\newtheorem{definition}{Definition}[section]
\newtheorem{theorem}{Theorem}[section]
\newtheorem{corollary}{Corollary}[theorem]
\newtheorem{remark}{Remark}
\begin{document}

\frontmatter
\begin{titlepage}
\title{Euler and the Gamma Function} 
\author{Alexander Aycock}
\maketitle
\end{titlepage}

\setcounter{tocdepth}{4}

\tableofcontents

\setcounter{secnumdepth}{4}

\newpage
\listoffigures

\mainmatter
 \lhead[\thepage]{}
\chead[Main Thesis]{Euler and the $\Gamma$-Function}
\rhead[]{\thepage}
\chapter{Euler and the $\Gamma$-Function}

This is the main body of the work. We present and discuss Euler's results on the $\Gamma$-function and will explain how Euler obtained them and how Euler's ideas anticipate more modern approaches and theories.

\section{Introduction} \label{sec:Introduction}

\subsection{Motivation} \label{sec: Motivation}

According to Leibniz, there are two arts in mathematics: The "Ars Inveniendi" (Art of Finding) and the "Ars Demonstrandi" (Art of Proving). Nowadays, the latter is dominating the mathematical education, whereas the first is often neglected.\\
Leonhard Euler's mathematical works are special not only for their quality and quantity, but also for his pedagogical style rendering them easily understandable. This is summarized by two famous quotes, the first due to Laplace: "Read Euler, read Euler, he is the Master of us all"\footnote{W. Dunham even wrote a book with a title alluding to this quote \cite{Du99}.}, the other due to Gau\ss: "The study of Euler's works cannot be replaced by anything else."\\[2mm] 
Moreover, Euler does not only provide the proofs of certain theorems, but also tells us how he found the theorem in the first place. In other words, Euler's work gives us both, the Ars Demonstrandi\footnote{Being written in the 18th century, Euler's works do not meet the modern standards of mathematical rigor, but in most cases it is not a lot of work to formulate Euler's proofs rigorously.} and the Ars Inveniendi\footnote{His famous book on the calculus of variations \cite{E65}, the first ever written on the subject, bearing the title "Methodus inveniendi lineas curvas maximi minimive proprietate gaudentes, sive solutio problematis isoperimetrici lattissimo sensu accepti", even contains Methodus inveniendi in the title.}.\\[2mm]
In this article we want to review and discuss some of the properties of the $\Gamma$-function, defined as\footnote{Most books devoted to the $\Gamma$-function start from the integral representation. We mention \cite{Ar15}, \cite{Fr06} as an example. \cite{Ni05} is an exception. It starts the investigation from the difference equation satisfied by $\Gamma(x)$, i.e. $\Gamma(x+1)=x\Gamma(x)$.}

\begin{equation} \label{eq:1}
\Gamma(x):= \int\limits_{0}^{\infty} t^{x-1}e^{-t}dt \quad \text{for} \quad \operatorname{Re}(x) > 0,
\end{equation} 
that were already discovered by Euler himself. More precisely, we will explain how Euler, the discoverer of \eqref{eq:1} \cite{E19}\footnote{To be completely precise, in \cite{E19} Euler introduced the integral $\int\limits_{0}^{1} \left(\log \frac{1}{t}\right)^{x-1}dt$ which transforms  into the above representation by the substitution $t=e^{-u}$.}, derived several different expressions for the $\Gamma$-function which are usually attributed to others.

\subsection{Euler's Idea concerning the $\Gamma$-function}
\label{subsec: Euler's Idea concerning the Gamma-function}

Euler found his expressions for the $\Gamma$-function basically from two different sources.\\[2mm]
1. Interpolation theory\\
2. The functional equation $\Gamma(x+1)=x\cdot \Gamma(x)$.\\[2mm]
But eventually they all boil down to the solution of the functional equation by different methods\footnote{This is the more natural approach, since the functional equation is one of the characteristic properties of the factorial.}. The first approach, outlined in \cite{E212} (Chapter 16 and 17 of the second part) and \cite{E613}, is based on difference calculus and led him to the Weiersta\ss{} product expansion of $\frac{1}{\Gamma(x)}$ and also to the Taylor series expansion of $\log \Gamma(1+x)$. Indeed, Euler implicitly uses the functional equation even in this approach. But we want to separate it from the other approaches, in which he tried to solve the functional equation explicitly and said so.\\[2mm]
Concerning the direct solution of the functional equation, first, Euler solved the functional equation satisfied by $\Gamma(x)$,

\begin{equation*}
\Gamma(x+1) = x \Gamma(x),
\end{equation*}
or, equivalently, the difference equation

\begin{equation*}
\log \Gamma(x+1) - \log \Gamma(x) = \log x
\end{equation*}
 by applying the Euler-Maclaurin formula, which he had  discovered in \cite{E25} and proved in \cite{E47}. We will discuss this below in section in section \ref{sec: Solution of log Gamma(x+1)- log Gamma (x)= log x via the Euler-Maclaurin Formula}. Later, he tried a solution\footnote{We say "tried", since the solution Euler found in this way is incorrect. He even realized this. But he tried to argue it away by another incorrect argument. This will be discussed in more detail in the corresponding section, i.e. in section \ref{subsubsec: An Application - Derivation of the Stirling Formula for the Factorial}.} by conversion of the difference equation into a differential equation of infinite order with constant coefficients, see \cite{E189}, which he then wanted to solve by the methods he had developed in \cite{E62}, \cite{E188}. Both ideas led him to the Stirling-formula for the factorial, i.e.,

\begin{equation*}
x! = \Gamma(x+1) = \dfrac{x^x}{e^x}\sqrt{2 \pi x} \quad \text{for} \quad x \rightarrow \infty.
\end{equation*}
The sign $=$ is to understood as follows here: We have

\begin{equation*}
    \lim_{x \rightarrow \infty} \dfrac{\Gamma(x+1)}{\frac{x^x}{e^x}\sqrt{2 \pi x}}=1.
\end{equation*}
In section \ref{sec:  Solution of the Equation log Gamma(x+1)- log  Gamma (x)= log x by Conversion into a Differential Equation of infinite Order} we will also explain that this formula follows from an asymptotic series.\\
In \cite{E123}, he explained a method how to solve the functional equation by an educated guess. Euler applied it to the factorial in $\S 13$ of \cite{E594}, basically adding some more examples to those of \cite{E123}; we will see in  section \ref{sec: Euler's direct Solution of the Equation Gamma (x+1)=xGamma(x) - The Moment-Ansatz} that his ideas lead to the integral representation  (\ref{eq:1}).\\
Finally, in \cite{E652}, he uses the functional equation to derive a product representation he first stated without proof in \cite{E19}.

\subsection{Organisation of the Paper}
\label{subsec: Organisation of the Paper}

\subsubsection{General Overview}
\label{subsubsec: General Overview}
This paper is organised as follows:\\
It can  be subdivided into three parts, referred to as part I., part II. and part III., respectively.\\[2mm]
Part I. will be a brief historical overview on the Gamma function, explicitly stressing Euler's contributions in section \ref{sec: Historical Overview over the Gamma-function}.\\[2mm]
Part II. contains the modern introduction of the $\Gamma$-function and the classification theorems in section \ref{sec: Short modern Introduction to the Gamma-Function}. Furthermore, we dedicate a whole section to the rigorous solution of the difference equation $F(x+1)=xF(x)$ in section \ref{sec: Solution of the difference equation F(x+1)=xF(x)}.\\[2mm]
Finally, part III. is  devoted to Euler's several approaches he used or could have used to arrive at the $\Gamma$-function (most of the time, he intended something entirely different and the $\Gamma$-function just was a special case). We will discuss Euler's idea to solve general homogeneous difference equations with linear coefficients by an idea we will refer to as "moment ansatz" in section \ref{sec: Euler's direct Solution of the Equation Gamma (x+1)=xGamma(x) - The Moment-Ansatz}, and we will discuss how he solved the general difference equation by converting it into a differential equation of infinite order in section \ref{sec: Solution of the Equation log Gamma(x+1)- log  Gamma (x)= log x by Conversion into a Differential Equation of infinite Order}. Another idea of Euler was to derive solutions of the general difference equation by difference calculus, which we will also discuss in detail in section \ref{sec: Interpolation Theory and Difference Calculus}. We also devoted a complete section to the relation among the $\Gamma$- and $B$-function and how Euler found those connections in section \ref{sec: Relation between Gamma and B}. After this we will conclude and try to summarize Euler's vast output in section \ref{sec: Summary}.\\[2mm]
Given the time in which Euler wrote his papers, some of his arguments are not completely rigorous in the modern sense and some are even incorrect. Therefore, when necessary or appropriate, we will show, how his ideas can be formulated in a modern setting and at some points also give the  rigorous proof. Sometimes, we will not give all the details, since this would simply take too long and would carry us too far away from our actual intention.\\
We will always try to put Euler's results and ideas in contrast to these modern ideas and it will turn out that Euler actually anticipated a lot that came after him (if understood in the modern context). Thus, we also added some sections just containing some historical notes. Furthermore, since this is mainly a paper on Euler's works, we also included some quotes from his papers, translated from his  original Latin into English. They will help to understand better Euler's way of thinking and his persona.

\subsubsection{Notation}
\label{subsubsec: Notation}

Euler invented many of the modern notation, e.g., $\sum$ for a sum, $f:x$ for a function of $x$ etc. Nevertheless, most of the times he did not use the compact notation, but wrote things out explicitly, e.g., for $\sum_{k=1}^{x}\frac{1}{k}$ he wrote $1+\frac{1}{2}+\cdots +\frac{1}{x}$. When referring to Euler's papers, we will retain his notation as closely as possible and only resort to a modern compact notation, if things become  clearer that way. Euler also never used the symbol $\Gamma(x)$ - This notation was introduced by Legendre in \cite{Le26a} p. 365 - to denote the factorial nor did he write $x!$, his notation varies from paper to paper. We will always use the modern notation.\\
Furthermore, the notion of limits as understood today did not exist at that time. Euler often speaks of infinitely small or infinitely large numbers. In this case, we will use the modern symbol $\lim_{n\rightarrow 0}$ and $\lim_{n \rightarrow \infty}$, respectively. At some occasions we also included scans of Euler's writings (all available at the Euler Archive) such that the reader can compare Euler's notation to the modern one.\\
Finally, a reader going through the whole text will inevitably encounter some  repetitions, which could not be avoided in the attempt to present each part independently from the previous ones.\\
Finally,  we do not always give a complete rigorous proof, but just explain the general idea how to prove a certain theorem. For, otherwise this would carry us too far away from our actual intention, presenting Euler's contributions to the theory of the $\Gamma$-function.

\newpage

\section{Historical Overview of the $\Gamma$-function} 
\label{sec: Historical Overview over the Gamma-function}

It will be convenient to give a short overview of the history of the $\Gamma$-function from its first appearance in Euler's 1738 paper \cite{E19} to the axiomatic introduction in Bourbaki more than 200 years later \cite{Bo51}. While doing this, we will mainly follow the history as described by Davis in his 1959 article on the $\Gamma$-function \cite{Da59} and Sandifer's articles in his book \cite{Sa07}. But \cite{Du91} and \cite{Gr03} are also recommended. The first goes into more detail on the investigations on the factorials before the $\Gamma$-function was actually introduced. The second discusses the correspondence between Euler, Goldbach and Bernoulli on the matter. We will talk about this after the general overview in section \ref{subsec: More detailed Discussion on the Discovery of the Gamma-function}.\\[2mm]
We will especially stress Euler's contributions and add some information, if necessary.

\subsection{Brief historical Overview}
\label{subsec: Brief historical Overview}

In his 1959 article \cite{Da59}, Davis described the history of the $\Gamma$-function from its discovery by Euler in 1729 \cite{Eu29} in his correspondence with Goldbach\footnote{The first ever definition is due to Daniel Bernoulli, who in a letter written to Goldbach  on the 6th of October 1729 \cite{Be29} defines the factorial as:

\begin{equation*}
    n! =\left(A+\frac{x}{2}\right)^{x-1}\left(\frac{2}{1+x} \cdot \frac{3}{2+x}\cdot \frac{4}{3+x}\cdots \frac{A}{A-1+x}\right)
\end{equation*}
for an infinitely large number $A$. Euler's definition, also stated in \cite{E19}, appeared  also in a letter to Goldbach \cite{Eu29}, written one week later than Bernoulli's. However, Bernoulli's definition did not catch on.} to the axiomatic introduction in Bourbaki in 1951 \cite{Bo51}. His article can be divided into the following phases:\\[2mm]
1. Discovery by Euler in 1729 (Davis seems to be unaware of Bernoulli's priority on the first definition. We will discuss this in more detail in the next section in section \ref{subsec: More detailed Discussion on the Discovery of the Gamma-function}.)\\
2. Gau\ss's investigations in 1812 on the hypergeometric series and the factorials in 1812 \cite{Ga28}.\\
3. Development of the notion of an analytic function by Riemann and Weierstra\ss{} around 1850. \\
4. Application of analytic function theory to the $\Gamma$-function. \\
5. Classification as a transcendentally transcendent function by Hölder in 1887. \\
6. Uniqueness theorems, especially the Bohr-Mollerup theorem. We will also prove his theorem later in section \ref{subsubsec: Bohr-Mollerup Theorem}.\\
7. Transcendence questions. \\[2mm]

\subsubsection{1. Phase: Discovery by Euler}
\label{subsubsec: 1. Phase: Discovery by Euler} 

As already mentioned, the $\Gamma$-function appeared in print the first time in 1737 in \cite{E19}. In that paper, Euler gave two expressions for the $\Gamma$-function. On the one hand, he found

\begin{equation*}
    \left[\left(\frac{2}{1}\right)^n\dfrac{1}{n+1}\right] \cdot \left[\left(\frac{3}{2}\right)^n\dfrac{2}{n+2}\right] \cdot
    \left[\left(\frac{4}{3}\right)^n\dfrac{3}{n+3}\right] \cdots = n!.
\end{equation*}
This is equation (1) in \cite{Da59}. In \cite{E19}, Euler does not give a proof for the formula, but just shows its correctness for $n=1,2,3$. A proof had to wait until his 1755 book \cite{E212}, a completely elaborated version then also appeared in \cite{E613}. This proof will be discussed  in section \ref{subsubsec: Gammafunction}. Therefore, we will not discuss this any further here. But we want to mention that it is more illustrative to write the product as follows:

    \begin{equation*}
    x! = \Gamma(x+1) = 1^{1-x} \cdot \dfrac{1^{1-x}2^x}{x+1} \cdot \dfrac{2^{1-x}3^x}{x+2} \cdot \dfrac{3^{1-x}4^x}{x+3} \cdot \text{etc.} = \prod_{k=1}^{\infty} \dfrac{k^{1-x}(k+1)^x}{x+k}.
\end{equation*}
This is also how Euler gave it already in \cite{E19} and in his letter to Goldbach \cite{Eu29}, for reasons of convergence, which he then elaborated on in \cite{E613}.\\[2mm]
In \cite{E19}, Euler also found the famous integral representation:

\begin{equation*}
    n! = \int\limits_{0}^{1} (-\log x)^ndx \quad \text{for} \quad \operatorname{Re}(x)>0.
\end{equation*}
He arrived at it starting from the integral:

\begin{equation*}
    \int\limits_{0}^{1}x^e(1-x)^ndx = \dfrac{1 \cdot 2 \cdots n}{(e+1)(e+2)\cdots (e+n+1)}
\end{equation*} 
by some clever substitutions and application of L'Hospital's rule. We will discuss his ideas in more detail below in section \ref{sec: Relation between Gamma and B}. Davis \cite{Da59} and Sandifer \cite{Sa07} also give Euler's original proof, i.e. also sticking to his notation. We will present it, when discussing the $B$-function in section \ref{subsubsec: Using the B-function}.

\subsubsection{2. Phase: Gau\ss's Investigations}
\label{subsubsec: 2. Phase: Gauss's Investigations}

In 1812 Gau\ss{} wrote the influential paper \cite{Ga28}, mainly on the hypergeometric series, but the last few paragraphs contain a discussion of the factorial. Gau\ss{}, in contrast to Euler, did not set the task for himself to find a formula for the factorial, but he simply started his  investigations from the product:

\begin{equation*}
    n! = \lim_{m \rightarrow \infty} \dfrac{m!(m+1)^n}{(n+1)(n+2)\cdots (n+m)},
\end{equation*}
written in Davis's notation; it is equation (3) in \cite{Da59}. Although immediate from the above formula, Euler did not represent the $\Gamma$-function in this form in \cite{E19}, but arrived at this formula by a different reasoning in \cite{E652}. We will see this below in section \ref{subsubsec: Using the B-function}, when discussing the connection of the $\Gamma$-function to the $B$-function, i.e. the integral

\begin{equation*}
    B(x,y)= \int\limits_{0}^{1}t^{x-1}(1-t)^{y-1}dt \quad \text{for} \quad  \operatorname{Re}(x), \operatorname{Re}(y)>0.
\end{equation*}
Anyway, Gau\ss{} realized that the above product formula is easier to work with. First, since it is defined for all non-negative integer numbers, whereas the integral is valid only for $\operatorname{Re}(y)>0$. Second, since it allows to prove some special identities of the $\Gamma$-function more easily than by using the integral representation. We mention the reflection formula:

\begin{equation*}
    \Gamma(x)\Gamma(1-x)= \dfrac{\pi}{\sin (\pi x)}
\end{equation*}
and the multiplication formula

\begin{equation*}
\Gamma \left(\frac{x}{n}\right)\Gamma \left(\frac{x+1}{n}\right) \Gamma \left(\frac{x+2}{n}\right) \cdots \Gamma \left(\frac{x+n-1}{n}\right)=  n^{1-x} \Gamma(x) \sqrt{\dfrac{(2\pi)^{n-1}}{n}},
\end{equation*}
as it is presented in modern form. Gau\ss{} proved this formula in \cite{Ga28} using the product formula representation, which hence also often has its name attached to it. But in his 1772 paper \cite{E421}, Euler already heuristically arrived at a formula that can be shown to be equivalent to the Gau\ss{} multiplication formula. This will also be discussed below in section \ref{subsec: Multiplication Formula}.

\subsubsection{3. Phase: Development of complex Analysis}
\label{subsubsec: 3. Phase: Development of complex Analysis}

We do not want to go into much detail here. We just mention  that in this phase functions of a complex variable became the focus of the studies and replaced the older notion of an analytical expression in the sense Euler introduced in his 1748 book \cite{E101}.\\[2mm]
One of the most important results in complex analysis, due to the works of Riemann \cite{Ri51} and Weiersta\ss{} \cite{We78}, is the concept of analytic continuation, i.e. the extension of a domain of the analytic function. \\
Below we will see that the function defined via the integral of the $\Gamma$-function is an analytic function, if negative integer numbers are excluded, as is the function defined from the product representation. But on first sight, those expressions do not seem to have to do anything with one another at all. But using the functional equation of the $\Gamma$-function, the integral can be extended to the same domain as the product formula and thus the product formula can be seen to be just an analytic continuation of the integral representation. One might argue, that this is what Euler did in \cite{E652} to obtain the product formula. This will also be discussed in more detail below in section \ref{subsec: Euler on Weierstrass's Condition}.\\
Furthermore, general results from complex analysis allow simple proofs of otherwise very difficult to prove identities.\\[2mm]

\subsubsection{4. Phase: Formulas following from complex Analysis}
\label{subsubsec: 4. Phase: Formulas following from complex Analysis}

As already indicated, by extending the domain of a function from the real to complex numbers, many interesting identities emerge and moreover they are proved more easily. As an example, let us again mention the reflection formula

\begin{equation*}
    \Gamma(z)\Gamma(1-z)= \dfrac{\pi}{\sin (\pi z)}.
\end{equation*}
Euler gave a proof in his 1772 paper \cite{E421}. We will present his proof in section \ref{subsubsec: Euler's Proof of the Reflection Formula} and another one, he could have given, below in section \ref{subsubsec: Proof Euler could have given}. Euler's proof is based on the product formula and the product for the sine, which he discovered in \cite{E41}\footnote{This is the paper, in which he solved the Basel problem, i.e. summed the series $1+\frac{1}{1^2}+\frac{1}{3^2}+\cdots =\frac{\pi^2}{6}$.} and proved in \cite{E61} for the first time. Using complex analysis, this identity is easily proved applying Liouville's theorem. \\
Another, more prominent  example, is the functional equation of the Riemann $\zeta$-function:

\begin{equation*}
    \zeta(z) = \zeta(1-z)\Gamma(1-z)2^z \pi^{z-1}\sin \frac{1}{2}\pi z
\end{equation*}
with

\begin{equation*}
    \zeta(z)= 1+ \dfrac{1}{2^z}+\dfrac{1}{3^z}+\text{etc.}
\end{equation*}
This formula is attributed to Riemann who proved it in his famous 1859 paper \cite{Ri59} using complex analysis. But Euler, using his definition of the sum of a divergent series which he gave in \cite{E212}, arrived at an equivalent identity in his 1768 paper \cite{E352}. An equivalent formula also appears in the 1773 paper \cite{E432}. Euler's arguments are heuristic, as he also admitted. The story of Euler and the $\zeta$-function is an interesting subject for itself, but we will not discuss it here. The reader is referred to \cite{Ha48}, who devoted a section to this discussion, and Ayoub's 1974 article \cite{Ay74}, also reprinted in \cite{Du07}. Let us mention that the functional equation for the $\zeta$-function can easily be deduced from the results of Malmsten's 1846 paper \cite{Ma46}, confer, e.g., \cite{Ay13}.\\[2mm]
Returning to formulas for the $\Gamma$-function, let us mention the following formula found by Newman in 1848.
\begin{equation*}
    \dfrac{1}{\Gamma(z)}= ze^{\gamma z}\left[\left(1+z\right)e^{-z}\right]\left[\left(1+\dfrac{z}{2}\right)e^{-\frac{z}{2}}\right]\cdots, \quad \text{where} \quad \gamma = 0.5772156649\cdots
\end{equation*}
$\gamma$ is the Euler-Mascheroni constant. Confer the appendix, section \ref{subsec: gamma meets Gamma - Euler on the Euler-Mascheroni Constant}, for more information on this constant.\\
This formula is often referred to as Weierstra\ss{} product representation of the $\Gamma$-function, since it is a special case Weierstra\ss{}'s factorisation theorem proved in 1878 in \cite{We78}. But this formula was  found even earlier by Schlömilch in 1844 \cite{Sc44}. We will review Weierstra\ss's ideas in section \ref{subsec: Modern Idea - Weierstrass product} and show that it was in some sense anticipated by Euler in \cite{E613} in section \ref{subsubsec: Comparison to Euler's Idea}. Thus, it comes as no surprise that Newman's or Schlömilch's formula follows already from Euler's.\\
Finally, in this context we mention the recent paper \cite{Pe20}, in which another definition for the $\Gamma$-function is derived using complex analysis. This definition originates by generalising Laplace's  definition of the $\Gamma$-function given in \cite{La82}, i.e.

\begin{equation*}
    \dfrac{1}{\Gamma(s)}= \dfrac{1}{2\pi}\int\limits_{-\infty}^{\infty}\dfrac{e^{x+iy}}{(x+iy)^s} \quad \text{for} \quad \operatorname{Re}(s)>0,~x>0
\end{equation*}
to Hankel's contour integral definition \cite{Ha64}, i.e.

\begin{equation*}
    \dfrac{1}{\Gamma(s)} =\dfrac{1}{2\pi i} \int_{\eta} z^{-s}e^zdz.
\end{equation*}
Here, $\eta$ is a curve from $-\infty$ to $+\infty$ surrounding the negative real axis $\mathbb{R}_{-}= ]-\infty, 0]$ and where for $z \in \mathbb{C}\setminus \mathbb{R}_{-}$ in $z^{-s}= e^{-s \log z}$ the principal branch of the logarithm is taken. This integral converges for all $s \in \mathbb{C}$ and thus defines an entire functions. Both formulas, Laplace's and Hankel's, also appear in \cite{Ni05}, but there Hankel's formula is attributed to Weierstraß, who actually discovered  it later than Hankel.

\subsubsection{5. Phase: Classification as transcendentally transcendental Function}
\label{subsubsec: 5. Phase: Classification as transcendentally transcendental Function}

As the title of \cite{E19} already suggests, the $\Gamma$-function is not an algebraic function. An algebraic function, already at the time of \cite{E19}, is a function that can be obtained by finitely many algebraic operations. Algebraic operations, according to Euler, were addition, subtraction, multiplication, division, raising to a natural power and taking root of integer powers.\\
The simplest, or rather most familiar, non-algebraic or transcendental functions are exponentials, logarithms, sines and cosines. They cannot be constructed from finitely many algebraic operations but have to defined by a limit procedure somehow. For example,

\begin{equation*}
    e^z: = \lim_{n \rightarrow \infty} \left(1+\dfrac{z}{n}\right)^n.
\end{equation*}
But all of the above functions share the property that they satisfy at least an algebraic differential equation.\\
In 1887 \cite{Hoe87}, Hölder showed that the $\Gamma$-function does not even satisfy an algebraic differential equation. Thus, the $\Gamma$-function is not only not an algebraic function, but  has an even higher degree of transcendence than, e.g., $\sin(x)$, $\log x$ etc. The $\Gamma$-function is a so-called transcendentally transcendent function.

\subsubsection{6. Phase: Towards the axiomatic Introduction}
\label{subsubsec: 6. Phase: Towards the axiomatic Introduction}

Up to this point we already mentioned several different expressions for the $\Gamma$-function, and initially there is no reason for them to be identical for all values. Euler claimed that all the different expressions he found while solving the interpolation problem of the factorial are identical. At least, it seems that he never addressed the issue, confer, e.g., \cite{E368}.\\
On the other hand, he was well aware that, given  points of a function only at the natural numbers, this does not determine the function. On the contrary, there are still infinitely many solutions. He stated this explicitly in his 1753 paper \cite{E189}. We will come back to some of the results of this paper below, when we will discuss the solution of difference equations by converting them into solvable differential equations of infinite order.\\[2mm]
The non-uniqueness of the interpolation theorem for the $\Gamma$-function raises the question, what makes it unique among all other solutions for  the same problem. In other words: Which properties determine the $\Gamma$-function uniquely? The way to answer this question is an example for the Methodus inveniendi, whence we want to describe a possible line of thought to get to the characteristics of the function.\\[2mm]
As mentioned above, the sole condition

\begin{equation*}
    f(n)=(n-1)! \quad \forall ~ n \in \mathbb{N}
\end{equation*}
is not enough to conclude $f(x)=\Gamma(x)$ for all $x \in \mathbb{C}$. For,

\begin{equation*}
    f(x)= \Gamma(x)p(x),
\end{equation*}
with $p(x)$ being a periodic function with period $1$ and $p(1)=1$, is also a solution to the same question. Moreover, Hadamard in 1894 \cite{Ha94} found the following nice solution:

\begin{equation*}
    f(x) = \dfrac{1}{\Gamma(1-x)} \dfrac{d}{dx} \log \left[\dfrac{\Gamma\left(\frac{1-x}{2}\right)}{\Gamma\left(1-\frac{x}{2}\right)}\right].
\end{equation*}
One can check that

\begin{equation*}
    f(x+1)=xf(x) +\dfrac{1}{\Gamma(1-x)} \quad \text{and} \quad f(1)=1.
\end{equation*}
This also $  f(n)=(n-1)! \quad \forall ~ n \in \mathbb{N}$.\\[2mm]
Those two examples show that we need additional conditions. First, it seems necessary to demand $f(1)=1$ and $f(x+1)=xf(x)$ for all $x \in \mathbb{C}$. This excludes Hadamard's solution, as it does not satisfy the recurrence relation. Moreover, one has to exclude solutions just being a product of the $\Gamma$-function by a function with period $1$. One possible way to do this, was found by Weierstra\ss{}, who in his 1856 paper \cite{We56} introduced the $\Gamma-$function as the unique function satisfying: $f(1)=1$ and $f(x+1)=xf(x)$ and

\begin{equation*}
    \lim_{n \rightarrow \infty} \dfrac{f(x+n)}{(n-1)!n^x}=1 \quad \forall x \in \mathbb{C}.
\end{equation*}
In his paper, he proves that the function satisfying those conditions is unique and mentioned that he was using Gau\ss's 1812 paper \cite{Ga28} and the product representation for the $\Gamma$-function as motivation for this definition. Although this formulation solves the uniqueness question, as we will also show below in section \ref{sec: Solution of the difference equation F(x+1)=xF(x)}, mathematicians kept looking for other, more "natural" properties, as Davis calls it.\\
In other words, we want to replace the third of Weierstra\ss's conditions by another one, which is somehow more intuitive. Trying so, one immediately notices the $\Gamma$-function to be convex for $x>0$.  But this is still not enough, as the following example, equation (35) in \cite{Da59}, shows. Define

\begin{equation*}
    f(x)= \left\lbrace
    \renewcommand{\arraystretch}{2,0}
\setlength{\arraycolsep}{0.0mm}
    \begin{array}{lll}
         \dfrac{1}{x} & \quad \text{for} \quad 0 < x \leq 1, \\
          1 & \quad \text{for} \quad 1 \leq x \leq 2, \\
          (x-1) & \quad \text{for} \quad 2 \leq x \leq 3, \\
           (x-1)(x-2) & \quad \text{for} \quad 3 \leq x \leq 4, \\
           etc.
    \end{array} \right\rbrace.
\end{equation*}
This function is convex and has the two other two properties.\\[2mm]
The correct formulation had to wait until 1922 \cite{Mo22}, when the Bohr-Mollerup theorem was formulated. We will meet it again below several times, but let us already formulate it here:

\textit{The $\Gamma$-function is the only function defined for $x>0$ which is positive, satisfies $\Gamma(1)=1$ and the functional equation $x\Gamma(x)=\Gamma(x+1)$ and is logarithmically convex.}\\[2mm]
In \cite{Bo51}, the $\Gamma$-function is introduced as the function satisfying all the above properties. The uniqueness is then proved afterwards.

\subsubsection{7. Phase: Transcendence Questions}
\label{subsubsec: 7. Phase: Transcendence Questions}

Having established the higher transcendence of the $\Gamma$-function, it is only natural to ask, whether the values of the $\Gamma-$function at rational values are always transcendental and if so, if  they can be expressed in terms of already known constants. We will meet the Euler-Mascheroni constant $\gamma$, defined as

\begin{equation*}
    \gamma := \lim_{n \rightarrow \infty} \left[\sum_{k=1}^{n}\dfrac{1}{k} -\log(n)\right]
\end{equation*}
again  in section \ref{subsubsec: Finding the constant K}, and discuss Euler's contribution to the constant in the appendix in section \ref{subsec: gamma meets Gamma - Euler on the Euler-Mascheroni Constant}. It was discovered by Euler in his 1740 paper \cite{E43} and appeared in Newman's infinite product formula for $\Gamma(x)$. To this day, it is not known whether $\gamma$ is rational or not. \\
Concerning values of the $\Gamma$-function itself, we know that $\Gamma(n+1)=n!$ and thus a natural number. From the reflection formula, we conclude

\begin{equation*}
    \Gamma \left(\frac{1}{2}\right)=\sqrt{\pi}
\end{equation*}
and thus transcendental. Euler already found this value in \cite{E19} from his product formula which reduces to the Wallis product formula for $\pi$. And this encouraged him to look for an integral representation for $\Gamma(x)$. More on this below in section \ref{subsubsec: Euler's Thought Process}.\\
Using results on periods as defined in \cite{Ko01} and the relation of the $\Gamma-$function to the $B$-function, which we will also study below and mention Euler's contributions to this, one can conclude that $\Gamma\left(\frac{1}{4}\right)\sqrt[4]{\pi}$ is a transcendental number, whereas it is not known whether only $\Gamma\left(\frac{1}{5}\right)$ is rational or not. But in 1975 Chudnovsky \cite{Ch84} showed that $\Gamma \left(\frac{1}{6}\right)$, $\Gamma \left(\frac{1}{4}\right)$, $\Gamma \left(\frac{1}{3}\right)$, $\Gamma \left(\frac{3}{4}\right)$, $\Gamma \left(\frac{5}{6}\right)$ are transcendental and algebraically independent from $\pi$.

\subsection{More detailed Discussion on the Discovery of the $\Gamma$-function} 
\label{subsec: More detailed Discussion on the Discovery of the Gamma-function}

First, we want to stress here again that the first representation for the factorial was given by Daniel Bernoulli, not by Euler. Confer \cite{Dut91}, \cite{Gr03}. In \cite{Be29} Bernoulli gave the formula:

\begin{equation*}
    x! =\left(A+\frac{x}{2}\right)^{x-1}\left(\frac{2}{1+x} \cdot \frac{3}{2+x}\cdot \frac{4}{3+x}\cdots \frac{A}{A-1+x}\right)
\end{equation*}
for an infinitely large number $A$. Euler's wrote the letter containing his definition to Goldbach one week later. Euler's definition is just the first that appeared in print.\\[2mm]
Euler and Daniel Bernoulli were friends since their childhood and communicated about math via letters and even both had positions in the  Imperial Russian Academy of Sciences in Saint Petersburg at 1729 and met daily, probably discussing various things, including the problem of  interpolating the factorial.  Thus, it is natural to ask, whether and, if so, how they influenced each other concerning the interpolation problem of the factorial. This is rather difficult to answer, since they are not explicit about this in their respective letter to Goldbach. But, for the sake of completeness, we mention, that in \cite{Ju65}, a book containing the letters exchanged between Euler and Goldbach from 1729-1764, in the footnote on page 21 it is stated  that Euler knew the contents of the letters from Bernoulli to Goldbach and thus also Bernoulli's formula.\\[2mm]
Bernoulli  just stated  the formula at the end of \cite{Be29} without any explanation.

\begin{center}
\begin{figure}
\centering
   \includegraphics[scale=0.9]{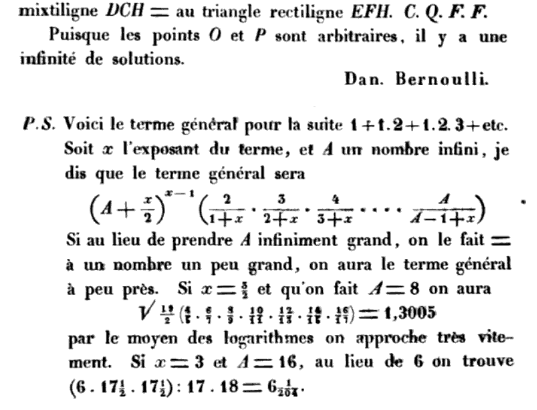}
    \caption{Letter from Bernoulli to Goldbach}
    Part of Bernoulli's letter \cite{Be29} written 6th October 1729 to Goldbach. It contains the first ever definition of the $\Gamma$-function.
    \end{figure}
\end{center}
Euler opened his letter with the formula 

\begin{equation*}
    x! = \dfrac{1}{1+x} \left(\dfrac{2}{1}\right)^x \cdot  \dfrac{2}{2+x} \left(\dfrac{3}{2}\right)^x \cdot  \dfrac{3}{3+x} \left(\dfrac{4}{3}\right)^x \cdot  \dfrac{4}{4+x} \left(\dfrac{5}{4}\right)^x \cdots
\end{equation*}
but dis also not explain how he arrived at this formula. In fact, a proof had to wait until his 1755 book \cite{E212}.

\begin{center}
\begin{figure}
\centering
    \includegraphics[scale=0.9]{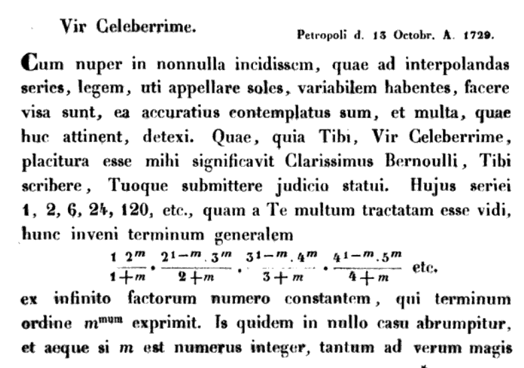}
      \caption{Letter from Euler to Goldbach - first Part}
       First page of Euler's letter \cite{Eu29} written to Goldbach 13th October 1729 - one week letter than Bernoulli's. The first page already contains the product representation. 
    \end{figure}
\end{center}

In the same letter, he also said that:

\begin{equation*}
    m! = \dfrac{1 \cdot 2 \cdot 3 \cdots n}{(1+m)(2+m)(3+m)\cdots (n+m)}(n+1)^{m}
\end{equation*}
for infinite $m$.

\begin{center}
\begin{figure}
\centering
    \includegraphics[scale=0.9]{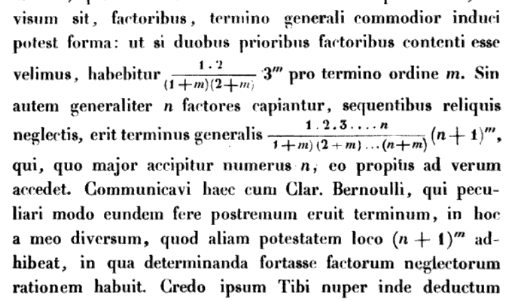}
      \caption{Letter from Euler to Goldbach - second Part}
    Part of the second page of Euler's letter \cite{Eu29} written to Goldbach 13th October 1729. It already contains what would become known  as the Gau\ss's product representation of the factorial.
    \end{figure}
\end{center}
He proved this later in his 1769 paper \cite{E368} and again in his 1793 paper \cite{E652}. Nevertheless, as mentioned, this expression is attributed to Gau\ss, who used it as a definition for the factorial in his influential 1812 paper \cite{Ga28}.\\
In his letter, concerning his and Bernoulli's expressions, Euler wrote, translated from the Latin original:\\[2mm]
\textit{I communicated these discoveries to Bernoulli, who in a peculiar way found almost the same last term\footnote{Euler means the factor $\left(A+\frac{x}{2}\right)^{x-1}$ in Bernoulli's formula, in contrast to the factor $(n+1)^m$ in his formula.}, different from mine in that regard that he uses another power instead of $(n+1)^{m}$, determining which he maybe took into account the neglected factors.}\\[2mm]
This sentence reveals two things: First, Euler knew about Bernoulli's formula, most probably from private communication. For, the subject was never mentioned in letters they exchanged. Second, he did not know how Bernoulli arrived at his result.\\
It is speculation, whether Euler found his result influenced by Bernoulli's formula and, not knowing how the latter arrived at his formula, found his expression or whether he had  found it independently. The letter seems to indicate the first option. Dutka in his overview article \cite{Dut91} argues in favour of this possibility, stating the opinion that Bernoulli obtained his formula first and talked to Euler about it and then Euler, after this, found his solution. This is also endorsed by \cite{Gr03}. But one might argue that Euler's second formula is more natural, since it can be easily found from the recursive property of the factorial and taking the limit. This is also how Euler then proved it in \cite{E652}, as we will see below in section \ref{subsec: Euler on Weierstrass's Condition}. Unfortunately, the question how Bernoulli actually found his formula, remains unsettled. But, having already stated the Bohr-Mollerup theorem above, it is easily checked that also Bernoulli's expression is another expression for the $\Gamma$-function; $\Gamma(x+1)$, to be precise. Applying the Bohr-Mollerup theorem, we also see that we get another expression for the $\Gamma$-function:

\begin{equation*}
    \Gamma(x+1)  = \lim_{n \rightarrow \infty} A(x,n)^{x-1} \left(\frac{2}{1+x} \cdot \frac{3}{2+x}\cdot \frac{4}{3+x}\cdots \frac{n}{n-1+x}\right)
\end{equation*}
if just $A(x,n)$ is logarithmically convex for $x >1$ and for $n \in \mathbb{N}$ and 

\begin{equation*}
    \lim_{n\rightarrow \infty} A(n,0)\cdot n =1.
\end{equation*}
And Bernoulli's choice $A(n,x)=\left(n+\frac{x}{2}\right)$ obviously satisfies those requirements. But, e.g., also $\left(n+\frac{x^k}{2}\right)^{x-1}$ for a natural number $k$ works as well.\\
Bernoulli probably preferred his choice, since it converges rapidly to its limit. Thus, the particular choice for $A(n,x)$ is probably a result of extensive calculations trying out several similar formulas.

\subsection{Overview on Euler's Contributions}
\label{subsec: Overview on Euler's Contributions}

Having now described the history of the $\Gamma$-function in general, we want to give an overview more focused on Euler's work. To this end, let us first mention that most historical overviews, e.g., \cite{Da59} mostly consider only Euler's first paper on the $\Gamma-$function \cite{E19} and discuss mainly the two presentations of the $\Gamma$-function Euler gave in that same paper, i.e. his product representation and the integral representation.\\
But the $\Gamma-$function appears in Euler's work at several places throughout his  career and he devoted several papers to it. Additionally, as we will show below in section \ref{sec: Euler's direct Solution of the Equation Gamma (x+1)=xGamma(x) - The Moment-Ansatz}, some of his results he discovered while investigating other subjects can also be applied to the $\Gamma$-function, but Euler did not always do so. To give an example, in his 1750 paper \cite{E123}, he considered difference equations of the form:

\begin{equation*}
    (\alpha x + a)f(x+1)= (\beta x +b)f(x) +(\gamma x +c)f(x-1)
\end{equation*}
but not does not apply it to the functional equation of the $\Gamma-$function, being a special case of the above equation. He did this only in the 1785 paper \cite{E594} on the same subject in section \ref{subsubsec: Application to the Gamma-function - Finding the Integral Representation}. Both papers are actually devoted to continued fractions and the results on the $\Gamma$-function are just a byproduct.\\[2mm]
Thus, it will be convenient to give a comprehensive list of Euler's papers on the $\Gamma$-function and group them into categories, whether they are directly devoted to the $\Gamma$-function or the results are just byproducts of other investigations. Furthermore, in the second part, we want to distinguish, whether Euler realized the consequences from his investigations or not, since on some occasions he did, on others he did not.\\[2mm]

\subsubsection{Euler's Papers on the $\Gamma$-function}
\label{subsubsec: Euler's Papers on the Gamma-function}

In total, Euler wrote 13 papers either directly devoted to the $\Gamma$-function or  containing insights on the $\Gamma-$function as byproducts. The papers devoted to the $\Gamma$-functions are: \cite{E19}, \cite{E652}, \cite{E661}, \cite{E421}, \cite{E662}, \cite{E816}, \cite{E368}.\\
The papers containing results on the $\Gamma$-function as a byproduct are: \cite{E122}, \cite{E123}, \cite{E189}, \cite{E613}, \cite{E594}, chapter 4, 16 and 17 of the second part of the book \cite{E212}.\\
For the sake of convenience, we want to give a brief summary of each paper and its title already here, although we will mention them  in more detail in the corresponding sections. On this occasion, it will be convenient to mention  Eneström's 1910 book \cite{En10}, listing all of Euler's writings in chronological order with a short description of the content. Eneström attributed a number  from $1$ to $866$ to each of Euler's papers or books, referred to a the Eneström number. The smaller the Eneström number is, the earlier the paper was published. We always cite Euler's papers by their Eneström number.\\

1. \cite{E19}: \textit{De progressionibus transcendentibus seu quarum termini generales algebraice dari nequeunt}, published in 1738.\\
This is Euler's first paper on the $\Gamma$-function. As mentioned above in section \ref{subsec: Brief historical Overview}, Euler found  the integral representation and the product representation. \\[2mm]

2. \cite{E122}: \textit{De productis ex infinitis factoribus ortis}, published in 1750.\\
Euler studied $B$-function integrals and derives several relations among them. \\[2mm]

3. \cite{E123}: \textit{De fractionibus continuis observationes}, published in 1750.\\
This is Euler's longest paper on continued fractions he ever wrote. It contains many topics, including the transformation of series into continued fractions and the solution of the Riccati differential equation via continued fractions. Additionally, Euler described a method to solve homogeneous difference equations with linear coefficients. See section \ref{sec: Euler's direct Solution of the Equation Gamma (x+1)=xGamma(x) - The Moment-Ansatz}.\\[2mm]

4. \cite{E189}: \textit{De serierum determinatione seu nova methodus inveniendi terminos generales serierum}, published in 1753.\\
Euler solved several difference equations by converting them into a differential equation of infinite order and solving those differential equations then. The factorial is one example he considered. We devote a whole section to the explanation of his ideas. See section \ref{sec: Solution of the Equation log Gamma(x+1)- log  Gamma (x)= log x by Conversion into a Differential Equation of infinite Order}. \\[2mm]

5. \cite{E212}: \textit{Institutiones calculi differentialis cum eius usu in analysi finitorum ac doctrina serierum}, published 1755:\\
This is Euler's book on differential calculus. It contains many topics. We mention the introduction of differential quotients, the Euler-Maclaurin summation formula, and many transformation formulas for series as examples. We will mainly focus on his interpolation theory based on difference calculus. See section \ref{sec: Interpolation Theory and Difference Calculus}.\\[2mm]

 6. \cite{E368}: \textit{De curva hypergeometrica hac aequatione expressa 
$y=1 \cdot 2 \cdot 3 \cdots x$}, published 1769.\\
This is an overview article on the $\Gamma$-function. Euler stated all formulas he ever found for $x!$ - most of them without out proof -, evaluated the factorial and its derivative for special values. \\[2mm]

7. \cite{E421}: \textit{Evolutio formulae integralis $\int x^{f-1}dx \left(\log (x)\right)^{\frac{m}{n}}$ integratione a valore $x=0$ ad $x=1$ extensa}, published 1772.\\
This is an overview paper on the integral representation of the $\Gamma$-function. It contains many results - with proofs - e.g., the relation among the $\Gamma$- and $B$-function, the derivation of the sine product formula, even formulas attributed to Gau\ss{}. All this will be discussed in \ref{sec: Relation between Gamma and B}.\\[2mm]

8. \cite{E594}: \textit{Methodus inveniendi formulas integrales, quae certis casibus datam inter se teneant rationem, ubi sumul methodus traditur fractiones continuas summandi}, published 1785.\\
This paper can be considered as an addendum to \cite{E123}. It contains several examples not discussed in the latter, including the $\Gamma$-function. Thus, \cite{E594} contains a derivation of the integral representation of the $\Gamma$-function from its functional equation.\\[2mm]

9. \cite{E613}: \textit{Dilucidationes in capita postrema calculi mei differentalis de functionibus inexplicabilibus}, published in 1787.\\
This is an addendum to chapter 16 and 17 in \cite{E212}. It is a more detailed discussion of his solution of the interpolation  problem of a sum using difference calculus. He applied his general formulas to the factorial this time.\\[2mm]

10. \cite{E652}: \textit{De termino generali serierum hypergeometricarum}, published in 1793.\\
This paper contains the proof of the Gau\ss{}ian representation of the factorial. The proof is based on the functional equation. We discuss this in \ref{subsec: Euler on Weierstrass's Condition}.\\[2mm]

11. \cite{E661}: \textit{Variae considerationes circa series hypergeometricas}, published in 1794.\\
Euler uses the Euler-Maclaurin summation formula to study the factorial.\\[2mm]

12. \cite{E662}: \textit{De vero valore formulae integralis $\int \partial x\left(\log \frac{1}{x}\right)^n$ a termino $x = 0$ usque ad terminum $x = 1$ extensae}, published in 1894, but presented already in 1776.\\
In this paper, Euler tried to evaluate the integral for several rational numbers. Due to the transcendence of the $\Gamma$-function, his efforts are eventually in vain. Nevertheless, he was able to express products of several values in terms of familiar quantities, like $\pi$ and $\sin \pi x$. \\[2mm]

13. \cite{E816}: \textit{Considerations sur quelques formules integrales dont les valeurs peuvent etre exprimees, en certains cas, par la quadrature du cercle}, published in 1862. \\
This paper was written in the same spirit as \cite{E662}.\\[3mm]
Having described all papers briefly, we want to note that the papers \cite{E19}, \cite{E368} and \cite{E421} together contain all formulas and insights on the $\Gamma$-function that Euler ever made and had. Thus, we provided a translation of the Latin originals into English of all three papers\footnote{Meanwhile, all  12 (of 13) papers, which were written in Latin, have been translated into English by the author of this thesis. The translations are available at online the Euler-Kreis Mainz.}. They are found in the appendix. As mentioned, the 1738 paper \cite{E19} was the first appearance of the $\Gamma$-function in print ever. The  1772 paper \cite{E421} and the 1769 paper \cite{E368} are rather overview papers. The first is devoted to the integral representation, whereas the second focuses more on the product representations. Thus, they are elaborations on both the representations that appeared already in \cite{E19}. Unfortunately, the proofs of the formulas are not always found in those same papers, but in others. In in some cases, Euler even only gave a heuristic proof.\\[2mm]

\subsubsection{Euler's Results on the $\Gamma$-function}
\label{subsubsec: Euler's Results on the Gamma-function}

Here, we want to give a list of Euler's results on the $\Gamma$-function together with the paper, in which it can be found.  How he obtained his result, will be explained below in the corresponding sections in more detail.\\[3mm]

1. \textit{Integral Representation}: \\ Euler found the function defined as:

\begin{equation*}
    \int\limits_{0}^{1} \left(\log \dfrac{1}{x}\right)^ndx
\end{equation*}
as a solution to the problem of interpolating of the factorial. In \cite{E19} he obtained it rather indirectly from a $B$-function integral. We will see this below in section \ref{subsubsec: Using the B-function}. But confer also \cite{Da59}, \cite{Sa07}. The paper \cite{E123}, actually devoted to the continued fractions, contains the tools to obtain the integral formula directly. He did this in the paper \cite{E594}, also a paper on continued fractions. \\[2mm]

2. \textit{Euler Product Representation}: \\ By this we mean the formula

\begin{equation*}
    x! = \dfrac{1}{1+x} \left(\dfrac{2}{1}\right)^x \cdot  \dfrac{2}{2+x} \left(\dfrac{3}{2}\right)^x \cdot  \dfrac{3}{3+x} \left(\dfrac{4}{3}\right)^x \cdot  \dfrac{4}{4+x} \left(\dfrac{5}{4}\right)^x \cdots
\end{equation*}
which he first stated without proof in \cite{E19}. It is also stated  in the first paragraph of \cite{E368}. But neither of those papers contains a proof. A proof had to wait until 1755 in chapter 16 and 17 of the second part of his book from 1755. There, he solved interpolation problems via difference calculus. The idea is presented in more detail in \cite{E613}. The above formula is just a corollary of more general formulas. Below in section \ref{subsec: Difference Calculus according to Euler} we will discuss Euler's ideas from \cite{E613}  and show, how they anticipated the fundamental ideas of Weierstra\ss-products in section \ref{subsec: Modern Idea - Weierstrass product}. \\[2mm]

3. \textit{Gau\ss ian  product representation}, i.e. 

\begin{equation*}
    x! = \lim_{n \rightarrow \infty} \dfrac{1}{1+x} \cdot \dfrac{2}{2+x} \cdots \dfrac{n}{n+x}(n+1)^x.
\end{equation*}
This formula is often attributed to Gau\ss{}, since it was the starting point of his investigations on the factorial in the influential paper \cite{Ga28}. But Euler, arguing via infinitely large numbers, proved this formula in \cite{E652}. We will discuss later in section \ref{subsec: Euler on Weierstrass's Condition} how he found this product. Additionally, Euler gave this definition in his 1729 letter to Goldbach \cite{Eu29}, as we have already seen in section \ref{subsec: More detailed Discussion on the Discovery of the Gamma-function}. \\[2mm]

4. \textit{Stirling formula for $\Gamma(x+1)$:}\\
The Stirling formula says

    \begin{equation*}
     x! =
\Gamma (x+1)= \sqrt{2 \pi x}\dfrac{x^x}{e^x} \quad \text{for} \quad x \rightarrow \infty.
\end{equation*}
This formula is attributed to Stirling for his 1730 investigations \cite{St30}. But we will have to say several things concerning priorities below in section \ref{subsubsec: An Application - Derivation of the Stirling Formula for the Factorial}. Anyhow, Euler proved this formula on two occasions. First, in his 1755 book \cite{E212}. There, the formula is derived as a consequence from his more general results on the Euler-Maclaurin summation formula. We will represent his proof, when discussing the Stirling formula in section \ref{subsubsec: An Application - Derivation of the Stirling Formula for the Factorial}. \cite{E661} is then solely devoted to investigations of slight generalisations of the factorial functions via the Euler-Maclaurin formula. He also stated the Stirling formula in \cite{E368}.\\
He also tried to give another proof in \cite{E189}. In this paper, he solved difference equations including $f(x+1)-f(x)=\log(x)$, which is solved by $\log \Gamma (x)$ of course, by converting them into differential equations of infinite order first. But his general investigations contain a conceptual error and thus his proof is incorrect. We will elaborate on this below in section \ref{subsubsec: Mistake in Euler's Approach}, discuss his error and give a method how to correct his arguments. \\[2mm]

5. \textit{Series expansions for $\log \Gamma(x+1)$}:\\
Euler found several series expansions for the logarithm of the $\Gamma-$function, he gave a list in \cite{E368}. Let us mention two striking examples:

\begin{alignat*}{9}
& \log \Gamma(x+1) && =  - \gamma x +x +\dfrac{1}{2}x+\dfrac{1}{3}x+\dfrac{1}{4}x +\text{etc.}\\
&  && -\log \left(1+\dfrac{x}{1}\right)-\log \left(1+\dfrac{x}{2}\right) -\log \left(1+\dfrac{x}{3}\right)-\log \left(1+\dfrac{x}{4}\right)+\text{etc.}
\end{alignat*}
Taking the exponentials and applying the functional equation $\Gamma(x+1)=x\Gamma(x)$, we arrive at Newman's formula which we saw above already

\begin{equation*}
    \dfrac{1}{\Gamma(z)}= ze^{\gamma z}\left[\left(1+z\right)e^{-z}\right]\left[\left(1+\dfrac{z}{2}\right)e^{-\frac{z}{2}}\right]\cdots.
\end{equation*}
As a second example, we mention the Taylor series expansion of $\log \Gamma(x+1)$:

\begin{equation*}
    \log \Gamma(x+1) = - \gamma x + \dfrac{\zeta(2)}{2}x^2 -  \dfrac{\zeta(3)}{3}x^3 +  \dfrac{\zeta(4)}{4}x^4 - \text{etc.}
\end{equation*}
where

\begin{equation*}
    \zeta(z)= \sum_{k=1}^{\infty} \dfrac{1}{k^z}.
\end{equation*}
A proof by Euler can be found in chapter 16 and 17 of the second part of \cite{E212}.\\[2mm]

6. \textit{Reflection formula} 

\begin{equation*}
    \dfrac{\pi}{\sin (\pi x)}= \Gamma(x)\Gamma(1-x).
\end{equation*}
He used it in his investigations on the $\zeta$-function in \cite{E352}, but provided a proof based on the product formula of the $\Gamma-$function and the sine product formula, discovered in \cite{E41} but proved later in \cite{E61}, just in \cite{E421}. Below, in section \ref{subsubsec: Euler's Proof of the Reflection Formula}, we will present this proof  and also discuss Euler's proof of the sine product formula in section \ref{subsubsec: Euler's Proof of the Sine Product Formula}.\\[2mm]

7. \textit{Relation among the $\Gamma-$ and $B-$function}:

\begin{equation*}
    B(x,y) = \dfrac{\Gamma(x)\Gamma(y)}{\Gamma(x+y)}.
\end{equation*}
Euler stated this formula on several occasions, e.g. \cite{E421}, \cite{E816}, \cite{E662}. But he never gave a satisfactory proof. \\[2mm]

8. \textit{Gau\ss ian multiplication formula}:

\begin{equation*}
    \Gamma \left(\dfrac{x}{n}\right)\Gamma \left(\dfrac{x+1}{n}\right)\cdots \Gamma \left(\dfrac{x+n-1}{n}\right)= \dfrac{(2\pi)^{\frac{n-1}{2}}}{n^{x-\frac{1}{2}}}\cdot \Gamma(x)
\end{equation*}
Euler did not state it in the above form, but, in \cite{E421}, he gave the formula:

\begin{equation*}
\left[ \frac{m}{n} \right] = \frac{m}{n} \sqrt[n]{n^{n-m}\cdot 1 \cdot 2 \cdot 3 \cdots (m-1) \left(\frac{1}{m}\right)\left(\frac{2}{m}\right)\left(\frac{3}{m}\right)\cdots \left(\frac{n-1}{m}\right)}.
\end{equation*}
Here is the explanation of Euler's notation from \cite{E421}:

\begin{equation*}
    [x]=x!= \Gamma(x+1) \quad \text{and} \quad  \left(\dfrac{p}{q}\right)= \dfrac{1}{n}\int\limits_{0}^{1}dx x^{\frac{p}{n}-1}(1-x)^{\frac{q}{n}-1}= \frac{1}{n}B \left(\frac{p}{n}, \frac{q}{n}\right). 
\end{equation*}
In section \ref{subsec: Multiplication Formula}, we will show that this formula is equivalent to the Gau\ss ian formula. Euler did not prove this formula and only obtains it heuristically by pattern recognition. \\[2mm]

9. \textit{Expression of $\Gamma \left(\frac{p}{q}\right)$ in terms of integrals of algebraic functions}: \\ In \cite{E19} and \cite{E122}, Euler gave the formula

\begin{equation*}
\int\limits_{0}^{1} \left(-\log x\right)^{\frac{p}{q}}dx =\sqrt[q]{1 \cdot 2 \cdot 3 \cdots p\left(\dfrac{2p}{q}+1\right)\left(\dfrac{3p}{q}+1\right)\left(\dfrac{4p}{q}+1\right)\cdots \left(\dfrac{qp}{q}+1\right)}
\end{equation*}
\begin{equation*}
\times \sqrt[q]{\int\limits_{0}^{1} dx(x-xx)^{\frac{p}{q}} \cdot \int\limits_{0}^{1} dx(x^2-x^3)^{\frac{p}{q}} \cdot \int\limits_{0}^{1} dx(x^3-x^4)^{\frac{p}{q}} \cdot \int\limits_{0}^{1} dx(x^4-x^5)^{\frac{p}{q}} \cdots \int\limits_{0}^{1} dx(x^{q-1}-x^q)^{\frac{p}{q}}}.
\end{equation*}

\newpage

\section{Short modern Introduction to the $\Gamma$-Function}
\label{sec: Short modern Introduction to the Gamma-Function}

We briefly mention the modern definition of $\Gamma(x)$ following  \cite{Fr06} (pp. 194-197). We start from the integral representation and derive the characteristic properties from it. Furthermore, we will obtain two equivalent characterisations of the $\Gamma$-function, based on Wielandt's theorem in section \ref{subsubsec: Wielandt's Theorem} and the Bohr-Mollerup theorem in section \ref{subsubsec: Bohr-Mollerup Theorem}.

\subsection{Definition and simple Properties}
\label{subsec: Definition and simple Properties}

\subsubsection{Definition}
\label{subsubsec: Definition}

\begin{definition}[$\Gamma$-integral]
We define the $\Gamma$-function as a function in the complex plane as the following integral

\begin{equation*}
    \Gamma(z) := \int\limits_{0}^{\infty} t^{z-1}e^{-t}dt.
\end{equation*}
Here $t^{z-1}:= e^{(z-1)\log (t)}$, $\log t \in \mathbb{R}$, $\operatorname{Re}(z) >0$.
\end{definition}
We have the following simple theorem:

\begin{theorem}
The $\Gamma$-integral

\begin{equation*}
    \Gamma(z) := \int\limits_{0}^{\infty} t^{z-1}e^{-t}dt
\end{equation*}
converges absolutely for $\operatorname{Re}(z) >0$ and represents an analytic function on the domain. The derivatives are given (for $k \in \mathbb{N}$) by

\begin{equation*}
    \Gamma^{(k)}(z) = \int\limits_{0}^{\infty} t^{z-1}(\log t)^{k}e^{-t}dt.
\end{equation*}
\end{theorem}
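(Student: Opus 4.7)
The plan is to establish the three claims in order, using only bounds on $|t^{z-1}e^{-t}|$ together with standard convergence theorems. First I would verify absolute convergence by splitting the integral at $t=1$. On $(0,1]$ the pointwise bound $|t^{z-1}e^{-t}|\le t^{\operatorname{Re}(z)-1}$ makes the integral finite precisely when $\operatorname{Re}(z)>0$, and on $[1,\infty)$ the exponential dominates: for any fixed $x=\operatorname{Re}(z)$ the product $t^{x-1}e^{-t/2}$ is bounded, so that the integrand decays at least like $e^{-t/2}$. These estimates are locally uniform in $z$ on every vertical strip $S_{a,b}:=\{z:0<a\le\operatorname{Re}(z)\le b\}$ and will serve both the analyticity argument and the derivative computation.

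For analyticity I would work on a fixed strip $S_{a,b}$ and consider the truncated integrals $\Gamma_n(z):=\int_{1/n}^{n}t^{z-1}e^{-t}\,dt$. Each $\Gamma_n$ is analytic on $S_{a,b}$: the integrand is jointly continuous in $(t,z)$ and, for each fixed $t>0$, entire in $z$, so Fubini (interchanging the $t$-integral with a contour integral around any triangle in $S_{a,b}$) together with Morera's theorem gives analyticity of $\Gamma_n$. The bounds from the previous step show that $\Gamma_n\to\Gamma$ uniformly on $S_{a,b}$, and Weierstrass' convergence theorem then yields analyticity of $\Gamma$ on $S_{a,b}$; since $a,b$ are arbitrary, this gives analyticity on the full right half-plane.

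For the derivative formula I would proceed by induction on $k$, differentiating under the integral sign and justifying the interchange via dominated convergence. The bound $|\partial_z^{k}(t^{z-1}e^{-t})|=|\log t|^k t^{\operatorname{Re}(z)-1}e^{-t}$ admits an integrable majorant on each strip $S_{a,b}$: near $0$ one absorbs $|\log t|^k$ into a slightly smaller power, using $|\log t|^k t^{a-1}\le C_{a,k}\,t^{a/2-1}$, and near $\infty$ one uses $|\log t|^k t^{b-1}e^{-t}=O(e^{-t/2})$. The main obstacle I expect is precisely this uniform domination of the logarithmic factors; once it is in hand, the standard parameter-dependent differentiation theorem delivers the stated formula for $\Gamma^{(k)}$, and the inductive step is purely formal.
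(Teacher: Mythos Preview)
Your proposal is correct and follows essentially the same route as the paper: split the integral at $t=1$, bound each piece (near $0$ by $t^{\operatorname{Re}(z)-1}$, near $\infty$ by absorbing the power into $e^{-t/2}$), show the truncations $\Gamma_n(z)=\int_{1/n}^n t^{z-1}e^{-t}\,dt$ converge locally uniformly to obtain analyticity, and then differentiate under the integral sign. You supply a bit more detail than the paper (the Morera/Fubini justification for analyticity of $\Gamma_n$ and the explicit domination of the $|\log t|^k$ factors), but the underlying argument is the same.
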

\begin{proof}
We split the $\Gamma$ integral into the two integrals

\begin{equation*}
    \Gamma(z) = \int\limits_{0}^{1} t^{z-1}e^{-t}dt +\int\limits_{1}^{\infty} t^{z-1}e^{-t}dt
\end{equation*}
and use the relation

\begin{equation*}
    \left|t^{z-1}e^{-t}\right| = t^{x-1}e^{-t}
\end{equation*}
where we wrote $x$ for $\operatorname{Re}(z)$. Let us consider both integrals separately.  In general, for each $x_0 >0$ there is a number $C>0$ with

\begin{equation*}
    t^{x-1} \leq C e^{\frac{t}{2}} \quad \forall ~ x \quad \text{with} \quad 0 < x \leq x_0 \quad ~\text{and}~ t\geq 1. 
\end{equation*}
Thus, the integral

\begin{equation*}
    \int\limits_{1}^{\infty} t^{z-1}e^{-t}dt
\end{equation*}
converges absolutely for all $z \in \mathbb{C}$.\\[2mm]
For the other integral, we use the estimate

\begin{equation*}
    \left|t^{z-1}e^{-t}\right|  < t^{x-1} \quad \text{for} ~ t >0
\end{equation*}
and the existence of the integral

\begin{equation*}
    \int\limits_{0}^{1} \dfrac{1}{t^s}dt \quad \text{for} \quad s <1.
\end{equation*}
From these estimates it follows that the sequence of functions

\begin{equation*}
    f_n(z) := \int\limits_{\frac{1}{n}}^{n} t^{z-1}e^{-t}dt
\end{equation*}
converges uniformly to $\Gamma$ for $n \rightarrow \infty$. Therefore, $\Gamma$ is an analytic function.\\
The formula for the $k-$th derivative follows from the application of the Leibniz rule (for differentiation) and then taking the limit $n \rightarrow \infty.$
\end{proof}

\subsubsection{Simple Properties}
\label{subsubsec: Simple Properties}

We have

\begin{theorem}[Elementary properties of the $\Gamma$-integral]
The $\Gamma$-function can be analytically continued to the whole complex plane except at the points

\begin{equation*}
    z \in S := \lbrace 0, -1, -2, -3, \cdots \rbrace
\end{equation*}
and at $\mathbb{C}\setminus S$ satisfies the functional equation:

\begin{equation*}
    \Gamma(z+1)=z\Gamma(z).
\end{equation*}
All singularities are poles of first order with the residues:

\begin{equation*}
    \operatorname{Res}(\Gamma, -n) = \dfrac{(-1)^n}{n!}
\end{equation*}
\end{theorem}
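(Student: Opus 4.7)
The plan is to bootstrap everything from the functional equation, which itself follows from integration by parts on the $\Gamma$-integral. First I would establish $\Gamma(z+1)=z\Gamma(z)$ for $\operatorname{Re}(z)>0$: the integrand $t^{z}e^{-t}$ is absolutely integrable on $(0,\infty)$ by the previous theorem, and integration by parts gives
\begin{equation*}
\Gamma(z+1) = \int_0^\infty t^{z}e^{-t}\,dt = \Bigl[-t^{z}e^{-t}\Bigr]_0^\infty + z\int_0^\infty t^{z-1}e^{-t}\,dt = z\,\Gamma(z),
\end{equation*}
where the boundary term vanishes at both endpoints provided $\operatorname{Re}(z)>0$.

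Next I would use this identity read backwards, $\Gamma(z)=\Gamma(z+1)/z$, as a definition on a larger half-plane. Iterating, for each $n\in\mathbb{N}$ one obtains
\begin{equation*}
\Gamma(z) \;=\; \frac{\Gamma(z+n+1)}{z(z+1)(z+2)\cdots(z+n)},
\end{equation*}
and the right-hand side is meromorphic on the half-plane $\operatorname{Re}(z)>-n-1$, analytic everywhere except at the points $\{0,-1,\ldots,-n\}$, where the denominator has simple zeros (and where, by the previous theorem, the numerator is analytic and nonzero at integer points, since $\Gamma(k+1)=k!$ for $k\ge 0$). Because each such extension agrees with the previous one on the overlap $\operatorname{Re}(z)>-n$ by construction, and because analytic continuation is unique on connected domains, letting $n\to\infty$ produces a single well-defined meromorphic function on $\mathbb{C}\setminus S$ that still satisfies $\Gamma(z+1)=z\Gamma(z)$ throughout (by the identity theorem applied to the meromorphic identity, which holds on the original half-plane).

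Finally I would read off the residues from the same formula. Fix $n\ge 0$ and pick any integer $N\ge n$ so that the representation above is valid in a neighborhood of $-n$. The factor $(z+n)$ in the denominator is the only source of the singularity at $-n$, so
\begin{equation*}
\operatorname{Res}(\Gamma,-n) \;=\; \lim_{z\to -n}(z+n)\,\Gamma(z) \;=\; \frac{\Gamma(1)}{(-n)(-n+1)\cdots(-1)\cdot 1\cdot 2\cdots (N-n)} \cdot \frac{1}{\text{(correction)}},
\end{equation*}
so it is cleanest to just use $\Gamma(z)=\Gamma(z+n+1)/[z(z+1)\cdots(z+n)]$ directly: the limit is $\Gamma(1)$ divided by $(-n)(-n+1)\cdots(-1)=(-1)^n n!$, giving $(-1)^n/n!$. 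Since this limit is finite and nonzero, the pole at $-n$ is of first order, as claimed.

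The argument is essentially routine; the only mild obstacle is being careful that the analytic continuations produced by the various iterates of the functional equation genuinely agree on their overlap, which is why I would invoke uniqueness of analytic continuation rather than trying to check compatibility by hand.
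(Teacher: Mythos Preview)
Your proof is correct and follows essentially the same route as the paper: integration by parts for the functional equation on $\operatorname{Re}(z)>0$, iteration to obtain $\Gamma(z)=\Gamma(z+n+1)/[z(z+1)\cdots(z+n)]$ as an analytic continuation, and the residue computation via $\lim_{z\to -n}(z+n)\Gamma(z)=\Gamma(1)/[(-n)(-n+1)\cdots(-1)]$. You are in fact slightly more explicit than the paper about invoking uniqueness of analytic continuation to ensure the successive extensions patch together consistently.
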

\begin{proof}
We show the functional equation first. Obviously, we have

\begin{equation*}
    \Gamma(1) = \int\limits_{0}^{\infty} e^{-t}dt = \left[-e^{-t}\right]_{0}^{\infty}=1.
\end{equation*}
By integration by parts one arrives at the functional equation 

\begin{equation*}
    \Gamma(z+1) = z \Gamma(z) \quad \text{for} \quad \operatorname{Re}(z) >0.
\end{equation*}
Using the functional equation iteratively, we find

\begin{equation*}
    \Gamma(z) = \dfrac{\Gamma(z+n+1)}{z \cdot (z+1)\cdots (z+n)}.
\end{equation*}
The right-hand side of the equation has a large domain where it can be defined, i.e.

\begin{equation*}
    \operatorname{Re}(z)> -(n+1) \quad \text{and} \quad z \neq 0, -1, -2, -3, \cdots ,-n.
\end{equation*}
Therefore, the above equation is an analytic continuation of $\Gamma$ into a larger domain. \\[2mm]
Finally, let us consider the residues. Using the functional equation, we have

\begin{equation*}
    \operatorname{Res}(\Gamma; - n) = \lim_{z \rightarrow -n}(z+n)\Gamma(z) = \dfrac{\Gamma(1)}{(-n)(-n+1)\cdots (-1)}= \dfrac{(-1)^n}{n!}.
\end{equation*}
\end{proof}

\subsection{Classification Theorems}
\label{subsec: Classification Theorems}

The $\Gamma$-function was invented by Euler to interpolate the factorial in 1738 \cite{E19}. The integral representation obviously fulfills this task, since $\Gamma(n+1)=n!$. The factorial has these two properties $0!=1$ and $n!=n(n-1)!$. Therefore, this automatically raises the question, whether the $\Gamma$-function is the only holomorphic function with $\Gamma(z+1)=z\Gamma(z)$ and $\Gamma(1)=1$. As already mentioned in section \ref{subsubsec: 6. Phase: Towards the axiomatic Introduction}, the answer to this question is no, since, e.g., 

\begin{equation*}
    f(z):= (1+\sin(2 \pi z))\Gamma(z)
\end{equation*}
also has these two properties. \\
Below we will encounter several other expressions also satisfying the functional equation and $f(1)=1$ and above we already did in section \ref{subsubsec: 6. Phase: Towards the axiomatic Introduction}. Therefore, it will be useful to have theorems that tell us immediately that the new expression is indeed the $\Gamma$-function without showing the equality to the integral representation directly.\\
This is provided by classification theorems. They state that the $\Gamma$-function can be uniquely defined by the two obvious properties $\Gamma(1)=1$ and $\Gamma(z+1)=z \Gamma(z)$ and an additional third one. We will present two theorems, Wielandt's theorem and the Bohr-Mollerup theorem. In Bourbaki \cite{Bo51}, the Bohr-Mollerup theorem is the starting point for theory of the $\Gamma$-function. There, one does not start from a specific representation.

\subsubsection{Wielandt's Theorem}
\label{subsubsec: Wielandt's Theorem}

Wielandt's Theorem is one possible characterisation of the $\Gamma$-function. Wielandt's original proof can be found in his collected papers \cite{Wi96}. Other proofs can be found, e.g., in the books \cite{Kn41} (pp. 47-49) and \cite{Fr06} (pp. 198-199) which we will present here, and in the paper \cite{Re96}. \\ 
We have:

\begin{theorem}[Wielandt's Theorem]
 Let $D \subseteq \mathbb{C}$ be a domain containing the vertical strip
 
 \begin{equation*}
     1 \leq x < 2.
 \end{equation*}
 Let $f:D \rightarrow \mathbb{C}$ be a function with the following properties:\\
 1) $f$ is bounded in the vertical strip \\
 2) We have 
 
 \begin{equation*}
     f(z+1)=zf(z) \quad \text{for} \quad z, z+1 \in D
 \end{equation*}
 Then we have:
 
 \begin{equation*}
     f(z)= f(1)\Gamma(z) \quad \text{for} \quad z \in D.
 \end{equation*}
\end{theorem}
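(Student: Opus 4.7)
\medskip

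\textbf{Proof plan.} My plan is to form the difference $g(z) := f(z) - f(1)\Gamma(z)$, extend it to an entire function via the functional equation, and then force $g \equiv 0$ by combining Liouville's theorem with a reflection trick. This avoids having to relate $f$ directly to any specific representation of $\Gamma$.

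First I would note that $g$ inherits the recurrence $g(z+1) = zg(z)$ from both $f$ and $\Gamma$, and satisfies $g(1) = f(1) - f(1)\Gamma(1) = 0$. Iterating forwards gives $g(n) = 0$ for every positive integer $n$. Rewriting the recurrence as $g(z) = g(z+1)/z$ extends $g$ inductively into $\operatorname{Re}(z) \le 0$; the only candidate singularities are at $z = 0, -1, -2, \ldots$, but at each such point the numerator carries an appropriate-order zero (since $g$ vanishes at the positive integers), so every singularity is removable and the extended $g$ is entire. By the identity theorem on the connected set $D$ this extension agrees with $f - f(1)\Gamma$ wherever the latter is defined.

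Next I would establish boundedness of $g$ on the closed strip $0 \le \operatorname{Re}(z) \le 1$. On $1 \le \operatorname{Re}(z) < 2$, hypothesis (1) together with the standard estimate $|\Gamma(z)| \le \Gamma(\operatorname{Re}(z)) \le 1$ yields $|g(z)| \le M$ for some constant $M$. For $0 \le \operatorname{Re}(z) \le 1$ with $|z| \ge 1$, the relation $g(z) = g(z+1)/z$ gives $|g(z)| \le M$, and on the compact remainder $g$ is bounded by continuity. The key construction is then
\[
H(z) := g(z)\,g(1-z).
\]
Using $g(w+1) = wg(w)$ with $w = -z$ one obtains $g(-z) = -g(1-z)/z$, and a short computation yields
\[
H(z+1) = g(z+1)\,g(-z) = zg(z)\cdot\bigl(-g(1-z)/z\bigr) = -H(z),
\]
so $H$ is entire and of period $2$. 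Since $g$ is bounded on the strip $0 \le \operatorname{Re}(z) \le 1$, so is $g(1-z)$ (the strip is symmetric under $z \mapsto 1-z$), and hence $H$ is bounded on a fundamental strip of width $2$. Periodicity then forces $H$ bounded on all of $\mathbb{C}$.

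By Liouville, $H$ is constant; evaluating at $z = 1$ gives $H(1) = g(1)g(0) = 0$, so $H \equiv 0$. Because $g$ is entire, its zero set is either all of $\mathbb{C}$ or discrete, and a discrete set together with its image under $z \mapsto 1-z$ cannot cover $\mathbb{C}$; hence $g \equiv 0$, which is the desired conclusion $f = f(1)\Gamma$ on $D$. The main obstacle is exactly the reflection trick: plain boundedness of $g$ on a single vertical strip is not enough for Liouville, because the recurrence forces $|g(z)|$ to grow like a polynomial as $\operatorname{Re}(z) \to +\infty$; the symmetric product $g(z)g(1-z)$ is the device that converts this one-sided growth into genuine two-sided boundedness via the antiperiod relation $H(z+1) = -H(z)$.
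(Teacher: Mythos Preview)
Your proof is correct and follows essentially the same route as the paper: form the difference $g = f - f(1)\Gamma$, show it is entire, bound it on the strip $0 \le \operatorname{Re}(z) \le 1$, then apply the reflection product $H(z) = g(z)g(1-z)$ with its anti-periodicity $H(z+1) = -H(z)$ to invoke Liouville. The only cosmetic difference is that the paper argues entirety by computing $\operatorname{Res}(f;-n) = \frac{(-1)^n}{n!}f(1)$ and matching residues with $f(1)\Gamma$, whereas you extend $g$ directly and use $g(1)=0$ to remove the singularities; these are two sides of the same coin, and your final justification that $H\equiv 0$ forces $g\equiv 0$ is actually spelled out more carefully than in the paper.
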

\begin{proof}
Applying the functional equation, it is easily seen that the function $f$ can be analytically continued to the whole complex plane except at the points:

\begin{equation*}
z \in S = \lbrace 0,-1,-2, -3, \cdots\rbrace    
\end{equation*}
and satisfies

\begin{equation*}
    f(z+1)=zf(z).
\end{equation*}
All $z \in S$ are either poles of first order or removable singularities, and we have:

\begin{equation*}
    \operatorname{Res}(f;-n)= \dfrac{(-1)^n}{n!}f(1).
\end{equation*}
Therefore, the function $h(z):= f(z)-f(1)\Gamma(z)$ is an entire function. Furthermore, it is bounded in the vertical strip $0 \leq x \leq 1$, which follows immediately from the boundedness in the strip $1 \leq x <2$ and the functional equation for $|\operatorname{Im}(z)| \geq 1$. The domain $|\operatorname{Im}(z)| \leq 1$, $0 \leq \operatorname{Re}(z) \leq 1$ is compact.\\
We want to use Liouville's theorem and observe that from the functional equation for $h$, i.e. $h(z)z = h(z+1)$, if we define

\begin{equation*}
H(z) := h(z)h(1-z),    
\end{equation*}
we find $H(z+1)=-H(z)$. But the strip $0 \leq x \leq 1$ is not changed under the transformation $z \rightarrow 1-z$. Thus, $H$ is bounded on this strip and, because of the periodicity, it is bounded on $\mathbb{C}$. Therefore, Liouville's theorem implies that $H$ is constant.  But $h(1)=0$, so $H=0$ and hence also $h=0$ for all $z \in \mathbb{C}$.
\end{proof}
The $\Gamma$-integral obviously satisfies all three properties. We will see this below, when we find the integral representation from the moment ansatz in section \ref{sec: Euler's direct Solution of the Equation Gamma (x+1)=xGamma(x) - The Moment-Ansatz}.

\subsubsection{Bohr-Mollerup Theorem} 
\label{subsubsec: Bohr-Mollerup Theorem}

The Bohr-Mollerup Theorem, first proved in the 1922 book \cite{Mo22}, also states that the Gamma function can be uniquely classified by three properties. In other words, aside from the two obvious ones $\Gamma(x+1)=x\Gamma(x)$, $\Gamma(1)=1$, we, as in the case of Wielandt's theorem, need one additional property. This is the so-called logarithmic convexity. For the sake of completeness, let us define convexity first and show that $\Gamma(x)$ has the property of logarithmic convexity, before we get to the theorem.

\begin{definition}[Logarithmic Convexity]
Let $X,Y$ be open subsets of the real numbers $\mathbb{R}$. Further, let $f: X \rightarrow Y$ be a function. Then, $f$ is called {convex}, if the following inequality holds:

\begin{equation*}
    f(tx +(1-t)y) \leq tf(x)+ (1-t)f(y) \quad \forall ~x,y \in X, \quad \forall t \in \left[0,1\right]
\end{equation*}
Furthermore, $f$ is called {logarithmically convex} , if $\log f(x)$ is convex.  
\end{definition}
Let us state a theorem which can be used if $f$ additionally is twice continuously differentiable. 

\begin{theorem}
 If the second derivative of a twice continuously differentiable function is always $\geq 0$ in the interval $(a,b)$, then the function $f$ is convex in this interval. The converse of this theorem is also true.
\end{theorem}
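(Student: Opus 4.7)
The plan is to handle the two implications separately and to exploit the mean value theorem in one direction and a symmetric difference quotient in the other. Throughout, I let $f\in C^2((a,b))$.

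For the forward direction, assume $f''(s)\ge 0$ on $(a,b)$, so that $f'$ is non-decreasing. Fix $x,y\in(a,b)$ with $x<y$ and $t\in[0,1]$; set $z:=tx+(1-t)y$, noting $z-x=(1-t)(y-x)$ and $y-z=t(y-x)$. First I would apply the mean value theorem to $f$ on the intervals $[x,z]$ and $[z,y]$, producing $\xi_1\in(x,z)$ and $\xi_2\in(z,y)$ with
\begin{equation*}
\frac{f(z)-f(x)}{z-x}=f'(\xi_1)\le f'(\xi_2)=\frac{f(y)-f(z)}{y-z},
\end{equation*}
where the middle inequality uses $\xi_1<\xi_2$ and monotonicity of $f'$. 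Multiplying through by $(z-x)(y-z)=t(1-t)(y-x)^2>0$ and simplifying yields $t(f(z)-f(x))\le(1-t)(f(y)-f(z))$, which rearranges to the convexity inequality $f(tx+(1-t)y)\le tf(x)+(1-t)f(y)$. The degenerate cases $t\in\{0,1\}$ are immediate, and $x=y$ is trivial.

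For the converse, suppose $f$ is convex on $(a,b)$ and fix $x_0\in(a,b)$. Take $h>0$ small enough that $x_0\pm h\in(a,b)$. Applying the convexity inequality with $t=\tfrac12$ to the endpoints $x_0-h$ and $x_0+h$ gives
\begin{equation*}
f(x_0)\le \tfrac{1}{2}\bigl(f(x_0-h)+f(x_0+h)\bigr),
\end{equation*}
i.e.\ $f(x_0+h)-2f(x_0)+f(x_0-h)\ge 0$. Dividing by $h^2$ and passing to the limit $h\to 0^+$, I would invoke the standard identity
\begin{equation*}
\lim_{h\to 0}\frac{f(x_0+h)-2f(x_0)+f(x_0-h)}{h^2}=f''(x_0),
\end{equation*}
which is valid because $f\in C^2$ (a two-term Taylor expansion with remainder gives it directly, or one may apply L'Hôpital twice using continuity of $f''$). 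Since the left-hand side is a limit of non-negative quantities, $f''(x_0)\ge 0$, as desired.

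The main obstacle, and the only step requiring any care, is the symmetric second-difference identity in the converse: one must ensure the remainder terms from Taylor's formula truly vanish in the limit, which is where the hypothesis of continuous (not merely pointwise) second derivative is essential. Everything else is bookkeeping around the mean value theorem and the definition of convexity.
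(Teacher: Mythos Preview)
Your proof is correct in both directions. The paper does not actually supply a proof of this theorem; it simply remarks that the result is standard and refers the reader to Artin's book and to general analysis texts, so there is nothing in the paper to compare your argument against.
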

The proof can be found in every book on analysis of one variable, one can also find a proof in \cite{Ar15} (pp. 6-7). We will need the following corollary.

\begin{corollary}
If $f: \mathbb{R} \rightarrow \mathbb{R}$ is twice continuously differentiable and the following inequalities are satisfied for all $x\in (a,b)$

\begin{equation*}
    f(x) >0, \quad f(x)f''(x) -(f'(x))^2 \geq 0,
    \end{equation*}
    then $\log f$ is convex, i.e. $f$ is logarithmically convex, in this interval.
\end{corollary}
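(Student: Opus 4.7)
The plan is to reduce the corollary directly to the preceding convexity theorem by computing the second derivative of $\log f$ and verifying it is non-negative under the stated hypotheses. Since this is an immediate application, the work is purely in the derivative computation and sign checking; there is no real obstacle.

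First I would observe that because $f(x) > 0$ throughout $(a,b)$ and $f$ is twice continuously differentiable, the composition $\log f$ is well-defined on $(a,b)$ and twice continuously differentiable (the logarithm is smooth on the positive reals). This justifies applying the preceding theorem to $\log f$.

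Next I would apply the chain rule to get
\begin{equation*}
(\log f)'(x) = \frac{f'(x)}{f(x)},
\end{equation*}
and then the quotient rule to obtain
\begin{equation*}
(\log f)''(x) = \frac{f''(x) f(x) - (f'(x))^2}{f(x)^2}.
\end{equation*}
By hypothesis, the numerator $f(x)f''(x) - (f'(x))^2$ is $\geq 0$ and the denominator $f(x)^2$ is strictly positive, so $(\log f)''(x) \geq 0$ on $(a,b)$.

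Finally, invoking the preceding theorem (twice continuously differentiable functions with non-negative second derivative on an interval are convex on that interval), we conclude that $\log f$ is convex on $(a,b)$, which by definition means $f$ is logarithmically convex on $(a,b)$. The main (and only) subtlety worth flagging is that the hypothesis $f > 0$ plays a double role: it ensures $\log f$ is defined and it makes the denominator $f^2$ strictly positive, so that the sign of $(\log f)''$ is governed entirely by the numerator appearing in the corollary's assumption.
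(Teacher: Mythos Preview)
Your proof is correct and takes exactly the approach the paper intends: the paper's entire proof is the one-line remark ``For a proof one just has to apply the previous theorem to $\log f$,'' and your argument is precisely the natural expansion of that instruction.
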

For a proof one just has to apply the previous theorem to $\log f$. Further, we have

\begin{corollary}
The sum of two logarithmically convex function is also logarithmically convex.
\end{corollary}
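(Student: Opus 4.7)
The plan is to show the equivalent statement that $\log(f+g)$ is convex, given that both $\log f$ and $\log g$ are convex. Fix $x,y$ in the common domain and $t\in[0,1]$, and write $z_t = tx+(1-t)y$. From the convexity of $\log f$ and $\log g$ one has the multiplicative inequalities
\begin{equation*}
f(z_t) \le f(x)^t f(y)^{1-t}, \qquad g(z_t) \le g(x)^t g(y)^{1-t}.
\end{equation*}
Adding these gives an upper bound on $(f+g)(z_t)$, but the bound is not yet in the form $(f+g)(x)^t(f+g)(y)^{1-t}$ that one needs to conclude logarithmic convexity of the sum.

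The decisive step will be to apply Hölder's inequality with conjugate exponents $p=1/t$ and $q=1/(1-t)$ to the two pairs $\bigl(f(x)^t, g(x)^t\bigr)$ and $\bigl(f(y)^{1-t}, g(y)^{1-t}\bigr)$. The Hölder bound reads
\begin{equation*}
f(x)^t f(y)^{1-t} + g(x)^t g(y)^{1-t} \le \bigl(f(x)+g(x)\bigr)^t \bigl(f(y)+g(y)\bigr)^{1-t},
\end{equation*}
because $\bigl(f(x)^t\bigr)^{1/t}=f(x)$ and similarly for the other three terms. Combining with the previous step yields
\begin{equation*}
(f+g)(z_t) \le (f+g)(x)^t (f+g)(y)^{1-t},
\end{equation*}
and taking logarithms gives exactly the convexity inequality for $\log(f+g)$.

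The main obstacle is locating the correct application of Hölder — once one identifies that the exponents $t$ and $1-t$ already sit in the right places to let $p=1/t$, $q=1/(1-t)$ do the work, the proof is essentially immediate. The edge cases $t=0$ and $t=1$ are trivial (the inequality becomes an equality), and no further regularity hypotheses on $f,g$ beyond positivity are needed, so the statement of the corollary follows at once.
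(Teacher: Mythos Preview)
Your proof is correct. The paper does not actually give its own proof of this corollary; it simply states ``We will not prove this statement here. For a proof the reader is referred to \cite{Ar15}.'' Your argument via H\"older's inequality with exponents $p=1/t$, $q=1/(1-t)$ is the standard one (and is essentially the proof found in Artin's book that the paper cites), so there is nothing further to compare.
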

We will not prove this statement here. For a proof  the reader is referred to \cite{Ar15}. Instead, we want to go over to the logarithmic convexity of $\Gamma$. For this, consider $f(t,x)$, continuous in both variables $x$ and $t$. Let $a \leq t \leq b$ and $x$ live in another interval. If $f(t,x)$ now is  logarithmically convex for all $t$ and twice continuously differentiable with respect to $x$, define:

\begin{equation*}
    F_n(x) = h \left\lbrace f(a,x)+f(a+h,x)+f(a+2h,x)+ \cdots + f(a+(n-1)h,x), \quad h= \dfrac{b-a}{n} \right\rbrace
\end{equation*}
Then, $F_n(x)$ is also logarithmically convex for all $n \in \mathbb{N}$. Therefore, also

\begin{equation*}
    \lim_{n \rightarrow \infty}F_n(x) = \int\limits_{a}^{b}f(t,x)dt
\end{equation*}
is logarithmically convex. This also holds for improper integrals, if the integral exists. Therefore, we  have:

\begin{theorem}
The $\Gamma$- function, given as

\begin{equation*}
    \int\limits_{0}^{\infty} t^{x-1}e^{-t}dt,
\end{equation*}
is logarithmically convex for $x >0$.
\end{theorem}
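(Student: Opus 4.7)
My plan is to exploit the preparation assembled just above the theorem: the preceding discussion reduces log-convexity of the integral to log-convexity of the integrand $f(t,x)$ in $x$, uniformly in $t$. So it suffices to verify this pointwise hypothesis and then apply the chain of results (log-convexity closed under sums by the second corollary; hence closed under Riemann sums; hence, by passage to the limit, closed under integrals) to $f(t,x):=t^{x-1}e^{-t}$.

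The first step is the pointwise check. For any fixed $t>0$,
\[
\log f(t,x)=(x-1)\log t - t,
\]
which is affine and hence convex in $x$. Therefore $f(t,\cdot)$ is logarithmically convex on $(0,\infty)$ for every $t>0$. The second step is the transfer. For each bounded $[a,b]\subset(0,\infty)$ the Riemann sums $F_n(x)$ defined in the prefatory discussion are finite sums of log-convex positive functions, so log-convex by the corollary on sums; their pointwise limit $\int_a^b t^{x-1}e^{-t}\,dt$ then inherits log-convexity, since the convexity inequality for $\log$ passes through pointwise limits of positive functions. A further limit $a\to 0^{+}$, $b\to\infty$ (legitimate because the convergence of the $\Gamma$-integral was already established) delivers the improper integral $\Gamma(x)$, still logarithmically convex.

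The one place where a reader might want more care is precisely this last passage to the improper integral: one must note that positivity of the tails is preserved and that pointwise limits preserve the log-convexity inequality. To sidestep this entirely, I would offer a parallel route via the corollary on $f f''-(f')^{2}\geq 0$. The earlier differentiation-under-the-integral theorem gives
\[
\Gamma^{(k)}(x)=\int_{0}^{\infty} t^{x-1}(\log t)^{k}e^{-t}\,dt,
\]
and the Cauchy--Schwarz inequality applied to the pair $\sqrt{t^{x-1}e^{-t}}$ and $(\log t)\sqrt{t^{x-1}e^{-t}}$ yields
\[
(\Gamma'(x))^{2}\leq \Gamma(x)\,\Gamma''(x).
\]
Since $\Gamma(x)>0$ for $x>0$, the corollary immediately gives logarithmic convexity of $\Gamma$. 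I expect the exposition to prefer the first route because it fits the theme of the paragraph, but the Cauchy--Schwarz argument is perhaps the cleanest self-contained proof and is the one I would keep in mind as a safety net.
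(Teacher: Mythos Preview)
Your first route is exactly the argument the paper intends: the discussion preceding the theorem sets up precisely the ``log-convexity is closed under sums, hence Riemann sums, hence integrals (including improper ones)'' mechanism, and the theorem is then meant to follow by applying it to $f(t,x)=t^{x-1}e^{-t}$. You supply the one detail the paper leaves implicit, namely that $\log f(t,x)=(x-1)\log t - t$ is affine in $x$, and your handling of the improper limit matches the paper's remark that ``this also holds for improper integrals, if the integral exists.'' Your Cauchy--Schwarz alternative via $(\Gamma')^{2}\le\Gamma\,\Gamma''$ is a nice self-contained backup that the paper does not mention.
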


Now, having mentioned all this in advance, we can finally state the Bohr-Mollerup theorem.

\begin{theorem}[Bohr-Mollerup Theorem]
If a function $f: \mathbb{R}^+ \rightarrow \mathbb{R}$ satisfies the  three properties\\
1) $f(x+1)=xf(x)$\\
2) $f$ is logarithmically convex on the whole domain where it is defined \\
3) $f(1)=1$,\\
it is identical to the $\Gamma$-function in the region where it is defined.
\end{theorem}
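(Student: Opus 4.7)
The plan is to squeeze $f(x)$ between two explicit sequences on the fundamental strip $(0,1]$, conclude that $f$ agrees there with $\Gamma$ (which satisfies the same three hypotheses, hence is squeezed between the same sequences), and then propagate the equality to all of $\mathbb{R}^+$ via the functional equation. First I would set $\varphi := \log f$ and note that by hypothesis (2) $\varphi$ is convex. Iterating (1) starting from (3) gives $f(n+1) = n!$ and hence $\varphi(n+1) = \log(n!)$ for every $n \in \mathbb{N}_0$, so $f$ already agrees with $\Gamma$ at the positive integers.

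Now I fix $x \in (0,1]$ and an integer $n \geq 1$ and perform the key slope comparison for the convex function $\varphi$ at the four ordered nodes $n < n+1 < n+1+x \leq n+2$. The standard secant inequality for convex functions yields
\begin{equation*}
\varphi(n+1) - \varphi(n) \;\leq\; \frac{\varphi(n+1+x) - \varphi(n+1)}{x} \;\leq\; \varphi(n+2) - \varphi(n+1),
\end{equation*}
which, after substituting the known values at integers, becomes $x \log n \leq \varphi(n+1+x) - \varphi(n+1) \leq x \log(n+1)$. Exponentiating and using (1) $n+1$ times to rewrite $f(n+1+x) = x(1+x)(2+x)\cdots(n+x)\, f(x)$ turns this into
\begin{equation*}
\frac{n^x \, n!}{x(1+x)(2+x)\cdots(n+x)} \;\leq\; f(x) \;\leq\; \frac{n^x \, n!}{x(1+x)(2+x)\cdots(n+x)} \cdot \left(\frac{n+1}{n}\right)^x.
\end{equation*}

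Since $\left((n+1)/n\right)^x \to 1$ as $n \to \infty$, the two bounds pinch to a common limit, so $f(x)$ must equal that limit. But $\Gamma$ satisfies (1) and (3) trivially and (2) by the theorem proved immediately above, so the identical squeeze applies to $\Gamma(x)$; this forces $f(x) = \Gamma(x)$ for every $x \in (0,1]$, and the functional equation (1), shared by $f$ and $\Gamma$, then propagates the equality to all of $\mathbb{R}^+$. The only delicate point is arranging the slope comparison so that both endpoints of the anchor intervals land on integers (where $f$ is already known) while still sandwiching the unknown increment $\varphi(n+1+x)-\varphi(n+1)$; once the right secant inequality is in place, the remainder is bookkeeping, and as a bonus the argument recovers the Gauss-type product representation of $\Gamma$ on $(0,1]$.
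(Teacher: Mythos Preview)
Your proof is correct and follows essentially the same route as the paper: a convexity secant inequality at integer nodes squeezes $f(x)$ between two expressions whose ratio tends to $1$, forcing $f(x)$ to equal the Gauss limit $\lim_{n\to\infty} n^x n!/\bigl(x(x+1)\cdots(x+n)\bigr)$ on $(0,1]$, after which the functional equation extends the identity. The only cosmetic difference is an index shift---you anchor at $n,\,n+1,\,n+1+x,\,n+2$ while the paper anchors at $n-1,\,n,\,n+x,\,n+1$---and you phrase the conclusion as ``$\Gamma$ is pinched by the same sequences'' whereas the paper states it as ``every such $f$ equals the common limit,'' but these are equivalent.
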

\begin{proof}
We have shown that $\Gamma$ satisfies all conditions. Therefore, let $f$ be another function with the above properties. From the functional equation we find

\begin{equation*}
    f(x+n) =(x+n-1)(x+n-2) \cdots (x+1)x f(x).
\end{equation*}
Since $f(1)=1$ we have $f(n)=\Gamma(n)~ \forall n \in \mathbb{N}$. We only need to show $f=\Gamma$ for the interval $0 < x \leq 1$, because of the functional equation. Thus, let $x$ be a number in that  interval and $n$ a natural number $\geq 2$. Then, we have the following inequality

\begin{equation*}
    \dfrac{\log (f(-1+n))-\log (f(n))}{(-1+n)-n} \leq \dfrac{\log (f(x+n))- \log (f(n))}{(x+n)-n} \leq \dfrac{\log (f(1+n))- \log (f(n))}{(1+n)-n}
\end{equation*}
which follows from the logarithmic convexity. We can simplify the last equation:

\begin{equation*}
    \log (n-1) \leq \dfrac{\log (f(x+n))- \log (f(n))}{(x+n)-n} \leq \log n
\end{equation*}
or

\begin{equation*}
  \log ((n-1)^x(n-1)!) \leq f(x+n) \leq \log (n^x(n-1)!).  
\end{equation*}
Using the above equation for $f(x+n)$:

\begin{equation*}
    \dfrac{(n-1)^x(n-1)!}{x(x+1)\cdots (x+n-1)} \leq f(x) \leq \dfrac{n^x(n-1)!}{x(x+1)\cdots (x+n-1)}= \dfrac{n^xn!}{x(x+1)\cdots (x+n)}\cdot \dfrac{x+n}{n}. 
\end{equation*}
Since we assumed $n \geq 2$, we can replace $n$ by $n+1$ and find:

\begin{equation*}
    \dfrac{n^xn!}{x(x+1)\cdots (x+n)}\leq f(x) \leq \dfrac{n^x n!}{x(x+1) \cdots (x+n)}\cdot \dfrac{x+n}{n}.
\end{equation*}
Therefore,

\begin{equation*}
    f(x)\dfrac{n}{n+x} \leq \dfrac{n^x n!}{x(x+1) \cdots (x+n)} \leq f(x).
\end{equation*}
Taking the limit $n \rightarrow \infty$:

\begin{equation*}
    f(x) = \lim_{n \rightarrow \infty} \dfrac{n^x n!}{x(x+1) \cdots (x+n)}.
\end{equation*}
Since the function $f$ only had to satisfy the three conditions in the theorem and was arbitrary otherwise, we conclude $f(x)= \Gamma(x)$.
\end{proof}
We have the following corollary:

\begin{corollary}
\begin{equation*}
    \Gamma(x) = \lim_{n \rightarrow \infty} \dfrac{n^x n!}{x(x+1) \cdots (x+n)}.
\end{equation*}
\end{corollary}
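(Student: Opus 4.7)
The plan is to apply the Bohr-Mollerup theorem directly with $f = \Gamma$, rather than redo any analysis from scratch. The reason this works instantly is that the proof of Bohr-Mollerup, after using log-convexity and the functional equation to squeeze $f(x)$ between $n^x n!/[x(x+1)\cdots(x+n)]$ and the same quantity multiplied by $(x+n)/n$, concluded with the explicit identity
$$f(x) = \lim_{n\to\infty} \frac{n^x n!}{x(x+1)\cdots(x+n)}$$
for \emph{every} $f$ satisfying the three stated hypotheses. So all I have to do is verify that $\Gamma$ itself satisfies those hypotheses, and the corollary falls out as a specialisation.

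The three checks are bookkeeping from earlier in the section. First, $\Gamma(1) = \int_0^\infty e^{-t}\,dt = 1$ is immediate. Second, the functional equation $\Gamma(x+1) = x\Gamma(x)$ was established in the elementary-properties theorem via integration by parts. Third, the logarithmic convexity of $\Gamma$ on $(0,\infty)$ is the statement of the theorem immediately preceding the Bohr-Mollerup statement, obtained by applying the corollary about $f f'' - (f')^2 \geq 0$ to the integrand $t^{x-1}e^{-t}$ and then passing the property through the improper integral. With these three items in hand, the hypotheses of Bohr-Mollerup are met by the restriction of $\Gamma$ to $\mathbb{R}^+$, and the theorem forces $\Gamma$ to equal the displayed limit there.

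The only ``obstacle'' is to notice that no obstacle exists: the real content of the corollary was already proved inside the Bohr-Mollerup argument, and invoking the theorem with $f = \Gamma$ amounts to one or two lines. If one wished to extend the identity beyond positive reals, analytic continuation would be available, since both sides are meromorphic on $\mathbb{C}\setminus\{0,-1,-2,\dots\}$ (the right side by inspection of the finite product in the denominator, and the left by the analytic-continuation statement in the elementary-properties theorem), so agreement on $\mathbb{R}^+$ together with the identity theorem would propagate the equality to the full common domain. But as stated, the corollary lives on the same half-line $x > 0$ as its parent theorem, so this extension is optional rather than necessary.
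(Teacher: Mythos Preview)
Your proposal is correct and matches the paper's approach exactly: the paper gives no separate proof for this corollary at all, merely remarking that ``it follows directly from the proof'' of the Bohr--Mollerup theorem, which is precisely what you have spelled out. Your observation that the limit formula was derived inside the Bohr--Mollerup argument for \emph{any} admissible $f$, and that the three hypotheses for $\Gamma$ were verified in the preceding pages, is the whole content.
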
We will find other ways to get to this product representation below. It is interesting that it follows directly from the proof. Additionally, we already pointed out in section \ref{subsubsec: 2. Phase: Gauss's Investigations} that Gau\ss{} in 1812 \cite{Ga28} used the last corollary as a definition for the $\Gamma$-function.  However, we already saw in section \ref{subsec: More detailed Discussion on the Discovery of the Gamma-function} that this product formula had already been discovered by Euler in \cite{Eu29}.

\newpage

\section{Solution of the Difference Equation $F(x+1)=xF(x)$}
\label{sec: Solution of the difference equation F(x+1)=xF(x)}

In this section we will solve the functional equation in general and from there descend to the $\Gamma$-function. This will lead us to the Weierstra\ss{} product expansion of the $\Gamma$-function. Our exposition follows \cite{Ni05}.

\subsection{Weierstra\ss's Definition of the $\Gamma$-function}
\label{subsubsec: Weierstrass's Definition of the Gamma-function}

It was Weierstra\ss's \cite{We56} idea to define the $\Gamma$-function as solution of the difference equation

\begin{equation*}
F(x+1)=xF(x)    
\end{equation*}
with the additional condition\footnote{Weierstra\ss{} added the condition $F(1)=1$ which, however, is not necessary to define the $\Gamma$-function uniquely.}

\begin{equation*}
    \lim_{n \rightarrow \infty} \dfrac{F(x+n)}{(n-1)!n^x}=1.
\end{equation*}
The above condition, as we will see soon,  excludes solutions of the form $\Gamma(x)p(x)$ with $p$ a periodic function with period $1$.\\[2mm]
Anyhow, in this section we want to solve the difference equation in general and want to show that it indeed defines the $\Gamma$-function as claimed.

\subsection{A Remark concerning the Solution of the Difference Equation}
\label{subsec: A Remark concerning the Solution of the Difference Equation}

Let us begin with the following remark:

\begin{remark}
In order to solve  the difference equation $F(x+1)=xF(x)$, we essentially only need one particular solution.
\end{remark}

\begin{proof}
For, let $F_1(x)$ and $F_2(x)$ be two particular solutions of the difference equation.  Then, one has

\begin{equation*}
    \dfrac{F_2(x+1)}{F_1(x+1)}=\dfrac{xF_2(x)}{xF_1(x)}=\dfrac{F_2(x)}{F_1(x)},
\end{equation*}
i.e. the quotient of the two solutions is  a periodic function with period $+1$. In other words, if $F_1(x)$ is a solution of the difference equation, then every other solution $F_2(x)$ is connected to it by

\begin{equation*}
    F_2(x)=\omega(x)F_1(x) \quad \text{with} \quad \omega(x+1)=\omega(x).
\end{equation*}
\end{proof}

\subsection{General Solution of the Equation $F(x+1)=xF(x)$}
\label{subsec: General Solution of the Equation F(x+1)=xF(x)}

\subsubsection{Introduction of an Auxiliary Function}
\label{subsubsec: Introduction of an Auxiliary Function}

Keeping the remark of the previous section in mind, we can now proceed to find a solution of the difference equation and show that the $\Gamma$-function is actually the only one. For this aim, let us introduce the following function:

\begin{definition}
We define a function\footnote{We will meet this function again in section \ref{subsubsec: Harmonic Series}, when we talk about Euler's ideas on interpolation of so-called inexplicable functions, a term he coined in 1755 in chapter 16 of \cite{E212}.} $\Sigma: \mathbb{C}\setminus\lbrace 0,-1,-2, -3, \cdots \rbrace \rightarrow \mathbb{C}$ by the sum

\begin{equation*}
    \Sigma(x) := \sum_{s=0}^{\infty} \left(\dfrac{1}{s+1}-\dfrac{1}{x+s}\right).
\end{equation*}
\end{definition}
 This series is easily seen to converge uniformly. Thus, we are allowed to integrate it term by term with respect to $x$, provided the path of integration is of finite length and does not pass through any of the poles.\\[2mm]
Furthermore, we have

\begin{theorem}
$\Sigma$  satisfies the functional equation

\begin{equation*}
    \Sigma(x+1)= \Sigma(x)+\dfrac{1}{x}.
\end{equation*}
\end{theorem}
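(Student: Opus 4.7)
The plan is to verify the functional equation directly by manipulating the defining series; the key observation is that the difference $\Sigma(x+1)-\Sigma(x)$ is a telescoping series whose limit equals $1/x$.

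More concretely, the cleanest approach is to work with partial sums rather than manipulating the infinite series formally. I would define
\[
\Sigma_N(x):=\sum_{s=0}^{N}\left(\frac{1}{s+1}-\frac{1}{x+s}\right),
\]
which converges to $\Sigma(x)$ by hypothesis. Then I would compute
\[
\Sigma_N(x+1)-\Sigma_N(x)=\sum_{s=0}^{N}\frac{1}{x+s}-\sum_{s=0}^{N}\frac{1}{x+1+s},
\]
since the $\frac{1}{s+1}$-parts cancel. After shifting the index in the second sum to $s'=s+1$, the right-hand side becomes the telescoping difference $\frac{1}{x}-\frac{1}{x+N+1}$.

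Finally, I would let $N\to\infty$. The term $\frac{1}{x+N+1}$ tends to $0$ (for any fixed $x$ not equal to a non-positive integer), and both $\Sigma_N(x)$ and $\Sigma_N(x+1)$ converge to $\Sigma(x)$ and $\Sigma(x+1)$ respectively, yielding $\Sigma(x+1)-\Sigma(x)=\frac{1}{x}$, which is the claimed functional equation.

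I do not expect any real obstacle here: the convergence of the defining series (asserted just before the theorem) legitimises the passage to the limit, and no analytic subtleties beyond telescoping arise. The only thing to be slightly careful about is separating the two sums before recombining, which is precisely why working with finite partial sums $\Sigma_N$ is preferable to manipulating the infinite series term by term.
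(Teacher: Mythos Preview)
Your proposal is correct and follows essentially the same approach as the paper: both compute $\Sigma(x+1)-\Sigma(x)$ and reduce it to $1/x$ via telescoping. The only difference is that you work with finite partial sums $\Sigma_N$ before passing to the limit, whereas the paper manipulates the infinite series directly; your version is slightly more careful but the underlying argument is identical.
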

\begin{proof}
Consider the difference $\Sigma(x+1)-\Sigma(x)$; it reads

\begin{equation*}
\renewcommand{\arraystretch}{2,5}
\setlength{\arraycolsep}{0.0mm}
\begin{array}{llllll}
    &\sum_{s=0}^{\infty} \left(\dfrac{1}{s+1}-\dfrac{1}{x+1+s}\right) - \sum_{s=0}^{\infty} \left(\dfrac{1}{s+1}-\dfrac{1}{x+s}\right) \\ ~=~&\sum_{s=0}^{\infty} \left(\dfrac{1}{s+1}-\dfrac{1}{x+1+s}\right)-(1-\dfrac{1}{x})- \sum_{s=0}^{\infty} \left(\dfrac{1}{s+2}-\dfrac{1}{x+1+s}\right)\\
    ~=~& 1-1+\dfrac{1}{x}=\dfrac{1}{x},
    \end{array}
\end{equation*}
since the sums involving $x$ cancel and the sums involving only $s$ are telescoping sums.
\end{proof}

\subsubsection{Product Representation of the Function $F$}
\label{subsubsec: Product Representation of the Function F}

\begin{theorem}[]
Every meromorphic function $F$ satisfying the equation $F(x+1)=xF(x)$ has a product expansion of the form

\begin{equation*}
    F(x+1)= \omega(x) \cdot \dfrac{e^{Kx}}{x}\cdot \prod_{s=1}^{\infty} \dfrac{e^{\frac{x}{s}}}{1+\frac{x}{s}},
\end{equation*}
where $K$ is a constant and $\omega : \mathbb{C} \rightarrow \mathbb{C}$ is a integrable and periodic function with period $+1$.
\end{theorem}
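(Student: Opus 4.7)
The plan is to find one explicit particular solution and then invoke the opening remark of the section, which already says that any two solutions of $F(x+1)=xF(x)$ differ by a multiplicative $1$-periodic factor. The bridge to the auxiliary function $\Sigma$ from the previous theorem is obtained by differentiating the functional equation logarithmically: from $F(x+1)=xF(x)$ one gets, away from the poles and zeros of $F$,
\begin{equation*}
\psi(x+1)-\psi(x)=\frac{1}{x},\qquad \psi:=\frac{F'}{F},
\end{equation*}
which is exactly the difference equation just established for $\Sigma$. Hence $\psi-\Sigma$ is a meromorphic $1$-periodic function, and the whole problem reduces to producing one $F_{0}$ whose logarithmic derivative is $\Sigma$ (up to an additive constant fixing the normalization).

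To build $F_{0}$ I would antidifferentiate $\Sigma$ termwise, an operation licensed by the uniform convergence noted just after the definition of $\Sigma$, obtaining
\begin{equation*}
\int_{1}^{x}\Sigma(t)\,dt=\sum_{s=0}^{\infty}\left(\frac{x-1}{s+1}-\log\frac{x+s}{s+1}\right),
\end{equation*}
and then exponentiate. After reindexing $k=s+1$ and splitting $e^{(x-1)/k}=e^{x/k}e^{-1/k}$, a telescoping computation together with the limit $H_{n}-\log n\to\gamma$ recasts the result in canonical Weierstra\ss{} form, namely $H(x)=\tfrac{e^{-\gamma}}{x}\prod_{k=1}^{\infty}e^{x/k}/(1+x/k)$. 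Integrating $\Sigma(x+1)-\Sigma(x)=1/x$ over $[x,x+1]$ (and pinning the constant at $x=1$) shows that this first candidate satisfies $H(x+1)=xe^{\gamma}H(x)$ rather than the desired functional equation; the discrepancy is killed by the exponential factor $e^{-\gamma x}$, so that $G(x):=e^{-\gamma x}H(x)$ is a genuine particular solution, already presented as an exponential times the Weierstra\ss{} product.

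Finally, the opening remark furnishes a meromorphic $1$-periodic $\omega$ with $F=\omega G$, so after absorbing the overall constant $e^{-\gamma}$ into $\omega$ one obtains the claimed representation, with the exponent $K$ equal to $-\gamma$ modulo $2\pi i\mathbb{Z}$; converting between $F(x)$ and $F(x+1)$ is then one application of the functional equation. The main obstacle I foresee is the middle step, where matching the antiderivative of $\Sigma$ to the Weierstra\ss{} factor $\prod e^{x/k}/(1+x/k)$ is where the telescoping bookkeeping lives and where the Euler--Mascheroni constant emerges and pins down the specific value of $K$; the surrounding steps are essentially formal consequences of the periodicity remark.
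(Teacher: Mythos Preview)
Your proposal is correct and follows essentially the same route as the paper: relate $\log F$ to the auxiliary function $\Sigma$ via the difference equation for the logarithmic derivative, antidifferentiate $\Sigma$ term by term, exponentiate to produce a particular solution in Weierstra\ss{} form, and then invoke the earlier remark that any two solutions differ by a $1$-periodic factor. The only organizational difference is that you carry the bookkeeping further and pin down $K=-\gamma$ inside the proof (via the telescoping $\prod(k+x)/(k+x-1)$ and $H_n-\log n\to\gamma$), whereas the paper leaves $K$ undetermined here and devotes the next subsection (``Finding the Constant $K$'') to precisely that computation; your version is arguably cleaner about how the periodic factor $\omega$ enters.
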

\begin{proof}
Let $\omega(x)$ be as above, and let it be integrable on $(0,x)$ with $x>0$. Define

\begin{equation*}
    \omega_1(x):= \int\limits_{0}^{x}\omega(x)dx.
\end{equation*}
Then $\omega_1(x)$ satisfies the functional equation:

\begin{equation*}
    \omega_1(x+1)= \omega_1(x)+K,
\end{equation*}
$K$ being a constant. Now, recalling the definition of the function $\Sigma$ from section \ref{subsubsec: Introduction of an Auxiliary Function}, by integration we find

\begin{equation*}
    \log F(x+1) = \int\limits_{0}^{x} \Sigma(x+1)dx+K\cdot (x+1)
\end{equation*}
or, equivalently substituting the series for $\Sigma$ and integrating it term by term, we have

\begin{equation*}
    \log F(x+1) = \sum_{s=0}^{\infty} \left(\dfrac{x}{s+1}-\log \left(1+\dfrac{x}{s+1}\right)\right)+ K \cdot (x+1).
\end{equation*}
Thus, taking the exponentials, we find

\begin{equation*}
    F(x) = \omega(x) \cdot \dfrac{e^{Kx}}{x} \cdot \prod_{s=1}^{\infty} \dfrac{e^{\frac{x}{s}}}{1+\frac{x}{s}}.
\end{equation*}
\end{proof}
Thus, to summarize the proof: We basically solved the simpler equation $f(x+1)-f(x)=\frac{1}{x}$ first. A particular solution is given by our function $\Sigma$. Thus, by integrating, we can then deduce the solution of $g(x+1)-g(x)=\log(x)$ and by taking the exponentials we arrive at the functional equation for $F(x)$. It is helpful to keep this in mind, since this is also basically what Euler did in 1780 in \cite{E613} and in 1755 in \cite{E212} to find the product representation of the $\Gamma$-function. In other words, this proof can easily be constructed from Euler's ideas in that paper.

\subsubsection{Finding the Constant $K$}
\label{subsubsec: Finding the constant K}

Finally, we need to find the constant $K$, which was introduced by an integration in the previous section. Hence let us introduce the sequence of functions $G_n: \mathbb{C}\setminus \lbrace{0,-1,-2, \cdots -(n-1)\rbrace} \rightarrow \mathbb{C}$ defined by

\begin{equation*}
    G_n(x):= \dfrac{e^{Kx}}{x}\prod_{s=1}^{n-1} \dfrac{e^{\frac{x}{s}}}{1+\frac{x}{s}}.
\end{equation*}
From this

\begin{equation*}
    G_n(x+1)= \dfrac{e^{Kx}\cdot e^K}{x+1}\cdot \prod_{s=1}^{n-1} \dfrac{e^{\frac{x}{s}}\cdot e^{\frac{1}{s}}}{1+\frac{x}{s+1}}\cdot \dfrac{s}{s+1}
\end{equation*}
We want to rewrite this as

\begin{equation*}
   G_n(x+1)= \dfrac{e^{Kx}\cdot e^K}{x+1}\cdot \prod_{s=1}^{n-1} \dfrac{e^{\frac{x}{s}}}{1+\frac{x}{s+1}}\cdot e^{\frac{1}{s}-\log \left(1+\frac{1}{s}\right)}.
\end{equation*}
Now using the well-known result that

\begin{equation*}
    \lim_{n \rightarrow \infty}\sum_{s=1}^{n-1} \left(\frac{1}{s}-\log \left(1+\frac{1}{s}\right)\right) =\gamma
\end{equation*}
where $\gamma$ is the Euler-Mascheroni constant, we know that the limit exists\footnote{This constant is discussed in the appendix in section \ref{subsec: gamma meets Gamma - Euler on the Euler-Mascheroni Constant}. There it is also shown that the limit exists.}. Define

\begin{equation*}
    \gamma_n = \dfrac{1}{1}+\dfrac{1}{2}+\cdots +\dfrac{1}{n}- \log n
\end{equation*}
such that also $\gamma = \lim_{n \rightarrow \infty} \gamma_n$. Then, we have

\begin{equation*}
    G_n(x)=\dfrac{e^{-\gamma_nx +\frac{x}{n}}}{x}\prod_{s=1}^{n-1}\dfrac{e^{\frac{x}{s}}}{1+\frac{x}{1+\frac{x}{s}}}
\end{equation*}
and the functional equation

\begin{equation*}
G_n(x+1)= x \cdot G_n(x) \cdot \dfrac{n}{n+x}.    
\end{equation*}
Therefore, we have:

\begin{theorem}
Let $\gamma$ be the Euler-Mascheroni constant. Further, let $\omega: \mathbb{C} \rightarrow \mathbb{C}$ be an arbitrary function satisfying $\omega(x+1)=\omega(x)$; then, the most general solution of the difference equation $F(x+1)=xF(x)$ is given by

\begin{equation*}
F(x) = \omega(x)\cdot \dfrac{e^{-\gamma x}}{x}\cdot \prod_{s=1}^{\infty} \dfrac{e^{\frac{x}{s}}}{1+\frac{x}{s}}.    
\end{equation*}
\end{theorem}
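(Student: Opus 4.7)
The previous theorem already reduces the problem: any meromorphic $F$ satisfying $F(x+1)=xF(x)$ must take the form
\[F(x)\;=\;\omega(x)\cdot\frac{e^{Kx}}{x}\cdot\prod_{s=1}^{\infty}\frac{e^{x/s}}{1+x/s}\]
with $\omega$ of period $1$ and $K$ a complex constant. So the only remaining task is to pin down $K$. My plan is to substitute this form back into the difference equation: since $\omega(x+1)=\omega(x)$, the equation $F(x+1)=xF(x)$ is equivalent, after cancelling $\omega$, to
\[\frac{e^{K}\,x}{x+1}\;\prod_{s=1}^{\infty}\frac{(s+x)\,e^{1/s}}{s+1+x}\;=\;x.\]

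The key step is to evaluate the infinite product in closed form by truncation. The rational factors telescope: $\prod_{s=1}^{N}(s+x)/(s+1+x)=(1+x)/(N+1+x)$. The exponential factors combine into $e^{H_N}$, where $H_N=1+\tfrac12+\cdots+\tfrac1N$. Using the defining property of the Euler-Mascheroni constant, $H_N=\log N+\gamma_N$ with $\gamma_N\to\gamma$, the $N$-th partial product equals $(1+x)\,N\,e^{\gamma_N}/(N+1+x)$, which tends to $(1+x)\,e^{\gamma}$ as $N\to\infty$. Feeding this back into the equation yields $x\,e^{K+\gamma}=x$, so $K+\gamma\in 2\pi i\,\mathbb{Z}$.

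Finally, I would note that shifting $K$ by an integer multiple of $2\pi i$ multiplies $e^{Kx}$ by $e^{2\pi i k x}$, which is itself $1$-periodic and can be absorbed into $\omega$. Hence one may, without loss of generality, take $K=-\gamma$ exactly, which is precisely the formula stated in the theorem; the converse direction --- that every $1$-periodic $\omega$ yields a solution --- is immediate by running the calculation backwards. The main obstacle is technical: one needs absolute and locally uniform convergence of the Weierstra\ss{} product $\prod(1+x/s)^{-1}e^{x/s}$ on compacta avoiding the nonpositive integers in order to legitimise the truncation and passage to the limit above, which follows from the standard estimate $\log(1+x/s)-x/s=O(1/s^2)$ for fixed $x$.
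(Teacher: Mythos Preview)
Your argument is correct and follows essentially the same route as the paper: truncate the Weierstra\ss{} product, let the rational factors telescope, and identify the remaining exponential via the limit defining $\gamma$. The only cosmetic difference is that the paper packages the computation as $\sum_{s=1}^{N}\bigl(\tfrac{1}{s}-\log(1+\tfrac{1}{s})\bigr)\to\gamma$ rather than $H_N-\log N\to\gamma$, which is of course the same limit; your treatment of the $2\pi i\mathbb{Z}$ ambiguity in $K$ and the absorption into $\omega$ is actually more explicit than what the paper provides.
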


\subsection{Application to $\Gamma(x)$- The Weierstra\ss{} Product Representation}
\label{subsec: Application to Gamma(x)- The Weierstrass Product Representation}

Now that we found the most general solution of the difference equation $F(x+1)=xF(x)$, we want to descend to $\Gamma(x)$ from this. This is, e.g., possible by an application of the Bohr-Mollerup theorem.\\
Doing so, we will arrive at the following theorem

\begin{theorem}[Weierstra\ss{} Product Expansion of $\Gamma(x)$]
The $\Gamma$-function has the following product expansion:

\begin{equation*}
    \Gamma(x)= \dfrac{e^{-\gamma x}}{x}\cdot \prod_{s=1}^{\infty} \dfrac{e^{\frac{x}{s}}}{1+\frac{x}{s}}.
\end{equation*}
\end{theorem}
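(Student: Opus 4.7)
The plan is to leverage both pieces of heavy machinery that the excerpt has already established. From the theorem in section \ref{subsec: General Solution of the Equation F(x+1)=xF(x)}, every meromorphic solution of the difference equation $F(x+1)=xF(x)$ has the form
\begin{equation*}
F(x) = \omega(x)\cdot P(x), \qquad P(x) := \dfrac{e^{-\gamma x}}{x}\prod_{s=1}^{\infty}\dfrac{e^{x/s}}{1+x/s},
\end{equation*}
with $\omega$ a $1$-periodic function. Since $\Gamma$ is itself such a solution, it suffices to show $\omega\equiv 1$, i.e.\ that $P(x)=\Gamma(x)$. The most direct route uses the corollary of the Bohr--Mollerup theorem proved just above:
\begin{equation*}
\Gamma(x) = \lim_{n\to\infty}\dfrac{n^{x}\,n!}{x(x+1)\cdots(x+n)}.
\end{equation*}
So the whole task reduces to recognising the $n$-th partial Weierstra\ss{} product as (asymptotically) the $n$-th Gau\ss{} partial product.

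First I would compute the $n$-th partial product explicitly. Using $\prod_{s=1}^{n}e^{x/s}=e^{xH_n}$ with $H_n=1+\tfrac12+\cdots+\tfrac1n$, and the telescoping identity $\prod_{s=1}^{n}\tfrac{s}{s+x}=\tfrac{n!}{(x+1)(x+2)\cdots(x+n)}$, one finds
\begin{equation*}
P_n(x) \;:=\; \dfrac{e^{-\gamma x}}{x}\prod_{s=1}^{n}\dfrac{e^{x/s}}{1+x/s} \;=\; \dfrac{e^{x(H_n-\gamma)}\,n!}{x(x+1)(x+2)\cdots(x+n)}.
\end{equation*}
Next I would split $H_n-\gamma=\log n+(\gamma_n-\gamma)$, where $\gamma_n:=H_n-\log n\to\gamma$ by the definition of the Euler--Mascheroni constant recalled in section \ref{subsubsec: Finding the constant K}. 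This gives $e^{x(H_n-\gamma)}=n^{x}\,e^{x(\gamma_n-\gamma)}$ with $e^{x(\gamma_n-\gamma)}\to 1$, so for $x>0$
\begin{equation*}
P(x) \;=\; \lim_{n\to\infty} P_n(x) \;=\; \lim_{n\to\infty}\dfrac{n^{x}\,n!}{x(x+1)\cdots(x+n)} \;=\; \Gamma(x).
\end{equation*}
Since both $P$ and $\Gamma$ are meromorphic on $\mathbb{C}\setminus\{0,-1,-2,\ldots\}$ and agree on $(0,\infty)$, the identity extends to the whole common domain by the identity theorem for analytic functions. Equivalently, this pins down the periodic factor $\omega$ in the general solution to be $\omega\equiv 1$.

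The main obstacle, if any, is a bookkeeping one: justifying that the infinite product converges and that the limit may be computed from the partial products. This is handled by the expansion $\tfrac{e^{x/s}}{1+x/s}=1-\tfrac{x^{2}}{2s^{2}}+O(s^{-3})$ for fixed $x$ as $s\to\infty$, which shows uniform absolute convergence on compact subsets of $\mathbb{C}\setminus\{0,-1,-2,\ldots\}$, exactly the same estimate that underlies the previous theorem. Alternatively, one could bypass the Gau\ss{} product entirely and verify directly that $P$ satisfies the three Bohr--Mollerup hypotheses: positivity on $(0,\infty)$ and $P(1)=1$ are immediate from the computation of $P_n(1)=\tfrac{e^{H_n-\gamma}\,n!}{(n+1)!}=\tfrac{e^{H_n-\gamma}}{n+1}\to 1$, while logarithmic convexity follows by differentiating the series $\log P(x)=-\gamma x-\log x+\sum_{s=1}^{\infty}\bigl(\tfrac{x}{s}-\log(1+\tfrac{x}{s})\bigr)$ twice term-by-term to obtain $(\log P)''(x)=\sum_{s=0}^{\infty}\tfrac{1}{(s+x)^{2}}>0$. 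Either route closes the argument cleanly.
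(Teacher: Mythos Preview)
Your proof is correct, and your primary route differs from the paper's. The paper verifies the three Bohr--Mollerup hypotheses directly for the Weierstra\ss{} product $P(x)$: it checks $P(1)=1$ via the identity $\sum_{s\ge 1}(\tfrac{1}{s}-\log(1+\tfrac{1}{s}))=\gamma$, invokes the general-solution theorem for the functional equation, and computes $(\log P)''(x)=\tfrac{1}{x^{2}}+\sum_{s\ge 1}\tfrac{1}{(s+x)^{2}}>0$ for logarithmic convexity. Your main argument instead rewrites the partial product $P_n(x)$ as $e^{x(\gamma_n-\gamma)}\cdot\dfrac{n^{x}n!}{x(x+1)\cdots(x+n)}$ and appeals to the Gau\ss{} limit formula (which the paper has already recorded as a corollary of Bohr--Mollerup). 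This is a bit more concrete: it makes the equivalence of the Weierstra\ss{} and Gau\ss{} representations explicit in one line, and identifies the periodic factor $\omega\equiv 1$ without re-running the convexity argument. The paper's route, by contrast, is self-contained in the sense that it does not rely on the Gau\ss{} product and illustrates Bohr--Mollerup in action. Your ``alternative'' paragraph at the end is essentially the paper's proof, so you have in fact presented both.
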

\begin{proof}
We need to check whether the three conditions in the  Bohr-Mollerup theorem are fulfilled. Therefore, let us check $\Gamma(1)=1$ first.\\[2mm]
We have

\begin{equation*}
    \Gamma(1)= e^{-\gamma} \cdot\prod_{s=1}^{\infty}\dfrac{e^{\frac{1}{s}}}{1+\frac{1}{s}}
\end{equation*}
or, by taking logarithms,

\begin{equation*}
    \log \Gamma(1) = - \gamma + \sum_{s+1}^{\infty} \left(\dfrac{1}{s}-\log \left(1+\dfrac{1}{s}\right)\right)
\end{equation*}
But, as we have seen above, the sum evaluates to $\gamma$, whence $\log \Gamma (1)=1$ or $\Gamma(1)=1$. Hence the first condition is satisfied.\\[2mm]
 The second condition of the Bohr-Mollerup Theorem, i.e. $\Gamma(x+1)=x\Gamma(x)$ is satisfied, since it solves the general difference equation $F(x+1)=xF(x)$.\\[2mm]
 Finally, let us check logarithmic convexity. Obviously, $\log \Gamma(x)$ is twice continuously differentiable, since the resulting sum converges uniformly. We find
 
 \begin{equation*}
     \dfrac{d}{dx}\log \Gamma(x) = -\gamma - \dfrac{1}{x} +\sum_{s=1}^{\infty}\left(\dfrac{1}{s}-\dfrac{1}{1+\frac{x}{s}}\cdot \frac{1}{s}\right)
 \end{equation*}
 and
 
 \begin{equation*}
     \dfrac{d^2}{dx^2} \log \Gamma(x)=\dfrac{1}{x^2}+\sum_{s=1}^{\infty} \dfrac{1}{s^2}\cdot \dfrac{1}{(1+\frac{x}{s})^2},
 \end{equation*}
 Therefore, obviously $ \dfrac{d^2}{dx^2} \log \Gamma(x) > 0 ~ \forall x>0$.\\[2mm]
 Hence the Bohr-Mollerup theorem applies and the function defined by the infinite product is indeed the familiar $\Gamma$-function.
\end{proof}

\subsection{Euler on Weierstra\ss's Condition}
\label{subsec: Euler on Weierstrass's Condition}

We want to show how Euler already arrived at the condition for the $\Gamma$-function that Weierstra\ss{} used to introduce it. Note that since we obtained that, if in the above theorem, we chose the periodic function $\omega(x)$ to be $=1$, $F(x)=\Gamma(x)$, we proved that:
\begin{theorem}

The $\Gamma$-function can also be defined by Weierstra\ss's conditions, i.e. the $\Gamma$ function is the unique meromorphic function satisfying
\begin{equation*}
    \Gamma(x+1)=x\Gamma(x)
\end{equation*}
and

\begin{equation*}
     \lim_{n \rightarrow \infty} \dfrac{\Gamma(x+n)}{(n-1)!n^x}=1. \quad \forall~x \in \mathbf{C}\setminus \left\lbrace 0,-1,-2, -3, \cdots\right\rbrace
\end{equation*}
\end{theorem}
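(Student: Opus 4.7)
The plan is to use two facts already established in the excerpt: the Remark in section \ref{subsec: A Remark concerning the Solution of the Difference Equation}, which says any two meromorphic solutions of $F(x+1)=xF(x)$ differ by a multiplicative period-$1$ function, and the Corollary to the Bohr--Mollerup theorem, which gives the Gauss product
\begin{equation*}
    \Gamma(x)=\lim_{n\to\infty}\dfrac{n^x\,n!}{x(x+1)\cdots(x+n)}.
\end{equation*}
With these in hand the theorem reduces to checking two things: (i) $\Gamma$ itself satisfies the Weierstra\ss{} limit condition, and (ii) that condition forces the periodic factor to be identically $1$.

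\textbf{Step 1 (Existence).} I would verify that $\Gamma$ satisfies the limit. Iterating the functional equation gives $\Gamma(x+n)=x(x+1)\cdots(x+n-1)\,\Gamma(x)$, so
\begin{equation*}
    \dfrac{\Gamma(x+n)}{(n-1)!\,n^x}=\Gamma(x)\cdot\dfrac{x(x+1)\cdots(x+n-1)}{(n-1)!\,n^x}=\Gamma(x)\cdot\dfrac{n}{n+x}\cdot\dfrac{x(x+1)\cdots(x+n)}{n!\,n^x}.
\end{equation*}
The first factor tends to $1$, and by the Gauss product formula above the second factor tends to $1/\Gamma(x)$, so the whole expression tends to $\Gamma(x)/\Gamma(x)=1$, as required.

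\textbf{Step 2 (Uniqueness).} Let $F$ be any meromorphic function on $\mathbb{C}\setminus\{0,-1,-2,\ldots\}$ satisfying both the functional equation and the Weierstra\ss{} limit. By the remark recalled above, there exists a period-$1$ function $\omega$ with $F(x)=\omega(x)\Gamma(x)$; since $F$ and $\Gamma$ have the same (at most first-order) poles on $\{0,-1,-2,\ldots\}$, the quotient $\omega=F/\Gamma$ extends to a meromorphic, hence (after removing the removable singularities) an entire period-$1$ function. Using periodicity $\omega(x+n)=\omega(x)$ and the result of Step 1,
\begin{equation*}
    1=\lim_{n\to\infty}\dfrac{F(x+n)}{(n-1)!\,n^x}=\omega(x)\lim_{n\to\infty}\dfrac{\Gamma(x+n)}{(n-1)!\,n^x}=\omega(x).
\end{equation*}
Hence $\omega\equiv 1$ and $F=\Gamma$.

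The main obstacle is really just a bookkeeping point in Step 1: one has to recognize that the Weierstra\ss{} quotient differs from the Gauss quotient only by the harmless factor $n/(n+x)$, so that the already-established Bohr--Mollerup product formula immediately supplies the limit. Once that computation is in hand, the uniqueness half is essentially automatic, because the periodicity of $\omega$ lets one slide the shift $x\mapsto x+n$ inside the limit and read off $\omega$ directly.
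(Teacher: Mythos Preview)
Your proof is correct and follows exactly the route the paper intends: the Remark in \S\ref{subsec: A Remark concerning the Solution of the Difference Equation} to write any solution as $\omega(x)\Gamma(x)$ with $\omega$ of period $1$, and the Gauss product (Corollary to Bohr--Mollerup) to evaluate the limit. In fact you are more explicit than the paper, which simply records the theorem as a consequence of the preceding sections (``Note that since we obtained \ldots\ we proved that'') without spelling out either the existence computation or the argument that the Weierstra\ss{} limit kills the periodic factor; your Steps~1 and~2 are precisely the details that justify that sentence.
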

We will  present how Euler obtained this condition in \cite{E652}. Weierstra\ss{} in 1856 in \cite{We56} attributed it to Gau\ss{} who introduced this condition in \cite{Ga28}. \\[2mm]
We will show it for $\Gamma(x+1)$, since in 1793 paper \cite{E652}\footnote{It was published after Euler's death in 1783.} Euler also did it for $x!$.
Euler's idea was to consider $n$ as a very large natural number and $x$ as a fixed finite natural number with $x \ll n$ and evaluate $\Gamma(x+n+1)$ in two ways. 
Using the functional equation $x$ times, we have

\begin{equation*}
    \Gamma(x+1+n)= (x+n)(x+n-1)\cdots (n+1)\Gamma(n+1).
\end{equation*}
But, since $x \ll n$, the finite parts added to $n$ in each factor can be ignored such that

\begin{equation*}
    \Gamma(x+1+n) \approx n^x \Gamma(n+1).
\end{equation*}
On the other hand, we can use the functional equation $n$ times, to find:

\begin{equation*}
    \Gamma(x+n+1) = (n+x)(n+x-1) \cdots (x+1)\Gamma(x+1).
\end{equation*}
Therefore, dividing both expressions expressions for $\Gamma(x+n+1)$, using $\Gamma(n+1)=n!$ and solving for $\Gamma(x+1)$, we arrive at the formula:

\begin{equation*}
    \Gamma(x+1) \approx \dfrac{n^x n!}{(x+1)\cdots (n+x-1)(n+x)} \quad \text{for} \quad x \ll n. 
\end{equation*}
In modern notation, this is  the  condition in the theorem. One just has to use the functional equation on $\Gamma(x+n)$ $n$ times to arrive at Euler's and Gau\ss's formula. Note that although the proof required $x$ and $n$ to be natural numbers, the right-hand side does not require $x$ to be a natural number. Therefore, it can also be used to interpolate $x!=\Gamma(x+1)$. Additionally, we point out again that Gau\ss{} \cite{Ga28} used this formula to introduce the $\Gamma$-function, but he did not motivate it at all.\\[2mm]
Euler's reasoning, using infinitely large numbers, is obviously not rigorous enough for modern times. But, in possession of the Bohr-Mollerup theorem, one could start from this expression and check, whether all conditions are satisfied or not. But since we already did it for the Weierstra\ss{} product and know this expression to be equivalent to it, we do not want to repeat this here. 

\newpage

\section{Euler's direct Solution of the Equation $\Gamma (x+1)=x\Gamma(x)$ - The Moment Ansatz}
\label{sec: Euler's direct Solution of the Equation Gamma (x+1)=xGamma(x) - The Moment-Ansatz}

We now go over to Euler's different approaches leading him to an explicit formula of the $\Gamma$-function. We will start with the "moment ansatz", a name that will become clear later. First, we want to explain briefly, where the method actually originated. 

\begin{figure}
\centering
    \includegraphics[scale=1.3]{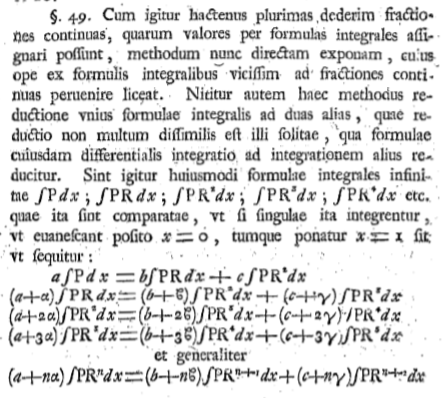}
    \caption{Moment Method}
     One page of Euler's paper \cite{E123}. Euler explains his idea how to solve the difference equation $(a +\alpha x)f(x)= (b +\beta x)f(x+1)+(c +\gamma x)f(x+2)$ by assuming it to be a certain integral ($\S$ 49). In the following paragraphs he applies it to continued fractions, which was his actual intention in that paper. 
     \end{figure}

\subsection{Origin of the Idea}
\label{subsec: Origin of the Idea}

Euler uses a technique, which we will refer here to as \textit{moment ansatz}, to solve difference equations of the kind:

\begin{equation*}
    (a +\alpha x)f(x)= (b +\beta x)f(x+1)+(c +\gamma x)f(x+2),
\end{equation*}
where $\alpha, \beta, \gamma ~ \in \mathbb{R}\setminus \lbrace 0 \rbrace$ and $a,b,c~ \in \mathbb{R}$\footnote{Euler, of course did not state the condition on $\alpha$, $\beta$, $\gamma$ explicitly. But the condition can be inferred from the following calculations in \cite{E123}.}, in his papers \cite{E123} and \cite{E594}. His actual intention was to derive continued fractions from this. For, dividing the above equation by $(a+\alpha x)$ and $f(x+1)$, one will find

\begin{equation*}
    \dfrac{f(x)}{f(x+1)}= \dfrac{b+ \beta x}{a +\alpha x}+ \dfrac{c + \gamma x}{a + \alpha x} \dfrac{f(x+2)}{f(x+1)}
\end{equation*}
or

\begin{equation*}
    \dfrac{f(x)}{f(x+1)}= \dfrac{b+ \beta x}{a +\alpha x}+ \dfrac{c + \gamma x}{a + \alpha x} \dfrac{1}{\frac{f(x+1)}{f(x+2)}}.
\end{equation*}
Replacing $x$ by $x+1$ one will get a similar equation for the quotient $\frac{f(x+1)}{f(x+2)}$ which can be inserted in the above equation. Repeating this procedure infinitely often, one will get a continued fraction for $\frac{f(x)}{f(x+1)}$.\\
Euler was interested in the continued fraction arising from this and he  tried to solve the difference equation. In the following, we will explain how he did this.

\begin{equation*}
    A(x)f(x)= B(x)f(x+1)+C(x)f(x+2)
\end{equation*}
with more general functions $A(x)$, $B(x)$, $C(x)$ etc. also leads to continued fractions. But Euler only considered the case in which those functions are linear functions in his papers \cite{E123} and \cite{E594}. Indeed, his investigations do not go beyond the case of linear functions in any of his papers.

\subsection{Euler's Idea}
\label{subsec: Euler's Idea}

Let us discuss his idea on the concrete example of the above difference equation. The generalisation to the general difference equation with linear coefficients is immediate. Euler  assumed that the solution is given as an integral of the form

\begin{equation*}
    \int\limits_{a}^{b}t^{x-1}P(t)dt,
\end{equation*}
whence we have to determine the limits of the integration and the function $P(t)$.  Euler, in modern formulation, assumed the solution of the difference equation to be the $x$-th moment of the function $P$. This is why we gave the method  the name moment ansatz. In order to do so, Euler considered the auxiliary equation

\begin{equation*}
    (a+\alpha x)\int\limits_{}^{t}t^{x-1}P(t)dt =  (b+\beta x)\int\limits_{}^{t}t^{x}P(t)dt + (c+\gamma x)\int\limits_{}^{t}t^{x+1}P(t)dt + t^{x}Q(t);
\end{equation*}
here, $\int\limits_{}^{t}$ is supposed to denote the indefinite integral over $t$ and $Q(t)$ is another function we have to determine; the use of this function will become clear in a moment.\\[2mm]
Euler then differentiated the auxiliary equation with respect to $t$:

\begin{equation*}
  (a+\alpha x)t^{x-1}P(t)= (b+\beta x)t^{x}P(t)+ (c +\gamma x)t^{x+1}P(t)  + xt^{x-1}Q(t)+t^xQ'(t).
\end{equation*}
Now divide by $t^{x-1}$:

\begin{equation*}
      (a+\alpha x)P(t)= (b+\beta x)tP(t)+ (c +\gamma x)t^{2}P(t)  + xQ(t)+tQ'(t).
\end{equation*}
Comparing the coefficients of the powers of $x$, we will get the following systems of coupled equations:

\begin{equation*}
    \renewcommand{\arraystretch}{1,5}
\setlength{\arraycolsep}{0.0mm}
\begin{array}{llllllll}
1. \quad    &aP(t) &~=~& btP(t)  & ~+~& ct^2P(t) & ~+~ &tQ'(t) \\
2. \quad     &\alpha P(t) & ~=~ & \beta tP(t) & ~+~& \gamma t^2P(t) & ~+~ &Q(t)
\end{array}
\end{equation*}
Solving both equations for $P$, we find

\begin{equation*}
    \renewcommand{\arraystretch}{2,5}
\setlength{\arraycolsep}{0.0mm}
\begin{array}{llllllll}
1. \quad    &P(t) &~=~& \dfrac{tQ'(t)}{a-bt-ct^2}\\
2. \quad     & P(t) & ~=~ & \dfrac{Q(t)}{\alpha -\beta t -\gamma t^2}
\end{array}
\end{equation*}
Therefore, we obtain the following equation for $Q(t)$

\begin{equation*}
    \dfrac{tQ'(t)}{Q(t)}= \dfrac{a-bt-ct^2}{\alpha - \beta t - \gamma t^2}
\end{equation*}
Although this equation can be solved in general, we will not do this here, because it will be more illustrative to consider examples. Anyhow, having found $Q(t)$, we can also find $P(t)$ substituting the value of $Q(t)$ in one of the above equations.\\[2mm]
Finally, we need the term $t^xQ(t)$ to vanish in the auxiliary equation. Hence the limits of integration are found from the solutions of the equation $t^xQ(t)=0$.

\subsubsection{Application to the $\Gamma$-function - Finding the Integral Representation}
\label{subsubsec: Application to the Gamma-function - Finding the Integral Representation}

As mentioned, everything becomes a lot clearer in certain examples. Therefore, let us consider the $\Gamma$-function, i.e. the functional equation $f(x+1)=xf(x)$. Euler considered the factorial explicitly in $\S 13$ of \cite{E594}, a paper published in 1785.
We make the ansatz

\begin{equation*}
    f(x) = \int\limits_{a}^{b}t^{x-1}P(t)dt.
\end{equation*}
Hence we need to determine $P(t)$ and the limits of integration $a$ and $b$. Let us introduce the auxiliary equation:

\begin{equation*}
    \int\limits_{}^{t}t^{x}P(t)dt = x \int\limits_{}^{t}t^{x-1}P(t)dt + t^x Q(t)
\end{equation*}
Differentiating with respect to $t$ gives

\begin{equation*}
    t^{x}P(t) = x t^{x-1}P(t)+ xt^{x-1}Q(t)+ t^xQ'(t).
\end{equation*}
    \begin{figure}
        \centering
        \includegraphics[scale=1.3]{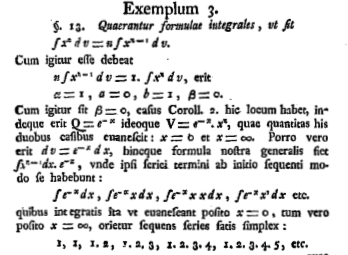}
        \caption{Euler finds the Integral Representation of $\Gamma(x)$ via the Moment Method} 
        This is one of the examples that Euler considered. It led him to the familiar integral representation of the the $\Gamma$-function. Euler called the sequence of the factorial "hypergeometric series". The scan is taken from \cite{E594}.
    \end{figure}
Division by $t^{x-1}$ gives

\begin{equation*}
    tP(t) = x P(t)+ xQ(t)+ tQ'(t).
\end{equation*}
Therefore, comparing the coefficients of $x$:

\begin{equation*}
    \renewcommand{\arraystretch}{1,5}
\setlength{\arraycolsep}{0.0mm}
\begin{array}{lrlllll}
1. \quad    &tP(t) &~=~&tQ'(t) \\
2. \quad     &0 & ~=~ & P(t) +Q(t).
\end{array}
\end{equation*}
Solving both equations for $P(t)$:

\begin{equation*}
    \renewcommand{\arraystretch}{1,5}
\setlength{\arraycolsep}{0.0mm}
\begin{array}{llllllll}
1. \quad    &P(t) &~=~&Q'(t) \\
2. \quad     &P(t) & ~=~ & - Q(t).
\end{array}    
\end{equation*}
Hence we obtain the following differential equation for $Q(t)$:

\begin{equation*}
    \dfrac{Q'(t)}{Q(t)}= -1.
\end{equation*}
This equation is easily integrated and gives

\begin{equation*}
    \log (Q(t)) =C - t \quad \text{or} \quad Q(t)=  Ce^{-t}, 
\end{equation*}
where $C \neq 0$ is an arbitrary constant of integration. From this $P$ is found to be

\begin{equation*}
   P(t)= -e^{-t}. 
\end{equation*}
Finally, we need to find the limits of integration. For this, we consider the equation $t^xQ(t)=Ct^{x}e^{-t}=0$. For $x>0$\footnote{Note that this is precisely the condition on $x$ we need for the integral to converge!} we find the two solution $t=0$ and $t = \infty$. Therefore, the term $t^xQ(t)$ in the auxiliary equation vanishes in these cases and we find:

\begin{equation*}
    C\int\limits_{0}^{\infty}t^xe^{-t}dt =x C\int\limits_{0}^{\infty}t^{x-1}e^{-t}dt.
\end{equation*}
In other words, the equation $f(x+1)=xf(x)$ is satisfied by:

\begin{equation*}
    f(x)=C\int\limits_{0}^{\infty}t^{x-1}e^{-t}dt.
\end{equation*}
This is, of course, almost the famous integral representation of the $\Gamma$-function. (There the constant $C$ is one.)\\

\subsubsection*{Finding the Integral Representation of $\Gamma(x)$}

We can force the function $f$ to be the $\Gamma$ by demanding it to satisfy all conditions of Wielandt's theorem. The condition $\Gamma(1)=1$ forces $C=1$. More precisely, we have the theorem:

\begin{theorem}[Integral Representation of $\Gamma(x)$]

The $\Gamma$-function is given by the following integral:

\begin{equation*}
    \Gamma(x) = \int_{0}^{\infty} t^{x-1}e^{-t}dt \quad \text{for} \quad \operatorname{Re}x>0.
\end{equation*}
\end{theorem}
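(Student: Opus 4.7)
The plan is to combine the moment-ansatz calculation just carried out with Wielandt's theorem from Section~\ref{subsubsec: Wielandt's Theorem}. The moment ansatz has already produced the one-parameter family
\[
f_C(z) := C\int_0^\infty t^{z-1}e^{-t}\,dt
\]
of solutions to $f(z+1)=zf(z)$ on the half-plane $\operatorname{Re}(z)>0$. What remains is to single out $C=1$ and recognise $f_1$ as $\Gamma$.

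First, I would verify the hypotheses of Wielandt's theorem for $f_1$. Analyticity on $\{\operatorname{Re}(z)>0\}$ and the functional equation $f_1(z+1)=zf_1(z)$ (by integration by parts) were already established in Section~\ref{subsec: Definition and simple Properties}, so they may be invoked without reproof. The only genuinely new requirement is boundedness on the vertical strip $1\leq\operatorname{Re}(z)<2$. This follows from the pointwise estimate $|t^{z-1}e^{-t}|=t^{x-1}e^{-t}$ with $x=\operatorname{Re}(z)\in[1,2)$, which gives
\[
|f_1(z)| \;\leq\; \int_0^1 t^{x-1}e^{-t}\,dt + \int_1^\infty t^{x-1}e^{-t}\,dt \;\leq\; 1 + \int_1^\infty t\,e^{-t}\,dt \;<\;\infty,
\]
uniformly in $z$ on the strip. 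Wielandt's theorem then yields
\[
f_1(z) \;=\; f_1(1)\,\Gamma(z) \qquad \text{for all } \operatorname{Re}(z)>0.
\]

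Next, I would compute $f_1(1)=\int_0^\infty e^{-t}\,dt=1$, so that $f_1\equiv\Gamma$. This identifies the $C=1$ member of the moment-ansatz family with the $\Gamma$-function itself, while every other $C$ produces the solution $C\Gamma(z)$, which fails $\Gamma(1)=1$. Hence the integral representation of $\Gamma$ is exactly the one asserted.

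There is essentially no serious obstacle: the two hard inputs, namely analytic continuation/convergence of the integral and the uniqueness statement in Wielandt's theorem, were both proved earlier in the paper. The only point requiring any care is the boundedness verification on the strip, which uses only the standard trivial bound on $|t^{z-1}|$ and splits the integral at $t=1$; once this is in place, the theorem drops out as a direct corollary of Wielandt together with the moment-ansatz construction. Alternatively, one could appeal to the Bohr--Mollerup theorem (Section~\ref{subsubsec: Bohr-Mollerup Theorem}) on the positive real axis, using the logarithmic convexity of $f_1$ already established in Section~\ref{subsubsec: Bohr-Mollerup Theorem}, and then extend to $\operatorname{Re}(z)>0$ by analytic continuation; but Wielandt's route is shorter here.
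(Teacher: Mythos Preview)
Your proposal is correct and takes essentially the same approach as the paper: verify the three hypotheses of Wielandt's theorem (value at $1$, functional equation from the moment ansatz via integration by parts, and boundedness on the strip $1\le\operatorname{Re}(z)<2$), then conclude. Your boundedness estimate is slightly more explicit than the paper's (which simply asserts the integrals are ``obviously bounded''), and your mention of the Bohr--Mollerup alternative is an extra observation, but the route is identical.
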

\begin{proof}
 We have to check all conditions of  Wielandt's theorem\footnote{Therefore, at this point, we basically prove that the integral representation is indeed a correct definition for $\Gamma(x)$.}. First, find $\Gamma(1)$.

\begin{equation*}
    \Gamma(1)= \int_{0}^{\infty} e^{-t}dt = \left[-e^{t}\right]_0^{\infty} = 0 -(-1)=1.
\end{equation*}
Secondly, the functional equation is satisfied, as demonstrated in the last section.\\
Finally, we have to check holomorphy (which is obvious) and that $\Gamma(x)$ is bounded in the strip $S:= \lbrace x| 1 \leq x < 2\rbrace$. Hence consider

\begin{equation*}
    |\Gamma(x)| = \left|  \int_{0}^{\infty} t^{x-1}e^{-t}dt\right| \leq  \int_{0}^{\infty} \left| t^{x-1}\right|e^{-t} dt 
\end{equation*}
In other words, we have

\begin{equation*}
    |\Gamma(x)| \leq \operatorname{Re}(\Gamma(x)) \quad \text{for} \quad \operatorname{Re}x > 0.
\end{equation*}
For checking Wielandt's theorem we have to consider

\begin{equation*}
      \int_{0}^{\infty}  t^{x-1}e^{-t} dt \quad \text{for} \quad 1 \leq x <2. 
\end{equation*}
But these integrals are obviously bounded, whence Wieldlandt's theorem applies.
\end{proof}

\subsubsection{Some Remarks on the Ansatz}
\label{subsubsec: Some Remarks on the Ansatz}

We assume the solution to have the form $\int t^{x-1}P(t)dt$, which explains the name moment ansatz\footnote{A moment is defined as $M_n:=\int\limits_{a}^{b}t^n d\mu $, $\mu$ being some integration measure.}. But one can, of course, make other choices for the integrand. For the sake of an example, one can set $\int (R(t))^{x-1}$. By the same procedure, one would then arrive at the equation

\begin{equation*}
    \Gamma(x)= \int\limits_{0}^{1} \left(\log \dfrac{1}{t}\right)^{x-1}dt.
\end{equation*}
This was Euler's preferred integral representation and actually the first he found in 1738 in \cite{E19}. It follows from the representation just found by setting $e^{-t}=u$.\\[2mm]
Furthermore, one can even generalize the ansatz to

\begin{equation*}
    f(x) = \int R(t)^{x-1}P(t)dt,
\end{equation*}
where $R(t)$ is another function to be determined\footnote{It is indeed convenient to use this ansatz in the case of hypergeometric series, for example.}. Carrying out the procedure as above, one would arrive at certain conditions on the function $R(t)$ which are trivially satisfied by $R(t)=t$. Indeed, Euler tried this most general ansatz in his 1750 paper \cite{E123}, but realizing that $R(t)=t$ meets all requirements, he quickly focused on that special case. 

\subsection{Examples of other Equations which can be solved by this Method}
\label{subsec: Examples of other Equations which can be found by this Method}

Having found the integral representation of the $\Gamma$-function from its difference equation, let us apply Euler's method to more complicated but still familiar difference equations in order to find some interesting integral representations.

\subsubsection{1. Example: Legendre Polynomials}
\label{subsubsec: 1. Example: Legendre Polynomials}

The Legendre polynomials satisfy the following difference equation

\begin{equation*}
(n+1)P_{n+1}(x)=(2n+1)x P_{n}(x)-nP_{n-1}(x).
\end{equation*}
Together with the conditions $P_0(x)=1$ and $P_1(x)=x$, this difference equation determines them completely. The moment ansatz can be used to find an explicit formula for $P_n(x)$. More precisely, we have the theorem:

\begin{theorem}[Integral representation for the $n$-th Legendre Polynomial]
We have

\begin{equation*}
     P_n(x)=\dfrac{1}{ \log (-1)}\int\limits_{x-\sqrt{x^2-1}}^{x+\sqrt{x^2-1}} \dfrac{t^n}{\sqrt{1-2xt+t^2}}dt.
\end{equation*}
where the principal branch of $\log (-1)$ is to be taken.
\end{theorem}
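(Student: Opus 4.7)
The plan is to apply Euler's moment ansatz directly to the Legendre recurrence, exactly as was done for $\Gamma(x)$ in the preceding subsection. I would postulate
\[
P_n(x) = C \int_a^b t^n \rho(t)\,dt,
\]
where $\rho$, $a$, $b$, and $C$ may depend on $x$ as a parameter but not on $n$, and substitute into $(n+1)P_{n+1}(x) - (2n+1)xP_n(x) + nP_{n-1}(x) = 0$ to derive ODE conditions on $\rho$ together with boundary conditions determining $a$ and $b$.

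The first concrete step is to factor out $t^{n-1}$ and rewrite the recurrence polynomial $(n+1)t^2 - (2n+1)xt + n$ as $n(1-2xt+t^2) + t(t-x)$, separating the $n$-dependence. The term $n\,t^{n-1}(1-2xt+t^2)\rho$ is then exactly $\tfrac{d}{dt}(t^n)\cdot(1-2xt+t^2)\rho$, so one integration by parts converts the recurrence condition into
\[
\bigl[t^n(1-2xt+t^2)\rho(t)\bigr]_a^b - \int_a^b t^n \Bigl\{\tfrac{d}{dt}\bigl[(1-2xt+t^2)\rho\bigr] - (t-x)\rho\Bigr\}\,dt = 0.
\]
Requiring this for every $n$ forces the boundary piece to vanish and the bracketed expression in the integrand to be identically zero.

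The next step is solving these two conditions. The ODE reduces to
\[
\frac{\rho'(t)}{\rho(t)} = \frac{x-t}{1-2xt+t^2} = -\tfrac{1}{2}\frac{d}{dt}\log(1-2xt+t^2),
\]
whose immediate integration gives $\rho(t) = K/\sqrt{1-2xt+t^2}$. The boundary condition $[t^n(1-2xt+t^2)\rho(t)]_a^b = K[t^n\sqrt{1-2xt+t^2}]_a^b = 0$ is then precisely the requirement that $a$ and $b$ be the two roots of $1-2xt+t^2 = 0$, namely $x \pm \sqrt{x^2-1}$.

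The overall constant is pinned down by demanding $P_0(x) = 1$, which reduces to evaluating $\int_{x-\sqrt{x^2-1}}^{x+\sqrt{x^2-1}} dt/\sqrt{1-2xt+t^2}$ via the substitution $t = x + \sqrt{x^2-1}\cos\theta$; after completing the square, the integrand collapses to $d\theta/\sqrt{-1}$, producing $\pi/\sqrt{-1}$, which is $\log(-1)$ on the principal branch. This fixes $CK = 1/\log(-1)$, and a quick check against $P_1(x) = x$ propagates the identity to all $n$ via the recurrence. The main obstacle I expect is not the moment-ansatz calculation itself but the careful tracking of branches for $\sqrt{\,\cdot\,}$ and $\log$: the polynomial $1-2xt+t^2$ is nonpositive on the integration interval when $x^2>1$, and the interval is outright complex when $|x|<1$, so a consistent choice of branch is essential to arrive at the stated formula rather than its negative.
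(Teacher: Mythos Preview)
Your proposal is correct and follows essentially the same moment-ansatz approach as the paper: the paper's primary proof introduces Euler's auxiliary function $Q$ and differentiates the auxiliary equation to extract two equations in $R$ and $Q$, whereas you bypass $Q$ by algebraically splitting $(n+1)t^2-(2n+1)xt+n = n(1-2xt+t^2)+t(t-x)$ and integrating by parts once---but this is precisely the Mellin-transform reformulation that the paper itself presents a few pages later when it revisits the Legendre example, and it leads to the identical ODE $\rho'/\rho = (x-t)/(1-2xt+t^2)$, the same boundary condition $\sqrt{1-2xt+t^2}=0$, and the same normalization via $P_0(x)=1$. Your closing caveat about branch choices is well-placed and matches the paper's own remark that the formula is unambiguous only once a consistent branch is fixed.
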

\begin{proof}
This expression can be found by the moment ansatz. Since this is our first concrete example of a second order difference equation and one has to be more careful than in the case of the $\Gamma$-function, let us present the calculation in detail. We start with the auxiliary equation again which reads

\begin{equation*}
(n+1)\int\limits_{}^{t}t^nR(x,t)dt = (2n+1)x\int\limits_{}^{t}t^{n-1}R(x,t)dt - n\int\limits_{}^{t}t^{n-2}R(x,t)dt + Q(x,t)t^n.
\end{equation*}
We wish to find $R(x,t)$\footnote{Although we wrote $R(x,t)=R(t)$ instead of $R(t)$, this does not alter the procedure at all. It will just turn out that $R$ depends also on $x$ which is to be considered as a parameter in the difference equation. The same goes for $Q$.} and $Q(x,t)$ and the limits of integration. 
Let us differentiate that equation with respect to $t$, we find:

\begin{equation*}
(n+1)t^nR(x,t)=(2n+1)xt^{n-1}R(x,t)-nt^{n-2}R(x,t)+ {Q'}(x,t)t^n+nt^{n-1}Q(x,t).
\end{equation*}
Dividing by $t^{n-2}$ and comparing the coefficients of the powers of $n$, we obtain the following system of equations

\begin{equation*}
\begin{array}{lll}
1. \quad t^2R(x,t) & = & 2tR(x,t)x -R(x,t) + t Q(x,t) \\
2.\quad t^2R(x,t) & = & tR(x,t)x + Q'(x,t)t^2
\end{array}
\end{equation*}
Solving both for $R(x,t)$

\begin{equation*}
 \renewcommand{\arraystretch}{2,5}
\setlength{\arraycolsep}{0.0mm}
\begin{array}{lll}
1. \quad R(x,t) & = & \dfrac{tQ(x,t)}{t^2-2tx+1}\\
2. \quad R(x,t) & = & \dfrac{t^2 {Q'}(x,t)}{t^2-xt} 
\end{array}
\end{equation*}
Therefore,

\begin{equation*}
\dfrac{tQ(t)}{t^2-2xt+1}= \dfrac{t^2 {Q'}(x,t)}{t^2-xt}
\end{equation*}
whence we find

\begin{equation*}
Q(t)= C(x)\sqrt{t^2-2xt+1}.
\end{equation*}
$C(x)$ being an arbitrary function of $x$ that entered via integration with respect to $t$. Therefore, 

\begin{equation*}
R(x,t)= C(x)\dfrac{t}{\sqrt{1-2xt+t^2}}.
\end{equation*}
Integrating the differentiated auxiliary equation again from $a$ to $b$, we would have

\begin{equation*}
P_n(x) = C(x)\int\limits_{a}^{b} \dfrac{t^n}{\sqrt{1-2xt+t^2}}dt
\end{equation*}
if we determine $a$ and $b$ in such a way that $Q(x,t)t^n$ vanishes for $a$ and $b$ for all $n$. Since $t^0=1$ has no zeros, we have to put $Q(x,t)=0$. This gives 

\begin{equation*}
a= x-\sqrt{x^2-1} \quad \text{and} \quad b = x+\sqrt{x^2-1}.
\end{equation*}
Therefore, it remains to find $C(x)$. For this we use the special case $P_0(x)=1$. We calculate

\begin{small}
\begin{equation*}
 C(x)\int\limits_{x-\sqrt{x^2-1}}^{x+\sqrt{x^2-1}} \dfrac{t^0}{\sqrt{1-2xt+t^2}}dt= C(x)\left[\log\left(\sqrt{1-2xt+t^2}+t-x\right)\right]_{x-\sqrt{x^2-1}}^{x+\sqrt{x^2-1}}
\end{equation*}
\end{small} Therefore, 

\begin{equation*}
C(x)= \dfrac{1}{\log(-1)},
\end{equation*}
where the principal branch of the logarithm is to be taken, of course. The explicit formula for the $n$-th Legendre polynomial hence reads

\begin{equation*}
P_n(x)=\dfrac{1}{\log(-1)}\int\limits_{x-\sqrt{x^2-1}}^{x+\sqrt{x^2-1}} \dfrac{t^n}{\sqrt{1-2xt+t^2}}dt.
\end{equation*}
It is easily checked that the explicit formula also gives $P_1(x)=x$. Therefore, both initial conditions and the functional equations are satisfied and hence the above formula gives the $n-$th Legendre polynomial.\\
The formula for the $n$-th Legendre polynomial seems to be ambiguous, but one always arrives at the same value for $P_n(x)$ as long as the same branch of the logarithm is taken.
\end{proof}

\subsubsection{Historical Remark on Legendre Polynomials}
\label{subsubsec: Historical Remark on Legendre Polynomials}

The Legendre polynomials were named after Legendre because of his 1785 paper \cite{Le85}; he discovered them in his investigations on the gravitational potential. Nowadays, they are important in electrodynamics, more precisely, the multipole expansion.  But they were in fact already discovered by Euler in his 1783 paper \cite{E551} in a completely different context. In his 1782 paper \cite{E606}, Euler even gave the explicit formula for the $n-$th Legendre polynomial we derived above. But since in that work he was mainly interested in continued fractions for the quotients of two consecutive integrals, he did not find the constant $C(x)=\frac{1}{\log (-1)}$.

\begin{center}
\begin{figure}
\centering
    \includegraphics[scale=1.0]{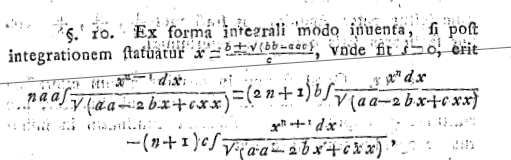}
    \caption{Euler and the Legendre Polynomials}
     Taken from \cite{E606}. Euler solved the difference equation in terms of an integral. He was interested in the continued fraction of two consecutive Legendre polynomials. Thus, he did not find any of their more interesting properties, e.g., their orthogonality relation. This had to wait until Legendre's paper \cite{Le85}.
    \end{figure}
\end{center}
Further, it seems that he did not notice the connection between the findings of \cite{E551} published in 1783  and \cite{E606} published in 1782. In other words, he was not aware that he already obtained an explicit formula for $P_n(x)$. This is even more interesting, because in \cite{E551} he said that it is not possible for him to find such an explicit formula, although he provided all necessary tools in his earlier papers in 1750 in \cite{E123} and in 1785 in \cite{E594}\footnote{Although \cite{E594} was published later than \cite{E606} and \cite{E551}, they were chronologically written according to their Eneström numbers, i.e. \cite{E551} was written first, \cite{E606} last.}. In summary, it seems that Euler was not aware that in those papers he basically discovered a general method to find a particular solution of the general homogeneous difference equation with linear coefficients.

\subsubsection{2. Example: Hermite Polynomials}
\label{subsubsec: 2. Example: Hermite Polynomials}

The Hermite polynomials satisfy the recurrence relation

\begin{equation*}
    H_{n+1}(x)= 2x H_n(x)-2nH_{n-1}(x)
\end{equation*}
with the additional conditions $H_0(x)=1$, $H_1(x)=2x$. We then have the formula

\begin{theorem}[Explicit Formula for the $n$-th Hermite Polynomial]
The following formula holds:

\begin{equation*}
   H_n(x)= \dfrac{i^n e^{x^2}}{2\sqrt{\pi}} \int\limits_{-\infty}^{\infty} t^n e^{\frac{1}{2}\left(-\frac{t^2}{2}-2xit\right)}dt
\end{equation*}
\end{theorem}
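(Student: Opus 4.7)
The plan is to adapt the moment ansatz exactly as in the Legendre case, writing $H_n(x) = \int_{a}^{b} t^{n} R(x,t)\, dt$ and introducing an auxiliary equation
\begin{equation*}
\int\limits^{t} t^{n+1} R(x,t)\, dt = 2x \int\limits^{t} t^{n} R(x,t)\, dt - 2n \int\limits^{t} t^{n-1} R(x,t)\, dt + Q(x,t)\, t^{n}.
\end{equation*}
Differentiating with respect to $t$, dividing by $t^{n-1}$, and comparing the powers of $n$ yields the coupled system
\begin{equation*}
t^{2}R = 2xt\,R + Q'\, t, \qquad -2R + Q = 0.
\end{equation*}
Substituting $Q = 2R$ into the first equation gives the separable ODE $R'/R = (t-2x)/2$, so that
\begin{equation*}
R(x,t) = C(x)\, e^{t^{2}/4 - xt}, \qquad Q(x,t) = 2R(x,t).
\end{equation*}

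The first potential obstacle is the boundary condition: we need $Q(x,t)\, t^{n} = 0$ at both endpoints, but the factor $e^{t^{2}/4 - xt}$ blows up along the real line, so no finite real endpoints work and no real interval reaching infinity works either. I would resolve this by rotating the contour onto the imaginary axis, substituting $t = is$ with $s \in \mathbb{R}$; then
\begin{equation*}
R(x, is) = C(x)\, e^{-s^{2}/4 - ixs},
\end{equation*}
which decays at $s = \pm\infty$, so the boundary term vanishes. Carrying out the substitution $dt = i\, ds$ and $t^{n} = i^{n} s^{n}$ gives
\begin{equation*}
H_{n}(x) = C(x)\, i^{n+1} \int\limits_{-\infty}^{\infty} s^{n} e^{-s^{2}/4 - ixs}\, ds.
\end{equation*}

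It remains to fix $C(x)$ from the initial condition $H_{0}(x)=1$. Completing the square in the exponent, $-s^{2}/4 - ixs = -(s+2ix)^{2}/4 - x^{2}$, and shifting the contour back to $\mathbb{R}$ (justified by Cauchy's theorem applied to the entire Gaussian), the resulting integral evaluates to $2\sqrt{\pi}\, e^{-x^{2}}$. Hence
\begin{equation*}
1 = H_{0}(x) = C(x)\, i \cdot 2\sqrt{\pi}\, e^{-x^{2}}, \qquad C(x) = \frac{-i\, e^{x^{2}}}{2\sqrt{\pi}},
\end{equation*}
and therefore $C(x)\, i^{n+1} = i^{n}\, e^{x^{2}}/(2\sqrt{\pi})$, which gives exactly the claimed formula. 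As a consistency check I would verify $H_{1}(x) = 2x$ directly from the integral (differentiating under the integral sign with respect to $x$ brings down a factor $-is$, which reduces the $n=1$ case to the $n=0$ Gaussian integral, yielding $2x$); together with the recurrence derivation above this establishes the formula for all $n$ by induction.

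The main obstacle, as indicated, is the honest justification of the contour rotation: Euler would simply have written down real-analytic formulas, but here one really must invoke Cauchy's theorem both to interpret the moment ansatz on the imaginary axis and to evaluate the Gaussian normalisation. Once that step is granted, the remaining work is mechanical matching of coefficients and a standard Gaussian computation.
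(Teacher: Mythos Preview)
Your proof is correct and follows essentially the same route as the paper: the moment ansatz leads to the same pair of equations for $R$ and $Q$, the same exponential $R(x,t)=C(x)e^{t^{2}/4-xt}$, the same rotation $t=is$ to the imaginary axis to make the boundary term vanish, and the same Gaussian normalisation from $H_0(x)=1$. The only difference is cosmetic (you write $R$ where the paper writes $P$) and that you are more explicit about invoking Cauchy's theorem for the contour shift, which the paper simply performs as a substitution without comment.
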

\begin{proof}
We use the moment ansatz. We will not carry out the calculation since it is similar to the case of the Legendre polynomials. We will only state the intermediate results.\\
Of course, we start from the auxiliary equation:

\begin{small}
\begin{equation*}
    \int\limits_{}^{t}t^{n+1}P(t)dt = 2x \int\limits_{}^{t}t^{n}P(t)dt -2n \int\limits_{}^{t}t^{n-1}P(t)dt + t^nQ(t).
\end{equation*}
From this we derive the following equations for $P(t)$ and $Q(t)$:

\begin{equation*}
       \renewcommand{\arraystretch}{2,5}
\setlength{\arraycolsep}{0.0mm}
\begin{array}{llllllllll}
1. \quad     &P(t) &~=~& \dfrac{Q'(t)t}{t^2-2xt}  \\
2. \quad     &P(t) &~=~& \dfrac{Q(t)}{2},  
\end{array}
\end{equation*}
\end{small}whence

\begin{equation*}
    Q(t) = C(x) e^{\frac{1}{2}\left(\frac{t^2}{2}-2xt\right)} \quad \text{and} \quad P(t)= \dfrac{1}{2} C(x) e^{\frac{1}{2}\left(\frac{t^2}{2}-2xt\right)}
\end{equation*}
In order to find the limits of integration, we need to solve $t^n e^{\frac{1}{2}\left(\frac{t^2}{2}-2xt\right)} =0$, which leads to $t=\pm  i \infty$, if we want $n$ to be an arbitrary integer number. Therefore, up to this point we have:

\begin{equation*}
    H_n(x)=\dfrac{C(x)}{2}\int\limits_{-i\infty}^{i\infty}t^n e^{\frac{1}{2}\left(\frac{t^2}{2}-2xt\right)}dt.
\end{equation*}
It is convenient to get rid of the imaginary limits by the substitution $t = iy$. This gives

\begin{equation*}
    H_n(x)= \dfrac{C(x)}{2}i^{n+1} \int\limits_{-\infty}^{\infty}y^n e^{\frac{1}{2}\left(-\frac{y^2}{2}-2xiy\right)}dy.
\end{equation*}
From the initial condition $H_0(x)=1$ we find

\begin{equation*}
    \dfrac{C(x)}{2} = \dfrac{e^{x^2}}{2i\sqrt{\pi}}.
\end{equation*}
Therefore, we arrive at

\begin{equation*}
    H_n(x)= \dfrac{e^{x^2}i^n}{2 \sqrt{\pi}} \int\limits_{-\infty}^{\infty} t^n e^{\frac{1}{2}\left(-\frac{t^2}{2}-2itx\right)}dt.
\end{equation*}
The condition $H_1(x)=2x$ is easily checked to be satisfied by the explicit formula. It is obvious that one can find similar explicit formulas for other orthogonal polynomials defined by second order homogeneous difference equations with linear coefficients, like, e.g., the Laguerre and Chebyshev polynomials. 
\end{proof}

\subsubsection{3. Example: Beta Function}
\label{subsubsec: 3. Example: Beta-Function}

Let us consider the $B$-function, which is defined as 

\begin{definition}[$B$-function]
The $B$-function, also referred to as Eulerian integral of the first kind, is defined as:

\begin{equation*}
    B(x,y) := \int\limits_{0}^{1} t^{x-1}(1-t)^{y-1}dt \quad \text{for} \quad \operatorname{Re}x, \operatorname{Re}y > 0
\end{equation*}
\end{definition}
From its definition it is immediate that $B$ satisfies the functional equation 

\begin{equation*}
    B(x+1,y)=\dfrac{x}{x+y}B(x,y) \quad (*).
\end{equation*}
And one could start from this functional equation to obtain the integral representation via the moment ansatz. Euler did this in 1785 in $\S$ 17 of \cite{E594}\footnote{He even considered a slightly more general example.}. Having already given several examples of this method, we do not want to do this here. \\
Here we want to use the results obtained up to this point to show:

\begin{theorem}
We have

\begin{equation*}
    B(x,y)= \dfrac{\Gamma(x)\Gamma(y)}{\Gamma(x+y)}.
\end{equation*}
\end{theorem}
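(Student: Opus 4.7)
The plan is to apply Wielandt's theorem from section \ref{subsubsec: Wielandt's Theorem}. Fix $y$ with $\operatorname{Re}(y)>0$ (initially one may even take $y>0$ real and extend afterwards by analytic continuation) and introduce the auxiliary function
\begin{equation*}
    f(x) \; := \; \frac{\Gamma(x+y)}{\Gamma(y)}\,B(x,y), \qquad \operatorname{Re}(x)>0.
\end{equation*}
If I can verify the three hypotheses of Wielandt's theorem for $f$ on a domain $D$ containing the strip $1\leq \operatorname{Re}(x)<2$, the theorem will force $f(x)=f(1)\Gamma(x)$, and a normalisation computation $f(1)=1$ will immediately rearrange to the claimed identity.

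First I would verify the two algebraic hypotheses. A direct computation gives $B(1,y)=\int_0^1(1-t)^{y-1}dt=1/y$ and $\Gamma(1+y)=y\Gamma(y)$, so $f(1)=1$. The functional equation $f(x+1)=xf(x)$ follows from the relation $(*)$ recalled just above the theorem: indeed
\begin{equation*}
    f(x+1) \;=\; \frac{\Gamma(x+1+y)}{\Gamma(y)}\,B(x+1,y) \;=\; \frac{(x+y)\Gamma(x+y)}{\Gamma(y)}\cdot\frac{x}{x+y}\,B(x,y) \;=\; x f(x).
\end{equation*}

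The main obstacle is the boundedness hypothesis of Wielandt's theorem. For $x$ in the strip $1\leq \operatorname{Re}(x)<2$ one has $|t^{x-1}|=t^{\operatorname{Re}(x)-1}\leq 1$ for $t\in(0,1)$, whence (taking first $y>0$ real) $|B(x,y)|\leq \int_0^1(1-t)^{y-1}dt = 1/y$. For the $\Gamma$-factor, the elementary bound $|\Gamma(z)|\leq \Gamma(\operatorname{Re}(z))$, which is immediate from the integral representation established in section \ref{subsubsec: Definition}, combined with the continuity of $\Gamma$ on the compact interval $[1+y,\,2+y]$, yields boundedness of $|\Gamma(x+y)|$ on the strip. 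Hence $f$ is bounded there, Wielandt's theorem applies, and $f(x)=\Gamma(x)$ for all $x$ in the right half-plane, which is the desired identity.

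Finally, to upgrade from $y>0$ real to general $y$ with $\operatorname{Re}(y)>0$, I would fix $x$ with $\operatorname{Re}(x)>0$ and observe that both sides of $B(x,y)\Gamma(x+y)=\Gamma(x)\Gamma(y)$ are analytic in $y$ on the right half-plane (for $B$ this follows exactly as in the analyticity proof of the $\Gamma$-integral); agreement on the positive real axis and the identity theorem then give the full result. As an alternative route, avoiding Wielandt entirely, one could start from the double integral $\Gamma(x)\Gamma(y)=\int_0^\infty\!\int_0^\infty u^{x-1}v^{y-1}e^{-u-v}du\,dv$ and apply the substitution $u=st$, $v=s(1-t)$ (with Jacobian $s$), which separates the integral into $\Gamma(x+y)\cdot B(x,y)$; this is the classical proof and perhaps closer in spirit to Euler, although, as the excerpt notes, Euler never supplied a satisfactory proof of this identity.
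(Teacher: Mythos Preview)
Your proof is correct and takes a genuinely different route from the paper's own argument in section \ref{subsubsec: 3. Example: Beta-Function}. The paper proceeds by an algebraic factorisation ansatz: it writes $B(x,y)=B_1(x,y)B_2(x,y)$ with $B_1(x+1)=xB_1(x)$ and $(x+y)B_2(x+1)=B_2(x)$, solves each difference equation separately to obtain $B(x,y)=\omega(x)C(y)\frac{\Gamma(x)}{\Gamma(x+y)}$ with $\omega$ periodic, and then kills the periodic factor and pins down $C(y)$ by evaluating at $B(x,1)$ and $B(1,y)$. You instead package everything into the single function $f(x)=\frac{\Gamma(x+y)}{\Gamma(y)}B(x,y)$ and invoke Wielandt's theorem directly, so that the boundedness estimate on the strip does the work that the special-value evaluations do in the paper. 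Your approach is shorter and avoids the somewhat informal factorisation step; the paper's approach, on the other hand, is more in keeping with its stated aim of showing what Euler could have done, since it stays within the framework of difference equations and does not require the complex-analytic machinery behind Wielandt's theorem. The double-integral substitution you mention at the end is precisely Jacobi's proof, which the paper presents separately in section \ref{subsubsec: Jacobi's Proof}.
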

\begin{proof}
We start from the functional equation $(*)$, of course. Further, we assume that $B$ can be written as product of two functions $B_1$, $B_2$. We demand those to satisfy the equations:

\begin{equation*}
    B_1(x+1,y)=xB_1(x,y) \quad \text{and} \quad (x+y)B_2(x+1,y)=B_2(x,y).
\end{equation*}
For the sake of brevity, we will drop the $y$ in the argument in the following; we consider the functional equation only in $x$, and $y$ can be seen as a parameter.\\
It is easily seen that $B_1(x)\cdot B_2(x)$ satisfies the functional equation for $B(x,y)$. Therefore, we need to solve the equations for $B_1$ and $B_2$ to find an expression for $B(x,y)$.\\[2mm]
Let us consider $B_1$ first. It satisfies the functional equation of the $\Gamma$-function in $x$. Therefore, we immediately have
\begin{equation*}
    B_1(x)= C_1(y)\Gamma(x)\omega_1(x),
\end{equation*}
$C_1(y)$ being an arbitrary function of $y$, $\omega_1(x)$ being a  periodic function with period $+1$.\\
To solve the functional equation for $B_2(x)$, let us introduce $D(x)=\frac{1}{B_2(x)}$. Then, $D(x)$ satisfies the functional equation:

\begin{equation*}
    D(x+1)=(x+y)D(x).
\end{equation*}
This equation is easily seen to be solved by

\begin{equation*}
    D(x)= C_2(y)\Gamma(x+y)\cdot\omega_2(x).
\end{equation*}
$C_2(y)$ is an arbitrary function of $y$, $\omega_2(x)$ being a periodic function of period $1$. And hence

\begin{equation*}
    B_2(x)=\dfrac{1}{C_2(y) \Gamma(x+y)}.
\end{equation*}
Combining the results, we have found

\begin{equation*}
    B(x,y)= \omega_1(x)\omega_2(x)C(y)\dfrac{\Gamma(x)}{\Gamma(x+y)},
\end{equation*}
where $C(y)=\frac{C_1(y)}{C_2(y)}$.\\[2mm]
We can omit the periodic factor in this solution, since otherwise we would have:

\begin{equation*}
    B(x,y)= \int\limits_{0}^{1}dtt^{x-1}(1-t)^{y-1}= \Omega_1(x)\dfrac{\Gamma(x)\Gamma(y)}{\Gamma(x+y)},
\end{equation*}
where $\Omega_1(x)$ is a function with period $1$.
But $\Omega_1(x)$ can be found from the special case $B(x,1)$. For, in this case we have on the one hand

\begin{equation*}
    B(x,1)= \int\limits_{0}^{1}dtt^{x-1}=\dfrac{1}{x}.
\end{equation*}
But on the other hand

\begin{equation*}
    B(x,1) = \dfrac{\Gamma(x)}{\Gamma(x+1)}\Omega_1(x)=\dfrac{\Omega_1(x)}{x}.
\end{equation*}
Here we used the functional equation of the $\Gamma$-function and $\Gamma(1)=1$. This already implies $\Omega_1(x)=1$ for all $x$. Hence the periodic function is simply $=1$.\\
It remains to determine the function $C(y)$. From the definition of $B(x,y)$ we find:

\begin{equation*}
    B(1,y) = \int\limits_{0}^{1} (1-t)^{y-1}dt = \dfrac{1}{y}.
\end{equation*}
First, from our solution we find

\begin{equation*}
    B(1,y)= C(y)\dfrac{\Gamma(1)}{\Gamma(y+1)}=C(y)\dfrac{1}{y\cdot \Gamma(y)},
\end{equation*}
where we used $\Gamma(1)=1$ and $\Gamma(y+1)=y\Gamma(y)$. Therefore, $C(y)$ must satisfy:

\begin{equation*}
    C(y)\dfrac{1}{y\Gamma(y)}=\dfrac{1}{y} \quad \text{or} \quad C(y)=\dfrac{1}{\Gamma(y)}.
\end{equation*}
Therefore, we finally arrived at the formula:

\begin{equation*}
    B(x,y)=\dfrac{\Gamma(x)\Gamma(y)}{\Gamma(x+y)}.
\end{equation*}
\end{proof}

\subsubsection{Some Remarks on the Relation of $\Gamma$ and $B$.}
\label{subsubsec: Some Remarks}

The relation among the $B$- and $\Gamma$-function was already discovered by Euler essentially in 1738 in \cite{E19}\footnote{We say "essentially" here, because Euler did not state it explicitly in that paper.}. He states it explicitly, e.g., in 1772 in \cite{E421}. But the argument he gave there is not  a rigorous proof, as we mentioned above in section \ref{subsec: Overview on Euler's Contributions},  he only proved the formula for integers $x$ and $y$ and then, without any further explanation, replaced the factorials by the integral representation of $\Gamma(x)$.\\
Rigorous proofs were first given by Jacobi in 1834 in \cite{Ja34} and Dirichlet in 1839 in \cite{Di39}, but they both use the theory of double integrals, which Euler did not know.  We will discuss Euler's argument, Dirichlet's and Jacobi's proof below in section \ref{subsubsec: Euler's Proof}, in section \ref{subsubsec: Dirichlet's Proof} and in section \ref{subsubsec: Jacobi's Proof}, respectively.\\[2mm]
 Our reasoning to obtain this fundamental relation  does not require double integrals and our proof certainly was within Euler's grasp. In \cite{E594}, he even considered similar questions, but never made the connection to the $B$- and $\Gamma$-functions.\\[2mm]

\subsubsection{4. Example: Hypergeometric Series}
\label{subsubsec: 4. Example: Hypergeometric Series}

Finally, let us mention the hypergeometric series. It was first defined by Euler in \cite{E710}, a paper  published only in 1801,

\begin{definition}[Hypergeometric Series]
For $a,b,c \in \mathbb{C}\setminus \lbrace{0,-1,-2, -3, \cdots \rbrace}$ the hypergeometric series is defined by

\begin{equation*}
    _2F_1(a,b,c;z) = 1 +\dfrac{ab}{c}\dfrac{z}{1!}+\dfrac{a(a+1)b(b+1)}{c(c+1)}\dfrac{z^2}{2!}+ \cdots \quad \text{for} \quad |z|<1.
\end{equation*}
\end{definition}
We will drop the subscripts $2$ and $1$, and write simply $F$, if there is no chance for confusion.\\
The first systematic study was done by Gau\ss{} in 1812 in \cite{Ga28}, whence the above series is often referred to as Gaussian hypergeometric series. Many people contributed to the nowadays highly developed theory of this function. We mention Kummer \cite{Ku36} and Riemann \cite{Ri57} as some of the contributors. The Gau\ss{}ian hypergeometric series has been generalized in several ways. The number of parameters in the coefficients has been increased, leading to the Tomae functions $_pF_q$, see, e.g., \cite{Sl09}. The number of variables has been increased, e.g., leading to Lauricella functions, which are also discussed in \cite{Sl09}. Furthermore, we want to mention the GKZ systems and the modern text \cite{Yo97}. Here we are mainly interested in hypergeometric integrals. A modern treatise  on the subject is \cite{Ao11}. What is of interest for us is that one can derive the  integral representation of the hypergeometric series which is usually attributed to Euler\footnote{We have to say some things about that later.} from a certain difference equation which is satisfied by the the hypergeometric series. Gau\ss{} in his paper called them contiguity relations and gave a complete list of 15 of such equations. We will need one equation that follows from those he gave in the mentioned paper. 

\begin{theorem}
We have the following equation:
\begin{equation*}
 \renewcommand{\arraystretch}{2,5}
\setlength{\arraycolsep}{0.0mm}
\begin{array}{lll}
    B(b+2,c-b){}F(a,b+2,c+2;x)&~=~& \left(\dfrac{b}{x(a-c-1)}\right)
    B(b,c-b)F(a,b,c;x) \\
    &~+~ &\left(\dfrac{(b-a+1)x+c}{x(c-a+1)}\right)B(b+1,c-b)F(a,b+1,c+1;x)
    \end{array}
\end{equation*}
\end{theorem}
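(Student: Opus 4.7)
The plan is to recognize each of the three products $B(b+k,c-b)\,F(a,b+k,c+k;x)$, for $k=0,1,2$, as a moment of a common weight on $[0,1]$, and then obtain the desired identity as a linear relation among these moments coming from a single integration by parts.

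Concretely, I would appeal to Euler's classical integral representation, which for $\operatorname{Re}c>\operatorname{Re}b>0$ gives
\begin{equation*}
B(b+k,c-b)\,F(a,b+k,c+k;x) \;=\; \int_0^1 t^{b+k-1}(1-t)^{c-b-1}(1-xt)^{-a}\,dt \;=:\; I_k.
\end{equation*}
This follows from expanding $(1-xt)^{-a}$ as a binomial series inside the integral, which reduces each summand to a beta integral. In this notation, after multiplying the claimed formula through by $x(c-a+1)$ and using $a-c-1=-(c-a+1)$, what has to be proved is the linear relation
\begin{equation*}
b\, I_0 \;+\; \bigl[(a-b-1)x-c\bigr]\, I_1 \;+\; (c-a+1)x\, I_2 \;=\; 0,
\end{equation*}
from which the theorem follows by solving for $I_2$ and substituting back the $B \cdot F$ expressions.

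To produce this linear relation I would integrate the derivative of
\begin{equation*}
\phi(t) \;:=\; t^b(1-t)^{c-b}(1-xt)^{1-a}
\end{equation*}
over $[0,1]$. Under the working hypothesis $\operatorname{Re}b>0$ and $\operatorname{Re}(c-b)>0$, the function $\phi$ vanishes at both endpoints, so $\int_0^1 \phi'(t)\,dt=0$. The product rule produces three terms, and upon noticing that $\phi(t)=t(1-t)(1-xt)\,u(t)$ with $u(t):=t^{b-1}(1-t)^{c-b-1}(1-xt)^{-a}$, each term factors as a quadratic polynomial in $t$ times the single base weight $u(t)$. Collecting by powers of $t$ yields
\begin{equation*}
\phi'(t) \;=\; \bigl[\,b \;+\; ((a-b-1)x-c)\,t \;+\; (c-a+1)x\,t^2\,\bigr]\,u(t),
\end{equation*}
and since $\int_0^1 t^k u(t)\,dt = I_k$, integrating gives precisely the required three-term relation.

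The only genuinely error-prone step is the algebraic bookkeeping in the last expansion, where three differentiated terms with shifted exponents on $t$, $1-t$, and $1-xt$ must be simultaneously rewritten as polynomial multiples of $u(t)$ and collected by degree. If one wishes to avoid invoking Euler's integral representation here, since the very goal of this section is to \emph{re-derive} that representation via the moment ansatz, then the same relation can instead be verified by comparing coefficients of $x^n$ on both sides of the original identity, using $B(b+k,c-b)=\bigl((b)_k/(c)_k\bigr)\,B(b,c-b)$ together with the Pochhammer shift $(b+k)_n/(c+k)_n=(b)_{n+k}(c)_k/\bigl((b)_k(c)_{n+k}\bigr)$; this route is more tedious but is logically independent of the integral representation and so avoids any circularity.
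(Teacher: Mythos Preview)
Your proof is correct. The integration-by-parts computation is clean, and your algebra checks out: differentiating $\phi(t)=t^{b}(1-t)^{c-b}(1-xt)^{1-a}$ and rewriting each term as a polynomial multiple of $u(t)$ gives exactly the quadratic $b+((a-b-1)x-c)t+(c-a+1)x\,t^2$, so $\int_0^1\phi'(t)\,dt=0$ produces the three-term relation you need. Your reformulation of the target identity as $bI_0+[(a-b-1)x-c]I_1+(c-a+1)xI_2=0$ is also correct.

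However, your route differs from the paper's. The paper simply asserts that the identity follows by expanding each hypergeometric series and comparing coefficients of $x^n$, without carrying this out; this is precisely the alternative you sketch at the end. Your main argument instead uses the Euler integral representation directly, which is more conceptual and explains \emph{why} such a three-term relation exists: it is literally an integration-by-parts identity among moments of a common weight. You are also right to flag the circularity issue: in the paper's logical flow this theorem is used as input to the moment ansatz in order to \emph{derive} the Euler integral representation, so invoking that representation here would close a loop. The coefficient-comparison route you outline is therefore the one that fits the paper's architecture. It is worth noting that later, in the intersection-theory section, the paper does recover this same relation starting from the integral side via intersection numbers of twisted cocycles; your integration-by-parts argument is essentially the elementary core of that computation, stripped of the cohomological language.
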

The proof is simply done by expanding each hypergeometric function into a power series and comparing coefficients. We will not do this here\footnote{Below, in section \ref{subsec: Modern Idea - Intersection Theory}, we will arrive precisely at this relation starting from the integral representation.}. We will consider the above equation as an equation in $b$. Note that by dividing both sides by $B(b,c-b)$ and applying the relation to the $\Gamma$-function and its functional equation, the coefficients become linear functions in $b$. Then, it is a homogeneous difference equation with linear coefficients: $a,c,x$ are considered as parameters. Thus, we can solve this equation by the moment ansatz. Indeed, proceeding as in the previous cases (with the condition $F(a,0,c;x)=1$), after a long and tedious calculation we arrive at:

\begin{equation*}
    F(a,b,c;x)= \dfrac{\Gamma(c)}{\Gamma(a)\Gamma(c-b)}\int\limits_{0}^{1}t^{b-1}(1-t)^{c-b-1}(1-xt)^{-a}dt.
\end{equation*}
This is the  Eulerian integral representation of the hypergeometric series.\\[2mm]

Finally, let us mention one drawback of the moment ansatz. In Gau\ss's paper one also finds contiguous relations, relating $F(a,b,c;x)$, $F(a+1,b,c;x)$ and $F(a-1,b,c;x)$ (see, e.g. equation [1] in $\S 7$ of \cite{Ga28}). The coefficients are also linear in $a$. But in this case the moment ansatz does not produce a nice solution, if one just uses the ansatz

\begin{equation*}
    \int t^{a-1}P(t)dt,
\end{equation*}
since $P(t)$, as we have seen, also  depends on $a$. 

\subsubsection{Historical Note on the Integral Representation}
\label{subsubsec: Historical Note on the Integral Representation}

We wish to make some remarks on the origin of the the integral representation of the hypergeometric function. It is often ascribed to Euler that he found this representation, see, e.g., \cite{An10}, but it is actually not that simple. What can be said for certain is that we do not find the above equality explicitly in any of Euler's publications. Therefore, let us briefly discuss, what Euler actually did. \\[2mm]
First, as we already mentioned in the previous section, Euler  studied the hypergeometric series in its most general form and  defined it as the power series above in \cite{E710}, a paper written in 1778, but just published in 1801. \\[2mm]
He proceeded to find the differential equation satisfied by it and found a transformation, now bearing his name, of the hypergeometric series in that paper. But he did NOT state the above integral representation anywhere. Nevertheless, on several instances, he did more general investigations, from which the formula would easily follow. Those investigations were mainly concerned with differential equations. We mention his 1763 paper \cite{E274}\footnote{Unfortunately, the second half of his paper is lost. But reading the first few paragraphs, it is clear that Euler's investigations would have led him to a formula containing the integral representation of the hypergeometric series as a special case} and especially chapter 12 of his second book on integral calculus \cite{E366} published in 1769. In both works, he derived differential equations for parameter integrals, depending on one ore more variables, by differentiating them under the integral sign.\\[2mm]
His paper on the hypergeometric series, \cite{E710}, was written later, but nevertheless he did not make the connection to his earlier investigations. In conclusion, Euler could have written down the above equation, but he did not do so in any of his works.\\
Therefore, let us turn to the people, who actually stated the integral representation. The first to write down the integral representation appears to be Legendre in 1817 in \cite{Le17}, although Abel in 1827 in \cite{Ab27} is often credited for it, confer, e.g., \cite{Ni05}\footnote{In some sense this is true, since Abel was the first to prove that uniformly convergent series can be integrated term-by term.}. Kummer also found it in 1837 in \cite{Ku37a} and \cite{Ku37b}. In those papers, he gave a general method to convert certain integrals into series and vice versa. \\
Having mentioned all this, it seems to be up to personal preference to call the integral formula the Eulerian integral representation or not.

\begin{figure}
\centering
    \includegraphics[scale=0.8]{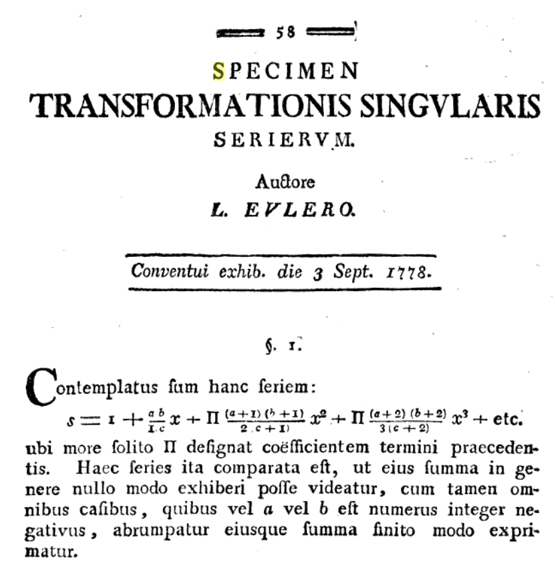}
     \caption{Euler and the hypergeometric Series}
     Definition of the hypergeometric series by Euler in \cite{E710}.
    \end{figure}

\begin{figure}
\centering
    \includegraphics[scale=0.8]{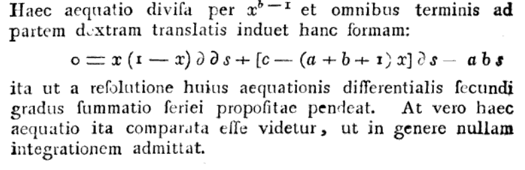}
    \caption{Euler and the hypergeometric Differential Equation}
    Euler finds the differential equation for the hypergeometric series in \cite{E710}.
    \end{figure}

\begin{figure}
\centering
    \includegraphics[scale=0.9]{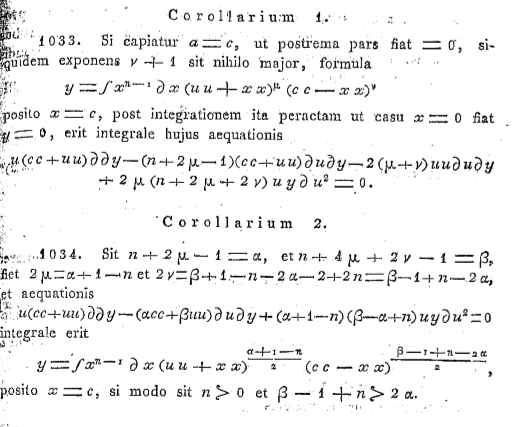}
    \caption{Integral Representation of the hypergeometric Series}
     Euler got close to the integral representation of the hypergeometric series, taken from chapter 10 of \cite{E366}. For a suitable choice of variables and after some simple substitutions, one arrives at the desired formula. Euler never related this result to the findings in \cite{E710}, the paper containing the first ever definition of the hypergeometric series as a power series.
     \end{figure}

\subsection{Euler and the Mellin-Transform}
\label{subsec: Euler and the Mellin Transform}

\subsubsection{Definition of the Mellin-Transform and inverse Mellin-Transform}

Euler's ansatz

\begin{equation*}
    F(x) = \int\limits_{a}^{b}t^{x-1}P(t)dt
\end{equation*}
for the solution of a homogeneous difference equation with linear coefficients bears quite a resemblance to the Mellin-Transform of a function $f(x)$. The Mellin-Transform $M(f)(s)$ is defined as

\begin{equation*}
    M(f)(s):= \int\limits_{0}^{\infty} t^{s-1}f(t)dt,
\end{equation*}
provided the integral exists\footnote{We will give the precise definition in a moment.}.
Although in Euler's approach we had to calculate the limits of integration, we will argue that Euler's method can be considered as a precursor of the Mellin-Transform, a concept introduced by Mellin in his 1895 paper \cite{Me95}. To this end, let us introduce the Mellin-Transform properly.

\begin{definition}[Mellin-Transform]
Let $t^{s-1}f(t) \in L^1(0,\infty)$, then the Mellin-Transform of the function $f$ is defined as:

\begin{equation*}
    F(s):= \int\limits_{0}^{\infty}t^{s-1}f(t)dt.
\end{equation*}
\end{definition}
The inverse Mellin-Transform allows to recover the function $f$ from its Mellin-Transform. We have the

\begin{theorem}[Inverse Mellin-Transform]
If $F(s)=\int\limits_{0}^{\infty}t^{s-1}f(t)dt$ exist, then we have

\begin{equation*}
    f(t)= \dfrac{1}{2\pi i} \int\limits_{c -i \infty}^{c+\infty} F(s)(t)^{-s}ds
\end{equation*}
with $c>0$, provided the integral exists.
\end{theorem}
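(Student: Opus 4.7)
The natural plan is to reduce the claim to the Fourier inversion theorem by an exponential change of variables, since this is essentially how Mellin himself derived the inversion formula. The key observation is that along the vertical line $\operatorname{Re}(s)=c$, parameterising $s=c+i\tau$ and substituting $t=e^{u}$ turns the Mellin transform into an ordinary Fourier transform of the shifted function $g(u):=e^{cu}f(e^{u})$.

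Concretely, I would proceed in three steps. First, starting from the hypothesis $t^{s-1}f(t)\in L^{1}(0,\infty)$ for $\operatorname{Re}(s)=c$, the substitution $t=e^{u}$, $dt=e^{u}\,du$ gives
\begin{equation*}
F(c+i\tau)=\int_{0}^{\infty}t^{c+i\tau-1}f(t)\,dt=\int_{-\infty}^{\infty}e^{cu}f(e^{u})\,e^{i\tau u}\,du=\hat g(-\tau),
\end{equation*}
where $\hat g$ denotes the Fourier transform of $g(u)=e^{cu}f(e^{u})$ (with the convention $\hat g(\xi)=\int g(u)e^{-i\xi u}du$). The hypothesis that $t^{s-1}f(t)\in L^{1}(0,\infty)$ on this line translates into $g\in L^{1}(\mathbb{R})$, so the Fourier transform is well defined.

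Second, I would apply the Fourier inversion theorem (using the additional hypothesis that $F(c+i\tau)$ itself is integrable in $\tau$, which is what the phrase ``provided the integral exists'' must mean to make the formula literal; otherwise one interprets the integral as a principal value or invokes the $L^{2}$-theory). This yields
\begin{equation*}
g(u)=\frac{1}{2\pi}\int_{-\infty}^{\infty}F(c+i\tau)\,e^{-i\tau u}\,d\tau,
\end{equation*}
and therefore, inserting the definition of $g$,
\begin{equation*}
f(e^{u})=\frac{1}{2\pi}\int_{-\infty}^{\infty}F(c+i\tau)\,e^{-(c+i\tau)u}\,d\tau.
\end{equation*}

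Third, I would undo the substitution by setting $t=e^{u}$, so that $e^{-(c+i\tau)u}=t^{-(c+i\tau)}=t^{-s}$, and change variables in the integral from $\tau$ to $s=c+i\tau$, which gives $d\tau=ds/i$ and turns the real line of integration into the vertical line $\operatorname{Re}(s)=c$ traversed from $c-i\infty$ to $c+i\infty$. This yields exactly
\begin{equation*}
f(t)=\frac{1}{2\pi i}\int_{c-i\infty}^{c+i\infty}F(s)\,t^{-s}\,ds,
\end{equation*}
as claimed. The main obstacle is not algebraic but analytic: one must justify the Fourier inversion step, which in full generality requires either $g,\hat g\in L^{1}$, enough regularity at the point $t$ (e.g.\ Dini-type conditions so that the inversion holds pointwise rather than only almost everywhere), or a passage through Plancherel's theorem in the $L^{2}$ setting. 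In a rigorous write-up I would state explicit hypotheses (for instance, $f$ continuous at $t$ and $F(c+i\tau)\in L^{1}(\mathbb{R},d\tau)$) under which the pointwise formula is valid, and otherwise present the identity as holding almost everywhere or in the sense of a Cauchy principal value, independence of the vertical line $c$ within the strip of absolute convergence following from Cauchy's theorem.
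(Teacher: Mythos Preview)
Your proof is correct and is in fact the standard derivation of Mellin inversion via the substitution $t=e^{u}$, which reduces the statement to Fourier inversion. The paper itself does not give a proof at all: it explicitly omits the argument and refers the reader to Titchmarsh's \emph{Introduction to the Theory of Fourier Integrals}, remarking that the formula is stated only for completeness and is not used later. What you have written is essentially the argument one finds in that reference, so there is nothing to compare beyond noting that you have supplied what the paper deliberately left out.
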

We omit the proof here and refer the interested reader, e.g., to \cite{Ti48}. We will not use this formula, but just stated it for the sake of completeness. Obviously, one can apply the inverse Mellin-Transform to find the function $P(t)$ in his ansatz. Thus, Euler's method to find the function $P(t)$ is also a method to find the inverse Mellin-Transform.\\[2mm]
In this section, we want to show that  one can also start from the differential equation satisfied by $P(t)$ and using Euler's ansatz one can recover the difference equation. We want to do this in Eulerian fashion and consider several examples of increasing complexity. \\
Before doing so, let us state the following theorem in advance:

\begin{theorem}
Let $P(t)t^{s-1-n} \in L^1(0,\infty)$, $P \in \mathcal{C}^n(\mathbb{R})$ and $n \in \mathbb{N}$ a fixed natural number. If

\begin{equation*}
    F(s)= \int\limits_{0}^{\infty}t^{s-1}P(t)dt,
\end{equation*}
then
\begin{equation*}
   \int\limits_{0}^{\infty}t^{s-1}P^{(n)}(t)dt= (-1)(s-1)(s-2)\cdots (s-n)F(s-n).
\end{equation*}
\end{theorem}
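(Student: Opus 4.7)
The natural approach is induction on $n$, using integration by parts once at each step. For the base case $n=1$ I would integrate by parts with $u=t^{s-1}$ and $dv=P'(t)\,dt$, obtaining
\begin{equation*}
\int_{0}^{\infty}t^{s-1}P'(t)\,dt = \bigl[t^{s-1}P(t)\bigr]_{0}^{\infty} - (s-1)\int_{0}^{\infty}t^{s-2}P(t)\,dt.
\end{equation*}
Provided the boundary term vanishes (see below), the right-hand side reduces to $-(s-1)F(s-1)$, which agrees with the claimed formula for $n=1$.

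For the inductive step, assume the formula holds for $n-1$ (and for every admissible value of $s$). Integration by parts on $\int_{0}^{\infty}t^{s-1}P^{(n)}(t)\,dt$ with $u=t^{s-1}$ and $dv=P^{(n)}(t)\,dt$ yields, again modulo the boundary contribution,
\begin{equation*}
\int_{0}^{\infty}t^{s-1}P^{(n)}(t)\,dt = -(s-1)\int_{0}^{\infty}t^{(s-1)-1}P^{(n-1)}(t)\,dt.
\end{equation*}
Applying the induction hypothesis to the remaining integral (with $s$ replaced by $s-1$ and $n$ by $n-1$) gives $(-1)^{n-1}(s-2)(s-3)\cdots(s-n)F(s-n)$, and multiplying by $-(s-1)$ produces the desired factor $(-1)^{n}(s-1)(s-2)\cdots(s-n)F(s-n)$. (I note in passing that the sign in the statement should be $(-1)^{n}$; the printed $(-1)$ appears to be a typo, visible already in the $n=1$ case.)

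The main obstacle, and really the only nonroutine point, is justifying that all the boundary terms $\bigl[t^{s-1-k}P^{(n-1-k)}(t)\bigr]_{0}^{\infty}$ vanish, for $k=0,1,\dots,n-1$. Here is where the integrability hypothesis $P(t)\,t^{s-1-n}\in L^{1}(0,\infty)$ together with $P\in\mathcal{C}^{n}$ does the work. The idea is the following: from $t^{s-1-n}P(t)\in L^{1}$ and the continuity of each $P^{(j)}$, one can deduce that $t^{s-1-k}P^{(n-1-k)}(t)$ tends to $0$ both at $0$ and at $\infty$ along suitable sequences; a standard tightening argument (using the monotonicity of $t\mapsto t^{s-1-k}$ on each tail and the dominated convergence theorem applied to the tail integrals) then upgrades this to an honest limit. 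One has to check that the intermediate integrals $\int_{0}^{\infty}t^{s-1-k}P^{(n-k)}(t)\,dt$ converge absolutely, and this too follows by the same comparison against $t^{s-1-n}P(t)$ once we combine it with $P\in\mathcal{C}^{n}$ and the appropriate growth control.

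Once these analytic points are in place, the algebraic identity collapses telescopically and the induction goes through cleanly. The proof is essentially a rigorous version of the heuristic ``$\int t^{s-1}\partial^{n}P = (-\partial)^{n}$ acting on $t^{s-1}$ under the integral,'' which is also the content that makes the Mellin transform intertwine differentiation with multiplication by a falling factorial in $s$, exactly as the preceding discussion motivates.
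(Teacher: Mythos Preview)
Your approach is exactly what the paper does: its entire proof reads ``The proof is by induction and integration by parts.'' You have supplied more detail than the paper, including correctly flagging that the sign should be $(-1)^n$ rather than the printed $(-1)$, and you worry about the boundary terms, which the paper ignores entirely.
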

The proof is by induction and integration by parts.

\subsubsection{1. Example: $\Gamma$-function}

Let us start with the most familiar example of a Mellin-Transfrom. We want to solve the equation

\begin{equation*}
    \Gamma(x+1)=x\Gamma(x)  \quad \text{with} \quad \Gamma(1)=1 \quad \text{for} \quad x>0
\end{equation*}
again. To this end, we start from the differential equation

\begin{equation*}
    P(t)=-P'(t)
\end{equation*}
at which we arrived using Euler's method in section \ref{subsubsec:  Application to the Gamma-function - Finding the Integral Representation}. Thus, multiplying this equation by $t^x$ and integrating from $0$ to infinity, formally

\begin{equation*}
    \int\limits_{0}^{\infty}t^xP'(t)dt =- \int\limits_{0}^{\infty}t^xP(t)dt,
\end{equation*}
whence by theorem 2.5.7

\begin{equation*}
    -x\int\limits_{0}^{\infty}t^{x-1}P(t)dt= - \int\limits_{0}^{\infty}t^xP(t)dt.
\end{equation*}
Therefore, calling $\Gamma(x)=\int\limits_{0}^{\infty}t^{x-1}P(t)dt$, we recover the equation

\begin{equation*}
    \Gamma(x+1)=x\Gamma(x).
\end{equation*}
Since the differential equation for $P$ is solved by $P(t)=Ce^{-t}$ with $C\neq 0$, as we also saw above, using the initial condition $\Gamma(1)=1$, we arrive at the integral representation of the $\Gamma$-function again.

\subsubsection{Remark}

Therefore, we used the differential equation, we found for $P(t)$ via the moment ansatz to reconstruct the difference equation we wanted to solve.\\[2mm]
Generally speaking, we used that the Mellin-Transform converts a differential equation into a difference equation. Thus, vice versa the inverse Mellin-Transform converts a difference equation into a differential equation, which turned out to be the differential equation equation that we found using Euler's method. The latter even works for general limits of integration as we will see in the following examples, whereas the Mellin-Transform is actually just defined for  the limits $0$ and $\infty$.

\subsubsection{2. Example: Generalized $\Gamma$-function}
\label{subsubsec: 2. Example: Generalized Gamma function}

We now want to solve the difference equation
\begin{equation*}
    f(x+1)= (\alpha x +\beta)f(x) \quad \text{with}, \quad f(1)=1, \alpha > 0, \beta > 0 \quad \text{for} \quad x>0.
\end{equation*}
One can use Euler's method again to solve this difference equation or simply guess the solution. Either way, the solution is given by

\begin{equation*}
    f(x) = C\int\limits_{0}^{\infty} t^{x-1+\frac{\beta}{\alpha}}e^{-\frac{t}{\alpha}}dt. 
\end{equation*}
Thus, one finds that $P(t)= Ct^{\frac{\beta}{\alpha}}e^{-\frac{t}{\alpha}}$ in the moment ansatz. The constant $C$ is not important at the moment. We will determine it later in section \ref{subsec: Generalized Factorials}. $P$ satisfies the following the differential equation\footnote{We do not mention the initial condition, since we do not care about the constant $C$ here.}:

\begin{equation*}
    P'(t)\cdot t \alpha = \beta P(t) -t P(t). 
\end{equation*}
Now assume that we did not know about the origin of that equation and we wanted to derive the corresponding difference equation. To this end, we multiply both sides by $t^{x-1}$ and integrate from $0$ to $\infty$:

\begin{equation*}
    \alpha \int\limits_{0}^{\infty} t^xP'(t)dt = \beta  \int\limits_{0}^{\infty} t^{x-1}P(t)dt -  \int\limits_{0}^{\infty}t^xP(t)dt.
\end{equation*}
By theorem 2.5.7, this simplifies to

\begin{equation*}
    - x \alpha  \int\limits_{0}^{\infty} t^{x-1}P(t)dt = \beta  \int\limits_{0}^{\infty} t^{x-1}P(t)dt-  \int\limits_{0}^{\infty} t^xP(t)dt.
\end{equation*}
Introducing $f(x)=  \int\limits_{0}^{\infty} t^{x-1}P(t)dt$, the equation reads.

\begin{equation*}
    -x \alpha f(x)= \beta f(x)- \alpha f(x+1)
\end{equation*}
or, equivalently

\begin{equation*}
    f(x+1)=(\alpha x+ \beta)f(x),
\end{equation*}
which is the initial difference equation again.\\[2mm]
In the next example we want to see how one can derive the limits of integration, if they are not known in advance.

\subsubsection{3. Example: Hermite Polynomials}

We start from the differential equation

\begin{equation*}
    P(t)(t-2x)=2P'(t),
\end{equation*}
 multiply it by $t^n$, and integrate over $t$ from $a$ and $b$:

\begin{equation*}
     \int\limits_{a}^{b}P(t)t^{n+1}dt -  2x \int\limits_{a}^{b} P(t)t^n dt = 2  \int\limits_{a}^{b} t^n P'(t)dt.
\end{equation*}
Since theorem 2.5.7 only holds for $a=0$ and $b=\infty$, we have to keep the absolute part while integrating the right-hand side by parts:

\begin{equation*}
    \int\limits_{a}^{b}P(t)t^{n+1}dt -  2x \int\limits_{a}^{b} P(t)t^n dt =  \left. 2t^n P(t)\right|_{a}^{b} -2n \int\limits_{a}^{b} t^{n-1}P(t)dt.
\end{equation*}
Introducing the notation $H_n(x) = \int\limits_{a}^{b}P(t)t^{n+1}dt$, we have:

\begin{equation*}
    H_{n+1}(x)-2xH_n(x) =  \left. 2t^n P(t)\right|_{a}^{b} -2nH_{n-1}(x).
\end{equation*}
Therefore, to recover the difference equation for the Hermite polynomials, see section \ref{subsubsec: 2. Example: Hermite Polynomials}, it has to be:

\begin{equation*}
    0 =  \left. 2t^n P(t)\right|_{a}^{b} \quad \text{or} \quad 0=b^nP(b)-a^nP(a) \quad n \in \mathbb{N}.
\end{equation*}
Furthermore, we require $a \neq b$, since this solution is trivial. As we also saw in \ref{subsubsec: 2. Example: Hermite Polynomials}, $P(t)$ is given by

\begin{equation*}
    P(t)= C(x) e^{\frac{t^2}{4}-tx}.
\end{equation*}
$C(x)$ is just the integration constant. Therefore, a solution of the  above equation is given by

\begin{equation*}
    (a,b)= (-i \infty, +i \infty),
\end{equation*}
whence, proceeding with this result, we recover the solution found in \ref{subsubsec: 2. Example: Hermite Polynomials}.\\[2mm]
Therefore, we actually do not need Euler's auxiliary function $Q(t)$ to find the limits of integration, but can also use the function $P(t)$ to do so.

\subsubsection{4. Example: Legendre Polynomials}

Finally, let us reconsider the Legendre polynomials already discussed in \ref{subsubsec: 1. Example: Legendre Polynomials}. We start from the differential equation\footnote{This equation can be derived from the explicit formula for the function $R$ in \ref{subsubsec: 1. Example: Legendre Polynomials}.}

\begin{equation*}
    R'(t)(t-2xt^2+t^3)=(1-tx)R(t),
\end{equation*}
 multiply it by $t^{n-2}$, and then integrate it from $a$ to $b$, whence:

\begin{equation*}
    \int\limits_{a}^{b}R'(t)(t^{n-1}-2xt^n+t^{n+1})dt = \int (t^{n-2}-t^{n-1}x)R(t) dt. 
\end{equation*}
Integration by parts on the left-hand side yields:

\begin{equation*}
    \left. R(t)t^{n-1}(t^2-2xt+1)\right|_{a}^{b}-   \int\limits_{a}^{b} R(t) ((n-1)t^{n-2}-2xnt^{n-1}+(n+1)t^n)dt =   \int\limits_{a}^{b} R(t) (t^{n-2}-t^{n-1}x)dt.
\end{equation*}
We want the absolute terms to vanish; using the explicit formula for $R(t)$ from \ref{subsubsec: 1. Example: Legendre Polynomials}, i.e. $\frac{t}{\sqrt{1-2xt+t^2}}$, this implies:

\begin{equation*}
    0 = b^n \sqrt{1-2xb+b^2}-a^n \sqrt{1-2xa+a^2} \quad \forall n \in \mathbb{N}.
\end{equation*}
A solution, aside from $a=b$, is given by

\begin{equation*}
    (a,b) = \left(x-\sqrt{x^2-1}, x +\sqrt{x^2+1}\right),
\end{equation*}
which is found, if each term in the above condition is set $=0$. Finally, let us introduce the notation $P_n(x) = \int \limits_{a}^{b} t^{n-1}P(t)dt$ with the solutions we just found for $a$ and $b$. Hence we arrive at

\begin{equation*}
    -(n-1)P_{n-1}(x) +2xn P_n(x)-(n+1)P_(n+1)(x) = P_{n-1}(x)-xP_n(x),
\end{equation*}
which, after some rearrangement, reads as:

\begin{equation*}
    (n+1)P_{n+1}(x) = (2n+1)xP_n(x)-nP_{n-1}(x)
\end{equation*}
which is the difference equation from which we started in \ref{subsubsec: 1. Example: Legendre Polynomials}.

\subsubsection{Final Results}

The examples we reconsidered illustrate lucidly that Euler, in some sense, anticipated the Mellin-Transform and found a way to calculate the inverse Mellin-Transform without using contour integration. As we already indicated, Euler's approach is more general then Mellin's, since Euler's method works for general limits of integration.\\
Moreover, it is now evident that Euler's results obtained via the moment ansatz can be justified rigorously using the theory of the Mellin-Transform and therefore, Euler's results are indeed correct. And Euler's momentum method is not just a precursor of the Mellin-Transform, but it is actually the same idea. Euler just discovered it in a completely different context, i.e. the solution of difference equations, see section \ref{subsec: Euler's Idea}, whereas Mellin introduced it in his studies of the hypergeometric differential equation. see, e.g., Mellin's paper \cite{Me95}.

\subsection{Modern Idea - Intersection Theory}
\label{subsec: Modern Idea - Intersection Theory}

Euler's approach, referred to as moment method in section \ref{subsec: Euler's Idea}, started from the propounded difference equation and reduced it to a differential equation, whereas the Mellin-Transform approach outlined in section \ref{subsec: Euler and the Mellin Transform} started from a differential equation and derived the initially propounded difference equation. In this section, we want to introduce another method; it starts from a hypergeometric integral of the Eulerian type, i.e.

\begin{equation*}
    I = \int_{\mathcal{C}}t^{n-1}(1-t)^{\alpha}(1-xt)^{\beta}dt, \quad n,\alpha, \beta \in \mathbb{C}\setminus \mathbb{Z},
\end{equation*}
and derives a difference equation for it. We, following the treatise \cite{Ao11}, will briefly mention the fundamental theorems and ideas of the theory and give some examples, partially also found in \cite{Fr19}. Before doing so, let us summarize the main ingredient of Euler's approach.

\subsubsection{Main Ingredient of Euler's Approach}
\label{subsubsec: Main Ingredient of Euler's Approach}

As mentioned in section \ref{subsec: Euler's Idea}, Euler explained his idea to solve homogeneous difference equations with linear coefficients in his 1750 paper \cite{E123} $\S\S 49 - 53$, mainly focusing on continued fractions. But his main idea, aside from the ansatz

\begin{equation*}
    \int\limits_{a}^{b}t^{x-1}P(t)dt,
\end{equation*}
is the assumption of an auxiliary equation, i.e. adding the extra term $t^{x}Q(t)$. This was necessary to find the limits of integration\footnote{It will turn out that it is very difficult to find the limits using intersection theory, since that theory actually starts from the integrals, i.e. the integration domain is given from the start.}. Then, we differentiated this auxiliary equation just to integrate it again later. Therefore, what we have essentially done, is to express the integral

\begin{equation*}
    \int\limits_{a}^{b}t^{x+1}P(t)dt
\end{equation*}
as a linear combination of one or two others of the same kind. More precisely, we found an equation

\begin{equation*}
    \int\limits_{a}^{b}t^{x+1}P(t)dt =  C_1(x) \int\limits_{a}^{b}t^{x}P(t)dt + C_2(x)   \int\limits_{a}^{b}t^{x-1}P(t)dt.
\end{equation*}
$C_1(x)$ and $C_2(x)$ are functions of $x$, linear in $x$ in our case. In physics, see, e.g., \cite{Fr19}, such equations are  referred to as integration-by-parts-identities, or IBPs for short, and are frequently employed in the calculation of Feynman  integrals of Feynman diagrams. The name IBP stems from the fact that the exponent of $t$ is lowered by $1$ by integrating by parts once. \\
Therefore, the extra term $t^xQ(t)$ can be understood as a term that has to be added to ensure that we arrive at the IBP given by the propounded difference equation. Intersection theory now turns this on its head and starts from an integral like the one above. It tells how many integrals of the same type one needs to express the given integral (e.g., it would immediately tell us that one requires two integrals to express $\int\limits_{a}^{b}t^{x+1}P(t)dt$) and furthermore it provides us with a tool to calculate the coefficients ($C_1(x)$ and $C_2(x)$ in our example).\\
But we will use it here to solve difference equations again, which will once more lead us to differential equations for the function $P(t)$. Thus, let us explain the origin of intersection theory first.

\subsubsection{Overview}

In the above integral, products of powers appear. Since we assume the powers to be non-integer numbers in general, we will modify de Rham's theory presented in section \ref{subsubsec: de Rham Theory}, formalizing the ordinary theory of integrals of single-valued functions, to the multi-valued case. This modified de Rham theory, presented in section \ref{subsubsec: Twisted de Rham Theory}, is referred to as twisted de Rham theory. Since the key to ordinary de Rham theory is Stokes' theorem, will we explain the analogue of this theorem for integrals of multi-valued functions first. We will do this in section \ref{subsubsec: Towards Stokes' Theorem for Integrals of multi-valued Functions}.

\subsubsection{Necessity for the Concept of a Twist}
\label{subsubsec: Necessity for the Concept of a Twist}

In this section, we want to motivate the concept of a twist.  To this end, let $M$ be a $n$-dimensional affine variety obtained as the complement, in $\mathbb{C}^n$, of zeros $D_j := \lbrace P_j(u)=0 \rbrace$ of a finite number of polynomials $P_j(u)=P_j(u_1, \cdots, u_n)$, $1 \leq j \leq m$; hence

\begin{equation*}
    M:= \mathbb{C}^n\setminus D, \quad D := \bigcup\limits_{j=1}^{m}D_j, 
\end{equation*}
and consider the multi-valued function on $M$:

\begin{equation*}
    U(u)=\prod_{j=1}^{m}P_{j}(u)^{\alpha_j}, \quad \alpha_j \in \mathbb{C}\setminus \mathbb{Z}, ~~ 1 \leq j\leq m.
\end{equation*}
Treating this $U(u)$, one way to get rid of its multi-valuedness such that the concepts of ordinary analysis can be applied to it, is to lift this $U$ to a  covering manifold $\tilde{M}$ of $M$ and consider it on this $\tilde{M}$. But, in general the relation among $\tilde{M}$ and $M$ is so complicated that this approach is not preferable for practical calculations as we intend to perform them here.\\
Thus, instead of lifting $U$ to $\tilde{M}$, we  introduce some quantities introducing a twist arising from the multi-valuedness, and then analyze $U$. In order to explain the key concepts as simply as possible, let us take a smooth triangulation $K$ on $M$. As mentioned earlier, we want to extend Stokes' theorem to multi-valued differential forms $U\varphi$ with $\varphi \in \mathcal{A}^{\bullet}(M)$ and  find the twisted version of de Rham's theory afterwards.

\subsubsection{Towards Stokes' Theorem for Integrals of multi-valued Functions}
\label{subsubsec: Towards Stokes' Theorem for Integrals of multi-valued Functions}

We will argue rather intuitively, since the explanation of every mathematical detail is beyond the scope of this article. Let $\Delta$ be one of the $p$-simplexes of the smooth triangulation $K$ of $M$ and let $\varphi$ be a smooth $p$-form on $M$. To determine the integral of a multi-valued $p$-form we have to fix the branch of $U$ on $\Delta$. Then, we get to the following definition:

\begin{definition}
Let the symbol $\Delta \otimes U_{\Delta}$ denote the pair of $\Delta$ and one of the branches $U_{\Delta}$ of $U$. Then,  for $\varphi \in \mathcal{A}^{p}(M)$, the integral $\int\limits_{\Delta \otimes U_{\Delta}} U \cdot \varphi $ is defined as:
\begin{equation*}
    \int\limits_{\Delta \otimes U_{\Delta}} U \cdot \varphi := \int\limits_{\Delta} \left[\text{the fixed branch }U_{\Delta} \text{ of }U \text{ on } \Delta\right] \cdot \varphi.
\end{equation*}
\end{definition}
Since $U_{\Delta}$ can be continued analytically on a sufficiently small neighborhood of $\Delta$, on this neighborhood, for a single-valued $p$-form $U_{\Delta}\cdot \varphi$ and a $p$-simplex $\Delta$, the ordinary Stokes theorem holds and reads:

\begin{theorem}[Stokes's Theorem on $\Delta$]
For $\varphi \in \mathcal{A}^{p-1}(M)$ one has

\begin{equation*}
    \int\limits_{\Delta} d \left(U_{\Delta} \cdot \varphi\right) = \int\limits_{\partial \Delta} U_{\Delta}\cdot \varphi.
\end{equation*}
\end{theorem}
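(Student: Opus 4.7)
The plan is to reduce the claim to the classical Stokes theorem for single-valued smooth differential forms. Since $\Delta$ is a smooth $p$-simplex of the triangulation $K$ on $M = \mathbb{C}^n \setminus D$, it lies entirely in the complement of the divisor $D$ and is compact. The function $U(u) = \prod_{j=1}^{m} P_j(u)^{\alpha_j}$ is locally holomorphic on $M$; its multi-valuedness arises solely because $M$ need not be simply connected.

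First, I would construct an open neighborhood $V \subset M$ of $\Delta$ that is simply connected. Because $\Delta$ is compact and disjoint from $D$, one can choose a tubular neighborhood $V$ of $\Delta$ in $M$ that deformation-retracts onto $\Delta$; since any geometric simplex is contractible, $V$ can be chosen simply connected. The fixed branch $U_{\Delta}$, given initially on $\Delta$, then extends uniquely by analytic continuation to a single-valued holomorphic function on $V$, which I will continue to denote $U_{\Delta}$.

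Second, on $V$ the product $U_{\Delta} \cdot \varphi$ is a genuine single-valued smooth $(p-1)$-form, so $d(U_{\Delta} \cdot \varphi)$ is a single-valued smooth $p$-form on $V$. Applying the ordinary Stokes theorem on the compact oriented $p$-simplex $\Delta \subset V$, with $\partial \Delta$ equipped with the induced orientation, yields
\begin{equation*}
    \int_{\Delta} d\left(U_{\Delta}\cdot \varphi\right) = \int_{\partial \Delta} U_{\Delta}\cdot \varphi,
\end{equation*}
and by the definition of the integral of a multi-valued form given just above the theorem, both sides coincide with the integrals occurring in the statement.

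The main obstacle, and really the only nontrivial point, is the construction of the simply connected neighborhood $V$ together with the verification that $U_{\Delta}$ extends uniquely and holomorphically to it. Once this extension is established, no further analytic input beyond the classical Stokes theorem is required. This also clarifies the role of fixing a branch: different branches $U_{\Delta}$ would simply produce different single-valued extensions on $V$, each satisfying its own instance of the identity, which is precisely what will be needed later to pass from a local statement on a single simplex to a global statement on chains $\sum \Delta_i \otimes U_{\Delta_i}$.
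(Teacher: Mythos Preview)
Your proposal is correct and follows exactly the approach the paper takes: the paper simply remarks that $U_{\Delta}$ can be analytically continued to a sufficiently small neighborhood of $\Delta$, so that $U_{\Delta}\cdot\varphi$ becomes a single-valued form there and the ordinary Stokes theorem applies. Your version is in fact more careful than the paper's, since you explain why such a simply connected neighborhood exists (tubular neighborhood retracting onto the contractible simplex), whereas the paper just asserts it.
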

On the other hand, since

\begin{equation*}
    d\left(U_{\Delta}\cdot \varphi\right) = dU_{\Delta} \wedge \varphi + U_{\Delta}d \varphi = U_{\Delta} \left(d \varphi+\dfrac{dU_{\Delta}}{U_{\Delta}} \wedge \varphi\right)
\end{equation*}
on $\Delta$ and $\omega = \frac{dU}{U}$ is a single-value holomorphic $1$-form, setting

\begin{equation*}
    \nabla_{\omega}\varphi := d \varphi +\omega \wedge \varphi,
\end{equation*}
we have:

\begin{equation*}
     d\left(U_{\Delta}\cdot \varphi\right) = U_{\Delta} \nabla_{\omega} \varphi.
\end{equation*}
$\nabla_{\omega} \varphi$ can be regarded as the defining equation of the covariant differential operator $\nabla_{\omega}$ associated to the connection form $\omega$. It satisfies $\nabla_{\omega}\cdot \nabla_{\omega}=0$ and in such a case $\nabla_{\omega}$ is said to be integrable or to define a Gau\ss{}-Manin connection of rank $1$ according to \cite{De70}. Thus, we can rewrite Stokes' theorem, using the symbol $\Delta \otimes U_{\Delta}$, as:

\begin{theorem}[Stokes' Theorem rewritten]
For $\varphi \in \mathcal{A}^{p-1}(M)$ we have:

\begin{equation*}
    \int\limits_{\Delta}d\left(U_{\Delta}\cdot \varphi\right)= \int\limits_{\Delta \otimes U_{\Delta}} U \cdot \nabla_{\omega}\varphi,
\end{equation*}
where

\begin{equation*}
     \nabla_{\omega}\varphi := d \varphi +\omega \wedge \varphi \quad \text{and} \quad \omega =\dfrac{dU}{U}.
\end{equation*}
\end{theorem}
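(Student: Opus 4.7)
The plan is to observe that this reformulated Stokes theorem is really just a matter of translating the classical Stokes theorem on $\Delta$ (already stated as the preceding theorem) into the notation $\Delta \otimes U_{\Delta}$, using the Leibniz rule to absorb the derivative of $U_{\Delta}$ into the connection form $\omega = dU/U$. Nothing genuinely new has to be proved; the work is purely bookkeeping.

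First I would start from the left-hand side and apply the ordinary Leibniz rule to the single-valued $p$-form $U_{\Delta}\cdot\varphi$, obtaining
\begin{equation*}
    d(U_{\Delta}\cdot\varphi) = dU_{\Delta}\wedge\varphi + U_{\Delta}\,d\varphi.
\end{equation*}
Next, since $U_{\Delta}$ is a nowhere vanishing holomorphic function on a sufficiently small neighborhood of $\Delta$, I can factor out $U_{\Delta}$ to rewrite the right-hand side as
\begin{equation*}
    d(U_{\Delta}\cdot\varphi) = U_{\Delta}\left(d\varphi + \frac{dU_{\Delta}}{U_{\Delta}}\wedge\varphi\right).
\end{equation*}
The crucial observation, already noted in the exposition preceding the theorem, is that $dU/U$ is a globally single-valued holomorphic $1$-form on $M$, because differentiating the logarithm of a multi-valued function kills the branch ambiguity; in particular $dU_{\Delta}/U_{\Delta}$ coincides with the restriction of $\omega = dU/U$ to the neighborhood of $\Delta$. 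Hence
\begin{equation*}
    d(U_{\Delta}\cdot\varphi) = U_{\Delta}\bigl(d\varphi + \omega\wedge\varphi\bigr) = U_{\Delta}\cdot\nabla_{\omega}\varphi.
\end{equation*}

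Finally I would integrate both sides over $\Delta$. On the left I simply have $\int_{\Delta} d(U_{\Delta}\cdot\varphi)$. On the right, using the definition of the twisted integral given immediately before the theorem, the integral of the single-valued form $U_{\Delta}\cdot\nabla_{\omega}\varphi$ over $\Delta$ equals, by that very definition, $\int_{\Delta\otimes U_{\Delta}} U\cdot\nabla_{\omega}\varphi$. Chaining these equalities together produces the claimed identity.

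The only subtle point, and hence the one to state carefully rather than merely compute, is the identification $dU_{\Delta}/U_{\Delta} = \omega$: one must invoke that $\omega = \sum_{j}\alpha_{j}\,dP_{j}/P_{j}$ is independent of the chosen branch of $U$, so that the locally defined $dU_{\Delta}/U_{\Delta}$ glues into the globally single-valued form $\omega$ used in the definition of $\nabla_{\omega}$. Once this is made explicit, the theorem is nothing more than Stokes' theorem on $\Delta$ repackaged in twisted notation, and no further estimates or limiting arguments are required.
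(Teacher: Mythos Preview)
Your proof is correct and follows essentially the same approach as the paper: apply the Leibniz rule to $d(U_{\Delta}\cdot\varphi)$, factor out $U_{\Delta}$, identify $dU_{\Delta}/U_{\Delta}$ with the single-valued form $\omega$, and then integrate using the definition of the twisted integral over $\Delta\otimes U_{\Delta}$. The paper in fact carries out exactly this computation in the text immediately preceding the theorem statement, so there is nothing to add.
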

Next, we want to rewrite the right-hand side of the ordinary Stokes' theorem (2.5.8). We will illustrate the idea by considering examples in low dimension.

\subsubsection{Towards the twisted Homology Group}
\label{subsubsec: Towards the twisted Homology Group}

We start with the one-dimensional case. For an oriented $1$-simplex, with starting point $p$ and ending point $q$, and a smooth function (since $0$-forms are identified with functions) $\varphi$ on $M$, Stokes' theorem (2.5.8.) becomes:

\begin{equation*}
    \int\limits_{\Delta} d\left(U_{\Delta}\cdot \varphi\right)= U_{\Delta}(q)\varphi(q)-U_{\Delta}(p)\varphi(p).
\end{equation*}
Here we have $\partial \Delta = \left\langle q \right\rangle - \left\langle p \right\rangle$ and $U_{\Delta}(q)$ is the value of the germ of the function $U_q$, determined by the branch $U_{\Delta}$ at the boundary $q$ of $\Delta$, at the point $q$.\\
Therefore, the symbol $\left\langle q\right\rangle \otimes U_q$ is determined and we have:

\begin{equation*}
    U_{\Delta}(q)\varphi(q)= \text{the integral of }U\cdot \varphi \text{ on } \left\langle q\right\rangle \otimes U_q,
\end{equation*}
whence we conclude the formula:

\begin{equation*}
    \int\limits_{\Delta \otimes U_{\Delta}} U \cdot \nabla_{\omega}\varphi = \int\limits_{\left\langle q\right\rangle \otimes U_q- \left\langle p\right\rangle \otimes U_p} U \cdot \varphi.
\end{equation*}
Since, in the context of the rewritten Stokes theorem, the right-hand side is supposed to be the boundary of $\Delta \otimes U_{\Delta}$, we can define a boundary operator by:

\begin{definition}[Boundary Operator for one dimensional $1$-simplex]
The boundary operator for this case can be defined as:
\begin{equation*}
    \partial_{\omega}(\Delta \otimes U_{\Delta}): = \left\langle q\right\rangle \otimes U_q- \left\langle p\right\rangle \otimes U_p.
\end{equation*}
\end{definition}Stokes' theorem then reads:

\begin{equation*}
    \int\limits_{\Delta \otimes U_{\Delta}} U \cdot \nabla_{\omega}\varphi = \int\limits_{ \partial_{\omega}(\Delta \otimes U_{\Delta})} U \cdot \varphi.
\end{equation*}
Let us also consider the two-dimensional case and let $\Delta$ be an oriented $2$-simplex with vertices $\left\langle 1 \right\rangle$,  $\left\langle 2 \right\rangle$,  $\left\langle 3 \right\rangle$. 

\begin{center}
    \begin{figure}
        \centering
        \includegraphics[scale=0.7]{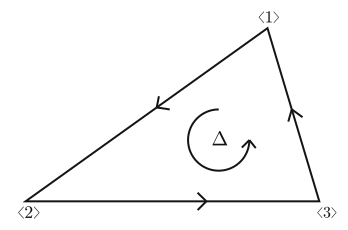}
        \caption{2-simplex with Orientation in the two-dimensional Case} Auxiliary Figure for the determination of the boundary operator in the two-dimensional case. This is Figure 2.2 from \cite{Ao11}.
    \end{figure}
\end{center}
For a smooth $1$-form $\varphi$ on $M$, the right-hand side of the ordinary Stokes theorem reads:

\begin{equation*}
    \int\limits_{\partial \Delta} U_{\Delta} \cdot \varphi = \int\limits_{\left\langle 12\right\rangle} U_{\Delta}\cdot \varphi +\int\limits_{\left\langle 23\right\rangle} U_{\Delta}\cdot \varphi +
    \int\limits_{\left\langle 31\right\rangle} U_{\Delta}\cdot \varphi.
\end{equation*}
Let $U_{\left\langle 12 \right\rangle}$ be the branch on the boundary $\left\langle 12 \right\rangle$ of $\Delta$ determined by $U_{\Delta}$. Using the symbol $\left\langle 12 \right\rangle \otimes U_{\left\langle 12 \right\rangle}$, the  above formula can be rewritten as follows:

\begin{equation*}
    \int_{\partial \Delta} U_{\Delta}\cdot \varphi = \int\limits_{\left\langle 12 \right\rangle \otimes U_{\left\langle 12 \right\rangle}}U \cdot \varphi +
    \int\limits_{\left\langle 23 \right\rangle \otimes U_{\left\langle 23 \right\rangle}}U \cdot \varphi +
    \int\limits_{\left\langle 31 \right\rangle \otimes U_{\left\langle 31 \right\rangle}}U \cdot \varphi. 
\end{equation*}
This leads to the following definition of the boundary operator in this case:

\begin{definition}[Boundary Operator for $2$-simplex in two-dimensional case]
The boundary operator in this case reads:

\begin{equation*}
    \partial_{\omega}(\Delta \otimes U_{\Delta}):= \left\langle 12 \right\rangle \otimes U_{\left\langle 12 \right\rangle}+ \left\langle 23 \right\rangle \otimes U_{\left\langle 23 \right\rangle}+ \left\langle 31 \right\rangle \otimes U_{\left\langle 31 \right\rangle}.
\end{equation*}
\end{definition}
Hence Stokes' theorem  becomes:

\begin{equation*}
    \int\limits_{\Delta \otimes U_{\Delta}} U \cdot \nabla_{\omega} \varphi = \int\limits_{\partial(\Delta \otimes U_{\Delta})} U\varphi.
\end{equation*}
After these introductory examples, we can generalize them to the higher-dimensional case. Note that the boundary operators defined  above are  precisely those appearing in algebraic topology when one defines homology with coefficients in a local system. We will explain this below.\\
For now, consider the following differential equation on $M$

\begin{equation*}
    \nabla_{\omega}h = dh +\sum_{j=1}^{m}\alpha_j \dfrac{dP_j}{P_j}h=0
\end{equation*}
which has a general solution, formally expressible as:

\begin{equation*}
    h= c \prod_{j=1}^{m}P_j^{-\alpha_j}, \quad c \in \mathbb{C}.
\end{equation*}
The space generated by the local solutions of the differential equation has dimension $1$. Cover the manifold $M$ by a sufficiently fine  locally finite open cover $M= \cup U_{\nu}$ and  fix a single-valued non-zero solution $h_{\nu}$ on each $U_{\nu}$. Since, provided $\mu \neq \nu$, $h_{\mu}$ and $h_{\nu}$ are solutions of the same differential function on the non-empty set $U_{\mu}\cap U_{\nu}$, setting

\begin{equation*}
    h_{\mu}(u)= g_{\mu \nu}(u)h_{\nu}, \quad u \in U_{\mu}\cap U_{\nu}
\end{equation*}
the transition function $g_{\mu \nu}$ is a constant on $U_{\mu}\cap U_{\nu}$. Since a solution $h(u)$ on $U_{\mu}\cap U_{\nu}$ is expressed in two ways, i.e. as $h= \xi_{\mu}h_{\mu}=h= \xi_{\nu}h_{\nu}$ with $\xi_{\mu}, \xi_{\nu} \in \mathbb{C}$, we find $\xi_{\mu}= g_{\mu \nu}^{-1}\xi_{\nu}$. \\
Therefore, the set of all local solutions of the given differential equation defines a flat line bundle $\mathcal{L}_{\omega}$ obtained by gluing the fibers $\mathbb{C}$ by the transition functions $\left\lbrace g_{\mu \nu}^{-1}\right\rbrace$. Let us denote the  flat line bundle obtained from the transition functions $\left\lbrace g_{\mu \nu}\right\rbrace$ by $\mathcal{L}_{\omega}^{\vee}$ and call it the dual line bundle of the line bundle $\mathcal{L}_{\omega}$.  By the above equation relating $h_{\nu}$ and $h_{\mu}$, $h_{\mu}^{-1}$ becomes a local section of $\mathcal{L}_{\omega}^{\vee}$. Moreover, from the initial differential equation, $\mathcal{L}_{\omega}^{\vee}$ can be considered as the flat line bundle generated by the set of all local solutions of

\begin{equation*}
    \nabla_{-\omega} h=dh- \sum_{j=1}^{m}\alpha_{j}\dfrac{dP_j}{P_j}h=0,
\end{equation*}
and $U(u)= \prod P_j^{\alpha_j}$ is its local section.\\[2mm]
Often, e.g., in \cite{Ao11}, the term "flat line bundle" is used in the same sense as "system of rank $1$". Therefore, we call $\mathcal{L}_{\omega}^{\vee}$ the dual local system of $\mathcal{L}_{\omega}$. Hence we see that the boundary operators defined in the first two examples are those of the chain groups $C_1(K, \mathcal{L}_{\omega}^{\vee})$ and $C_2(K, \mathcal{L}_{\omega}^{\vee})$ with coefficients in $\mathcal{L}_{\omega}^{\vee}$. Naturally, if using $M$ instead of the simplicial complex $K$, we denote the chain group by $C_q(M, \mathcal{L}_{\omega}^{\vee})$ for the $q$-dimensional case. More precisely, we were lead to the definition:

\begin{definition}[$p$-dimensional twisted Chain Group]
We call

\begin{equation*}
    C_p(M, \mathcal{L}_{\omega}^{\vee}) = \left\lbrace \begin{array}{l}
         \text{for a $p$-simplex $\Delta$ of $M$}, \\
         \text{the complex vector space} \\
         \text{with basis $\Delta \otimes U_{\Delta}$}
    \end{array} \right\rbrace
\end{equation*}
the $p$-dimensional twisted chain  group.
\end{definition}Moreover, as a generalization of the above examples, we have:

\begin{definition}[Boundary Operator of the $p$-simplex]
We define the boundary operator

\begin{equation*}
    \partial_{\omega}: C_{p}(M, \mathcal{L}_{\omega}^{\vee}) \rightarrow C_{p-1}(M, \mathcal{L}_{\omega}^{\vee})
\end{equation*}
for the $p$-simplex $\Delta = \left\langle 012\cdots p\right\rangle$ as

\begin{equation*}
    \partial_{\omega}(\Delta \otimes U_{\Delta}):= \sum_{j=0}^{p} (-1)^{j} \left\langle 01 \cdots \widehat{j}\cdots p\right\rangle \otimes U_{\left\langle 01 \cdots \widehat{j}\cdots p\right\rangle}.
\end{equation*}
where $\widehat{j}$ indicates that this term is omitted.
\end{definition}
Similarly to the ordinary homology theory, one has:

\begin{theorem}
The boundary operator $\partial_{\omega}$ satisfies:

\begin{equation*}
    \partial_{\omega}\circ \partial_{\omega}=0.
\end{equation*}
\end{theorem}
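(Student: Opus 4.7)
The plan is to mimic the classical proof that the ordinary simplicial boundary operator squares to zero, and to verify that the only new ingredient, the branch decoration $U_{(-)}$, does not obstruct the combinatorial cancellation. Fix a $p$-simplex $\Delta=\langle 0\,1\,\cdots\, p\rangle$ and a chosen branch $U_\Delta$ of $U$ on a neighborhood of $\Delta$. I would first write out $\partial_\omega\bigl(\partial_\omega(\Delta\otimes U_\Delta)\bigr)$ using the definition twice, obtaining a double sum
\begin{equation*}
\sum_{j=0}^{p}(-1)^{j}\,\partial_\omega\Bigl(\langle 0\cdots\widehat{j}\cdots p\rangle\otimes U_{\langle 0\cdots\widehat{j}\cdots p\rangle}\Bigr)
=\sum_{j=0}^{p}\sum_{i}(-1)^{j}(-1)^{\epsilon(i,j)}\,\langle 0\cdots\widehat{i}\cdots\widehat{j}\cdots p\rangle\otimes U_{\langle 0\cdots\widehat{i}\cdots\widehat{j}\cdots p\rangle},
\end{equation*}
where $\epsilon(i,j)$ is the index of $i$ within the vertex list of the codimension-one face $\langle 0\cdots\widehat{j}\cdots p\rangle$.

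Next I would split the inner sum according to whether the second omitted vertex $i$ comes before or after $j$ in the original ordering, so that each codimension-two face $\langle 0\cdots\widehat{i}\cdots\widehat{j}\cdots p\rangle$ (with $i<j$) arises in exactly two terms of the double sum: once by omitting $j$ first and then $i$ (contributing sign $(-1)^{j}(-1)^{i}$), and once by omitting $i$ first and then $j$ (contributing sign $(-1)^{i}(-1)^{j-1}$, because after removing the $i$-th vertex the label $j$ becomes the $(j-1)$-st). These two signs are opposite, so the purely combinatorial part vanishes exactly as in ordinary simplicial homology.

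The crucial point specific to the twisted setting is that, in both ways of reaching the same codimension-two face, the branch attached to it coincides. This is forced by the definition: the branch $U_{\langle 0\cdots\widehat{j}\cdots p\rangle}$ is the unique analytic continuation of $U_\Delta$ to a neighborhood of the face $\langle 0\cdots\widehat{j}\cdots p\rangle$, and restricting this germ further to $\langle 0\cdots\widehat{i}\cdots\widehat{j}\cdots p\rangle$ yields the germ $U_{\langle 0\cdots\widehat{i}\cdots\widehat{j}\cdots p\rangle}$; the same germ is obtained if one first restricts to $\langle 0\cdots\widehat{i}\cdots p\rangle$ and then to the common codimension-two face, because analytic continuation inside a simply connected neighborhood of $\Delta$ is unambiguous. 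Hence the two contributions carry identical branch decorations and opposite signs, so they cancel in $C_{p-2}(M,\mathcal{L}_\omega^\vee)$ and we conclude $\partial_\omega\circ\partial_\omega=0$.

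The main obstacle I foresee is not the combinatorics, which is entirely parallel to the untwisted case, but the bookkeeping needed to argue rigorously that the branch $U_{\langle 0\cdots\widehat{i}\cdots\widehat{j}\cdots p\rangle}$ is the same in both derivations. This requires the observation that $\Delta$ admits a simply connected open neighborhood in $M$ on which $U_\Delta$ extends single-valuedly; once that is granted, all branch germs attached to faces of $\Delta$ are canonically determined restrictions of this single extension, and the proof reduces cleanly to the classical cancellation of faces.
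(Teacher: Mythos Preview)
Your proof is correct. The paper does not actually give a proof of this theorem: it simply states it with the remark ``Similarly to the ordinary homology theory, one has'' and then immediately moves on to define the twisted homology group. Your argument supplies precisely the details the paper leaves implicit, namely the classical sign cancellation for codimension-two faces together with the observation that the branch decorations agree because they are all restrictions of a single-valued extension of $U_\Delta$ over a simply connected neighborhood of $\Delta$. This is the standard and expected proof, and the paper's brief allusion to ordinary homology theory indicates exactly this line of reasoning.
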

Having given this theorem, we can define the twisted homology group:

\begin{definition}[Twisted Homology Group]
The qoutient vector space

\begin{equation*}
    H_{p}(M, \mathcal{L}_{\omega}^{\vee}):= \left\lbrace \text{Ker } \partial_{\omega}:   C_{p}(M, \mathcal{L}_{\omega}^{\vee}) \rightarrow C_{p-1}(M, \mathcal{L}_{\omega}^{\vee}) \right\rbrace / \partial_{\omega} C_{p+1}(M, \mathcal{L}_{\omega}^{\vee})
\end{equation*}
is called the $p$-dimensional twisted homology group, and an element of Ker $\partial_{\omega}$ is called a twisted cycle.
\end{definition}

\subsubsection{Locally finite Twisted Holonomy Group}
\label{subsubsec: Locally finite Twisted Holonomy Group}

Having arrived at the definition of the twisted homology group, we can now, in analogy to the ordinary homology group, introduce some related concepts. We start with the locally finite twisted homology group. \\
Since in general $M$ is not compact, it would require infinitely many simplices for its triangulation, hence it is natural to consider an infinite chain group which  is locally finite. More precisely, we set:

\begin{definition}[Locally finite $p$-dimensional twisted chain group]
The group

\begin{equation*}
    C_{p}^{lf}(M, \mathcal{L}_{\omega}^{\vee}) := \left\lbrace  \Sigma c_{\Delta}\Delta \otimes U_{\Delta}| \text{the $\Delta$'s are locally finite}\right\rbrace
\end{equation*}
is called the locally finite Twisted chain Group.
\end{definition}
Since $\partial_{\omega} \circ \partial_{\omega}=0$, $(C_{\bullet}^{lf}(M,\mathcal{L}_{\omega}^{\vee}))$ forms a complex, which in turn leads to the next

\begin{definition}[Locally finite Twisted  Holonomy Group]
The homology group obtained from the complex $(C_{\bullet}^{lf}(M,\mathcal{L}_{\omega}^{\vee}))$ is denoted by $(H_{\bullet}^{lf}(M,\mathcal{L}_{\omega}^{\vee}))$ and called the locally finite Twisted Holonomy Group.
\end{definition}
Let us now consider Stokes' theorem. Thus, let $\Delta$ be a $p$-simplex of $M$, $U_{\Delta}$ a branch of $U$ on $\Delta$, $\varphi \in \mathcal{A}^{p-1}(M)$, then the right-hand side of Stokes' theorem becomes:

\begin{equation*}
        \renewcommand{\arraystretch}{1,5}
\setlength{\arraycolsep}{0.0mm}
\begin{array}{lll}
     \int\limits_{\partial \Delta} U_{\Delta} \cdot \varphi &~=~& \sum_{i=0}^{p} (-1)^i \int\limits_{\left\langle 0 \cdots \widehat{i}\cdots p\right\rangle} U_{\Delta}\cdot \varphi \\
     &~=~& \sum_{i=0}^{p} (-1)^i \int\limits_{\left\langle 0 \cdots \widehat{i}\cdots p\right\rangle \otimes U_{\left\langle 0 \cdots \widehat{i} \cdots p \right\rangle}} U \cdot \varphi \\
     &~=~& \int\limits_{\partial(\Delta \otimes U_{\Delta})}U \cdot \varphi.
\end{array}
\end{equation*}
For a general twisted chain, one can just extend the definition $\mathbb{C}$-linearly and arrives at the following  form of Stokes' theorem:

\begin{theorem}[Stokes' Theorem for $C_p(M, \mathcal{L}_{\omega}^{\vee})$ and $C_p^{lf}(M, \mathcal{L}_{\omega}^{\vee})$]{~}\\

\begin{itemize}
\item[1.] For $\sigma \in C_p(M, \mathcal{L}_{\omega}^{\vee}) $ and $\varphi \in \mathcal{A}^{p-1}(M)$ we have:

\begin{equation*}
    \int\limits_{\sigma}U \cdot \nabla_{\omega} \varphi = \int\limits_{\partial_{\omega}\sigma} U \cdot \varphi.
\end{equation*}
\item[2.]  For $\tau \in C_p^{lf}(M, \mathcal{L}_{\omega}^{\vee}) $ and $\varphi \in \mathcal{A}_c^{p-1}(M)$ we have:

\begin{equation*}
    \int\limits_{\tau}U \cdot \nabla_{\omega} \varphi = \int\limits_{\partial_{\omega}\tau} U \cdot \varphi.
\end{equation*}
\end{itemize}
\end{theorem}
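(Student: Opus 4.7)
The plan is to reduce the statement to the ordinary Stokes theorem applied simplex-by-simplex, and then extend by $\mathbb{C}$-linearity (in part 1) or by a finiteness-on-support argument (in part 2). The essential input has already been assembled in the text preceding the theorem: on each smooth $p$-simplex $\Delta$ the branch $U_\Delta$ is single-valued in a neighborhood, so $U_\Delta\cdot\varphi$ is an ordinary smooth $(p-1)$-form, and the Leibniz rule together with $\omega = dU/U$ yields the key identity $d(U_\Delta\cdot\varphi) = U_\Delta\cdot\nabla_\omega\varphi$.

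For part 1, I would first handle a generator $\sigma = \Delta\otimes U_\Delta$. Classical Stokes on $\Delta$ gives
\begin{equation*}
\int\limits_\Delta d(U_\Delta\cdot\varphi) = \int\limits_{\partial\Delta} U_\Delta\cdot\varphi.
\end{equation*}
The left-hand side equals $\int_{\Delta\otimes U_\Delta} U\cdot\nabla_\omega\varphi$ by the identity above and the definition of integration against a twisted simplex. For the right-hand side, I would write $\partial\Delta$ as the signed sum $\sum_j(-1)^j\langle 0\cdots\widehat{j}\cdots p\rangle$, note that on each face $\langle 0\cdots\widehat{j}\cdots p\rangle$ the branch $U_\Delta$ restricts to the branch $U_{\langle 0\cdots\widehat{j}\cdots p\rangle}$ (this is exactly how those face-branches were defined), and recognize the resulting sum as $\int_{\partial_\omega(\Delta\otimes U_\Delta)} U\cdot\varphi$ by Definition 2.5.10. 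Then $\mathbb{C}$-linear extension in $\sigma$ finishes part 1; this is immediate because both sides of the desired identity are $\mathbb{C}$-linear in $\sigma$, and a general $\sigma\in C_p(M,\mathcal{L}_\omega^\vee)$ is by definition a finite $\mathbb{C}$-linear combination of generators.

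For part 2, the subtlety is that a locally finite chain $\tau = \sum_\Delta c_\Delta\,\Delta\otimes U_\Delta$ may involve infinitely many simplices, so one cannot naively integrate term-by-term. The fix is the compactness of $\operatorname{supp}(\varphi)$: since $\varphi\in\mathcal{A}^{p-1}_c(M)$, its support meets only finitely many of the simplices $\Delta$ in the locally finite family (this is the defining virtue of local finiteness), and the same is true for $\nabla_\omega\varphi = d\varphi + \omega\wedge\varphi$, whose support is contained in $\operatorname{supp}(\varphi)$. Consequently, both $\int_\tau U\cdot\nabla_\omega\varphi$ and $\int_{\partial_\omega\tau} U\cdot\varphi$ reduce to finite sums, to which part 1 applies term by term. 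One should check that the faces entering $\partial_\omega\tau$ also satisfy the local finiteness needed for $\int_{\partial_\omega\tau}U\cdot\varphi$ to make sense, but this is a routine consequence of local finiteness of $\tau$ (each simplex contributes only $p+1$ faces, and in any compact neighborhood only finitely many of those arise).

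The main obstacle, and really the only nontrivial bookkeeping, is keeping track of the branches: one must verify that the restriction of $U_\Delta$ to the face $\langle 0\cdots\widehat{j}\cdots p\rangle$ is precisely the branch $U_{\langle 0\cdots\widehat{j}\cdots p\rangle}$ appearing in Definition 2.5.10, so that the signed sum over faces coming from classical Stokes matches $\partial_\omega$ \emph{on the nose}, signs and branches included. Once this bookkeeping is pinned down, the rest is a direct consequence of the classical Stokes theorem, the Leibniz computation $d(U_\Delta\varphi)=U_\Delta\nabla_\omega\varphi$, and (for part 2) the finiteness forced by the compact support of $\varphi$.
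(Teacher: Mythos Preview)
Your proposal is correct and follows essentially the same approach as the paper: verify the identity on a single generator $\Delta\otimes U_\Delta$ via classical Stokes and the Leibniz computation $d(U_\Delta\varphi)=U_\Delta\nabla_\omega\varphi$, match the boundary terms with the definition of $\partial_\omega$, and extend $\mathbb{C}$-linearly. Your treatment of part 2 via the compact-support finiteness argument is in fact more explicit than the paper's, which simply states the result after the $\mathbb{C}$-linear extension remark.
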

Next, since we already see the similarity to the de Rham theory here, we want to construct the twisted version of the theory. To this end, using the last theorem, we have to establish relations among the cohomology of

\begin{equation*}
    \left(A^{\bullet}(M), \nabla_{\omega}\right), \quad \left(A_p^{\bullet}(M), \nabla_{\omega}\right)
\end{equation*}
and the twisted holonomy groups 

\begin{equation*}
    H_{\bullet}\left(M, \mathcal{L}_{\omega}^{\vee}\right), \quad  H_{\bullet}^{lf}\left(M, \mathcal{L}_{\omega}^{\vee}\right).
\end{equation*}

\subsubsection{de Rham Theory}
\label{subsubsec: de Rham Theory}

As mentioned in the preceding section, we want to establish that the cohomology of the complex $ \left(A^{\bullet}(M), \nabla_{\omega}\right)$ is dual to the twisted homology   $H_{\bullet}\left(M, \mathcal{L}_{\omega}^{\vee}\right)$. Since the proof of this fact is too long to present it here, we will simply state the theorem. The reader interested in the proof is, e.g., referred to \cite{Ao11}.\\
Furthermore, let us point out that ordinary de Rham theory corresponds to the case $\omega =0$ and thus twisted de Rham theory will be its natural generalization. \\[2mm]
De Rham theory is summarized by de Rham's theorem and duality theorems.
\begin{theorem}[de Rham's Theorem]
The following isomorphisms exist:

\begin{equation*}
     \renewcommand{\arraystretch}{2,0}
\setlength{\arraycolsep}{0.0mm}
\begin{array}{ll}
   (1) \quad & H^p(M, \mathbb{C}) \simeq  \left \lbrace
     \renewcommand{\arraystretch}{1,0}
\setlength{\arraycolsep}{0.0mm}
    \begin{array}{c}
         H^{p}(\mathcal{A}^{\bullet}(M),d) \\
         \uparrow \wr\\
         H^{p}(\Omega^{\bullet}(M),d)
    \end{array}
    \right\rbrace \xrightarrow{\sim} H^{p}(\mathcal{K}^{\bullet}(M),d) \xleftarrow{\sim} H_{2n-p}^{lf}(M, \mathbb{C}) \\
    (2) \quad & H_c^{p}(M, \mathbb{C}) \simeq H^p(\mathcal{A}_{c}^{\bullet}(M),d) \xrightarrow{\sim} H^p(\mathcal{K}_{c}^{\bullet}(M),d) \xleftarrow{\sim} H_{2n-p}(M,\mathbb{C}).
    \end{array}
\end{equation*}
\end{theorem}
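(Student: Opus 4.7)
The plan is to establish each isomorphism in the chain separately, using sheaf-theoretic arguments together with Poincaré--Lefschetz duality; the whole result then follows by composing them.

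First I would handle the top isomorphism $H^{p}(M,\mathbb{C}) \simeq H^{p}(\mathcal{A}^{\bullet}(M),d)$ in the standard way: the Poincaré lemma for smooth forms furnishes a fine resolution
\[
0 \to \underline{\mathbb{C}} \to \mathcal{A}^{0} \xrightarrow{d} \mathcal{A}^{1} \xrightarrow{d} \mathcal{A}^{2} \to \cdots
\]
of the constant sheaf on $M$. Since $M$ is a smooth manifold, it admits smooth partitions of unity, so each $\mathcal{A}^{q}$ is soft (hence acyclic), and the hypercohomology of the complex equals the cohomology of its global sections; this gives the smooth de Rham isomorphism. The compactly supported variant in part (2) is proved in exactly the same manner, replacing $\mathcal{A}^{q}$ by $\mathcal{A}^{q}_{c}$ and noting that partitions of unity preserve compact support of forms, so the flabbiness/softness argument adapts.

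Next I would justify the vertical identification $H^{p}(\Omega^{\bullet}(M),d) \xrightarrow{\sim} H^{p}(\mathcal{A}^{\bullet}(M),d)$. Here $M$ is the complement of a hypersurface $D$ in $\mathbb{C}^{n}$, hence a smooth affine variety. The key input is Grothendieck's algebraic de Rham theorem from \cite{Gr66}-style arguments, or more precisely the fact that for a smooth affine variety the inclusion of the complex of regular (hence holomorphic) differential forms into the complex of smooth complex-valued forms is a quasi-isomorphism. Equivalently, one uses the degeneration of the Hodge-to-de Rham spectral sequence together with vanishing of coherent cohomology on affine varieties (Cartan's Theorem B), so that the hyperderived global sections of $\Omega^{\bullet}$ reduce to the cohomology of global sections.

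To produce the maps into and out of the currents complex $\mathcal{K}^{\bullet}(M)$, I would use that a smooth form defines a current by integration against a compactly supported test form, giving a morphism of complexes $\mathcal{A}^{\bullet}(M) \to \mathcal{K}^{\bullet}(M)$; that this is a quasi-isomorphism again follows from the sheaf-theoretic argument since currents also form a fine resolution of $\underline{\mathbb{C}}$ (the Poincaré lemma holds for currents by regularisation). The compactly supported analogue $\mathcal{A}^{\bullet}_{c}(M) \to \mathcal{K}^{\bullet}_{c}(M)$ is handled identically.

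The main obstacle, and the step requiring the most work, is the last isomorphism $H_{2n-p}^{lf}(M,\mathbb{C}) \xrightarrow{\sim} H^{p}(\mathcal{K}^{\bullet}(M),d)$, which encodes Poincaré duality on the orientable real $2n$-manifold $M$. I would establish it by associating to a locally finite singular chain $\sigma$ (with complex coefficients) the current $T_{\sigma}(\varphi) := \int_{\sigma}\varphi$ on compactly supported test forms $\varphi$; Stokes' theorem shows this is a morphism of complexes, and local finiteness of $\sigma$ is exactly what allows the integral to be evaluated on any compactly supported form. To prove it induces an isomorphism I would reduce to the local case via a Mayer--Vietoris argument on a good open cover, where both sides compute $\mathbb{C}$ in degree matching an oriented cell; the global statement then follows by the comparison theorem for spectral sequences of coverings. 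For the compactly supported pairing in (2), the roles of $C_{\bullet}$ and $C_{\bullet}^{lf}$ are swapped and one uses $\mathcal{K}^{\bullet}_{c}$, but the argument is formally the same. Composing all the pieces produces the two chains of isomorphisms stated in the theorem.
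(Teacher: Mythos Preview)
Your sketch is a reasonable outline of the standard sheaf-theoretic proof of de Rham's theorem and its Poincar\'e-duality extension to currents. However, the paper does not actually prove this statement: it explicitly declares that ``the proof of this fact is too long to present it here'' and refers the reader to \cite{Ao11}. So there is no proof in the paper to compare your approach against; the theorem is simply quoted as background for the subsequent twisted version.

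One remark on your sketch itself: the vertical isomorphism $H^{p}(\Omega^{\bullet}(M),d)\simeq H^{p}(\mathcal{A}^{\bullet}(M),d)$ that you justify via Grothendieck's algebraic de Rham theorem is correct in spirit, but note that in the paper's setting $M$ is the complement of a divisor in $\mathbb{C}^{n}$ (hence Stein and smooth affine), so your appeal to Cartan's Theorem~B is the right tool. Your handling of the current complex and the locally finite homology via Mayer--Vietoris and local computation is also the standard route. The only place you are slightly informal is in the compactly supported version of the first step: $\mathcal{A}_c^q$ is not a sheaf in the usual sense, so one either works with cosheaves or, more commonly, phrases the argument via the compactly supported cohomology functor $H_c^{\bullet}$ applied to the fine resolution; this is a minor packaging issue rather than a gap.
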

Here, $\mathcal{K}_c^p$ is the sheaf of germs of currents of degree $p$ with compact support on $M$. A current is an element of the space containing both $\mathcal{C}^{\infty}$ differential forms and smooth singular chains.\\
Furthermore, we have the following 

\begin{theorem}[Duality Theorems]
The following bilinear forms are non-degenerate
\begin{equation*}
       \renewcommand{\arraystretch}{2,0}
\setlength{\arraycolsep}{0.0mm}
\begin{array}{cccccccccccccc}
    (1) \quad & H_p(M,\mathbb{C}) &~\times~ & H^{p}(\mathcal{A}^{\bullet}(M),d) &~\rightarrow ~& \mathbb{C} \\
              &                  &         & (\left[ \sigma\right], \left[\varphi\right]) &~\longmapsto~ & \int\limits_{\sigma}\varphi, \\
              (2) \quad & H^{2n-p}(\mathcal{A}_{c}^{\bullet}M,\mathbb{C}) &~\times~ & H^{p}(\mathcal{A}^{\bullet}(M),d) &~\rightarrow ~& \mathbb{C} \\
              &                  &         & (\left[ \alpha\right], \left[\beta\right]) &~\longmapsto~ & \int\limits_{M}\alpha \wedge \beta. \\
\end{array}
\end{equation*}
\end{theorem}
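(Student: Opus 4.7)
The plan is to handle each pairing in three steps: verify it descends to the quotient via Stokes' theorem, rewrite it through the de Rham isomorphism stated in the preceding theorem as a more classical pairing, and then invoke a known non-degeneracy result.

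First I would check well-definedness. For (1), if $\partial\sigma=0$ and $\varphi=d\psi$ is exact, then Stokes gives $\int_\sigma d\psi=\int_{\partial\sigma}\psi=0$; conversely, if $\sigma=\partial\tau$ and $d\varphi=0$, then $\int_{\partial\tau}\varphi=\int_\tau d\varphi=0$. For (2) the same argument works, but one crucially uses that $\alpha\wedge\beta$ has compact support whenever one of the factors does, so Stokes applies without a contribution from infinity. In both cases the bilinear form descends to the indicated quotients.

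For (1), I would next use the de Rham isomorphism $H^p(\mathcal{A}^\bullet(M),d)\simeq H^p(M,\mathbb{C})$ from Theorem~2.5.16(1). By the construction of that isomorphism, integration of smooth forms over smooth singular chains is precisely the map that realizes it, so the pairing in question becomes identified with the classical Kronecker evaluation pairing between singular cohomology and singular homology with complex coefficients. Because $\mathbb{C}$ is a field, the universal coefficient theorem collapses to a natural isomorphism $H^p(M,\mathbb{C})\simeq \mathrm{Hom}_{\mathbb{C}}(H_p(M,\mathbb{C}),\mathbb{C})$, which is equivalent to the non-degeneracy of the Kronecker pairing and hence of the integration pairing.

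For (2), the non-degeneracy is Poincaré duality for the de Rham complex on the oriented real $2n$-dimensional manifold $M$. I would prove it by Mayer-Vietoris induction on a good open cover: the base case $M\cong\mathbb{R}^{2n}$ is handled by the Poincaré lemma (each side is $\mathbb{C}$ in a single degree and zero otherwise, and the pairing is computed directly to be non-zero); for the inductive step, I would apply the five lemma to the Mayer-Vietoris sequences for $H^\bullet$ and $H^\bullet_c$, linked in opposite variance by the wedge-product pairing. A direct-limit / exhaustion argument then extends the statement from finite good covers to arbitrary second-countable $M$. The main obstacle is precisely in this induction: one has to verify that the squares connecting the two Mayer-Vietoris sequences via the integration pairing commute up to a controlled sign, which forces one to track the orientation conventions and the sign in the connecting homomorphism for compactly supported cohomology. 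Once this compatibility is in place the five lemma closes the induction, and Theorem~2.5.16(2) ensures that the de Rham cohomology indeed computes the compactly supported cohomology that Poincaré duality refers to, so the resulting isomorphism is the integration pairing stated in the theorem.
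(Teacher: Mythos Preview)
Your outline is a correct and standard route to these statements: well-definedness via Stokes, part (1) by reducing the integration pairing through the de Rham isomorphism to the Kronecker pairing and invoking universal coefficients over a field, and part (2) by the usual Mayer--Vietoris/five-lemma proof of Poincar\'e duality for de Rham cohomology. The paper, however, does not actually prove this theorem. It presents the de Rham theorem and the duality theorems as the ``fundamental theorems in de Rham theory'' and simply states them, remarking that the proofs are too long to include and referring the reader to \cite{Ao11}. So there is nothing to compare at the level of argument: you have supplied a proof where the paper deliberately omits one. Your sketch is in line with what one finds in standard references (e.g.\ Bott--Tu for part (2)), and the caveat you flag about sign compatibility in the Mayer--Vietoris squares is indeed the one place where care is required.
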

These are the fundamental theorems in de Rham theory. These results can be extended to twisted de Rham theory.

\subsubsection{Twisted de Rham Theory}
\label{subsubsec: Twisted de Rham Theory}

We state the most important theorems necessary to develop twisted de Rham theory

\begin{theorem}
There is an natural isomorphism
\begin{equation*}
    H^{p}(M, \mathcal{L}_{\omega}) \simeq \operatorname{Hom}_{\mathbb{C}}(H_p(M, \mathcal{L}_{\omega}^{\vee}), \mathbb{C})
\end{equation*}
\end{theorem}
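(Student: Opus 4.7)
The plan is to construct a natural pairing
\[
H^{p}(\mathcal{A}^{\bullet}(M), \nabla_{\omega}) \;\times\; H_{p}(M, \mathcal{L}_{\omega}^{\vee}) \;\longrightarrow\; \mathbb{C},
\qquad ([\varphi],\, [\sigma]) \;\longmapsto\; \int_{\sigma} U \cdot \varphi,
\]
and then to show two things: first, that $H^{p}(M, \mathcal{L}_{\omega})$ is naturally isomorphic to the cohomology $H^{p}(\mathcal{A}^{\bullet}(M), \nabla_{\omega})$ of the twisted de Rham complex (the twisted analogue of statement (1) in Theorem 2.5.14); second, that the resulting pairing is non-degenerate, so it induces the desired isomorphism onto $\operatorname{Hom}_{\mathbb{C}}(H_{p}(M, \mathcal{L}_{\omega}^{\vee}), \mathbb{C})$.

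First I would verify that the pairing is well defined on cohomology/homology classes. If $\varphi = \nabla_{\omega}\psi$ is exact, then Stokes' theorem for $C_{p}(M, \mathcal{L}_{\omega}^{\vee})$ (Theorem 2.5.13, part 1) gives
\[
\int_{\sigma} U \cdot \nabla_{\omega}\psi \;=\; \int_{\partial_{\omega}\sigma} U \cdot \psi \;=\; 0
\]
whenever $\partial_{\omega}\sigma = 0$, and symmetrically the integral vanishes if $\sigma = \partial_{\omega}\tau$ is a twisted boundary, because then $\int_{\partial_{\omega}\tau} U \cdot \varphi = \int_{\tau} U \cdot \nabla_{\omega}\varphi = 0$ for $\nabla_{\omega}$-closed $\varphi$. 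Thus the pairing descends to the quotient spaces.

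Second, I would establish the twisted de Rham theorem $H^{p}(M, \mathcal{L}_{\omega}) \simeq H^{p}(\mathcal{A}^{\bullet}(M), \nabla_{\omega})$. The idea is to regard $\mathcal{L}_{\omega}$ as the sheaf of flat local sections of the trivial line bundle equipped with the connection $\nabla_{\omega}$, and then to take the fine resolution of $\mathcal{L}_{\omega}$ by the sheaves $\mathcal{A}^{\bullet}$ of smooth forms, twisted by the connection; flatness ($\nabla_{\omega} \circ \nabla_{\omega} = 0$) guarantees that this is a resolution, and a twisted Poincaré lemma shows exactness. Since the $\mathcal{A}^{q}$ are fine (partitions of unity are available on $M$), their global sections compute the sheaf cohomology. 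This step is the formal analogue of the classical de Rham isomorphism.

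Third, the pairing induces a linear map $H^{p}(M, \mathcal{L}_{\omega}) \to \operatorname{Hom}_{\mathbb{C}}(H_{p}(M, \mathcal{L}_{\omega}^{\vee}), \mathbb{C})$, and I would prove it is an isomorphism by the twisted analogue of Theorem 2.5.15 (duality). The main obstacle is precisely this non-degeneracy statement: classically it relies on Poincaré--Lefschetz duality combined with the universal coefficient theorem, and in the twisted situation one has to be careful because $M$ is non-compact and the local system has non-trivial monodromy. The cleanest route is to exploit the fact that $\mathcal{L}_{\omega}^{\vee}$ is the dual local system and to apply Poincaré duality with coefficients in a local system (for which one combines the $H^{p}$ versus $H^{p}_{c}$ duality from Theorem 2.5.15(2) with a comparison between ordinary and locally finite twisted homology), reducing the claim to the already-known universal coefficient theorem over the field $\mathbb{C}$. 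Naturality is automatic, since the pairing is constructed directly from $U$ and integration and commutes with any morphism of the data $(M, \omega)$.
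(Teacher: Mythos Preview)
The paper does not actually prove this theorem: it is stated at the opening of the twisted de Rham section with the general disclaimer that ``the proof of this fact is too long to present it here'' and a reference to Aomoto--Kita. So there is no detailed argument in the paper to compare against. That said, the role this theorem plays in the paper makes clear what kind of proof is intended, and it is not the one you outline.

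In the paper's logical order this statement \emph{precedes} the twisted de Rham isomorphism (their Theorem labelled (1)--(2)) and the non-degeneracy statements; indeed the sentence immediately following the theorem uses it to \emph{define} the abstract pairing $\langle [\sigma],[\varphi]\rangle$ before any integration enters. Your plan invokes both the twisted de Rham theorem and the non-degeneracy of the integration pairing to deduce the result, which in the paper's architecture (and in Aomoto--Kita) is circular: those later results are built on top of this one.

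More to the point, the statement is purely algebraic-topological and needs no de Rham theory. By definition the twisted cochain complex is
\[
C^{p}(M,\mathcal{L}_{\omega}) \;=\; \operatorname{Hom}_{\mathbb{C}}\!\bigl(C_{p}(M,\mathcal{L}_{\omega}^{\vee}),\,\mathbb{C}\bigr),
\]
the coboundary being the transpose of $\partial_{\omega}$. Since $\mathbb{C}$ is a field, $\operatorname{Hom}_{\mathbb{C}}(-,\mathbb{C})$ is exact, so cohomology of the dual complex is the dual of homology; this is just the universal coefficient theorem over a field (the $\operatorname{Ext}$ term vanishes). Naturality is immediate from the functoriality of $\operatorname{Hom}$. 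You yourself invoke UCT at the very end of your Step~3 --- that one line is the whole proof, and the integration pairing, Stokes' theorem, and Poincar\'e duality are all unnecessary detours here.
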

Via this isomorphism, the cohomology group $H^{p}(M, \mathcal{L}_{\omega})$ can be considered as the dual space of the homology group $H_{p}(M,\mathcal{L}_{\omega}^{\vee})$. Therefore,  the value $\left\langle \left[\sigma\right], \left[ \varphi\right] \right\rangle$ of a cohomology class $\left[\varphi \right] \in H^p(M, \mathcal{L}_{\omega})$ at a homology class $\left[\sigma\right] \in H_p(M, \mathcal{L}_{\omega})$ is well-defined and we have the following 
\begin{theorem}
The following bilinear form is non-degenerate
\begin{equation*}
      \renewcommand{\arraystretch}{1,0}
\setlength{\arraycolsep}{0.0mm}
\begin{array}{ccccccccc}
     H_p(M, \mathcal{L}_{\omega}^{\vee})&~\times ~& H^p(M, \mathcal{L}_{\omega})&~\rightarrow & \mathbb{C} \\
      & & \left(\left[\sigma\right], \left[\varphi\right]\right) &~ \rightarrow ~& \left\langle \left[\sigma\right], \left[\varphi\right] \right\rangle.
\end{array}
\end{equation*}
\end{theorem}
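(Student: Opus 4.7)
The plan is to deduce the duality statement almost immediately from the preceding theorem, which asserts a natural isomorphism
\begin{equation*}
H^{p}(M,\mathcal{L}_{\omega}) \;\simeq\; \operatorname{Hom}_{\mathbb{C}}\bigl(H_{p}(M,\mathcal{L}_{\omega}^{\vee}),\mathbb{C}\bigr).
\end{equation*}
Under this identification, a class $[\varphi]\in H^{p}(M,\mathcal{L}_{\omega})$ corresponds precisely to the linear functional $[\sigma]\mapsto \langle[\sigma],[\varphi]\rangle$ on $H_{p}(M,\mathcal{L}_{\omega}^{\vee})$. So the proposed bilinear pairing is nothing but evaluation of a functional against its argument, and proving non-degeneracy reduces to two separate checks.

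First I would verify non-degeneracy in the cohomology slot. If $\langle[\sigma],[\varphi]\rangle=0$ for every $[\sigma]\in H_{p}(M,\mathcal{L}_{\omega}^{\vee})$, then the functional on $H_{p}(M,\mathcal{L}_{\omega}^{\vee})$ associated to $[\varphi]$ under the preceding isomorphism is identically zero; since the correspondence is an isomorphism (in particular injective), this forces $[\varphi]=0$. This direction is essentially tautological once the preceding theorem is granted, and requires no finiteness hypothesis.

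The second direction — non-degeneracy in the homology slot — is the one that requires real input. One needs that if $\langle[\sigma],[\varphi]\rangle=0$ for every $[\varphi]\in H^{p}(M,\mathcal{L}_{\omega})$, then $[\sigma]=0$. Equivalently, the canonical map from $H_{p}(M,\mathcal{L}_{\omega}^{\vee})$ into its double dual must be injective. For abstract vector spaces this canonical map is always injective, so this step is in fact free; however, to rephrase the conclusion as genuine duality (rather than one-sided pairing) one wants both groups to be finite-dimensional, so that each is canonically the full dual of the other. Here I would invoke that $M=\mathbb{C}^{n}\setminus D$ is a smooth affine variety and that, under the non-integrality assumption $\alpha_{j}\in\mathbb{C}\setminus\mathbb{Z}$ imposed earlier, the twisted homology and cohomology with local-system coefficients are known to be finite-dimensional (e.g. by the comparison with algebraic de Rham cohomology in the spirit of Deligne, as used in \cite{Ao11}).

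The main obstacle, then, is not the formal algebra of pairings — which is essentially immediate from the previous theorem — but rather verifying finite-dimensionality in the present non-compact, multi-valued setting. Once that is in place, combining injectivity of the canonical map into the double dual with the identification $H^{p}(M,\mathcal{L}_{\omega})\simeq H_{p}(M,\mathcal{L}_{\omega}^{\vee})^{\vee}$ yields the non-degeneracy of the pairing in both arguments, completing the proof.
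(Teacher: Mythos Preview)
Your approach is correct and matches the paper's. In fact, the paper does not give a proof of this theorem at all: it simply states it immediately after the preceding isomorphism $H^{p}(M,\mathcal{L}_{\omega})\simeq\operatorname{Hom}_{\mathbb{C}}(H_{p}(M,\mathcal{L}_{\omega}^{\vee}),\mathbb{C})$, remarking that via this identification the pairing value is well-defined, and refers the reader to \cite{Ao11} for details. Your argument---deducing non-degeneracy in the cohomology slot directly from injectivity of that isomorphism, and handling the homology slot via the canonical embedding into the double dual (with finite-dimensionality invoked for a clean two-sided statement)---is exactly the intended reasoning and is more explicit than what the paper provides.
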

In analogy to de Rham's Theorem, we have the

\begin{theorem}
The following isomorphisms exist

\begin{equation*}
     \renewcommand{\arraystretch}{2,0}
\setlength{\arraycolsep}{0.0mm}
\begin{array}{ll}
   (1) \quad & H^p(M, \mathcal{L}_{\omega}) \simeq  \left \lbrace
     \renewcommand{\arraystretch}{1,0}
\setlength{\arraycolsep}{0.0mm}
    \begin{array}{c}
         H^{p}(\mathcal{A}^{\bullet}(M),\nabla_{\omega}) \\
         \uparrow \wr\\
         H^{p}(\Omega^{\bullet}(M),\nabla_{\omega})
    \end{array}
    \right\rbrace \xrightarrow{\sim} H^{p}(\mathcal{K}^{\bullet}(M),\nabla_{\omega}) \xleftarrow{\sim} H_{2n-p}^{lf}(M, \mathcal{L}_{\omega}) \\
    (2) \quad & H_c^{p}(M, \mathcal{L}_{\omega}) \simeq H^p(\mathcal{A}_{c}^{\bullet}(M),\nabla_{\omega}) \xrightarrow{\sim} H^p(\mathcal{K}_{c}^{\bullet}(M),\nabla_{\omega}) \xleftarrow{\sim} H_{2n-p}(M,\mathcal{L}_{\omega}).
    \end{array}
\end{equation*}
\end{theorem}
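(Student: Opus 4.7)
The plan is to establish each arrow in the displayed diagram separately, using the complex of currents $\mathcal{K}^{\bullet}$ as the common intermediary into which both smooth forms and locally finite chains embed. The decisive tool throughout is the \emph{twisted Poincaré lemma}: on any sufficiently small contractible open $U \subset M$ the local system $\mathcal{L}_{\omega}$ admits a single-valued trivializing section $h$ satisfying $\nabla_{-\omega} h = 0$, so that writing $\varphi = h \cdot \psi$ converts $\nabla_{\omega}\varphi$ into $\pm h \cdot d\psi$; this reduces exactness of the twisted complex to the classical Poincaré lemma. Combined with the fact that $\mathcal{A}^{\bullet}$ and $\mathcal{K}^{\bullet}$ are fine sheaves (admitting partitions of unity, which act on currents by multiplication), one obtains fine resolutions
\begin{equation*}
0 \to \mathcal{L}_{\omega} \to \mathcal{A}^{0} \xrightarrow{\nabla_{\omega}} \mathcal{A}^{1} \xrightarrow{\nabla_{\omega}} \cdots, \qquad 0 \to \mathcal{L}_{\omega} \to \mathcal{K}^{0} \xrightarrow{\nabla_{\omega}} \mathcal{K}^{1} \xrightarrow{\nabla_{\omega}} \cdots .
\end{equation*}
Abstract sheaf cohomology then gives the first two isomorphisms on the right of (1), namely $H^{p}(M,\mathcal{L}_{\omega}) \cong H^{p}(\mathcal{A}^{\bullet}(M),\nabla_{\omega}) \cong H^{p}(\mathcal{K}^{\bullet}(M),\nabla_{\omega})$.

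For the vertical comparison $H^{p}(\Omega^{\bullet}(M),\nabla_{\omega}) \xrightarrow{\sim} H^{p}(\mathcal{A}^{\bullet}(M),\nabla_{\omega})$ I would invoke Grothendieck's algebraic de Rham theorem: since $M$ is the complement of an algebraic divisor in $\mathbb{C}^{n}$, it is Stein and affine, so higher sheaf cohomology of each $\Omega^{p}$ vanishes (Cartan's Theorem B), and the spectral sequence of the double complex $(\Omega^{p} \otimes \mathcal{A}^{q}, \nabla_{\omega}, \bar{\partial})$ collapses in two ways to identify both $\nabla_{\omega}$-cohomologies with the same total cohomology.

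The remaining arrow of (1), $H_{2n-p}^{lf}(M,\mathcal{L}_{\omega}) \xrightarrow{\sim} H^{p}(\mathcal{K}^{\bullet}(M),\nabla_{\omega})$, is the twisted analogue of the classical representation of Borel–Moore homology by currents. The map sends a locally finite $(2n-p)$-chain $\tau$ with branch decoration $U_{\tau}$ to the current $\varphi \mapsto \int_{\tau} U \cdot \varphi$; the twisted Stokes theorem in its locally finite form, proved earlier in the excerpt, shows this is a chain map intertwining $\partial_{\omega}$ and $\nabla_{\omega}$. Since $\mathcal{K}^{\bullet}$ already resolves $\mathcal{L}_{\omega}$ and $M$ carries its canonical complex orientation, this descends to the required isomorphism. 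Part (2) is then proved by repeating these steps with the roles of ``locally finite'' and ``compactly supported'' interchanged: $\mathcal{A}_{c}^{\bullet}$ and $\mathcal{K}_{c}^{\bullet}$ are c-soft resolutions computing $H_{c}^{p}(M,\mathcal{L}_{\omega})$, and ordinary (finite) twisted chains pair against compactly supported currents via the first form of Stokes' theorem.

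The hard step will be the Poincaré–Lefschetz duality arrow identifying twisted locally finite homology with current cohomology: one must check that integration against $\tau$ lands in the correct degree of currents, that it descends to homology independently of triangulation and of the choice of local sections $h_{\mu}$, and that orientations match consistently with the transition functions $g_{\mu\nu}$ defining $\mathcal{L}_{\omega}^{\vee}$. The cleanest route is sheaf-theoretic via Verdier duality with local coefficients, which forces one to unpack the dualizing complex $\omega_{M}^{\bullet} \simeq \mathbb{C}_{M}[2n]$ for a complex manifold; the alternative chain-level argument requires an explicit cap product with a locally finite fundamental cycle twisted by $\mathcal{L}_{\omega}$. Either way, the non-twisted case established in the preceding theorem is recovered locally through the trivialization $h$, and the whole statement follows by globalizing with a partition of unity.
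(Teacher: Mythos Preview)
The paper does not actually prove this theorem. It is stated without proof in the section on twisted de Rham theory, with the explicit remark just before that ``the proof of this fact is too long to present it here, we will simply state the theorem. The reader interested in the proof is, e.g., referred to \cite{Ao11}.'' So there is no argument in the paper to compare against.

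That said, your outline is the standard route and is essentially what one finds in Aomoto--Kita. The twisted Poincar\'e lemma via a local trivializing section $h$ with $\nabla_{-\omega}h=0$, the fineness of $\mathcal{A}^{\bullet}$ and $\mathcal{K}^{\bullet}$ giving acyclic resolutions of $\mathcal{L}_{\omega}$, the Stein/affine argument for the holomorphic--smooth comparison, and the identification of locally finite twisted homology with current cohomology through the integration pairing are all the correct ingredients. Your own assessment of the hard step is accurate: the Poincar\'e--Lefschetz arrow is where the real work lies, and invoking Verdier duality with coefficients (or equivalently the cap product with a twisted fundamental class) is the clean way to handle it. Nothing in your sketch is wrong; it simply goes well beyond what the paper itself supplies.
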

Finally, we can give the most important theorem for our investigations.

\begin{theorem}
  The following bilinear forms are non-degenerate
  
  \begin{equation*}
         \renewcommand{\arraystretch}{2,0}
\setlength{\arraycolsep}{0.0mm}
\begin{array}{crccclcc}
     (1) \quad & H_p(M, \mathcal{L}_{\omega}^{\vee})&~\times ~& H^{p}(\mathcal{A}^{\bullet}(M), \nabla_{\omega})&~\rightarrow ~& \mathbb{ C } \\
      &   & & (\left[\sigma\right], \left[\varphi\right]) &~\mapsto ~& \int\limits_{\sigma}U \cdot \varphi. \\
      (2) \quad & H^p(\mathcal{A}_c^{\bullet}(M), \nabla_{\omega})&~\times ~& H_{p}^{lf}(M), \mathcal{L}_{\omega}^{\vee})&~\rightarrow ~& \mathbb{ C } \\
      &   & & (\left[\psi\right], \left[\tau\right]) &~\mapsto ~& \int\limits_{\tau}U \cdot \psi. \\
      (3) \quad & H^{2n-p}(\mathcal{A}_c^{\bullet}(M), \nabla_{-\omega})&~\times ~& H^{p}(\mathcal{A}^{\bullet}(M), \nabla_{\omega})&~\rightarrow ~& \mathbb{ C } \\
      &   & & (\left[\alpha\right], \left[\beta\right]) &~\mapsto ~& \int\limits_{M} \alpha \wedge \beta. \\
     (4) \quad & H_p(M, \mathcal{L}_{\omega}^{\vee})&~\times ~& H_{2n-p}^{lf}(M, \mathcal{L}_{\omega})&~\rightarrow ~& \mathbb{ C } \\
      &   & & (\left[\sigma\right], \left[\tau\right]) &~\mapsto ~& \text{intersection number } \left[\sigma\right] \cdot [\tau].
\end{array}
  \end{equation*}
\end{theorem}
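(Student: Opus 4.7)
The plan is to deduce all four non-degeneracies from two inputs already in place: the abstract duality isomorphism $H^{p}(M,\mathcal{L}_{\omega}) \simeq \operatorname{Hom}_{\mathbb{C}}(H_{p}(M,\mathcal{L}_{\omega}^{\vee}),\mathbb{C})$ from the preceding theorem, and the twisted de Rham isomorphisms $H^{p}(\mathcal{A}^{\bullet}(M),\nabla_{\omega}) \simeq H^{p}(M,\mathcal{L}_{\omega})$ together with their compactly supported variants. The preliminary step, common to all four items, is to check that each pairing descends to cohomology. For (1) and (2) this is a direct application of the twisted Stokes theorem stated just above: if $\varphi = \nabla_{\omega}\eta$ then $\int_{\sigma} U\cdot \nabla_{\omega}\eta = \int_{\partial_{\omega}\sigma} U\cdot \eta = 0$ for a twisted cycle $\sigma$, and symmetrically if $\sigma = \partial_{\omega}\tau$; the compactly supported case works identically because the compact support of $\psi$ makes all integrals over locally finite chains absolutely convergent.

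To establish (1), I would compose the twisted de Rham isomorphism with the abstract duality to obtain a non-degenerate bilinear form on $H_{p}(M,\mathcal{L}_{\omega}^{\vee}) \times H^{p}(\mathcal{A}^{\bullet}(M),\nabla_{\omega})$, and then verify that under this chain of identifications the abstract pairing $\langle[\sigma],[\varphi]\rangle$ is computed by $\int_{\sigma} U\cdot \varphi$. This reduces to the local statement that a section of $\mathcal{L}_{\omega}$ restricted to a simplex, multiplied by the chosen branch of $U$, pairs with chains by integration, which is the twisted analogue of the comparison underlying the classical de Rham theorem. Pairing (2) follows by exactly the same recipe, using the compactly supported de Rham isomorphism and the duality between $H^{p}_{c}(M,\mathcal{L}_{\omega})$ and $H_{p}^{lf}(M,\mathcal{L}_{\omega}^{\vee})$.

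For (3) the key algebraic input is the identity $\nabla_{\omega}(\alpha \wedge \beta) = \nabla_{-\omega}\alpha \wedge \beta + (-1)^{\deg \alpha}\alpha \wedge \nabla_{\omega}\beta$, valid because the $\pm \omega$ terms in the Leibniz rule cancel; this immediately shows that wedge product descends to a bilinear pairing on the two cohomology groups. Non-degeneracy is then a twisted Poincaré duality, which I would obtain by transporting both sides through the twisted de Rham isomorphisms to $H_{c}^{2n-p}(M,\mathcal{L}_{-\omega})$ and $H^{p}(M,\mathcal{L}_{\omega})$, observing that $\mathcal{L}_{\omega}^{\vee} \simeq \mathcal{L}_{-\omega}$ (visible from the defining equation $\nabla_{-\omega}h=0$ whose local section is $U$ itself), and invoking the classical Poincaré--Lefschetz pairing for rank-one local systems. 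Pairing (4) is then obtained from (3) by dualising both factors through the Poincaré isomorphisms $H_{p}(M,\mathcal{L}_{\omega}^{\vee}) \xrightarrow{\sim} H^{2n-p}(\mathcal{A}_{c}^{\bullet}(M),\nabla_{-\omega})$ and $H_{2n-p}^{lf}(M,\mathcal{L}_{\omega}) \xrightarrow{\sim} H^{p}(\mathcal{A}^{\bullet}(M),\nabla_{\omega})$, provided the geometric intersection number $[\sigma]\cdot[\tau]$ matches the wedge pairing of the Poincaré duals.

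I expect the main obstacle to lie precisely in this last matching for (4). Defining the intersection of two twisted cycles forces one to put the underlying geometric chains in transverse position and then weight each intersection point by the value of the canonical contraction $\mathcal{L}_{\omega}^{\vee} \otimes \mathcal{L}_{\omega} \to \mathbb{C}$ applied to the two branches of $U$ attached at that point. Verifying bilinearity, invariance under $\partial_{\omega}$, and agreement with the wedge pairing demands careful local computations: the cancellation $\partial_{\omega}\sigma \cdot \tau + (-1)^{p}\sigma \cdot \partial_{\omega}\tau = 0$ works because the transition functions of $\mathcal{L}_{\omega}$ and $\mathcal{L}_{\omega}^{\vee}$ are mutually inverse, but carrying this out cleanly, with orientation and branch conventions consistent with those used in (3), is the technically substantive part. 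Once done, the theorem is assembled by stringing together the isomorphisms indicated above.
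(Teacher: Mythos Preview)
The paper does not prove this theorem at all. A few paragraphs before it, the author writes explicitly that ``the proof of this fact is too long to present it here, we will simply state the theorem. The reader interested in the proof is, e.g., referred to [Ao11]'' (Aomoto--Kita, \emph{Theory of Hypergeometric Functions}). The theorem is then listed as part of a sequence of quoted results from twisted de Rham theory, and the text moves on immediately to explaining the \emph{meaning} of the intersection number in (4), not its non-degeneracy.

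Your outline is in fact a faithful sketch of the strategy carried out in Aomoto--Kita: well-definedness via twisted Stokes, non-degeneracy of (1) and (2) by composing the twisted de Rham comparison with the abstract duality $H^{p}(M,\mathcal{L}_{\omega})\simeq\operatorname{Hom}(H_{p}(M,\mathcal{L}_{\omega}^{\vee}),\mathbb{C})$, (3) as twisted Poincar\'e duality using $\mathcal{L}_{-\omega}\simeq\mathcal{L}_{\omega}^{\vee}$, and (4) by dualising (3). Your identification of the technical crux---matching the geometric intersection number with the wedge pairing of Poincar\'e duals, with careful tracking of branches and orientations---is exactly where the substantive work in the reference lies. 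So there is nothing to compare against in the paper itself; your proposal simply supplies what the paper omits by citation.
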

In the next sections we will explain the meaning of equation (4), i.e. the intersection number which is crucial for investigations of hypergeometric integrals.

\subsubsection{Construction of twisted Cycles}
\label{subsubsec: Construction of twisted Cycles}

In this section, we want to construct some twisted cycles explicitly. Starting from the one-dimensional case and the twisted cycle around one point, we will generalize this and will arrive at an explicit formula for $H_1(M, \mathcal{L}_{\omega}^{\vee})$.\\[2mm]
We will explicitly construct $m-1$ independent twisted cycles associated to the multi-valued function

\begin{equation*}
    U(u)= \prod_{j=1}^{m}(u-x_j)^{\alpha_j}, \quad \alpha_j \notin \mathbb{Z}
\end{equation*}
which is defined $M=\mathbb{C}\setminus \lbrace x_1, \cdots, x_m\rbrace$. Since later we will also be mainly interested in this case, we assume all the points $x_j,~ 1 \leq j \leq m$ to lie on the real axis such that $x_1 <\cdots< x_m$. We set $\omega =\frac{dU}{U}$ such that this $U$, as we explained in section \ref{subsubsec: Towards the twisted Homology Group},  defines the rank $1$ local system $\mathcal{L}_{\omega}^{\vee}$.  We choose the following branch of $U(u)$: we take the one that is single-valued on the lower half plane and that satisfies

\begin{equation*}
    \operatorname{arg}(u-x_j)= \left\lbrace 
    \begin{array}{rl}
         0 & (1 \leq j \leq p) \\
         - \pi & (p+1 \leq j \leq m)
    \end{array}
    \right\rbrace
\end{equation*}
on each interval $\Delta_p:= (x_p,x_{p+1})$, $1 \leq p \leq m-1$. From basic algebraic topology we know that\\[2mm]
(1) $H_{\bullet}(M, \mathcal{L}_{\omega}^{\vee})$ is homotopy invariant \\
(2) The Mayer-Vietoris sequence holds\\[2mm]
Thus, using (1) it suffices to calculate $H_{\bullet}(K, \mathcal{L}_{\omega}^{\vee})$ for the one-dimensional simplicial complex $K$ seen in the figure above. Note that, for the sake of simplicity, we also write $\mathcal{L}_{\omega}^{\vee}$ for the restriction of $\mathcal{L}_{\omega}^{\vee}$ on $M$ to $K$. To apply the Mayer-Vietoris sequence (2), let us write $K=K_1 \cup K_2$, i.e. as the union of two subcomplexes

\begin{equation*}
    K_1= \bigcup_{j=1}^{m} S_{\varepsilon}^1(x_j), \quad K_2 = \bigcup_{j=1}^{m-1}[x_j+\varepsilon, x_{j+1}-\varepsilon].
\end{equation*}

\begin{center}
    \begin{figure}
    \centering
        \includegraphics[scale=0.7]{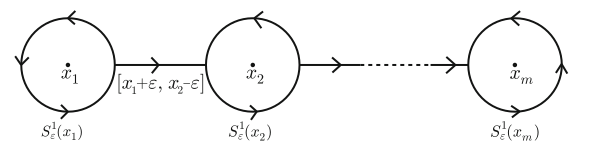}
        \caption{Construction of twisted Cycles in the one-dimensional Case} Auxiliary figure for the construction of a twisted cycles for the function $U(u)=\prod_{j=1}^m (u-x_j)^{\alpha_j}, ~\alpha_j \notin \mathbb{Z}$. This is figure 2.5 from \cite{Ao11}.
    \end{figure}
\end{center}
In the first step, we calculate the  twisted homology  for each circle $S_{\varepsilon}^1(x_j)$, i.e. we prove:

\begin{theorem}[Twisted homology for a circle]
For $\alpha_j \notin \mathbb{Z}$, $1 \leq j \leq m$, we have

\begin{equation*}
    H_q(S_{\varepsilon}^1(x_j), \mathcal{L}_{\omega}^{\vee})=0, \quad q=0,1, \cdots
\end{equation*}
\end{theorem}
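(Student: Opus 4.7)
My plan is to compute the twisted chain complex of $S_\varepsilon^1(x_j)$ directly using a minimal simplicial decomposition, and then read off that both homology groups vanish as a consequence of the nontrivial monodromy of $U$ around $x_j$. Since $S_\varepsilon^1(x_j)$ is one-dimensional, $C_q(S_\varepsilon^1(x_j),\mathcal{L}_\omega^\vee)=0$ for $q\geq 2$ automatically, so the only real task is to handle $q=0,1$.

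To carry this out, I will pick two distinct 0-simplices $p_0,p_1\in S_\varepsilon^1(x_j)$ and decompose the circle into two oriented arcs $e_0,e_1$ from $p_0$ to $p_1$, arranged so that the loop $e_0\cdot e_1^{-1}$ is homotopic to a positively oriented circle around $x_j$. Fix branches $U_{p_0},U_{p_1}$ of $U$ at the vertices and a branch $U_{e_0}$ on $e_0$ whose restrictions to $p_0,p_1$ agree with the chosen vertex branches. The crucial observation is that, because $\alpha_j\notin\mathbb{Z}$, analytic continuation along a small loop around $x_j$ multiplies $U$ by $\mu_j:=e^{2\pi i\alpha_j}\neq 1$; hence a branch $U_{e_1}$ on $e_1$ whose germ at $p_0$ equals $U_{p_0}$ will have germ at $p_1$ equal to $\mu_j\,U_{p_1}$. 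Applying Definition 2.5.10 to the basis $\{e_0\otimes U_{e_0},\,e_1\otimes U_{e_1}\}$ of $C_1$ then gives
\begin{equation*}
\partial_\omega(e_0\otimes U_{e_0})=p_1\otimes U_{p_1}-p_0\otimes U_{p_0},\qquad
\partial_\omega(e_1\otimes U_{e_1})=\mu_j\,p_1\otimes U_{p_1}-p_0\otimes U_{p_0}.
\end{equation*}

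In the bases $\{e_0\otimes U_{e_0},\,e_1\otimes U_{e_1}\}$ and $\{p_0\otimes U_{p_0},\,p_1\otimes U_{p_1}\}$ the boundary operator $\partial_\omega\colon C_1\to C_0$ is represented by the $2\times 2$ matrix with columns $\left(-1,1\right)^\top$ and $\left(-1,\mu_j\right)^\top$, whose determinant equals $\mu_j-1\neq 0$ precisely because $\alpha_j\notin\mathbb{Z}$. Thus $\partial_\omega$ is an isomorphism of two $2$-dimensional spaces, whence $\ker\partial_\omega=0$ and $\operatorname{coker}\partial_\omega=0$, i.e.\ $H_1(S_\varepsilon^1(x_j),\mathcal{L}_\omega^\vee)=H_0(S_\varepsilon^1(x_j),\mathcal{L}_\omega^\vee)=0$. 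Combined with the dimensional vanishing $C_q=0$ for $q\geq 2$, this establishes the theorem.

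The only genuine subtlety, and the step deserving the most care, is the bookkeeping of branches that produces the factor $\mu_j$ in the second column. One must verify that with the chosen orientations of $e_0,e_1$ and the prescribed extensions of the vertex germs along these arcs, continuation of $U_{p_0}$ along $e_1$ really differs from $U_{p_1}$ by the expected monodromy and that this is what Definition 2.5.10 records. Once this is made explicit, the remainder of the argument is pure linear algebra. As a sanity check one recovers the familiar local-systems statement that on a circle the homology of a rank-one local system is the coinvariants (resp.\ invariants) of its monodromy on $\mathbb{C}$, both of which vanish whenever the monodromy is not the identity.
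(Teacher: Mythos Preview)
Your argument is correct and follows essentially the same route as the paper: compute the twisted chain complex of the circle explicitly, write down the matrix of $\partial_\omega$, and observe that its determinant is $e^{2\pi i\alpha_j}-1\neq 0$, so $\partial_\omega$ is an isomorphism. The only difference is cosmetic: the paper uses a genuine simplicial triangulation with three vertices $\langle 0\rangle,\langle 1\rangle,\langle 2\rangle$ and three edges (so a $3\times 3$ matrix with determinant $c_j-1$), whereas you use a two-cell $\Delta$-decomposition. Since the paper's framework is set up in terms of smooth \emph{triangulations}, your two-arc decomposition is not literally a simplicial complex (two $1$-simplices share the same vertex set), so if you want to match the stated setup you should either add a third vertex or remark that homology with local coefficients is computable from any CW structure; either fix is routine and does not affect the substance of your proof.
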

\begin{proof}
We triangulate the circle as seen in the figure above

\begin{center}
    \begin{figure}
    \centering
        \includegraphics[scale=0.7]{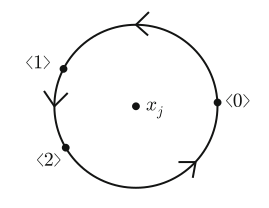}
        \caption{Triangulation of the Circle} Auxiliary figure for the calculation of $H_q(S_{\varepsilon}^1(x_j), \mathcal{L}_{\omega}^{\vee})$. This is figure 2.6 from \cite{Ao11}.
    \end{figure}
\end{center}
Thus, the chain group $C_q(S_{\varepsilon}^1(x_j), \mathcal{L}_{\omega}^{\vee})$ and the boundary operator $\partial_{\omega}$ are:

\begin{equation*}
   \renewcommand{\arraystretch}{2,0}
\setlength{\arraycolsep}{0.0mm}
    \begin{array}{l}
        C_0(S_{\varepsilon}^1(x_j), \mathcal{L}_{\omega}^{\vee})= \mathbb{C}\langle 0 \rangle +\mathbb{C}\langle 1 \rangle +\mathbb{C}\langle 2 \rangle, \\
     C_1(S_{\varepsilon}^1(x_j), \mathcal{L}_{\omega}^{\vee})=  \mathbb{C}\langle 01 \rangle +\mathbb{C}\langle 12 \rangle +\mathbb{C}\langle 20 \rangle, \\
     \partial_{\omega}\langle 01\rangle =\langle 1 \rangle - \langle 0 \rangle, \quad \partial_{\omega}\langle 12\rangle =\langle 2 \rangle - \langle 0 \rangle, \quad \partial_{\omega}\langle 20\rangle =c_j\langle 0 \rangle - \langle 2 \rangle,
    \end{array}
\end{equation*}
with $c_j= \exp(2\pi i \alpha_j)$. Writing these relations as a vectorial equation, we have

\begin{equation*}
    \partial_{\omega} \begin{pmatrix} \langle 01 \rangle \\
    \langle 12 \rangle \\
    \langle 20 \rangle
    \end{pmatrix} = \begin{pmatrix} -1 & 1 & 0 \\
    0 & -1 & 1 \\
    c_j & 0 & -1
    \end{pmatrix}
    \begin{pmatrix}
    \langle 0 \rangle \\
    \langle 1 \rangle \\
    \langle 2 \rangle
    \end{pmatrix}
\end{equation*}
and $\operatorname{det}(\partial_{\omega})=c_j-1$. This is not zero, since $\alpha_j \notin \mathbb{Z}$. Therefore, the map

\begin{equation*}
    \partial_{\omega} C_1(S_{\varepsilon}^1(x_j), \mathcal{L}_{\omega}^{\vee}) \rightarrow  C_0(S_{\varepsilon}^1(x_j), \mathcal{L}_{\omega}^{\vee})
\end{equation*}
is an isomorphism and hence the theorem is proven.
\end{proof}
Next, we have the 

\begin{theorem}
Provided $\alpha_j \notin \mathbb{Z}$, $1 \leq j \leq m$, we have

  \begin{equation*}
        \renewcommand{\arraystretch}{2,0}
\setlength{\arraycolsep}{0.0mm}
\begin{array}{lcl}
     H_0(M, \mathcal{L}_{\omega}^{\vee})&~=~& 0,\\
     H_1(M, \mathcal{L}_{\omega}^{\vee})&\simeq~& \mathbb{C} \cdot \Delta_{j}(\omega)
\end{array}
  \end{equation*}
  with the twisted cycle $\Delta_j(\omega)$
  
  \begin{equation*}
   \Delta_j(\omega):= \dfrac{1}{c_j-1}S_{\varepsilon}^{1}(x_j)+[x_j+\varepsilon, x_{j+1}-\varepsilon]-\dfrac{1}{c_{j+1}-1}S_{\varepsilon}^1 (x_{j+1}).
  \end{equation*}
\end{theorem}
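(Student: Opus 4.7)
The plan is to combine two complementary arguments: a direct boundary computation showing that each $\Delta_j(\omega)$ is a twisted cycle, and the Mayer-Vietoris sequence for the decomposition $K=K_1\cup K_2$ (already introduced in the construction above) to pin down the dimensions of $H_0$ and $H_1$. I read the target statement as $H_1(M,\mathcal{L}_\omega^\vee)\simeq \bigoplus_{j=1}^{m-1}\mathbb{C}\cdot\Delta_j(\omega)$, since the cycles $\Delta_j(\omega)$ for different $j$ are visibly independent; the displayed formula presumably abbreviates this.

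For the cycle property, I would reuse the boundary calculation from the preceding theorem: a small triangulated circle $S_\varepsilon^1(x_j)$ around $x_j$, based at $x_j+\varepsilon$ with the fixed branch of $U$, has $\partial_\omega S_\varepsilon^1(x_j)=(c_j-1)\,\langle x_j+\varepsilon\rangle\otimes U$. Dividing by $c_j-1$ yields $\langle x_j+\varepsilon\rangle\otimes U$. After choosing the orientation and starting point of $S_\varepsilon^1(x_{j+1})$ so that it is based at $x_{j+1}-\varepsilon$ with a compatible branch, $-\tfrac{1}{c_{j+1}-1}\partial_\omega S_\varepsilon^1(x_{j+1})=-\langle x_{j+1}-\varepsilon\rangle\otimes U$. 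The interval contributes $\langle x_{j+1}-\varepsilon\rangle\otimes U-\langle x_j+\varepsilon\rangle\otimes U$, and all contributions cancel. The coefficients $1/(c_j-1)$ are tuned precisely for this cancellation, which is why the hypothesis $\alpha_j\notin\mathbb{Z}$, forcing $c_j\neq 1$, is essential.

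For the dimension count, apply Mayer-Vietoris to $K=K_1\cup K_2$; by homotopy invariance $H_\bullet(M,\mathcal{L}_\omega^\vee)\simeq H_\bullet(K,\mathcal{L}_\omega^\vee)$. The preceding theorem gives $H_q(K_1,\mathcal{L}_\omega^\vee)=0$ for all $q$. Each interval of $K_2$ is contractible, so $H_0(K_2,\mathcal{L}_\omega^\vee)\simeq\mathbb{C}^{m-1}$ and higher groups vanish; similarly $H_0(K_1\cap K_2,\mathcal{L}_\omega^\vee)\simeq\mathbb{C}^{2(m-1)}$, one copy of $\mathbb{C}$ for each of the $2(m-1)$ endpoints. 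The long exact sequence collapses to
\begin{equation*}
0\longrightarrow H_1(K,\mathcal{L}_\omega^\vee)\longrightarrow\mathbb{C}^{2(m-1)}\xrightarrow{\phi}\mathbb{C}^{m-1}\longrightarrow H_0(K,\mathcal{L}_\omega^\vee)\longrightarrow 0,
\end{equation*}
where $\phi$ sends each endpoint class to the $H_0$-class of the interval on which it lies. Because each interval has two endpoints both mapping to its generator, $\phi$ is surjective, so $H_0(K,\mathcal{L}_\omega^\vee)=0$ and $\dim H_1(K,\mathcal{L}_\omega^\vee)=m-1$. Since the first step supplied $m-1$ cycles $\Delta_j(\omega)$ that are independent (the interval $[x_j+\varepsilon,x_{j+1}-\varepsilon]$ appears only in $\Delta_j(\omega)$ and with unit coefficient), they form a basis.

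The main obstacle I anticipate is a bookkeeping one: carefully tracking the branch of $U$ at each intersection point as one glues the small circles to the intervals, and verifying that the monodromy factor $c_j=\exp(2\pi i\alpha_j)$ appearing in $\partial_\omega S_\varepsilon^1(x_j)=(c_j-1)\langle x_j+\varepsilon\rangle$ corresponds to the same branch chosen for the interval meeting $x_j+\varepsilon$. Once the conventions on branch and orientation of $S_\varepsilon^1(x_j)$ are fixed so that the two branches agree at every junction, the cancellation in the cycle check and the exactness argument in the Mayer-Vietoris step become routine.
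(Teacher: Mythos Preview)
Your proposal is correct and follows essentially the same route as the paper: Mayer--Vietoris for $K=K_1\cup K_2$, the vanishing of $H_\bullet(K_1,\mathcal{L}_\omega^\vee)$ from the preceding theorem, and surjectivity of the map $\mathbb{C}^{2(m-1)}\to\mathbb{C}^{m-1}$ to get $H_0=0$ and $\dim H_1=m-1$. The only cosmetic difference is that the paper identifies the $\Delta_j(\omega)$ by computing the connecting homomorphism $\delta$ and showing $\delta(\Delta_j(\omega))=\langle x_j+\varepsilon\rangle-\langle x_{j+1}-\varepsilon\rangle$ spans $\ker i_*$, whereas you verify the cycle condition by a direct boundary computation and then invoke chain-level independence; since $K$ is $1$-dimensional there are no $2$-boundaries, so chain-level independence of cycles is the same as independence in $H_1$, and your argument goes through.
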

We sketch the idea of the proof. One applies the Mayer-Vietoris sequence to the union $K=K_1 \cup K_2$ to find the exact sequence

\begin{equation*}
    0 \rightarrow H_1(K_1, \mathcal{L}_{\omega}^{\vee}) \oplus H_1(K_2, \mathcal{L}_{\omega}^{\vee}) \rightarrow H_1(K, \mathcal{L}_{\omega}^{\vee}) \xrightarrow{\delta}
\end{equation*}
\begin{equation*}
    \rightarrow H_0(K_1 \cap K_2, \mathcal{L}_{\omega}^{\vee}) \rightarrow H_0(K_1, \mathcal{L}_{\omega}^{\vee}) \oplus H_0(K_2, \mathcal{L}_{\omega}^{\vee}) \rightarrow H_0(K, \mathcal{L}_{\omega}^{\vee}) \rightarrow 0.
\end{equation*}
Therefore, the task is to find $\delta$ such that one can determine $H_1(K,\mathcal{L}_{\omega}^{\vee})$. To this end, note that $K_2$ is homotopically equivalent to $m-1$ points and $K_1 \cap K_2$ is homotopically equivalent to $2m-2$ points. Furthermore, using the last theorem, many terms in the sequence are seen to vanish, and we are left with the exact sequence:

\begin{equation*}
        \renewcommand{\arraystretch}{1,0}
\setlength{\arraycolsep}{0.0mm}
\begin{array}{cccccccccccc}
&    0 & ~\rightarrow ~& H_1(K, \mathcal{L}_{\omega}^{\vee}) &~\xrightarrow{\delta}~&  H_0(K_1 \cap K_2, \mathcal{L}_{\omega}^{\vee}) &~\xrightarrow{i_{*}}~ & H_0( K_2, \mathcal{L}_{\omega}^{\vee})   
     & ~ \rightarrow \\
    & &  &  & &||\wr & & ||\wr \\
     & &  &  & & \mathbb{C}^{2m-2} & & \mathbb{C}^{m-1} \\
  & & ~\rightarrow ~ &  H_0(K_, \mathcal{L}_{\omega}^{\vee}) & ~\rightarrow ~& 0   
\end{array}
\end{equation*}
The map $i_{*}$ is induced from he inclusion $i: K_1 \cap K_2 \rightarrow K_2$ and we find

\begin{equation*}
    i_{*}\left(\sum_{j=1}^{m-1}a_j\langle x_j+\varepsilon\rangle +b_j\langle x_{j+1}-\varepsilon\rangle\right)= \sum_{j=1}^{m-1}(a_j+b_j)\langle x_j+\varepsilon\rangle, \quad a_j, b_j \in \mathbb{C}.
\end{equation*}
Thus, $i_{*}$ is surjective and we find:

\begin{equation*}
        \renewcommand{\arraystretch}{2,0}
\setlength{\arraycolsep}{0.0mm}
\begin{array}{llll}
     H_0(K, \mathcal{L}_{\omega}^{\vee})=0 \\
     \operatorname{Im } \delta = \operatorname{Ker }i_{*} =\bigoplus_{j=1}^{m-1}\mathbb{C}(\langle x_j-\varepsilon\rangle -\langle x_{j+1}-\varepsilon\rangle).
\end{array}
\end{equation*}
One finds $\delta$, which is necessary to find $H_1(K,\mathcal{L}_{\omega}^{\vee})$, in the same way; we state the result. Defining

\begin{equation*}
   \Delta_j(\omega):= \dfrac{1}{c_j-1}S_{\varepsilon}^{1}(x_j)+[x_j+\varepsilon, x_{j+1}-\varepsilon]-\dfrac{1}{c_{j+1}-1}S_{\varepsilon}^1 (x_{j+1}),
\end{equation*}
one has 

\begin{equation*}
    \delta(\Delta_j(\omega))=\langle x_j+\varepsilon \rangle -\langle x_{j+1}-\varepsilon\rangle
\end{equation*}
Therefore,

\begin{equation*}
    H_1(K, \mathcal{K}_{\omega}^{\vee})= \bigoplus_{j=1}^{m-1}\mathbb{C}\cdot \Delta_{j}(\omega).
\end{equation*}

\subsubsection{Intersection Number for Twisted Cycles}
\label{subsubsec: Intersection Number for Twisted Cycles}

We will explain the intersection number by means of a simple example which can then be generalized to the general case. First, we mention the 

\begin{theorem}
Under the assumption $\sum_{j=1}^{m} \alpha_j \notin \mathbb{Z}$, twisted cycles $\Delta_{\nu}\otimes U_{\Delta_{\nu}}$ defined by bounded chambers $\Delta_{\nu}$, $1 \leq \nu \leq \binom{m-1}{n}$, form a basis of $H_{n}^{lf}(M, \mathcal{L}_{\omega}^{\vee})$:

\begin{equation*}
    H_{n}^{lf}(M, \mathcal{L}_{\omega}^{\vee}) \simeq \bigoplus\limits_{\nu=1}^{\binom{m-1}{n}}\mathbb{C}\left[\Delta_{\nu}\otimes U_{\Delta_{\nu}}\right].
\end{equation*}
\end{theorem}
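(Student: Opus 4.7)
The plan is to proceed in three steps: first establish a vanishing theorem that concentrates the twisted cohomology in middle degree $n$, second compute the dimension via an Euler characteristic argument, and third show that the bounded chambers $\Delta_\nu \otimes U_{\Delta_\nu}$ are linearly independent by means of the intersection pairing in part (4) of the preceding duality theorem. Combining these three steps, the bounded chambers, which are $\binom{m-1}{n}$ in number, form a basis.

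For the first step, I would invoke the non-resonance hypothesis $\sum_{j=1}^{m}\alpha_j \notin \mathbb{Z}$ to prove the vanishing theorem
\begin{equation*}
H^{p}(M,\mathcal{L}_{\omega}) = 0 \quad \text{for} \quad p \neq n.
\end{equation*}
The standard route (cf.\ \cite{Ao11}) is to realise $M$ as the complement of $m$ hyperplanes in general position in $\mathbb{C}^{n}$, fibre it by one linear projection, and argue inductively: on a generic fibre one uses the one-dimensional vanishing (already evident from the computation of $H_{\bullet}(S^{1}_{\varepsilon}(x_{j}),\mathcal{L}_{\omega}^{\vee})=0$ in the previous section), and the non-resonance condition rules out the trivial monodromy that would otherwise contribute to lower-degree cohomology when combining the Leray spectral sequence with the base. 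By the twisted de Rham isomorphism and the duality between $H^{p}(M,\mathcal{L}_{\omega})$ and $H_{p}(M,\mathcal{L}_{\omega}^{\vee})$ (and their compact/locally finite counterparts), the same vanishing holds for $H_{p}^{lf}(M,\mathcal{L}_{\omega}^{\vee})$.

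For the second step, the Euler characteristic of $M$ can be computed combinatorially for $m$ hyperplanes in general position by inclusion–exclusion:
\begin{equation*}
\chi(M) \;=\; \sum_{k=0}^{n}(-1)^{k}\binom{m}{k}\binom{\,m-k-1\,}{\,n-k\,}\cdot(\text{contributions}) \;=\; (-1)^{n}\binom{m-1}{n}.
\end{equation*}
Because only the $n$-th twisted cohomology is non-zero by step one, one obtains
\begin{equation*}
\dim_{\mathbb{C}} H_{n}^{lf}(M,\mathcal{L}_{\omega}^{\vee}) \;=\; |\chi(M)| \;=\; \binom{m-1}{n}.
\end{equation*}
So the count on the right-hand side of the claimed isomorphism is the correct dimension.

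For the third and most delicate step, I would use part (4) of the previous theorem: the intersection pairing
\begin{equation*}
H_{n}^{lf}(M,\mathcal{L}_{\omega}^{\vee}) \times H_{n}(M,\mathcal{L}_{\omega}) \longrightarrow \mathbb{C}, \qquad ([\Delta]\otimes U_{\Delta},[\sigma]) \longmapsto [\Delta]\cdot[\sigma]
\end{equation*}
is non-degenerate. To each bounded chamber $\Delta_{\nu}$ I would associate a \emph{dual regularised cycle} $\widetilde{\Delta}_{\nu}$ obtained by inflating $\Delta_{\nu}$ slightly away from each bounding hyperplane and attaching small twisted tubes analogous to the one-dimensional construction $\Delta_{j}(\omega) = \frac{1}{c_{j}-1}S^{1}_{\varepsilon}(x_{j}) + [x_{j}+\varepsilon,x_{j+1}-\varepsilon] - \frac{1}{c_{j+1}-1}S^{1}_{\varepsilon}(x_{j+1})$ from the preceding section. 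An explicit local computation shows that
\begin{equation*}
[\Delta_{\mu}\otimes U_{\Delta_{\mu}}] \cdot [\widetilde{\Delta}_{\nu}] \;=\; \delta_{\mu\nu}\cdot \prod_{j\in J(\nu)}\dfrac{1}{c_{j}-1}\,(\text{non-zero factor}),
\end{equation*}
where $J(\nu)$ indexes the hyperplanes bounding $\Delta_{\nu}$. Thus the intersection matrix is diagonal with non-zero entries (since each $c_{j}\neq 1$ by non-resonance), which implies that the classes $[\Delta_{\nu}\otimes U_{\Delta_{\nu}}]$ are linearly independent in $H_{n}^{lf}(M,\mathcal{L}_{\omega}^{\vee})$. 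Having $\binom{m-1}{n}$ independent classes in a space of the same dimension, we conclude that they form a basis.

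The main obstacle is the third step, specifically the explicit construction of the dual regularised cycles and the local intersection number computation. In dimension $n=1$ this was the content of the construction of $\Delta_{j}(\omega)$, but in higher dimensions one must handle the combinatorics of corners where several hyperplanes meet and verify that the tubular neighbourhoods can be chosen compatibly; this is where the general-position assumption is essential.
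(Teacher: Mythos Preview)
The paper does not actually supply a proof of this theorem. It is quoted as a result from the reference \cite{Ao11} (Aomoto--Kita), stated in order to set up the subsequent explanation of intersection numbers; the text says only ``First, we mention the [theorem]'' and then immediately moves on to the one-dimensional example. So there is no in-paper argument to compare against; your outline is being measured against the standard literature proof rather than anything the author wrote.

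Your three-step strategy (vanishing via non-resonance, dimension via Euler characteristic, independence via the intersection pairing) is indeed the standard route in that literature. One point to tighten in step~3: you claim the intersection matrix between the bounded chambers $\Delta_\mu$ and your dual regularised cycles $\widetilde{\Delta}_\nu$ will be \emph{diagonal}. The one-dimensional computation carried out just after this theorem in the paper shows the matrix $(\Delta_j(\omega)\cdot\Delta_k)$ is \emph{tridiagonal}, not diagonal, and its nonsingularity comes from the determinant being a nonzero multiple of $(c_1c_2\cdots c_m-1)$, which is exactly where the hypothesis $\sum_j\alpha_j\notin\mathbb{Z}$ enters. In higher dimensions the analogous matrix is likewise only block-banded in the adjacency structure of the chambers. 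For your argument you only need nonsingularity, so you should either weaken the diagonality claim to ``the intersection matrix is nonsingular under the non-resonance hypothesis'' or be much more explicit about how your $\widetilde{\Delta}_\nu$ differ from the naive regularisations so as to kill the off-diagonal terms.
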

Concerning bounded champers: The arrangement of $m$ real hyperplanes in $\mathbb{R}^n$ decomposes that space into several chambers, which are referred to as bounded chambers.\\
Having stated this theorem in advance, let us explain what the intersection number between $H_1(M, \mathcal{L}_{\omega}^{\vee})$ and $H_{1}^{lf}(M, \mathcal{L}_{\omega}^{\vee})$ means. We will consider the case $m=2$, the general case can then be understood similarly.\\
Let $M=\mathbb{C}\setminus \lbrace 0,1\rbrace$ and choose a branch of the multi-valued function $U(u)=u^{\alpha}(u-1)^{\beta}$, $\alpha, \beta, \alpha +\beta \notin \mathbb{Z}$. $H_1(M,\mathcal{L}_{\omega}^{\vee})$ is one dimensional and is basis is given by

\begin{equation*}
 \renewcommand{\arraystretch}{2,0}
\setlength{\arraycolsep}{0.0mm}
    \begin{array}{l}
        \Delta(\omega) = \dfrac{1}{c_1-1}S_{\varepsilon}^{1}(0) \otimes U_{S_{\varepsilon}^1(0)} +[\varepsilon, 1- \varepsilon]\otimes U_{[\varepsilon, 1- \varepsilon]}-\dfrac{1}{c_2-1}S_{\varepsilon}^1(1) \otimes U_{S_{\varepsilon}^1(1)}, \\
        c_1= \exp(2\pi i \sqrt{\alpha}), \quad c_2= \exp(2\pi i \beta)
    \end{array}
\end{equation*}
$S_{\varepsilon}^{1}(0)$ is a circle around $0$ with radius $p$ with starting point $\varepsilon$, which turns in anticlockwise direction, stopping at an infinitely near point before $\varepsilon$, and $U_{S_{\varepsilon}^1(0)}$ is the branch obtained by analytic continuation along $S_{\varepsilon}^{1}(0)$, and similarly for the rest.\\
Now, using the above theorem, $H_1^{lf}(M, \mathcal{L}_{\omega})$ is the one dimensional vector space with basis $(0,1)\otimes U_{(0,1)}^{-1}$, where $U_{(0,1)}$ is the restriction of the branch determined above to $(0,1)$. Define $\Delta := \gamma \otimes U_{\gamma}^{-1}$, obtained by deforming the  interval $(0,1)$ as in the figure above, this is homologous to $(0,1)\otimes U_{(0,1)}^{-1}$ in $H_1^{lf}(M, \mathcal{L}_{\omega})$.\\[2mm]

The geometric intersections of $\Delta(\omega)$ and $\Delta$ are the three points $p$, $\frac{1}{2}$, $q$ and the signatures are $-$, $-$, $+$. Since the difference  of the branches is cancelled by $U$ and $U^{-1}$, we finally arrive at:

\begin{equation*}
    \renewcommand{\arraystretch}{2,0}
\setlength{\arraycolsep}{0.0mm}
\begin{array}{lll}
     \Delta(\omega) \cdot \Delta &~=~& \dfrac{1}{c_1-1}\times (-1)+(-1)+ \left(-\dfrac{1}{c_2-1}\right)\times (+1) \\
     &~=~& -\dfrac{c_1c_2 -1}{(c_1-1)(c_2-1)}
\end{array}
\end{equation*}
for the intersection number.\\[2mm]

As already indicated, one can compute the intersection matrix for a general $m$ in a similar way. The result will look as follows: We set $\Delta_j := (x_j,x_{j+1}) \otimes U_{(x_j, x_{j+1})}^{-1} \in H_1^{lf}(M, \mathcal{L}_{\omega})$, $1 \leq j \leq m-1$, $c_j = \exp(2\pi i \alpha_j)$, $d_j:= c_j-1$, $d_{jk}=c_jc_k-1$, then we have:

\begin{equation*}
\Delta_j(\omega)\cdot \Delta_k = \left\lbrace
    \renewcommand{\arraystretch}{2,0}
\setlength{\arraycolsep}{0.0mm}
\begin{array}{lll}
     \dfrac{c_j}{d_j}, \quad & j=k+1, \\
     -\dfrac{d_{j,j+1}}{d_jd_{j+1}}, \quad & j=k, \\
     -\dfrac{1}{d_k}, \quad & j+1 =k \\
     0, \quad & \text{otherwise}
\end{array}
\right.
\end{equation*}

\begin{center}
    \begin{figure}
        \centering
        \includegraphics[scale=0.7]{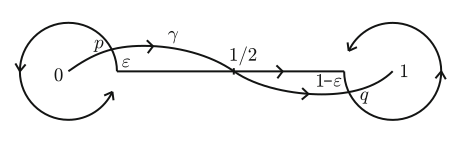}
        \caption{Intersections of $\Delta(\omega)$ and $\Delta$} Figure showing the three intersection of $\Delta(\omega)$ and $\Delta$ at the three points $p$, $\frac{1}{2}$, $q$. This is Figure 2.7 from \cite{Ao11}.
    \end{figure}
\end{center}

Therefore, the intersection matrix, consisting of each intersection number, reads:

\begin{equation*}
    \begin{pmatrix}
    \Delta_1(\omega) \cdot \Delta_1 & \cdots & \Delta_1(\omega) \cdot \Delta_{m-1} \\
    \vdots & & \vdots \\
    \Delta_{m-1}(\omega) \cdot \Delta_1 & \cdots & \Delta_{m-1}(\omega) \cdot \Delta_{m-1}
     \end{pmatrix}
\end{equation*}
\begin{equation*}
    =- \begin{pmatrix}
    \dfrac{d_{12}}{d_1d_2} & \dfrac{-1}{d_2} &  & 0 & \cdots & 0 & 0 \\
    \dfrac{-c_2}{d_2} & \dfrac{d_{23}}{d_2d_3} & &  &        & 0 & 0 \\
                                               & &  &        &  & \vdots \\
      0 & 0 & & & &\dfrac{d_{m-2,m-1}}{d_{m-2}d_{m-1}} & \dfrac{-1}{d_{m-1}} \\
      0 & 0 & \cdots & & 0 & \dfrac{-c_{m-1}}{d_{m-1}} & \dfrac{d_{m-1,m}}{d_{m-1}d_m} 
    \end{pmatrix}
\end{equation*}

\subsubsection{Intersection Numbers for Twisted Cocycles}
\label{subsubsec: Intersection Numbers for Twisted Cocycles}

Although the intersection number of twisted cycles is of great importance in the general theory of hypergeometric functions, for us, more interested in difference equations, we need the  intersection number of  twisted \textit{cocycles}. Above, in the main theorem of section \ref{subsubsec: Twisted de Rham Theory}, equation (3),

\begin{equation*}
    H^{n}(M, \mathcal{L}_{\omega})\times H_c^n(M, \mathcal{L}_{\omega}^{\vee}) \rightarrow \mathbb{C}
\end{equation*}
can be considered to define an intersection number of twisted cocycles.
Indeed, the calculation works similarly to the case of twisted cycles and was carried out explicitly in \cite{Ch95} such that it will enough to mention the  necessary definitions briefly and consider examples after this.\\
Here, we set $M=\mathbb{P}^1\setminus \lbrace x_0, \cdots, x_m\rbrace$, where the case $x_i=\infty$ is not excluded. We define

\begin{equation*}
    \omega := \sum_{j=0}^{m}\alpha_{j}\dfrac{du}{u-x_j}, ~~ \nabla_{\omega}:= d+\omega \wedge, ~~ \nabla_{\omega}^{\vee}:= d -\omega \wedge
\end{equation*}
and let $D$ be the divisor of $\mathbb{P}^1$ determined by the $m+1$ points $x_0, \cdots, x_m$, and denote the sheaf of logarithmic $p$-forms that may have logarithmic poles only along $D$ by $\Omega_{\mathbb{P}^1}^p(\log D)$ and the sub-sheaf of $\Omega_{\mathbb{P}^1}^p(\log D)$ of $p$-forms which have poles along $D$ of at least order $1$ by $\Omega_{\mathbb{P}^1}^p(\log D)(-D)$, then we have the 

\begin{theorem}
The following bilinear form is  non-degenerate
\begin{equation*}
    I: \Gamma \left(\mathbb{P}^1,\Omega_{\mathbb{P}^1}^{1}(\log D)\right) \times H^1(\mathbb{P}^1, \mathcal{O}_{\mathbb{P}^1}(-D)) \rightarrow \mathbb{C}.
\end{equation*}
\end{theorem}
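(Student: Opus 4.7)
The plan is to identify this pairing as an instance of Serre duality on $\mathbb{P}^1$, and to verify that the stated bilinear form coincides with the classical Serre pairing.

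First I would observe that since $D = p_0 + \cdots + p_m$ is a reduced divisor of $m+1$ distinct points, the natural inclusion $\Omega^1_{\mathbb{P}^1}(\log D) \hookrightarrow \Omega^1_{\mathbb{P}^1}\otimes\mathcal{O}_{\mathbb{P}^1}(D)$ is an equality: a logarithmic pole at a smooth point of a reduced divisor is exactly a simple pole. Hence the left-hand side of the pairing equals $\Gamma(\mathbb{P}^1,\Omega^1_{\mathbb{P}^1}(D))$, and Serre duality on $\mathbb{P}^1$, with dualizing sheaf $\omega_{\mathbb{P}^1}=\Omega^1_{\mathbb{P}^1}$, yields a canonical non-degenerate pairing
$$H^1(\mathbb{P}^1,\mathcal{O}(-D)) \times H^0(\mathbb{P}^1,\mathcal{O}(D)\otimes\Omega^1_{\mathbb{P}^1}) \xrightarrow{\ \smile\ } H^1(\mathbb{P}^1,\Omega^1_{\mathbb{P}^1}) \xrightarrow{\ \mathrm{tr}\ } \mathbb{C},$$
where the first arrow is cup product (multiplication of a Čech cocycle with a global section) and $\mathrm{tr}$ is the Grothendieck trace, realized analytically on Dolbeault representatives as $\tfrac{1}{2\pi i}\int$. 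So the abstract statement is already in hand; the work is to match $I$ with this pairing.

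For the matching, I would use the Čech cover $\mathcal{U}=\{U_0,U_1\}$ with $U_0 = \mathbb{P}^1\setminus D$ and $U_1 = \bigsqcup_{j=0}^{m} V_j$ a disjoint union of small discs around each $p_j$. A class in $H^1(\mathbb{P}^1,\mathcal{O}(-D))$ is then represented by a 1-cocycle $\psi$ on $U_0\cap U_1$, i.e.\ a tuple of holomorphic germs on the punctured discs $V_j\setminus\{p_j\}$ vanishing at $p_j$ (the condition imposed by $\mathcal{O}(-D)$). For $\varphi\in\Gamma(\mathbb{P}^1,\Omega^1_{\mathbb{P}^1}(\log D))$, the product $\varphi\cdot\psi$ is a meromorphic 1-form on each $V_j$, and the pairing is given explicitly by
$$I(\varphi,[\psi]) \;=\; \sum_{j=0}^{m}\mathrm{Res}_{p_j}(\varphi\cdot\psi).$$
Coboundary invariance is the residue theorem: if $\psi = f_1 - f_0\vert_{U_0\cap U_1}$ with $f_1$ holomorphic on $U_1$ vanishing along $D$ and $f_0$ holomorphic on $U_0$, then each $\varphi\cdot f_1$ is holomorphic at $p_j$ and contributes zero, while $\varphi\cdot f_0$ is a global meromorphic form on $\mathbb{P}^1$ whose total residue sum vanishes.

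A dimension check keeps us honest: by Riemann--Roch on $\mathbb{P}^1$ one has $h^0(\mathcal{O}(-D))=0$ and $h^1(\mathcal{O}(-D))=m$, while $\deg\Omega^1_{\mathbb{P}^1}(\log D) = m-1$ gives $h^0(\Omega^1_{\mathbb{P}^1}(\log D))=m$ and $h^1=0$; so both spaces are $m$-dimensional and it suffices to prove injectivity in either variable. The main obstacle is the bookkeeping: one has to check the signs and $2\pi i$ conventions relating the Čech--residue formula to the Dolbeault integral defining $\mathrm{tr}$, so that $I$ genuinely is the Serre pairing and not a nonzero scalar multiple mis-identified (a multiple would be harmless for non-degeneracy, but must be tracked if the residue formula is to be used quantitatively later, as in the explicit intersection matrices of the preceding sections). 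Once the identification is made, non-degeneracy is automatic from the perfectness of Serre duality.
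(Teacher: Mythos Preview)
Your argument via Serre duality is correct and is the standard way to establish this non-degeneracy. The paper, however, does not actually prove this theorem: it is stated without proof as part of a brief survey of intersection theory for twisted cocycles, with the reader referred to \cite{Ch95} and the monograph \cite{Ao11} for details. So there is no ``paper's own proof'' to compare against.

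Your approach is entirely appropriate for filling this gap. The identification $\Omega^1_{\mathbb{P}^1}(\log D) \cong \Omega^1_{\mathbb{P}^1}(D)$ for a reduced divisor on a curve is correct, the Riemann--Roch dimension count is right (both sides have dimension $m$ when $D$ consists of $m+1$ points), and the residue description of the Serre pairing via the \v{C}ech cover you describe is the classical one. One small remark: your comment about tracking the $2\pi i$ normalization is well-placed, since the paper does go on to use this pairing quantitatively in the explicit computation of intersection numbers following \cite{Ch95} and \cite{Ma98}, where the residue formula $\langle\varphi_L|\varphi_R\rangle_\omega = \sum_{p\in\mathcal{P}}\operatorname{Res}_{z=p}(\psi_p\varphi_R)$ is taken as the working definition.
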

This defines the intersection number

\begin{definition}[Intersection Number for twisted Cocycles]
We define the intersection number of $\varphi^{+}$, $\psi^{-}$ by

\begin{equation*}
    I_c(\varphi^{+}, \psi^{-}):= I(\varphi, i^{-1}(\psi^{-})),
\end{equation*}
where

\begin{equation*}
    \varphi, \psi \in \Gamma \left(\mathbb{P}^1,\Omega_{\mathbb{P}^1}^{1}(\log D)\right) \quad \text{and} \quad  \varphi^{+} = \varphi ~\text{mod} ~\mathbb{C}\cdot \omega, \quad \psi^{-} = \psi ~\text{mod} ~\mathbb{C}\cdot (-\omega)
\end{equation*}
and $i$ is the inclusion map

\begin{equation*}
    \left(\Omega^{\bullet}(\log D)(-D), \nabla_{\omega}^{\vee}\right) \xrightarrow{i}  \left(\Omega^{\bullet}(\log D), \nabla_{\omega}^{\vee}\right)
\end{equation*}
\end{definition}

\subsubsection{Calculating Examples - Using a different Notation}

Having given all necessary concepts to understand the notion of a twist, we finally want to calculate some examples. To do this as conveniently as possible, we want to introduce a more compact notation, i.e. the one used in the application-oriented paper \cite{Fr19}. There, the concept of  twisted cohomology etc. is not explicitly introduced. But having introduced all the necessary concepts and theorems, the reader can now always add the corresponding mathematical concepts to the more physicist-oriented paper.\\[2mm]
Let us mention in advance that in the following calculations the  manifold of interest is always $M= \mathbb{P}^1\setminus \mathcal{P}$, where $\mathcal{P}$ is the set of poles of the $1$-form $\omega$. Now, let us go through the most important definitions of \cite{Fr19}. We will use the notation and symbols of that paper such that it is easier for the reader to compare.\\
First they pair $\left\langle \varphi \right|$, which is a cocylce, and $\left| \mathcal{C} \right]$, which is a domain in $\mathbb{P}$,  to obtain an integral of the above form. More precisely, they define

\begin{equation*}
    \left\langle \varphi \right| \mathcal{C}] := \int_{\mathcal{C}} u \varphi.
\end{equation*}
 This defines a bilinear form in the spirit of the main theorem of section \ref{subsubsec: Twisted de Rham Theory} and hence can be used to find linear relations among hypergeometric functions.  To see this, let $\nu$ be the number of linearly independent cocycles, and denote an arbitrary basis of differential forms by

\begin{equation*}
    \left\langle e_1 \right|, \quad \left\langle e_2 \right|,  \cdots, \left\langle e_{\nu} \right|.
\end{equation*}
Note that we know from our considerations that the dimension $\nu$ of the space of cocylcles is finite.
Then, as known from linear algebra, a decomposition is achieved by expressing the arbitrary cocycle $\left\langle \varphi \right|$ as a linear combination of the base elements. This can be done as follows. We need a dual space of twisted cocycles, whose basis we want to denote by $\left| h_i \right\rangle$ with $i \in \lbrace 1,2,\cdots \nu \rbrace$. This will lead us to the definition of the intersection number from \cite{Fr19}\footnote{This is seen to agree with the definition of the intersection number of twisted cocylces we have given in \ref{subsubsec: Intersection Numbers for Twisted Cocycles}. }:

\begin{definition}[Intersection Number in this new Notation]
Let $\left\langle e_i \right|$ with $i \in \lbrace 1,2,\cdots, \nu \rbrace$ be a base element of the space of cocycles and $\left| h_i \right\rangle$ with $i \in \lbrace 1,2,\cdots,\nu \rbrace$ a basis element of a second space  dual to the first of the space of cocycles\footnote{From twisted de Rham Theory we know this dual space to exist.}, then we define the intersection numbers

\begin{equation*}
    \mathbf{C}_{ij}:= \left\langle e_i | h_j \right\rangle.
\end{equation*}
\end{definition}
These are the entries of a $\nu \times \nu$ matrix.

\subsubsection{Derivation of the Decomposition Formula}
\label{subsubsec: Derivation of the Decomposition Formula}

Having defined the intersection numbers, our next task is to find the decomposition formula, i.e. how to find the coefficients in the linear combination of $\left\langle \varphi \right|$ in terms of the basis elements. This is the most important formula in \cite{Fr19}. It is used in all examples there.\\ 
To derive the decomposition formula, let us define a $(\nu+1)\times (\nu+1)$ matrix $\mathbf{M}$  as follows:

\begin{equation*}
    \mathbf{M}=
    \begin{pmatrix}
    \left\langle \varphi | \psi \right\rangle &   \left\langle \varphi | h_1 \right\rangle &   \left\langle \varphi | h_2 \right\rangle & \cdots &   \left\langle \varphi | h_{\nu} \right\rangle \\
    \left\langle e_1 | \psi \right\rangle &   \left\langle e_1| h_1 \right\rangle &   \left\langle e_1 | h_2 \right\rangle & \cdots &   \left\langle e_1 | h_{\nu} \right\rangle \\
     \left\langle e_2 | \psi \right\rangle &   \left\langle e_2| h_1 \right\rangle &   \left\langle e_2 | h_2 \right\rangle & \cdots &   \left\langle e_2 | h_{\nu} \right\rangle \\
     \vdots & \vdots & \vdots & \ddots  & \vdots \\
      \left\langle e_{\nu} | \psi \right\rangle &   \left\langle e_{\nu}| h_1 \right\rangle &   \left\langle e_{\nu} | h_2 \right\rangle & \cdots &   \left\langle e_{\nu} | h_{\nu} \right\rangle \\
    \end{pmatrix} \equiv
    \begin{pmatrix}
     \left\langle \varphi | \psi \right\rangle & \mathbf{A}^{\intercal} \\
     \mathbf{B} & \mathbf{C}
    \end{pmatrix}
\end{equation*}
Each entry is given by a pairing i.e. a bilinear. The matrix $\mathbf{C}$ is  a submatrix of $\mathbf{M}$. $\mathbf{B}$ is columnvector, $\mathbf{A}^{\intercal}$ is a row vector.\\[2mm]
Since we have $\nu+1$ cocycles labelling the rows and columns and the corresponding vector spaces have dimension $\nu$, the determinant of $\mathbf{M}$ must vanish. Therefore, from the common formula for the determinant of a matrix with block matrices:

\begin{equation*}
    \operatorname{det}(\mathbf{M})= \operatorname{det}(\mathbf{C})\cdot \left( \left\langle \varphi | \psi \right\rangle - \mathbf{A}^{\intercal}\mathbf{C}^{-1}\mathbf{B}\right)=0.
\end{equation*}
But by definition the matrix $\mathbf{C}$ cannot be zero, whence the other factor must be zero. From this we conclude

\begin{equation*}
     \left\langle \varphi | \psi \right\rangle = \mathbf{A}^{\intercal}\mathbf{C}^{-1}\mathbf{B}
\end{equation*}
or writing out the products

\begin{equation*}
      \left\langle \varphi | \psi \right\rangle = \sum_{i,j =1}^{\nu} \left\langle \varphi | h_j\right\rangle \left(\mathbf{C}^{-1}\right)_{ji}\left\langle e_i | \psi\right\rangle
\end{equation*}
Therefore,  since $\left|\psi\right\rangle$ is arbitrary, we arrive at the the decomposition formula:

\begin{theorem}[Decomposition Formula]

Let $\left\langle e_i \right|$ with $i \in \lbrace 1,2,\cdots \nu \rbrace$ be a basis element of the space of cocycles and $\left\langle e_i \right|$ with $i \in \lbrace 1,2,\cdots \nu \rbrace$ a basis element of the dual space of the space of cocycles, then we have the decomposition formula
\begin{equation*}
      \left\langle \varphi \right| = \sum_{i,j =1}^{\nu} \left\langle \varphi | h_j\right\rangle \left(\mathbf{C}^{-1}\right)_{ji}\left\langle e_i \right|
\end{equation*}
\end{theorem}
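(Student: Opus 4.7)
The plan is to follow exactly the matrix argument sketched just before the statement, but make each step explicit. First I would introduce an auxiliary dual cocycle $|\psi\rangle$ (arbitrary for now) and assemble the $(\nu+1) \times (\nu+1)$ matrix
\begin{equation*}
\mathbf{M} = \begin{pmatrix} \langle \varphi | \psi \rangle & \mathbf{A}^{\intercal} \\ \mathbf{B} & \mathbf{C} \end{pmatrix},
\end{equation*}
where $\mathbf{C}_{ij} = \langle e_i | h_j \rangle$, $\mathbf{A}_i = \langle \varphi | h_i \rangle$, and $\mathbf{B}_i = \langle e_i | \psi \rangle$, exactly as displayed in the excerpt.

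Next I would justify that $\det(\mathbf{M}) = 0$. Since the space of twisted cocycles has dimension $\nu$ and $\{\langle e_i|\}$ is a basis, the additional element $\langle \varphi|$ must be a linear combination of them; that is, there exist scalars $c_1, \dots, c_\nu$ with $\langle \varphi| = \sum_i c_i \langle e_i|$. Pairing with $|\psi\rangle$ and with each $|h_j\rangle$ yields, respectively, $\langle\varphi|\psi\rangle = \sum_i c_i \langle e_i|\psi\rangle$ and $\langle\varphi|h_j\rangle = \sum_i c_i \langle e_i|h_j\rangle$, which says exactly that the top row of $\mathbf{M}$ is the same linear combination of the bottom $\nu$ rows. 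Hence $\mathbf{M}$ is rank-deficient and its determinant vanishes.

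Then I would apply the standard Schur-complement identity
\begin{equation*}
\det(\mathbf{M}) = \det(\mathbf{C}) \cdot \bigl(\langle \varphi|\psi\rangle - \mathbf{A}^{\intercal}\mathbf{C}^{-1}\mathbf{B}\bigr),
\end{equation*}
valid because $\mathbf{C}$ is invertible: non-degeneracy of the cocycle/dual-cocycle pairing (the bilinear form introduced via equation (3) of the main theorem of section 2.5.9, i.e. Theorem 2.5.5 of twisted de Rham theory) guarantees that the Gram matrix of dual bases has nonzero determinant. From $\det(\mathbf{M})=0$ and $\det(\mathbf{C}) \neq 0$, the second factor vanishes, so
\begin{equation*}
\langle\varphi|\psi\rangle \;=\; \mathbf{A}^{\intercal}\mathbf{C}^{-1}\mathbf{B} \;=\; \sum_{i,j=1}^{\nu}\langle\varphi|h_j\rangle\,(\mathbf{C}^{-1})_{ji}\,\langle e_i|\psi\rangle.
\end{equation*}
Because $|\psi\rangle$ was arbitrary in the dual cocycle space, and the pairing is non-degenerate, the identity above holds as an equality of linear functionals, yielding
\begin{equation*}
\langle\varphi| \;=\; \sum_{i,j=1}^{\nu}\langle\varphi|h_j\rangle\,(\mathbf{C}^{-1})_{ji}\,\langle e_i|.
\end{equation*}

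The only real obstacle is the last step: one must be careful that equality of the two functionals after pairing with every $|\psi\rangle$ really does force equality as cocycles. This is precisely the non-degeneracy half of Theorem 2.5.5(3), so it is available to us. Everything else is a mechanical block-matrix computation; the conceptual content is entirely in the observation that $\nu+1$ cocycles in a $\nu$-dimensional space force the bordered intersection matrix to be singular, and that the Schur complement then packages the unique linear dependence into the stated formula.
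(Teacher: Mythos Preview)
Your proposal is correct and follows essentially the same approach as the paper: build the bordered $(\nu+1)\times(\nu+1)$ intersection matrix $\mathbf{M}$, argue it is singular because the cocycle space is $\nu$-dimensional, apply the Schur-complement determinant identity, and strip off the arbitrary $|\psi\rangle$. If anything, you are more careful than the paper, which asserts $\det(\mathbf{M})=0$ and $\det(\mathbf{C})\neq 0$ without spelling out the row-dependence argument or invoking non-degeneracy explicitly.
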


The decomposition formula provides us with a projection of $\left\langle \varphi \right|$ onto the basis elements $\left\langle e_i \right|$. Contracting both sides with the twisted cocycle $\left|\mathcal{C}\right]$ (this means multiplying by $u$ and integrating over $\mathcal{C}$), we find the formula

\begin{corollary}
\begin{equation*}
    \int_{\mathcal{C}}u \varphi = \sum_{i,j =1}^{\nu} \left\langle \varphi | h_j\right\rangle \left(\mathbf{C}^{-1}\right)_{ji} \int_{\mathcal{C}} ue_i.
\end{equation*}
\end{corollary}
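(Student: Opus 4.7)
The plan is to obtain the corollary as an immediate consequence of the decomposition formula proved in the theorem immediately above. The decomposition formula is an identity in the space of cocycles, i.e.\ it asserts the equality of two elements of $H^{n}(M,\mathcal{L}_{\omega})$. Any such equality can be tested against an element of the (homology) pairing, and the definition $\left\langle \varphi \mid \mathcal{C}\right] = \int_{\mathcal{C}} u\varphi$ shows that pairing with $\left|\mathcal{C}\right]$ converts cocycle identities into identities among hypergeometric integrals. So the proof will be essentially one line: pair both sides of the decomposition formula with $\left|\mathcal{C}\right]$.

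First, I would take the decomposition formula
\begin{equation*}
\left\langle \varphi\right| = \sum_{i,j=1}^{\nu}\left\langle \varphi \mid h_j\right\rangle\bigl(\mathbf{C}^{-1}\bigr)_{ji}\left\langle e_i\right|,
\end{equation*}
and evaluate both sides at $\left|\mathcal{C}\right]$. The left-hand side becomes $\left\langle \varphi \mid \mathcal{C}\right] = \int_{\mathcal{C}} u\varphi$ by the definition of the pairing. On the right-hand side, the scalar coefficients $\left\langle \varphi \mid h_j\right\rangle\bigl(\mathbf{C}^{-1}\bigr)_{ji} \in \mathbb{C}$ pull out of the (bilinear) pairing, leaving $\left\langle e_i \mid \mathcal{C}\right] = \int_{\mathcal{C}} u e_i$. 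Collecting these yields the claimed identity.

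The only subtle point, which is not really an obstacle but worth checking, is that the pairing $\left\langle \cdot \mid \mathcal{C}\right]$ is a well-defined $\mathbb{C}$-bilinear pairing on cohomology, so that one may substitute any representative of $\left\langle \varphi\right|$ in the decomposition formula without changing the value of $\int_{\mathcal{C}} u\varphi$; this is exactly the content of the non-degenerate pairing in the main theorem of Section~\ref{subsubsec: Twisted de Rham Theory}, together with the fact that $\mathcal{C}$ represents a twisted cycle so that $\int_{\mathcal{C}} u\,\nabla_{\omega}\eta = 0$ by the twisted Stokes theorem (Theorem in Section~\ref{subsubsec: Locally finite Twisted Holonomy Group}). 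Hence no technical difficulty arises and the corollary follows immediately from the decomposition formula.
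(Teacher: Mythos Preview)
Your proof is correct and follows exactly the same approach as the paper: the paper simply states that ``contracting both sides with the twisted cocycle $\left|\mathcal{C}\right]$ (this means multiplying by $u$ and integrating over $\mathcal{C}$)'' yields the formula. Your additional remarks about well-definedness via the twisted Stokes theorem are more explicit than what the paper writes, but entirely in line with its reasoning.
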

Therefore, we find the coefficients of the decomposition

\begin{corollary}
Given
\begin{equation*}
    I  = K\left\langle \varphi | \mathcal{C} \right]= \sum_{i=1}^{\nu} c_iJ_i,
\end{equation*}
where

\begin{equation*}
    J_i \equiv KE_i \quad \text{with} \quad E_i \equiv \left\langle e_i | \mathcal{C}\right],
\end{equation*}
we have

\begin{equation*}
    c_i = \sum_{j=1}^{\nu} \left\langle \varphi | h_j \right\rangle \left(\mathbf{C}^{-1}\right)_{ji}.
\end{equation*}
\end{corollary}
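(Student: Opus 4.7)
The plan is to derive this corollary as an immediate consequence of the previous corollary, which already expresses $\int_{\mathcal{C}} u\varphi$ as a double sum involving the intersection numbers $\langle \varphi | h_j\rangle$, the inverse intersection matrix $(\mathbf{C}^{-1})_{ji}$, and the integrals $\int_{\mathcal{C}} u e_i$. First I would take that identity and rewrite it using the pairing notation of the present section, so that $\int_{\mathcal{C}} u\varphi = \langle \varphi | \mathcal{C}]$ and $\int_{\mathcal{C}} u e_i = E_i = \langle e_i | \mathcal{C}]$.

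Next I would multiply the resulting equation by the overall constant $K$ appearing in the definitions $I = K\langle \varphi|\mathcal{C}]$ and $J_i = K E_i$. Since $K$ is just a scalar independent of the summation indices, it can be pulled inside the sum, yielding
\begin{equation*}
I \;=\; \sum_{i=1}^{\nu}\Bigl(\sum_{j=1}^{\nu} \langle \varphi | h_j\rangle \,(\mathbf{C}^{-1})_{ji}\Bigr)\, J_i .
\end{equation*}
Comparing this with the given expression $I = \sum_{i=1}^{\nu} c_i J_i$ then identifies $c_i = \sum_{j=1}^{\nu} \langle \varphi | h_j\rangle (\mathbf{C}^{-1})_{ji}$, which is exactly the claim.

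The only subtle point, and the one place where care is needed, is the uniqueness of the decomposition: matching coefficients term by term is legitimate only if $\{J_1,\dots,J_\nu\}$ is linearly independent. This, however, follows from the setup, since $\{\langle e_i|\}_{i=1}^{\nu}$ has been chosen as a basis of the $\nu$-dimensional twisted cohomology space and the pairing $\langle \cdot | \mathcal{C}]$ with the fixed twisted cycle $|\mathcal{C}]$ is non-degenerate by the duality theorem in Section~\ref{subsubsec: Twisted de Rham Theory}. Thus the $E_i$, and hence the $J_i = K E_i$, are linearly independent, and the identification of coefficients is unambiguous. No substantive obstacle therefore arises: the result is essentially a rewriting of the previous corollary adapted to the normalization constant $K$ and the notation $J_i$.
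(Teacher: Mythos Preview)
Your main approach is correct and matches the paper's implicit reasoning: the corollary is indeed just the previous corollary multiplied through by the scalar $K$, with the identifications $I = K\langle\varphi|\mathcal{C}]$ and $J_i = KE_i$.

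However, your uniqueness paragraph contains a genuine error. The $E_i = \langle e_i|\mathcal{C}]$ are complex \emph{numbers} (they are values of integrals), and hence so are the $J_i = KE_i$. A collection of $\nu$ complex numbers cannot be linearly independent over $\mathbb{C}$ once $\nu > 1$, so your claim that the $J_i$ are linearly independent is false. Your justification is also wrong: the duality theorem asserts that the full pairing $H_p \times H^p \to \mathbb{C}$ is non-degenerate, but pairing with a single fixed cycle $|\mathcal{C}]$ is just one linear functional on a $\nu$-dimensional space, and it necessarily has a $(\nu-1)$-dimensional kernel. So one cannot recover the coefficients $c_i$ uniquely from the scalar equation $I = \sum_i c_i J_i$ alone.

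Fortunately this uniqueness is also unnecessary. The coefficients $c_i$ are already determined at the cohomology level by the Decomposition Formula $\langle\varphi| = \sum_{i,j}\langle\varphi|h_j\rangle(\mathbf{C}^{-1})_{ji}\langle e_i|$, where the $\langle e_i|$ genuinely form a basis. One then pairs with $|\mathcal{C}]$ and multiplies by $K$; no coefficient-matching on the numerical side is required. Simply delete the uniqueness paragraph and your argument is fine.
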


\subsubsection{The case of $1$-forms}
\label{subsubsec: The case of 1-forms}

In this section, we descend to $1$-forms and show how to calculate intersection numbers and to apply the decomposition formula. First, we need to find the dimension $\nu$ of the vector space under consideration. This is addressed by the following theorem.

\begin{theorem}
Let $\omega$ be a differential $1$-form defined on $X=\mathbb{P}^1\setminus \mathcal{P}$, where $\mathcal{P}$ is the set of poles of $\omega$. Then, the dimension $\nu$ of the vector space of cocycles is 
\begin{equation*}
    \nu = \lbrace \text{number of solutions of }\omega =0\rbrace.
\end{equation*}
\end{theorem}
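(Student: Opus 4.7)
The plan is to compute both sides of the asserted equality separately and check that they agree. Writing $|\mathcal{P}| = m+1$, both quantities should turn out to equal $m-1$.

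For the dimension of the cocycle space, I would apply the comparison between the twisted de~Rham complex on $X$ and the logarithmic de~Rham complex on $\mathbb{P}^1$, which is essentially a direct consequence of the main twisted de~Rham isomorphism stated earlier in this section. On the curve $\mathbb{P}^1$ this logarithmic complex has only two terms,
\begin{equation*}
\mathcal{O}_{\mathbb{P}^1} \xrightarrow{\nabla_\omega} \Omega^1_{\mathbb{P}^1}(\log D),
\end{equation*}
where $D$ is the reduced divisor supported on $\mathcal{P}$. Under the standing non-resonance hypothesis on the exponents $\alpha_j$, both $H^0(X,\mathcal{L}_\omega)$ and $H^2(X,\mathcal{L}_\omega)$ vanish, so $\dim H^1(X,\mathcal{L}_\omega) = -\chi(X,\mathcal{L}_\omega)$. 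Evaluating this Euler characteristic via Riemann-Roch on $\mathbb{P}^1$ gives $\chi(\mathcal{O}_{\mathbb{P}^1}) - \chi(\Omega^1_{\mathbb{P}^1}(\log D)) = 1 - m$, and hence $\dim H^1(X,\mathcal{L}_\omega) = m-1$.

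For the zero count, I would simply use that $\omega$ is a global section of the line bundle $\Omega^1_{\mathbb{P}^1}(\log D)$, whose degree equals $\deg \Omega^1_{\mathbb{P}^1} + \deg D = -2 + (m+1) = m-1$. Its zero divisor on $\mathbb{P}^1$ therefore has degree $m-1$. Since the residues of $\omega$ at the points of $\mathcal{P}$ are exactly the exponents $\alpha_j$, which are assumed nonzero, none of those zeros can coincide with a point of $\mathcal{P}$; the $m-1$ zeros therefore all lie in $X$, and under genericity they are distinct. This produces the required count of solutions of $\omega = 0$ on $X$ and matches the dimension from the first step.

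The main obstacle will be the careful bookkeeping of the non-resonance conditions required to guarantee both the vanishing of $H^0$ and $H^2$ and the simplicity (and disjointness from $\mathcal{P}$) of the zero divisor of $\omega$. These are precisely the genericity assumptions already invoked in the construction of twisted cycles earlier in the section, so modulo this standard packaging the theorem reduces to the two Riemann-Roch degree computations above.
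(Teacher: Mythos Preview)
The paper does not actually prove this theorem; it states the result and defers to the references \cite{Ao11}, \cite{Ch95}, \cite{Ma98}, \cite{Fr19} for the underlying theory, then immediately moves on to define intersection numbers. So there is no proof in the paper to compare against.

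Your argument is the standard one and is correct in outline. Both computations reduce to the same Riemann--Roch count: with $|\mathcal{P}|=m+1$, the logarithmic de~Rham complex on $\mathbb{P}^1$ has Euler characteristic $\chi(\mathcal{O}_{\mathbb{P}^1})-\chi(\Omega^1_{\mathbb{P}^1}(\log D))=1-m$, and under the non-resonance hypotheses $\alpha_j\notin\mathbb{Z}$ (which the paper assumes throughout the twisted cohomology discussion) the vanishing of $H^0$ and $H^2$ gives $\dim H^1=m-1$; meanwhile $\omega$, viewed as a section of the degree-$(m-1)$ bundle $\Omega^1_{\mathbb{P}^1}(\log D)$, has $m-1$ zeros counted with multiplicity, none of which lie in $\mathcal{P}$ since the residues $\alpha_j$ are nonzero. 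One small caveat: the statement as written in the paper literally counts ``solutions of $\omega=0$'' rather than the degree of the zero divisor, so your genericity remark about simple zeros is needed to make the wording honest, but this is exactly the regime the paper works in (and the applications in the following examples all have $\omega$ with simple zeros).
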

Following \cite{Ch95} and \cite{Ma98}, we define the intersection number $\left\langle \varphi_L|\varphi_R\right\rangle_{\omega}$ as follows:

\begin{definition}[Intersection Number $\left\langle \varphi_L|\varphi_R\right\rangle_{\omega}$]
Let $\mathcal{P}$ be the set of poles of $\omega$, a meromphic $1$-form on $\mathbb{P}$, i.e

\begin{equation*}
    \mathcal{P}= \left\lbrace z | z \text{ is a pole of }\omega \right\rbrace
\end{equation*}
Then, we have:

\begin{equation*}
    \left\langle \varphi_L|\varphi_R\right\rangle_{\omega}=  \sum_{p \in \mathcal{P}} \operatorname{Res}_{z=p}\left(\psi_p \varphi_R\right),
\end{equation*}
where $\psi_p$ is a function and solves the differential equation $\nabla_{\omega}\psi = \psi_L$ around $p$, i.e. 

\begin{equation*}
    \nabla_{\omega_p}\psi_p = \varphi_{L,p}.
\end{equation*}
In general, $f_p$ denotes the Laurent expansion of the function $f$ around $p$.
\end{definition}

\subsubsection{1. Example: $B$-function}
\label{subsubsec: 1. Example: B-function}

 We will start with the simplest hypergeometric integral - the $B$-function, defined as the integral

\begin{equation*}
    B(x,y)= \int\limits_{0}^{1}dz z^{x-1}(1-z)^{y-1}.
\end{equation*}
Moreover, in section \ref{subsubsec: 3. Example: Beta-Function} we found the relation:

\begin{equation*}
    B(x,y)= \dfrac{\Gamma(x)\Gamma(y)}{\Gamma(x+y)}.
\end{equation*}
We will discuss the $B$-function in more detail to see how the method works in contrast to other more familiar methods discussed in this chapter.

\subsubsection*{1. Approach: Direct Integration}

Let us put

\begin{equation*}
    I_n = \int_{\mathcal{C}}uz^ndz, \quad u= z^{\gamma}(1-z)^{\gamma}, \quad \mathcal{C} =[0,1].
\end{equation*}
We can use the relation among $\Gamma$ and $B$ to express $I_n$ directly:

\begin{equation*}
    I_n= \dfrac{\Gamma(1+\gamma)\Gamma(1+\gamma +n)}{\Gamma(2+2 \gamma +n)}.
\end{equation*}
Therefore,

\begin{equation*}
    I_n=\dfrac{\Gamma(1+\gamma+n)\Gamma(2+2\gamma)}{\Gamma(1+\gamma)\Gamma(2+2\gamma +n)}I_0.
\end{equation*}
For $n=1$, this reduces to

\begin{equation*}
    I_1 = \dfrac{1}{2}I_0.
\end{equation*}

\subsubsection*{2. Approach: Integration-by-Parts}

Let us see, whether we can find the last relation among $I_1$ and $I_0$ by integration by parts. For this aim, note that

\begin{equation*}
    \int_{\mathcal{C}}d\left((z(1-z))^{\gamma +1}z^{n-1}\right) =0.
\end{equation*}
Expanding the integrand gives:

\begin{equation*}
    (\gamma +n)I_{n-1}-(1+2 \gamma +n)I_n=0.
\end{equation*}
Hence

\begin{equation*}
    I_n= \dfrac{(\gamma+n)}{(1+2\gamma +n)}I_{n-1},
\end{equation*}
which for $n=1$ gives

\begin{equation*}
    I_1= \dfrac{1}{2}I_0.
\end{equation*}

\subsubsection*{3. Approach: Intersection Numbers}

We want to find the relation among the $B$-integrals again. We define

\begin{equation*}
    I_n = \int u \phi_{n+1} \equiv _{\omega}\left\langle \phi_{n+1} |\mathcal{C}\right], \quad \phi_{n+1} \equiv z^ndz.
\end{equation*}
Additionally,

\begin{equation*}
    u=z^{\gamma}(1-z)^{\gamma}, \quad \omega = d \log u = \gamma \left(\dfrac{1}{z}+\dfrac{1}{z-1}\right)dz.
\end{equation*}
The equation $\omega = 0$ obviously has only $1$ solution. Hence the dimension of the space under consideration is $\nu=1$. Further, we need to find the poles of $\omega$. The poles $z=0$ and $z=1$ are immediate. To consider $z=\infty$, perform the substitution $t=\frac{1}{z}$, whence $dt=-\frac{1}{z^2}$. Therefore,

\begin{equation*}
    \omega = -\gamma \left(t+\dfrac{t}{1-t}\right)\dfrac{dt}{t^2}.
\end{equation*}
Since this tends to $\infty$ for $t \rightarrow 0$, the set of poles is:

\begin{equation*}
    \mathcal{P}= \lbrace 0,1,\infty \rbrace.
\end{equation*}
we want to express $I_1$ in terms of some integrals of the same class. Since $\nu =1$, we only need one integral and choose it to be $I_0$. Therefore, we know

\begin{equation*}
    I_1 =c_1I_0 \quad \Leftrightarrow \quad _{\omega}\left\langle \phi_2| \mathcal{C}\right] = c_1{}\cdot{}_{\omega}\left\langle \phi_1| \mathcal{C}\right]
\end{equation*}
and we need to find $c_1$.\\
Since $\nu=1$, the matrix $\mathbf{C}_{ij}$ is simply a number, i.e. $\left\langle \phi_1|\phi_1 \right\rangle$. Hence, in total we need to evaluate the numbers $\left\langle \phi_1|\phi_1 \right\rangle$, $\left\langle \phi_2|\phi_1 \right\rangle$.\\
For this we need the residues. For each pole $p \in \mathcal{P}$ we need $\phi_{i,p}$ - the series expansion of $\phi_i$ about $z=p$ - and $\psi_{i,p}$ - the series expansion of $\psi_i$ around $z=p$, which is found from the differential equation

\begin{equation*}
    \nabla_{\omega_p} \psi_{i,p}= \phi_{i,p}.
\end{equation*}
Note that the following Laurent expansions are known

\begin{equation*}
    \phi_{i,p}= \sum_{k = \operatorname{min}-1} \phi_{i,p}^{(k)}z^k \quad \text{and} \quad \omega_{p}= \sum_{k=-1} \omega_p^{k}z^k
\end{equation*}
and we need to find

\begin{equation*}
    \psi_{p} = \sum_{k=\operatorname{min}}^{\operatorname{max}}\alpha_k z^k.
\end{equation*}
In other words, the coefficients $\alpha_k$ are to be found. $\operatorname{max}(\phi_i)= \operatorname{ord}_p(\phi_i)+1$ and $\operatorname{min}(\phi_i)= -\operatorname{ord}_p(\phi_i)-1$. We introduced $\operatorname{min}$ and $\operatorname{max}$, since those determine the range of coefficients necessary to determine the residue we need. We do not need the rest of the Laurent series.\\
Writing out the above differential equation for differential forms, we arrive at the following simpler differential equation for functions:

\begin{equation*}
    \dfrac{d}{dz}\psi_p + \omega_p \psi_p = \phi_{i,p}.
\end{equation*}
Now we can begin with the calculation of the intersection numbers. We start with $\left\langle \phi_1|\phi_1 \right\rangle$. For $p=0$ and $p=1$ we find that $\operatorname{min}=1 > \operatorname{max}=-1$. Therefore, the resulting residues are zero. For $p=\infty$ on the other hand, we find $\operatorname{min}=-1$ and $\operatorname{min}=1$. Therefore, our series for $\psi_{\infty}$ has the following three terms:

\begin{equation*}
    \psi_{\infty}= \alpha_1\dfrac{1}{z}+\alpha_0 +\alpha_1 z^1.
\end{equation*}
Substituting this series in the above differential equations together with the series expansion of $\phi_{1,\infty}$ and $\omega_{\infty}$ and comparing coefficients, we find

\begin{equation*}
    \alpha_{-1}=\dfrac{1}{2\gamma +1}, \quad \alpha_0 =-\dfrac{1}{2(2\gamma +1)}, \quad \alpha_1=\dfrac{\gamma}{2(2\gamma-1)(2\gamma +1)}.
\end{equation*}
Therefore, the intersection number is 

\begin{equation*}
    \left\langle \phi_1|\phi_1 \right\rangle = \operatorname{Res}_{z=\infty}(\psi_{\infty}\phi_1)= \dfrac{\gamma}{2(2\gamma -1)(2\gamma +1)}.
\end{equation*}

Let us go over to $\left\langle \phi_2|\phi_1 \right\rangle$. As in the first case, the differential equation has no solution for $p=0$ and $p=1$. For $p=\infty$ we  find that the series must look as follows:

\begin{equation*}
    \psi_{\infty}= \alpha_{-2}\dfrac{1}{z^2}+ \alpha_{-1}\dfrac{1}{z}+ \alpha_{0}+ \alpha_1 z.
\end{equation*}
The coefficients are found by the same procedure as above and are calculated to be

\begin{equation*}
    \alpha_{-2}=\dfrac{1}{2(\gamma +1)}, \quad \alpha_{-1}=\dfrac{\gamma}{2(\gamma +1)(\gamma +2)}, \quad \alpha_0 =\dfrac{1}{4(2\gamma+1)}, \quad \alpha_1=-\dfrac{\gamma}{4(2\gamma-1)(2 \gamma+1)}.
\end{equation*}
Thus,

\begin{equation*}
    \left\langle \phi_2|\phi_1 \right\rangle = \operatorname{Res}_{z=\infty}(\psi_{\infty}\phi_1)=\dfrac{\gamma}{4(2\gamma-1)(2\gamma +1)}.
\end{equation*}
Now we can apply the decomposition formula and find

\begin{equation*}
    c_1= \left\langle \phi_2|\phi_1 \right\rangle (\left\langle \phi_1|\phi_1 \right\rangle)^{-1}=\dfrac{1}{2},
\end{equation*}
i.e.

\begin{equation*}
    I_1 = \dfrac{1}{2}I_0
\end{equation*}
in agreement with the approaches above.

\subsubsection{2. Example: Hypergeometric Function}
\label{subsubsec: 2. Example: Hypergeometric Function}

Above  in section \ref{subsubsec: 4. Example: Hypergeometric Series} we saw that we can express the Gaussian hypergeometric series as

\begin{equation*}
    B(b,c-b){}_2F_1(a,b,c;x) = \int\limits_{0}^{1}z^{b-1}(1-z)^{c-b-1}(1-xz)^{-a}dz.
\end{equation*}
Hence the integration contour is $\mathcal{C}=[0,1]$. $B(a,b)$ is the $B$-function. We want to use intersection theory to find a difference equation for hypergeometric series. We write:

\begin{equation*}
    B(b,c-b)_2F_{1}(a,b,c;x) = \int_{\mathcal{C}} u\varphi = {}_{\omega}\left\langle \varphi | \mathcal{C}\right],
\end{equation*}
with the letters $u$, $\omega$ and $\varphi$ being

\begin{equation*}
     \renewcommand{\arraystretch}{2,5}
\setlength{\arraycolsep}{0.0mm}
\begin{array}{lll}
    u &~=~& z^{b-1}(1-xz)^{-a}(1-z)^{-b+c+1} \\
    \omega &~=~& d \log u = \dfrac{xz^2(c-a-2)+z(ax-c+x+2)-bxz+b-1}{(z-1)z(xz-1)}dz \\
    \varphi &~=~& dz.
\end{array}
\end{equation*}
Therefore, we have

\begin{equation*}
    \nu =2 \quad \text{and} \quad \mathcal{P}=\left\lbrace 0,1,\frac{1}{x}, \infty\right\rbrace.
\end{equation*}
This indicates that we can express one hypergeometric integrals by two others. This is not surprising, since we know Gau\ss's contiguous  relations. But let us find a specific one by using intersection theory. \\[2mm]
We chose the basis $\lbrace \left\langle \phi_{1}\right|, \left\langle \phi_{2}\right|\rbrace$, where, as above $\phi_{i+1}=z^{i}dz$.\\
In this example, the matrix $\mathbf{C}$ is a $2\times 2$ matrix and looks as follows:

\begin{equation*}
    \mathbf{C}= \begin{pmatrix}
    \left\langle \phi_1 | \phi_1 \right\rangle &   \left\langle \phi_1 | \phi_2 \right\rangle \\
      \left\langle \phi_2 | \phi_1 \right\rangle &   \left\langle \phi_2 | \phi_2 \right\rangle
    \end{pmatrix}
\end{equation*}
The intersection numbers are calculated as above, we just list the results:

\begin{equation*}
         \renewcommand{\arraystretch}{1,5}
\setlength{\arraycolsep}{0.0mm}
\begin{array}{lll}
 \left\langle \phi_1 | \phi_1 \right\rangle & ~=~ & \bigg(x^2(-(a-b+1))(b-c+1)-2ax(-b+c-1)+a(c-2)\bigg)/\bigg(x^2(a\\
 &~-~&c+1)(a-c+2)(a-c+3) \\
  \left\langle \phi_1 | \phi_2 \right\rangle &~=~& \bigg(x^3(-(a-b+1)(a-b+2)(b-c+1))-ax^2(-b+c+1)(2a-3b \\
   &~+~& c+2)+ax(a+2c-5)(-b+c+1)-a(c-3)(c-2)\bigg)/\bigg(x^3(a-c+1) \\
  & & (a-c+2)(a-c+3)(a-c+4)\bigg), \\
    \left\langle \phi_2 | \phi_1 \right\rangle &~=~ & \bigg(x^3(-(a-b))(a-b+1)(b-c+1)-ax^2(-b+c-1)(2a-3b+c)\\
    &~+~ & ax(a+2c-3)(-b+c+1)-a(c-2)(c-1)\bigg)/\bigg( x^3(a-c)(a-c+1) \\
    & &(a-c+2)(a-c+3)\bigg),\\
     \left\langle \phi_2 | \phi_2 \right\rangle &~=~ & \bigg(-ax^2(a^2b-a^2c+a^2-3ab^2+7abc-8ab-4ac^2 +9ac-5a-3b^2c \\
     &~+~& 6b^2+4bc^2-10bc+6b-c^3+2c^2-c)+x^4(-(a^3-3a^2b +3a^2 +3ab^2 \\
     &~-~& 6ab+2a-b^3+3b^2-2b))(b-c+1)+2ax^3(a-b+1)(ab-ac+a \\
     &~-~& 2b^2+3bc-2b-c^2 +c)+2a(c-2)x(a+c-2)(b-c+1)+a(c^3-6c^2 \\
     &~+~& 11c-6)\bigg)/\bigg(x^4(a-c)(a-c+1)(a-c+2)(a-c+3)(a-c+4)\bigg).
\end{array}
\end{equation*}
Now we found everything necessary for the application of the decomposition formula, which in this case reads

\begin{equation*}
    \left\langle \phi_{n} \right| = \sum_{i,j=1}^{2}  \left\langle \phi_n | \phi_j \right\rangle \left(\mathbf{C}^{-1}\right)_{ji} \left\langle \phi_i \right|.
 \end{equation*}
As an example, let us take $\left\langle \phi_3 | \mathcal{C}\right]= B(b+2,c-b){}_2F_1(a,b+2,c+2;x)$ in terns of $B(b,c-b){}_2F_1(a,b,c;x)$ and $B(b+1,c-b){}_2F_1(a,b+1,c+1;x)$. Then, we find

\begin{equation*}
 \renewcommand{\arraystretch}{2,5}
\setlength{\arraycolsep}{0.0mm}
\begin{array}{lll}
    B(b+2,c-b){}_2F_1(a,b+2,c+2;x)&~=~& \left(\dfrac{b}{x(a-c-1)}\right)B(b,c-b){}_2F_1(a,b,c;x) \\
    &~+~ &\left(\dfrac{(b-a+1)x+c}{x(c-a+1)}\right)B(b+1,c-b){}_2F_1(a,b+1,c+1;x)
    \end{array}
\end{equation*}
which relation can also be derived from the contiguous relations for hypergeometric functions.

\subsubsection{3. Example: $\Gamma$-function}
\label{subsubsec: 3. Example: Gamma-function}

Having explained the idea, let us apply the idea to the $\Gamma$-function and recover its integral representation from its functional equation one more time. We still assume that it is possible to write $\Gamma(x)$ as an integral of the form

\begin{equation*}
    \Gamma(x) = \int\limits_{0}^{\infty}t^{x-1}P(t)dt
\end{equation*}
and we have to determine $P(t)$. We want to solve $\Gamma(x+1)= x\Gamma(x)$. In the language of intersection theory, we have

\begin{equation*}
    u = t^{x}P(t) \quad \text{and hence} \quad \omega = d \log u = \left(\dfrac{x}{t}+\dfrac{P'(t)}{P(t)}\right)dt.
\end{equation*}
Since we seek to express $\Gamma(x+1)$ by one other integral, this forces us to arrange that

\begin{equation*}
    \dfrac{x}{t}+\dfrac{P'(t)}{P(t)} =0
\end{equation*}
has precisely one solution for $t$. This in turn implies

\begin{equation*}
    \dfrac{P'(t)}{P(t)}=C \quad \text{or} \quad P(t)=Be^{Ct}.
\end{equation*}
for some constants $B$ and $C\neq 0$ we have to determine from the remaining conditions. We need to find the poles of $\omega$, i.e. the $1$-form defined above\footnote{Of course, at this point we could use Wielandt's theorem directly to force the integral to become $\Gamma(x)$ - it would imply $B=1$ and $C=-1$. But we will use intersection theory to the end and see how we have to find $B$ and $C$ his way.}. Inserting the result we found for $P(t)$, we have

\begin{equation*}
    \omega = \left(\dfrac{x}{t}+C\right)dt.
\end{equation*}
One pole is obviously given by $t=0$.  But we also have to consider $t=\infty$. Therefore, put $u=\frac{1}{t}$. Hence locally around $t= \infty$

\begin{equation*}
    \omega =  -\left(xu+C\right)\dfrac{du}{u^2}
\end{equation*}
which is infinite for $u=0$ and hence indicates a pole at $t=\infty$. Therefore,

\begin{equation*}
    \mathcal{P}= \lbrace 0, \infty\rbrace.
\end{equation*}
As in the first example, we need the intersection numbers $\left\langle \phi_1| \phi_1 \right\rangle$ and $\left\langle \phi_2| \phi_1 \right\rangle$. Since we forced $\nu=1$, the intersection matrix is just $\mathcal{C}_{11}=\left\langle \phi_1| \phi_1 \right\rangle$, i.e a $1\times 1$-matrix. The two necessary intersection numbers are calculated as in the preceding examples. As in the first example only the point $p=\infty$ actually contributes and we find:

\begin{equation*}
    \renewcommand{\arraystretch}{2,5}
\setlength{\arraycolsep}{0.0mm}
\begin{array}{lllllll}
     \left\langle \phi_1| \phi_1 \right\rangle  &~=~ & \dfrac{x}{C^2} \\
     \left\langle \phi_2| \phi_1 \right\rangle  &~=~ & -\dfrac{x^2}{C^3} \\
\end{array}
\end{equation*}
Therefore, the decomposition formula yields

\begin{equation*}
    c_1= \left\langle \phi_2| \phi_1 \right\rangle   \left\langle \phi_1| \phi_1 \right\rangle^{-1} = -\dfrac{x}{C}.  
\end{equation*}
Recall that we want to solve the equation

\begin{equation*}
    \Gamma(x+1)=x\Gamma(x).
\end{equation*}
Therefore, this gives the condition

\begin{equation*}
    -\dfrac{x}{C}=x \quad \Rightarrow \quad C=-1.
\end{equation*}
Hence, we finally arrive at the solution:

\begin{equation*}
    \Gamma(x) = B \int\limits_{0}^{\infty}t^{x-1}e^{-t}dt.
\end{equation*}
$B$ is a constant different from $0$. Using the initial condition $\Gamma(1)=1$, one finds the integral representation of $\Gamma(x)$ again.

\subsubsection{Remarks}
\label{subsubsec: Remarks}

This last example shows that intersection theory can also be used backwards to solve certain difference equations, although it is quite a lot of work compared to Euler's method. But note the difference, how we found the function $P(t)$ here in contrast to the other methods. We recovered the differential equation for $P$ by requiring an equation to have only one solution in $t$. This is quite different from Euler's method or the inverse Mellin-Transform. Furthermore, one has to insert the integration limits from the beginning in contrast to Euler's method, where you calculate them explicitly. But since we showed above in section \ref{subsec: Euler and the Mellin Transform} how to circumvent this problem, this is not to be considered an defect of intersection theory in this regard. \\
Therefore, we have now presented three methods to solve homogeneous difference equations with linear difference equations, where intersection theory is clearly the most general.

\newpage

\section{Solution of the Equation $\log \Gamma(x+1)- \log  \Gamma (x)= \log x$ by Conversion into a Differential Equation of infinite Order}
\label{sec: Solution of the Equation log Gamma(x+1)- log  Gamma (x)= log x by Conversion into a Differential Equation of infinite Order}

\subsection{Overview}
\label{subsec: Overview}
We solve the general difference equation

\begin{equation*}
    f(x+1)-f(x)=g(x)
\end{equation*}
by converting it into a differential equation of infinite order via Taylor's theorem. This was done by Euler in 1753 in \cite{E189}\footnote{He considered other examples of differential equations of infinite order in \cite{E62} and his second book on integral calculus \cite{E366}.}. Unfortunately, there is an error in Euler's approach that we will explain in section \ref{subsubsec: Mistake in Euler's Approach} and correct in section \ref{subsec: Correction of Euler's Approach}, before we solve the equation by the more modern approach of Fourier analysis in section \ref{subsubsec: Solution Algorithm for the general difference equation - Finding a particular solution}. But we will also present a solution that Euler could have given in section \ref{subsec: A Solution Euler could have given}, and present Euler's, corrected, derivation of the Stirling-Formula given in \cite{E189} in section \ref{subsubsec: An Application - Derivation of the Stirling Formula for the Factorial}.

\subsection{Euler's Idea}
\label{subsec: Eulers Idea}

Euler's reasoning involves some purely formal operations. Therefore, we will not try to give the conditions under which the operations are valid here. Nevertheless, it is  a beautiful example of the "Ars Inveniendi" (Art of Finding).

\subsubsection{Presentation of his Idea}
\label{subsec: Presentation of his Idea}

As mentioned, the main source for this section is  \cite{E189}. Here, Euler actually intended to solve various interpolation problems. One of them is to interpolate the function $f$ defined for positive integers:

\begin{equation*}
    f(n) := \sum_{k=0}^{n-1} g(k),
\end{equation*}
$g$ being an arbitrary function. Obviously, $f$ satisfies the functional equation:

\begin{equation*}
    f(n+1)-f(n)= g(n) \quad \forall n \in \mathbb{N}.
\end{equation*}
One possible way to interpolate $f$ is to solve the above difference equation for general $n \in \mathbf{C}$  which was Euler's intention  \cite{E189}\footnote{Euler actually assumed $n$ to be real, but this restriction is not necessary at all.}. For this he used Taylor's theorem to write, now for $x\in \mathbb{C}$:

\begin{equation*}
    f(x+1) = \sum_{n=0}^{\infty} \dfrac{f^{(n)}(x)}{n!}.
\end{equation*}
Therefore, the above difference equation becomes:

\begin{equation*}
     \sum_{n=1}^{\infty} \dfrac{f^{(n)}(x)}{n!} =g(x).
\end{equation*}
This can be seen as an inhomogeneous ordinary differential equation of infinite order with constant coefficients.\\
In \cite{E62} and \cite{E188}, Euler explained a method to solve such differential equations, if the order is finite\footnote{\cite{E62} considers the homogeneous case, whereas \cite{E188} treats the inhomogeneous case}. That method is still the same we use today and can be found in any modern textbook on differential equations.

\subsubsection{Actual Solution}
\label{subsubsec: Actual Solution}

Let us present Euler's solution. This first step to solve an equation of such a kind (at least in the case of finite order) is to consider the characteristic polynomial, i.e., the polynomial resulting by substituting $\frac{d^n}{dx^n}$ for $z^n$ in the differential operator acting on $f$. This "polynomial" in our case reads:

\begin{equation*}
    P(z)= e^z-1.
\end{equation*}
Next, Euler wants to find the zeros of $P$. Using the theory of complex logarithms, which  he had developed in \cite{E168} and \cite{E807}, he found the zeros to be

\begin{equation*}
    z_k = 2 k \pi i, \quad k \in \mathbb{Z}.
\end{equation*}
All zeros are easily seen to be simple. From this Euler (assuming that the case of infinite order can be treated as the case of finite order) concluded that each zero will lead to a term 

\begin{equation*}
    e^{z_k x}\int\limits_{}^{x} e^{-z_k t}g(t)dt.
\end{equation*}
$\int\limits_{}^{x}$ means that we have to put $x=t$ after the integration. Therefore, Euler claimed that the solution of the general difference equation reads

\begin{equation*}
    f(x) = \sum_{k= -\infty}^{\infty} e^{2 k \pi i x}\int\limits_{}^{x} e^{-2 k \pi i t}g(t)dt.
\end{equation*}
Note that each integral leads to an integration constant $c_k$, whence this solution is not  a particular but the complete solution.

\subsubsection{Mistake in Euler's Approach}
\label{subsubsec: Mistake in Euler's Approach}

Unfortunately, Euler's solution is incorrect. It gives the correct solution only in the homogeneous case. One finds

\begin{equation*}
    f(x) = \sum_{k= -\infty}^{\infty}c_k e^{2 k \pi ix}
\end{equation*}
as solution of $f(x+1)=f(x)$. $c_k$ are arbitrary constants of integration. Interestingly, Euler found that each periodic function has Fourier series. But Euler did not realize what a broad field of mathematics he had entered and did not pursue this any further. \\
But his solution formula already fails to give the correct result in the case $g(t)=1$. The correct formula, as we will prove in the following, reads:

\begin{equation*}
    f(x) =- \dfrac{1}{2}g(x)+ \sum_{k= -\infty}^{\infty}c_k e^{2 k \pi ix},
\end{equation*}
or, presented in the more convenient form,

\begin{equation*}
    f(x) = \int\limits_{}^{x}g(t)dt -\dfrac{1}{2}g(x)+ \sum_{k \in \mathbb{Z}\setminus \lbrace 0\rbrace} e^{2 k \pi ix}\int\limits_{}^{x} e^{-2 k \pi i t}g(t)dt.
\end{equation*}
Furthermore, Euler's mistake is not a computational but a conceptual one. His approach to construct the solution from the zeros of the characteristic "polynomial" (in analogy to the finite case where this is possible) simply does not work in the case of infinite order. Instead of the zeros one has to use the partial fraction decomposition\footnote{The following partial fraction decomposition is proved in the appendix in section \ref{subsec: Euler and the Partial Fraction Decomposition of Transcendental Functions}}:

\begin{equation*}
\dfrac{1}{e^z-1}    = -\dfrac{1}{2}+\dfrac{1}{z} + \sum_{k \in \mathbb{Z}\setminus \lbrace 0\rbrace} \dfrac{1}{z - 2 k \pi i}.
\end{equation*}
Comparing this to the solution it is easily seen that each term $\frac{1}{z -2 k \pi i}$ leads to a term $ e^{2 k \pi ix}\int\limits_{}^{x} e^{-2 k \pi i t}g(t)dt$ in the solution, whereas $-\frac{1}{2}$ explains the term $-\frac{1}{2}g(x)$. 
We will prove this in the following section, but need to give some definitions and state some auxiliary theorems in advance.

\subsection{Correction of Euler's Approach}
\label{subsec: Correction of Euler's Approach}

\subsubsection{Preparations - Lemmata and Definitions}
\label{subsubsec: Preparations - Lemmata and Definitions}

\begin{theorem}[Fundamental Theorem of Algebra] Let $P(z)=a_0+a_1z+\cdots +a_nz^n$ be a non-constant polynomial of degree $n$ with complex coefficients, then $P(z)$ has exactly $n$ zeros in $\mathbb{C}$, where the zeros have to be counted with multiplicity.
\end{theorem}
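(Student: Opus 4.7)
The plan is to prove the theorem via Liouville's theorem (which is alluded to earlier in the excerpt when discussing the reflection formula), combined with induction on the degree $n$. This is arguably the cleanest modern approach, and it fits with the complex-analytic flavour of the surrounding material.

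First I would establish a growth estimate: writing
\begin{equation*}
P(z) = a_n z^n \left( 1 + \frac{a_{n-1}}{a_n z} + \cdots + \frac{a_0}{a_n z^n}\right),
\end{equation*}
the parenthesised factor tends to $1$ as $|z| \to \infty$, so for $|z|$ sufficiently large one has $|P(z)| \geq \tfrac{1}{2} |a_n| |z|^n$. In particular $|P(z)| \to \infty$ as $|z| \to \infty$, so the function $1/P$ (if well-defined) tends to $0$ at infinity.

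Next I would prove existence of at least one zero by contradiction. Assume $P(z) \neq 0$ for all $z \in \mathbb{C}$. Then $g(z) := 1/P(z)$ is entire, and by the growth estimate it is also bounded on all of $\mathbb{C}$ (bounded by some constant on a large disk by continuity, and bounded near infinity by the above). Liouville's theorem then forces $g$, and hence $P$, to be constant, contradicting $\deg P = n \geq 1$. Thus $P$ has at least one zero $z_1 \in \mathbb{C}$.

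Finally, I would extract all $n$ zeros with multiplicity by induction on $n$. The base case $n = 1$ is immediate (a linear polynomial has exactly one zero). For the inductive step, given a zero $z_1$, polynomial long division yields $P(z) = (z - z_1) Q(z)$ with $Q$ a polynomial of degree $n-1$ and nonzero leading coefficient; applying the induction hypothesis to $Q$ produces its $n-1$ zeros (with multiplicity), and together with $z_1$ these furnish the desired $n$ zeros of $P$. The main obstacle, really the only non-trivial point, is the first existence step, and that is handled cleanly by Liouville; everything else is algebraic bookkeeping.
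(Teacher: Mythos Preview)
Your proof is correct and follows precisely the approach the paper indicates: the paper does not actually write out a proof of this theorem but simply states that ``This theorem is usually proved by applying Liouville's theorem'' and refers the reader to a standard complex analysis textbook. Your Liouville-based argument with the growth estimate and induction on degree is exactly that standard proof, so there is nothing to compare.
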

This theorem is usually proved by applying Liouville's theorem. We refer the reader to any modern book on complex analysis for a proof and mention \cite{Fr06} as an example.\\[2mm]
As a historical note, we add that Euler also tried to prove the fundamental theorem of Algebra in \cite{E170}. But his proof is incomplete, as pointed out by Gau\ss{} in his 1799 dissertation \cite{Ga99}. We refer to \cite{Du91} for a review of Euler's  paper \cite{E170} from the modern perspective.\newline \\[2mm]
But  let us go over to the correction of Euler's approach concerning the solution of the difference equation. We need to introduce some definitions.
\begin{definition}[Schwartz Space]

 We denote by $\mathcal{S}$ the set of all functions $f \in C^{\infty}(\mathbb{R}^n)$ such that

\[
\sup_{x\in \mathbb{R}^n} \max_{|\alpha|, |\beta| < N \in \mathbb{N}} |x^{\alpha}D^{\beta}f(x)  | < \infty
\]
holds. The linear space $\mathcal{S}$ endowed with the convergence
\[
f_j \rightarrow 0: \Leftrightarrow \sup_{x\in \mathbb{R}^n} \max_{|\alpha|, |\beta| < N \in \mathbb{N}} |x^{\alpha}D^{\beta}f_j(x)  | \rightarrow 0
\]
is called the Schwartz space. $D$ is the differentiation operator and multi-index notation is used.
\end{definition}
Next, we introduce the Fourier transform and state the Fourier inversion formula.

\begin{definition}[Fourier Transform and Fourier Inversion Formula]
 
We define the Fourier transformation of a function $f \in L_1(\mathbb{R}^n)$ as:

\[
\widehat{f}(p):= \int\limits_{\mathbb{R}^n}e^{-ix \cdot p}f(x)dx,
\]
where $x\cdot p= \sum_{i=1}^n x_kp_k$.\newline
If also $\widehat{f}$ is integrable, the following formula, the Fourier inversion formula holds:

\[
f(x)=\frac{1}{(2\pi)^n} \int\limits_{\mathbb{R}^n}e^{ix \cdot p}\widehat{f}(p)dp.
\]
\end{definition}
Next, we state a theorem, often very useful in calculations involving Fourier transforms.

\begin{theorem}[Convolution Theorem]

Let $\mathcal{F}$ be the operator of the Fourier transform, that $\mathcal{F}\{f\}=\widehat{f}$ and $\mathcal{F}\left\lbrace g\right\rbrace =\widehat{g}$ are the Fourier transforms of the functions $f$ and $g \in \mathcal{S}$, then we have

\[
\mathcal{F}\left\lbrace f*g\right\rbrace =(2\pi)^{\frac{n}{2}}\mathcal{F}\left\lbrace f\right\rbrace \cdot \mathcal{F}\left\lbrace g\right\rbrace \quad \text{and} \quad (2\pi)^{\frac{n}{2}}\mathcal{F}\left\lbrace f \cdot g\right\rbrace =\mathcal{F}\left\lbrace f\right\rbrace * \mathcal{F}\left\lbrace g\right\rbrace,
\]
where $(f*g)(x)$ means the convolution product defined via

\[
(f*g)(x):= \int_{-\infty}^{\infty}f(\tau)g(x-\tau)d\tau =\int_{-\infty}^{\infty}f(x-\tau)g(\tau)d\tau
\]
\end{theorem}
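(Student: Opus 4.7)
The plan is to prove both identities by reducing them to Fubini's theorem and a translation substitution, and then to deduce the second identity from the first by an application of Fourier inversion.

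For the first identity, I would begin by unwinding the definition of the Fourier transform applied to the convolution and writing
\begin{equation*}
\mathcal{F}\{f * g\}(p) = \int_{\mathbb{R}^n} e^{-ix \cdot p} \int_{\mathbb{R}^n} f(\tau)\, g(x - \tau)\, d\tau \, dx.
\end{equation*}
Since $f, g \in \mathcal{S}$, the rapid decay guarantees that the corresponding absolute double integral is finite, so Fubini's theorem applies. After interchanging the order of integration and performing the substitution $y = x - \tau$ in the inner integral (for each fixed $\tau$), the exponential factor splits as $e^{-ix \cdot p} = e^{-i\tau \cdot p}\, e^{-iy \cdot p}$, and the double integral decouples into the product $\widehat{f}(p)\,\widehat{g}(p)$. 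This is the heart of the computation.

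For the second identity, the approach is to exploit the symmetry between the transform and its inverse. Using Fourier inversion together with the relation $\mathcal{F}\{\mathcal{F}\{h\}\}(x) = (2\pi)^n h(-x)$, one can dualise the first identity by applying $\mathcal{F}$ to both sides after substituting $\widehat{f}, \widehat{g}$ for $f, g$. The convolution identity then transmutes into the dual product identity, again with a single application of Fubini on a Schwartz-class integrand.

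The main obstacle here is bookkeeping of the normalisation constants rather than any substantive analytic difficulty. In the convention of the excerpt, $\widehat{f}(p) = \int e^{-ix \cdot p} f(x)\, dx$ carries no $(2\pi)^{-n/2}$ prefactor, so a direct computation yields $\mathcal{F}\{f * g\} = \widehat{f} \cdot \widehat{g}$ without the $(2\pi)^{n/2}$ factor appearing in the stated theorem; that factor corresponds to the symmetric normalisation $\widehat{f}(p) = (2\pi)^{-n/2}\int e^{-ix \cdot p} f(x)\, dx$. Reconciling the constants with the definition adopted in the excerpt will therefore require either switching to the symmetric convention or rewriting the statement; the underlying analytical content — the interchange of integrals and the translation substitution — is otherwise entirely routine for $f, g \in \mathcal{S}$.
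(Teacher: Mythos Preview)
Your approach is exactly what the paper has in mind: it does not actually give a proof but simply remarks that the result is ``straight-forward by a simple application of Fubini's theorem'' and refers the reader to Titchmarsh. Your Fubini-plus-translation argument for the first identity and the inversion-based derivation of the second are the standard route and match the paper's one-line indication.

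Your observation about the normalisation constants is also correct and worth flagging: with the paper's own definition $\widehat{f}(p)=\int e^{-ix\cdot p}f(x)\,dx$ (no prefactor), the computation yields $\mathcal{F}\{f*g\}=\widehat{f}\cdot\widehat{g}$ and $\mathcal{F}\{f\cdot g\}=(2\pi)^{-n}\,\widehat{f}*\widehat{g}$, so the $(2\pi)^{n/2}$ factors in the stated theorem do not match the adopted convention. This is a genuine inconsistency in the paper's formulation rather than a gap in your argument.
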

The theorem is straight-forward by a simple application of Fubini's theorem, so that we omit it here. One finds a proof in most textbook that covers the Fourier transform. We refer to \cite{Ti48}.

\begin{theorem}[Convolution Theorem for the inverse Fourier Transform]
Let $\mathcal{F}^{-1}$ be the operator of the inverse Fourier transform, so that $\mathcal{F}^{-1}\left\lbrace f\right\rbrace$ and $\mathcal{F}^{-1}\left\lbrace g\right\rbrace$ the inverse Fourier transforms $f$ and $g \in \mathcal{S}$, then we have

\[
f*g=(2\pi)^{\frac{n}{2}}\mathcal{F}^{-1}\left\lbrace\mathcal{F}\left\lbrace f\right\rbrace \cdot \mathcal{F}\left\lbrace g\right\rbrace\right\rbrace \quad \text{and} \quad (2\pi)^{\frac{n}{2}}f \cdot g=\mathcal{F}^{-1}\left\lbrace\mathcal{F}\left\lbrace f\right\rbrace * \mathcal{F}\left\lbrace g\right\rbrace\right\rbrace
\]
\end{theorem}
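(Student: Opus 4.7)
The plan is to derive the two identities directly from the convolution theorem stated just before it (the forward direction), by applying the inverse Fourier transform to both sides and invoking the Fourier inversion formula. The key observation to set up first is that the Schwartz space $\mathcal{S}$ is closed under the Fourier transform: if $f \in \mathcal{S}$, then $\widehat{f} \in \mathcal{S}$, and in particular $\widehat{f} \in L^1(\mathbb{R}^n)$, so the Fourier inversion formula
\[
\mathcal{F}^{-1}\{\mathcal{F}\{f\}\} = f
\]
is applicable. I would state this invariance as an acknowledged fact (it is a standard consequence of the definition of $\mathcal{S}$ combined with the interchange rules $\widehat{x^{\alpha}f} = i^{|\alpha|} D^{\alpha}\widehat{f}$ and $\widehat{D^{\beta}f} = (ip)^{\beta}\widehat{f}$).

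Next I would carry out the two deductions in parallel. For the first identity, start from the already-established formula
\[
\mathcal{F}\{f*g\} = (2\pi)^{n/2} \mathcal{F}\{f\} \cdot \mathcal{F}\{g\},
\]
apply $\mathcal{F}^{-1}$ to both sides, and use the inversion formula on the left to obtain
\[
f*g = (2\pi)^{n/2} \mathcal{F}^{-1}\{\mathcal{F}\{f\} \cdot \mathcal{F}\{g\}\}.
\]
For the second identity, start from
\[
(2\pi)^{n/2} \mathcal{F}\{f \cdot g\} = \mathcal{F}\{f\} * \mathcal{F}\{g\},
\]
apply $\mathcal{F}^{-1}$ to both sides, and again use inversion on the left to obtain
\[
(2\pi)^{n/2} f \cdot g = \mathcal{F}^{-1}\{\mathcal{F}\{f\} * \mathcal{F}\{g\}\}.
\]

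Before concluding I would briefly justify that all operations are legal: $f*g$ and $f \cdot g$ belong to $\mathcal{S}$ (the Schwartz space is an algebra under both pointwise product and convolution), and the products and convolutions of the Fourier transforms appearing on the right-hand sides are again in $\mathcal{S}$, so inversion is applicable throughout. The main obstacle is essentially bookkeeping rather than substance, namely making sure that $\mathcal{F}^{-1}$ can be applied to every expression in sight; once $\mathcal{S}$-invariance of $\mathcal{F}$ is recorded, the result drops out in a single line from each of the two formulas of the preceding theorem. No new hard estimate or exchange-of-limits argument beyond those already used in proving the forward convolution theorem is required.
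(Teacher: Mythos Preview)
Your proposal is correct and matches the paper's approach exactly: the paper's entire proof is the single sentence ``The proof is just by applying the inverse Fourier transform to the equations in the convolution theorem.'' Your version is more detailed in that you explicitly record the $\mathcal{S}$-invariance of $\mathcal{F}$ and the closure of $\mathcal{S}$ under products and convolutions to justify applying $\mathcal{F}^{-1}$ everywhere, but the underlying argument is identical.
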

 The proof is just by applying the inverse Fourier transform to the equations in the convolution theorem. \newline \\[2mm]
Before we  present the solution algorithm for the general difference equation, we need two more properties of the Fourier Transform. First that the Fourier transform changes differential operators $D$ into $-ip$, i.e. an algebraic object. More precisely, we have:

\begin{theorem}
Let $f \in \mathcal{S}(\mathbb{R}^n)$, then

\begin{equation*}
    \mathcal{F}(D^{\alpha}f)(x) = (ip)^{\alpha}\widehat{f}(p).
\end{equation*}
$\alpha$ denotes a multi-index again.
\end{theorem}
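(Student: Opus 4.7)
The plan is to prove the formula by induction on $|\alpha|$, with the core work located in the base step $|\alpha|=1$, after which the general case follows by iteration since $\mathcal{S}(\mathbb{R}^n)$ is closed under differentiation. Throughout I would work directly from the definition of $\widehat{f}$ as an absolutely convergent integral; the Schwartz decay of $f$ and all its derivatives makes every manipulation below rigorous.

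First I would establish the single-variable case. Fix an index $j \in \{1,\ldots,n\}$ and consider
\begin{equation*}
\mathcal{F}(\partial_{x_j} f)(p) = \int_{\mathbb{R}^n} e^{-ix\cdot p} \, \partial_{x_j} f(x)\, dx.
\end{equation*}
Using Fubini's theorem (justified by rapid decay of $\partial_{x_j}f$), I would write this as an iterated integral, performing the $x_j$-integration first. Integration by parts in $x_j$ yields the boundary term
\begin{equation*}
\Bigl[ e^{-ix\cdot p} f(x) \Bigr]_{x_j = -\infty}^{x_j = +\infty},
\end{equation*}
which vanishes because $f \in \mathcal{S}$ decays faster than any polynomial in each coordinate, and $|e^{-ix\cdot p}|=1$. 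What remains is
\begin{equation*}
-\int_{\mathbb{R}^n} \bigl(\partial_{x_j} e^{-ix\cdot p}\bigr) f(x)\, dx = -\int_{\mathbb{R}^n} (-ip_j) e^{-ix\cdot p} f(x)\, dx = ip_j \, \widehat{f}(p),
\end{equation*}
which is the claim for the multi-index $\alpha = e_j$.

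For the inductive step, suppose the formula holds for all multi-indices of length $k$, and let $\alpha$ have $|\alpha| = k+1$. Choose $j$ with $\alpha_j \geq 1$, write $\alpha = e_j + \beta$ with $|\beta| = k$, and note that $g := D^\beta f$ still lies in $\mathcal{S}(\mathbb{R}^n)$ because $\mathcal{S}$ is closed under differentiation. Applying the base case to $g$ and then the inductive hypothesis gives
\begin{equation*}
\mathcal{F}(D^\alpha f)(p) = \mathcal{F}(\partial_{x_j} g)(p) = ip_j \, \widehat{g}(p) = ip_j \cdot (ip)^\beta \widehat{f}(p) = (ip)^\alpha \widehat{f}(p),
\end{equation*}
completing the induction.

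The only subtle point, and hence the "main obstacle," is the rigorous justification of the integration by parts in the base case; but this reduces to the vanishing of the boundary term at $\pm\infty$ in the $x_j$-direction, which is immediate from the Schwartz decay estimate $\sup_x |x_j|^N |f(x)| < \infty$ for all $N$. Thereafter, everything is formal manipulation.
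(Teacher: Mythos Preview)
Your proof is correct and follows exactly the approach the paper indicates: the paper simply says ``The proof is just by integration by parts,'' and you have carried this out in full detail via induction on $|\alpha|$ with the Schwartz decay handling the boundary terms.
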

The proof is just by integration by parts. Furthermore, we have

\begin{theorem}
If $f \in \mathcal{S}(\mathbb{R}^n)$, then $\widehat{f} \in \mathcal{S}(\mathcal{R}^n)$.
\end{theorem}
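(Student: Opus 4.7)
The plan is to exploit the two reciprocal intertwining relations between Fourier transform, differentiation, and multiplication by polynomials. The paper has already recorded $\mathcal{F}(D^{\alpha}f)(p) = (ip)^{\alpha}\widehat{f}(p)$ for $f \in \mathcal{S}$, obtained by integration by parts. The dual identity, proved by differentiating under the integral sign,
\begin{equation*}
D^{\beta}\widehat{f}(p) = \mathcal{F}\bigl((-ix)^{\beta}f\bigr)(p),
\end{equation*}
is established the same way, with the decay of $f$ justifying the exchange of differentiation and integration. Both manipulations are legal on $\mathcal{S}$ precisely because every $x^{\gamma}D^{\delta}f$ is integrable.

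Next I would combine the two identities to read off the behavior of $p^{\alpha}D^{\beta}\widehat{f}(p)$. Up to a harmless factor of $i^{|\alpha|+|\beta|}$, this quantity equals $\mathcal{F}\bigl(D^{\alpha}(x^{\beta}f)\bigr)(p)$. By the Leibniz rule, $D^{\alpha}(x^{\beta}f)$ is a finite sum of terms of the form (polynomial in $x$)$\cdot D^{\gamma}f$ with $|\gamma|\le|\alpha|$, hence lies in $\mathcal{S}$ and in particular in $L^{1}(\mathbb{R}^{n})$. Therefore its Fourier transform is bounded uniformly in $p$ by $\|D^{\alpha}(x^{\beta}f)\|_{L^{1}}$, which yields
\begin{equation*}
\sup_{p\in\mathbb{R}^{n}}\bigl|p^{\alpha}D^{\beta}\widehat{f}(p)\bigr|<\infty
\end{equation*}
for every pair of multi-indices $\alpha,\beta$. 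This is exactly the defining estimate for $\widehat{f}\in\mathcal{S}$.

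It remains to check that $\widehat{f}$ is $C^{\infty}$, but that is already contained in the dual intertwining relation: since $(-ix)^{\beta}f\in L^{1}$ for every $\beta$, its Fourier transform is continuous, so $\widehat{f}$ admits continuous partial derivatives of every order. The only genuine step requiring care is the justification of differentiation under the integral sign when deriving the second intertwining relation; once that is in hand, everything else is an algebraic consequence of the Leibniz rule and the trivial bound $\|\widehat{g}\|_{\infty}\le\|g\|_{L^{1}}$. I do not anticipate a true obstacle, only the bookkeeping of multi-indices.
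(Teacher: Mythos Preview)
Your proof is correct and matches the paper's approach exactly: the paper gives only the one-line remark ``This is an easy application of the fact that the Fourier transform interchanges differentiation and multiplication,'' and you have simply filled in the standard details of that application.
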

This is an easy application of the fact that the Fourier transform interchanges differentiation and multiplication. Finally, we have the following property of the inverse Fourier transform:

\begin{theorem}
The Fourier transform is a bijective mapping on the Schwartz space.
\end{theorem}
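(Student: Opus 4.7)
The plan is to use the Fourier inversion formula stated earlier, combined with the already-proven fact that $\mathcal{F}$ maps $\mathcal{S}(\mathbb{R}^n)$ into itself. The key observation is that the inverse Fourier transform, defined by
\begin{equation*}
    (\mathcal{F}^{-1} g)(x) := \frac{1}{(2\pi)^n} \int\limits_{\mathbb{R}^n} e^{ix\cdot p} g(p)\, dp,
\end{equation*}
differs from $\mathcal{F}$ only by the sign of the exponent and the normalizing factor $(2\pi)^{-n}$. Consequently, $(\mathcal{F}^{-1} g)(x) = (2\pi)^{-n} (\mathcal{F} g)(-x)$. This identity will let every property established for $\mathcal{F}$ transfer verbatim to $\mathcal{F}^{-1}$; in particular, $\mathcal{F}^{-1}$ also maps $\mathcal{S}(\mathbb{R}^n)$ into itself, since reflection $x \mapsto -x$ and multiplication by a constant preserve the Schwartz space.

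For injectivity, I would argue as follows. Let $f \in \mathcal{S}(\mathbb{R}^n)$. Since $\mathcal{S} \subset L^1(\mathbb{R}^n)$ (Schwartz functions decay faster than any polynomial, so they are integrable) and $\widehat{f} \in \mathcal{S} \subset L^1(\mathbb{R}^n)$ by the previous theorem, the Fourier inversion formula applies and yields $\mathcal{F}^{-1}\mathcal{F} f = f$ pointwise. Hence $\mathcal{F} f = 0$ forces $f = 0$, so $\mathcal{F}$ is injective on $\mathcal{S}$.

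For surjectivity, given an arbitrary $g \in \mathcal{S}(\mathbb{R}^n)$, set $f := \mathcal{F}^{-1} g$. By the observation above, $f$ lies in $\mathcal{S}(\mathbb{R}^n)$. Applying the Fourier inversion formula to $g$ (which is legitimate because both $g$ and $\mathcal{F}^{-1} g$ are Schwartz, hence integrable), we obtain $\mathcal{F} f = \mathcal{F} \mathcal{F}^{-1} g = g$. Thus every Schwartz function is in the image of $\mathcal{F}$, and combined with injectivity this proves bijectivity.

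The only real subtlety here is that the argument leans entirely on the Fourier inversion formula, which the excerpt states as part of the definition but does not prove. The main obstacle, if one wanted a fully self-contained proof, would be to establish that formula on $\mathcal{S}$, for which the standard route is to approximate with a Gaussian (using that the Gaussian is its own Fourier transform up to normalization) and apply Fubini to justify interchanging the two integrations in $\mathcal{F}^{-1} \mathcal{F} f$. Given that the formula is already on the table, however, the bijectivity statement follows essentially by bookkeeping as sketched above.
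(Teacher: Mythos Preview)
Your argument is correct and is exactly the standard textbook proof. The paper, however, does not give any proof of this theorem at all: it simply refers the reader to Stein--Shakarchi, \emph{Fourier Analysis}, for this and the two preceding statements. So there is nothing to compare against beyond noting that your sketch is precisely the route taken in that reference (and essentially every other), relying on the inversion formula together with the already-established fact that $\mathcal{F}$ preserves $\mathcal{S}$.
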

 The proof of this (and the two previous theorems) can be found, e.g., in \cite{St02}.

\subsubsection{Solution Algorithm for the general difference equation - Finding a particular solution}
\label{subsubsec: Solution Algorithm for the general difference equation - Finding a particular solution}

Having stated all the definitions and theorems, we can now finally give the solution for the general difference equation with constant coefficients.

\begin{theorem}
Let the following equation be propounded:

\[
\sum_{k=0}^{N}a_kf(x+k)=g(x),
\]
with $f$ and $g \in \mathcal{S}(\mathbb{R})$ and let denote $z_k$ the zeros, which we, for the sake of brevity, assume to be simple, of the polynomial \footnote{The fundamental theorem of algebra guarantees that we always have exactly $N$ solutions.}

\[
P(z)=\sum_{k=0}^{N}a_kz^k.
\]
And let $p_l$ be a solutions of the equation

\[
e^{i p_l}=z_k.
\]
Further, find the partial fraction decomposition:

\[
\sum_{k=0}^{N}\left[a_k e^{+i p \cdot k}\right]^{-1}=C+\sum_{l\in \mathbb{Z}}\frac{b_l}{i(p-p_l)}.
\]
Then, a particular solution of the difference equation is given by

\[
f(x)=Cg(x)+\sum_{l\in \mathbb{Z}}b_l \int\limits_{}^{x} e^{-ip_l(x-\tau)}g(\tau)d\tau,
\] 
where $\int\limits_{}^{x}f(\tau)d\tau$ means that we integrate with respect to $\tau$ and then write  $x$ for $\tau$ after the integration and the constant of integration is to be kept. This solution is unique up to the addition of a function, satisfying the equation:

\[
\sum_{k=0}^{N}a_kf(x+k)=0.
\]
\end{theorem}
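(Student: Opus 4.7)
The plan is to translate the difference equation into an algebraic equation via the Fourier transform, exploit the partial-fraction expansion of the reciprocal of the resulting symbol, and then invert term by term using the convolution theorem. Concretely, I would first apply $\mathcal{F}$ to both sides of $\sum_{k=0}^{N}a_k f(x+k)=g(x)$. Using the translation rule $\mathcal{F}(f(\,\cdot+k))(p)=e^{ipk}\widehat{f}(p)$, which follows from a change of variables in the defining integral, the equation becomes
\begin{equation*}
\Bigl(\sum_{k=0}^{N}a_k e^{ipk}\Bigr)\widehat{f}(p)=\widehat{g}(p).
\end{equation*}
The zeros of the symbol $Q(p):=\sum a_k e^{ipk}$ in $p$ are exactly the numbers $p_l$ with $e^{ip_l}=z_k$ for some root $z_k$ of the characteristic polynomial $P$, which explains the appearance of the $p_l$ in the statement and, via the fundamental theorem of algebra, accounts for all of them (with simple zeros under our hypothesis).

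Next I would invoke the partial-fraction expansion hypothesized in the theorem, namely
\begin{equation*}
\frac{1}{Q(p)}=C+\sum_{l\in\mathbb{Z}}\frac{b_l}{i(p-p_l)},
\end{equation*}
so that $\widehat{f}(p)=C\widehat{g}(p)+\sum_{l}\frac{b_l}{i(p-p_l)}\widehat{g}(p)$. Applying $\mathcal{F}^{-1}$ term by term, the constant term $C$ contributes $Cg(x)$ after convolution with $\delta$. For each remaining term I would use the identification of $\frac{1}{i(p-p_l)}$ as the Fourier transform (up to a normalisation and a modulation by $e^{-ip_l x}$) of a Heaviside kernel, so that the convolution theorem (Theorem 2.6.5 above) yields
\begin{equation*}
\mathcal{F}^{-1}\!\left(\frac{b_l\,\widehat{g}(p)}{i(p-p_l)}\right)(x)=b_l\!\int^{x}e^{-ip_l(x-\tau)}g(\tau)\,d\tau,
\end{equation*}
where the indefinite-integral notation records the freedom of an additive constant. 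Summing gives the formula claimed in the theorem.

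Finally, uniqueness up to a solution of the homogeneous equation $\sum a_k f(x+k)=0$ is immediate from linearity: the difference of two particular solutions is annihilated by $L$. The main obstacle is analytic rather than algebraic. First, $1/Q(p)$ has infinitely many poles on the real axis, so the partial-fraction expansion must be interpreted carefully — for instance as a principal-value sum or in a distributional sense — and the Fourier inversion of $1/i(p-p_l)$ is itself distributional, so strictly speaking the computation takes place outside of $\mathcal{S}$ and must be justified by duality against test functions. Second, one must verify that the termwise inverse Fourier transform converges and produces a genuine function; this requires decay of the coefficients $b_l$, which in the cases of interest follows from examining the growth of $Q$ along lines $\operatorname{Im}(p)=$ const. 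Once these points are handled, the substitution $g(x)=\log x$ recovers the intended interpolation formula for $\log\Gamma(x+1)$, modulo the standard caveat that $\log x \notin \mathcal{S}$ and so the Schwartz-class hypothesis must be relaxed by a separate limiting argument.
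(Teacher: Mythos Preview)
Your approach is essentially the same as the paper's: both reduce the problem to the algebraic identity $\widehat f(p)\,Q(p)=\widehat g(p)$ via the Fourier transform, invoke the partial-fraction expansion of $1/Q(p)$, and use the convolution theorem to identify each pole term $\frac{b_l}{i(p-p_l)}\widehat g(p)$ with the integral $b_l\int^{x}e^{-ip_l(x-\tau)}g(\tau)\,d\tau$. The only difference is direction: the paper explicitly says it is ``easier to start from the solution and derive the difference equation,'' so it takes the Fourier transform of the proposed formula for $f$, collapses it to $\widehat g(p)/Q(p)$, and then multiplies back and inverts to recover $\sum a_k f(x+k)=g(x)$; you instead transform the equation first and then invert to obtain $f$. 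Your forward route is more constructive, while the paper's backward verification sidesteps the need to justify dividing by $Q(p)$ at its real zeros. The analytic caveats you raise (distributional interpretation of $1/i(p-p_l)$, poles on the real axis) are genuine and are likewise glossed over in the paper, which simply asserts that ``all steps are justified by using the theorems in the preparations.''
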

\begin{proof}

To give a proof, it is easier to start from the solution and derive the difference equation. We do not present the calculation in detail. All steps are justified, by using the theorems in the preparations. With $p_k$ defined as above, consider

\[
f(x):= Cg(x)+\sum_{l \in \mathbb{Z}}b_le^{ip_l x}\int\limits_{}^{x}e^{-ip_l\tau}g(\tau)d\tau.
\]
Taking the Fourier transform of this expression, simplifying it by using the convolution theorem and the inverse convolution theorem\footnote{In section \ref{subsubsec: Explicit Solution of the general Difference Equation via Fourier-Transform} we will also present a solution of the simple difference equation $f(x+1)-f(x)=g(x)$ without resorting to the convolution theorem.}, we arrive at the following expression:

\[
\widehat{f}(p)=\widehat{g}(p)\left(C+\sum_{l \in \mathbb{Z}} b_l\frac{1}{i(p-p_l)}\right).\\
\]
The expression in brackets is just the partial fraction decomposition of the polynomial in $e^{i pk}$, hence we have

\[
\widehat{f}(p)=\widehat{g}(p)\left[\sum_{k=0}^{N}a_k e^{ipk}\right]^{-1}.
\]
Solving for $\widehat{g}(p)$ gives

\[
\widehat{g}(p)=\widehat{f}(p)\left[\sum_{k=0}^{N}a_k e^{-ipk}\right].
\]
Finally, taking the inverse Fourier transform, we get

\[
g(x)=\sum_{k=0}^Na_kf(x+k),
\]
which is the difference equation propounded and completes our proof. That we can add a function satisfying 

\[
\sum_{k=0}^{N}a_kf(x+k)=0,
\]
is obvious.
\end{proof}

\subsubsection{Case of multiple Roots}
\label{subsubsec: Case of multiple Roots}

 Although we only proved the theorem for simple zeros of the polynomial, we can directly generalize it to multiple zeros. Suppose, that $z_k$ is a multiple zero of order $m$ of the polynomial $P(z)$ defined above. Then in the formula

\[
\sum_{k=0}^{N}\left[a_ke^{i p\cdot k}\right]^{-1}=C+\sum_{l\in \mathbb{Z}}\frac{b_l}{i(p-p_l)}
\]
we just have to replace

\[
\frac{b_l}{p-p_l} \quad \text{by} \quad \sum_{j=1}^m \frac{b_j}{(p-p_l)^j}.
\]
In general, we have that

\[
\frac{j!}{(ip-ip_l)^{j+1}}\widehat{g(p)} 
\]
leads to the term

\[
\int\limits_{}^{x}(\tau-x)^{j}e^{ip_l(\tau-x)}g(\tau)d \tau
\]
in the final solution.

\subsubsection{Application to the difference equation $f(x+1)-f(x)=g(x)$.}
\label{subsubsec: Application to the difference equation f(x+1)-f(x)=g(x)}

The difference equation we are interested in is a special case of the theorem we just proved. Here, 

\begin{equation*}
    P(z)= z-1.
\end{equation*}
Considering the solutions of

\begin{equation*}
    e^{i p_l}=z_k=1,
\end{equation*}
$p_l$ is found to satisfy:

\begin{equation*}
    ip_l = 2 k \pi i \quad k \in \mathbb{Z}.
\end{equation*}
Therefore, we need to find the partial fraction decomposition of

\begin{equation*}
    \dfrac{1}{e^{ip}-1}
\end{equation*}
which (comparing to the result we mentioned above in \ref{subsubsec: Mistake in Euler's Approach}) is found to be

\begin{equation*}
    -\dfrac{1}{2}+ \sum_{l = \infty}^{\infty} \dfrac{1}{ip-2l \pi i}
\end{equation*}
or in terms of $p_l$, see also section \ref{subsec: Euler and the Partial Fraction Decomposition of Transcendental Functions}:

\begin{equation*}
     \dfrac{1}{e^{ip}-1} =  -\dfrac{1}{2}+ \sum_{l = \infty}^{\infty} \dfrac{1}{i(p-p_l)}.
\end{equation*}
Therefore, the solution of the difference equation is concluded to be

\begin{equation*}
    f(x) = -\dfrac{1}{2} + \sum_{l = \infty}^{\infty} e^{2 l \pi i x}\int\limits_{}^{x} e^{-2 \pi i l t}g(t)dt.
\end{equation*}
Precisely, as we stated the solution above in section \ref{subsubsec: Mistake in Euler's Approach}.

\subsubsection{Explicit Solution of the general Difference Equation via Fourier-Transform}
\label{subsubsec: Explicit Solution of the general Difference Equation via Fourier-Transform}

Above in section \ref{subsubsec: Solution Algorithm for the general difference equation - Finding a particular solution} we just explained how to proceed in general. Therefore, we want to add the explicit calculation in the example of the simple difference equation, i.e.

\begin{equation*}
    f(x+1)-f(x)=g(x).
\end{equation*}
Assuming the function $f,g$ have all the necessary properties, i.e. are $\in \mathcal{S}(\mathbb{R})$, we now present the solution of the above equation. First, we rewrite this equation as

\begin{equation*}
    \sum_{n=1}^{\infty} \dfrac{f^{(n)}(x)}{n!}=g(x)
\end{equation*}
using Taylor's theorem and apply the Fourier-Transform; then, our equation becomes

\begin{equation*}
    \left(e^{ip}-1\right)\widehat{f}(p)= \widehat{g}(p).
\end{equation*}
Thus,

\begin{equation*}
    \widehat{f}(p)= \dfrac{1}{e^{ip}-1}\widehat{g}(p).
\end{equation*}
Using the partial fraction decomposition of $\frac{1}{e^z-1}$ given in \ref{subsubsec: Mistake in Euler's Approach}, we find

\begin{equation*}
    \widehat{f}(p)=\left(\dfrac{1}{ip}-\dfrac{1}{2}+\sum_{k \in \mathbb{Z}\setminus \lbrace 0\rbrace} \dfrac{1}{ip-2k \pi i}\right)\widehat{g}(p).
\end{equation*}
Next, we want to take the inverse Fourier transform of the last equation. The inverse Fourier transform of $-\frac{1}{2}\widehat{g}(p)$ is simply $-\frac{1}{2}g(x)$. Therefore, let us consider

\begin{equation*}
    \mathcal{F}^{-1}\left(\dfrac{1}{ip}\widehat{g}(p)\right)=\dfrac{1}{2\pi} \int\limits_{-\infty}^{\infty} \dfrac{1}{ip}\widehat{g}(p)e^{ixp}dp
\end{equation*}
Now note that one can write
\begin{equation*}
    \int\limits_{i\infty}^{x}e^{iyp}dy =\left. \dfrac{e^{iyp}}{ip} \right|_{i\infty}^{x}= \dfrac{e^{ixp}}{ip}.
\end{equation*}
Inserting this into the above formula, we have

\begin{equation*}
     \mathcal{F}^{-1}\left(\dfrac{1}{ip}\widehat{g}(p)\right)= \dfrac{1}{2\pi} \int\limits_{-\infty}^{\infty} \left(\int\limits_{i\infty}^{x} e^{iyp}dy \right)\widehat{g}(p)dp.
\end{equation*}
Applying Fubini's theorem, this becomes

\begin{equation*}
    =\dfrac{1}{2\pi} \int\limits_{i \infty}^{x} \left(\int\limits_{-\infty}^{\infty}\widehat{g}(p)e^{iyp}dp\right)dy.
\end{equation*}
The inner integral times $\frac{1}{2\pi}$ is just the inverse Fourier transform of $\widehat{g}(p)$. Therefore, in total

\begin{equation*}
     \mathcal{F}^{-1}\left(\dfrac{1}{ip}\widehat{g}(p)\right)= \int\limits_{i\infty}^{x}g(y)dy.
\end{equation*}
Let us now consider the more general case

\begin{equation*}
      \mathcal{F}^{-1}\left(\dfrac{1}{ip-2 k \pi i}\widehat{g}(p)\right) = \dfrac{1}{2\pi} \int\limits_{-\infty}^{\infty}\dfrac{\widehat{g}(p)}{i(p-2k \pi)}e^{ixp}dp.
\end{equation*}
Substituting $p-2k \pi =q$, this becomes

\begin{equation*}
   = \dfrac{1}{2\pi}\int\limits_{-\infty}^{\infty}\dfrac{\widehat{g}(q+2k \pi)}{iq}e^{ix(q+2k \pi)}dq.
\end{equation*}
Rewriting $\frac{e^{ixq}}{iq}$ as above and applying Fubini's theorem, we find 

\begin{equation*}
      \mathcal{F}^{-1}\left(\dfrac{1}{ip}\widehat{g}(p)\right)=\dfrac{1}{2\pi}e^{2\pi ix} \int\limits_{i \infty}^{x}\left(\int\limits_{-\infty}^{\infty}e^{iyq}\widehat{g}(q+2k\pi) dq\right)dy.
\end{equation*}
The inner integral times $\frac{1}{2\pi}$ is just the inverse Fourier transform of $\widehat{g}(q+2k\pi)$. To find this integral,  one just has to set $p=q+2k \pi $ again. More precisely,

\begin{equation*}
    \setlength{\arraycolsep}{0mm}
\renewcommand{\arraystretch}{2,5}
\begin{array}{llll}
    \dfrac{1}{2\pi}\int\limits_{-\infty}^{\infty} e^{iyq}\widehat{g}(q+2 k \pi) &~=~& \dfrac{1}{2\pi} \int\limits_{-\infty}^{\infty}e^{iy(p-2k \pi)}\widehat{g}(p)dp \\  
     &~=~& =\dfrac{1}{2\pi}e^{-2k \pi i y} \int\limits_{-\infty}^{\infty}e^{iyp}\widehat{g}(p)dp \\
     &~=~& \dfrac{1}{2\pi}e^{-2k \pi i y}g(y),
\end{array}
\end{equation*}
since the last integral times $\frac{1}{2\pi}$ is the inverse Fourier transform of $\widehat{g}(p)$.  Therefore, in total, we found

\begin{equation*}
    \mathcal{F}^{-1}\left(\dfrac{\widehat{g}(p)}{ip -2 k \pi i}\right)=e^{2k \pi ix} \int\limits_{i\infty}^{x} e^{-2k \pi y}g(y).
\end{equation*}
Finally, inserting everything in the complete formula

\begin{equation*}
    f(x) = \int\limits_{i \infty}^{x} g(y)dy -\dfrac{1}{2}g(x)+\sum_{k\in \mathbb{Z}\in \lbrace 0 \rbrace} e^{+2 k \pi ix}\int\limits_{i\infty}^{x}e^{-2 k \pi iy}g(y)dy.
\end{equation*}
So, there are no integration constants in this formula and the solution we found is a particular solution, as we claimed. But since the full solution is obtained by adding an arbitrary function of period $1$, we hence derive the general solution

\begin{equation*}
    f(x) =\int\limits^{x}g(y)dy -\dfrac{1}{2}g(x)+\sum_{k\in \mathbb{Z}\setminus \lbrace 0\rbrace}e^{2 k \pi i x}\int\limits^{x}e^{-2 k \pi i y}g(y)dy,
\end{equation*}
as we stated it above in \ref{subsubsec: Mistake in Euler's Approach}.

\subsection{A Solution Euler could have given}
\label{subsec: A Solution Euler could have given}

Fourier analysis came after Euler\footnote{Fourier published his book, in which he introduced Fourier series, in 1822 \cite{Fo22}.}, i.e. he had no access to it. But here we argue that he could have derived the correct solution from his results,  if only we overlook some details of mathematical rigor.\\[2mm]
The first idea is to consider $\frac{d}{dx}$ and the higher derivatives as  operators acting on the function $f(x)$. Then, solving a differential equation is equivalent to finding the inverse operator to the operator acting on $f$ and applying it to both sides of the equation. Let us again write $\frac{d}{dx}=z$. The fundamental theorem of calculus implies

\begin{equation*}
\dfrac{1}{z}f = \int f dx.
\end{equation*}
And the difference equation we want to solve then becomes

\begin{equation*}
f(x)=\dfrac{1}{e^z-1}X.
\end{equation*}
We can now simplify this equation inserting the partial fraction decomposition of $(e^z-1)^{-1}$. We obtain

\begin{equation*}
f(x)= \left(\dfrac{1}{z}-\dfrac{1}{2}+\sum_{k \in \mathbb{Z}\setminus \lbrace 0\rbrace}\dfrac{1}{z-2k \pi i}\right)X.
\end{equation*}
We only need to find out what $\frac{1}{z-\alpha}X$ is. For this, let us write

\begin{equation*}
\renewcommand{\arraystretch}{2,5}
\begin{array}{ll}
\dfrac{1}{z-\alpha}X & =\dfrac{1}{z \left(1-\frac{\alpha}{z}\right)}X \\
 &= \dfrac{1}{z}\sum_{n=0}^{\infty} \dfrac{\alpha^n}{z^n}X,
\end{array}
\end{equation*}
where we used the geometric series in the second step. $\frac{1}{z^{n+1}}X$ is the $n+1$ times iterated integral of $X$. Euler considered  integrals of this kind in \cite{E679}, a paper written in 1778. His formulas yield 

\begin{equation*}
\int\limits_{}^{n}Xdx = \int\limits_{}^{x}\dfrac{(x-t)^{n-1}}{(n-1)!}X(t)dt.
\end{equation*}
Here, $\int\limits_{}^{n}$ denotes the $n$ times iterated integral, $\int\limits_{}^{x}$ indicates that one has to put $x=t$ after the integration. \\
Substituting this formula and using the Taylor series for $e^x$, we find

\begin{equation*}
\dfrac{1}{z-\alpha}X= \sum_{n=0}^{\infty} \dfrac{\alpha^n}{n!}\int\limits_{}^{x}(x-t)^n X(t)dt = \int\limits_{}^{x} e^{\alpha (x-t)}X(t)dt.
\end{equation*}
Therefore, we can write our solution as

\begin{equation*}
\setlength{\arraycolsep}{0mm}
\renewcommand{\arraystretch}{2,5}
\begin{array}{llllllllllll}
f(x) & ~=~ &\int\limits_{}^{x}Xdx &~-~&\dfrac{1}{2}X(x)&~+~& \sum_{k \in \mathbb{Z}\setminus \lbrace 0\rbrace} \int\limits_{}^{x}e^{2 k \pi i (x-t)}X(t)dt \\
 &~=~& \int\limits_{}^{x}Xdx &~-~&\dfrac{1}{2}X(x)&~+~& 2\sum_{k=1}^{\infty} \int\limits_{}^{x}\cos\left(2 k \pi  (x-t)\right)X(t)dt,
\end{array}
\end{equation*}
where we used the formula $\cos x = \frac{e^{ix}+e^{-ix}}{2}$ in the second line.\\
Note that this is almost the formula Euler gave, in Euler's formula only the term $-\frac{1}{2}X$ is missing, making his result incorrect.\\[2mm]
Although we have operated completely non-rigorously, the formula, as we found it, is correct, see \cite{We14}.\\
This formal calculus is a  beautiful example of the "Ars inveniendi" and is validated by the techniques from Fourier analysis.\\[2mm]

\subsubsection{An Application - Derivation of the Stirling Formula for the Factorial}
\label{subsubsec: An Application - Derivation of the Stirling Formula for the Factorial}

In the last paragraphs of \cite{E189}, Euler tried to derive the Stirling formula for $\Gamma(x)$ from his general solution of the general difference equation. But since he missed the term $-\frac{1}{2}g(x)$ in the solution of the difference equation, his formula is incorrect\footnote{He was even aware of this but tried to argue it away by another incorrect argument. More precisely, he arrived at the asymptotic formula $x!=x \log x-x +P$, i.e. the term $-\frac{1}{2}\log x$ is missing. But he argued that his formula is nevertheless correct, since  $P$ has to be a periodic function - which statement is incorrect - and he just found $P=\frac{1}{2}\log (2\pi)$ instead of the correct value $P=\frac{1}{2}\log (2\pi x)$.}. He gave a correct derivation of the formula in \cite{E212}. We will present his idea from \cite{E189} and derive the Stirling formula from the solution of the difference equation:

\begin{equation*}
    \log \Gamma (x+1)- \log \Gamma(x)= \log(x).
\end{equation*}
More precisely, we prove the formula

\begin{theorem}[Stirling Formula for $\Gamma(x+1)$]
\begin{equation*}
\Gamma (x+1) \sim \sqrt{2 \pi x}\dfrac{x^x}{e^x} \quad \text{for} \quad x \rightarrow \infty.
\end{equation*}
\end{theorem}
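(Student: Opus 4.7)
The plan is to apply the corrected general solution formula derived in the previous section to the specific case $g(x) = \log x$, recovering an asymptotic expansion for $\log\Gamma(x)$, and then exponentiating and using the functional equation $\Gamma(x+1) = x\Gamma(x)$ to obtain the Stirling formula. First I would substitute $g(y) = \log y$ into the particular-solution formula $f(x) = \int^{x} g(y)\,dy - \tfrac{1}{2}g(x) + \sum_{k\neq 0} e^{2k\pi ix}\int^{x} e^{-2k\pi iy}g(y)\,dy$, computing $\int^{x}\log y\,dy = x\log x - x$ and $-\tfrac{1}{2}g(x) = -\tfrac{1}{2}\log x$. This yields the candidate
$$\log\Gamma(x) = x\log x - x - \tfrac{1}{2}\log x + C + R(x),$$
where $C$ is an undetermined real constant (absorbing the integration constant and any $1$-periodic function allowed by the difference equation) and $R(x)$ is the oscillatory Fourier remainder.

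Next I would establish that $R(x) \to 0$ as $x \to \infty$. Each Fourier term $e^{2k\pi ix}\int^{x} e^{-2k\pi iy}\log y\,dy$ can be handled by a single integration by parts against the fast-oscillating factor: the boundary piece contributes at order $(\log x)/(kx)$, and iterating once more produces a further factor $1/x$ in the remaining integral. Summing over $k\neq 0$ and using absolute convergence then yields $R(x) = O((\log x)/x)$. Combining, we obtain $\log\Gamma(x) = x\log x - x - \tfrac{1}{2}\log x + C + o(1)$ as $x\to\infty$. Exponentiating and multiplying by $x$ then gives
$$\Gamma(x+1) \sim e^{C}\sqrt{x}\,\frac{x^{x}}{e^{x}},$$
so the entire proof reduces to the single identification $e^{C} = \sqrt{2\pi}$.

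The main obstacle, as Euler himself realised, is that the constant $C$ cannot be extracted from the difference equation $\log\Gamma(x+1) - \log\Gamma(x) = \log x$ alone, since both sides are invariant under the addition of any $1$-periodic function (and in particular any constant) to $\log\Gamma$. An external input is therefore unavoidable, and the cleanest route is Wallis' product $\prod_{n\ge 1}\frac{(2n)^{2}}{(2n-1)(2n+1)} = \frac{\pi}{2}$. Rewriting the finite products in terms of factorials gives the limit $\frac{2^{4N}(N!)^{4}}{((2N)!)^{2}(2N+1)}\to \frac{\pi}{2}$, into which one substitutes $n! \sim e^{C}\sqrt{n}\,n^{n}e^{-n}$ on both sides; the powers of $n$, the $n^{n}$-factors, and the $e^{-n}$-factors cancel, leaving $e^{2C} = 2\pi$, so $e^{C} = \sqrt{2\pi}$. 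Euler's failure in \cite{E189} occurred precisely here: he both dropped the $-\tfrac{1}{2}\log x$ correction and then conflated the remaining periodic ambiguity with a determinate constant. The corrected derivation requires exactly these two fixes, after which the Stirling formula $\Gamma(x+1)\sim \sqrt{2\pi x}\,x^{x}e^{-x}$ follows.
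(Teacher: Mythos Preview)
Your overall strategy matches the paper's: feed $g=\log$ into the corrected solution formula, isolate $x\log x - x - \tfrac12\log x$, and then pin down the constant via Wallis. Two points need attention.

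First, the integration-by-parts bookkeeping is off. One integration by parts of $\int^{x} e^{-2k\pi iy}\log y\,dy$ gives a boundary contribution (after multiplying by $e^{2k\pi ix}$) of $-\dfrac{\log x}{2k\pi i}$, which is $O((\log x)/k)$, not $O((\log x)/(kx))$. These leading boundary terms do vanish, but only after summing over $k\neq 0$ by oddness in $k$; you should say so. The remaining integral $\frac{1}{2k\pi i}\int^{x}\frac{e^{-2k\pi iy}}{y}\,dy$ then gives $O(1/(k^{2}x))$ after a second integration by parts, and the sum over $k$ converges. So the conclusion $R(x)\to 0$ is correct, but your stated bound for individual terms is not.

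Second, and more seriously, you have not actually supplied the ``second fix'' you attribute to Euler. You write that $C$ ``absorbs \dots any $1$-periodic function allowed by the difference equation,'' but that step is precisely the conflation you criticise: nothing yet forces the $1$-periodic piece $\Pi(x)$ to be constant. The Wallis computation uses only $n!$ for integer $n$, so it determines $\Pi$ on $\mathbb{Z}$ (where periodicity already makes it constant) and says nothing about $\Pi(x)$ for non-integer $x$. The theorem is stated for real $x\to\infty$, so you need an argument that $\Pi$ is constant. The paper supplies this via logarithmic convexity: knowing $\log\Gamma$ is convex on $(0,\infty)$, one argues that the only differentiable $1$-periodic function whose addition preserves convexity of $x\log x - x - \tfrac12\log x + (\text{Bernoulli tail})$ is a constant. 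Without something of this sort (Bohr--Mollerup, or an equivalent uniqueness input), your derivation only establishes Stirling along the integers.
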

Before we start with the proof, we want to mention that we will, presenting Euler's argument, argue formally. For, we will write $=$, although we will encounter a series which   actually is an asymptotic series and thus $=$ is not the correct sign to use here. For definition of asymptotic series, the reader is, e.g., referred to \cite{Ha48}, \cite{Ba09} and the chapter on divergent series in \cite{Va06}. Having said this in advance, let us go over to Euler's proof.
\begin{proof}
His idea was to simplify the solution of the difference equation we derived; applied to $\log \Gamma(x)$ this solution reads

\begin{equation*}
    \log \Gamma(x) = -\dfrac{1}{2}\log x + \int\limits_{}^{x} \log (t) dt + \sum_{k \in \mathbb{Z}\setminus \lbrace 0\rbrace} e^{2 k \pi i x} \int\limits_{}^{x}e^{-2 \pi i t}\log (t)dt,
\end{equation*}
We simplify this general solution following Euler in \cite{E189}. First, we have

\begin{equation*}
\int\limits_{}^{x} \log t dt = x \log x - x +C,
\end{equation*}
$C$ being the constant of integration. Secondly, by iterated partial integration, we obtain

\begin{equation*}
\int\limits_{}^{x} \log t e^{-2 k \pi i t}dt =C_k -\dfrac{\log x e^{-2 k \pi i x}}{2 k \pi i}+ \sum_{n=1}^{\infty}\dfrac{(-1)^n}{(2k \pi i)^{n+1}}\dfrac{(n-1)!}{x^n}e^{-2 k  \pi i x},
\end{equation*}
$C_k$ being the constant of integration. Therefore,
\begin{equation*}
\log \Gamma(x) = x \log x - x + \Pi(x) - \dfrac{1}{2}\log x  +\sum_{k \in \mathbb{Z}\setminus \lbrace 0\rbrace} \left[-\dfrac{\log x }{2 k \pi i}+ \sum_{n=1}^{\infty}\dfrac{(-1)^n}{(2k \pi i)^{n+1}}\dfrac{(n-1)!}{x^n}\right],
\end{equation*}
where $\Pi(x)$ is an arbitrary periodic function with period $1$, i.e., a solution of the homogeneous equation $y(x+1)-y(x)=0$. Now note that for a natural number $m$

\begin{equation*}
\sum_{k \in \mathbb{Z}\setminus \lbrace 0\rbrace} \dfrac{1}{k^{2m-1}}=0 \quad \text{and} \quad\sum_{k \in \mathbb{Z}\setminus \lbrace 0\rbrace} \dfrac{1}{k^{2m}}= 2 \sum_{k=1}^{\infty} \dfrac{1}{k^{2m}}. 
\end{equation*}
Therefore,

\begin{equation*}
\log \Gamma(x)= x\log x-x+\Pi(x)-\frac{1}{2}\log x+ \sum_{k=1}^{\infty}\sum_{m=1}^{\infty} \dfrac{(-1)^{2m-1}}{x^{2m-1}}\dfrac{2(2m-2)!}{(2k \pi i)^{2m}}.
\end{equation*}
Now recall Euler's famous formula for the even $\zeta$-values\footnote{He never wrote it down like this explicitly but gave  a list of the first 13 values, e.g., in \cite{E130} and \cite{E212}. He certainly was aware of this general formula.}

\begin{equation*}
\sum_{k=1}^{\infty} \dfrac{1}{k^{2m}}= \dfrac{(-1)^{m-1}B_{2m}(2\pi)^{2m}}{2(2m)!},
\end{equation*}
where $B_n$ are the Bernoulli numbers. \\
Hence, having substituted those explicit values for the sums, we finally arrive at

\begin{equation*}
\log \Gamma(x)= x \log x-x- \frac{1}{2}\log x + \Pi(x)+ \sum_{k=1}^{\infty} \dfrac{B_{2m}}{2m(2m-1)x^{2m-1}}.
\end{equation*}
This is where Euler stopped.  But in his case, the term $-\frac{1}{2}\log x$ was missing, as we mentioned in section \ref{subsubsec: Mistake in Euler's Approach}.\\[2mm]
But having simplified the solution this far, let us now check the conditions of the Bohr-Mollerup theorem. Demanding them to be satisfied by the solution, we can prove that indeed $\Pi(x)=\frac{1}{2}\log (2\pi)$. We start with logarithmic convexity.\\[2mm]
One way of arguing that $\Pi(x)=A$, i.e., a constant, is as follows. Recall the following condition for convexity.

\begin{definition}[Convexity for a differentiable function]

A differentiable function $y(x)$ is convex on the interval $I$ if and only if

\begin{equation*}
\dfrac{y(a)-y(b)}{a-b} \geq y'(b) \quad \forall~ a,b \in I.
\end{equation*}
\end{definition}
Let us denote the convex part of the right-hand side in the final equation, i.e., the sum of every function except the periodic function $\Pi(x)$ since all of them are easily seen to be convex on the positive real axis, by $C(x)$. \\
Then, since $y(x)=\Pi(x)+C(x)$ and $y(x)$ is convex by assumption, we have

\begin{equation*}
\dfrac{\Pi(a)-\Pi(b)+C(a)-C(b)}{a-b}\geq \dfrac{\Pi(a)-\Pi(b)}{a-b} + C'(b)
\end{equation*}
and 

\begin{equation*}
\dfrac{\Pi(a)-\Pi(b)+C(a)-C(b)}{a-b} \geq C'(b)+\Pi'(b)
\end{equation*}
Therefore, we must have either

\begin{equation*}
\dfrac{\Pi(a)-\Pi(b)}{a-b} +C'(b) \geq C'(b) + \Pi'(b),
\end{equation*}
in which case $\Pi(x)$ would already be convex and everything works out nicely, since the only differentiable periodic function that is convex on the whole positive real axis is the constant function, or we have

\begin{equation*}
C'(b)+\Pi'(b) \geq C'(b)+ \dfrac{\Pi(a)-\Pi(b)}{a-b},
\end{equation*}
or equivalently

\begin{equation*}
\Pi'(b) \geq \dfrac{\Pi(a)-\Pi(b)}{a-b}.
\end{equation*}
Since this inequality must hold for all positive $a$ and $b$, let us put $b=a+1$. Then, since $\Pi(x)$ and hence also $\Pi'(x)$ is periodic,

\begin{equation*}
\Pi'(a) \geq  0.
\end{equation*}
This condition must hold for all positive $a$. Hence $\Pi(x)$ is a monotonically increasing function on the positive real axis. But the only differentiable  periodic function that is monotonically increasing on the whole positive real axis is, again, the constant function. This completes the proof that $\Pi(x)$ is indeed a constant. \\[2mm]
In the next step, we have to determine the constant explicitly. This is done using the condition $y(1)= 0$. Therefore,

\begin{equation*}
0= y(1)= -1 +A + \sum_{k=1}^{\infty}\dfrac{B_{2m}}{2m(2m-1)}.
\end{equation*}
This equation, at least in principle, allows  to find the constant $A$. We say "in principle", since the sum  diverges because of the rapid growth of the Bernoulli numbers. Nevertheless, applying techniques to sum divergent series, see, e.g., \cite{Ha48} written in 1948, the sum can be evaluated and we find $A=\log \sqrt{2\pi}$.\\
 The evaluation of this constant was Stirling's great contribution to the Stirling-formula - he gave a more general formula in 1730 in \cite{St30} and essentially used the Wallis product formula for $\pi$ to find the constant $A$. But the formula we want to prove and was named after him, was actually discovered by de Moivre in 1718 in \cite{dM18}, who could at that time only evaluate the constant numerically. See, e.g, \cite{Du91}, \cite{Le86}, \cite{Pe24} for a discussion on this matter.\\[2mm]
In order to avoid the use of divergent series here, let us present Euler's strategy to find $A$, taken from \cite{E212} $\S 158$ chapter 6 of the second part. Euler argues that, since $A$ is constant, we can determine it from any case. First, let us assume that $x$ is an integer and $x \gg 1$ so that the term $\sum_{k=1}^{\infty} \frac{B_{2m}}{2m(2m-1)x^{2m-1}}$ is negligible. Then, we have the summation

\begin{equation*}
\sum_{k=1}^{x} \log k = A + \left(x+\dfrac{1}{2}\right)\log x -x
\end{equation*}

\begin{center}
\begin{figure}
\centering
    \includegraphics[scale=1.0]{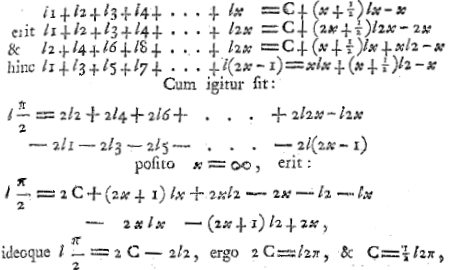}
    \caption{Euler and Stirling's Constant}
    Taken from \cite{E212}. Euler uses the Wallis product formula for $\pi$ to find the Stirling constant, $\log \sqrt{2 \pi}$, in the Stirling formula. Euler wrote $l$ for $\log$.
    \end{figure}
\end{center}
and hence

\begin{equation*}
\sum_{k=1}^{2x} \log k = A + \left(2x+\dfrac{1}{2}\right)\log (2x) -2x.
\end{equation*}
Moreover,

\begin{equation*}
\sum_{k=1}^{x} \log (2k) = \sum_{k=1}^{x} \log k +\sum_{k=1}^{x} \log 2 = x \log 2 + A + \left(x+\dfrac{1}{2}\right)\log x -x.
\end{equation*}
Therefore, finally

\begin{equation*}
\sum_{k=1}^{x} \log(2k-1) = \sum_{k=1}^{2x} \log k - \sum_{k=1}^{x} \log (2k) = x \log x + \left(x+\dfrac{1}{2}\right)\log 2 -x.
\end{equation*}
Now, Euler's idea also was to use the Wallis product formula for $\pi$, i.e.

\begin{equation*}
\dfrac{\pi}{2}= \dfrac{2}{1} \cdot \dfrac{2 \cdot 4}{3 \cdot 3} \cdot \dfrac{4 \cdot 6}{5 \cdot 5}\cdot \dfrac{6 \cdot 8}{7 \cdot 7} \cdot \dfrac{8 \cdot 10}{9 \cdot 9} \cdot \text{etc.}
\end{equation*}
Therefore,

\begin{equation*}
\log \dfrac{\pi}{2}= \lim_{x \rightarrow \infty} \left(2 \sum_{k=1}^{x}\log(2k) - \log(2x)-2\sum_{k=1}^{x}\log(2k-1)\right).
\end{equation*}
Thus, combining the corresponding equations and taking the limit, we find

\begin{equation*}
\log \dfrac{\pi}{2}= 2A - 2 \log 2
\end{equation*}
and hence

\begin{equation*}
A = \log \sqrt{2 \pi }.
\end{equation*}
Finally, we can write down our expression for $y(x)= \log \Gamma(x)$ in its final form:

\begin{equation*}
\log \Gamma (x) = x\log x -x-\dfrac{1}{2}\log x + \log \sqrt{2 \pi} + \sum_{k=1}^{\infty}\dfrac{B_{2m}}{2m(2m-1)x^{2m-1}}.
\end{equation*}
Finally, taking exponentials we arrive at the limit formula. Concerning this formula, we stress here again that this is an asymptotic expansion and thus one rather should write:

\begin{equation*}
\log \Gamma (x) \sim x\log x -x-\dfrac{1}{2}\log x + \log \sqrt{2 \pi} + \sum_{k=1}^{\infty}\dfrac{B_{2m}}{2m(2m-1)x^{2m-1}}.
\end{equation*}

\end{proof}

\subsection{Generalized Factorials}
\label{subsec: Generalized Factorials}

\subsubsection{Euler's Results on generalized factorials}
\label{subsubsec: Euler's results on generalized factorials}
In \cite{E661}, as the title \textit{Variae considerationes circa series hypergeometricas}\footnote{The term "hypergeometric series" that Euler used can be translated as "factorial series" in this context.} suggests, Euler studied more general factorials. The generalisation is that the difference equation $f(n+1)=nf(n)$ is now replaced by the slightly more general one $f(n+1)=(a+bn)f(n)$ with positive real numbers $a, b$. Having discussed the moment ansatz, we can easily solve such equations, but Euler had other ideas. He wanted to find Stirling-like formulas and used the Euler-Maclaurin summation formula\footnote{The Euler-Maclaurin summation formula will be discussed in the following section \ref{sec:  Solution of log Gamma(x+1)- log Gamma (x)= log x via the Euler-Maclaurin Formula}, but it will turn out that the summation formula is just a special case of the formula we found for the solution of the difference equation in this section.}. Otherwise, the ideas in this paper are not new. Regardless, we  want to state his results. He introduced the following functions:

\begin{equation*}
 \renewcommand{\arraystretch}{1,5}
\setlength{\arraycolsep}{0.0mm}
\begin{array}{lll}
    \Gamma(i) & ~=~ & a(a+b)(a+2b)(a+3b)\cdots (a+(i-1)b) \\ 
    \Delta(i) & ~=~ & a(a+2b)(a+4b) \cdots (a+(2i-2)b) \\
    \theta(i) & ~=~ & (a+b)(a+3b) \cdots (a+(2i-1)b).
\end{array}
\end{equation*}
He established several relations among them and found the Stirling-like formulas, i.e. asymptotic expansions,

\begin{equation*}
    \renewcommand{\arraystretch}{1,5}
\setlength{\arraycolsep}{0.0mm}
\begin{array}{lll}
\Gamma(i) &~\sim~& Ae^{-i}(a-b+bi)^{\frac{a}{b}+i-\frac{1}{2}} \\  
\Delta(i) &~\sim~& Be^{-i}(a-2b+2bi)^{\frac{a}{2b}+i-\frac{1}{2}} \\ 
\theta(i) &~\sim~& Ce^{-i}(a-b+2bi)^{\frac{a}{2b}+i}. 
\end{array}
\end{equation*}
All equations hold only for $i \rightarrow \infty$.
In these formulas, $\Gamma(i)$ is not to be confused with our $\Gamma$-function. We simply adopted Euler's notation from \cite{E661}.
Euler could not determine the constants $A$, $B$, $C$ as in the case of Stirling's formula, since he did not have access to a Wallis-like product and could not argue analogously as we showed in section \ref{subsubsec: An Application - Derivation of the Stirling Formula for the Factorial}. But we, in possession of all necessary tools, will evaluate these constants in the next sections. 
\begin{center}
    \begin{figure}
        \centering
        \includegraphics[scale=1.0]{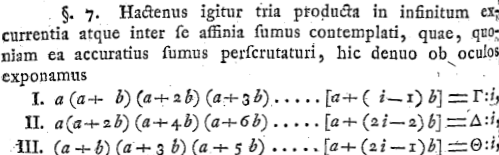}
        \caption{Euler's generalized Factorials} Euler's definition of the generalized factorials he considered in \cite{E661}. The scan is taken from the same paper.
    \end{figure}
\end{center}

\subsubsection{Evaluation of the Constant $A$}
\label{subsubsec: Evaluation of the constant A}

We will present the evaluation of the constant $A$ in the  first of the three asymptotic expansions. We claim that the following calculations could all have been done by Euler himself, since all steps involved concepts or techniques that he used elsewhere in a similar way. We have:

\begin{theorem}
Let $F$ be a meromorphic function that is logarithmically convex\footnote{Of course, Euler did not know the definition of a logarithmically convex function. But we state the result as general as possible.} on the positive real axis. Furthermore, let $F$ satisfy:

\begin{equation*}
    F(x+1)=(a+bx)F(x) \quad \forall ~ a,b,x>0 \quad \text{and} \quad F(1)=1.
\end{equation*}
Then $F$ has the following asymptotic expansion:

\begin{equation*}
    F(x) \sim A\left(a-b+bx\right)^{\frac{a}{b}-\frac{1}{2}+x}e^{-x} \quad \text{for} \quad x \rightarrow \infty
\end{equation*}
with

\begin{equation*}
    A= \dfrac{b^{-\frac{a}{b}-\frac{1}{2}}\sqrt{2\pi}e^{1-\frac{a}{b}}}{\Gamma\left(1+\frac{a}{b}\right)}.
\end{equation*}
\end{theorem}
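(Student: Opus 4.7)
The plan is to reduce the whole statement to the ordinary $\Gamma$-function by recognising that the unique $F$ satisfying the three hypotheses is
\begin{equation*}
F(x) \;=\; \frac{b^{x-1}\,\Gamma\!\bigl(x+\tfrac{a}{b}\bigr)}{\Gamma\!\bigl(1+\tfrac{a}{b}\bigr)},
\end{equation*}
and then plugging Stirling's formula, already proved in Section~\ref{subsubsec: An Application - Derivation of the Stirling Formula for the Factorial}, into this closed form. A one-line check will show that the candidate equals $1$ at $x=1$, that $\Gamma(y+1)=y\Gamma(y)$ upgrades to $F(x+1)=b(x+a/b)F(x)=(a+bx)F(x)$, and that it is logarithmically convex because $\log\Gamma$ is convex on $\mathbb{R}^{+}$ while $(x-1)\log b$ is affine. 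So the candidate meets all three hypotheses.

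The identification $F=\text{candidate}$ will require an adaptation of the Bohr--Mollerup argument of Section~\ref{subsubsec: Bohr-Mollerup Theorem}. Setting $H(x):=F(x)/b^{x-1}$, this $H$ is still log-convex, satisfies $H(1)=1$ and $H(x+1)=(x+a/b)H(x)$. For $0<x\le 1$ and integer $n\ge 2$, the three-point convexity sandwich applied to $\log H$ at $n-1,\,n,\,n+x,\,n+1$ will give
\begin{equation*}
(n-1+\tfrac{a}{b})^{x}\,H(n)\;\le\;H(n+x)\;\le\;(n+\tfrac{a}{b})^{x}\,H(n).
\end{equation*}
Iterating $H(x+1)=(x+a/b)H(x)$ rewrites both sides as finite products in $a/b$; solving for $H(x)$ and letting $n\to\infty$ pins $H$ down to $\Gamma(x+a/b)/\Gamma(1+a/b)$ on $(0,1]$, and the functional equation then propagates the equality to all of $\mathbb{R}^{+}$.

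The asymptotic claim will then follow from Stirling in the form $\log\Gamma(z)=(z-\tfrac12)\log z-z+\tfrac12\log(2\pi)+o(1)$, applied at $z=x+a/b$. The one non-cosmetic rewrite needed is $\log(x+a/b)=\log(x-1+a/b)+\log\bigl(1+\tfrac{1}{x-1+a/b}\bigr)$; the correction term contributes a clean $1+o(1)$ to the factor $(z-\tfrac12)\log z$, after which $\log(x-1+a/b)=\log(a-b+bx)-\log b$ converts the logarithm to the desired base. Collecting the $x$-dependent part then yields the announced factor $(a-b+bx)^{a/b-1/2+x}e^{-x}$, while the residual constants aggregate to
\begin{equation*}
\log A \;=\; -\!\bigl(\tfrac{a}{b}+\tfrac12\bigr)\log b + 1 - \tfrac{a}{b} + \tfrac12\log(2\pi) - \log\Gamma\!\bigl(1+\tfrac{a}{b}\bigr),
\end{equation*}
which exponentiates to exactly the value of $A$ stated in the theorem.

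The main obstacle will be the uniqueness step: the Bohr--Mollerup theorem as quoted earlier is tied to the functional equation $f(x+1)=xf(x)$, so the convexity sandwich has to be rerun verbatim for $H(x+1)=(x+a/b)H(x)$. Everything after that reduces to careful Stirling bookkeeping with no genuine analytic content.
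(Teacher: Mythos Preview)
Your proof is correct and lands on the same closed form $F(x)=b^{x-1}\Gamma(x+a/b)/\Gamma(1+a/b)$ that the paper uses, followed by the same Stirling substitution; the final bookkeeping matches the paper's line by line.

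The one genuine difference is how the closed form is reached and justified. The paper pulls it from the moment-ansatz integral $F(x)=C\int_0^\infty t^{x-1+a/b}e^{-t/b}\,dt$ derived earlier (Section~\ref{subsubsec: 2. Example: Generalized Gamma function}), substitutes $t/b=y$ to recognise $\Gamma(x+a/b)$, and then appeals to Euler's Euler--Maclaurin computation for the \emph{form} of the asymptotic, so that only the constant $A$ remains to be pinned down. You instead posit the formula directly, verify the three hypotheses, and run an explicit Bohr--Mollerup sandwich on $H(x)=F(x)/b^{x-1}$ to force uniqueness. Your route is more self-contained and makes the role of log-convexity visible (the paper is tacit about why its integral solution is \emph{the} $F$ of the theorem), while the paper's route ties the result back into the moment-ansatz narrative of the chapter.
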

\begin{proof}
That the general asymptotic formula is correct follows from an  application of the Euler-Maclaurin summation formula, which will be discussed in section \ref{sec:  Solution of log Gamma(x+1)- log Gamma (x)= log x via the Euler-Maclaurin Formula}, and was already shown by Euler in \cite{E661}, as we mentioned above. Thus, we will only determine the constant $A$.\\
To this end, recall the solution for the propounded difference equation we found in section \ref{subsubsec: 2. Example: Generalized Gamma function}, i.e.

\begin{equation*}
    F(x) = C \int\limits_{0}^{\infty}t^{x-1+\frac{a}{b}}e^{-\frac{t}{b}}dt \quad \text{for} \quad x-1+\frac{a}{b}>0.
\end{equation*}
$C$ is a constant that we did not need in the previous investigations.  But  we will determine it here. Before doing so, let us express $F$ in terms of the familiar $\Gamma$-function. Substituting $\frac{t}{b}=y$ in the above integral, we find:

\begin{equation*}
    F(x) =C \int\limits_{0}^{\infty} b^{x+\frac{a}{b}}y^{x-1+\frac{a}{b}}e^{-y}dy = C b^{x+\frac{a}{b}}\Gamma\left(x+\dfrac{a}{b}\right),
\end{equation*}
where we used the integral representation of the $\Gamma$-function in the last step. Therefore, from $F(1)=1$:

\begin{equation*}
    C= \dfrac{b^{-\frac{a}{b}-1}}{\Gamma\left(1+\frac{a}{b}\right)}.
\end{equation*}
Finally, we  have to use the Stirling-formula from section \ref{subsubsec: An Application - Derivation of the Stirling Formula for the Factorial} yielding the asymptotic expansion of $\Gamma(x+1)$ for large $x$; we just have to replace $x$ by $x+\frac{a}{b}$ in that formula and find the asymptotic expansion:

\begin{equation*}
    F(x+1) \sim \dfrac{b^{-\frac{a}{b}-1}}{\Gamma\left(1+\frac{a}{b}\right)}b^{x+1+\frac{a}{b}}\sqrt{2\pi}\left(x+\dfrac{a}{b}\right)^{x+\frac{a}{b}+\frac{1}{2}}e^{-x-\frac{a}{b}} \quad \text{for} \quad x \rightarrow \infty
\end{equation*}
Therefore, 

\begin{equation*}
    F(x)\sim \dfrac{b^{-\frac{a}{b}-1}}{\Gamma\left(1+\frac{a}{b}\right)}b^{x+\frac{a}{b}}\sqrt{2\pi}\left(x-1+\dfrac{a}{b}\right)^{x+\frac{a}{b}-\frac{1}{2}}e^{-x-\frac{a}{b}+1}  \quad x \rightarrow \infty.
\end{equation*}
Massaging this into a form resembling Euler's:

\begin{equation*}
    F(x) \sim \dfrac{b^{-\frac{a}{b}-\frac{1}{2}}}{\Gamma\left(1+\frac{a}{b}\right)}\sqrt{2\pi}\left(bx-b+a\right)^{x+\frac{a}{b}-\frac{1}{2}}e^{-x}\cdot e^{1-\frac{a}{b}}  \quad x \rightarrow \infty.
\end{equation*}
Therefore, comparing this to Euler's result we find:

\begin{equation*}
    A= \dfrac{\sqrt{2\pi}\cdot e^{1-\frac{a}{b}}}{\Gamma\left(1+\frac{a}{b}\right)}\cdot b^{-\frac{a}{b}-\frac{1}{2}}.
\end{equation*}
\end{proof}

\subsubsection{Finding the other Constants}
\label{subsubsec: Finding the other constants}

Having found $A$ in this way, one can proceed analogously to find the others. Concerning the function $\Delta(x)$, satisfying

\begin{equation*}
    \Delta(x+1)=(a+2bx)\Delta(x), \quad a,b>0 \quad \text{with} \quad \Delta(1) =1,
\end{equation*}
we can derive the asymptotic expansion just replacing $b$ by $2b$ in the expansion found for $F$. Hence we find:

\begin{equation*}
    \Delta(x) \sim \dfrac{(2b)^{-\frac{a}{2b}-\frac{1}{2}}}{\Gamma\left(1+\frac{a}{2b}\right)}\sqrt{2\pi}\left(2bx-2b+a\right)^{x+\frac{a}{2b}-\frac{1}{2}}e^{-x}\cdot e^{1-\frac{a}{2b}}  \quad x \rightarrow \infty.
\end{equation*}
Thus, the constant $B$ in Euler's formulas from section \ref{subsubsec: Euler's results on generalized factorials} is:

\begin{equation*}
    B= \dfrac{(2b)^{-\frac{a}{2b}-\frac{1}{2}}}{\Gamma \left(1+\frac{a}{2b}\right)}\sqrt{2\pi}e^{1-\frac{a}{2b}}.
\end{equation*}
Finally, Euler introduced the function $\theta(x)$ satisfying

\begin{equation*}
    \theta(x+1) =(a+(2b+1)x)\theta(x), \quad a,b >0 \quad \text{with} \quad \theta(0)=1.
\end{equation*}
Additionally, it is logarithmically convex, as it follows from Euler's definition given in \ref{subsubsec: Euler's results on generalized factorials}. Here, one is inclined to derive the asymptotic expansion from the one of $F$ again. But this does not lead to a form resembling Euler's. To get to such a form, note that

\begin{equation*}
    \Delta \left(x+\dfrac{1}{2}\right)=k \theta(x)
\end{equation*}
for a constant $k$, since both sides satisfy the same functional equation and are logarithmically convex on the positive real axis.  From the special case $x=0$ the constant $k$ is found to be $=\frac{1}{\Delta(\frac{1}{2})}$. Let us evaluate $k$. First, we have:

\begin{equation*}
    \Delta(x) = \dfrac{(2b)^{-\frac{a}{2b}-1}}{\Gamma\left(1+\frac{a}{2b}\right)} \cdot (2b)^{x+\frac{a}{2b}} \cdot \Gamma\left(x+\dfrac{a}{2b}\right).
\end{equation*}
Hence

\begin{equation*}
    \Delta \left(\dfrac{1}{2}\right)= (2b)^{-\frac{1}{2}}\cdot \dfrac{\Gamma\left(\frac{1}{2}+\frac{a}{2b}\right)}{\Gamma\left(1+\frac{a}{2b}\right)}.
\end{equation*}
Therefore, we can now use the asymptotic expansion of $\Delta$ to find the one for $\theta:$

\begin{equation*}
    \theta(x) \sim \dfrac{1}{\Delta \left(\frac{1}{2}\right)}\cdot B (2bx-b+a)^{x+\frac{a}{2b}}e^{-x-\frac{1}{2}}.
\end{equation*}
Thus, the constant $C$ in Euler's formulas from \ref{subsubsec: Euler's results on generalized factorials} can be expressed in terms of $k= \frac{1}{\Delta \left(\frac{1}{2}\right)}$ and $B$, which we already found:

\begin{equation*}
    C = k\cdot B e^{-\frac{1}{2}} 
\end{equation*}

\subsubsection{Euler's Relations among the Constants}
\label{subsubsec: Euler's relations among the constants}

As mentioned in section \ref{subsubsec: Euler's results on generalized factorials}, in $\S 17$ of \cite{E661}, Euler also proved some relations among the constants $A$, $B$ and $C$ in the asymptotic expansions. More precisely, he proved:

\begin{equation*}
    B = \Delta \left(\frac{1}{2}\right) e^{\frac{1}{2}}C \quad \text{and} \quad B = \sqrt{A\Delta \left(\frac{1}{2}\right)e}.
\end{equation*}
The relation among $B$ follows directly from our calculations so that we focus on the derivation of the other one, among $A$ and $B$. To prove it from our explicit formulas for $A$ and $B$, we need the following 
\begin{theorem}[Legendre's duplication formula for $\Gamma$]
The following equation holds for all $z \in \mathbb{C}$:

\begin{equation*}
    \Gamma(z)\Gamma \left(z+\dfrac{1}{2}\right)= 2^{1-2z}\sqrt{\pi}\Gamma(2z).
\end{equation*}
\end{theorem}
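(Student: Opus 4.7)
The plan is to prove the identity by reducing it, through the substitution $w = 2z$, to a form that can be treated by the Bohr-Mollerup theorem (Theorem 2.2.6), which has already been established in the excerpt. Rewriting the claim as
\begin{equation*}
\Gamma(w) = \frac{2^{w-1}}{\sqrt{\pi}}\,\Gamma\!\left(\frac{w}{2}\right)\Gamma\!\left(\frac{w+1}{2}\right),
\end{equation*}
I would introduce the auxiliary function
\begin{equation*}
F(w) := \frac{2^{w-1}}{\sqrt{\pi}}\,\Gamma\!\left(\frac{w}{2}\right)\Gamma\!\left(\frac{w+1}{2}\right), \qquad w>0,
\end{equation*}
and show that $F$ meets the three hypotheses of Bohr-Mollerup. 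This yields $F=\Gamma$ on $(0,\infty)$, and the theorem then follows by analytic continuation.

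First I would check $F(1)=1$: using $\Gamma(1/2)=\sqrt{\pi}$ (discovered by Euler and reviewed in the excerpt) together with $\Gamma(1)=1$, one immediately gets $F(1) = \tfrac{1}{\sqrt{\pi}}\,\sqrt{\pi}\cdot 1 = 1$. Next, I would verify the functional equation $F(w+1)=w\,F(w)$ by a direct computation using $\Gamma(w/2+1)=(w/2)\Gamma(w/2)$: the prefactor $2^{w-1}$ becomes $2^{w}$, producing a factor $2$, which combines with $w/2$ to give exactly $w$, while the two $\Gamma$-values simply swap roles. Third, I would establish that $F$ is logarithmically convex on $(0,\infty)$: writing
\begin{equation*}
\log F(w) = (w-1)\log 2 - \tfrac{1}{2}\log\pi + \log\Gamma\!\left(\tfrac{w}{2}\right) + \log\Gamma\!\left(\tfrac{w+1}{2}\right),
\end{equation*}
the first two terms are affine and thus convex, while the last two are convex as compositions of the convex function $\log\Gamma$ (Theorem 2.2.5 of the excerpt) with the increasing affine map $w \mapsto (w+c)/2$. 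Since sums of convex functions are convex, $\log F$ is convex, and $F$ is clearly positive on $(0,\infty)$.

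With these three properties in hand, Bohr-Mollerup forces $F(w)=\Gamma(w)$ for all $w>0$. Substituting $w=2z$, rearranging, and using $\Gamma(z+1/2)=\Gamma((2z+1)/2)$ recovers the claimed identity for real $z>0$. The last step is to extend the equality to all $z\in\mathbb{C}$: both sides are meromorphic functions on $\mathbb{C}$ (the right-hand side having poles only at $z\in\{0,-1/2,-1,-3/2,\dots\}$, which are precisely matched by those of $\Gamma(2z)$), and since they agree on an open subset of $\mathbb{R}$, the identity theorem for meromorphic functions extends the equality to the full complex plane.

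The only subtle point, and the main place where one must be careful, is the verification of log-convexity: strictly speaking one should appeal to the fact that a composition of a convex function with an increasing affine map is convex, which needs to be stated (or deduced as a quick consequence of the definition of convexity already given in the excerpt). Everything else is a bookkeeping exercise in the functional equation of $\Gamma$.
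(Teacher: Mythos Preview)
Your proof is correct and complete. The verification of the three Bohr--Mollerup hypotheses is sound (the functional-equation step is exactly right, and the log-convexity argument via affine precomposition is standard and valid), and the analytic continuation at the end is handled properly, including the matching of pole sets.

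The paper, however, does not prove this theorem directly: it simply remarks that Legendre's duplication formula is the case $n=2$ of the Gau\ss{} multiplication formula and defers to the later section on that more general identity. There the discussion is primarily historical---showing that Euler's formula in \cite{E421} is equivalent to the multiplication formula---and Euler's own argument is acknowledged to be heuristic rather than rigorous. So your route is genuinely different: you give a self-contained proof using the Bohr--Mollerup characterization already established earlier in the paper, whereas the paper treats the result as a corollary of a theorem whose proof it never fully supplies. Your approach is cleaner for this specific statement; the paper's approach (in principle) buys the full multiplication formula at once, but at the cost of relying on arguments it does not make rigorous.
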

This formula is a special case of the Gau\ss{}ian multiplication formula for the $\Gamma$-function, which will be discussed in quite some detail below in section \ref{subsec: Multiplication Formula}.\\
But, for now, let us consider $A \cdot \Delta \left(\frac{1}{2}\right) \cdot e$; inserting the values for $A$ and $\Delta \left(\frac{1}{2}\right)$:

\begin{equation*}
    A \Delta \left(\dfrac{1}{2}\right)\cdot e = \dfrac{b^{-\frac{a}{b}-\frac{1}{2}}\sqrt{2\pi}e^{1-\frac{a}{b}}}{\Gamma \left(1+\frac{a}{b}\right)}\cdot (2b)^{-\frac{1}{2}}\dfrac{\Gamma\left(\frac{1}{2}+\frac{a}{2b}\right)}{\Gamma\left(1+\frac{a}{2b}\right)}\cdot e.
\end{equation*}
Applying Legendre's duplication formula for $z=\frac{1}{2}+\frac{a}{2b}$, this equation becomes

\begin{equation*}
    A \Delta \left(\dfrac{1}{2}\right)\cdot e = \dfrac{b^{-\frac{a}{b}-\frac{1}{2}}\sqrt{2\pi}\cdot e^{2-\frac{a}{b}}(2b)^{-\frac{1}{2}}\cdot \sqrt{\pi}\cdot 2^{-\frac{a}{b}}}{\Gamma\left(\frac{1}{2}+\frac{a}{2b}\right)\left(\Gamma\left(1+\frac{a}{2b}\right)\right)^2}\Gamma \left(\dfrac{1}{2}+\dfrac{a}{2b}\right).
\end{equation*}
This expression can be simplified to

\begin{equation*}
     A \Delta \left(\dfrac{1}{2}\right)\cdot e = \dfrac{(2b)^{-\frac{a}{b}-1}\cdot 2 \pi \cdot e^{2-\frac{a}{b}}}{\left(\Gamma\left(1+\frac{a}{2b}\right)\right)^2}
\end{equation*}
The right-hand side is just $B^2$; thus, the relation  is proven.\\[2mm]
Finally, we want to point out that Euler in \cite{E661} found the relations among $A$, $B$, $C$ from the relations among the functions $\Delta$, $F$ and $\theta$. Therefore, Legendre's duplication formula for the $\Gamma$-function follows from the relations among $A$, $B$ and $C$. Euler did not notice this, of course. But below in \ref{subsubsec: Euler's Version of the Multiplication Formula} we will show that later in his career Euler  found a formula equivalent to the multiplication formula for the $\Gamma$-function containing Legendre's formula as a special case.

\newpage

\section{Solution of $\log \Gamma(x+1)- \log \Gamma (x)= \log x$ via the Euler-Maclaurin Formula}
\label{sec: Solution of log Gamma(x+1)- log Gamma (x)= log x via the Euler-Maclaurin Formula}

\subsection{Overview on the Section}
\label{subsec: Overview on the Section}

We will present Euler's derivation in section \ref{subsec: Euler's Derivation of the Formula} and the modern derivation of the Euler-Maclaurin summation formula in section \ref{subsec: Modern Derivation}. Euler also saw the formula as the solution of the difference equation, i.e., as a tool to calculate finite sums.

\subsection{Euler's Derivation of the Formula}
\label{subsec: Euler's Derivation of the Formula}

Euler derived the Euler-Maclaurin summation formula on various occasions. The first occasion was in 1738 in \cite{E25}, but he also gave derivations in 1741 in \cite{E47} and in his book \cite{E212} published 1755. \\
He saw it as a tool to calculate finite sums and this helps to understand his derivation. For, we already mentioned that

\begin{equation*}
    f(n) := \sum_{k=1}^{n} g(k)
\end{equation*}
satisfies the functional equation

\begin{equation*}
    f(n)-f(n-1)=g(n) \quad \forall n \in \mathbb{N}.
\end{equation*}
And the idea to find the function $f$ is still the same as in section \ref{subsec: Eulers Idea} (or in \cite{E189}), i.e. to solve the above difference equation. Using Taylor's theorem, he wrote

\begin{equation*}
    f(n-1)= \sum_{k=0}^{\infty} \dfrac{(-1)^k f^{(k)}(n)}{k!}.
\end{equation*}
This, as above, led him to a differential equation of infinite order, i.e.

\begin{equation*}
    \sum_{k=1}^{\infty} \dfrac{(-1)^k f^{(k)}(n)}{k!} =g(n).
\end{equation*}
Now, it is important to note that at the time of his first proof in 1738 in \cite{E25} and later in 1741 in \cite{E47}, Euler had not developed the theory how to solve differential equations with constant coefficients yet. But Euler had another idea: He made an educated guess. More precisely, he assumed the solution to be of the following form:

\begin{equation*}
    f(n) = \alpha \int\limits_{}^{n}g(k)dk + \beta\dfrac{dg(n)}{dn}+ \gamma\dfrac{d^2g(n)}{dn^2}+ \delta\dfrac{d^3g(n)}{dn^3}+\varepsilon\dfrac{d^4g(n)}{dn^4}+\text{etc.}
\end{equation*}
Inserting this ansatz into the differential equation of infinite order gives recursive relations to define the coefficients $\alpha, \beta, \gamma$ etc. \\

\begin{center}
\begin{figure}
\centering
    \includegraphics[scale=1.2]{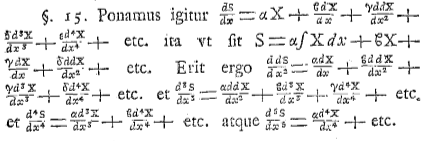}
    \caption{Finding the Euler-Maclaurin Summation Formula}
     Scan taken from \cite{E47}. Euler makes the ansatz with undetermined coefficients $\alpha$, $\beta$, $\gamma$ etc. to solve the differential equation of infinite order (derived from the difference equation $S(x+1)-S(x)=X$) for $S$. In the following paragraphs, he found the recursive relation for the coefficients. This led him to the Euler-Maclaurin summation formula.
    \end{figure}
\end{center}
In his 1755 book \cite{E212}, Euler then also proved that the coefficients are generated by:

\begin{equation*}
    \dfrac{z}{e^{-z}-1}
\end{equation*}
if it is expanded into a Taylor series around the origin\footnote{In his earlier papers on the summation formula, he did not realize this.}. This is almost the modern definition of the Bernoulli numbers\footnote{Indeed, Euler introduced that name for those numbers in 1755 in \cite{E212}. Furthermore, in \cite{E746}, a paper just published in 1815, he arrived at the generating function $\frac{z}{e^z-1}$, which is used nowadays to introduce the Bernoulli numbers.}.

\begin{definition}[Bernoulli Numbers]
The Bernoulli numbers are defined via a generating function. More precisely, we define the $n$-th Bernoulli number $B_n$ via

\begin{equation*}
    \dfrac{z}{e^z-1}= \sum_{n= 0}^{\infty} \dfrac{B_n}{n!}z^n.
\end{equation*}
\end{definition}
Euler calculated many of the Bernoulli numbers, which were defined by Jacob Bernoulli in the context of combinatorics in his 1713 book \cite{Be13}, and proved some elementary properties about them, e.g., that all odd Bernoulli numbers except $B_1= - \frac{1}{2}$ vanish. He was mainly interested in them, since they appear in his formula for $\zeta(2n)= \frac{1}{1^{2n}}+\frac{1}{2^{2n}}+\frac{1}{3^{2n}}+\cdots$, a connection he realized for the first time in 1750 in \cite{E130}. See also Sandifer's article in \cite{Br07} (pp. 279 - 303).\\
Using the Bernoulli numbers, one can write the Euler-Maclaurin formula as

\begin{equation*}
    f(n+1)=\sum_{k=0}^{n}g(k) = \int\limits_{}^{n}g(k)dk + \sum_{k=0}^{\infty} \dfrac{B_{k+1}}{(k+1)!}\dfrac{d^k g(n)}{dn^k}.
\end{equation*}
The constant of integration must be determined in such a way that the condition $f(1)=g(0)$ is satisfied. This is at least, how Euler used the formula. The modern formula avoids this problem, essentially by subtracting two sums from each other. Furthermore, we stress that the infinite sum does in general not converge for any value of $n$ and is to be understood as an asymptotic series. We addressed this issue already, when we discussed the Stirling formula \ref{subsubsec: An Application - Derivation of the Stirling Formula for the Factorial}.

\begin{center}
\begin{figure}
\centering
    \includegraphics[scale=1.1]{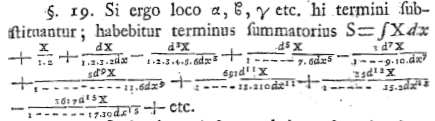}
    \caption{The Euler-Maclaurin Sum Formula}
    Taken from \cite{E47}. Euler states the Euler-Maclaurin formula explicitly for the first time. $\alpha$, $\beta$, $\gamma$ etc. are defined recursively in the same paper. They are the Bernoulli numbers, a fact Euler did not realize at that time.
    \end{figure}
\end{center}

\subsubsection{Several Remarks}
\label{subsubsec: Several Remarks}

The derivation of the above series is purely formal and the convergence of the sum is not guaranteed. Indeed, since the Bernoulli numbers increase rapidly (roughly as $(2n)!$, which follows from Euler's formula for $\zeta(2n)$.), the series actually converges very rarely. Euler was aware of this and only used it for numerical calculations truncating the sum after a certain number of terms. The Euler-Maclaurin summation formula leads to the notion of a semi-convergent series, a term coined by Gau\ss{} in 1812 in \cite{Ga28}. $B_n$ is small for the first few $n$, whence the series seems to converge taking only a few terms\footnote{Indeed, the value found that way is often very accurate.}, although the sum if continued to infinity must ultimately diverge, if the derivatives of $g$ do not vanish. An explicit formula for the remainder term, if the sum is truncated at some point, was given by Jacobi in 1834 in \cite{Ja34a} and Poisson.\\
The issue of semi-convergence troubled Euler and many others, including Gau\ss{} in \cite{Ga28}. Nowadays, the right-hand side of the above series is understood as an asymptotic expansion of the sum on the right. We mention \cite{Ha48}, \cite{Ba09} and \cite{Va06} again as references discussing the notion of an asymptotic expansion.\\[2mm]
Leaving the issues of convergence aside, let us discuss the nature of the formula. Recalling its origin,  the solution of a differential equation of infinite order, it has to be a particular solution of that equation. In other words, it has to be a special case of the solution given above in section \ref{sec: Solution of the Equation log Gamma(x+1)- log  Gamma (x)= log x by Conversion into a Differential Equation of infinite Order}. Staying purely formal, it is easier to understand this connection, what we will do in the following section.

\subsubsection{Purely formal Derivation}
\label{subsubsec: Purely formal Derivation}

We will use the idea that we can replace $\frac{d}{dx}$ by $z$ and an integral by $\frac{1}{z}$ and vice versa\footnote{Note that using the language of Fourier analysis this can be made completely rigorous, as we have seen above.}. Above we saw that a solution of

\begin{equation*}
    f(x+1)-f(x) =g(x)
\end{equation*}
is given by

\begin{equation*}
    f(x) = \dfrac{1}{e^z-1}g(x).
\end{equation*}
And next, we expanded $\frac{1}{e^z-1}$ into partial fractions and derived the complete solution of the simple difference equation. But we can also expand $\frac{1}{e^z-1}$ differently, i.e. into a Laurent series around $z=0$. Using the definition of the Bernoulli numbers above, we find

\begin{equation*}
    \dfrac{1}{e^z-1}= \sum_{n=0}^{\infty} \dfrac{B_n}{n!}z^{n-1} = \dfrac{B_0}{z} +  \sum_{n=0}^{\infty} \dfrac{B_{n+1}}{(n+1)!}z^{n}.
\end{equation*}
Therefore, replacing $\frac{1}{e^z-1}$ by the right-hand side of this equation and then replacing $z^n$ by $\frac{d^n}{dx^n}$ and $\frac{1}{z}$ by $\int\limits_{}^{x}$ in the above equation, we have

\begin{equation*}
    f(x) = B_0\int\limits_{}^{x} g(t)dt + \sum_{n=0}^{\infty} \dfrac{B_{n+1}}{(n+1)!}\dfrac{d^n g(x)}{dx^n}.
\end{equation*}
It is easy to see that this is the Euler-Maclaurin summation formula again. Therefore, the naive and formal derivation easily reproduces both, the general solution and the particular solution provided by the Euler-Maclaurin series, from the general difference equation.

\subsection{Historical Overview}
\label{subsec: Historical Overview}

For the sake of completeness, it will be convenient to give a least a short overview about the results in the theory of differential equations of infinite order. The overview can be subdivided in formal results and rigorously proven results. Naturally, the formal results extend much further, but will we see that the general formulas, derived in purely formal manner, do not hold in every case. And the study of the first rigorous results will reveal, why the the formal approaches are not correct in the most general case.

\subsubsection{Formal Approaches - From Euler to Bourlet}
\label{subsec: Formal Approaches - From Euler to Bourlet}

Although many people contributed to the formal theory of differential equations of infinite order, including Lagrange \cite{La72}, Laplace \cite{La20} and many others, the most general result was derived by Bourlet \cite{Bo97} and \cite{Bo99}, who basically treated the problem of solving a differential equation as a problem to find the left-inverse operator of the corresponding differential operator as we did in the case of the general difference equation. He wrote $z=\frac{d}{dx}$, and considered the equation

\[
F(x,z)f(x)=\sum_{n=0}^{\infty} a_n(x)\frac{d^n}{dx^n}f(x)=g(x),
\]
where Bourlet, like Euler, did not specify the functions $a_n(x),f(x),g(x)$. Now Bourlet's simple idea was that, since $F(x,z)$ is an operator, it has (in modern language), a left inverse $X(x,z)$. And treating $z$ as a variable quantity, he derived a partial differential equation that determines $X(x,z)$. It reads as follows:

\[
\sum_{n=0}^{\infty} \frac{1}{n!}\frac{\partial^n X}{\partial z^n}\cdot \frac{\partial^n F}{\partial x^n}=1.
\]
As appealing as this formula might look, it is not true in general. To see this, this consider the equation:

\[
\sum_{n=0}^{\infty} \frac{(-x)^n}{n!}\frac{d^n}{dx^n}f(x)= g(x).
\]
On the one hand we see, that the left hand side is by Taylor's theorem equal to:

\[
f(0).
\]
Thus, $g(x)$ cannot be chosen arbitrarily. On the other hand, Bourlet's formula would lead to an inverse function, because the corresponding differential equation can be solved. So it gives a solution to an ill-defined question.\newline

\subsubsection{Rigorous Results - From Carmichel to today}
\label{subsubsec: Rigorous Results - From Carmichel to today}

Therefore, formally the theory is established by Bourlet's formula, a beautiful account of this is given in \cite{Da36}, where the main focus is put on solving the equations by mostly formal means.  Because of the problems with the formal procedure,  mathematicians considered more special problems and considered more special classes of functions, for which rigorous results can be proven. One of the first overview papers on the rigorous results was \cite{Ca36}. Most of the results described in that paper only consider  the existence of a solution, but do not give explicit solution formulas. One exception is the theorem we want to quote here.\newline \\[2mm]

\begin{theorem}
 In the linear differential equation of infinite order

\[
a_0y+a_1y^{\prime}+\cdots =\phi(x)
\]
let the constants $a_{\nu}$ be such constants that the function

\[
F(z)=a_0+a_1z+a_2z^2+\cdots
\]
is analytic in the region $|z| \le q$, i.e. inside a disc\footnote{To be precise, we should rather write $|z|<q$, because for differentiation we need open domains, but Carmichel's paper states the theorem with $\le$, which we simply adopted here.}, where $q$ is a given positive constant or zero, and let $\phi(x)$ be a function of exponential type not exceeding $q$. If $F(z)$ vanishes at least once in the region $|z| \le q$, let $n$ be the number of its zeros in this region (each counted according to its multiplicity) and let $P(z)$ be the polynomial of degree $n$ with leading coefficient unity, that $\frac{F(z)}{P(z)}$ does not vanish in the region. If $F(z)$ does not vanish in the region, let $P(z)$ be identically equal to $1$. When $P(z) \equiv 1$, let $P_{n-1}(z)$ be identically equal to zero; otherwise, let it be an arbitrary polynomial of degree $n-1$ (including the case of an arbitrary constant when $n=1$). Then the general solution $y(x)$, subject to the condition that it shall be a function of exponential type not exceeding $q$, may be written in the form

\[
y(x)=\frac{1}{2 \pi i}\int_{C_{\rho}}\frac{\Psi(s)}{F(s)}ds+\frac{1}{2 \pi i}\int_{C_{\rho}}\frac{P_{n-1}(s)}{P_n(s)}ds
\]
where

\[
\Psi(s)=\sum_{\nu=0}^{\infty}\frac{\phi^{(\nu)}(0)}{s^{\nu+1}},
\]
and where $C_{\rho}$ is a circle of radius $\rho$ about $0$ as  center, $\rho$ being greater than $q$ and such that $F(z)$ is analytic in the region $q>|z|\le \rho$ and does not vanish there.\newline
If $\phi(x)$ is exactly of exponential type $q$, then the named solution $y(x)$ is also of exponential type $q$.
\end{theorem}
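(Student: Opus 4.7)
The plan is to establish the formula by reducing the problem to the Borel--Laplace transform theory of entire functions of exponential type, handling the particular solution and the homogeneous contributions separately. I will assume the two integrands are to be read with the kernel $e^{sx}$ inserted (as is needed for the right-hand side to actually depend on $x$; this appears to be a typographical omission). First I would show that $\Psi(s)$ is analytic in $|s|>q$ and tends to $0$ as $|s|\to\infty$: because $\phi$ has exponential type $\le q$, Cauchy's estimates on circles of radius $R$ give $|\phi^{(\nu)}(0)|\le M\,\nu!\,R^{-\nu}e^{qR}$, and optimizing $R$ in $\nu$ yields bounds that make the series $\sum\phi^{(\nu)}(0)/s^{\nu+1}$ converge uniformly on compacta of $|s|>q$. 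Together with the elementary contour integral $\frac{1}{2\pi i}\int_{C_\rho}s^{-\nu-1}e^{sx}\,ds=x^\nu/\nu!$, termwise integration yields the inverse Borel formula
\begin{equation*}
\frac{1}{2\pi i}\int_{C_\rho}\Psi(s)\,e^{sx}\,ds=\phi(x),
\end{equation*}
which recovers the Taylor series of $\phi$ at the origin.

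Next I would verify that the first integral is a particular solution. On $C_\rho$ the hypotheses guarantee that $F$ is analytic and nowhere vanishing, so $\Psi(s)/F(s)$ is continuous there; since $e^{sx}$ and all its $x$-derivatives are uniformly bounded on $C_\rho\times K$ for any compact $K\subset\mathbb{R}$, dominated convergence lets us apply $F(d/dx)$ termwise under the integral, using $(d/dx)^k e^{sx}=s^k e^{sx}$ and the fact that $\sum a_k s^k$ converges to $F(s)$ uniformly on $C_\rho$. The result is $\frac{1}{2\pi i}\int_{C_\rho}\Psi(s)\,e^{sx}\,ds=\phi(x)$. For the homogeneous piece, I would invoke the residue theorem: the integrand $P_{n-1}(s)e^{sx}/P_n(s)$ is analytic outside $|s|\le q$, and its residues at the zeros $\alpha_j$ of $P_n$ (which are the zeros of $F$ inside $|s|\le q$, with the same multiplicities) produce exactly the terms $Q_j(x)e^{\alpha_j x}$ with $\deg Q_j$ less than the multiplicity of $\alpha_j$. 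Each such term lies in the kernel of $F(d/dx)$, and as $P_{n-1}$ ranges over polynomials of degree $<n$ one obtains an $n$-parameter family — the correct dimension of the homogeneous solution space consistent with the exponential-type bound $\le q$.

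Finally, one must prove uniqueness within the class of functions of exponential type $\le q$, and this is where the main obstacle lies. The strategy is to invoke a Pólya-type representation: if $y$ is entire of exponential type $\le q$, then its Borel transform $Y(s)=\sum y^{(\nu)}(0)/s^{\nu+1}$ converges in $|s|>q$, and $y(x)=\frac{1}{2\pi i}\int_{C_\rho}Y(s)e^{sx}\,ds$. Applying $F(d/dx)$ and comparing with the representation of $\phi$ gives $F(s)Y(s)-\Psi(s)$ analytic and bounded on $C_\rho$; extending meromorphically into $|s|\le q$, the only poles it can have are at the zeros of $F$ in $|s|\le q$, with pole orders at most the corresponding multiplicities, and it must vanish at infinity. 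Partial fractions then force $Y(s)=\Psi(s)/F(s)+P_{n-1}(s)/P_n(s)$ for some polynomial of degree $<n$, establishing the claimed form. The delicate point — where most of the technical work will reside — is justifying the meromorphic extension of $Y$ across $|s|=q$ and the precise growth control needed both for termwise differentiation under the integral and for the Pólya representation; this is also where the statement about the exponential type of $y$ matching that of $\phi$ must be read off from the integral representation by estimating the integrand on $C_\rho$ and letting $\rho\searrow q$.
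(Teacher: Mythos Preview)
The paper does not prove this theorem. It is quoted verbatim from Carmichael's 1936 survey \cite{Ca36} as an illustration of how carefully the early rigorous results on infinite-order differential equations had to be formulated; the surrounding text only comments on the restrictiveness of the hypotheses and moves on. There is therefore no proof in the paper to compare your proposal against.

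That said, your outline is the standard route to results of this type and is essentially sound. Your observation that the integrands must carry an $e^{sx}$ kernel is correct --- without it the right-hand side is constant in $x$, which is clearly a transcription slip in the quoted statement. The Borel transform identification $\Psi(s)=\sum \phi^{(\nu)}(0)/s^{\nu+1}$ as the Borel transform of $\phi$, the verification that $F(d/dx)$ acts as multiplication by $F(s)$ under the integral, and the P\'olya representation argument for uniqueness are exactly the ingredients Carmichael and his contemporaries used. The one place to be careful is the uniqueness step: you need that an entire function of exponential type $\le q$ annihilated by $F(d/dx)$ has Borel transform supported on the zero set of $F$ inside $|s|\le q$, which requires showing $F(s)Y(s)$ extends holomorphically across $|s|=q$ --- this is where the hypothesis that $F$ is analytic and nonvanishing in the annulus $q<|s|\le\rho$ does real work.
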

Having stated this theorem, we see, how carefully it is formulated and that the class of functions is quite restricted. But we have to keep in mind that in the time of this paper, i.e. 1936, the notion and theory of distributions did not exist, making the theorems quite difficult to state.  The more recent treatments, e.g., \cite{Du10} start with constructing the appropriate function spaces, before solving any differential equations.

\subsection{Modern Derivation}
\label{subsec: Modern Derivation}

It will be illustrative to compare a modern proof of the Euler-Maclaurin summation formula to Euler's idea. Most modern proofs in modern introductory textbooks are similar. We will present it as it is found in \cite{Koe00} (pp. 223-226). The proof in \cite{Va06} is the same.
The proof of the general formula proceeds in several steps, slowly ascending from special cases  to the general formula.

\subsubsection{Euler Maclaurin Formula for $\mathcal{C}^{1}-$functions}
\label{subsubsec: Euler Maclaurin Formula for C1-functions}

We first have to define an auxiliary function

\begin{definition}
We define a function $H:\mathbb{R}\rightarrow \mathbb{R}$ as follows

\begin{equation*}
    H(x) := \left\lbrace
    \begin{array}{ll}
      x -[x]-\frac{1}{2}   & \quad \text{for} \quad x \in \mathbb{R}\setminus \mathbb{Z}  \\
        0 & \quad \text{for} \quad x \in \mathbb{Z} 
    \end{array} \right\rbrace
\end{equation*}
$[x]$ is the Gau\ss{} bracket of $x$ and expresses the integer part of $x$.
\end{definition}
This function obviously has period $1$. Having introduced $H$ in this way, we can state the elementary form of the summation formula

\begin{theorem}[Euler summation formula (simple version)]
Let $f:[1,n]\rightarrow \mathbb{C}$, $n\in \mathbb{N}$ be a once continuously differentiable function, then

\begin{equation*}
    \sum_{k=1}^{n}f(k) = \int\limits_{1}^{n}f(x)dx+\dfrac{1}{2}(f(1)+f(n))+\int\limits_{1}^{n}H(x)f'(x)dx.
\end{equation*}
\end{theorem}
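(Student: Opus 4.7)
The plan is to prove the formula interval by interval, using integration by parts on each $[k,k+1]$ with a carefully chosen antiderivative of $1$, then sum the resulting identities. This is the same style of argument that makes the higher-order version (with Bernoulli polynomials) work, but here we only need the ``zeroth'' polynomial $H$.

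First I would decompose the integral on the right as
\begin{equation*}
\int_{1}^{n}H(x)f'(x)\,dx \;=\; \sum_{k=1}^{n-1}\int_{k}^{k+1}H(x)f'(x)\,dx,
\end{equation*}
and note that on the open interval $(k,k+1)$ one has $[x]=k$, hence $H(x)=x-k-\tfrac{1}{2}$ is a smooth antiderivative of $1$ there. The key observation is that although $H$ is set to $0$ at integers, the one-sided limits are $H(k^{+})=-\tfrac12$ and $H((k+1)^{-})=+\tfrac12$; since $f'$ is continuous on $[1,n]$, these one-sided values are what enter the Riemann integral.

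Next, on each subinterval I would apply integration by parts with $u=f$, $dv=dx$ but picking the antiderivative $v(x)=x-k-\tfrac{1}{2}$. Using the one-sided limit convention above, this yields
\begin{equation*}
\int_{k}^{k+1}f(x)\,dx \;=\; \bigl[H(x)f(x)\bigr]_{k^{+}}^{(k+1)^{-}} - \int_{k}^{k+1}H(x)f'(x)\,dx \;=\; \tfrac{1}{2}\bigl(f(k)+f(k+1)\bigr) - \int_{k}^{k+1}H(x)f'(x)\,dx.
\end{equation*}
Summing from $k=1$ to $n-1$ telescopes the trapezoidal contribution into
\begin{equation*}
\sum_{k=1}^{n-1}\tfrac{1}{2}\bigl(f(k)+f(k+1)\bigr) \;=\; \sum_{k=1}^{n}f(k) - \tfrac{1}{2}\bigl(f(1)+f(n)\bigr),
\end{equation*}
and combining the two identities gives
\begin{equation*}
\int_{1}^{n}f(x)\,dx \;=\; \sum_{k=1}^{n}f(k) - \tfrac{1}{2}\bigl(f(1)+f(n)\bigr) - \int_{1}^{n}H(x)f'(x)\,dx,
\end{equation*}
which is the desired identity upon rearrangement.

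The only genuine subtlety—the ``hard part,'' such as it is—is the bookkeeping at the integer points, where $H$ is defined to be $0$ rather than $\pm\tfrac{1}{2}$. Because each integer is a measure-zero set, the integral $\int_{1}^{n}H(x)f'(x)\,dx$ is unaffected by this convention, and in the integration-by-parts step only the one-sided limits of $H$ from inside $(k,k+1)$ are relevant; the values $H(k)=0$ play no role in the argument. Once this point is made explicit, the remaining manipulations are purely algebraic.
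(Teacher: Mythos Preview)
Your proof is correct and follows essentially the same approach as the paper: integrate by parts on each $[k,k+1]$ using the antiderivative $x-k-\tfrac{1}{2}$, identify this with $H$ up to a measure-zero set, and sum over $k$. Your write-up is in fact slightly more careful than the paper's about the one-sided limits of $H$ at the integers; the paper simply remarks in a footnote that the endpoint values do not matter for the integral.
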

\begin{proof}
By integration by parts over the interval $[k, k+1]$, one has

\begin{equation*}
    \int\limits_{k}^{k+1}1 \cdot f(x)dx = \left[ \left(x-k-\dfrac{1}{2}\right)f(x)\right]_{k}^{k+1} -     \int\limits_{k}^{k+1} \left(x-k-\dfrac{1}{2}\right)f'(x)dx.
\end{equation*}
Since the function $(x-k-\frac{1}{2})f'$ is  identical to $Hf'$ in the interval $[k;k+1]$\footnote{Maybe not at the end points of the intervals, but this does not matter in the following.}, their integrals over the same intervals are identical, thus,

\begin{equation*}
       \int\limits_{k}^{k+1} 1 \cdot f(x)dx = \dfrac{1}{2}(f(k+1)-f(k))-   \int\limits_{k}^{k+1}H(x)f'(x)dx.
\end{equation*}
Summation over $k$ from $1$ to $n-1$ and addition of $\frac{1}{2}(f(1)+f(n))$ then gives the formula.
\end{proof}

\subsubsection{General Euler-Maclaurin Summation Formula}
\label{subsubsec: General Euler-Maclaurin Summation Formula}

As in the simple case, we need to introduce some auxiliary functions

\begin{definition}
We define functions $H_k:\mathbb{R}\rightarrow \mathbb{R}$ recursively as follows: \\
1) $H_k$ is a primitive of $H_{k-1}$, $k\geq 2 \in \mathbb{N}$ and $H_1:=H$\\
2) $\int\limits_{0}^{1} H_k(x)dx=0$.
\end{definition}
Now we can state the general Euler-Maclaurin summation formula

\begin{theorem}[Euler-Maclaurin Summation Formula]
Let $f:[1,n]\rightarrow \mathbb{C}$ be a $\mathcal{C}^{2k+1}$-function and $k\geq 1$. Then we have

\begin{equation*}
    \sum_{\nu=1}^{n} = \int\limits_{1}^{n}f(x) +\dfrac{1}{2}\left(f(1)+f(n)\right)+\left[\sum_{\kappa =1}^{k}H_{\kappa}(0)f^{(2\kappa -1)}\right]_{1}^{n}+R(f);
\end{equation*}
with

\begin{equation*}
    R(f)= \int_{1}^{n}H_{2k+1}f^{(2k+1)}dx.
\end{equation*}
\end{theorem}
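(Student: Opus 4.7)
The plan is to start from the simple version of the Euler summation formula (Theorem 2.6.1, already established in the excerpt) and to apply integration by parts $2k$ times to the remainder $\int_1^n H(x) f'(x) dx$, using the chain of primitives $H_1=H, H_2, H_3, \ldots, H_{2k+1}$. The normalization $\int_0^1 H_\kappa dx = 0$ will do all the work at the boundary.

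First I would establish two facts about the auxiliary functions $H_\kappa$, which are the engine of the whole argument. (i) Each $H_\kappa$ is periodic with period $1$. This is proved by induction: $H_1$ is periodic by construction, and if $H_{\kappa-1}$ is periodic with $\int_0^1 H_{\kappa-1}\, dx = 0$, then $H_\kappa(x+1)-H_\kappa(x) = \int_x^{x+1} H_{\kappa-1}(t)\,dt = \int_0^1 H_{\kappa-1}(t)\,dt = 0$. In particular $H_\kappa(m) = H_\kappa(0)$ for every integer $m \geq 1$, so the boundary term after each integration by parts collapses to a multiple of $H_\kappa(0)(f^{(\kappa-1)}(n)-f^{(\kappa-1)}(1))$. (ii) $H_\kappa(0)=0$ for every odd $\kappa \geq 3$. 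This is the analogue, in this language, of the vanishing of odd Bernoulli numbers $B_{2j+1}=0$ for $j\ge 1$, and I would prove it by an induction showing that $H_{2j}$ is symmetric about $x=\tfrac12$ while $H_{2j+1}$ is antisymmetric, i.e.\ $H_{2j+1}(1-x) = -H_{2j+1}(x)$; combined with continuity at the integers and periodicity this forces $H_{2j+1}(0) = -H_{2j+1}(1) = -H_{2j+1}(0)$.

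With these facts in hand, the main calculation is a straightforward iteration. Writing $I_m := \int_1^n H_m(x)\,f^{(m)}(x)\,dx$, integration by parts gives
\begin{equation*}
I_m \;=\; \bigl[H_{m+1}(x)\,f^{(m)}(x)\bigr]_1^n \;-\; I_{m+1} \;=\; H_{m+1}(0)\bigl(f^{(m)}(n)-f^{(m)}(1)\bigr) \;-\; I_{m+1},
\end{equation*}
by fact (i). Starting from the simple formula, the remainder is $I_1$. Apply the identity above $2k$ times; by fact (ii) every boundary term with odd index $m+1\geq 3$ vanishes, so only the contributions with $m+1$ even survive. Collecting signs (each IBP introduces a minus), these surviving contributions assemble into precisely the claimed sum $\big[\sum_{\kappa=1}^{k} H_\kappa(0)\, f^{(2\kappa-1)}\big]_1^n$, and the leftover integral is $\pm I_{2k+1} = R(f)$.

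The main obstacle will be fact (ii): the symmetry argument is the one non-routine piece, since everything else is bookkeeping. Once the symmetry/antisymmetry alternation $H_{2j}(1-x)=H_{2j}(x)$, $H_{2j+1}(1-x)=-H_{2j+1}(x)$ is verified by induction (carefully handling the constants of integration so that the normalization $\int_0^1 H_\kappa = 0$ is preserved in the inductive step), the vanishing at integers drops out immediately and the proof reduces to the iterated IBP described above.
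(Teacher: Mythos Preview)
Your proposal is correct and follows essentially the same route as the paper's proof: establish periodicity of the $H_\kappa$ by induction (the paper does exactly your computation $H_{\kappa+1}(x+1)-H_{\kappa+1}(x)=\int_x^{x+1}H_\kappa=\int_0^1 H_\kappa=0$), then iterate integration by parts starting from the simple version of the formula. The paper's proof is in fact only a two-line sketch and says nothing about your fact (ii), the vanishing of $H_{2j+1}(0)$ for $j\ge 1$; you are right that this is what makes only odd-order derivatives survive in the boundary sum, and your symmetry/antisymmetry argument is the standard way to see it, so your write-up is actually more complete than the paper's on this point.
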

\begin{proof}
One just has to note that $H_{k}$ is a periodic function for all $k$, which is seen as follows by induction. We know that $H_1=H$ is periodic. Thus, consider

\begin{equation*}
    H_{k+1}(x+1)-H_{k+1}(x) = \int\limits_{x}^{x+1}H_k(t)dt= \int\limits_{0}^{1}H_k(t)dt=0,
\end{equation*}
where we used the defining properties of $H_k$ and the induction assumption that $H_k$ is periodic. Having noticed this, one proceeds just as in the case of the simple Euler-Maclaurin formula, but integrates by parts $2k+1$-times to arrive at the formula.\\[2mm]
\end{proof}

\subsubsection{Comparison to Euler's Idea}
\label{subsubsec: Comparison to Euler's Idea 1}

First, we want to mention that this idea of the modern proof is basically Jacobi's, see \cite{Ja34a}. Furthermore, we want to point out that we have in general

\begin{equation*}
    H_{k}(0)=\dfrac{1}{k!}B_{k}.
\end{equation*}
Thus, Euler's result and the modern result agree. Anyhow, the modern proof does not obtain the formula from the solution of a difference equation but rather starts from the results and proves it to be correct. In Euler's case, the formula resulted as an answer to a more general question.\\
Finally, we used the procedure of iterated integration by parts above in our derivation of the Stirling formula  for $n!$ in section \ref{subsubsec: An Application - Derivation of the Stirling Formula for the Factorial} and simply ignored $R(f)$ in the calculation. As already mentioned in the same section, this is not justified. A rigorous reasoning requires the theory of asymptotic expansions which we did not want to discuss at that point and hence operated on a formal level. Nevertheless, if understood as an asymptotic expansion, dropping $R(f)$ in the Stirling-formula is justified.

\newpage

\section{Interpolation Theory and Difference Calculus}
\label{sec: Interpolation Theory and Difference Calculus}

This section mainly discusses chapter 16 and 17 of \cite{E212} and the paper \cite{E613}, which Euler stated to be an elaboration of the mentioned chapters.

\subsection{Overview}
\label{subsec: Overview and Content}

Euler again tried to solve the difference equation

\begin{equation*}
    f(x+1)-f(x) =g(x)
\end{equation*}
in order to find an explicit formula for

\begin{equation*}
    \sum_{k=1}^{x} g(k-1).
\end{equation*}
The sum, only defined for integer $x$, is interpolated by the solution of the difference equation. But this time, Euler used the rules of difference calculus which  he had developed in \cite{E212} to solve the difference equation. Interestingly, addressing issues of convergence, this led him to the concept of Weierstra\ss{} products in section \ref{subsec: Modern Idea - Weierstrass product}.

\subsection{Euler's Idea}
\label{subsubsec: Euler's Idea 3}

Euler's idea, outlined in \cite{E613}, is best explained by an example. Euler tried to find the sum

\begin{equation*}
    \sum_{k=1}^{x}g(k).
\end{equation*}
This time, he simply added $0$ in a clever way. For, we formally have

\begin{equation*}
    \renewcommand{\arraystretch}{1,5}
\setlength{\arraycolsep}{0.0mm}
\begin{array}{cccccccccccccccccccccccccc}
     \sum_{k=1}^{x}g(k) &~=~& g(1) & ~+~& g(2) &~+~ & g(3) & ~+~& g(4) & ~+~& \text{etc.}   \\
       &~-~& g(x+1) & ~-~& g(x+2) &~-~ & g(x+3) & ~-~& g(x+4) & ~-~& \text{etc.}   \\ 
\end{array}
\end{equation*}
Or, in short notation

\begin{equation*}
     \sum_{k=1}^{x}g(k)=\sum_{k=1}^{\infty}\left(g(k)-g(x+k)\right).
\end{equation*}
As Euler observed, this can only work, if $g(k)-g(x+k)$ converges to zero for $k \rightarrow \infty$.\\
If the series does not converge, one can again add  $0$ in a clever way. If the resulting series does still not converge, one can do it again. Indeed, one can repeat the process arbitrarily often to increase the convergence as much as one wants.\\[2mm]
In \cite{E613} Euler divided sums into classes according to the behaviour of their infinitesimal terms. More precisely, the first class contains those series, in which we have $\lim_{k \rightarrow \infty} g(k)=0$. The second class contains those series whose differences of infinitesimal terms vanish, i.e. $\lim_{k\rightarrow \infty}g(x+k+1)-g(x+k)= 0$. The third class contains the series, whose second differences vanish etc. From this definition it immediately follows that the series of the $i+1$-th class are a subset of the series of the $i$-th class.\\[2mm]
Euler then gave formulas for the first, second and third class and explains how the general formula for the $i-$th class can be constructed. We will only need the first and second class for our discussion. The formula for the first class was stated above. Therefore, we will give the formula for the second class. It reads

\begin{equation*}
      \renewcommand{\arraystretch}{1,5}
\setlength{\arraycolsep}{0.0mm}
\begin{array}{cccccccccccccccccccccccc}
    \sum_{k=1}^x g(k) &~=~ & (1-x)g(1) &~+~&  (1-x)g(2) &~+~& (1-x)g(3) &~+~& \text{etc.}     \\
 +   xg(1)   & ~+~ & xg(2)  & ~+~ & xg(3)  & ~+~ & xg(4)  & ~+~ & \text{etc.} \\
           & ~-~ & g(x+1)  & ~-~ & g(x+2)  & ~-~ & g(x+3)  & ~-~ & \text{etc.}   
\end{array}
\end{equation*}
Or in short notation:

\begin{equation*}
     \sum_{k=1}^x g(k) = xg(1) + \sum_{k=1}^{\infty} \left((1-x)g(k)+xg(k+2)-g(x+k)\right).
\end{equation*}
Whereas the left-hand side only makes sense for integer $x$, this restriction is not necessary on the right-hand side. Therefore, we can interpolate the sum in this way. But let us consider some examples first.

\subsection{Examples}
\label{subsec: Examples}

\subsubsection{Harmonic Series}
\label{subsubsec: Harmonic Series}

Euler chose the harmonic series as his first example, i.e.

\begin{equation*}
    f(x) = \sum_{k=1}^{x} \dfrac{1}{k}= 1 +\dfrac{1}{2}+\dfrac{1}{3}+\dfrac{1}{4}+\cdots +\dfrac{1}{x}
\end{equation*}
This series belongs to the first class and hence we can write

\begin{equation*}
       \renewcommand{\arraystretch}{2,5}
\setlength{\arraycolsep}{0.0mm}
\begin{array}{cccccccccccccc}
    f(x) &  ~=~ & \dfrac{1}{1} &~+~ & \dfrac{1}{2} &~+~ & \dfrac{1}{3}  &~+~ & \dfrac{1}{4} & ~+~ & \text{etc.} \\
      &  ~-~ & \dfrac{1}{x+1} &~-~ & \dfrac{1}{x+2} &~-~ & \dfrac{1}{x+3}  &~-~ & \dfrac{1}{x+4} & ~-~ & \text{etc.} \\
\end{array}
\end{equation*}
And adding each two terms written above each other, we find:

\begin{equation*}
    f(x) = \sum_{k=1}^{\infty} \dfrac{x}{k(x+k)} = \dfrac{x}{1(x+1)}+ \dfrac{x}{2(x+2)}+ \dfrac{x}{3(x+3)} + \dfrac{x}{4(x+4)}+ \text{etc.},
\end{equation*}
Trying to find the derivative of the function, in his book \cite{E212}, Euler arrived at the Taylor series expansion of this series. For this, he just expanded each term $\frac{x}{k(x+k)}$ into a power series via the geometric series and summed the resulting series columnwise. 

\subsubsection{$\Gamma(x)$-function}
\label{subsubsec: Gammafunction}

But we are mainly interested in the expression for the $\Gamma$-function arising from this idea. Since we have $\Gamma(x+1)= x\Gamma(x)$, we also have $\log \Gamma(x+1)-\log \Gamma(x)= \log x$. Therefore, we can, as we did above, consider $\log \Gamma(x)$ as the solution of the simple difference equation, and for integer $x$ we have

\begin{equation*}
    \log \Gamma(x+1) = \sum_{k=1}^{x} \log (k).
\end{equation*}
Since $\log(k+1)-\log(k)= \log \left(1+\frac{1}{k}\right)$ tends to zero for infinite $k$, the series belongs to the second class. Applying the corresponding formula, we obtain

\begin{equation*}
      \renewcommand{\arraystretch}{1,5}
\setlength{\arraycolsep}{0.0mm}
\begin{array}{cccccccccccccccccccccccc}
    \sum_{k=1}^x \log (k) &~=~ & (1-x)\log(1) &~+~&  (1-x)\log (2) &~+~& (1-x)\log(3) &~+~& \text{etc.}     \\
 +   x\log(1)   & ~+~ & x\log(2)  & ~+~ & x\log(3)  & ~+~ & x\log(4)  & ~+~ & \text{etc.} \\
           & ~-~ & \log(x+1)  & ~-~ & \log(x+2)  & ~-~ & \log(x+3)  & ~-~ & \text{etc.}   
\end{array}
\end{equation*}
Collecting the columns, we arrive at:

\begin{equation*}
    \log (\Gamma(x+1))= \log (1^{1-x}) + \log \left(\dfrac{1^{1-x}2^x}{x+1}\right) + \log \left(\dfrac{2^{1-x}3^x}{x+2}\right) + \log \left(\dfrac{3^{1-x}4^x}{x+3}\right) + \text{etc.}
\end{equation*}
Taking exponentials, we arrive at Euler's first formula for $x!$ in \cite{E19}\footnote{There, he did not prove the formula, but just gives an heuristic argument why it is true.}.

\begin{equation*}
    x! = \Gamma(x+1) = 1^{1-x} \cdot \dfrac{1^{1-x}2^x}{x+1} \cdot \dfrac{2^{1-x}3^x}{x+2} \cdot \dfrac{3^{1-x}4^x}{x+3} \cdot \text{etc.} = \prod_{k=1}^{\infty} \dfrac{k^{1-x}(k+1)^x}{x+k}
\end{equation*}

\subsection{Difference Calculus according to Euler}
\label{subsec: Difference Calculus according to Euler}

Here, we will explain in a bit more detail how Euler arrived at his formulas. To this end, we need to explain his results on difference calculus. He outlined his ideas in his book \cite{E212} and also in \cite{E613}. We want to state and prove his main formula from \cite{E613} for the finite sum of $x$ terms.

\begin{center}
\begin{figure}
\centering
    \includegraphics[scale=0.9]{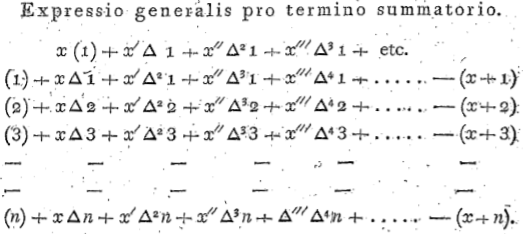} 
    \caption{Euler's Key Formula in the Calculus of Differences}
  Taken from \cite{E613}. Euler's main formula from the paper. He used the word "summatory" to describe the sum of a series up to $n$ terms and writes $(x)$ to denote the sum from $1$ to $x$. $\Delta^k$ denotes the $k$-th difference. Furthermore, Euler did not use a separate letter to denote a function. This is suppressed in his notation, e.g., $(x+3)$ is to be understood a $(f(x+3))$ for a general function $f$.
    \end{figure}
\end{center}
For this, we need to introduce some notation.

\begin{definition}[$n-$th Difference]
We denote the $n$-th difference by $\Delta^n g(k)$ and define it recursively by

\begin{equation*}
    \Delta^n g(k) : = \Delta^{n-1} g(k+1) - \Delta^{n-1}g(k)
\end{equation*}
$k$ being an arbitrary number\footnote{In most cases $k$ will be a natural number.} and 

\begin{equation*}
    \Delta^{0}g(k) : = g(k).
\end{equation*}
Additionally, we sometimes write simply $\Delta$ for $\Delta^1$.
\end{definition}
It is easily seen that we can express each term $g(k)$ using only the differences of $g(1)$. More precisely, we have:

\begin{theorem}
We have for a natural number $k$:

\begin{equation*}
    g(k) = g(1)+ \dfrac{k-1}{1}\Delta g(1) + \dfrac{k-1}{1} \cdot \dfrac{k-2}{2}\Delta ^2 g(1) + \dfrac{k-1}{1} \cdot \dfrac{k-2}{2} \cdot \dfrac{k-3}{3}\Delta ^3 g(1) + \text{etc.}
    \end{equation*}
    or in compact notation

    \begin{equation*}
        g(k) = \sum_{n=0}^{\infty} \binom{k-1}{n}\Delta^{n}g(1).
    \end{equation*}
    where $\binom{n}{k}$ is the binomial coefficient.
\end{theorem}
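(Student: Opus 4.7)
The plan is to prove this formula---which is Newton's forward-difference interpolation formula---by induction on $k\in\mathbb{N}$. The cleanest conceptual picture is operator-theoretic: if one introduces the shift operator $E$ by $(Eg)(j):=g(j+1)$, then $\Delta=E-I$ (with $I$ the identity), so $E=I+\Delta$, and since $I$ and $\Delta$ commute one formally has
$$g(k)\;=\;E^{k-1}g(1)\;=\;(I+\Delta)^{k-1}g(1)\;=\;\sum_{n=0}^{k-1}\binom{k-1}{n}\Delta^{n}g(1).$$
Because $k$ is a natural number and $\binom{k-1}{n}=0$ for $n\geq k$, the sum is in fact finite, so Euler's ``etc.''\ hides no convergence issue whatsoever.

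To make this rigorous I would then just grind out the induction. The base case $k=1$ is trivial: $\binom{0}{0}\Delta^{0}g(1)=g(1)$. For the inductive step, set $S_{k}(g):=\sum_{n=0}^{k-1}\binom{k-1}{n}\Delta^{n}g(1)$ and compute
$$S_{k+1}(g)-S_{k}(g)\;=\;\sum_{n=0}^{k}\binom{k}{n}\Delta^{n}g(1)\;-\;\sum_{n=0}^{k-1}\binom{k-1}{n}\Delta^{n}g(1).$$
Pascal's identity $\binom{k}{n}=\binom{k-1}{n}+\binom{k-1}{n-1}$ cancels the common terms (with the $n=0$ term vanishing since $\binom{k}{0}=\binom{k-1}{0}$, and the $n=k$ term fitting the pattern since $\binom{k}{k}=\binom{k-1}{k-1}$), leaving after reindexing $m=n-1$
$$S_{k+1}(g)-S_{k}(g)\;=\;\sum_{m=0}^{k-1}\binom{k-1}{m}\Delta^{m+1}g(1)\;=\;S_{k}(\Delta g).$$
Applying the induction hypothesis now to the function $\Delta g$ (this is the one point worth flagging: the hypothesis must be invoked not just for $g$ itself but for the shifted function $\Delta g$) gives $S_{k}(\Delta g)=(\Delta g)(k)=g(k+1)-g(k)$, hence $S_{k+1}(g)=g(k)+\bigl(g(k+1)-g(k)\bigr)=g(k+1)$, closing the induction.

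There is no genuine obstacle here---the proof is a routine combinatorial induction---and the only real bookkeeping is handling the boundary terms in Pascal's identity and recognising that the induction must be run over the class of all functions simultaneously (so that one may apply it to $\Delta g$). The payoff, which Euler exploits in the sections to come, is the striking analogy with Taylor's theorem: just as $g(x)$ is recovered from $g(x_{0})$ together with all derivatives $g^{(n)}(x_{0})$, here $g(k)$ is recovered from $g(1)$ together with all iterated differences $\Delta^{n}g(1)$, and this parallel is exactly what will allow Euler to interpolate sums by letting the integer $k$ drift off to a continuous (or complex) variable $x$ in the binomial coefficient $\binom{x-1}{n}$.
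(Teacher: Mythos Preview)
Your proof is correct and follows essentially the same route as the paper's: induction on $k$, with the inductive step combining the formula for $g(k)$ with the formula for $\Delta g(k)$ (obtained by applying the hypothesis to the function $\Delta g$) via Pascal's identity. The operator-theoretic heuristic $E=(I+\Delta)$ you front-load is a nice addition not present in the paper, but the rigorous argument underneath is the same.
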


\begin{proof}
Since the binomial coefficients vanish if $n > k-1$, the sum is finite and hence converges. The proof  by induction. The formula is obviously true for $k=1$. So let us assume that it is true for a fixed $k \in \mathbb{N}$. Consider 

\begin{equation*}
    g(k+1) = g(k) + \Delta g(k) = g(k) + \sum_{n=0}^{\infty} \binom{k-1}{n}\Delta^{n}g(k),
\end{equation*}
where we used the definition of $\Delta$ and the induction assumption. Using the formula \footnote{The formula follows by taking $\Delta$ on both sides of the equation for $g(k)$.}

\begin{equation*}
    \Delta g(k) = \sum_{n=1}^{\infty} \binom{k-1}{n-1}\Delta^n g(1),
\end{equation*}
we arrive at

\begin{equation*}
    g(k+1) = \sum_{n=0}^{\infty} \left(\binom{k-1}{n}\Delta^{n}g(1)+ \sum_{n=1}^{\infty}\binom{k-1}{n-1}\Delta^n g(1)\right),
\end{equation*}
or

\begin{equation*}
    g(k+1) = \sum_{n=0}^{k-1} \binom{k-1}{n}\Delta^{n}g(1)+ \sum_{n=1}^{k}\binom{k-1}{n-1}\Delta^n g(1).
\end{equation*}
Let us  extract the first term of the first sum and the last term of the second sum such that

\begin{equation*}
    g(k+1) = g(1) + \sum_{n=1}^{k-1} \binom{k-1}{n}\Delta^{n}g(1)+ \sum_{n=1}^{k-1}\binom{k-1}{n-1}\Delta^n g(1) + \Delta^{k}g(k).
\end{equation*}
Contracting the sums and using the well-known identity $\binom{k-1}{n}+\binom{k-1}{n-1}=\binom{k}{n}$, we have

\begin{equation*}
    g(k+1) = g(1)+ \sum_{n=1}^{k-1} \binom{k}{n}\Delta^{n}g(1) + \Delta^k g(k).
\end{equation*}
Finally, absorbing the two isolated terms into the sum and using the vanishing of the binomial coefficients for $n >k$ we arrive at

\begin{equation*}
    g(k+1) = \sum_{n=0}^{\infty} \binom{k}{n}\Delta^{n}g(1).
\end{equation*}
\end{proof}
Next, we want to determine the sum $\sum_{k=1}^{x}g(k)$. This is done in the following theorem.

\begin{theorem}
We have

\begin{equation*}
    \sum_{k=1}^{x}g(k) = \sum_{k=1}^{\infty} \binom{x}{k}\Delta^kg(k).
\end{equation*}
\end{theorem}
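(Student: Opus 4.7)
The plan is to deduce this sum formula directly from the preceding theorem, which expressed a single term as
\begin{equation*}
g(k) = \sum_{n=0}^{\infty} \binom{k-1}{n}\Delta^{n}g(1),
\end{equation*}
(a finite sum, since $\binom{k-1}{n}$ vanishes for $n > k-1$). I would simply sum this identity over $k$ from $1$ to $x$ (with $x$ a positive integer, so that the left-hand side is well defined as a finite sum) and interchange the order of summation. Because each inner sum over $n$ is finite, no convergence issue arises, so
\begin{equation*}
\sum_{k=1}^{x} g(k) \;=\; \sum_{k=1}^{x}\sum_{n=0}^{\infty}\binom{k-1}{n}\Delta^{n}g(1) \;=\; \sum_{n=0}^{\infty}\Delta^{n}g(1)\sum_{k=1}^{x}\binom{k-1}{n}.
\end{equation*}

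The second step is the combinatorial identity
\begin{equation*}
\sum_{k=1}^{x}\binom{k-1}{n} \;=\; \sum_{j=0}^{x-1}\binom{j}{n} \;=\; \binom{x}{n+1},
\end{equation*}
which is the standard "hockey-stick" identity. I would prove it by a short induction on $x$: it is trivial for $x = 1$, and the inductive step follows from Pascal's rule $\binom{x}{n+1} + \binom{x}{n} = \binom{x+1}{n+1}$. Substituting this back gives
\begin{equation*}
\sum_{k=1}^{x}g(k) \;=\; \sum_{n=0}^{\infty}\binom{x}{n+1}\Delta^{n}g(1),
\end{equation*}
and after the index shift $m = n+1$ this becomes the claimed formula (with the natural reading that the ``$g(k)$'' appearing inside $\Delta^{k}$ on the right-hand side of the statement is $g(1)$, i.e. the differences are always evaluated at the initial point, as in the previous theorem).

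There is really no serious obstacle here: the ideas are entirely formal, and the only thing to verify is the hockey-stick identity, which is elementary. The only mild subtlety is notational, namely to read the right-hand side of the statement as $\sum_{k=1}^{\infty}\binom{x}{k}\Delta^{k-1}g(1)$, consistently with Euler's convention (visible in the reproduced figure from \cite{E613}) that the higher differences $\Delta^{k-1}$ are understood as being applied to the first term of the sequence. Once the identity is established for integer $x$, one observes that the right-hand side also makes sense for arbitrary $x \in \mathbb{C}$ through the polynomial definition $\binom{x}{k} = \frac{x(x-1)\cdots(x-k+1)}{k!}$, and this is precisely how Euler uses the formula to interpolate sums, producing in particular the product representation of $\Gamma(x+1)$ derived earlier in the section.
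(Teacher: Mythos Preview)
Your proof is correct. The paper itself does not give a detailed argument here; it simply says ``The proof is by induction again along the same lines as the last,'' referring to the induction on $k$ used for the single-term formula $g(k)=\sum_{n}\binom{k-1}{n}\Delta^{n}g(1)$. The implied proof would therefore be an induction on $x$, adding $g(x+1)$ (expanded via the preceding theorem) to the inductive hypothesis and collapsing the result with Pascal's rule. Your route---summing the already-established identity for $g(k)$ over $k$ and invoking the hockey-stick identity $\sum_{j=0}^{x-1}\binom{j}{n}=\binom{x}{n+1}$---is a minor reorganisation of the same computation: the hockey-stick identity is precisely what one gets by iterating Pascal's rule, so the two arguments unwind to the same chain of binomial manipulations. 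Your version has the advantage of making explicit where the previous theorem enters and isolating the single combinatorial fact needed. You are also right to flag the notational issue: the right-hand side should be read as $\sum_{k\ge 1}\binom{x}{k}\Delta^{k-1}g(1)$, as the subsequent display in the paper (the ``Equation for sum in terms of differences'') confirms.
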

The proof is by induction again along the same lines as the last. Therefore, we omit it here.  we want to make some remarks instead.\\[2mm]
Although the proof explicitly assumes $x$ to be a natural number, the right-hand side of the above equation does not require $x$ to be an integer\footnote{The binomial coefficients can be defined for non-integer numbers replacing the factorials by $\Gamma$-functions. Indeed, this is a consequence of Newton's generalized binomial theorem proved by Euler in \cite{E465}.} and hence interpolates the sum on the left-hand side. In other words, the right-hand side solves the functional equation satified by the finite sum, i.e. $\sum_{k=1}^{x} g(k) - \sum_{k=1}^{x-1}g(k)=g(x)$ with the initial condition $\sum_{k=1}^{1}g(k)=g(1)$. \\
And Euler precisely did this replacement, even addressing the issue of convergence of the then infinite series. \\[2mm]
But let us now go over to Euler's idea of adding zero in a clever way to enforce convergence. First note that from the above theorem we also have:

\begin{equation*}
    g(x+1)= \sum_{k=0}^{\infty} \binom{x}{k}\Delta^k g(1). 
\end{equation*}
But we can find similar expressions for $g(x+2)$ using $g(2)$. We have:

\begin{equation*}
    g(x+2)= \sum_{k=0}^{\infty} \binom{x}{k}\Delta^k g(2). 
\end{equation*}
And in general for $n$:

\begin{equation*}
    g(x+n)= \sum_{k=0}^{\infty} \binom{x}{k}\Delta^k g(n). 
\end{equation*}
Therefore, using the sum representation we found above, we arrive at the following equation:

\begin{theorem}[Equation for sum in terms of differences]
\begin{equation*}
\begin{array}{c}
    \sum_{k=1}^{x}g(k) = \\
     \renewcommand{\arraystretch}{2,5}
\setlength{\arraycolsep}{0.0mm}
\begin{array}{rrrrrrrrrrrrrrrrrrrrrrrrrrrrr}
  &  &\binom{x}{1} g(1) &~+~& \binom{x}{2}\Delta^1 g(1)  &~+~& \binom{x}{3}\Delta^2 g(1)    &~+~& \cdots \\ 
    +g(1)  &~+~& \binom{x}{1}\Delta^1 g(1)  &~+~& \binom{x}{2}\Delta^2 g(1) &~+~& \binom{x}{3}\Delta^3 g(1)   &~+~& \cdots &~-~& g(x+1) \\ 
     +g(2)  &~+~& \binom{x}{1}\Delta^1 g(2)  &~+~& \binom{x}{2}\Delta^2 g(2) &~+~& \binom{x}{3}\Delta^3 g(2)   &~+~& \cdots &~-~& g(x+2) \\
      +g(3)  &~+~& \binom{x}{1}\Delta^1 g(3)  &~+~& \binom{x}{2}\Delta^2 g(3) &~+~& \binom{x}{3}\Delta^3 g(3)   &~+~& \cdots &~-~& g(x+3) \\
      & \vdots & & \vdots & & \vdots & & \vdots & & \vdots & &  \\
        +g(n)  &~+~& \binom{x}{1}\Delta^1 g(n)  &~+~& \binom{x}{2}\Delta^2 g(n) &~+~& \binom{x}{3}\Delta^3 g(n)   &~+~& \cdots&~-~& g(x+n)  
\end{array}
\end{array}
\end{equation*}
\end{theorem}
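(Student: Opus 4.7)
The identity is, despite appearances, a routine consequence of the two preceding theorems; its content lies not in its truth but in the way Euler proposes to regroup its terms. My plan is therefore to verify the identity row by row, using the first theorem of the section to collapse each non-leading row to zero, and the second to identify the leading row with the sum on the left.

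First I would treat the leading row. The second theorem of the section, applied directly, gives
\begin{equation*}
\sum_{k=1}^{x} g(k) \;=\; \binom{x}{1}g(1)+\binom{x}{2}\Delta g(1)+\binom{x}{3}\Delta^{2}g(1)+\cdots,
\end{equation*}
which is precisely the topmost row of the double array. Hence it remains to show that every subsequent row,
\begin{equation*}
g(n)+\binom{x}{1}\Delta g(n)+\binom{x}{2}\Delta^{2}g(n)+\cdots-g(x+n), \qquad n=1,2,3,\ldots,
\end{equation*}
sums to zero. For this I would invoke the first theorem of the subsection, noting that the Newton forward expansion
\begin{equation*}
g(k)=\sum_{j=0}^{\infty}\binom{k-1}{j}\Delta^{j}g(1)
\end{equation*}
depends only on the assumption that differences are reckoned starting from the first argument. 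Shifting the starting index from $1$ to $n$ and evaluating at $k=x+n$ yields
\begin{equation*}
g(x+n)=\sum_{j=0}^{\infty}\binom{x}{j}\Delta^{j}g(n),
\end{equation*}
so the $n$-th row of the array is exactly $g(x+n)-g(x+n)=0$. Summing over all rows then reproduces the leading row, which is $\sum_{k=1}^{x}g(k)$, and the identity is established.

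The main obstacle is not the algebraic verification but the handling of convergence for non-integral $x$. For $x\in\mathbb{N}$ the binomial coefficients $\binom{x}{j}$ vanish once $j>x$, every displayed series is a finite sum, and the above manipulations are unimpeachable. For general complex $x$, however, the infinite series on each row need not converge, and the passage from row-sums to a double sum of rows requires justification. Euler's own response, taken up in the preceding subsection, is pragmatic: he simply adds the array column by column instead of row by row; the cancellations between the initial $g(n)$ column and the trailing $-g(x+n)$ column then produce the convergent series (the harmonic and $\log\Gamma$ expansions of the examples) that interpolate the sum. A rigorous treatment would therefore either restrict to classes of functions (Euler's first, second, third, $\ldots$ classes) for which columnwise summation converges absolutely, allowing Fubini-type rearrangement, or regard the whole identity as a formal statement in a completion of the difference algebra and only later specialize $x$ so as to extract convergent series. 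I would not dwell on these analytic details in the proof itself, since the theorem as stated is Euler's formal identity; the subsequent examples demonstrate how it is to be used.
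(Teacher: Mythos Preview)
Your proof is correct and matches the paper's approach exactly: the first row is identified with $\sum_{k=1}^{x}g(k)$ via the preceding sum-formula, and each subsequent row is seen to vanish by the shifted Newton forward expansion $g(x+n)=\sum_{j\ge 0}\binom{x}{j}\Delta^{j}g(n)$. The paper's own proof is the same two-sentence observation, without your additional (and appropriate) remarks on convergence.
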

\begin{proof}
The proof is immediate from the preceding. The first row is just the alternate representation of the sum. Each following row is  simply $=0$ by the results we stated. 
\end{proof}
And this is Euler's fundamental formula. For, he proceeded to sum the series column by column from $n=1$ to $n=\infty$. And hence it is easily seen that the definition of the classes we mentioned above makes sense. For,  if any iterated difference vanishes, one only has a finite number of columns to sum and the series converges\footnote{Under some mild additional assumptions.}. It is easily seen, how the examples we gave (from the first and second class) result from this formula. In the next section, we will explain how this idea actually anticipates the idea of Weierstra\ss{} factors.

\subsection{Modern Idea - Weierstra\ss{} Product}
\label{subsec: Modern Idea - Weierstrass product}

We introduce, but not prove, Weierstra\ss's product theorem and show the connection to Euler's ideas outlined in the last sections. The exposition of Weierstra\ss's theory follows \cite{Fr06} (pp. 213 -217).

\subsubsection{Introduction to the Problem}
\label{subsubsec: Introduction to the Problem}

Weierstra\ss{} considered the following problem in his 1878 paper \cite{We78}: Given a domain $D \subset \mathbb{C}$ and a discrete subset $S$ in $D$, can one construct an analytic function $f : D \rightarrow \mathbb{C}$ with zeros of a given order $m_s$ precisely in $S$?. The answer to this question is yes and the task can be solved by Weierstra\ss{} products. Let us see how we arrive at the concept. \\[2mm]
For the sake of simplicity, let us take $D= \mathbb{C}$. First, we note that closed disks are compact sets and hence there are only finitely many $s \in S$ with $|s| \leq N \in \mathbb{N}$. Thus, $S$ is a countable set and  the elements can be ordered with respect to their magnitude

\begin{equation*}
    S = \lbrace s_1, s_2, \cdots\rbrace \quad |s_1| \leq |s_2| \leq |s_3| \leq \cdots.
\end{equation*}
If $S$ is a finite set, we know how to solve the problem, the solution is given by the polynomial

\begin{equation*}
    \prod_{s \in S}(z-s)^{m_s}.
\end{equation*}
For infinite sets on the other hand, the product obtained in this way cannot converge in general. But we can assume $S$ to not contain zero, since we can multiply by $z^{m_0}$ at the end. We want to do this, since we can focus on products of the form

\begin{equation*}
    \prod_{n=1}^{\infty} \left(1-\dfrac{z}{s_n}\right)^{m_n}, \quad m_n := m_{s_n}.
\end{equation*}
Indeed, Euler already had this idea and it led him to the discovery of the sine product in \cite{E41} and its proof in \cite{E61}\footnote{The discussion of his proof can be found in section \ref{subsubsec: Euler's Proof of the Sine Product Formula}}.\\
This product still does not always converge (it converges, e.g., for $s_n =n^2$, $m_n=1$ but diverges for $s_n$, $m_n=1$).\\
Weierstra\ss{} had the idea to multiply it by factors not changing the zeros but forcing the product to converge. He made the ansatz:

\begin{equation*}
    f(z):=  \prod_{n=1}^{\infty} \left(1-\dfrac{z}{s_n}\right)^{m_n} \cdot e^{P_n(z)}.
\end{equation*}
$P_n(z)$ is a polynomial still to be determined. We have to ensure that

\begin{equation*}
    \lim_{n \rightarrow \infty} \left(1-\dfrac{z}{s_n}\right)^{m_n}e^{P_n(z)}=1 \quad \forall z \in \mathbb{C}.
\end{equation*}
This is possible, since it is easily seen that we can find an analytic function $A_n(z)$

\begin{equation*}
    \left(1-\dfrac{z}{s_n}\right)^{m_n}e^{A_n(z)}=1 \quad \forall z \in U_{\left|s_n\right|}(0)
\end{equation*}
with $A_n(0)=0$. The power series $A_n$ converges uniformly in each compact subset of the disk $U_{\left|s_n\right|}(0)$. Therefore, truncating this power series for $A_n$, we can easily find a polynomial $P_n(z)$ with

\begin{equation*}
    \left|1 - \left(1-\dfrac{z}{s_n}\right)^{m_n}e^{P_n(z)}\right|\leq \dfrac{1}{n^2} \quad \text{for all $z$ with } |z| \leq \dfrac{1}{2}\left|s_n\right|.
\end{equation*}
Since the series $1+\frac{1}{4}+\frac{1}{9}+\cdots$ converges, we arrive at the theorem:

\begin{theorem}
The series

\begin{equation*}
    \sum_{n=1}^{\infty} \left|1 - \left(1-\dfrac{z}{s_n}\right)^{m_n}e^{P_n(z)}\right|\leq \dfrac{1}{n^2} \quad \text{for all $z$ with } |z| \leq \dfrac{1}{2}\left|s_n\right|
\end{equation*}
converges normally.
\end{theorem}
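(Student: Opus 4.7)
My plan is to reduce the statement of normal convergence to a direct comparison with the convergent series $\sum 1/n^2$, using the per-term estimate that the preceding paragraph already furnishes. Normal convergence on $\mathbb{C}$ means uniform absolute convergence on every compact subset $K\subset\mathbb{C}$, so I would fix such a $K$ and show that from some index onward the terms of the series are bounded on $K$ by $1/n^2$.

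First I would verify that $|s_n|\to\infty$. Because $S\subset\mathbb{C}$ is discrete and the elements are ordered by magnitude, only finitely many $s_n$ can lie in any closed disc of finite radius; otherwise a compact disc would contain an accumulation point, contradicting discreteness. Hence $|s_n|\to\infty$. Then for a given compact $K$, setting $R:=\max_{z\in K}|z|$, there exists $N=N(K)$ such that $\tfrac{1}{2}|s_n|\ge R$ for all $n\ge N$, so every $z\in K$ satisfies $|z|\le\tfrac{1}{2}|s_n|$ for $n\ge N$.

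For those indices the stated inequality
\begin{equation*}
\left|1-\left(1-\frac{z}{s_n}\right)^{m_n}e^{P_n(z)}\right|\le\frac{1}{n^2}
\end{equation*}
applies uniformly in $z\in K$. Since $\sum_{n=1}^{\infty}1/n^2<\infty$, the Weierstrass $M$-test yields uniform and absolute convergence on $K$ of the series whose $n$-th term is the quantity on the left. The finitely many terms with $n<N$ are continuous (in fact entire) functions and so are uniformly bounded on $K$, contributing nothing to convergence questions. Because $K$ was arbitrary, this establishes normal convergence on all of $\mathbb{C}$.

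The proof is essentially mechanical once the polynomials $P_n$ have been chosen as in the construction preceding the theorem, so I do not anticipate a genuine obstacle. The only subtle point worth making explicit is the passage from the pointwise estimate (valid on the disc $|z|\le\tfrac{1}{2}|s_n|$, a disc which depends on $n$) to a uniform estimate on a fixed compact set $K$; this is handled precisely by the observation that $|s_n|\to\infty$, so that for each $K$ the disc $|z|\le\tfrac{1}{2}|s_n|$ eventually engulfs $K$.
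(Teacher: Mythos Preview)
Your argument is correct and follows exactly the approach the paper has in mind: the paper merely remarks that ``since the series $1+\tfrac{1}{4}+\tfrac{1}{9}+\cdots$ converges'' the theorem follows, and you have spelled out the implicit details---$|s_n|\to\infty$ by discreteness, so any compact $K$ is eventually contained in the disc $|z|\le\tfrac12|s_n|$, after which the Weierstrass $M$-test with $M_n=1/n^2$ applies. There is nothing to add.
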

Concerning the problem propounded initially, we can now formulate Weierstra\ss's factorisation theorem.

\begin{theorem}[Weierstra\ss's factorisation theorem]
Let $S \in \mathbb{C}$ be a discrete subset. Further, let the following map be given

\begin{equation*}
    m:S \rightarrow \mathbb{N}, \quad s \mapsto m_s.
\end{equation*}
Then, there exists an analytic function

\begin{equation*}
    f: \mathbb{C} \rightarrow \mathbb{C}
\end{equation*}
with the properties:\\
1) $S:=\lbrace z \in \mathbb{C}| f(z)=0\rbrace$ \\
2) $m_s= \operatorname{ord}(f;s)$. (Order of the zero.)
\end{theorem}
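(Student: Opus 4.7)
The plan is to assemble $f$ directly from the Weierstra\ss{} elementary factors provided by the previous theorem, and then verify analyticity and the order of zeros using the normal convergence already established. As a preliminary reduction, I would order the points of $S$ as $|s_1|\leq |s_2|\leq \cdots$ (possible since $S$ is discrete, hence countable, with only finitely many points in any disk). If $0\in S$ with prescribed order $m_0$, I would pull this point out as a separate factor $z^{m_0}$ in front of the product, so that without loss of generality $0\notin S$. I would then define
\begin{equation*}
    f(z) := z^{m_0}\prod_{n=1}^{\infty}\left(1-\dfrac{z}{s_n}\right)^{m_n}e^{P_n(z)},
\end{equation*}
where the polynomials $P_n$ are chosen exactly as in the last theorem so that the series with general term $1-(1-z/s_n)^{m_n}e^{P_n(z)}$ converges normally.

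For analyticity, I would fix an arbitrary compact set $K\subset \mathbb{C}$ and choose $N$ so large that $K\subset U_{|s_n|/2}(0)$ for all $n\geq N$. On $K$ the tail of the product is, by the estimate $|1-(1-z/s_n)^{m_n}e^{P_n(z)}|\leq 1/n^2$, a normally convergent infinite product of holomorphic functions, hence converges uniformly on $K$ to a holomorphic function (standard fact: normal convergence of $\sum|1-f_n|$ implies uniform convergence of $\prod f_n$ to a holomorphic limit). The finite partial product up to $N-1$ is trivially holomorphic, so $f$ is analytic on $K$, and since $K$ was arbitrary, $f$ is entire.

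To identify the zero set and orders, I would pick any $s_k\in S$ and split
\begin{equation*}
    f(z) = z^{m_0}\underbrace{\prod_{n=1}^{N}\left(1-\tfrac{z}{s_n}\right)^{m_n}e^{P_n(z)}}_{=:g_N(z)}\cdot \underbrace{\prod_{n=N+1}^{\infty}\left(1-\tfrac{z}{s_n}\right)^{m_n}e^{P_n(z)}}_{=:h_N(z)},
\end{equation*}
with $N$ chosen so that $k\leq N$ and so that $|s_k|<|s_n|/2$ for all $n>N$ (possible because $|s_n|\to\infty$, a consequence of the discreteness of $S$). The finite product $g_N$ has at $s_k$ precisely a zero of order $m_k$, while $h_N$ is a normally convergent product of non-vanishing factors on a neighbourhood of $s_k$ and is therefore non-zero there (again by the standard theorem on infinite products: normal convergence of $\sum|1-f_n|$ on an open set where no $f_n$ vanishes yields a non-vanishing limit). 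Combining, $f$ has a zero of order exactly $m_k$ at $s_k$, and similarly has no zeros outside $S$ since each factor is non-vanishing away from $\{s_n\}$.

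The main obstacle is the careful handling of the infinite product: the convergence statement in the previous theorem controls the series of deviations $1-(1-z/s_n)^{m_n}e^{P_n(z)}$, and one must invoke the standard bridge from normal convergence of this series to (i) uniform convergence of the product on compacta (giving analyticity), and (ii) non-vanishing of the tail away from the listed points (giving the precise description of the zero locus). Everything else is bookkeeping around the preliminary reduction $0\notin S$ and the ordering of $S$ by modulus.
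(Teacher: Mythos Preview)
Your proof is correct and follows exactly the standard route. However, the paper explicitly does \emph{not} prove this theorem: at the start of the section it states ``We introduce, but not prove, Weierstra\ss's product theorem and show the connection to Euler's ideas.'' The paper only sets up the construction---the ordering of $S$ by modulus, the reduction to $0\notin S$, the choice of convergence-enforcing polynomials $P_n$, and the normal convergence estimate $|1-(1-z/s_n)^{m_n}e^{P_n(z)}|\leq 1/n^2$---and then states the theorem. Your argument is precisely the completion of what the paper sketched: you take those ingredients and carry out the two remaining steps (analyticity via uniform convergence on compacta, and identification of the zero set via the non-vanishing tail), which is the standard way to finish the proof.
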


\subsubsection{Comparison to Euler's Idea}
\label{subsubsec: Comparison to Euler's Idea}

Let us compare both, Weierstra\ss's idea from 1878 in \cite{We78} and Euler's idea from \cite{E613}, a paper written in 1780 and published in 1813. In his paper, Euler added zero and expressed the series under consideration in an alternative way to obtain a more convergent series, whereas Weierstra\ss{} told us to  multiply by additional exponential of polynomials to ensure the convergence. But it is easily seen that both ideas are actually equivalent. The following quote from the Introduction written by G. Faber in  Volume 16,2 of the first series of Euler's Opera Omnia confirms this. See p. XLIII.\\[2mm]

    \textit{Tatsächlich hat Euler nicht nur die Produktdarstellung (12) [this means the product expansion of the $\Gamma$-function], sondern sogar den Gedanken der Konvergenz erzeugenden Faktoren von Weierstra\ss{} vorweggenommen. Denn es bedeuted keinen Unterschied, ob man den Gliedern des divergenten Produktes $\prod_{\nu=1}^{\infty}\left(1+\frac{x}{\nu}\right)$ die Konvergenz erzeugenden Fakoren $e^{-\frac{x}{\nu}}$ oder den Gliedern der divergenten unendlichen Reihe $\sum_{\nu=1}^{\infty}\log \left(1+\frac{x}{\nu}\right)$ die Konvergenz erzeugenden Summanden $-\frac{x}{\nu}$ oder auch $-x \log \left(1+\frac{1}{\nu}\right)$ beifügt. Das tat aber Euler mit voller Absicht in der Abhandlung 613.}\\

For, Euler's idea translates into the one of Weierstra\ss{} by considering sums of logarithms. Euler even told us how to find those factors you need to enforce convergence\footnote{Indeed, there is even a prescription how to find those factors for the Weierstra\ss{} product. Confer, e.g. \cite{Fr06}.}. Therefore, Euler actually anticipated the idea of Weierstra\ss{} factors without actually intending it. His intention, as we saw above, was the interpolation of a sum, which is defined for positive integer numbers, to all numbers.\\
Nevertheless, Weierstra\ss's name is attached to the idea, since he constructed a rigorous theory of infinite products and he, by solving the problem propounded above \ref{subsec: Modern Idea - Weierstrass product}, also provided the mathematical community with a large class of analytic functions. Euler's contribution was maybe overlooked, since he was interested in something completely different and did not point out the generality of his method clearly enough.
\newpage

\section{Relation among $\Gamma$ and $B$}
\label{sec: Relation between Gamma and B}

This section is entirely devoted to the connection between the $\Gamma$- and $B$-function.

\subsection{From $B$ to $\Gamma$ - Euler's first Way to the Integral Representation}
\label{subsec: From B to Gamma - Euler's first Way to the Integral Representation}

\subsubsection{Euler's Thought Process}
\label{subsubsec: Euler's Thought Process}

The 1738 paper \cite{E19} is interesting, since Euler described his thought process how he got the idea that the $\Gamma$-function  can be expressed as an integral. This provides us with a beautiful example of the Ars inveniendi (Art of Finding). He explains his thoughts in $\S\S 3-7$. He wrote:\\[2mm]

{\em I had believed before that the general term of the series $1,2,6,24$ etc., if not algebraic, is nevertheless given as an exponential. But after I had understood that certain terms depend on the quadrature of the circle, I realized that neither algebraic nor exponential quantities suffice to express it. [...].\\
But after I had considered that among differential quantities there are formulas of such a kind, which admit an integration in certain cases and then yield algebraic quantities, but in others do not admit an integration and then exhibited quantities depending on quadratures, it came to mind that maybe formulas of this kind are apt to  express the general terms of the mentioned and other progressions.[...].\\
But the differential formula must contain a certain variable quantity.[...]\\
For the sake of clarity, I say that $\int pdx$ is the  general term of the progression to be found as follows from it; but let $p$ denote a function of $x$ and constants, amongst which here $n$ must be contained\footnote{Euler wanted the parameter integral to express the $n$-th term of the progression. Thus, the parameter integral must contain $n$, which is not to be integrated over.}. Imagine $pdx$ to be integrated and such a constant to be added that for $x=0$ the whole integral vanishes; then set $x$ equal to a certain known quantity. Having done this, if in the found integral only quantities extending to the progression remain, it will express the term, whose index is $n$. In other words, the integral determined like this will be the general term.[...]\\
Therefore, I considered many differential formulas only admitting an integration, if one takes $n$ to be a positive integer number, so that the principal terms become algebraic, and hence formed progressions.}\\[2mm]
To summarize Euler's idea in modern formulation: He propounded that there is a function $p(x,n)$ such that

\begin{equation*}
    n ! = \int\limits_{a}^{b}p(x,n)dx \quad \text{for} \quad n \in \mathbb{N}.
    \end{equation*}
In the following paragraphs, he really tried out several different functions and integrals (which essentially all boil down to the $B$-function) and eventually arrived at the integral representation of the $\Gamma$-function. We will discuss his proof in the following sections, in section \ref{subsubsec: Euler's Mathematical Argument} and in section \ref{subsubsec: Using the B-function}.\\     
But here we want to stress  that the theory of parameter integrals did not really exist at the time. Euler was the first to develop this theory. Furthermore, this is one, if not the first paper, in which parameter integrals or functions defined through integrals have been discussed.

\subsubsection{Euler's  Mathematical Argument}
\label{subsubsec: Euler's Mathematical Argument}

It is interesting, how Euler arrived at the integral representation of $\Gamma(x)$ for the first time in his 1738 paper \cite{E19}. He repeated his argument in 1772 in \cite{E421}. For a review of the following argument, using Euler's notation, confer, e.g., the article on the $\Gamma$-function in \cite{Sa15}, \cite{Va06} or \cite{Du99}. First, he showed that

\begin{equation*}
    \dfrac{1 \cdot 2 \cdot 3 \cdots n}{(f+g)(f+2g)\cdots (f+ng)}= \dfrac{f+(n+1)g}{g^{n+1}}\int\limits_{0}^{1} x^{\frac{f}{g}}dx(1-x)^n.
\end{equation*}
This is easily proved by induction. Now it is easy to see that one arrives at an expression for $1\cdot 2 \cdots n =n!$, if one sets $g=0$, i.e.

\begin{equation*}
    \dfrac{n!}{f^n} = f\lim_{g \rightarrow 0} \int\limits_{0}^{1}\dfrac{x^{\frac{f}{g}}(1-x)^n}{g^{n+1}}dx.
\end{equation*}
To get rid of the $g$ in the denominator, Euler set $x=y^{\frac{g}{f+g}}$ (and uses then $y=x$ again) and arrived at:

\begin{equation*}
   \dfrac{n!}{f^n} = f\lim_{g \rightarrow 0} \int\limits_{0}^{1}\dfrac{g}{f+g}\dfrac{\left(1-x^{\frac{g}{f+g}}\right)^n}{g^{n+1}}dx.
\end{equation*}
We are mainly interested in the case $f=1$, i.e.

\begin{equation*}
    n! =\lim_{g \rightarrow 0} \int\limits_{0}^{1}\dfrac{g}{1+g}\dfrac{\left(1-x^{\frac{g}{1+g}}\right)^n}{g^{n+1}}dx.
\end{equation*}
But this is the same as:

\begin{equation*}
    n! = \lim_{g \rightarrow 0} \int\limits_{0}^{1}\dfrac{\left(1-x^{\frac{g}{1+g}}\right)^n}{g^{n}}dx.
\end{equation*}
 We can pull the limit into the integral, i.e.

\begin{equation*}
    n! =  \int\limits_{0}^{1}\lim_{g \rightarrow 0}\dfrac{\left(1-x^{g}\right)^n}{g^{n}}dx.
\end{equation*}
Note that we already took the limit\footnote{This is allowed since all functions are continuous.} in the denominator  of the power of $x$. Let us rewrite this as

\begin{equation*}
    n! =  \int\limits_{0}^{1}\left(\lim_{g \rightarrow 0}\dfrac{(1-x^{g})}{g}\right)^ndx.
\end{equation*}
This is allowed, since $x^n$ is a continuous function. But for natural $n$ this limit can be found by L'Hospital's rule. And one finds:

\begin{equation*}
    n ! = \int\limits_{0}^{1} \left(\log \dfrac{1}{x}\right)^ndx.
\end{equation*}
This is the integral representation of $\Gamma(n+1)$. Hence it is easily understood why Euler preferred to work with this representation. He was led naturally to it.

\subsubsection{Using the $B$-function}
\label{subsubsec: Using the B-function}

Euler did not consider the $B$-function as an independent function at the time he wrote \cite{E19} (in 1738) and \cite{E421}  is devoted to the integral representation of the $\Gamma$-function. Therefore, let us see how Euler's argument can be formulated using the known properties of $B$.\\
We also start from

\begin{equation*}
    {n!} = \lim_{g \rightarrow 0} \int\limits_{0}^{1}\dfrac{x^{\frac{1}{g}}(1-x)^n}{g^{n+1}}dx.
\end{equation*}
Let us rewrite the integral as a $B$-function:

\begin{equation*}
    {n!} = \lim_{g \rightarrow 0} \dfrac{B\left(\frac{1}{g}+1, n+1\right)}{g^{n+1}}dx.
\end{equation*}
Using the functional equation $B(x+1,y)= \frac{x}{x+y}B(x,y)$, we have

\begin{equation*}
    {n!} = \lim_{g \rightarrow 0} \dfrac{\frac{1}{g}}{\frac{1}{g}+n+1}\dfrac{B\left(\frac{1}{g}, n+1\right)}{g^{n+1}}.
\end{equation*}
It is more convenient to put $\frac{1}{g}=h$ and consider the following limit

\begin{equation*}
    {n!} = \lim_{h \rightarrow \infty} \dfrac{h}{n+h+1} h^{n+1}B\left(h, n+1\right).
\end{equation*}
This is the same limit as

\begin{equation*}
    {n!} = \lim_{h \rightarrow \infty}  h^{n+1}B\left(h, n+1\right).
\end{equation*}
Let us use the relation $B(x,y)=\frac{\Gamma(x)\Gamma(y)}{\Gamma(x+y)}$:

\begin{equation*}
    {n!} = \lim_{h \rightarrow \infty}  h^{n+1} \dfrac{\Gamma(h)\Gamma(n+1)}{\Gamma(n+h+1)}.
\end{equation*}
Using the functional equation of the $\Gamma$-function $h$ times in the denominator, we find

\begin{equation*}
     {n!} = \lim_{h \rightarrow \infty}  h^{n+1} \dfrac{\Gamma(h)\Gamma(n+1)}{\Gamma(n+1)(n+h)(n+h-1)\cdots (n+1)}.
\end{equation*}
Or equivalently

\begin{equation*}
     {n!} = \lim_{h \rightarrow \infty}   \dfrac{h^{n+1}\Gamma(h)}{(n+h)(n+h-1)\cdots (n+1)}.
\end{equation*}
Interestingly, we arrived at the condition Weierstra\ss{}  used to define $\Gamma$.

\subsubsection{Gau\ss's Idea}
\label{subsubsec: Gauss's Idea}

Now that we have seen that it is possible to get to the $\Gamma$-function from the $B$-function, for the sake of completeness, let us also briefly mention Gau\ss's idea, which he presented in his 1828 paper \cite{Ga28}. His idea is basically the same as Euler's. We already mentioned that Gau\ss{} defined the $\Gamma$-function as Weierstra\ss{} did later\footnote{ Weiersta\ss{} followed Gau\ss's example in \cite{Ga28} and explicitly said so in \cite{We78}.} as the above limit, i.e. he defined

\begin{equation*}
    \Gamma(x)= \lim_{n \rightarrow \infty} \dfrac{n^x n!}{(x+1)\cdot (x+2)\cdots (x+n)}.
\end{equation*}
And Gau\ss{} observed, essentially as Euler did in 1738 in \cite{E19}, that

\begin{equation*}
    \dfrac{n^x n!}{(x+1)\cdot (x+2)\cdots (x+n)} = \int\limits_{0}^{n} t^{x-1}\left(1-\dfrac{t}{n}\right)^{n-1}dt
\end{equation*}
and hence

\begin{equation*}
     \Gamma(x)= \lim_{n \rightarrow \infty} \dfrac{n^x n!}{(x+1)\cdot (x+2)\cdots (x+n)} = \int\limits_{0}^{n} t^{x-1}\left(1-\dfrac{t}{n}\right)^{n-1}dt \quad \text{for} \quad \operatorname{Re}(x)>0.
\end{equation*}
Hence he concluded:

\begin{equation*}
    \Gamma(x) = \int\limits_{0}^{\infty} e^{-t}t^{x-1}dt \quad \text{for} \quad \operatorname{Re}(x)>0.
\end{equation*}
It is interesting that, although starting from the same idea, the same identity even, Gau\ss{} and Euler got to the integral representation in such different ways. Euler's proof can even be considered rigorous by today's standards\footnote{Euler did not know how to reason rigorously that the limit can be pulled inside the integral, he simply did it without any reasoning. But it can be justified by means of the Lebesgue integral.}, whereas Gau\ss's approach is harder to make it rigorous. A rigorous proof was given by Schl\"omilch \cite{Sc79} and is also presented in Nielsen's book \cite{Ni05}.

\subsection{Expressing the $B$-function via $\Gamma$-functions - Fundamental Relation}
\label{subsec: Expressing the B-function via Gamma-functions - Fundamental Relation}

We want to start with the formula, already proved above, relating the $\Gamma$- and $B$-function, i.e. the formula

\begin{equation*}
    B(x,y)=\dfrac{\Gamma(x)\Gamma(y)}{\Gamma(x+y)}.
\end{equation*}

\subsubsection{Euler's Proof}
\label{subsubsec: Euler's Proof}

As already indicated above in section \ref{subsubsec: Remarks}, Euler's proof is not  rigorous by modern standards, and based on the extension of the validity of the formula for natural numbers $x$ and $y$ to all numbers. Nevertheless, we present Euler's arguments here, since it is interesting to see how he discovered the  result. In \cite{E421} ($\S 26$), he stated the following equation

\begin{equation*}
    \dfrac{\int\limits_{0}^{1}dx\left(\log \frac{1}{x}\right)^{n-1}\cdot \int\limits_{0}^{1} dx \left(\log \frac{1}{x}\right)^{m-1}}{\int\limits_{0}^{1}dx \left(\log \frac{1}{x}\right)^{m+n-1}}= k \int\limits_{0}^{1}x^{mk-1}dx(1-x^k)^{n-1},
\end{equation*}
which reduces to the desired relation for $m=1$. But in his paper, he only established this equation for natural numbers $n$ and $m$ and not in general. In the following paragraphs, he then established the formula for some more fractional numbers, but not for the general case. \\
In the next two sections, we will consider Dirichlet's proof from \cite{Di39} and Jacobi's proof from \cite{Ja34} and see why it was difficult for Euler to prove the identity in general. 

\subsubsection{Jacobi's Proof}
\label{subsubsec: Jacobi's Proof}

We present Jacobi's proof of the fundamental relation from this 1834 paper \cite{Ja34}, the proof is also given in \cite{Ni05}.

\begin{proof}
Let $x$, $y$ $>0$ and consider:

\begin{equation*}
    \Gamma(x)\Gamma(y) = \int\limits_{0}^{\infty} t^{x-1}e^{-t}dt \cdot \int\limits_{0}^{\infty} u^{y-1}e^{-u}du =  \int\limits_{0}^{\infty} \int\limits_{0}^{\infty} e^{-(t+u)}t^{x-1}u^{y-1}dtdu.
\end{equation*}
Make the substitution:

\begin{equation*}
    t+u = \alpha(u,t), \quad u=\alpha(u,t)\beta(u,t).
\end{equation*}
This gives:

\begin{equation*}
     \renewcommand{\arraystretch}{1,5}
\setlength{\arraycolsep}{0.0mm}
\begin{array}{llllll}
    t =\alpha(1-\beta), & \quad u(1-\beta)=t \beta, \\ 
    dt = (1- \beta)d \alpha, & \quad (1-\beta)du = \alpha d \beta, 
\end{array}
\end{equation*}
which immediately leads to the formula:

\begin{equation*}
    \Gamma(x)\Gamma(y) =  \int\limits_{0}^{\infty} e^{-\alpha}\alpha^{x+y-1}d \alpha \cdot \int\limits_{0}^{1}\beta^{x-1}(1- \beta)^{y-1}d \beta.
\end{equation*}
This implies the desired formula.
\end{proof}
Obviously, the hard part is to find the substitution and actually it is only possible, if you know in advance how the final result looks like.

\subsubsection{Dirichlet's Proof}
\label{subsubsec: Dirichlet's Proof}

Dirichlet's proof \cite{Di39} is a bit more straight-forward.

\begin{proof}
We start from the following expression of $B$

\begin{equation*}
    B(x,y) = \int\limits_{0}^{\infty} \dfrac{t^{x-1}dt}{(1+t)^{x+y}}.
\end{equation*}
It is obtained from $B(x,y)= \int\limits_{0}^{1}dt t^{x-1}(1-t)^{y-1}$ by setting $t=\frac{1}{z}$ and then $z=u+1$.
Furthermore, setting $t=ky$, provided $k>0$, in the integral representation of the $\Gamma$-function, we have

\begin{equation*}
    \int\limits_{0}^{\infty}e^{-tk}t^{x-1}dt = \dfrac{\Gamma(x)}{k^x}.
\end{equation*}
Therefore, from the above representation of the $B$-function

\begin{equation*}
    B(x,y)= \dfrac{1}{\Gamma(x+y)}\cdot \int\limits_{0}^{\infty} t^{x-1}dt  \int\limits_{0}^{\infty} e^{-(1+t)u}\cdot u^{x+y-1}du.
\end{equation*}
We can exchange the order of integration here by Fubini's theorem such that

\begin{equation*}
    B(x,y) = \dfrac{1}{\Gamma(x+y)}\cdot \int\limits_{0}^{\infty}e^{-u}u^{x-y-1}du \cdot \int\limits_{0}^{\infty} e^{-tu}t^{x-1}dt.
\end{equation*}
Performing the integral over $t$:

\begin{equation*}
    B(x,y) = \dfrac{1}{\Gamma(x+y)} \int\limits_{0}^{\infty} e^{-u}u^{x+y-1} \cdot \dfrac{\Gamma(x)}{u^x}.
\end{equation*}
Finally, we arrive at:

\begin{equation*}
    B(x,y) = \dfrac{\Gamma(x)\Gamma(y)}{\Gamma(x+y)}.
\end{equation*}
\end{proof}

\subsubsection{Discussion}
\label{subsubsec: Discussion}

Considering these proofs, Jacobi's proof was out of Euler's reach. For, Euler did not know how to perform a substitution in the case of several variables; in other words, he did not know the concept of the Jacobi determinant. This was only explained in 1841 by Jacobi in \cite{Ja41}, as we already mentioned in section \ref{subsubsec: Some Remarks}. Additionally, the  anticommutativity of the wedge product puzzled him in his only paper devoted explicitly to double integrals \cite{E391}\footnote{The concept of the wedge product was introduced much later by Cartan. Indeed, Euler, could not explain why his results seemed to indicate that $dxdy= -dydx$ and tried to argue it away. In hindsight, we might say that he encountered the wedge product for the first time.}. See also Katz's  article in \cite{Du07} on the subject.\\[2mm]
Dirichlet's proof on the other hand was  definitely within Euler's reach. Especially, since Euler also knew the formula 

\begin{equation*}
    \dfrac{\Gamma(x)}{k^x} = \int\limits_{0}^{\infty}e^{-kt}t^{x-1}dt
\end{equation*}
and derived many extraordinary integrals from it in \cite{E675}, including the Fresnel integrals. Although Fubini's theorem was proven only a lot later, this would not have  troubled Euler, since he basically considered integrals as ordinary sums, just over infinitesimally small numbers. In general, the exchange of limits did not bother him at all. The necessity to prove the validity of such a procedure only came after Abel's proof of what we now call Abel's limit theorem for series, i.e. more than 30 years after Euler's death.\\
Interestingly, although Euler obviously knew the formula

\begin{equation*}
    B(x,y) = \int\limits_{0}^{1} t^{x-1}(1-t)^{y-1}dt
\end{equation*}
and devoted some of his papers to examine it, e.g., \cite{E321}, \cite{E640} and even a chapter in his second book on integral calculus \cite{E366}, he never explicitly stated the formula:

\begin{equation*}
    B(x,y) = \int\limits_{0}^{1} t^{x-1}(1-t)^{y-1}dt = \int\limits_{0}^{\infty}\dfrac{t^{x-1}}{(1+t)^{x+y}},
\end{equation*}
which would have simplified his investigations and results on definite integrals a lot. Indeed, almost all his papers on definite integrals (contained in the Opera Omnia, Series 1, Volumes 17-19) can be understood very easily using the $B$- and $\Gamma$-functions and their derivatives and the fundamental relation connecting them that we considered in this section. But it seems that Euler did not realize this, at least he does not mention it in any of his papers or books.

\subsection{Relation to the Beta Function - Expressing $\Gamma$ via $B$}
\label{subsec: Relation to the Beta Function - Expressing Gamma via B}

In \cite{E19} and \cite{E122}, Euler stated, but not proved, a formula which expresses $\Gamma(\frac{p}{q})$ in terms of a product of several $B$-functions\footnote{Euler wanted $p$ and $q$ to be natural numbers, but we will see that this restriction is not necessary.}. This is interesting, since the $\Gamma$-function  involves a transcendental integrand, whereas $B$ is an integral over an algebraic function (if the variables are rational numbers) and hence is a period in the sense of Zagier and Kontsevich \cite{Ko01}. Euler stated the formula as follows in \cite{E19}

\begin{equation*}
\int\limits_{0}^{1} \left(-\log x\right)^{\frac{p}{q}}dx =\sqrt[q]{1 \cdot 2 \cdot 3 \cdots p\left(\dfrac{2p}{q}+1\right)\left(\dfrac{3p}{q}+1\right)\left(\dfrac{4p}{q}+1\right)\cdots \left(\dfrac{qp}{q}+1\right)}
\end{equation*}
\begin{equation*}
\times \sqrt[q]{\int\limits_{0}^{1} dx(x-xx)^{\frac{p}{q}} \cdot \int\limits_{0}^{1} dx(x^2-x^3)^{\frac{p}{q}} \cdot \int\limits_{0}^{1} dx(x^3-x^4)^{\frac{p}{q}} \cdot \int\limits_{0}^{1} dx(x^4-x^5)^{\frac{p}{q}} \cdots \int\limits_{0}^{1} dx(x^{q-1}-x^q)^{\frac{p}{q}}}.
\end{equation*}
Replacing the integral $\int\limits_{0}^{1} \left(-\log x\right)^{\frac{p}{q}}dx$ by $\Gamma \left(\frac{p}{q}+1\right)$ and $1 \cdot 2 \cdot 3 \cdots p$ by $\Gamma(p+1)$, we can formulate the theorem as follows:

\begin{theorem}
Let $p>0$ and $q \in \mathbb{N}$. Then, the following formula holds:

\begin{equation*}
    \Gamma \left(\dfrac{p}{q}+1\right) = \sqrt[q]{\Gamma (p+1) \times \prod_{k=2}^{q}\left(\dfrac{kp}{q}+1\right) \times \prod_{i=1}^{q-1} \int\limits_{0}^{1} dx(x^{i-1}-x^i)^{\frac{p}{q}}}.
\end{equation*}
\end{theorem}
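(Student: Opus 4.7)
The plan is to reduce the identity to a telescoping computation based on the Beta--Gamma relation $B(x,y) = \Gamma(x)\Gamma(y)/\Gamma(x+y)$ established in section \ref{subsec: Expressing the B-function via Gamma-functions - Fundamental Relation}.

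First, I identify each integral in the product as a Beta function. Reading off the exponents from Euler's explicit expansion $\int_0^1 (x-x^2)^{p/q}dx \cdot \int_0^1(x^2-x^3)^{p/q}dx\cdots\int_0^1(x^{q-1}-x^q)^{p/q}dx$, the $j$-th factor (with $j=1,\ldots,q-1$) carries integrand $(x^j-x^{j+1})^{p/q}=x^{jp/q}(1-x)^{p/q}$, which is $B\!\left(\tfrac{jp}{q}+1,\tfrac{p}{q}+1\right)$. Abbreviating $a_j := \Gamma(jp/q+1)$, so that $a_0 = 1$ and $a_q = \Gamma(p+1)$, the Beta--Gamma identity together with $\Gamma(z+1)=z\Gamma(z)$ rewrites this Beta value as $a_j\,\Gamma(p/q+1)/[((j+1)p/q+1)\,a_{j+1}]$.

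Next, I take the product over $j=1,\ldots,q-1$ and exploit the telescoping. The factors $a_j$ in the numerators and denominators collapse to $a_1/a_q = \Gamma(p/q+1)/\Gamma(p+1)$; the remaining denominator re-indexes to $\prod_{k=2}^{q}(kp/q+1)$; and the factor $\Gamma(p/q+1)$ appears $q-1$ times. This yields
\[
\prod_{j=1}^{q-1}\int_0^1(x^j-x^{j+1})^{p/q}dx \;=\; \frac{\Gamma(p/q+1)^{q}}{\Gamma(p+1)\cdot\prod_{k=2}^{q}(kp/q+1)}.
\]
Multiplying through by the two external factors $\Gamma(p+1)$ and $\prod_{k=2}^{q}(kp/q+1)$ clears the denominator, so the radicand equals $\Gamma(p/q+1)^q$, and the $q$-th root finishes the identity.

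The argument is purely algebraic once the Beta--Gamma identity is available; there is no analytic obstruction, and convergence of every integral is automatic from $p>0$. The one point requiring care is the bookkeeping at the two ends of the telescope: the boundary contributions $a_1 = \Gamma(p/q+1)$ and $a_q=\Gamma(p+1)$, and the shift $\prod_{j=1}^{q-1}((j+1)p/q+1) = \prod_{k=2}^{q}(kp/q+1)$, must align precisely with the two prefactors inside the radical. This alignment is the only place where an off-by-one slip is possible, and it is essentially what forces the external correction to be exactly $\Gamma(p+1)\prod_{k=2}^q(kp/q+1)$ and nothing nearby.
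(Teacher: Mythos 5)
Your proof is correct and follows essentially the same route as the paper's: each integral is recognised as $B\!\left(\tfrac{jp}{q}+1,\tfrac{p}{q}+1\right)$, rewritten via $B(x,y)=\Gamma(x)\Gamma(y)/\Gamma(x+y)$ together with the functional equation, and the resulting product telescopes to $\Gamma(p/q+1)^q$ after the two prefactors clear the denominator. You also (rightly) read the integrals with Euler's indexing $x^{jp/q}(1-x)^{p/q}$, $j=1,\dots,q-1$, which is the same silent index shift the paper's own proof performs on the theorem as printed.
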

\begin{proof}
The idea is to express the integrals as $B$-functions, and then rewrite them in terms of $\Gamma$-functions. Let us consider the $q$-th power of the right-hand side and call it $G(p,q)$. Then

\begin{equation*}
    G(p,q) = \Gamma (p+1) \times \prod_{k=2}^{q}\left(\dfrac{kp}{q}+1\right) \times \prod_{i=2}^{q} \int\limits_{0}^{1} x^{(i-1)\frac{p}{q}}dx(1-x)^{\frac{p}{q}}.
\end{equation*}
The integrals can  be rewritten in terms of $B$:

\begin{equation*}
 G(p,q) = \Gamma (p+1) \times \prod_{k=2}^{q}\left(\dfrac{kp}{q}+1\right) \times \prod_{i=2}^{q} B \left(\dfrac{(i-1)p}{q}+1, \dfrac{p}{q}+1\right).    
\end{equation*}
Next, express the $B$-functions via $\Gamma$-functions:

\begin{equation*}
 G(p,q) = \Gamma (p+1) \times \prod_{k=2}^{q}\left(\dfrac{kp}{q}+1\right) \times \prod_{i=2}^{q}\dfrac{\Gamma \left(\frac{(i-1)p}{q}+1\right)\Gamma\left(\frac{p}{q}+1\right)}{\Gamma \left(\frac{ip}{q}+2\right)}.
\end{equation*}
$\Gamma \left(\frac{p}{q}\right)$ does not depend on the multiplication index. Therefore, we can pull it out of the product. Additionally, let us use the functional equation of the $\Gamma$-function to rewrite the denominator. Then, we arrive at:

\begin{equation*}
    G(p,q) = \Gamma(p+1) \cdot \left(\Gamma \left(\frac{p}{q}+1\right)\right)^{q-1} \times \prod_{k=2}^{q}\left(\dfrac{kp}{q}+1\right) \times \prod_{i=2}^{q}\dfrac{\Gamma \left(\frac{(i-1)p}{q}+1\right)}{\left(\frac{ip}{q}+1\right)\Gamma \left(\frac{ip}{q}+1\right)}.
\end{equation*}
Finally, writing everything as one product

\begin{equation*}
    G(p,q) = \Gamma (p+1) \cdot \left(\Gamma \left(\frac{p}{q}+1\right)\right)^{q-1} \times \prod_{i=2}^{q} \dfrac{\left(\frac{(ip}{q}+1\right)\cdot \Gamma \left(\frac{(i-1)p}{q}+1\right)}{\left(\frac{ip}{q}+1\right)\cdot\Gamma \left(\frac{ip}{q}+1\right)}.
\end{equation*}
Almost every factor in the product cancels. Indeed, it just remains

\begin{equation*}
    \dfrac{\Gamma\left(\frac{p}{q}+1\right)}{\left(\frac{p}{q}+1\right)\cdot \Gamma \left(\frac{pq}{q}+1\right)} = \dfrac{\Gamma \left(\frac{p}{q}+1\right)}{\Gamma(p+1)}.
\end{equation*}
Hence we arrive at
\begin{equation*}
    G(p,q) =  \left(\Gamma \left(\frac{p}{q}+1\right)\right)^q.
\end{equation*}
Therefore, we arrived at the desired result.
\end{proof}

\subsection{Reflection Formula}
\label{subsec: Reflection Formula}

\begin{theorem}[Reflection formula for the $\Gamma$-function]
We have:

\begin{equation*}
    {\Gamma(x)\Gamma(1-x)}=\dfrac{\pi}{\sin (\pi x)}.
\end{equation*}
\end{theorem}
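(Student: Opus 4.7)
The plan is to give Euler's own proof, using the product representation of $\Gamma$ together with Euler's sine product formula, both of which have been established earlier in the paper (the Weierstra\ss{} product in section \ref{subsec: Application to Gamma(x)- The Weierstrass Product Representation} and the sine product in the references to \cite{E41}, \cite{E61}). The strategy is to form $\Gamma(x)\Gamma(1-x)$, collapse it to a purely trigonometric product, and read off the right-hand side.

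First I would rewrite $\Gamma(x)\Gamma(1-x)$ using the functional equation in the form $\Gamma(1-x)=-x\,\Gamma(-x)$, so that the task reduces to understanding $\Gamma(x)\Gamma(-x)$. Then I would invoke the Weierstra\ss{} product, which gives
\begin{equation*}
\frac{1}{\Gamma(x)\Gamma(-x)} \;=\; \left( x e^{\gamma x}\prod_{s=1}^{\infty}\bigl(1+\tfrac{x}{s}\bigr)e^{-x/s}\right)\!\left(-x e^{-\gamma x}\prod_{s=1}^{\infty}\bigl(1-\tfrac{x}{s}\bigr)e^{x/s}\right).
\end{equation*}
The Euler--Mascheroni exponentials and the convergence factors $e^{\pm x/s}$ cancel pairwise, and the paired factors $(1+x/s)(1-x/s)$ collapse to $1-x^{2}/s^{2}$. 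This gives $1/\bigl(\Gamma(x)\Gamma(-x)\bigr) = -x^{2}\prod_{s\ge 1}(1-x^{2}/s^{2})$, so that combined with $\Gamma(1-x)=-x\Gamma(-x)$ I obtain
\begin{equation*}
\frac{1}{\Gamma(x)\Gamma(1-x)} \;=\; x\prod_{s=1}^{\infty}\Bigl(1-\tfrac{x^{2}}{s^{2}}\Bigr).
\end{equation*}

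At this point I would invoke Euler's sine product formula $\sin(\pi x)=\pi x\prod_{s\ge 1}(1-x^{2}/s^{2})$, which the paper has already identified as originating in \cite{E41} and \cite{E61}. Substituting yields $1/\bigl(\Gamma(x)\Gamma(1-x)\bigr)=\sin(\pi x)/\pi$, which is the claimed reflection formula after inverting, valid on $\mathbb{C}\setminus \mathbb{Z}$ where both sides are meromorphic.

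The only genuine subtlety, and the main obstacle, is justifying the regrouping of the two infinite products into a single product of $(1-x^{2}/s^{2})$: the two Weierstra\ss{} products converge absolutely and uniformly on compacta of $\mathbb{C}\setminus\mathbb{Z}$, so the pairwise rearrangement and cancellation of the $e^{\pm x/s}$ factors is legitimate, but this step does need a line of comment. Everything else is formal manipulation, and a remark is worth making that the identity extends to all of $\mathbb{C}\setminus\mathbb{Z}$ by the identity theorem, since both sides are meromorphic and agree on a set with a limit point.
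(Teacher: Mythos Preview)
Your proof is correct and follows essentially the same strategy as the paper: multiply two product representations of the $\Gamma$-function at $x$ and $-x$ so that the asymmetric factors cancel, leaving a product in $x^2$ that is recognized as Euler's sine product. The only cosmetic difference is that the paper (reproducing Euler's argument from \cite{E421}) uses Euler's own product $\Gamma(x+1)=\prod_{k\ge 1}\frac{k^{1-x}(k+1)^{x}}{x+k}$ and forms $\Gamma(1+x)\Gamma(1-x)$ directly, whereas you use the Weierstra\ss{} form of $1/\Gamma$ and handle the cancellation of the $e^{\pm\gamma x}$ and $e^{\pm x/s}$ factors; both routes collapse to the same identity $\prod_{k\ge 1}(1-x^{2}/k^{2})^{-1}=\pi x/\sin(\pi x)$ and finish via the functional equation.
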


We want to discuss the proof of the theorem that Euler gave in \cite{E421} and one he could have given. For Euler's proof, we need to prove the product formula for the sine first.

\subsubsection{Euler's Proof of the Sine Product Formula}
\label{subsubsec: Euler's Proof of the Sine Product Formula}

The product formula for the sine, i.e.

\begin{equation*}
    \sin (\pi x)= x \pi \prod_{k=1}^{\infty}\left(1-\dfrac{x^2}{k^2}\right),
\end{equation*}
is one of Euler's most beautiful discoveries. He discovered it in 1740 in \cite{E41} and used it in the solution of the Basel problem, i.e. the summation over the reciprocals of the squares in the same paper.

\begin{center}
\begin{figure}
\centering
    \includegraphics[scale=1.1]{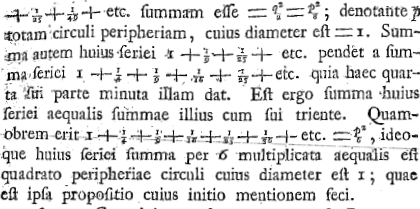}
    \caption{Solution of the Basel Problem}
     Taken from \cite{E41} $\S 11$. Euler arrives at the famous formula $1+\frac{1}{2^2}+\frac{1}{3^2}+\cdots = \frac{\pi^2}{6}$ for the first time. He writes simply $p$ for $\pi$.
     \end{figure}
\end{center}
Due to  criticism from Bernoulli and Cramer, he gave a proof in 1741 in \cite{E61}, which he repeated in \cite{E101}, a book published in 1748. 

\begin{center}
\begin{figure}
\centering
    \includegraphics[scale=0.9]{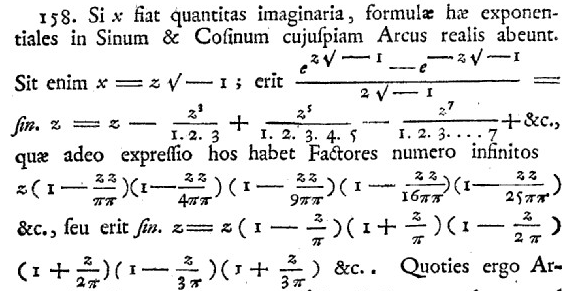}
    \caption{Sine Product Formula}
     Taken from the book \cite{E101}. Euler arrives at the sine product formula. His proof given there is rigorous even by modern standards.
    \end{figure}
\end{center}
Concerning the criticism, Euler in \cite{E61} $\S 6$ wrote:

\begin{center}
    {\em  This almost completely oppressed worry\footnote{The worry concerns the correctness of his product formula for the sine and the solution of the Basel problem derived from it.} has recently been renewed by  letters from Daniel Bernoulli, in which he shared the same worries concerning my method and also mentioned that Cramer  has the same doubts whether my method is right.}
\end{center}
The main point of criticism was that in \cite{E41} Euler did not prove that the sine only has the integer numbers times $\pi$ as roots. In other words, Euler could not rule out complex roots at that time.\\[2mm]
But in \cite{E61}, one year after \cite{E41}, he then offered a proof for the sine product formula. For this, he considered the expression

\begin{equation*}
a^n - b^n,
\end{equation*}
$n$ being a natural number, and its factorization into real factors. The  complex factors are easily found by de Moivre's theorem. They appear in complex conjugate pairs and hence, combing each into a real factor, each real factor reads

\begin{equation*}
a^2 -2 ab \cos \dfrac{2k}{n}\pi +b^2,
\end{equation*}
$k$ being a natural number with $2k<n$. If $n$ is odd, one has the additional factor $a-b$.\\[2mm]
In the next step, Euler used the famous identity named after him to write

\begin{equation*}
\sin s = \dfrac{e^{is}-e^{-is}}{2i}
\end{equation*}
and the definition of $e^s$

\begin{equation*}
e^s := \lim_{n \rightarrow \infty} \left(1+\dfrac{s}{n}\right)^n.
\end{equation*}
Therefore, Euler considered the expression

\begin{equation*}
\dfrac{\left(1+\frac{is}{n}\right)^n - \left(1-\frac{is}{n}\right)^n}{2i}
\end{equation*}
and noted that for infinite $n$ this expression goes over into $\sin s$. \\[2mm]
This expression has the form of the general expression above and hence each factor is given by the form we ascribed to it above. Here, $a= 1+\frac{is}{n}$, $b=1-\frac{si}{n}$ and hence each factor has the form

\begin{equation*}
2-\dfrac{2s^2}{n^2}-2\left(1+\dfrac{s^2}{n^2}\right)\cos \dfrac{2k\pi}{n}.
\end{equation*}
Therefore, all the factors of the sine are obtained, if all natural numbers are substituted for $k$.\\
But since now $n$ is infinite we have

\begin{equation*}
\cos \dfrac{2k\pi}{n}= 1- \dfrac{2k^2 \pi^2}{n^2},
\end{equation*}
at least approximately, whence our factor  becomes

\begin{equation*}
-\dfrac{4s^2}{n^2}+\dfrac{4k^2\pi^2}{n^2},
\end{equation*}
or, if we want each factor to have the form $1-a_k$, the general factor will be

\begin{equation*}
1-\dfrac{s^2}{k^2\pi^2}.
\end{equation*}
Therefore, we already have

\begin{equation*}
\sin s = As \prod_{k=1}^{\infty}\left(1-\dfrac{s^2}{k^2\pi^2}\right).
\end{equation*}
The constant $A$ is easily seen to be $=1$ from the well-known limit $\lim_{x\rightarrow 0} \frac{\sin x}{x}=1$. This completes Euler's proof of the sine product formula.\\[2mm]
Although some arguments are not completely rigorous, the general idea is correct and one can work out a proof meeting today's standard of rigor. Confer, e.g., \cite{Va06} for a modern proof based on Euler's ideas. Let us mention that in \cite{E664} Euler started from the right-hand side of the last equation\footnote{More precisely, he used the corresponding expression of $\cos x$, since the proof is easier working with the product of $\cos x$.} and proved that it is equal to $\sin x$.

\subsubsection{Euler's Proof of the Reflection Formula}
\label{subsubsec: Euler's Proof of the Reflection Formula}

 The proof, we are about to give, is the one from \cite{E421}. 
 
 \begin{center}
 \begin{figure}
 \centering
     \includegraphics[scale=1.1]{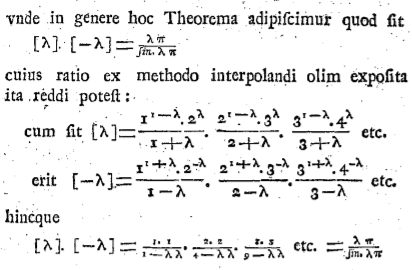}
     \caption{Reflection Formula for the $\Gamma$-function}
      Taken from E421. Euler proves the reflection formula. He uses the symbol $[\lambda]$ to denote $\lambda !$. Thus, in modern notation $[\lambda] = \Gamma(\lambda +1)$.
     \end{figure}
 \end{center}

\begin{proof}
Above we had the formula 

\begin{equation*}
   \Gamma(x+1) = 1^{x} \cdot \dfrac{1^{1-x}2^x}{x+1} \cdot \dfrac{2^{1-x}3^x}{x+2} \cdot \dfrac{3^{1-x}4^x}{x+3} \cdot \text{etc.} = \prod_{k=1}^{\infty} \dfrac{k^{1-x}(k+1)^x}{x+k}.
\end{equation*}
Therefore, replacing $x$  by $-x$:

\begin{equation*}
   \Gamma(1-x) = 1^{1+x} \cdot \dfrac{1^{1+x}2^{-x}}{1-x} \cdot \dfrac{2^{1+x}3^{-x}}{2-x} \cdot \dfrac{3^{1+x}4^{-x}}{3-x} \cdot \text{etc.} = \prod_{k=1}^{\infty} \dfrac{k^{1+x}(k+1)^{-x}}{k-x}.
\end{equation*}
Therefore, 

\begin{equation*}
    \Gamma(1+x)\Gamma(1-x) = \dfrac{1^2}{x^2-1^2} \cdot \dfrac{2^2}{x^2-2^2} \cdot \dfrac{3^2}{3^2-x^2}\cdots
\end{equation*}
or in compact notation:

\begin{equation*}
      \Gamma(1+x)\Gamma(1-x) = \prod_{k=1}^{\infty} \dfrac{k^2}{k^2-x^2} = \prod_{k=1}^{\infty} \dfrac{1}{1- \frac{x^2}{k^2}}
\end{equation*}
The product is the well-known product formula for the sine such that

\begin{equation*}
    \Gamma(x+1)\Gamma(1-x) = \dfrac{x\pi}{\sin (\pi x)}.
\end{equation*}
Using the functional equation of the $\Gamma$-function, one arrives at the desired formula.

\end{proof}

\subsubsection{Proof Euler could have given}
\label{subsubsec: Proof Euler could have given}

In \cite{E59}, Euler stated and, in \cite{E60}, \cite{E61} and \cite{E462}, then proved the following identity:

\begin{equation*}
\int\limits_0^1 \dfrac{t^{x-1}-t^{-x}}{1+t}dt = \dfrac{\pi}{\sin \pi x} \quad 0<x<1.
\end{equation*}
He did this by considering integrals over rational fractions and solving them by partial fraction decomposition. But using the partial fraction decomposition of $\frac{\pi}{\sin \pi x}$, one will easily see the identity to be true by expanding the denominator into a geometric series and integrating term by term. Therefore, we will not give the proof here.\\[2mm]
Instead, we want to prove the reflection formula in the most simple way using only formulas at Euler's disposal. The theorem to be proved is still the same, i.e that

\begin{equation*}
    \Gamma(x)\Gamma(1-x) =\dfrac{ \pi }{\sin (\pi x)}.
\end{equation*}
\begin{proof}
We have

\begin{equation*}
    \Gamma(x)\Gamma(1-x)= B(x,1-x)= \int\limits_{0}^{\infty}\dfrac{t^{x-1}}{1+t}dt
\end{equation*}
by using the relation among the $\Gamma$- and $B$-function discussed in section \ref{subsec: Expressing the B-function via Gamma-functions - Fundamental Relation} and using the integral representation with limits $0$ and $\infty$ for $B$. Next, we split the integral into two integrals $\int\limits_{0}^{1} + \int\limits_{1}^{\infty}$ and put $t =\frac{1}{u}$ in the second. Then, calling $u$ $t$ again, we arrive at:

\begin{equation*}
    \Gamma(x)\Gamma(1-x) = \int\limits_0^1 \dfrac{t^{x-1}-t^{-x}}{1+t}dt = \dfrac{\pi}{\sin \pi x}.
\end{equation*}
\end{proof}

\subsection{A slight Detour: Gamma Class Conjectures}

At this point, it seems appropriate to mention a rather recent application of the $\Gamma$-function, more precisely the reflection formula, which was the subject of the last section. We will follow \cite{Ga18}.\\
We want to mention briefly the so-called \textit{Gamma Conjecture} and how the $\Gamma$-function is connected to it. This will also show that there is a recent interest in the $\Gamma$-function.

\subsubsection{Gamma Class}

Let us  describe briefly what a Gamma class is. The description follows \cite{Ga16}. The \textit{Gamma class} of a complex manifold $X$ is the cohomology class

\begin{equation*}
    \widehat{\Gamma}_X= \prod_{i=1}^{n}\Gamma(1+ \delta_i) \in H^{\bullet}(X,\mathbb{R})
\end{equation*}
where $\delta_1$, $\delta_2$, $\cdots$, $\delta_n$ are the Chern roots of the tangent bundle $TX$ and $\Gamma(x)$ is our $\Gamma$-function. Above we have seen the Taylor series for $\log(\Gamma(1+x))$, i.e.

\begin{equation*}
    \log (\Gamma(1+x))= -\gamma x + \sum_{k=2}^{\infty} (-1)^k\zeta(k)x^k.
\end{equation*}
$\gamma$ is the Euler-Mascheroni constant, $\zeta(s)$ denotes the Riemann zeta function. This Taylor series, which we gave in section \ref{subsubsec: Euler's Results on the Gamma-function}, implies the following formula for the Gamma Class:

\begin{equation*}
    \widehat{\Gamma}_X= \exp \left(-\gamma c_1(X)+ \sum_{k=2}^{\infty}(-1)^k (k-1)! \zeta(k) \operatorname{ch}_k(TX)\right)
\end{equation*}
$\operatorname{ch}_k$ is the $k$-th Chern character.

\subsubsection{Gamma Conjectures}

The \textit{Gamma Conjecture},  stated, e.g., in \cite{Ga16} and \cite{Ga18}, is a conjecture relating quantum cohomology of a Fano manifold with its topology. The small quantum cohomology of $F$ defines a flat connection, often referred to as a quantum connection, over $\mathbb{C}^{\times}$ and its solution is a cohomology-valued function $J_F(t)$ called \textit{J-function}. Under certain conditions, not to be discussed here, the limit of the $J$-function 

\begin{equation*}
    A_F : = \lim_{t \rightarrow \infty} \dfrac{J_F(t)}{\left\langle \left[\operatorname{pt}\right],J_F(t) \right\rangle} \in H^{\bullet}(F)
\end{equation*}
exists and defines  the \textit{principal asymptotic class} $A_F$ of $F$. \textit{Gamma Conjecture I} states that $A_F$ equals the Gamma class $\widehat{\Gamma}_F= \widehat{\Gamma}(TF)$ of the tangent bundle of $F$, i.e. that

\begin{equation*}
    A_F = \widehat{\Gamma}_F.
\end{equation*}
Now suppose that the quantum cohomology of $F$ is semisimple. Then, one can define \textit{higher} asymptotic classes $A_{F,i}$ with $1 \leq i \leq N =\operatorname{dim}H^{\bullet}(F)$ from exponential asymptotics of flat sections of the quantum connection. \\
\textit{Gamma Conjecture II} states that there exists a full exceptional collection $E_1, E_2, \cdots, E_n$ of $D_{\text{coh}^b}(F)$ such that

\begin{equation*}
    A_{F,i}= \widehat{\Gamma}_F\cdot \operatorname{Ch}(E_i) \quad i=1, \cdots, N.
\end{equation*}
Here,

\begin{equation*}
    \operatorname{Ch}(E):= (2\pi i)^{\frac{deg}{2}}\operatorname{ch}(E)= \sum_{p=0}^{\operatorname{dim}F}(2\pi i)^p \operatorname{ch}_p (E)
\end{equation*}
is the Chern character. The principal asymptotic class $A_F$ corresponds to the exceptional object $E=O_F$.\\[2mm]
Now let us see how the $\Gamma$-function enters into this: One can say that the \textit{Gamma conjecture} is something like the \textit{square root} of the index theorem. To explain this, first recall the Hirzebuch-Riemann-Roch formula:

\begin{equation*}
    \chi(E_1,E_2)= \int_F \operatorname{ch}\left(E_1^{\vee}\right)\cdot \operatorname{ch}(E_2) \cdot \operatorname{td}_F
\end{equation*}
for vector bundles $E_1, E_2$ of $F$. $\chi(E_1,E_2)= \sum_{i=0}^{\operatorname{dim}F}(-1)^i \operatorname{dim}\operatorname{Ext}^{i}(E_1,E_2) $ is the Euler-Pairing and $\operatorname{td}_F= \operatorname{td}(TF)$ is the Todd class of $F$. We have seen in \ref{subsec: Reflection Formula} that Euler proved

\begin{equation*}
    \Gamma(1-x)\Gamma(1+x)= \dfrac{\pi x}{\sin (\pi x)}.
\end{equation*}
Rewriting the sines in terms of exponential functions and then writing $\frac{x}{2\pi i}$ instead of $x$, one arrives at

\begin{equation*}
    \dfrac{x}{1-e^{-x}} = e^{\frac{x}{2}} \Gamma\left(1-\dfrac{x}{2\pi i}\right)\Gamma\left(1+\dfrac{x}{2\pi i}\right).
\end{equation*}
Recall that we have seen the left-hand side of this equation while discussing  the Bernoulli numbers in section \ref{subsubsec: Purely formal Derivation}. \\
Anyhow, the factorisation of the $\Gamma$-functions  gives the factorisation of the Todd class:

\begin{equation*}
    (2\pi i)^{\frac{\operatorname{deg}}{2}}\operatorname{td}_F =e^{\pi i c_1(F)}\cdot \widehat{\Gamma}_F \cdot \widehat{\Gamma}^{\ast}_{{F}}.
\end{equation*}
This in turn, factorises the Hirzebuch-Riemann-Roch formula:

\begin{equation*}
    \chi(E_1,E_2)= \left[\operatorname{Ch}(E_1)\cdot \widehat{\Gamma}_F, \operatorname{Ch}(E_2)\cdot \widehat{\Gamma}_F\right).
\end{equation*}
In these formulas: $\widehat{\Gamma}^{\ast}_{{F}}= (-1)^{\frac{\operatorname{deg}}{2}} \widehat{\Gamma}_F = \sum_{p=0}^{\operatorname{dim}F}(-1)^p \gamma_p$, if one writes $\widehat{\Gamma}_F = \sum_{p=0}^{\operatorname{dim}F}\gamma_p$ with $\gamma_p \in H^{2p}(F)$, and $\left[\cdot, \cdot\right)$ is a non-symmetric pairing of $H^{\bullet}(F)$ given by

\begin{equation*}
    \left[\alpha, \beta\right)= \dfrac{1}{(2 \pi)^{\operatorname{dim}F}}\int_F \left(e^{\pi i c_1(X)}e^{\pi i \mu} \alpha\right) \cup \beta
\end{equation*}
with $\mu \in \operatorname{End}H^{\bullet}(F)$ defined by $\mu (\phi)= \left(p-\dfrac{\operatorname{dim}F}{2}\right)\phi$ for $\phi \in H^{2p}(F)$.

\subsection{Multiplication Formula}
\label{subsec: Multiplication Formula}

One of the fundamental properties of the $\Gamma$-function is the so-called {\em multiplication formula} that reads in  modern notation
\begin{equation*}
\Gamma \left(\dfrac{x}{n}\right)\Gamma \left(\dfrac{x+1}{n}\right)\cdots \Gamma \left(\dfrac{x+n-1}{n}\right)= \dfrac{(2\pi)^{\frac{n-1}{2}}}{n^{x-\frac{1}{2}}}\cdot \Gamma(x).
\end{equation*}
For $n=2$ one obtains the {\em duplication formula} that is usually ascribed to Legendre, who proved it in \cite{Le26}; we met this formula already in \ref{subsubsec: Finding the other constants}.\\[2mm]
The multiplication formula was first proven by Gau\ss{} in  his influential 1828 paper \cite{Ga28} on the hypergeometric series, in which he also gave a complete account of the factorial function $\Pi(x):=\Gamma(x+1)=x!$.  
Gau\ss{} cited Euler's results very often, but apparently he was not aware of the lesser-known paper \cite{E421} of Euler. At least, he did not cite it.

\begin{center}
\begin{figure}
\centering
    \includegraphics[scale=1.1]{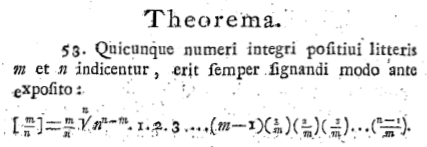}
    \caption{Euler's Version of the Multiplication Formula for the $\Gamma$-function}
    This formula, found in $\S 53$ of \cite{E421}, is equivalent to the Gau\ss ian multiplication formula above. The restriction to natural numbers $m$ is not necessary. Although Euler states it as a theorem, his proof is not rigorous.
    \end{figure}
\end{center}
In that paper Euler presented a formula that is equivalent to the above equation (see figure), as we will explain now. We need to introduce some notation.

\subsubsection{The Function $\left(\frac{p}{q}\right)$}
\label{subsubsec: The Function pq)}

In $\S 3$ of \cite{E321} and $\S 44$ of \cite{E421}, Euler studied properties of the function 
\begin{equation*}
\left(\frac{p}{q}\right):= \int\limits_{0}^{1} \dfrac{x^{p-1}dx}{(1-x^n)^{\frac{n-q}{n}}}.
\end{equation*}
In his notation, the variable $n$ is left implicit, and Euler showed the nice symmetry property
\[ \left(\frac{p}{q}\right)=\left(\frac{q}{p}\right) .\]
By the substitution $x^n=y$ it is clear that this function is just the Beta function in disguise: 
\begin{equation*}
\left(\frac{p}{q}\right) = \frac{1}{n} \int\limits_{0}^1 y^{\frac{p}{n}-1}dy(1-y)^{\frac{q}{n}-1}=\frac{1}{n} B \left(\frac{p}{n},\frac{q}{n}\right),
\end{equation*}
where the Beta function still is defined as
\begin{equation*}
B(x,y)= \int\limits_{0}^1 t^{x-1}dt(1-t)^{y-1} \quad \text{for} \quad \text{Re}(x),\text{Re}(y) > 0.
\end{equation*}
Euler implicitly assumed $p$ and $q$ to be natural numbers,  but
this restriction is not necessary.\\
 As mentioned several times, see, e.g. section \ref{subsec: Expressing the B-function via Gamma-functions - Fundamental Relation}, Euler already knew the relation among Beta integral and the $\Gamma$-function:
\begin{equation*}
B(x,y) = \dfrac{\Gamma(x) \cdot \Gamma(y)}{\Gamma(x+y)}.
\end{equation*}

\subsubsection{Applying the Reflection Formula}
\label{subsubsec: Applying the Reflection Formula}

Euler's version  of the reflection formula for the $\Gamma$-function, 
\begin{equation*}
\dfrac{\pi}{\sin \pi x}= \Gamma (x)\Gamma (1-x),
\end{equation*}
can be found in  $\S 43$ of \cite{E421} and reads
\[ [\lambda]\cdot [-\lambda]=\frac{\pi \lambda}{\sin \pi \lambda} ,\]
where, in Euler's notation, $[\lambda]$ stands for $\lambda !$, that is $\Gamma(1+\lambda)$.\\
If one applies the reflection formula for $x=\frac{i}{n}$, $i=1,2,\cdots,n-1$, one obtains
$$
\begin{array}{rcl}
\Gamma \left(\dfrac{1}{n}\right)  \Gamma \left(\dfrac{n-1}{n}\right)&=&\dfrac{\pi}{\sin \frac{\pi}{n}},\\
\Gamma \left(\dfrac{2}{n}\right)  \Gamma \left(\dfrac{n-2}{n}\right)&=&\dfrac{\pi}{\sin \frac{2\pi}{n}},\\
\Gamma \left(\dfrac{3}{n}\right)  \Gamma \left(\dfrac{n-3}{n}\right)&=&\dfrac{\pi}{\sin \frac{3\pi}{n}},\\
\ldots&=&\ldots\\
\Gamma \left(\dfrac{n-1}{n}\right)  \Gamma \left(\dfrac{1}{n}\right)&=&\dfrac{\pi}{\sin \frac{(n-1)\pi}{n}} .\\
\end{array}
$$
Multiplying these equations  gives our first auxiliary formula

\begin{equation*}
 \prod_{i=1}^{n-1}\Gamma \left(\frac{i}{n}\right)^2= \frac{\pi^{n-1}}{\prod_{i=1}^{n-1} \sin \left(\frac{i \pi}{n}\right)} .
\end{equation*}
Our second auxiliary formula is
\begin{equation*}
\prod_{i=1}^{n-1} \sin \left(\frac{i \pi}{n} \right) = \frac{n}{2^{n-1}} ,
\end{equation*}
which was certainly known to Euler.
For example, in $\S 7$ of \cite{E562} and in $\S 240$ of \cite{E101}, he stated the more general formula

\begin{equation*}
\sin n \varphi = 2^{n-1} \sin \varphi \sin \left(\dfrac{\pi}{n}- \varphi\right) \sin \left(\dfrac{\pi}{n}+ \varphi\right)
\end{equation*}
\begin{equation*}
 \sin \left(\dfrac{2\pi}{n}- \varphi\right) \sin \left(\dfrac{2\pi}{n}+ \varphi\right)\cdot\text{etc.}
\end{equation*}
The product has $n$ factors in total. If we divide by $2^{n-1}\sin \varphi$, 
use $\sin \left(\frac{\pi(n-i)}{n}\right)= \sin \left(\frac{i\pi}{n}\right)$
and take the limit $\varphi \rightarrow 0$, we obtain the second auxiliary 
formula.\\[2mm]
The first and the second auxiliary formula were also given by Gau\ss{} in \cite{Ga28} and are used in his proof of the multiplication formula\footnote{Gau\ss's proof follows the same lines as ours concerning the use of the auxiliary formulas.}.\\[2mm]
Combining them and taking the square root, we obtain the beautiful formula
\begin{equation*}
\Gamma \left(\frac{1}{n}\right) \Gamma \left( \frac{2}{n}\right) \cdots \Gamma \left(\frac{n-1}{n}\right)=\sqrt{\frac{(2\pi)^{n-1}}{n}}.
\end{equation*}
This formula was also found by Euler in $\S 46$ of \cite{E816}, where he stated it in the form

\begin{equation*}
\int\limits_{0}^{1}dx \left(\log \dfrac{1}{x}\right)^{\frac{1}{n}}\int\limits_{0}^{1}dx \left(\log \dfrac{1}{x}\right)^{\frac{2}{n}}\cdots \int\limits_{0}^{1}dx \left(\log \dfrac{1}{x}\right)^{\frac{n-1}{n}}= \dfrac{1 \cdot 2 \cdot 3 \cdots (n-1)}{n^{n-1}}\sqrt{\dfrac{2^{n-1}\pi^{n-1}}{n}}.
\end{equation*}

\subsubsection{Euler's Version of the Multiplication Formula}
\label{subsubsec: Euler's Version of the Multiplication Formula}

In $\S 53$ of \cite{E421}, Euler gave the formula

\begin{equation*}
\left[ \frac{m}{n} \right] = \frac{m}{n} \sqrt[n]{n^{n-m}\cdot 1 \cdot 2 \cdot 3 \cdots (m-1) \left(\frac{1}{m}\right)\left(\frac{2}{m}\right)\left(\frac{3}{m}\right)\cdots \left(\frac{n-1}{m}\right)}.
\end{equation*}
As before, $[\lambda]$ is Euler's notation for the factorial of $\lambda$ such that $\left[ \frac{m}{n} \right] = \Gamma \left(\frac{m}{n}+1\right)$. Euler assumed $m$ and $n$ to be natural numbers, but it is easily seen that we can interpolate $1 \cdot 2 \cdot 3 \cdots (m-1)$ by $\Gamma(m)$. Therefore, if we assume $x$ to be real and positive and write $x$ instead of $m$ in the above formula and
express it in terms of the Beta function, Euler's formula becomes

\begin{equation*}
\Gamma \left(\frac{x}{n}\right)= \sqrt[n]{n^{n-x} \Gamma(x) \frac{1}{n^{n-1}}B \left(\frac{1}{n},\frac{x}{n}\right)B \left(\frac{2}{n},\frac{x}{n}\right)\cdots B \left(\frac{n-1}{n},\frac{x}{n}\right)}.
\end{equation*}
Expressing the $B$-function in terms of the $\Gamma$-function, then after  some rearrangement under the $\sqrt[n]{}$-sign we obtain

\begin{equation*}
\Gamma \left(\frac{x}{n}\right)= \sqrt[n]{n^{1-x}\Gamma(x) \dfrac{\Gamma\left(\frac{1}{n}\right)\Gamma \left(\frac{x}{n}\right)}{\Gamma \left(\frac{x+1}{n}\right)} \cdot \dfrac{\Gamma\left(\frac{2}{n}\right)\Gamma \left(\frac{x}{n}\right)}{\Gamma \left(\frac{x+2}{n}\right)} \cdots \dfrac{\Gamma\left(\frac{n-1}{n}\right)\Gamma \left(\frac{x}{n}\right)}{\Gamma \left(\frac{x+n-1}{n}\right)}}.
\end{equation*}
Bringing all $\Gamma$-functions of fractional argument to the left-hand side, the expression simplifies to

\begin{equation*}
\Gamma \left(\frac{x}{n}\right)\Gamma \left(\frac{x+1}{n}\right) \Gamma \left(\frac{x+2}{n}\right) \cdots \Gamma \left(\frac{x+n-1}{n}\right)= n^{1-x} \Gamma (x) \Gamma \left(\frac{1}{n}\right) \cdots \Gamma \left(\frac{n-1}{n}\right).
\end{equation*}
The product on the right-hand side, $\Gamma \left(\frac{1}{n}\right) \cdots \Gamma \left(\frac{n-1}{n}\right)$, was evaluated above and thus we obtain 
\begin{equation*}
\Gamma \left(\frac{x}{n}\right)\Gamma \left(\frac{x+1}{n}\right) \Gamma \left(\frac{x+2}{n}\right) \cdots \Gamma \left(\frac{x+n-1}{n}\right)=  n^{1-x} \Gamma(x) \sqrt{\dfrac{(2\pi)^{n-1}}{n}}.
\end{equation*}
Thus, we arrived at the multiplication formula.

\subsubsection{Euler's Proof of the Multiplication Formula}

As mentioned, Euler never had a complete proof for the multiplication formula. Basically, this was because he did not prove the fundamental relation to the $B$-function for all real numbers. Nevertheless, for the sake of completeness, we present Euler's proof which he offered in the  first supplement to \cite{E421}. His proof is based on the formula that he wrote as

\begin{equation*}
    \dfrac{\left[\frac{m}{n}\right]\left[\frac{\lambda}{n}\right]}{\left[\frac{\lambda+m}{n}\right]}= \dfrac{\lambda m}{\lambda +m} \left(\dfrac{\lambda}{m}\right).
\end{equation*}
This is just the fundamental relation among the $B$- and $\Gamma$-function. In modern notation, the formula reads:

\begin{equation*}
    \dfrac{\Gamma\left(\frac{m}{n}+1\right)\Gamma \left(\frac{\lambda}{n}+1\right)}{\Gamma \left(\frac{\lambda+m}{n}+1\right)}= \dfrac{\lambda m  }{\lambda +m}\cdot \dfrac{1}{n} B\left(\dfrac{\lambda}{n}, \dfrac{m}{n}\right).
\end{equation*}

His idea was to substitute all natural numbers from $1$ to $n$ for $\lambda$ and multiply the resulting expressions. Then, the multiplication formula will follow from simple manipulation of the product. On the right-hand side he finds, in modern notation:

\begin{equation*}
    \dfrac{m}{1+m}\cdot \dfrac{2m}{2+m} \cdots \dfrac{nm}{n+m}\cdot \dfrac{1}{n^n} B\left(\dfrac{1}{n}, \dfrac{m}{n}\right) B\left(\dfrac{1}{n}, \dfrac{m}{n}\right)\cdots B\left(\dfrac{n}{n}, \dfrac{m}{n}\right)
\end{equation*}
which simplifies to

\begin{equation*}
    \dfrac{m^n}{n^n}\cdot \dfrac{m!}{(n+1)(n+2)\cdots (n+m)} B\left(\dfrac{1}{n}, \dfrac{m}{n}\right) B\left(\dfrac{1}{n}, \dfrac{m}{n}\right)\cdots B\left(\dfrac{n}{n}, \dfrac{m}{n}\right)
\end{equation*}
On the left-hand side on the other hand:

\begin{equation*}
    \dfrac{\Gamma \left(\frac{m}{n}+1\right)\Gamma \left(\frac{1}{n}+1\right)}{\Gamma \left(\frac{1+m}{n}+1\right)}\cdot  \dfrac{\Gamma \left(\frac{m}{n}+1\right)\Gamma \left(\frac{2}{n}+1\right)}{\Gamma \left(\frac{2+m}{n}+1\right)}\cdots  \dfrac{\Gamma \left(\frac{m}{n}+1\right)\Gamma \left(\frac{m}{n}+1\right)}{\Gamma \left(\frac{n+m}{n}+1\right)}
\end{equation*}
\begin{equation*}
    = \left(\Gamma\left(\dfrac{m}{n}+1\right)\right)^n \cdot \dfrac{\Gamma \left(\frac{1}{n}+1\right)\Gamma \left(\frac{2}{n}+1\right)\cdots \Gamma \left(\frac{n}{n}+1\right)}{\Gamma \left(\frac{1+m}{n}+1\right)\cdot \Gamma \left(\frac{2+m}{n}+1\right)\cdots \Gamma \left(\frac{m+n}{n}+1\right)}.
\end{equation*}
This simplifies to

\begin{equation*}
     \left(\Gamma\left(\dfrac{m}{n}+1\right)\right)^n \cdot \dfrac{\Gamma \left(\frac{1}{n}+1\right)\Gamma \left(\frac{2}{n}+1\right)\cdots \Gamma \left(\frac{m}{n}+1\right)}{\Gamma \left(\frac{1+n}{n}+1\right)\cdot \Gamma \left(\frac{2+n}{n}+1\right)\cdots \Gamma \left(\frac{m+n}{n}+1\right)}
\end{equation*}
Finally, using the recursive relation in the denominator and cancelling all the $\Gamma$-functions:

\begin{equation*}
    =  \left(\Gamma\left(\dfrac{m}{n}+1\right)\right)^n \cdot \dfrac{n^m}{(n+1)(n+2)\cdots (n+m)}.
\end{equation*}
Therefore, equating the final result of the left-hand and right-hand side, we arrive at the equation:

\begin{equation*}
     \dfrac{m^n}{n^n}\cdot \dfrac{m!}{(n+1)(n+2)\cdots (n+m)} B\left(\dfrac{1}{n}, \dfrac{m}{n}\right) B\left(\dfrac{1}{n}, \dfrac{m}{n}\right)\cdots B\left(\dfrac{n}{n}, \dfrac{m}{n}\right)
\end{equation*}
\begin{equation*}
    =\left(\Gamma\left(\dfrac{m}{n}+1\right)\right)^n \cdot \dfrac{n^m}{(n+1)(n+2)\cdots (n+m)}.
\end{equation*}
Therefore, replacing also $m!$ by $\Gamma(m+1)$, we find

\begin{equation*}
    \left(\Gamma\left(\dfrac{m}{n}+1\right)\right)^n = m^n \Gamma(m+1) n^{-n-m} B\left(\dfrac{1}{n}, \dfrac{m}{n}\right)\cdots B\left(\dfrac{n}{n}, \dfrac{m}{n}\right).
\end{equation*}
This is just Euler's version of the multiplication formula in modern notation and thus finishes his proof.\\

\subsubsection{Discussion of Euler's Result}
\label{subsubsec: Discussion of Euler's Result}

From the above sketch it is apparent that in \cite{E421} Euler had a result that is  equivalent to the multiplication formula for the $\Gamma$-function. He expressed it in terms of the symbol $\left(\frac{p}{q}\right)$, which  in modern notation is the Beta function. One may wonder why Euler did not express his result in terms of the $\Gamma$-function itself. Reading his paper, it becomes clear that his main motivation was to express the factorial of rational numbers in terms of integrals of \textit{algebraic functions}, and the formula given by Euler fulfills this purpose. For the same reason, he probably did not replace $1 \cdot 2 \cdot 3 \cdots (m-1)$ by $\Gamma(m)$.\\[2mm]
Euler also expressed $\Gamma(\frac{p}{q})$, as we saw in \ref{subsec: Relation to the Beta Function - Expressing Gamma via B} in terms of integrals of algebraic algebraic functions in $\S 23$ \cite{E19} and $\S 5$ of \cite{E122}. That formula reads

\begin{equation*}
\int\limits_{0}^{1} \left(-\log x\right)^{\frac{p}{q}}dx =\sqrt[q]{1 \cdot 2 \cdot 3 \cdots p\left(\dfrac{2p}{q}+1\right)\left(\dfrac{3p}{q}+1\right)\left(\dfrac{4p}{q}+1\right)\cdots \left(\dfrac{qp}{q}+1\right)}
\end{equation*}
\begin{equation*}
\times \sqrt[q]{\int\limits_{0}^{1} dx(x-xx)^{\frac{p}{q}} \cdot \int\limits_{0}^{1} dx(x^2-x^3)^{\frac{p}{q}} \cdot \int\limits_{0}^{1} dx(x^3-x^4)^{\frac{p}{q}} \cdot \int\limits_{0}^{1} dx(x^4-x^5)^{\frac{p}{q}} \cdots \int\limits_{0}^{1} dx(x^{q-1}-x^q)^{\frac{p}{q}}}.
\end{equation*}
Despite the similarity to the first formula expressing $\Gamma$ via $B$, that formula is not as general as the multiplication formula\footnote{We want to mention here that in the foreword of the Opera Omnia, series 1, volume 19, p. LXI A. Krazer and G. Faber claim that these two formulas are equivalent and both are a special case of the multiplication formula. This is incorrect, as it was shown in the preceding sections. The formula given in section \ref{subsec: Relation to the Beta Function - Expressing Gamma via B} does not lead to the multiplication formula, it only interpolates $\Gamma \left(\frac{p}{q}\right)$ in terms of algebraic integrals.}.\\[2mm]
It appears that Euler was aware that the proofs he indicated in \cite{E421} were not completely convincing. 
He expressed that  with characteristic honesty in a concluding SCHOLIUM:\\

{\em  Hence infinitely many relations among the integral formulas of the form
\[ \int \dfrac{x^{p-1}dx}{(1-x^n)^{\frac{n-q}{n}}}=\left(\frac{p}{q} \right)\]
follow, which are even more remarkable, because we were led to them by a
completely singular method. And if anyone does not believe them to be true,
he or she should consult my observations on these integral formulas\footnote{Here Euler refers to his paper \cite{E321}.} 
and will then hence easily be convinced of their truth for any case. But even if
this consideration provides some confirmation, the relations found here are
nevertheless of even greater importance, because a certain structure is noticed
in them and they are easily generalized to all classes, whatever number was
assumed for the exponent $n$, whereas in the first treatment the calculation for the higher classes becomes continuously more cumbersome and intricate.}\\[2mm]

\newpage

\section{Summary}
\label{sec: Summary}

\subsection{Overview of Euler's Results on the $\Gamma$-Function}
\label{subsec: Overview over Euler's Results on the Gamma-Function}

\subsubsection{General Overview}
\label{subsubsec: General Overview summary}

As we have seen, Euler basically already found all common representations of the $\Gamma$-function ranging from the integral representation to the product representation, although he never got credit for the latter. The common denominator of all his ways to these representations was an attempt to solve the functional equation $\Gamma(x+1)=x\Gamma(x)$. He basically had four different approaches: The moment method (section \ref{sec:  Euler's direct Solution of the Equation Gamma (x+1)=xGamma(x) - The Moment-Ansatz}), which gave the integral represantation, solution by conversion into a differential equation (section \ref{sec:  Solution of the Equation log Gamma(x+1)- log  Gamma (x)= log x by Conversion into a Differential Equation of infinite Order}), which led to the Euler-Maclaurin formula (section \ref{sec: Solution of log Gamma(x+1)- log Gamma (x)= log x via the Euler-Maclaurin Formula}) and from it to the Stirling formula (section \ref{subsubsec: An Application - Derivation of the Stirling Formula for the Factorial}),  difference calculus (section \ref{sec: Interpolation Theory and Difference Calculus}), which led him to Weierstra\ss{} product formula (section \ref{subsec: Difference Calculus according to Euler}),  and direct iterative application of the functional equation, which led him to the Gau\ss{} product formula (section \ref{subsec: Euler's Idea concerning the Gamma-function}).\\
Furthermore, he discovered several special properties like the reflection formula (section \ref{subsec: Reflection Formula}), the multiplication formula \ref{subsec: Multiplication Formula}, and the relation to the $B$-function (section \ref{subsec: Expressing the B-function via Gamma-functions - Fundamental Relation}). Thus, it is safe to say that throughout his career he discovered all basic properties of the $\Gamma$-function. \\[3mm]

He could have found some more results that were then later discovered by others. As the foremost example, we want to mention the Fourier series expansion of $\log(\Gamma(x))$. This is usually attributed to Kummer \cite{Ku47}, but an equivalent result was obtained one year earlier by Malmsten \cite{Ma46} (p. 25)\footnote{Malmsten's paper was published later than Kummer's, but written earlier in 1846. Both authors clearly obtained their results independently, since the  techniques applied are very different.}. Fourier series were introduced later in Fourier's monumental treatise on heat \cite{Fo22} in 1822. But certain examples appear at seemingly random places in Euler's work and in \cite{E189} he actually showed that every periodic function has a Fourier expansion. But at that time, he did not pay any further attention to it. He gave the formula for the Fourier coefficients then later in \cite{E704} but did not make the connection to his earlier findings and did not realize the importance of those findings as Fourier later did.

\begin{center}
\begin{figure}
\centering
    \includegraphics[scale=0.9]{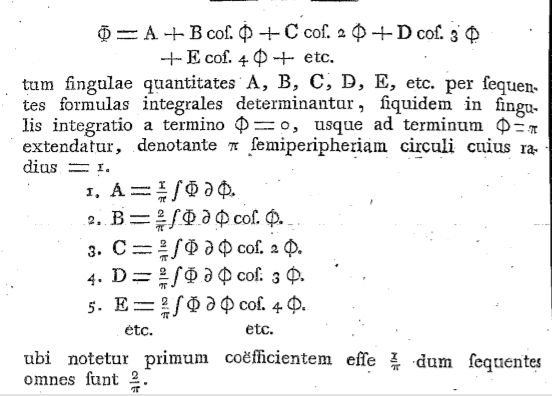}
    \caption{Formula for the Fourier Coefficients}
     Taken from \cite{E704}. Euler states the formula for the Fourier series of a symmetric function. He proves it formally in the following paragraphs of this paper.
    \end{figure}
\end{center}

But there  are also results that were out of his reach. Those concern all results which require the  theory of complex functions, like the classification theorems discussed in section \ref{subsec: Classification Theorems}. Those are necessary in order to see, whether two different expressions for $\Gamma$ are actually identical. Euler never felt the need to do so and never even addressed that issue. Indeed, it is quite difficult to show directly that all the different expressions are identical.\\
Another property that Euler rarely talked about is that the $\Gamma$-function is a transcendental function and even such values as $\Gamma\left(\frac{1}{2}\right)$ are transcendental. Although Euler had a notion of what a transcendental number is, he never actually gave a definition.\\[3mm]
Unfortunately, Euler never pointed out the connections between all his findings concerning the $\Gamma$-function and never organized all his results with the exception of \cite{E421}. But that paper mainly focused on the integral representation and the other representations are not discussed. Probably, the closest to an overview article might be the paper \cite{E368}, in which he listed almost all of the formulas\footnote{Compare also Figure 2.24 and 2.25}, which we discussed throughout the text.\\
But it seems that in this  paper he was more interested in the evaluation of other values of the $\Gamma$-function than $\Gamma\left(\frac{1}{2}\right)$ which can be expressed in terms of known constants. His attemps were fruitless at the end due to the higher transcendality of the $\Gamma$-function pointed out in \ref{subsubsec: 7. Phase: Transcendence Questions}. It would have been really intriguing to see, what Euler had to say about all the connections we pointed out.\\
But despite all this, it was interesting to see what Euler actually already knew about the $\Gamma$-function and how many ideas he actually anticipated and that this is not generally known. Thus, we have added some interesting details to the history of the $\Gamma$-function, wonderfully told in \cite{Da59}. Therefore, I hope, our article on Euler and the $\Gamma$-function  provides a motivation to go through other historical texts and see what treasures they might contain.

\begin{center}
\begin{figure}
\centering
    \includegraphics[scale=1.1]{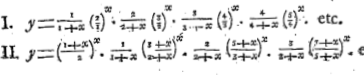}
    \caption{Euler's Formulas for $\Gamma(x)$ - Part 1}
    First two of Euler's formulas for the factorial from his paper \cite{E368}.
    \end{figure}
\end{center}

\begin{center}
\begin{figure}
\centering
    \includegraphics[scale=1.0]{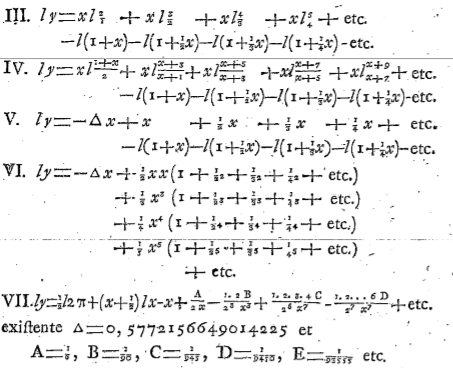}
    \caption{Euler's Formulas for $\Gamma(x)$ - Part 2}
     Remainder of Euler's formulas for the factorial from his paper \cite{E368}. It contains all the formulas he ever found for the factorial. He writes simply $l$ for $\log$, $\Delta$ is the Euler-Mascheroni constant $\gamma$, $A, B, C$ denote the rational part of $\zeta(2n)$, i.e. $\zeta(2)=A\pi^2$, $\zeta(4)=B\pi^4$ etc.
    \end{figure}
\end{center}

\subsubsection{Results not generally associated with Euler}
\label{subsubsec: Results not generally associated with Euler}

In this section, we wish to give an overview of the results that Euler already obtained, but that are not usually attributed to him. Or in other words, some of his lesser-known results:\\[2mm]

\textit{1. Moment Method}:\\
In \cite{E123} and \cite{E594}, two papers actually devoted to continued fractions, Euler finds a method which can be used to solve homogeneous difference equations with linear coefficients in terms of definite integrals (section \ref{sec: Euler's direct Solution of the Equation Gamma (x+1)=xGamma(x) - The Moment-Ansatz}).\\
In this context, we mention that in \cite{E551} he already found the recursive definition of the Legendre polynomials (see  section \ref{subsubsec: Historical Remark on Legendre Polynomials}). Using the moment ansatz, one can find an explicit formula for the $n$-th Legendre polynomial in terms of definite integrals. This formula was found (up to a multiplicative constant) by Euler in \cite{E606}. But it seems that Euler was not aware of the connections between \cite{E551} and \cite{E606}.
Moreover, we explained that the moment method can be considered as the first appearance of the Mellin-Transform \ref{subsec: Euler and the Mellin Transform}.\\[2mm]

\textit{2. Anticipation of convergence inducing factors:}\\
In \cite{E613}, a paper actually devoted to interpolation problems, Euler anticipated Weierstra\ss's idea of introducing factors forcing a product to converge (see section \ref{subsec: Difference Calculus according to Euler}). \\[2mm]

\textit{3. Gau\ss's definition of the factorial and multiplication formula}\\
Gau\ss's definition of the factorial \cite{Ga28} now bearing his name had already been found by Euler in \cite{Eu29} (see also in section \ref{subsec: More detailed Discussion on the Discovery of the Gamma-function}). He proved it in \cite{E652} (see section \ref{subsec: Euler on Weierstrass's Condition}). Additionally, he also found a formula equivalent to the Gau\ss{}ian multiplication formula (see section  \ref{subsec: Multiplication Formula}).\\[2mm]

\textit{4. Solution of differential equations of infinite order:}\\
In \cite{E189}, Euler considered and solved differential equations of infinite order with constant coefficients (see section \ref{sec: Solution of the Equation log Gamma(x+1)- log  Gamma (x)= log x by Conversion into a Differential Equation of infinite Order}). Unfortunately, his  formulas are incorrect in general due to a conceptual error (see section \ref{subsubsec: Mistake in Euler's Approach}).\\[2mm]

\textit{5. Cauchy's definition for convergence of infinite series:}\\
In \cite{E43}, Euler gave the necessary and sufficient criterion for the convergence of an infinite series usually attributed to Cauchy (see section \ref{subsec: gamma meets Gamma - Euler on the Euler-Mascheroni Constant}).\\[2mm]

\textit{6. Partial Fraction Decomposition of transcendental functions:}\\
In \cite{E592}, Euler explained  his method to find the partial fraction decomposition of simple transcendental functions as, e.g., $\frac{1}{\sin(x)}$, $\tan (x)$ (see section in section \ref{subsec: Euler and the Partial Fraction Decomposition of Transcendental Functions}). Anyhow, although all examples given in that paper are correct, his method does not work in general (see section \ref{subsubsec: Example 2}).

\subsubsection{Results erroneously ascribed to Euler}
\label{subsubsec: Results wrongly ascribed to Euler}

There are also some results that are wrongly ascribed to Euler that we learnt about. We also list them here.\\[3mm]

\textit{1. Discovery of the $\Gamma$-function:}\\
Euler was not the first to discover an expression for the $\Gamma$-function. The first was Daniel Bernoulli \cite{Be29}. But Euler was indeed the first to discover the integral representation one year later. Anyhow, his paper \cite{E19} is the first paper published in a scientific journal containing an expression for $\Gamma$-function. See section \ref{subsec: More detailed Discussion on the Discovery of the Gamma-function} for a more detailed discussion on the subject.\\[2mm]

\textit{2. Eulerian Integral Representation for the hypergeometric series:}\\
It seems that Euler never wrote down the integral representation for the hypergeometric series in his papers, although it can easily be derived from his ideas and results in \cite{E254} and \cite{E366}. Legendre seems to be the first to write it down explicitly  in his 1812 book \cite{Le17}. See section \ref{subsubsec: Historical Note on the Integral Representation}.

\newpage

\section{Appendix}
\label{sec: Appendix}

\subsection{A Solution of the Basel Problem that Euler missed (but could have found)}
\label{subsec: A Solution of the Basel Problem that Euler missed (but could have found)}
The solution of the Basel problem, i.e. the summation of the series

\begin{equation*}
    \sum_{k=1}^{\infty}\dfrac{1}{k^2} =1 +\dfrac{1}{4}+\dfrac{1}{9}+\cdots
\end{equation*}
was Euler's first great claim to fame. He gave the correct result the first time in \cite{E41}. But his proof is based on the sine-product that he could not prove at that time. Therefore, the first complete solution of the problem he gave was in \cite{E61} one year later. In his career, he gave at least 5 different proofs of this formula. Here, we want to show, how Euler could have given another proof, i.e. we give a proof using only Euler's results.\\[2mm]
We showed that Euler had an explicit solution of the difference equation

\begin{equation*}
f(x+1)-f(x)=X(x).
\end{equation*}
But the same equation, for integer $x$, is also solved by

\begin{equation*}
f(x) = \sum_{n=1}^{x}X(n-1).
\end{equation*}
To solve the Basel problem, let us consider the special case $X=x^2$, i.e. the equation

\begin{equation*}
f(x+1)-f(x)= x^2.
\end{equation*}
For integer $x$ the solution is

\begin{equation*}
f(x) = \sum_{n=1}^{x}(n-1)^2 =\sum_{n=1}^{x-1}n^2.
\end{equation*}
But applying the Euler-Maclaurin summation formula this sum is easily found  to be

\begin{equation*}
f(x) = \dfrac{1}{6}x(x-1)(2x-1)=\dfrac{x^3}{3}-\dfrac{x^2}{2}+\dfrac{x}{6}.
\end{equation*}
But this solution is seen to satisfy the propounded difference equation for all $x \in \mathbb{C}$! (The general solution  is obtained by adding an arbitrary periodic function to that solution as we discussed in section \ref{subsubsec:  Application to the difference equation f(x+1)-f(x)=g(x)}.).\\[2mm]
But now let us also use the explicit solution of the general difference equation that we found in \ref{subsubsec: Application to the difference equation f(x+1)-f(x)=g(x)}. We find

\begin{equation*}
\setlength{\arraycolsep}{0mm}
\renewcommand{\arraystretch}{2,5}
\begin{array}{llllllllllll}
f(x) & ~=~& \int\limits_{}^{x}t^2dt & ~-~ & \dfrac{x^2}{2} & ~+~ &  \sum_{k \in \mathbb{Z}}^{\prime} e^{2\pi ix}\int\limits_{}^{x}e^{-2 \pi i t}t^2dt \\
& ~=~& \dfrac{x^3}{3}+C & ~ -~ &\dfrac{x^2}{2} &~+~ & \dfrac{x}{\pi^2} \sum_{k=1}^{\infty} \dfrac{1}{k^2}+\sum_{k \in \mathbb{Z}\setminus \left\lbrace 0\right\rbrace} C_ke^{2\pi ix}\\
& ~=~&\dfrac{x^3}{3} & ~-~ &\dfrac{x^2}{2} & ~+~ & \dfrac{x}{\pi^2} \sum_{k=1}^{\infty} \dfrac{1}{k^2} +A(x).
\end{array}
\end{equation*}
The $'$ indicates that the term for $k=0$ is left out in the summation. The $C_k$ are integration constants. $A$ is just an arbitrary periodic function with period one, i.e., a solution of the homogeneous difference equation $f(x+1)-f(x)=0$. \\[2mm]
  Since the solution of a difference equation must be unique (if the solution of the homogeneous equation is subtracted), comparing the coefficients of $x$  and the last equations we find

\begin{equation*}
\dfrac{1}{\pi^2}\sum_{k=1}^{\infty}\dfrac{1}{k^2} = \dfrac{1}{6}.
\end{equation*}
And this is the Basel sum again.\\[2mm]
Although the way in which we obtained this result was rather non-straightforward, it nevertheless only used  results that Euler knew; hence we  claim that he also could have given that solution. \\
Anyhow, this solution seems to be  new, since it does not appear in \cite{Ch03} which is a list of several different solutions for the Basel problem.

\subsection{Euler and the Partial Fraction Decomposition of Transcendental Functions}
\label{subsec: Euler and the Partial Fraction Decomposition of Transcendental Functions}

Euler devoted the whole paper \cite{E592} to the expansion of transcendental functions into partial fractions. Although his method is not correct in general, all formulas he stated in his paper are correct. The flaw in his method can easily be fixed.
We will explain his method in the example of $\frac{1}{\sin x}$, where his method works, but then will also give an example in which his method fails and will explain the reason.

\subsubsection{An Example: $\frac{\pi}{\sin \pi x}$}
\label{subsubsec: Example 1}

In \cite{E592}, Euler wrote down the formula

\begin{equation*}
\dfrac{\pi}{2 \lambda \sin \lambda \pi}-\dfrac{1}{2\lambda^2}=\dfrac{1}{1-\lambda^2}-\dfrac{1}{4-\lambda^2}+\dfrac{1}{9-\lambda^2}-\dfrac{1}{16-\lambda^2}+\text{etc.} \quad \text{valid for} \quad \lambda \in \mathbb{C}\setminus \mathbb{Z}.
\end{equation*}
To find the partial fraction decomposition of $\frac{1}{\sin \varphi}$ and other similar transcendental functions, Euler argued as if they were rational functions. For those he outlined the procedure in \cite{E101}, \cite{E162}, \cite{E163} and gave an improved version in the last chapter of \cite{E212}. Sandifer wrote a nice paper on this method in \cite{Sa07}.\\[2mm]
The first step is, as usual, to find the zeros of the denominator, i.e., $\sin \varphi$. They are, as it was demonstrated in section \ref{subsubsec: Euler's Proof of the Sine Product Formula}, $\varphi_i = i \pi$ with $i \in \mathbb{Z}$. Furthermore, all of those zeros are simple.\\[2mm]
Hence Euler  made the ansatz

\begin{equation*}
\dfrac{1}{\sin \varphi} = \dfrac{A_i}{\varphi -i \pi}+R_i(\varphi),
\end{equation*}
$A_i$ being a constant to be determined, $R_i(\varphi)$ being a function that does not contain the factor $\varphi -i \pi$ or any positive or negative powers of it. To determine the constant $A_i$,  he wrote

\begin{equation*}
A_i = \dfrac{\varphi - i \pi}{\sin \varphi}-R_i(\varphi)(\varphi - i \pi).
\end{equation*}
Since $R_i(\varphi)$ does not contain $\varphi - i \pi$ or any power of it, we  have

\begin{equation*}
A_i = \lim_{\varphi \rightarrow i \pi} \dfrac{\varphi - i \pi}{\sin \varphi} = \lim_{\varphi \rightarrow i \pi} \dfrac{1}{\cos \varphi} = (-1)^i,
\end{equation*}
where L'Hospital's rule was used in the second step.\\[2mm]
Since this method works for all zeros Euler then concluded

\begin{equation*}
\dfrac{1}{\sin \varphi}= +\dfrac{1}{\varphi}-\dfrac{1}{\varphi - \pi}-\dfrac{1}{\varphi + \pi}+\dfrac{1}{\varphi - 2\pi}+\dfrac{1}{\varphi + 2\pi}-\dfrac{1}{\varphi - 3\pi}-\dfrac{1}{\varphi +3\pi}+\cdots
\end{equation*}
This follows from the integral representation we used above to prove the sine product formula in section \ref{subsubsec: Proof Euler could have given} and can vice versa be used to show that the integral formula is correct. Although this formula turns out to be right, Euler's reasoning is not quite correct. We will elaborate on this in the next section. For now, let us simplify the result.\\
Adding each two terms we find

\begin{equation*}
\dfrac{1}{\sin \varphi}-\dfrac{1}{\varphi}= \dfrac{2 \varphi}{\pi^2 - \varphi^2}- \dfrac{2 \varphi}{4\pi^2 - \varphi^2}+ \dfrac{2 \varphi}{9\pi^2 - \varphi^2}- \dfrac{2 \varphi}{16\pi^2 - \varphi^2}+\cdots
\end{equation*}
Diving by $2\varphi$ and then setting $\varphi = \lambda \pi$ we arrive at

\begin{equation*}
\dfrac{\pi}{\lambda \sin \lambda \pi}-\dfrac{1}{2 \lambda^2}=\dfrac{1}{1-\lambda^2}-\dfrac{1}{4-\lambda^2}+\dfrac{1}{9-\lambda^2}-\dfrac{1}{16-\lambda^2}+ \cdots
\end{equation*}
This is the claimed formula and hence completes the proof.

\subsubsection{An Example in which Euler's Method fails - $\frac{1}{e^z-1}$}
\label{subsubsec: Example 2}

In the preceding section, we already mentioned that Euler's method to find the partial fraction decomposition of a transcendental function does not work in general.
We want to illustrate this with an example that we actually already needed above in section \ref{subsec: Correction of Euler's Approach}.\\[2mm]
Let us try to find the partial fraction decomposition of $(e^z-1)^{-1}$.\\
The first step is again to find all the zeros of the denominator. They are $z_k=2 k \pi i$ with an integer number $k$. Furthermore, they are all simple.\\[2mm]
Using Euler's ansatz let, us set

\begin{equation*}
\dfrac{1}{e^z-1}= \dfrac{A_k}{z-2 k \pi i}+ R_k(z),
\end{equation*}
where $A_k$ is a constant we want to determined and $R_k$ is a function not containing $z-2 k \pi i$ or any powers of it. Proceeding as above we find $A_k =1$.\\
Therefore, we can write

\begin{equation*}
\dfrac{1}{e^z-1}= \sum_{k \in \mathbb{Z}}\dfrac{1}{z -2 k \pi i}+ R(z),
\end{equation*}
and we have to determine $R(z)$. Euler simply would have assumed $R(z)$ to be zero according to the procedure outlined in \cite{E592}; this will turn out to be not true. To see this, consider

\begin{equation*}
R(z) = \dfrac{1}{e^z -1} - \sum_{k \in \mathbb{Z}}\dfrac{1}{z -2 k \pi i}.
\end{equation*}
From this we find that $R(z)$ is a bounded holomorphic function and hence constant by Liouville's theorem.\\
To find the value of $R$ consider the Laurent series of $(e^z-1)^{-1}$ around $z=0$. This series turns out to be

\begin{equation*}
\dfrac{1}{e^z -1} = \dfrac{1}{z}\sum_{n=0}^{\infty} \dfrac{B_n}{n!}z^n,
\end{equation*}
where $B_n$ are the Bernoulli numbers. As we mentioned in section \ref{subsec: Euler's Derivation of the Formula}, Euler essentially found this generating function for the Bernoulli number already in \cite{E25}, but  especially pointed it out in his studies concerning the Euler-Maclaurin summation formula, see, e.g.,  \cite{E47}, \cite{E55} and in his book \cite{E212}.\\[2mm]
Using this series we have 

\begin{equation*}
R= \dfrac{B_0}{z}+B_1+\dfrac{1}{z}\sum_{n=2}^{\infty} \dfrac{B_n}{n!}z^n - \dfrac{1}{z}-\sum_{k \in \mathbb{Z}\setminus \lbrace 0\rbrace}^{\prime}\dfrac{1}{z -2 k \pi i}.
\end{equation*}
 But from the definition of the Bernoulli numbers one easily finds $B_0=1$ and $B_1 =-\frac{1}{2}$. Hence putting $z=0$ in the equation for $R$ we obtain

\begin{equation*}
R(z)=R(0)=R=B_1=-\frac{1}{2}.
\end{equation*}
Therefore, we obtain the partial fraction decomposition

\begin{equation*}
\dfrac{1}{e^z-1}=-\dfrac{1}{2}+\sum_{k \in \mathbb{Z}}\dfrac{1}{z -2 k \pi i}.
\end{equation*}
As we saw above in section \ref{subsec: Correction of Euler's Approach}, the extra term of $-\frac{1}{2}$  turns out to be of major importance in the derivation of the solution of the simple difference equation.

\subsection{Euler on the Euler-Mascheroni Constant}
\label{subsec: gamma meets Gamma - Euler on the Euler-Mascheroni Constant}

We needed the Euler-Mascheroni\footnote{Mascheroni's name was added to the constant, since  in his elaborations to Euler's book on integral calculus he found a new formula for $\gamma$ and used it to calculate several digits of it. Additionally, he found several other functions, in whose Taylor series expansion it appears. Mascheroni's ideas are reprinted in the Opera Omnia version of Euler's book \cite{E366}.} constant $\gamma$ in the derivation of the Weierstra\ss{} product formula for $\Gamma(x)$ in section \ref{subsubsec: Finding the constant K}; hence we want to  take the opportunity to consider  Euler's contribution to the constant. The $\gamma$ constant appears frequently at various places in mathematics. Therefore, it is natural that it also appears at various places in Euler's works. But we want to focus mainly on three papers \cite{E43}, the first occurrence of $\gamma$, \cite{E583}, a paper devoted to $\gamma$, and \cite{E629} considering one singular expression for $\gamma$ in more detail\footnote{For an interesting historical account, confer also Sandifer's article in \cite{Sa15}.}.

\subsubsection{Euler's Discovery of $\gamma$}
\label{subsubsec: Euler's Discovery of gamma}

Euler discovered the constant, nowadays called $\gamma$, in 1740 in \cite{E43}. That paper is also interesting for a another reason. Euler stated the modern convergence criterion for the convergence of an infinite series in $\S$ 2; he wrote:

\begin{center}
    {\em A series, which continued to infinity has a finite sum, even though it is continued twice as far, will  never gain any increment, but that what is added after infinity, will actually be infinitely small. For, if it would not be like this, the sum, even though it is continued to infinity, would not be defined and hence not finite. Hence it follows, if that, what results beyond the infinitesimal term, is of finite magnitude, that the sum of the series is necessarily finite.} 
\end{center}
This is essentially Cauchy's definition for the convergence of a series. But let us discuss how Euler discovered $\gamma$. He encounters it the first time in $\S 6$, where he noted, if 

\begin{equation*}
    s := \sum_{n=1}^{i} \dfrac{c}{a+(n-1)b}
\end{equation*}
and $i$ is a very large number,

\begin{center}
\begin{figure}
\centering
    \includegraphics[scale=0.9]{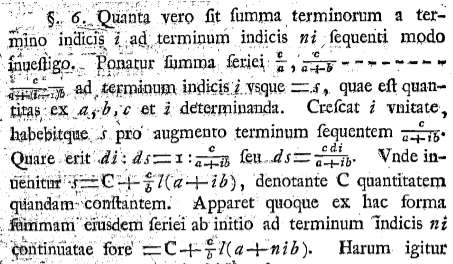}
    \caption{Discovery of the Euler-Mascheroni Constant}
     First appearance of the Euler-Mascheroni constant in print.  In this paper (\cite{E43}) Euler denotes it by $C$. He uses $i$ to denote an infinitely large number. 
    \end{figure}
\end{center}
that

\begin{equation*}
    \dfrac{ds}{di}= \dfrac{c}{a+bi} \quad \text{and hence} \quad s = C +\dfrac{c}{b}\log(a+ib).
\end{equation*}
The constant $C$ will turn out to be $\gamma$ in the case $a=b=c=1$. By this reasoning, Euler found that 

\begin{equation*}
    \lim_{n \rightarrow \infty} \left(\sum_{k=1}^{n}\dfrac{1}{k}-\log(n+1)\right)
\end{equation*}
is a finite number, although both the harmonic series and the logarithm become infinite for $n\rightarrow \infty$. Euler was certainly intrigued by this and tried to find $\gamma$. For this,  he  noted that:

\begin{equation*}
    \log \left(\dfrac{2}{1}\right)+ \log \left(\dfrac{3}{2}\right)+ \cdots + \log \left(\dfrac{n+1}{n}\right)= \log(n+1).
\end{equation*}
Next, he noted the series expansion:

\begin{equation*}
    \log \left(\dfrac{n+1}{n}\right)= \log \left(1+\dfrac{1}{n}\right)= \dfrac{1}{n}-\dfrac{1}{2n^2}+\dfrac{1}{3n^3}- \text{etc.}
\end{equation*}
which is is just the Taylor series expansion of $\log(1+x)$ around $x=0$ applied for $x=\frac{1}{n}$. Since $n \in \mathbb{N}$, the convergence condition $|x|\leq 1$ is met. Therefore, we have

\begin{equation*}
        \renewcommand{\arraystretch}{2,5}
\setlength{\arraycolsep}{0.0mm}
\begin{array}{ccccccccccccccccccccc}
     1 &~=~ & \log 2 &~+~& \dfrac{1}{2} & ~-~ & \dfrac{1}{3} &~+~ & \dfrac{1}{4} &~-~& \text{etc.} \\
      \dfrac{1}{2} &~=~ & \log \left(\dfrac{3}{2}\right) &~+~& \dfrac{1}{2 \cdot 2^2} & ~-~ & \dfrac{1}{3 \cdot 2^3} &~+~ & \dfrac{1}{4\cdot 2^4} &~-~& \text{etc.} \\
       \dfrac{1}{3} &~=~ & \log \left(\dfrac{4}{3}\right) &~+~& \dfrac{1}{2 \cdot 3^2} & ~-~ & \dfrac{1}{3 \cdot 3^3} &~+~ & \dfrac{1}{4\cdot 3^4} &~-~& \text{etc.} \\
          \dfrac{1}{4} &~=~ & \log \left(\dfrac{5}{4}\right) &~+~& \dfrac{1}{2 \cdot 4^2} & ~-~ & \dfrac{1}{3 \cdot 4^3} &~+~ & \dfrac{1}{4\cdot 4^4} &~-~& \text{etc.} \\
          \vdots & & \\
             \dfrac{1}{n} &~=~ & \log \left(\dfrac{n+1}{n}\right) &~+~& \dfrac{1}{2 \cdot n^2} & ~-~ & \dfrac{1}{3 \cdot n^3} &~+~ &\dfrac{1}{4\cdot n^4} &~-~& \text{etc.} \\
\end{array}
\end{equation*}
Therefore, adding the columns, we arrive at:

\begin{equation*}
       \renewcommand{\arraystretch}{2,5}
\setlength{\arraycolsep}{0.0mm}
\begin{array}{cccccccccccccccccc}
1+\dfrac{1}{2}+\dfrac{1}{3}+ \cdots + \dfrac{1}{n}     &~=~ & \log(n+1) &~+~& \dfrac{1}{2}\bigg(1 &~+~& \dfrac{1}{2^2}&~+~&\dfrac{1}{3^2}&~+~&\dfrac{1}{4^2}&~+~&\text{etc.}\bigg) \\  
         &  &  &~-~& \dfrac{1}{3}\bigg(1 &~+~& \dfrac{1}{2^3}&~+~&\dfrac{1}{3^3}&~+~&\dfrac{1}{4^3}&~+~&\text{etc.}\bigg)\\
           &  &  &~-~& \dfrac{1}{4}\bigg(1 &~+~& \dfrac{1}{2^4}&~+~&\dfrac{1}{3^4}&~+~&\dfrac{1}{4^4}&~+~&\text{etc.}\bigg)\\
           &  & \text{etc.}
\end{array}
\end{equation*}
Therefore, if we call $\zeta(m) = \sum_{n=1}^{\infty} \frac{1}{n^m}$, we conclude:

\begin{equation*}
    \gamma = \sum_{m=2}^{\infty}\dfrac{(-1)^m}{m}\zeta(m).
\end{equation*}
By the Leibniz criterion, this series is seen to converge and hence Euler proved that $\gamma$ indeed exists and is finite.

\subsubsection{Euler's Formulas for $\gamma$}
\label{subsubsec: Euler's formulas for gamma}

As mentioned above \cite{E583} is solely devoted to the determination of alternate expressions for $\gamma$.

\begin{center}
\begin{figure}
\centering
    \includegraphics[scale=0.9]{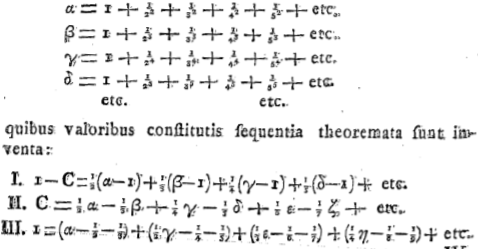}
    \caption{Formulas for the Euler-Mascheroni Constant - Part 1}
    First part of Euler's formulas for the Euler-Mascheroni constant from \cite{E583}. He wrote $\alpha$, $\beta$, $\gamma$ etc. for $\zeta(2)$, $\zeta(3)$, $\zeta(4)$ etc.  and $C$ for the Euler-Mascheroni constant $\gamma$.
    \end{figure}
\end{center}
We do not want to prove them here, but want to mention some of the results and ideas. Probably the most interesting formula is (formula I. in the figure):

\begin{equation*}
1-    \gamma = \sum_{m=2}^{\infty} \dfrac{1}{m}\left(\zeta(m)-1\right).
\end{equation*}
Euler listed all other formulas, involving series, at the end of this paper.
\begin{center}
\begin{figure}
\centering
    \includegraphics[scale=0.9]{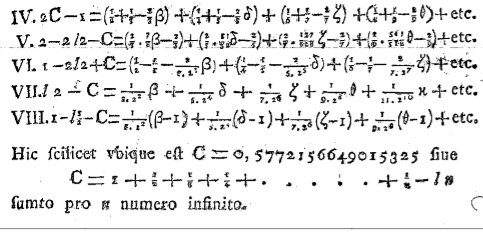}
      \caption{Formulas for the Euler-Mascheroni Constant - Part 2}
     Second part of Euler's formulas for the Euler-Mascheroni constant from \cite{E583}.  He writes $\alpha$, $\beta$, $\gamma$ etc. for $\zeta(2)$, $\zeta(3)$, $\zeta(4)$ etc.  and $C$ for the Euler-Mascheroni constant $\gamma$.
    \end{figure}
\end{center}
He also gave an integral formula\footnote{\cite{E629} is completely devoted to this one formula.}, namely

\begin{equation*}
    \gamma = \int\limits_{0}^{1} \left(\dfrac{1}{1-z}+\dfrac{1}{\log z}\right)dz.
\end{equation*}
He found it in true Eulerian fashion. First, he notes that

\begin{equation*}
\lim_{x \rightarrow 1}    \log \dfrac{1-x^n}{1-x} = n 
\end{equation*}
and

\begin{equation*}
    \log \left(1-x^n\right)=-n \int \dfrac{x^{n-1}dx}{1-x^n};
\end{equation*}
in particular, for $n=1$

\begin{equation*}
    \log \left(1-x\right)=- \int \dfrac{dx}{1-x}.
\end{equation*}
Moreover,

\begin{equation*}
    \int\limits_{0}^{1}\dfrac{(1-x^n)}{1-x}dx = 1 +\dfrac{1}{2} + \cdots + \dfrac{1}{n}
\end{equation*}
which follows integrating  the geometric sum,

\begin{equation*}
    \dfrac{1-x^n}{1-x} =1 +x +x^2 + \cdots + x^{n-1},
\end{equation*}
term by term. Therefore, we have:

\begin{equation*}
    \gamma = \lim_{n \rightarrow \infty}\left( - \int\limits_{0}^{1} \dfrac{(1-x^n)dx}{1-x} + n \int\limits_{0}^{1}\dfrac{x^{n-1}dx}{1-x^n} -\int\limits_{0}^{1}\dfrac{dx}{1-x}\right),
\end{equation*}
or

\begin{equation*}
\gamma = \lim_{n \rightarrow \infty}\left( - \int\limits_{0}^{1} \dfrac{x^ndx}{1-x} + n \int\limits_{0}^{1}\dfrac{x^{n-1}dx}{1-x^n} \right).
\end{equation*}
Next, Euler set $x^n=z$ to arrive at

\begin{equation*}
\gamma = \lim_{n \rightarrow \infty}\left( -\dfrac{1}{n} \int\limits_{0}^{1} \dfrac{z^{\frac{1}{n}}dz}{1-z^{\frac{1}{n}}}  +\int\limits_{0}^{1}\dfrac{dz}{1-z} \right).
\end{equation*}
Note the known limit

\begin{equation*}
    \log (z) = \lim_{n \rightarrow \infty} n \left(z^{\frac{1}{n}}-1\right).
\end{equation*}
Pull the limit inside the integral and use the above limit and $z^{\frac{1}{n}} \rightarrow 0$ for $n \rightarrow \infty$ and we finally arrive at

\begin{equation*}
    \gamma = \int\limits_{0}^{1} \left(\dfrac{1}{1-z}+\dfrac{1}{\log z}\right)dz.
\end{equation*}
Therefore, we arrived at the formula, which is subject of \cite{E629}. But the formulas Euler presented in that paper are not as useful as those in \cite{E583}. So we will not discuss this here.\\[2mm]
Finally, we want to add Euler's conjecture that $\gamma$ is a logarithm of an important number (confer $\S 2$ of \cite{E583}). Unfortunately, Euler did not further explain what "important" means in this context. In general, not much is known about the Euler-Mascheroni constant. Not even whether it is irrational or not. See, e.g., \cite{Ha17} for an entertaining account on the history and current state of the art concerning Euler's constant $\gamma$.

\newpage

\appendix
\appendixpage
\addappheadtotoc

\renewcommand\thechapter{\Roman{chapter}}
\lhead[\thepage]{}
\chead[Translations]{Appendix}
\rhead[]{\thepage}
\chapter{On the Translations of Euler's Papers}

\section{Possible Questions about the Translations}

We answer some possible question the reader might ask about the translations provided in the following.

\subsection{Where can I find the original Papers?}

Scanned Versions of the original papers  can be found at  the Euler Archive online. Moreover, they are all reprinted in the Opera Omnia, Euler's collected works published by Birkh\"auser\footnote{Detailed information on \cite{E19}: Original title: "De progressionibus transcendentibus seu quarum termini generales algebraice dari nequeunt", first published in {Commentarii academiae scientiarum Petropolitanae} 5 (1730/31), 1738, p. 36-57, reprint in: \textit{Opera Omnia}: Series 1, Volume 14, pp. 1 - 24, Eneström-Number E19}\footnote{Detailed information on \cite{E368}: "De curva hypergeometrica hac aequatione expressa $y = 1 \cdot 2 \cdot 3 \cdots x$",\\
first published in Novi Commentarii academiae scientiarum Petropolitanae 13, 1769, pp. 3-66,\\
reprint in: Opera Omnia: Series 1, Volume 28, pp. 41 - 98, Eneström-Number E368}\footnote{Detailed information on \cite{E421}: Original title: "Evolutio formulae integralis $\int x^{f-1} dx (\log x)^\frac{m}{n}$ integratione a valore $x = 0$ ad $x = 1$ extensa", first published {Novi Commentarii academiae scientiarum Petropolitanae 16}, 1772, pp. 91-139, reprint in Opera Omnia: Series 1, Volume 17, pp. 316 - 357, Eneström-Number E421}.

\subsection{Why these three Papers?}

As mentioned in the main text, the three papers $E19$, $E368$, $E421$ contain all formulas that Euler contributed to the $\Gamma$-Function. Moreover, $E19$ was the first published paper containing the definition of the $\Gamma$-function. In the papers $E368$ and $E421$ Euler summarised everything he had discovered on the theory of the $\Gamma$-function throughout the years.

\subsection{What was changed compared to Euler's Originals?}

In the translations we stayed as close as possible to the original, concerning both the translation and the notation. Euler tends to write rather long sentences, which is rather unusual in the English language. Thus, at some instance we split such sentences into two shorter ones. 
Notationwise, we did not change much. Most of the modern notation we use today traces back to Euler so that we had to change only minor things. Euler simply wrote $lx$ for the logarithm of $x$, in the translation we always write $\log x$, i.e we made the change $\operatorname{l}\mapsto \log$ Figure 2.14 provides an example for Euler's notation of the logarithm. Furthermore, Euler wrote $\text{sin.}x$, $\text{cos.}x$, $\text{tan.}x$ instead of $\sin x$, $\cos x$, $\tan x$, since he abbreviated "\textit{sinus}", "\textit{cosinus}", "\textit{tangent}" by it. Thus, we made the changes $\text{tan.}\mapsto \tan$, $\text{cos.} \mapsto \cos$, $\text{sin.} \mapsto \sin$.  Euler's notation is seen, e.g., in figure 2.20. Additionally, Euler did not draw the square root sign over the complete expression it is supposed to contain but just in front of it. For example, Euler would write $\sqrt{}(1-xx)$ instead of the modern notation $\sqrt{1-xx}$. See figure 2.22 for an example of Euler's use of the square root sign. Since Euler's notation tends to be confusing at times, this was replaced by the modern notation. 

\subsection{Why were certain Things not changed?}

We did not add limits of integration when Euler talked about integrals\footnote{He did so, e.g., in \cite{E19} and \cite{E421}.}, since Euler always described them in the text or it is is clear form the context what he meant. Furthermore, we did not write long formulas in compact form, e.g., using the summation sign to denote an infinite sum, since written out matters are often more clear than in the compact notation.

\subsection{What about the Footnotes?}

In the translations one will find several footnotes. They were all included by the translator to provide additional information not given by Euler. Indeed, Euler did not make any footnotes in any of his works. At Euler's time, mathematicians did not always provide the reader with the precise source of theorem they applied in the respective paper. Thus, some of the footnotes state the missing information, i.e. the precise source Euler actually means.
In this context, we want to mention that footnotes about references Euler made to other works were also included by the respective editor in the Opera Omnia version of the Latin original.\\

\lhead[\thepage]{}
\chead[Appendix]{Translation of E19}
\rhead[]{\thepage}
\chapter{Translation of E19 - On transcendental Progressions or those whose general terms cannot be given algebraically}

\paragraph*{§1}

After  on the occasion of the observations on series Goldbach had discussed in the Society\footnote{Euler refers to Goldbach's work on series the latter presented at the St. Petersburg Academy of Sciences. Euler spent a whole paper on Goldbach's findings, namely \cite{E72}.}, I recently tried to find a general expression, which would give all terms of this progression

\begin{equation*}
1+1\cdot 2 +1 \cdot 2 \cdot 3 + 1 \cdot 2 \cdot 3 \cdot 4 +\text{etc.};
\end{equation*}
considering, if it is continued to infinity, that it is eventually confounded with the geometric series, I discovered the following expression

\begin{equation*}
\frac{1 \cdot 2^n}{1+n}\cdot \frac{2^{1-n} \cdot 3^n}{2+n}\cdot \frac{3^{1-n} \cdot 4^n}{3+n} \cdot \frac{4^{1-n} \cdot 5^n}{4+n} \cdot \text{etc.},
\end{equation*}
which expresses the term of order $n$ in the before-mentioned progression. It certainly does not terminate in either case, neither if $n$ is an integer number nor if it is a fraction;  but it only yields approximations to find the respective term, if the cases $n=0$ and $n=1$ are excluded, in which cases the formula is immediately seen to be $1$. Set $n=2$; one will then have

\[
\dfrac{2\cdot 2}{1\cdot 3}\cdot\dfrac{3\cdot 3}{2\cdot 4}\cdot\dfrac{4 \cdot 4}{3\cdot 5}\cdot\dfrac{5\cdot 5}{4\cdot 6} \cdot \text{etc.} = \text{ the second term $2$}
\]
If it is $n=3$, one will have

\[
\dfrac{2\cdot 2\cdot 2}{1\cdot 1\cdot 4}\cdot\dfrac{3\cdot 3\cdot 3}{2\cdot 2\cdot 5}\cdot\dfrac{4\cdot 4\cdot 4}{3\cdot 3\cdot 6}\cdot\dfrac{5\cdot 5\cdot 5}{4\cdot 4\cdot 7}\cdot\text{etc} = \text{the third term $6$}
\]
\paragraph*{§2}
But although this expression seems to have no use for finding the terms, it can nevertheless be applied  to interpolate the series or the terms, whose indices are fractional numbers. But I decided to explain nothing about this here, since  more appropriate  ways  to achieve the same will be exhibited below. I only want to mention  everything that is necessary to get to the things to follow about the general term. I tried to find the term corresponding to the index $n=\frac{1}{2}$ or the term which falls in the middle between the first, $1$, and the preceding one, which is also $1$. But having put $n=\frac{1}{2}$ I obtain the product

\[\sqrt{\dfrac{2\cdot 4}{3\cdot 3}\cdot\dfrac{4\cdot 6}{5\cdot 5}\cdot\dfrac{6\cdot 8}{7\cdot 7}\cdot\dfrac{8\cdot 10}{9\cdot 9}\cdot\text{etc.}},\]
which expresses the term in question. But this series, to me, seemed to be similar to the one I remembered to have seen in Wallis's works on the area of the circle\footnote{Euler refers to Wallis' book "{}Arithmetica infintorum sive nova method inquirendi in curvilineorum quadraturam, aliaque difficiliora Matheseos problemata{}", published 1655}. For, Wallis found that the area of the circle has a ratio to the diameter of the circle as of

\[
2\cdot 4\cdot 4\cdot 6\cdot 6\cdot 8\cdot 8\cdot 10\cdot\text{etc.} \quad \text{to} \quad  3\cdot 3\cdot 5\cdot 5\cdot 7\cdot 7\cdot 9\cdot 9\cdot\text{etc.}
\]
Therefore, if the diameter was $=1$, the area of the circle will be

\[
= \dfrac{2\cdot 4}{3\cdot 3}\cdot\dfrac{4\cdot 6}{5\cdot 5}\cdot\dfrac{6\cdot 8}{7\cdot 7}\cdot\text{etc.}
\]
Therefore, from the agreement of this series with mine it is possible to conclude that the term corresponding to the index $\frac{1}{2}$ is equal to the square root of the area of the circle, whose diameter is $=1$.
\paragraph*{§3}

I had believed before that the general term of the series $1$, $2$, $6$, $24$ etc., if not algebraically, is  at least given exponentially\footnote{Euler means as an exponential, i.e. as $a^b$ for some real numbers $a$ and $b$.}. But after I had understood that certain intermediate terms depend on the quadrature of the circle, I realized that neither algebraic nor exponential quantities are sufficient to express it. For, the general term of the progression must be of such a nature that  one time it contains algebraic quantities but another time quantities depending on the quadrature of the circle and even on other quadratures; and no algebraic nor exponential formula fulfills this condition.
\paragraph*{§4}

But after I had remembered that among differential  formulas  quantities  exist, which in certain cases admit an integration and then yield algebraic quantities, but in other cases  do not admit an integration and then exhibit quantities depending on the quadratures of curves, it came to mind that maybe formulas of this kind are appropriate to express the general terms of the mentioned progression and other similar ones. And in the following, I will call progressions requiring such general terms, which cannot be given algebraically, \textit{transcendental}; as Geometers used to call everything exceeding the power of common Algebra, transcendental.
\paragraph*{§5}

Therefore, I thought about what properties differential formulas should have in order to express general terms of progressions in the best possible way. But the general term is a formula, into which both constant as well certain other non constant quantities such as $n$ enter, which number $n$ gives the order of the terms or the index such that, if the third term is in question, instead of $n$ one has to put $3$. But  also a certain variable quantity must be contained in the differential formula. For this quantity it is not advisable to use $n$, since this variable is not to be integrated over, but rather, after the formula has been integrated or when it is assumed that it has been integrated,   this variable $n$ is just used to form the progression. Therefore,  a certain variable quantity $x$ must be contained in the differential formula, which after the integration must be set equal to another number concerning the progression\footnote{By this Euler means that the upper limit of the integral has to be chosen appropriately}; and hence the term, whose index is $n$, results.

\paragraph*{§6}

In order that this is understood more clearly, I say that $\int pdx$ is the general term of the progression which is to be found in the following from it; but let $p$ denote an arbitrary function of $x$ and of constants, one of which constants must be $n$. Now assume $pdx$ to be integrated and augmented by such a constant such that having put $x=0$ the whole integral vanishes; then put $x$ equal to a certain known quantity. Having done this  only quantities related to the progression will remain in the found integral, and that integral will express the term corresponding to the index $=n$.  In other words, the integral determined in this way will be the general term. If this integration is actually possible, the differential formula is not necessary and the progression formed from this will have a general algebraic term; but things change, if the integration only succeeds for certain numbers  $n$.

\paragraph*{§7}

Therefore, I considered and tried many differential formulas of this kind only admitting an integration,  if  a positive integer is substituted for $n$ such that the principal terms become algebraic, and hence formed progressions. Therefore, their general terms were immediately clear, and it will be possible to define the quadrature describing the intermediate terms.  I will certainly not go through many formulas of this kind here, but will only treat a general one  which extends very far and which can be accommodated to all progressions, whose the arbitrary terms are  products consisting of a certain number of terms depending on the index; the factors of these products are fractions, whose numerators and denominators proceed in an arbitrary arithmetic progression, as, e.g.,

\[
\dfrac{2}{3} + \dfrac{2\cdot 4}{3\cdot 5} + \dfrac{2\cdot 4\cdot 6}{3\cdot 5\cdot 7} + \dfrac{2\cdot 4\cdot 6\cdot 8}{3\cdot 5\cdot 7\cdot 9} + \text{ etc.}
\]
\paragraph*{§8}
Let this formula be given

\[
\int x^{e}dx(1 - x)^{n}
\]
describing the general term; this formula, integrated in such a way that it becomes $=0$, if  $x=0$, and then having set  $x=1$,  expresses the term of order $n$ of the progression resulting from it. Therefore, let us see, which progression it actually describes. We have
\[
(1 - x)^{n} = 1 - \dfrac{n}{1}x + \dfrac{n(n-1)}{1\cdot 2}x^{2} - \dfrac{n(n-1)(n-2)}{1\cdot 2\cdot 3}x^{3} + \text{ etc.}
\]
and hence
\[
x^edx(1 - x)^{n} = x^{e}dx - \dfrac{n}{1}x^{e + 1}dx + \dfrac{n(n-1)}{1\cdot 2}x^{e + 2}dx - \dfrac{n(n-1)(n-2)}{1\cdot 2\cdot 3}x^{e + 3}dx + \text{ etc.}\]
Hence
\[
\int x^{e}dx(1 - x)^{n} = \dfrac{x^{e + 1}}{e + 1} - \dfrac{nx^{e + 2}}{1\cdot (e + 2)} + \dfrac{n(n-1)x^{e + 3}}{1\cdot 2\cdot (e + 3)} - \dfrac{n(n-1)(n-2)x^{e + 4}}{1\cdot 2\cdot 3\cdot (e + 4)} + \text{ etc.}
\]
Set $x=1$, since the addition of the constant is not necessary; and one will have
\[
\dfrac{1}{e + 1} - \dfrac{n}{1\cdot (e + 2)} + \dfrac{n(n-1)}{1\cdot 2\cdot (e + 3)} - \dfrac{n(n-1)(n-2)}{1\cdot 2\cdot 3\cdot (e + 4)} + \text{ etc.}\]
as the general term in question of the series. The series will be of such a nature that, if  $n=0$, the corresponding term results to be $= \frac{1}{e + 1}$; if $n = 1$, the term $\frac{1}{(e+1)(e+2)}$; if $n=2$, the corresponding  term is $=\frac{1 \cdot 2}{(e+1)(e+2)(e+3)}$, if  $n=3$, then the corresponding term is  $=\frac{1 \cdot 2 \cdot 3 \cdot 4}{(e+1)(e+2)(e+3)(e+4)}$; the rule describing how  these terms proceed, is manifest.
\paragraph*{§9}
Therefore, I obtained this progression
\[
\dfrac{1}{(e + 1)(e + 2)} + \dfrac{1\cdot 2}{(e + 1)(e + 2)(e + 3)} + \dfrac{1\cdot 2\cdot 3}{(e + 1)(e + 2)(e + 3)(e + 4)} + \text{ etc.},
\]
whose general term is
\[
\int x^{e}dx(1 - x)^{n}.
\]
On the other hand, the term of order $n$ will be  expressed by this form

\[
\dfrac{1\cdot 2\cdot 3\cdot 4\cdots n}{(e + 1)(e + 2)\cdots (e + n +1)}.
\]
This form certainly suffices to find the terms of  integer indices, but if the indices were no integers, this form cannot be used to find the corresponding terms. This following series will be helpful to find  them approximately
\[
\dfrac{1}{e + 1} - \dfrac{n}{1\cdot (e + 2)} + \dfrac{n(n-1)}{1\cdot 2\cdot (e + 3)} - \dfrac{n(n-1)(n-2)}{1\cdot 2\cdot 3\cdot (e + 4)} + \text{ etc.}\]
If $\int x^e dx(1-x)^n$ is multiplied by $e+n+1$, one will have a progression, whose term of order $n$ has this form

\[
\dfrac{1\cdot 2\cdot 3\cdots n}{(e + 1)(e + 2)\cdots (e + n)},
\]
whose general term  will therefore be
\[
(e + n + 1)\int x^{e}dx(1 - x)^{n}.
\]
Here it is to be noted that the progression always becomes algebraic, whenever  a positive number is assumed  for $e$. For the sake of an example, set $e=2$; then the $n$-th term of the progression will be
\[
\dfrac{1\cdot 2\cdot 3\cdots n}{3\cdot 4\cdot 5\cdots (n + 2)} \quad\text{or } \quad \dfrac{1\cdot 2}{(n + 1)(n + 2)}.
\]
The general term itself also indicates this; since this term will be
\[
(n + 3)\int xxdx(1 - x)^{n}.
\]
For, its integral is
\[
\left(C - \dfrac{(1 - x)^{n + 1}}{n + 1} + \dfrac{2(1 - x)^{n + 2}}{n + 2} - \dfrac{(1 - x)^{n + 3}}{n + 3}\right)(n + 3);
\]
in order for this to become $=0$, if one has $x=0$, it will be

\[
C = \dfrac{1}{n + 1} - \dfrac{2}{n - 2} + \dfrac{1}{n + 3}.
\]
Set $x=1$; the general term will be

\[
\dfrac{n + 3}{n + 1} - \dfrac{2(n + 3)}{n + 2} + 1= \dfrac{2}{(n + 1)(n + 2)}.
\]
\paragraph*{§10}

Therefore, in order to obtain transcendental progressions, set $e$ equal to the fraction $\frac{f}{g}$. The term of order $n$ of the progression will be
\[
= \dfrac{1\cdot 2\cdot 3\cdots n}{(f + g)(f + 2g)(f + 3g)\cdots (f + ng)}g^{n}
\]
or 
\[
\dfrac{g\cdot 2g\cdot 3g\cdots ng}{(f + g)(f + 2g)(f + 3g)\cdots (f + ng)}.
\]
On the other hand, the general term  will be

\[
= \dfrac{f + (n + 1)g}{g}\int x^{\frac{f}{g}}dx(1 - x)^{n}.
\]
If this is divided by $g^n$, we will obtain the general term for the progression
\[
\dfrac{1}{f + g} + \dfrac{1\cdot 2}{(f + g)(f + 2g)} + \dfrac{1\cdot 2\cdot 3}{(f + g)(f + 2g)(f + 3g)} + \text{ etc.},
\]
whose term of $n$-th order is

\[
= \dfrac{1\cdot 2\cdot 3\cdots n}{(f + g)(f + 2g)\cdots (f + ng)}.
\]
Therefore, the general term of the progression will be

\[
= \dfrac{f + (n + 1)g}{g^{n + 1}}\int x^{\frac{f}{g}}dx(1 - x)^{n}.
\]
If the fraction $\frac{f}{g}$ is not equal to an integer number, or if  $f$ is no multiple of $g$, the progression will be transcendental and the intermediate terms will depend on quadratures.

\paragraph*{§11}

In order to display the general term  more clearly, I want to mention a certain example. In the first progression of the preceding paragraph, let $f=1$ and $g=2$; the term of order $n$ will be

\[
= \dfrac{2\cdot 4\cdot 6\cdot 8\cdots 2n}{3\cdot 5\cdot 7\cdot 9\cdots (2n + 1)},
\]
the progression itself on the other hand will be this one

\[
\dfrac{2}{3} + \dfrac{2\cdot 4}{3\cdot 5} + \dfrac{2\cdot 4\cdot 6}{3\cdot 5\cdot 7} + \text{ etc.},
\]
whose general term will hence be
\[
\dfrac{2n + 3}{2}\int dx(1 - x)^{n}\sqrt{x}.
\]
Let the term corresponding to the index $\frac{1}{2}$ be in question; therefore, it will be $n=\frac{1}{2}$ and one will find the general term in question to be

\[
= 2\int dx\sqrt{x - xx}.
\]
Since this denotes the element of the circular area, it is perspicuous that the term in question is the area of the circle, whose diameter is $=1$.\\
Further, let this series be put forth

\[
1 + \dfrac{r}{1} + \dfrac{r(r - 1)}{1\cdot 2} + \dfrac{r(r - 1)(r - 2)}{1\cdot 2\cdot 3} + \text{ etc.},
\]
which is the  coefficient of the binomial raised to the power $r$. Therefore, the term of order $n$ is

\[
\dfrac{r(r- 1)(r - 2)\cdots (r - n + 2)}{1\cdot 2\cdot 3\cdots (n - 1)}.
\]
In the preceding paragraph on the other hand,  we found  this expression

\[
\dfrac{1\cdot 2\cdot 3\cdots n}{(f + g)(f + 2g)\cdots (f + ng)}.
\]
In order to compare these two expressions, invert the second one such that one has
\[
\dfrac{(f + g)(f + 2g)\cdots (f + ng)}{1\cdot 2\cdots n};
\]
now multiply it  by $\frac{n}{f+ng}$ and it will be

\[
= \dfrac{(f + g)(f + 2g)\cdots (f + (n - 1)g)}{1\cdot 2\cdots (n - 1)};
\]
therefore, it must be $f+g=r$ and $f+2g=r-1$, whence it will be $g=-1$ and $f=r+1$. Treat the following general term in the same way
\[
\dfrac{f + (n + 1)g}{g^{n + 1}}\int x^{\frac{f}{g}}dx(1 - x)^{n}.
\]
For the propounded progression 
\[
1 + \dfrac{r}{1} + \dfrac{r(r - 1)}{1\cdot 2} + \text{ etc.}
\]
this general term will result

\[
\dfrac{n(-1)^{n + 1}}{(r - n)(r - n + 1)\int x^{-r - 1}dx(1 - x)^{n}}.
\]
Let $r=2$; the general term of this progression
\[
1, \quad 2, \quad 1,\quad 0,\quad 0, \quad 0 \text{etc.}
\]
will be
\[
\dfrac{n(-1)^{n + 1}}{(2 - n)(3 - n)\int x^{-3}dx(1 - x)^{n}}.
\]
Here it must be noted that this  and other cases, in which $e+1$ is a negative number, cannot be deduced from the general expression; for, in these cases,  the integral does not become $=0$, if  $x=0$. But in order to treat even these cases, it is convenient to integrate

\[
\int x^{e}dx(1 - x)^{n}
\]
in a peculiar way; for, after the integration an infinite constant is to be added. But whenever $e+1$ is a positive number, as I assumed in  § 8, the addition of the constant is not necessary. But having considered the progression, whose term of order $n$ was the following

\[
\dfrac{r(r - 1)(r - 2)\cdots (r - n + 2)}{1\cdot 2\cdot 3\cdots (n - 1)},
\]
that form of the term corresponding to the index $n$ can be transformed into this one

\[
\dfrac{r(r - 1)\cdots 1}{(1\cdot 2\cdot 3\cdots (n-1))(1\cdot 2\cdots (r - n + 1))}.
\]
But by means of § 14, 
\[
r(r - 1)\cdots 1 = \int dx(-\log(x))^{r}
\]
and

\[
1\cdot 2\cdot 3\cdots (n - 1) = \int dx(-\log(x))^{n - 1}
\]
and

\[
1\cdot 2\cdots (r - n + 1) = \int dx(-\log(x))^{r - n + 1}.
\]
Therefore, the progression considered there
\[
1 + \dfrac{r}{1} + \dfrac{r(r - 1)}{1\cdot 2} + \dfrac{r(r - 1)(r - 2)}{1\cdot 2\cdot 3} + \text{ etc.}
\]
has this general term

\[
\dfrac{\int dx(-\log(x))^{r}}{\int dx(-\log(x))^{n - 1}\int dx(-\log(x))^{r - n + 1}}-
\]
If it was $r=2$, the general term will be
\[
\dfrac{2}{\int dx(-\log(x))^{n - 1}\int dx(-\log(x))^{3 - n}},
\]
to which this progression corresponds
\[
1,\quad 2,\quad 1,\quad 0,\quad 0,\quad 0 \quad \text{etc.};
\]
and if the term of the index $\frac{3}{2}$ is in question, it will be

\[
\dfrac{2}{\int dx(-\log(x))^{\frac{1}{2}}\int dx(-\log(x))^{\frac{3}{2}}}.
\]
Therefore, having called the area of the circle, whose diameter is $=1$, $A$, since 
\[
\int dx(-\log(x))^{\frac{1}{2}} = \sqrt{A}\quad\text{and} \quad \int dx(-\log(x))^{\frac{3}{2}} = \dfrac{3}{2}\sqrt{A},
\]
the term falling in the middle between the first two terms of the progression $1$, $2$, $1$, $0$, $0$, $0$ etc. will be  $\frac{4}{3A}$, this means approximately $\frac{5}{3}$.

\paragraph*{§12}
Now I proceed to the progression I talked about in the beginning,

\[
1 + 1\cdot 2 + 1\cdot 2\cdot 3 + \text{etc.}
\]
and in which the term corresponding to the index $n$ is $1 \cdot 2 \cdot 3 \cdot 4 \cdots n$. This progression is contained in our general one, but the general term must be derived from it in a peculiar way. Until now  I considered the general term, if the term of order $n$ is
 
\[
\dfrac{1\cdot 2\cdot 3\cdots n}{(f + g)(f + 2g)\cdots (f + ng)},
\]
which, if one sets $f=1$ and $g=0$, goes over into $1 \cdot 2 \cdot 3 \cdots n$, which is in question; therefore, in the general term

\[
\dfrac{f + (n + 1)g}{g^{n + 1}}\int x^{\frac{f}{g}}dx(1 - x)^{n}
\]
substitute these values for $f$ and $g$; the general term in question will be
\[
\int\dfrac{x^{\frac{1}{0}}dx(1 - x)^{n}}{0^{n + 1}}.
\]
And I will investigate the value of this general term as follows.

\paragraph*{§13}

Considering the condition that general terms of this kind must fulfill in order to be useful, it is understood that instead of $x$ other functions can be assumed, as long as they were of such a kind that they are $=0$, if  $x=0$, and $=1$, if $x=1$. For, if  a function of this kind is substituted for $x$, the general term will  fulfill the same condition as before. Therefore, write $x^{\frac{p}{f+g}}$ instead of $x$ and as a logical consequence $\frac{g}{f+g}x^{\frac{-f}{g+f}}dx$ instead of $dx$; having done this one will have

\[
\dfrac{f + (n + 1)g}{g^{n + 1}}\int\dfrac{g}{f + g}dx(1 - x^{\frac{g}{f + g}})^{n}.
\]
Now set $f=1$ and $g=0$ here; one will have

\begin{equation*}
\int \frac{dx(1-x^0)^n}{0^n}.
\end{equation*}
But because  $x^0=1$, here we have a case, in which the numerator $(1-x^0)^n$ and the denominator $0^n$ vanish. Therefore, applying the known rule\footnote{By this Euler means what we refer to as L'Hospital's rule today. Euler derived the rule in book \cite{E212} written in 1755. He most likely knew the rule from Jacob Bernoulli, his mentor in his teenage years. Jacob Bernoulli on the other hand also was a teacher of L'Hospital.} let us find the value of the fraction $\frac{1-x^0}{0}$. This will by achieved by finding the value of the fraction $\frac{1-x^z}{z}$ for vanishing $z$; therefore, differentiate the numerator and the denominator with respect to that variable $z$; one will find $\frac{-x^z dz \log (x)}{dz}$ or $- x^z \log (x)$; if now one puts $z=0$, $- \log (x)$ will result. Therefore, 

\begin{equation*}
\frac{1-x^0}{0}=- \log (x).
\end{equation*}

\paragraph*{§14}
Therefore, because 
\[\dfrac{1 - x^{0}}{0} = -\log(x),
\]
it will be
\[
\dfrac{(1 - x^{0})^{n}}{0^{n}} = (-\log(x))^{n}
\]
and therefore the general term in question $\int \frac{dx(1-x^0)^n}{0^n}$ is transformed into $\int dx(- \log (x))^n$. Its value can be expressed by means of quadratures. Therefore, the general term of this progression
\[
1, \quad 2,\quad 6, \quad 24, \quad 120, \quad 720 \quad \text{etc.}
\]
is 

\[
\int dx(-\log(x))^{n},
\]
and it is to be used in the same way as it was prescribed above. That this really is the general term of the propounded progression is also seen from this, that it indeed yields the correct terms for the cases, in which the indices are positive integers. For the sake of an example, let $n=3$; it will be

\[
\int dx(-\log(x))^{3} = \int -dx(\log(x))^{3} = -x(\log(x))^{3} + 3x(\log(x))^{2} - 6x\log(x) + 6x;
\]
the addition of a constant is not necessary, since for $x=0$ everything vanishes; therefore, put $x=1$; since $\log(1)=0$, all terms containing  logarithms will vanish and $6$ will remain, which is the third term.
\paragraph*{§15}

It is clear that this method to find the terms of this series is too laborious,  even for those terms corresponding to integer indices; they are certainly obtained more easily by just continuing the progression. But this expression is nevertheless more than appropriate to find the terms corresponding to rational indices; for, it was not even possible to define these terms using the most laborious methods. If one sets $x=\frac{1}{2}$,  the corresponding term will be $= \int dx \sqrt{- \log (x)}$, whose value is given by quadratures. But at the beginning  [§ 11] I showed that this term is equal to the square root of the area the circle, whose diameter is $1$. Hence it is certainly not possible to conclude the same because of the missing Analysis; but below a method will be explained to reduce the same intermediate terms to quadratures of algebraic curves. And comparing this method to the one already explained it will perhaps be possible to derive many results which can be used to develop the whole field of Analysis even further.
\paragraph*{§16}

The general term of the progression, whose term of order $n$ is given by
 
\[
\dfrac{1\cdot 2\cdot 3\cdots n}{(f + g)(f + 2g)(f + 3g)\cdots (f + ng)},
\]
is, by means of § 10,
\[
\dfrac{f + (n + 1)g}{g^{n + 1}}\int x^{\frac{f}{g}}dx(1 - x)^{n}.
\]
But if the term of order $n$ was

\[
1\cdot 2\cdot 3\cdots n,
\]
then the general term is

\[
\int dx(-\log(x))^{n}.
\]
If this formula is substituted for $1 \cdot 2 \cdot 3 \cdots n$, one will have

\[
\dfrac{\int dx(-\log(x))^{n}}{(f + g)(f + 2g)(f + 3g)\cdots (f + ng)} = \dfrac{f + (n + 1)g}{g^{n + 1}}\int x^{\frac{f}{g}}dx(1 - x)^{n}.
\]
Hence  

\[
(f + g)(f + 2g)(f + 3g)\cdots (f + ng) = \dfrac{g^{n + 1}\int dx(-\log(x))^{n}}{(f + (n + 1)g)\int x^{\frac{f}{g}}dx(1 - x)^{n}}.
\]
Therefore, this expression is the general term of this general progression

\[
(f + g),\quad (f + g)(f + 2g), \quad (f + g)(f + 2g)(f + 3g) \quad \text{etc.}
\]
Therefore, by means of the general term, all terms corresponding to any arbitrary index of all progressions of this kind are defined. What will follow below on the reduction of $\int dx (-\log (x))^n$ to more familiar quadratures or quadratures of algebraic curves, will also be useful here.

\paragraph*{§17}
Let $f+g=1$ and $f+2g=3$; it will be $g=2$ and $f=-1$. Hence this particular  progression will result

\[
1, \quad 1\cdot 3, \quad 1\cdot 3\cdot 5, \quad 1\cdot 3\cdot 5\cdot 7 \quad \text{etc.}
\]
Therefore, its general term is

\[
\dfrac{2^{n + 1}\int dx(-\log(x))^{n}}{(2n + 1)\int x^{-\frac{1}{2}}dx(1-x)^{n}}.
\]
Although here one exponent of $x$ is negative, nevertheless the inconvenience addressed above does not occur here, since it is greater than $-1$. Put $n=\frac{1}{2}$ to find the term corresponding to the index $\frac{1}{2}$; that term  will be

\[
= \dfrac{2^{\frac{3}{2}}\int dx\sqrt{-\log(x)}}{2\int x^{-\frac{1}{2}}dx \sqrt{1 - x}} = \dfrac{\sqrt{2} \cdot\int dx \sqrt{-\log(x)}}{\int\frac{dx - xdx}{\sqrt{x - xx}}}.
\]
But from § 15 it is known that $\int dx \sqrt{- \log (x)}$ gives the square root of the circle, whose diameter is $=1$; let the circumference of that circle be $p$; the area will be $=\frac{1}{4}p$ and hence $\int dx \sqrt{- \log (x)}$ gives $\frac{1}{2}\sqrt{p}$. Further, 

\[
\int \dfrac{dx - xdx}{2 \sqrt{x - xx}} = \int\dfrac{dx}{2\sqrt{x - xx}} + \sqrt{x - xx},
\]
but $\int\frac{dx}{2\sqrt{x - xx}}$ gives the arc, whose sinus versus is $x$, of the circle. Therefore, having put $x=1$ $\frac{1}{2}p$ will result. Therefore, the term in question will be

\[
= \sqrt{\dfrac{2}{p}}.
\]
\paragraph*{§18}
 Since the general term of the progression, whose term of order $n$ is given by
 
\[
(f + g)(f + 2g)\cdots (f + ng),
\]
by means of § 16 reads

\[
\dfrac{g^{n + 1}\int dx(-\log(x))^{n}}{(f + (n + 1)g)\int x^{\frac{f}{g}} dx(1 - x)^{n}},
\]
in like manner, if the term of order $n$ was

\[
(h + k)(h + 2k)\cdots (h + nk),
\]
the general term  will be

\[
\dfrac{k^{n + 1}\int dx(-\log(x))^{n}}{(h + (n + 1)k)\int x^{\frac{h}{k}}dx(1 - x)^{n}}.
\]
Divide the first progression by the second, namely the first term by the first, the second by the second and so forth; in this way one will arrive at a new progression, whose term of order $n$ will be

\[
\dfrac{(f + g)(f + 2g)\cdots (f + ng)}{(h + k)(h + 2k)\cdots (h + nk)}.
\]
And the general term of this progression composed of these two will be

\[
\dfrac{g^{n + 1}(h + (n + 1)k)\int x^{\frac{h}{k}}dx(1 - x)^{n}}{k^{n + 1}(f + (n + 1)g)\int x^{\frac{f}{g}}dx(1 - x)^{n}}.
\]
This term does not contain the logarithmic integral $\int dx(-\log(x))^{n}$.

\paragraph*{§19}

In all general terms of this kind, one has to note that not even for $f$, $g$, $h$, $k$ one has to put constant numbers, but they can be assumed to depend on $n$ arbitrarily. For, in the integration these letters are treated in the same way as $n$, namely as constants. Therefore, let the term of order $n$ be this one

\[
(f +g)(f + 2g)\cdots (g + ng);
\]
set $g = 1$, but $f = \frac{nn - n}{2}$. Since the progression itself is

\[
f + g, \quad  (f + g)(f + 2g), \quad (f + g)(f + 2g)(f + 3g) \quad
\text{etc.}
\]
 sett $1$ instead of $g$ everywhere; the progression will be

\[
f + 1, \quad  (f + 1)(f + 2), \quad (f + 1)(f + 2)(f + 3) \quad \text{etc.}\]
But  one has to write $0$ in the first term instead of $f$, $1$ in the second, $3$ in the third, $6$ in the fourth and so forth; then this progression will result

\[
1, \quad 2\cdot 3, \quad 4\cdot 5\cdot 6, \quad 7\cdot 8\cdot 9\cdot 10 \quad \text{etc.},
\]
whose general term is
\[\dfrac{2\int dx(-\log(x))^{n}}{(nn + n + 2)\int x^{\frac{nn - n}{2}}dx(1 - x)^{n}} 
=\dfrac{2\int dx(-\log(x))^{n}}{(nn + n +2)\int dx(x^{\frac{n - 1}{2}} - x^{\frac{n + 1}{2}})^{n}}.\]
\paragraph*{§20}

Now I proceed to the progressions, which led me to the invention of the artifice to define the intermediate of this progression more easily

\[
1, \quad 2,\quad 6, \quad 24, \quad 120 \quad \text{etc.}
\]
For, this artifice extends  further than only to this progression, since its general term

\[
\int dx(-\log(x))^{n}
\]
also enters the general terms of infinitely many other progressions.
I assume this general term
\[
\dfrac{f + (n + 1)g}{g^{n + 1}}\int x^{\frac{f}{g}}dx(1 - x)^{n},
\]
to which  this term of order $n$  corresponds

\[
\dfrac{1\cdot 2\cdot 3\cdots n}{(f + g)(f + 2g)(f + 3g)\cdots (f + ng)}.
\]
Here I set $f=n$, $g=1$; hence the general term will  be

\[
(2n + 1)\int x^{n}dx(1 - x)^{n} \quad \text{or} \quad (2n + 1)\int dx(x - xx)^{n}
\]
and its form of order $n$ will be

\[
\dfrac{1\cdot 2\cdot 3\cdots n}{(f + g)(f + 2g)(f + 3g)\cdots 2n}.
\]
The progression itself on the other hand is this one
\[
\dfrac{1}{2}, \quad \dfrac{1\cdot 2}{3\cdot 4}, \quad \dfrac{1\cdot 2\cdot 3}{4\cdot 5\cdot 6}\quad \text{etc.}
\]
or this one
\[
\dfrac{1\cdot 1}{1\cdot 1}, \quad \dfrac{1\cdot 2\cdot 1\cdot 2}{1\cdot 2\cdot 3\cdot 4}, \quad \dfrac{1\cdot 2\cdot 3\cdot 1\cdot 2\cdot 3}{1\cdot 2\cdot 3\cdot 4\cdot 5\cdot 6}.
\]
In this expression, the numerators are the squares of the progression $1$, $2$, $6$, $24$ etc.; and now it is easy to find the terms corresponding to rational indices whose denominator is $2$. For,  in the progression $1$, $2$, $6$, $24$ etc. let the term corresponding to the index  $\frac{1}{2}$ be $A$; the term of order $\frac{1}{2}$ of the propounded progression will then be $=\frac{AA}{1}$.

\paragraph*{§21}
In the general term
\[
(2n + 1)\int x^{n}dx(1 - x)^{n}
\]
set $n = \frac{1}{2}$; the term corresponding to that exponent will be
 
\[
2\int dx\sqrt{x - xx} = \dfrac{AA}{1},
\]
whence
\[
A = \sqrt{1\cdot 2\int dx\sqrt{x - xx}}
\]
$=$ to the term of the progression $1$, $2$, $6$, $24$ etc. corresponding to the index $\frac{1}{2}$; hence this term, as   is clear from the integral, is the square root of the area of the circle, whose diameter is $1$. Now call the term of order $\frac{3}{2}$ of this progression $A$; the corresponding term in the assumed progression will be

\[
\dfrac{A\cdot A}{1\cdot 2\cdot 3} = 4\int dx(x - xx)^{\frac{3}{2}},
\]
therefore,
\[
A = \sqrt{1\cdot 2\cdot 3\cdot 4\int dx(x - xx)^{\frac{3}{2}}}.
\]
In like  manner, the term of order $\frac{5}{2}$ will be found to be

\[
= \sqrt{1\cdot 2\cdot 3\cdot 4\cdot 5\cdot 6\int dx(x - xx)^{\frac{5}{2}}}.
\]
From these result I conclude in general that the term of order $\frac{p}{2}$ will be

\[
= \sqrt{1\cdot 2\cdot 3\cdot 4\cdots (p + 1)\int dx(x - xx)^{\frac{p}{2}}}.
\]
Therefore, in this way one finds all terms of the progression $1$, $2$, $6$, $24$ etc., whose indices are fractions, while the denominator is $2$.

\paragraph*{§22}

Further, in the general term

\[
\dfrac{f + (n +1)g}{g^{n + 1}}\int x^{\frac{f}{g}}dx(1 - x)^{n}
\]
I set $f = 2n$, while $g$ remains $=1$; then this expression results

\[
(3n + 1)\int dx(xx - x^{3})^{n}
\]
as general term of the progression

\[
\dfrac{1}{3}, \quad \dfrac{1\cdot 2}{5\cdot 6}, \quad \dfrac{1\cdot 2\cdot 3}{7\cdot 8\cdot 9} \quad \text{etc.}
\]
Multiply that one by the preceding $(2n+1) \int dx (x-xx)^n$; then this expression will result

\[
(2n + 1)(3n + 1)\int dx(x - xx)^{n}\int dx(xx - x^{3})^{n}\int dx(x^3 - x^{4})^{n}.
\]
This will give this progression
\[
\dfrac{1\cdot 1\cdot 1}{1\cdot 2\cdot 3}, \quad \dfrac{1\cdot 2\cdot 1\cdot 2\cdot 1\cdot 2}{1\cdot 2\cdot 3\cdot 4\cdot 5\cdot 6}\quad 
\text{etc.},
\]
where the numerators are the cubes of the corresponding terms of the progression $1$, $2$, $6$, $24$ etc. Let the term of order $\frac{1}{3}$ of this progression be $A$; the corresponding term of that progression will be

\[
\dfrac{A^{3}}{1} = 2\left(\dfrac{2}{3} + 1\right)\int dx(x - xx)^{\frac{1}{3}}\int dx(xx - x^{3})^{\frac{1}{3}},
\]
therefore, the term of order $\frac{1}{3}$ is

\[
\sqrt[3]{1\cdot 2\cdot\dfrac{5}{3}\int dx(x - xx)^{\frac{1}{3}}\int dx(xx - x^{3})^{\frac{1}{3}}};
\]
similarly, the term of order $\frac{2}{3}$ is

\[
\sqrt[3]{1\cdot 2\cdot 3\cdot\dfrac{7}{3}\int dx(x - xx)^{\frac{2}{3}}\int dx(xx - x^{3})^{\frac{2}{3}}}.
\]
And the term of order $\frac{4}{3}$ is

\[
\sqrt[3]{1\cdot 2\cdot 3\cdot 4\cdot 5\cdot\dfrac{11}{3}\int dx(x - xx)^{\frac{4}{3}}\int dx(xx - x^{3})^{\frac{4}{3}}}
\]
and in general the term of order $\frac{p}{3}$ is

\[
\sqrt[3]{1\cdot 2\cdots p\cdot \dfrac{2p + 3}{3}\cdot (p + 1)\int dx(x - xx)^{\frac{p}{3}}\int dx(xx - x^{3})^{\frac{p}{3}}}.
\]
\paragraph*{§23}

If we want to proceed further by putting $f=3n$, it will be necessary to multiply the general term

\[
(4n + 1)\int dx(x^{3} - x^{4})^{n}
\]
by the preceding ones, whence one has

\[
(2n +1)(3n + 1)(4n + 1)\int dx(x - xx)^{n}\int dx(x^{2} - x^{3})^{n},
\]
which is the general term of this series

\[
\dfrac{1\cdot 1\cdot 1\cdot 1}{1\cdot 2\cdot 3\cdot 4}, \quad \dfrac{1\cdot 2\cdot 1\cdot 2\cdot 1\cdot 2\cdot 1\cdot 2}{1\cdot 2\cdot 3\cdot 4\cdot 5\cdot 6\cdot 7\cdot 8} \quad 
\text{etc.}
\]
Using this expression, the terms of the progression $1$, $2$, $6$, $24$ etc. will be defined, whose indices are fractions having the denominator $4$. For, the term, whose index is $\frac{p}{4}$, will be found to be

\[=\sqrt[4]{1\cdot 2\cdot 3\cdots p\left(\dfrac{2p}{4} + 1\right)\left(\dfrac{3p}{4} + 1\right)\left(p + 1\right)}
\]
\[
\times\int dx(x - xx)^{\frac{p}{4}}\int dx(xx - x^{3})^{\frac{p}{4}}\int dx(x^{3} - x^{4})^{\frac{p}{4}}.
\]
Hence it is possible to conclude in general that the term of order $\frac{p}{q}$ is

\[
= \sqrt[q]{1\cdot 2\cdot 3\cdots p\left(\dfrac{2p}{q} + 1\right)\left(\dfrac{3p}{q} + 1\right)\left(\dfrac{4p}{q} + 1\right)\cdots (p + 1)}\]
\[
\times\int dx(x -xx)^{\frac{p}{q}}\int dx(x^{2} - x^{3})^{\frac{p}{q}}\int dx(x^{3} - x^{4})^{\frac{p}{q}}\cdots \int  dx(x^{q - 1} - x^{q})^{\frac{p}{q}}.
\]
Therefore, from this formula the terms corresponding to any arbitrary fractional indices are found by means of a quadrature of algebraic curves; but for this $1 \cdot 2 \cdot 3 \cdots p$ is required, the term, whose index is the numerator of the propounded fraction.

\paragraph*{§24}
In the same way it is possible to proceed further to higher composed progressions by assuming higher composited numbers, but I will not follow this line any further here.  It is also possible to use multiple integrals so that the general term is

\[
\int qdx\int pdx;
\]
for, the integral of $pdx$ must be multiplied by  $qdx$ and what results from the integration must be then integrated again; and the result of this second integration will just then, having put $x=1$, give the general term of the series. But in each of the two integrations, in order for it to be well-defined, a constant of such a kind has to be added that having put $x=0$ the integral likewise becomes $=0$.\\[2mm]
In like manner, general terms are to be treated, which are expressed using several integrals, i.e.

\[
\int rdx\int qdx\int pdx.
\]
But nevertheless   functions of such a kind are always to be taken for $p$, $q$, $r$ that, as often as $n$ is a positive integer, at least algebraic terms result.

\paragraph*{§25}
Let the general term be

\[
\int \dfrac{dx}{x}\int x^{e}dx(1 - x)^{n};
\]
this expression converted into a series gives

\[
\dfrac{x^{e + 1}}{(e + 1)^{2}} - \dfrac{nx^{e + 2}}{1\cdot (e + 2)^{2}} + \dfrac{n(n - 1)x^{e + 3}}{1\cdot 2\cdot (e + 3)^{2}} - \text{etc.}
\]
Having put $x=1$, one will have the term of order $n$ expressed by this series

\[
\dfrac{1}{(e + 1)^{2}} - \dfrac{n}{1\cdot (e + 2)^{2}} + \dfrac{n(n - 1)}{1\cdot 2\cdot (e + 3)^{2}} - \text{etc.}
\]
The progression, beginning from the term corresponding to the index $0$, will be 
\begin{small}
\[
\dfrac{1}{(e + 1)^{2}}, \quad  \dfrac{(e + 2)^{2} - (e + 1)^{2}}{(e + 2)^{2}(e + 1)^{2}}, \quad \dfrac{(e + 3)^{2}(e + 2)^{2} - 2(e + 3)^{2}(e + 1)^{2} + (e + 2)^{2}(e + 1)^{2}}{(e + 3)^{2}(e + 2)^{2}(e + 1)^{2}}
\]
\[\dfrac{(e + 4)^{2}(e + 3)^{2}(e + 2)^{2} - 3(e + 4)^{2}(e + 3)^{2}(e + 1)^{2}+ 3(e + 4)^{2}(e + 2)^{2}(e + 1)^{2} - (e + 3)^{2}(e + 2)^{2}(e + 1)^{2}}{(e + 4)^{2}(e + 3)^{2}(e + 2)^{2}(e + 1)^{2}}\]
\begin{center}
etc.
\end{center}
\end{small}The structure of this progression is manifest and does not require any explanation. Let $e=0$; it will be

\[
\int dx(1 - x)^{n} = \dfrac{1 - (1 - x)^{n + 1}}{n + 1};
\]
therefore, the general term is

\[
\int\dfrac{dx - dx(1 - x)^{n + 1}}{(n + 1)x},
\]
the progression on the other hand will be 

\[
\dfrac{1}{1}, \quad \dfrac{4 - 1}{4\cdot 1}, \quad \dfrac{9\cdot 4 - 2\cdot 9\cdot 1 + 4\cdot 1}{9\cdot 4\cdot 1}, \quad \dfrac{16\cdot 9\cdot 4 - 3\cdot 16\cdot 9\cdot 1 + 3\cdot 16\cdot 4\cdot 1 - 9\cdot 4\cdot 1}{16\cdot 9\cdot 4\cdot 1}.
\]
The difference will constitute this progression

\[
\dfrac{-1}{4\cdot 1}, \quad \dfrac{-9 + 4}{9\cdot 4\cdot 1}, \quad\dfrac{-16\cdot 9 + 2\cdot 16\cdot 4 - 9\cdot 4}{16\cdot 9\cdot 4\cdot 1} \quad 
\text{etc.}
\]

\paragraph*{§26}

Therefore, in this dissertation I achieved, what I mainly intended, namely to  find the general terms of all progressions, each term of which is a product of  factors proceeding in an arithmetic progression, and in which the number of factors depends on the index in an arbitrary manner. But although here the number of factors is always set equal to the index, if the number of factors is desired to depend on it in another way, this will not cause any difficulties. The index is denoted by the letter $n$; if now anyone would require that the number of factors is $\frac{nn+n}{2}$, it is only necessary to substitute $\frac{nn+n}{2}$ for $n$ everywhere.

\paragraph*{§27}

Instead of ending the dissertation here, I want to add something more curious than useful. It is known that by $d^nx$ the differential of order $n$ of $x$ is understood and $d^np$, if $p$ denotes a certain function of $x$ and $dx$ is put constant, is proportional to $dx^n$; and, if $n$ is a positive integer number, the ratio of $d^np$ to $d^nx$ can always be expressed algebraically; consider, e.g., the case $n=2$ and $p=x^3$, the ratio of $d^2(x^3)$  to $dx^2$ will be the same as $6x$ to $1$. Now it is in question, what that ratio will be, if $n$ is a rational number.  The difficulty is easily understood in these cases; for, if $n$ is a positive integer number, $d^n$ is found by iterated differentiation; but this is not possible, if $n$ is a fractional number. But nevertheless by means of interpolations of the progressions I considered in this dissertation, it will be possible to answer this question.

\paragraph*{§28}

Let the ratio of $d^n(z^e)$ to $dz^n$ to be found for constant $dz$, or let the value of the fraction $\frac{d^n(z^e)}{dz^n}$ be in question. First, let us see, what its values are, if $n$ is an integer number; after this we will then proceed to the non-integer cases. If  $n=1$, its value will be

\[ez^{e - 1} = \dfrac{1\cdot 2\cdot 3\cdots e}{1\cdot 2\cdot 3\cdots (e - 1)}z^{e - 1};
\]
I express $e$ in this way such that later on the results we found in this paper can be applied more easily here. \\
If  $n=2$, the value will be

\[
e(e - 1)z^{e - 2} = \dfrac{1\cdot 2\cdot 3\cdots e}{1\cdot 2\cdot 3\cdots (e - 2)}z^{e - 2}.
\]
If  $n = 3$, one will have
\[
e(e - 1)(e - 2)z^{e - 3} = \dfrac{1\cdot 2\cdot 3\cdots e}{1\cdot 2\cdot 3\cdots (e - 3)}z^{e - 3}.
\]
Hence, I generally infer, whatever $n$ is, that it will always be

\[
\dfrac{d^{n}(z^{e})}{dz^{n}} = \dfrac{1\cdot 2\cdot 3\cdots e}{1\cdot 2\cdot 3\cdots (e - n)}z^{e - n}.
\]
But by means of § 14 

\[
1\cdot 2\cdot 3\cdots e = \int dx(-\log(x))^{e} \text{ und } 1\cdot 2\cdot 3\cdots (e - n) = \int dx(-\log(x))^{e - n}.
\]
Hence one has

\[
\dfrac{d^{n}(z^{e})}{dz^{n}} = z^{e - n}\dfrac{\int dx(-\log(x))^{e}}{\int dx(-\log(x))^{e - n}}
\]
or
\[
d^{n}(z^{e}) = z^{e - n}dz^{n}\dfrac{\int dx(-\log(x))^{e}}{\int dx(-\log(x))^{e - n}}.
\]
Here $dz$ is set constant and $\int dx (- \log (x))^s$ and $\int dx (-\log (x))^{s-n}$ must be integrated in such a way as it was prescribed above. 

\paragraph*{§29}

It is not necessary to show how the true value is found; this will become clear by substituting an arbitrary integer number for $n$. But let it be  questioned what $d^{\frac{1}{2}}z$ is, if $dz$ is constant. Therefore, it will be $e=1$ and $n=\frac{1}{2}$ in our expression. Therefore, one will have

\[
d^{\frac{1}{2}}z = \dfrac{\int dx(-\log(x))}{\int dx\sqrt{-\log(x)}}\sqrt{zdz}.
\]
But 
\[
\int dx(-\log(x)) = 1
\]
and having called the area of the circle, whose diameter is $1$, $A$, it will be

\[
\int dx\sqrt{-\log(x)} = \sqrt{A},
\]
whence 

\[
d^{\frac{1}{2}}z = \sqrt{\dfrac{zdz}{A}}.
\]
Therefore, let this equation for a  curve be propounded

\[
yd^{\frac{1}{2}}z = z\sqrt{dy},
\]
where $dz$ is set constant, and let it be questioned what kind of curve this is. Since  $d^{\frac{1}{2}}z= \sqrt{\frac{zdz}{A}}$, the equation will go over into this one

\[
y\sqrt{\dfrac{zdz}{A}} = z\sqrt{dz},
\]
which, having squared it, gives

\[
\dfrac{yydz}{A} = zdy,
\]
whence one finds

\[
\dfrac{1}{A}\log(z) = c - \dfrac{1}{y}
\]
or
\[
y\log(z) = cAy - A,
\]
which is the equation for the curve in question.

\lhead[\thepage]{}
\chead[Appendix]{Translation of E368}
\rhead[]{\thepage}
\chapter{Translation of E368 - On the hypergeometric curve expressed by the equation $y = 1 \cdot 2 \cdot 3 \cdots x$}

\paragraph*{1.} 

While  the letter $x$ denotes the abscissa and $y$ the ordinate,  this equation  immediately  indicates the quantity only of those ordinates corresponding to integer numbers; for, if one had  
\begin{equation*}
\renewcommand{\arraystretch}{1.5}
\setlength{\arraycolsep}{1,0mm}
\begin{array}{rrcccccccccccccccccc}
& \text{the abscissas }x\cdots &0, &1, &2, &3, & 4, &5, &6 & \text{etc.} \\
\text{one will have} \quad \\
& \text{the ordinates }y\cdots &1, &1, &2, &6, & 24, &120, &720 & \text{etc.} \\
\end{array}
\end{equation*}
such that, while the abscissas are taken according to the natural numbers, the ordinates proceed according to the Wallisian hypergeometric\footnote{Wallis called such series hypergeometric which are nowadays referred to as factorial series. Wallis reasoning for the name might was as follows: While in a geometric series the ratio of two subsequent terms is always the same, in factorial series the quotient of two subsequent terms increases and is thus more than geometric, i.e. "hypergeometric".} progression; therefore, it will be convenient to call also this curve hypergeometric. But even though  through this equation certainly innumerable, but just a discrete set of, points of this curve are assigned, nevertheless the nature of this curve is to be considered to be determined by this equation such that to each abscissa a certain, and, via this equation, well-defined ordinate corresponds\footnote{From our investigations in the main text we know that this statement is incorrect, confer especially section \ref{subsec: Classification Theorems}.}. For, the  nature of this equation requires, if the ordinate $y=q$ corresponds to a certain abscissa $x=p$, that the ordinate $y=q(p+1)$ corresponds to the abscissa $x=p+1$, but the ordinate $y=\frac{q}{p}$ corresponds to the abscissa $x=p-1$. Therefore, one cannot  draw  a certain curve of  parabolic kind through this infinitely many points arbitrarily, since all its points are determined from the equation.

\paragraph*{2.}

But except for these ordinates corresponding to  abscissas expressed by integer numbers, those are especially noteworthy, which fall into the middle between them from the  equation; and they are  all  determined by the one I once showed to correspond to the abscissa $x=\frac{1}{2}$ and to be equal to $\frac{1}{2}\sqrt{\pi}$. Therefore, since
\begin{equation*}
    \sqrt{\pi}= 1.77245385090548,
\end{equation*}
all these ordinates as well for the positive as for the negative abscissas will be as follows:

\begin{equation*}
\renewcommand{\arraystretch}{1.7}
\setlength{\arraycolsep}{2,0mm}
\begin{array}{c|l||c|l}
 & \text{for the positive abscissas} \quad &  & \text{for the negative abscissas} \\
 x & \text{the ordinate $y$ is} & x & \text{the ordinate $y$ is} \\
 0 & 1 & 0 & +1 \\
 \frac{1}{2} & 0.8862269 & -\frac{1}{2} & +1.7724538 \\
 1 & 1 & -1 & \pm \infty \\
 1\frac{1}{2} & 1.3293404 & -1\frac{1}{2} & -3.5449077 \\
 2 & 2 & -2 & \mp \infty \\
 2\frac{1}{2} & 3.3233509 & -2\frac{1}{2} &+2.3632718 \\
 3 & 6 & -3 & \pm \infty \\
 3\frac{1}{2} & 11.6317284 & -3\frac{1}{2} & -0.9453087 \\
 4 & 24 & -4 & \mp \infty \\
 4\frac{1}{2} & 52.3427777 & -4\frac{1}{2} & +0.2700882 \\
 5 & 120 & -5 & \pm \infty  \\
 5\frac{1}{2} & 287.8852775 & -5\frac{1}{2} & -0.0600196 \\
 6 & 720 & -6 & \mp \infty \\
 6\frac{1}{2} & 1871.2543038 & -6\frac{1}{2} & +0.0109126 \\
 7 & 5040 & -7 & \pm \infty
\end{array}
\end{equation*}
From this I drew the curve seen in figure 1, which extends from the negative abscissa $x=-1$,

\begin{center}
    \includegraphics[scale=0.7]{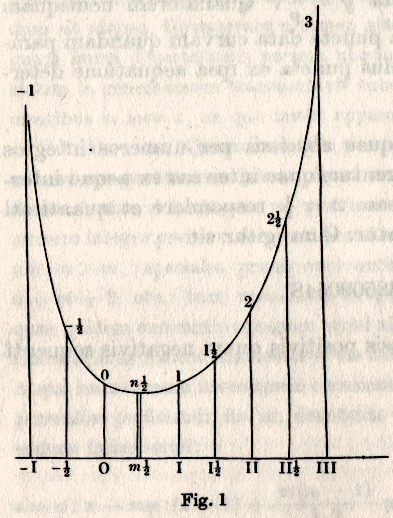}\\
    \begin{footnotesize}
    Figure 1 from E368. It shows the factorial function from $x=-1$ to $x=3$.\\ The scan was taken from the Opera Onmia Version, i.e. p. 44 of volume 28 of the first series.
    \end{footnotesize}
\end{center}
where the ordinate becomes infinite, to $x=3$, where $y=6$, and  from this point on is to be understood to ascend to infinity;  but the values on the left, where for each integer value of the abscissa the ordinates go over into asymptotes, I did not expresses beyond $x=-1$.

\paragraph*{3.}

The consideration of this curve raises many rather curious questions, providing a reason to examine it more accurately; and their solution seem to be even more interesting, since the equation for our curve cannot be expressed in the usual manner. Questions of this kind first concern  the determination of the remaining points of a curve in addition to those which are easily assigned. After this, in each point the tangents require their own investigation such that the behaviour of the whole curve can be defined more easily. But  from the inspection of the figure it is perspicuous that between the abscissas $x=0$ and $x=1$ there must be a smallest ordinate somewhere:  to assign  its abscissa and its value will be worth of one's while.\\
Furthermore,  between each two negative abscissas $-1$, $-2$, $-3$, $-4$, $-5$ etc.,  where the ordinates extend to infinity, there necessarily are smallest ordinates,  which, the more we proceed to the left, become continuously smaller until eventually they  vanish completely. Finally, also the question on the curvature radius in each point deserves our attention, and  the point, where the curvature is the largest, seems to be especially remarkable, since it is manifest that in the elongation of the curve from the axis the  branches come continuously closer to the straight line. Thus, I want to resolve those questions.

\subsection*{First Question}

\textit{To find a continuous equation among the abscissa $x$ and the ordinate $y$ for the hypergeometric curve, which equally holds, no matter whether  an integer or a fractional number is taken for $x$.}

\paragraph*{4.}

Since the propounded equation $y=1 \cdot 2 \cdot 3 \cdots x$ can only hold, if $x$ is an integer number, it must be cast into another form which is not restricted by this condition; this can be achieved in multiple ways  using expressions running to infinity, among which at first this one occurs:

\begin{equation*}
    y= \dfrac{1}{1+x}\left(\dfrac{2}{1}\right)^{x}\cdot \dfrac{2}{2+x}\left(\dfrac{3}{2}\right)^{x} \cdot \dfrac{3}{3+x}\left(\dfrac{4}{3}\right)^{x} \cdot
    \dfrac{4}{4+x}\left(\dfrac{5}{4}\right)^{x} \cdot \text{etc.}
\end{equation*}
 the factors of which must be continued to infinity. The reason for the validity of this expression is obvious, since the more factors are taken the closer the true value is being obtained, and, having taken infinitely many factors, the true value is obtained accurately: For, if  the numbers of factors is $=n$, one has

\begin{equation*}
    y= \dfrac{1}{1+x}\cdot \dfrac{2}{2+x}\cdot \dfrac{3}{3+x} \cdots \dfrac{n}{n+x}(n+1)^x;
\end{equation*}
if its numerator is represented this way:

\begin{equation*}
    1 \cdot 2 \cdot 3 \cdots x(x+1)(x+2)(x+3) \cdots n,
\end{equation*}
the denominator on the other hand this way

\begin{equation*}
    (1+x)(2+x)(3+x)\cdots n(n+1)(n+2) \cdots (n+x),
\end{equation*}
having cancelled the common factors, it results in

\begin{equation*}
    y=\dfrac{1 \cdot 2 \cdot 3 \cdots x}{(n+1)(n+2)(n+3)\cdots (n+x)}(n+1)^x.
\end{equation*}
Hence, if $n$ is an infinite number,  because of the  $n+1$ single factors  and the total amount of $x$ factors of the denominator, the whole denominator is cancelled by the factor $(n+1)^x$ and the propounded equation becomes $y=1 \cdot 2 \cdot 3 \cdots x$. \\

\paragraph*{5.}

This formula can be generalised a bit; for, since the whole task reduces to this that the factor $(n+1)^x$ becomes equal to the last denominator

\begin{equation*}
    (n+1)(n+2)(n+3)\cdots (n+x),
\end{equation*}
in the case, in which $n$ is an infinite number, it is evident that  this condition is also satisfied, if the factor is in general set to $(n+a)^x$, where $a$ is an arbitrary finite number;  but this formula is most  appropriate for our task, if   a certain  mean value of $1$ and $x$, e.g., $a=\frac{1+x}{2}$ or $a=\sqrt{x}$, is attributed to $a$. Now it is necessary that this factor $(n+a)^x$  is resolved into so many factors as $n$ contains units, which is  conveniently achieved using this resolution\footnote{Actually, the resolution Euler states, has $n+1$ factors and not just $n$. But since he is interested in the case $n \rightarrow \infty$ and the resolution is correct, this does not cause any mistakes in the following.}: 

\begin{equation*}
    (n+a)^x= a^x \cdot \left(\dfrac{a+1}{a}\right)^x \cdot \left(\dfrac{a+2}{a+1}\right)^x\cdot \left(\dfrac{a+3}{a+2}\right)^x \cdots \left(\dfrac{a+n}{a+n-1}\right)^x.
\end{equation*}
Therefore, for an arbitrary abscissa $x$ we will have the ordinate:

\begin{equation*}
    y=a^x \cdot \dfrac{1}{1+x} \left(\dfrac{a+1}{a}\right)^x\cdot \dfrac{2}{2+x} \left(\dfrac{a+2}{a+1}\right)^x \cdot \dfrac{3}{3+x} \left(\dfrac{a+3}{a+2}\right)^x \cdot \text{etc. to infinity},
\end{equation*}
which expression is always true, whatever number is chosen for $a$, but leads to the truth most quickly, if one takes $a=\frac{1+x}{2}$, whence it will be:

\begin{equation*}
    y= \left(\dfrac{1+x}{2}\right)^x\cdot \dfrac{1}{1+x} \left(\dfrac{3+x}{1+x}\right)^x \cdot \dfrac{2}{2+x} \left(\dfrac{5+x}{3+x}\right)^x \cdot \dfrac{3}{3+x} \left(\dfrac{7+x}{5+x}\right)^x \cdot \text{etc.},
\end{equation*}
which  expression consists of infinitely many factors of the form

\begin{equation*}
    \dfrac{m}{m+x}\left(\dfrac{a+m}{a-m-1}\right)^x
\end{equation*}
except the first $a^x$, and the more terms are multiplied by each other in a given case, the closer one will get to the truth. But the initial expression results, if one takes $a=1$.

\paragraph*{6.}

 But  this expression is the more useful, the faster the  factors converge to one, which happens by taking $a=\frac{1+x}{2}$;  indeed, then the calculation will become as much easier as smaller numbers are substituted  for $x$; but it always suffices to have investigated ordinates for  abscissas $x$ between one and zero, since from there  the  ordinates  corresponding to $x+1$, $x+2$, $x+3$, $x+4$ etc. are easily  derived. Therefore, let $x=\frac{\alpha}{\beta}$, while $\alpha  < \beta$, and it will be
 
 \begin{equation*}
     y = \left(\dfrac{\alpha +\beta}{2 \beta}\right)^{\frac{\alpha}{\beta}} \cdot \dfrac{\beta}{\alpha +\beta}  \left(\dfrac{3\alpha +\beta}{ \beta +\alpha}\right)^{\frac{\alpha}{\beta}} \cdot \dfrac{2 \beta}{\alpha +2 \beta} \left(\dfrac{5\beta+\alpha}{3 \beta +\alpha}\right)^{\frac{\alpha}{\beta}} \cdot \dfrac{3 \beta}{\alpha + 3 \beta}\left(\dfrac{7 \beta + \alpha}{5\beta+\alpha}\right)^{\frac{\alpha}{\beta}}\cdot \text{etc.},
 \end{equation*}
 whence  the power $y^{\beta}$ of the ordinate results  in this expression:
 
 \begin{equation*}
     y^{\beta}=\left(\dfrac{\alpha +\beta}{2 \beta}\right)^{\alpha} \cdot \dfrac{\beta^{\beta}(3\beta +\alpha)^{\alpha}}{(\beta + \alpha)^{\beta}(\beta +\alpha)^{\alpha}} \cdot \dfrac{(2\beta)^{\beta}(5\beta +\alpha)^{\alpha}}{(2\beta + \alpha)^{\beta}(3\beta +\alpha)^{\alpha}} \cdot \dfrac{(3\beta)^{\beta}(7\beta +\alpha)^{\alpha}}{(3\beta + \alpha)^{\beta}(5\beta +\alpha)^{\alpha}} \cdot \text{etc.}
 \end{equation*}
 But for the abscissa $x=-\frac{\alpha}{\beta}$ the ordinate is hence calculated to be
 
 \begin{equation*}
     y^{\beta}= \left(\dfrac{2 \beta}{\beta - \alpha}\right)^{\alpha} \cdot \dfrac{\beta^{\beta}(\beta-\alpha)^{\alpha}}{(\beta-\alpha)^{\beta}(3\beta - \alpha)^{\alpha}} \cdot \dfrac{(2\beta)^{\beta}(3\beta -\alpha)^{\alpha}}{(2\beta -\alpha)^{\beta}(5\beta-\alpha)^{\alpha}}\cdot \dfrac{(3\beta)^{\beta}(5\beta -\alpha)^{\alpha}}{(3\beta -\alpha)^{\beta}(7\beta-\alpha)^{\alpha}} \cdot \text{etc.}
 \end{equation*}
 For the sake of an example, let us take $x=\frac{1}{2}$ and we will obtain:
 
 \begin{equation*}
     y^2 = \dfrac{3}{4}\cdot \dfrac{2 \cdot 2 \cdot 7}{3 \cdot 3 \cdot 3} \cdot \dfrac{4 \cdot 4 \cdot 11}{5 \cdot 5 \cdot 7} \cdot \dfrac{6 \cdot 6 \cdot 15 }{7 \cdot 7 \cdot 11} \cdot \dfrac{8 \cdot 8 \cdot 19}{9 \cdot 9 \cdot 15} \cdot \text{etc.};
 \end{equation*}
 since a general factor of this is
 
 \begin{equation*}
     \dfrac{2n \cdot 2n (4n+3)}{(2n+1)(2n+1)(4n-1)}= \dfrac{16n^3+12nn}{16n^3+12nn-1}=1+\dfrac{1}{(2n+1)^2(4n-1)},
 \end{equation*}
  it is seen in general how quickly these factors converge to $1$; therefore, it will be:
 
 \begin{equation*}
     y^2 = \dfrac{3}{4}\left(1+\dfrac{1}{3^2 \cdot 3}\right)\left(1+\dfrac{1}{5^2 \cdot 7}\right)\left(1+\dfrac{1}{7^2 \cdot 11}\right)\left(1+\dfrac{1}{9^2 \cdot 15}\right)\left(1+\dfrac{1}{11^2 \cdot 19}\right)\text{etc.},
 \end{equation*}
 where we  know that $y^2=\frac{\pi}{4}$. But if we set $x=-\frac{1}{2}$, which corresponds to $y=\sqrt{\pi}$, from  the other expression it will be
 
 \begin{equation*}
     \pi = 4 \cdot \dfrac{2 \cdot 2 \cdot 1}{1 \cdot 1 \cdot 5}\cdot \dfrac{4 \cdot 4 \cdot 5}{3 \cdot  3 \cdot 9}\cdot \dfrac{6 \cdot 6 \cdot 9}{5 \cdot 5 \cdot 13} \cdot \dfrac{8 \cdot 8 \cdot 13}{7 \cdot 7 \cdot 17}\cdot \text{etc.}
 \end{equation*}
 or
 
 \begin{equation*}
     \pi = 4 \left(1-\dfrac{1}{1^2 \cdot 5}\right)\left(1-\dfrac{1}{3^2 \cdot 9}\right)\left(1-\dfrac{1}{5^2 \cdot 13}\right)\left(1-\dfrac{1}{7^2 \cdot 17}\right)\text{etc.},
 \end{equation*}
 hence
 
 \begin{equation*}
     \pi = 3 \left(1+\dfrac{1}{3^2 \cdot 3}\right) \left(1+\dfrac{1}{5^2 \cdot 7}\right) \left(1+\dfrac{1}{7^2 \cdot 11}\right) \left(1+\dfrac{1}{9^2 \cdot 15}\right)\text{etc.},
 \end{equation*}
 such that  the one expression will get to the truth while it is increasing, the other while it is decreasing.
 
 \paragraph*{7.}
 
 But the calculation is executed more conveniently, if our expression is terminated at each factor; for, then  the following formulas coming continuously closer to the truth will result:
 
 \begin{equation*}
\renewcommand{\arraystretch}{2.5}
\setlength{\arraycolsep}{2,0mm}
\begin{array}{l}
y = \dfrac{1}{1+x}\left(\dfrac{3+x}{2}\right)^x \\
y = \dfrac{1}{1+x}\cdot \dfrac{2}{2+x}\left(\dfrac{5+x}{2}\right)^x \\
y = \dfrac{1}{1+x}\cdot \dfrac{2}{2+x} \cdot \dfrac{3}{3+x}\left(\dfrac{7+x}{2}\right)^x \\
y = \dfrac{1}{1+x}\cdot \dfrac{2}{2+x} \cdot \dfrac{3}{3+x} \cdot \dfrac{4}{4+x}\left(\dfrac{9+x}{2}\right)^x \\
y = \dfrac{1}{1+x}\cdot \dfrac{2}{2+x} \cdot \dfrac{3}{3+x} \cdot \dfrac{4}{4+x} \cdot \dfrac{5}{5+x}\left(\dfrac{11+x}{2}\right)^x.
\end{array}
\end{equation*}
Since, if  one writes $x-1$ instead of $x$, the ordinate $=\dfrac{y}{x}$ results, by similar formulas it will be

\begin{equation*}
    \renewcommand{\arraystretch}{2.5}
\setlength{\arraycolsep}{2,0mm}
\begin{array}{l}
y= \left(\dfrac{2+x}{2}\right)^{x-1} \\
y= \dfrac{2}{1+x}\left(\dfrac{4+x}{2}\right)^{x-1} \\
y= \dfrac{2}{1+x} \cdot \dfrac{3}{2+x}\left(\dfrac{6+x}{2}\right)^{x-1} \\
y= \dfrac{2}{1+x} \cdot \dfrac{3}{2+x} \cdot \dfrac{4    }{3+x}\left(\dfrac{8+x}{2}\right)^{x-1} \\
y= \dfrac{2}{1+x}\cdot \dfrac{3}{2+x} \cdot \dfrac{4    }{3+x} \cdot \dfrac{5}{4+x}\left(\dfrac{10+x}{2}\right)^{x-1}.
\end{array}
\end{equation*}
Hence, having set $x=\frac{1}{2}$, for the ordinate $y=\frac{1}{2}\sqrt{\pi}$ two series of formulas converging to it result:

\begin{equation*}
    \renewcommand{\arraystretch}{2.5}
\setlength{\arraycolsep}{2,0mm}
\begin{array}{l|l}
\dfrac{1}{2}\sqrt{\pi} = \dfrac{2}{3} \sqrt{\dfrac{7}{4}} & \dfrac{1}{2}\sqrt{\pi} = \sqrt{\dfrac{4}{5}} \\
\dfrac{1}{2}\sqrt{\pi} = \dfrac{2 \cdot 4}{3 \cdot 5} \sqrt{\dfrac{11}{4}} & \dfrac{1}{2}\sqrt{\pi} = \dfrac{4}{3}\sqrt{\dfrac{4}{9}} \\
    \dfrac{1}{2}\sqrt{\pi} = \dfrac{2 \cdot 4 \cdot 6}{3 \cdot 5 \cdot 7} \sqrt{\dfrac{15}{4}} & \dfrac{1}{2}\sqrt{\pi} = \dfrac{4 \cdot 6}{3 \cdot 5}\sqrt{\dfrac{4}{13}} \\
     \dfrac{1}{2}\sqrt{\pi} = \dfrac{2 \cdot 4 \cdot 6 \cdot 8}{3 \cdot 5 \cdot 7 \cdot 9} \sqrt{\dfrac{19}{4}} & \dfrac{1}{2}\sqrt{\pi} = \dfrac{4 \cdot 6 \cdot 8}{3 \cdot 5 \cdot 7}\sqrt{\dfrac{4}{17}} \\
 \text{etc.} \quad & \dfrac{1}{2}\sqrt{\pi} = \dfrac{4 \cdot 6 \cdot 8 \cdot 10}{3 \cdot 5 \cdot 7 \cdot 9}\sqrt{\dfrac{4}{21}} \\
 & \text{etc.}
\end{array}
\end{equation*}

\paragraph*{8.}

But products of this kind are  expanded most conveniently using logarithms; and first  from the  general formula involving the  arbitrary number $a$ we obtain:

\begin{equation*}
    \renewcommand{\arraystretch}{2.5}
\setlength{\arraycolsep}{0mm}
\begin{array}{lllllllllllll}
     \log y = x \log a &~+~& x \log \dfrac{a+1}{a} &~+~ & x \log \dfrac{a+2}{a+1} &~+~ & x \log \dfrac{a+3}{a+2} &~+~ & x \log \dfrac{a+4}{a+3} &~+~ & \text{etc.} \\
                       &~-~& \log (1+x) &~-~& \log \left(1+\dfrac{x}{2}\right) &~-~& \log \left(1+\dfrac{x}{3}\right) &~-~& \log \left(1+\dfrac{x}{4}\right)& ~-~& \text{etc.}
\end{array}
\end{equation*}
having taken $a=\frac{1+x}{2}$ this series is rendered most convergent:

\begin{equation*}
    \log y = x \log \dfrac{1+x}{2} + x\log \dfrac{x+3}{x+1}  + x\log \dfrac{x+5}{x+3}  + x\log \dfrac{x+7}{x+5}  + x\log \dfrac{x+9}{x+7}+\text{etc.}
\end{equation*}
\begin{equation*}
    \log (1+x) ~-~ \log \left(1+\dfrac{x}{2}\right) ~-~ \log \left(1+\dfrac{x}{3}\right) ~-~ \log \left(1+\dfrac{x}{4}\right) ~-~ \text{etc.}
\end{equation*}
Therefore, having taken these logarithms and since in general:

\begin{equation*}
    \renewcommand{\arraystretch}{2.5}
\setlength{\arraycolsep}{0mm}
\begin{array}{rlllllllllllllllll}
     x \log \dfrac{x+2m+1}{x+2m-1} &~=~& \dfrac{2x}{x+2m} &~+~& \dfrac{2x}{3(x+2m)^3} &~+~& \dfrac{2x}{5(x+2m)^5} &~+~& \dfrac{2x}{7(x+2m)^7} &~+~& \text{etc.} \\
     \text{and} \log \left(1+\dfrac{x}{m}\right) &~=~& \dfrac{2x}{x+2m} &~+~& \dfrac{2x^3}{3(x+2m)^3} &~+~& \dfrac{2x^5}{5(x+2m)^5} &~+~& \dfrac{2x^7}{7(x+2m)^7} &~+~& \text{etc.} \\
\end{array}
\end{equation*}
we obtain the following formulas consisting of infinitely many series:

\begin{equation*}
    \renewcommand{\arraystretch}{2.5}
\setlength{\arraycolsep}{0mm}
\begin{array}{lllllllllllllllllll}
     \log y = x \log \dfrac{1+x}{2} &~+~& \dfrac{2}{3}x(1-xx)& \bigg(\dfrac{1}{(x+2)^3}&~+~&\dfrac{1}{(x+4)^3}&~+~&\dfrac{1}{(x+6)^3}&~+~&\dfrac{1}{(x+8)^3}&~+~&\text{etc.}\bigg) \\
        &~+~& \dfrac{2}{5}x(1-x^4)& \bigg(\dfrac{1}{(x+2)^5}&~+~&\dfrac{1}{(x+4)^5}&~+~&\dfrac{1}{(x+6)^5}&~+~&\dfrac{1}{(x+8)^5}&~+~&\text{etc.}\bigg) \\
           &~+~& \dfrac{2}{7}x(1-x^6)& \bigg(\dfrac{1}{(x+2)^7}&~+~&\dfrac{1}{(x+4)^7}&~+~&\dfrac{1}{(x+6)^7}&~+~&\dfrac{1}{(x+8)^7}&~+~&\text{etc.}\bigg) \\
   &~+~& \dfrac{2}{9}x(1-x^8)& \bigg(\dfrac{1}{(x+2)^9}&~+~&\dfrac{1}{(x+4)^9}&~+~&\dfrac{1}{(x+6)^9}&~+~&\dfrac{1}{(x+8)^9}&~+~&\text{etc.}\bigg) \\
   & &  & &  & &\text{etc.}
\end{array}
\end{equation*}

\paragraph*{9.}

Let us take a definite number of terms of the first series, which number we want to be $=n$, and since the upper part is reduced to the single term $x\log(a+n)$, it will be

\begin{equation*}
    \log y = x\log(a+x)-\log(1+x)-\log \left(1+\frac{1}{2}x\right)-\log \left(1+\frac{1}{3}x\right)- \cdots - \log \left(1+\frac{1}{n}x\right),
\end{equation*}
which expression comes the closer to the truth, the greater the number $n$ is taken. Therefore, let $n$ be very large, and first we will obviously have

\begin{equation*}
    \log(n+a)= \log n +\dfrac{a}{n}-\dfrac{aa}{2n^2}+\dfrac{a^3}{3n^3}-\text{etc.},
\end{equation*}
where it will be convenient to take $\frac{1+x}{2}$ for $a$; but then, for the sake of brevity, having put the fraction

\begin{equation*}
    0.5772156649015325= \Delta,
\end{equation*}
we know the sum of the harmonic progression to be:

\begin{equation*}
    1+\dfrac{1}{2}+\dfrac{1}{3}+\dfrac{1}{4}+\cdots +\dfrac{1}{n}= \Delta +\log n +\dfrac{1}{2n}-\dfrac{1}{12nn}+\dfrac{1}{120n^4}-\text{etc.},
\end{equation*}
whence, since:

\begin{equation*}
    \log(n+a)=  1+\dfrac{1}{2}+\dfrac{1}{3}+\dfrac{1}{4}+\cdots +\dfrac{1}{n} - \Delta  -\dfrac{1}{2n}+\dfrac{1}{12nn}-\dfrac{1}{120n^4}+\text{etc.}
\end{equation*}
\begin{equation*}
    +\dfrac{a}{n}-\dfrac{aa}{2nn}+\dfrac{a^3}{3n^3}-\text{etc.},
\end{equation*}
having taken $a=\frac{1+x}{2}$, we conclude

\begin{equation*}
    \log y = -\Delta x+x +\dfrac{1}{2}x+\dfrac{1}{3}x+\cdots + \dfrac{1}{n}x
\end{equation*}
\begin{equation*}
    -\log(1+x)-\log \left(1+\dfrac{1}{2}x\right)-\log \left(1+\dfrac{1}{3}x\right)-\cdots -\log \left(1+\dfrac{1}{n}x\right)
\end{equation*}
\begin{equation*}
    +\dfrac{xx}{2n}-\dfrac{x+6xx+3x^3}{24nn}+\text{etc.}
\end{equation*}
Therefore, by increasing the number $n$ to infinity, it will actually be:

\begin{equation*}
     \log y = -\Delta x+x +\dfrac{1}{2}x+\dfrac{1}{3}x+\dfrac{1}{4}x+\text{etc.}
\end{equation*}
\begin{equation*}
    -\log(1+x)-\log \left(1+\dfrac{1}{2}x\right)-\log \left(1+\dfrac{1}{3}x\right) -\log \left(1+\dfrac{1}{4}x\right)-\text{etc.}
\end{equation*}
and, having expanded each logarithm into a series:

\begin{equation*}
       \renewcommand{\arraystretch}{2.5}
\setlength{\arraycolsep}{0mm}
\begin{array}{lllclllllllllllll}
\log y = - \Delta x &~+~& \dfrac{1}{2}&xx&\bigg(1+\dfrac{1}{2^2}+\dfrac{1}{3^2}+\dfrac{1}{4^2}+\text{etc.}\bigg)\\
 &~-~& \dfrac{1}{3}&x^3&\bigg(1+\dfrac{1}{2^3}+\dfrac{1}{3^3}+\dfrac{1}{4^3}+\text{etc.}\bigg)\\
  &~+~& \dfrac{1}{4}&x^4&\bigg(1+\dfrac{1}{2^4}+\dfrac{1}{3^4}+\dfrac{1}{4^4}+\text{etc.}\bigg)\\
   &~-~& \dfrac{1}{5}&x^5&\bigg(1+\dfrac{1}{2^5}+\dfrac{1}{3^5}+\dfrac{1}{4^5}+\text{etc.}\bigg)\\
   & & &\text{etc.}
\end{array}
\end{equation*}

\paragraph*{10.}

But except for those formulas, in which  the ordinate $y$ corresponding to a certain abscissa $x$ is assigned, my method to sum progressions indefinitely\footnote{Euler refers to the Euler-Maclaurin summation formula.}  provides us with an extraordinary expression  accommodated to our purposes.\\
For, since $\log y = \log 1 +\log 2+\log 3+\log 4 +\cdots +\log x$, this progression must be summed indefinitely; but introducing numerical values:

\begin{equation*}
    A=\dfrac{1}{6}, ~~ B=\dfrac{1}{90}, ~~ C=\dfrac{1}{945}, ~~ D=\dfrac{1}{9450}, ~~ E=\dfrac{1}{93555},~~ F=\dfrac{691}{1 \cdot 3 \cdot 5 \cdots 15 \cdot 315}~~ \text{etc.},
\end{equation*}
which progression is of such a nature that

\begin{equation*}
    5B = 2AA,~~ 7C=4AB,~~ 9D=4AC+2BB,~~11E=4AD+4BC ~~\text{etc.},
\end{equation*}
I showed elsewhere\footnote{The following formula is just the Stirling formula for the factorial. Euler showed it, e.g., in \cite{E212}.} that it will be:

\begin{equation*}
    \log y = \dfrac{1}{2}\log 2\pi +\left(x+\dfrac{1}{2}\right)\log x -x +\dfrac{A}{2x}-\dfrac{1 \cdot 2 B}{2^3x^3}+\dfrac{1 \cdot 2 \cdot 3 \cdot 4 C}{2^5x^5}-\dfrac{1 \cdot 2 \cdot 3 \cdot 4 \cdot 5 \cdot 6 D}{2^7x^7}+\text{etc.},
\end{equation*}
which series, compared to the first, has the use that, the greater the abscissas $x$ are taken, the faster it exhibits the true value of the ordinate $y$. Therefore, since, if the ordinate $y$ corresponds to the abscissa $x$, the following ordinate 

\begin{equation*}
    y(x+1)(x+2)(x+3)\cdots(x+n)
\end{equation*}
to the larger abscissa $x+n$, we will have the  following rapidly convergent series\footnote{This series does, in fact, not converge and is to be understood as an asymptotic series. Confer our discussion in section \ref{subsubsec: An Application - Derivation of the Stirling Formula for the Factorial}.}:

\begin{equation*}
    \log y = \dfrac{1}{2}\log 2 \pi -\log(x+1)-\log(x+2)-\log(x+3)-\cdots -\log(x+n)
\end{equation*}
\begin{equation*}
    +\left(x+n+\dfrac{1}{2}\right)\log(x+n)-x-n
\end{equation*}
\begin{equation*}
    +\dfrac{A}{2(x+n)}-\dfrac{1 \cdot 2B}{2^3(x+n)^3}+\dfrac{1\cdot 2 \cdot 3 \cdot 4 C}{2^5(x+n)^5}-\dfrac{1 \cdot 2 \cdot 3 \cdot 4 \cdot 5 \cdot 6 D}{2^7(x+n)^7}+\text{etc.}
\end{equation*}
Therefore, if $e$ denotes the number whose natural logarithm is $=1$, and  if, for the sake of brevity, one sets:

\begin{equation*}
    \dfrac{A}{2(x+n)}-\dfrac{1 \cdot 2B}{2^3(x+n)^3}+\dfrac{1\cdot 2 \cdot 3 \cdot 4 C}{2^5(x+n)^5}-\text{etc.}=s,
\end{equation*}
going back from logarithms to numbers we conclude:

\begin{equation*}
    y=\dfrac{\sqrt{2\pi(x+n)}}{(x+1)(x+2)(x+3)\cdots (x+n)}\left(\dfrac{x+n}{e}\right)^{x+n}e^s,
\end{equation*}
where the integer number $n$ is arbitrary; but the larger it is taken, the easier the true value of $s$ can be found.

\paragraph*{11.}

Finally, the ordinate $y$ can even be exhibited  by an integral formula; for, having  put the abscissa $x=p$ and having introduced the new variable $u$, independent of the quantity $p$, the ordinate will be

\begin{equation*}
    y = \int du \left(\log \dfrac{1}{u}\right)^p,
\end{equation*}
if the integration is extended from the value $u=0$ to the value $u=1$. Or, if one prefers the exponential form, it will also be

\begin{equation*}
    y = \int e^{-v}v^pdv,
\end{equation*}
extending the integration from $v=0$ to $v=\infty$. From those formulas,  if the abscissa $p$ is an integer number,  the integration indeed immediately yields

\begin{equation*}
    y=1 \cdot 2 \cdot 3 \cdots p,
\end{equation*}
but if $p$ was a fractional number, hence it is at the same time  understood  to which class of  transcendental quantities the  value of $y$ is to be referred. Indeed, I showed on another occasion, how the  integral can then be expressed using quadratures of algebraic curves.

\paragraph*{12.}

Therefore, lo and behold the many solutions of our first question, in which for an arbitrary abscissa $x$, even though  it is expressed by a non-integer number, the value of the ordinate $y$ was  sought after; it will be helpful to have  listed up the principal ones, that from here in each case the one which seems to be the most useful can be chosen:

\begin{small}
\begin{equation*}
         \renewcommand{\arraystretch}{2.5}
\setlength{\arraycolsep}{0mm}
\begin{array}{rrlllllllllllllll}
\text{I.}\quad & y &~=~& \dfrac{1}{1+x}\left(\dfrac{2}{1}\right)^x \cdot \dfrac{2}{2+x}\left(\dfrac{3}{2}\right)^x \cdot \dfrac{3}{3+x}\left(\dfrac{4}{3}\right)^x \cdot
\dfrac{4}{4+x}\left(\dfrac{5}{4}\right)^x \cdot \text{etc.} \\
\text{II.} \quad &y&~=~& \left(\dfrac{1+x}{2}\right)^x \cdot \dfrac{1}{1+x}\left(\dfrac{3+x}{2+x}\right)^x\cdot \dfrac{2}{2+x}\left(\dfrac{5+x}{3+x}\right)^x \cdot \dfrac{3}{3+x}\left(\dfrac{7+x}{5+x}\right)^x \cdot \text{etc.} \\
\text{III.} \quad &\log y &~=~& x \log \dfrac{2}{1} +  x \log \dfrac{3}{2} +  x \log \dfrac{4}{3} +  x \log \dfrac{5}{4} + \text{etc.}\\
                 & &~-~& \log(1+x)-\log \left(1+\dfrac{1}{2}x\right)-\log \left(1+\dfrac{1}{3}x\right)-\log \left(1+\dfrac{1}{4}x\right)-\text{etc.} \\
\text{IV.} \quad &\log y &~=~& x \log \dfrac{1+x}{2} +x\log \dfrac{x+3}{x+1} +x\log \dfrac{x+5}{x+3} +x\log \dfrac{x+7}{x+5} +x\log \dfrac{x+9}{x+7}+\text{etc.} \\
 & &~-~& \log(1+x)-\log \left(1+\dfrac{1}{2}x\right)-\log \left(1+\dfrac{1}{3}x\right)-\log \left(1+\dfrac{1}{4}x\right)-\text{etc.} \\
 \text{V.} \quad & \log y &~=~& -\Delta x +x+\dfrac{1}{2}x+\dfrac{1}{3}x+\dfrac{1}{4}x+\text{etc.} \\
  & &~-~& \log(1+x)-\log \left(1+\dfrac{1}{2}x\right)-\log \left(1+\dfrac{1}{3}x\right)-\log \left(1+\dfrac{1}{4}x\right)-\text{etc.} \\
 \text{VI.} \quad & \log y & ~=~& \left\lbrace
        \renewcommand{\arraystretch}{2.5}
\setlength{\arraycolsep}{0mm}
\begin{array}{lllclllllllllllll}
 - \Delta x &~+~& \dfrac{1}{2}&xx&\bigg(1+\dfrac{1}{2^2}+\dfrac{1}{3^2}+\dfrac{1}{4^2}+\text{etc.}\bigg)\\
 &~-~& \dfrac{1}{3}&x^3&\bigg(1+\dfrac{1}{2^3}+\dfrac{1}{3^3}+\dfrac{1}{4^3}+\text{etc.}\bigg)\\
  &~+~& \dfrac{1}{4}&x^4&\bigg(1+\dfrac{1}{2^4}+\dfrac{1}{3^4}+\dfrac{1}{4^4}+\text{etc.}\bigg)\\
   &~-~& \dfrac{1}{5}&x^5&\bigg(1+\dfrac{1}{2^5}+\dfrac{1}{3^5}+\dfrac{1}{4^5}+\text{etc.}\bigg)\\
   &~+~ & &\text{etc.}
\end{array} \right.
\\
\text{VII.} \quad &\log y &~=~& \dfrac{1}{2}\log 2 \pi +\left(x+\dfrac{1}{2}\right)\log x -x +\dfrac{A}{2x}-\dfrac{1 \cdot 2 B}{2^3x^3}+\dfrac{1 \cdot 2 \cdot 3 \cdot 4 C}{2^5 x^5}-\dfrac{1 \cdot 2 \cdots 6D}{2^7x^7}+\text{etc.}
\end{array}
\end{equation*}
\end{small}
while $\Delta = 0.57721556649014225$ and

\begin{equation*}
    A=\dfrac{1}{6}, \quad B=\dfrac{1}{90}, \quad C=\dfrac{1}{945}, \quad D=\dfrac{1}{9450}, \quad E=\dfrac{1}{93555} \quad \text{etc.}
\end{equation*}
Then in the three last forms one has to use natural logarithms.

\subsection*{Second Question}

\textit{To define the direction of its tangent on the hypergeometric curve  for each point.}

\paragraph*{13.}

Therefore, here we assume that for the abscissa $x$ the value of the ordinate $y$ has already been found, and since the direction of the tangent  is defined by the ratio of the differentials $\frac{dy}{dx}$, by which fraction the tangent of the angle, in which the tangent at that point is inclined to the axis, is usually expressed, it is just necessary that we differentiate one of the found formulas. To this end, formula V seems especially suitable, from which we conclude:

\begin{equation*}
         \renewcommand{\arraystretch}{2.5}
\setlength{\arraycolsep}{0mm}
\begin{array}{lllllllllllllllll}
\dfrac{dy}{ydx}=- \Delta &~+~& 1 &~+~& \dfrac{1}{2} &~+~& \dfrac{1}{3} &~+~& \dfrac{1}{4} &~+~& \text{etc.} \\
                         &~-~& \dfrac{1}{1+x} &~-~& \dfrac{1}{2+x} &~-~& \dfrac{1}{3+x} &~-~& \dfrac{1}{4+x} &~-~& \text{etc.}
\end{array}
\end{equation*}
which expression is contracted into this more convenient one:

\begin{equation*}
    \dfrac{dy}{ydx}= -\Delta +\dfrac{x}{1+x}+\dfrac{x}{2(2+x)}+\dfrac{x}{3(3+x)}+\dfrac{x}{4(4+x)}+\text{etc.},
\end{equation*}
whence it is clear at the same time, if $x$ is a negative integer number, that not just the ordinate $y$ but also the formula $\frac{dy}{dx}$ becomes infinite such that in these points the  ordinates, since they are asymptotes, become the tangents. But let us in general put the angle the tangent constitutes with the axis to be $=\varphi$ such that

\begin{equation*}
    \dfrac{dy}{dx}=\tan \varphi.
\end{equation*}

\paragraph*{14.}

Therefore, first let us define the tangents for the abscissas $x$ which are expressed by positive numbers, since the ordinates $y$ are given.\\

\begin{itemize}
    \item[I.]  Therefore, let $x=0$ and, because of $y=1$,
    
    \begin{equation*}
        \dfrac{dy}{dx}= -\Delta =- 0.5772156649= \tan \varphi,
    \end{equation*}
    whence
    
    \begin{equation*}
        \text{the angle }\varphi =-29^{\circ}59'29'',
    \end{equation*}
    where the sign $-$ indicates that the tangent falls to the right of the axis and does not constitute an angle of $ 30^{\circ}$ with it.\\
    \item[II.]  Let $x=1$ and, because of $y=1$,
    
    \begin{equation*}
        \dfrac{dy}{dx}=1-\Delta = 0.422784335 =\tan \varphi,
    \end{equation*}
    and hence
    \begin{equation*}
        \text{the angle }\varphi =22^{\circ}55'.
    \end{equation*}
    \item[III.]  Let $x=2$ and, because of $y=2$, 
    
    \begin{equation*}
        \dfrac{dy}{dx}=2\left(1+\dfrac{1}{2}-\Delta\right)=1.845568670 = \tan \varphi
    \end{equation*}
    and hence
    
    \begin{equation*}
        \text{the angle }\varphi =61^{\circ}33'.
    \end{equation*}
    \item[IV.] Let $x=3$ and, because of $y=6$,
    
    \begin{equation*}
        \dfrac{dy}{dx}=6 \left(1+\dfrac{1}{2}+\dfrac{1}{3}-\Delta\right)=\tan \varphi
    \end{equation*}
    or
    
\begin{equation*}
    \tan \varphi =7.536706010 \quad \text{and} \quad \varphi = 82^{\circ}26'.
\end{equation*}
\item[V.] Let $x=4$ and, because of $y=24$,

\begin{equation*}
    \dfrac{dy}{dx}= 24 \left(1+\dfrac{1}{2}+\dfrac{1}{3}+\dfrac{1}{4}-\Delta\right)
\end{equation*}
and hence

\begin{equation*}
    \tan \varphi =36.146824040 \quad \text{and} \quad \varphi =88^{\circ}25'.
\end{equation*}
\end{itemize}
Therefore, in general, if the abscissa $x$ is equal to an arbitrary integer number, because of $y=1 \cdot 2 \cdots n$, it will be

\begin{equation*}
    \dfrac{dy}{dx}=\tan \varphi = 1 \cdot 2 \cdot 3 \cdots n \left(1+\dfrac{1}{2}+\dfrac{1}{3}+\cdots+\dfrac{1}{n}-\Delta\right).
\end{equation*}

\paragraph*{15.}

Hence let us also define the tangents for the intermediate points, and first certainly for those corresponding to positive abscissas:

\begin{itemize}
    \item[I.] Let $x=\frac{1}{2}$, it will be $y=\frac{1}{2}\sqrt{\pi}$ and
    \begin{equation*}
    \dfrac{dy}{ydx}=-\Delta +1-\dfrac{2}{3}+\dfrac{1}{2}-\dfrac{2}{5}+\dfrac{1}{3}-\dfrac{2}{7}+\text{etc.}
    \end{equation*}
    or
    
    \begin{equation*}
        \dfrac{dy}{ydx}= -\Delta +2 \left(\dfrac{1}{2}-\dfrac{1}{3}+\dfrac{1}{4}-\dfrac{1}{5}+\text{etc.}\right)=-\Delta +2(1-\log 2)
    \end{equation*}
    and hence
    
    \begin{equation*}
        \dfrac{dy}{dx}=\tan \varphi =y(2(1-\log 2)-\Delta)=0.0364899739 \cdot y.
    \end{equation*}
    
    \item[II.] Let $x=\frac{3}{2}$, it will be $y=\dfrac{1 \cdot 3}{2 \cdot 2}\sqrt{\pi}$ and
    
    \begin{equation*}
        \dfrac{dy}{ydx}=-\Delta +2\left(1+\dfrac{1}{3}-\log 2\right),
    \end{equation*}
    whence
    
    \begin{equation*}
        \dfrac{dy}{dx}= \tan \varphi = y \left(2\left(1+\dfrac{1}{3}-\log 2\right)-\Delta\right)= 0.7031566405 \cdot y.
    \end{equation*}
    \item[III.] Let $x=\frac{5}{2}$, it will be $y=\frac{1 \cdot 3 \cdot 5}{2 \cdot 2 \cdot 2}\sqrt{\pi}$ and
    
    \begin{equation*}
        \dfrac{dy}{ydx}= -\Delta +2 \left(1+\dfrac{1}{3}+\dfrac{1}{5}-\log 2\right)
    \end{equation*}
    hence
    
    \begin{equation*}
        \tan \varphi =y \left(2\left(1+\dfrac{1}{3}+\dfrac{1}{5}-\log 2\right)-\Delta\right)=1.1031566405 \cdot y.
    \end{equation*}
\end{itemize}
Since now

\begin{equation*}
    \dfrac{1}{2}\sqrt{\pi \cdot(2(1-\log 2)-\Delta)}=0.0323383973,
\end{equation*}
for these cases it will be:

\begin{equation*}
         \renewcommand{\arraystretch}{2.5}
\setlength{\arraycolsep}{0mm}
\begin{array}{lllrllr}
x =\dfrac{1}{2}, \quad & y &~=~& 0.8862269, \quad & \tan \varphi &~=~& 0.0323384, \\
x =\dfrac{3}{2}, \quad & y &~=~& 1.3293404, \quad & \tan \varphi &~=~& 0.9347345, \\
x =\dfrac{5}{2}, \quad & y &~=~& 3.3233509, \quad & \tan \varphi &~=~& 3.6661767, \\
x =\dfrac{7}{2}, \quad & y &~=~& 11.6317284, \quad & \tan \varphi &~=~& 16.1549694, \\
x =\dfrac{9}{2}, \quad & y &~=~& 52.3427777, \quad & \tan \varphi &~=~& 84.3290907, \\
                       &   &   & \text{etc.}
\end{array}
\end{equation*}

\paragraph*{16.}

Before I proceed, I observe, if for any abscissa it was

\begin{equation*}
    x=p, \quad y=q, \quad \tan \varphi =r,
\end{equation*}
that then for the following abscissa it will be

\begin{equation*}
    x=p+1, \quad y=q(p+1) \quad \text{and} \quad \tan \varphi =r(p+1)+q,
\end{equation*}
but for the preceding one

\begin{equation*}
    x=p-1, \quad y=\frac{q}{p} \quad \text{and} \quad \tan \varphi =\dfrac{r}{p}-\dfrac{q}{pp},
\end{equation*}
whence we can easily continue the above values backwards:

\begin{equation*}
            \renewcommand{\arraystretch}{2.5}
\setlength{\arraycolsep}{0mm}
\begin{array}{lllcllllllll}
x &~=~& & \dfrac{1}{2}, \quad &y &~=~&  &0.8862269, \quad & \tan \varphi &~=~& &0.0323384, \\
x &~=~&~-~ & \dfrac{1}{2}, \quad &y &~=~&  &1.7724538, \quad & \tan \varphi &~=~&~-~ &3.4802308, \\
x &~=~&~-~ & \dfrac{3}{2}, \quad &y &~=~&~-~ &3.5449077, \quad & \tan \varphi &~=~&~-~ &0.1293538, \\
x &~=~&~-~ & \dfrac{5}{2}, \quad &y &~=~& ~+~ &2.3632718, \quad & \tan \varphi &~=~&~+~ &1.6617504, \\
x &~=~&~-~ & \dfrac{7}{2}, \quad &y &~=~& ~-~ &0.9453087, \quad & \tan \varphi &~=~&~-~ &1.0428236, \\
x &~=~&~-~ & \dfrac{9}{2}, \quad &y &~=~& ~+~ &0.2700882, \quad & \tan \varphi &~=~&~+~ &0.3751176, \\
x &~=~&~-~ & \dfrac{11}{2}, \quad &y &~=~& ~-~ &0.0600196, \quad & \tan \varphi &~=~&~-~ &0.0966971, \\
x &~=~&~-~ & \dfrac{13}{2}, \quad &y &~=~& ~+~ &0.0109126, \quad & \tan \varphi &~=~&~+~ &0.0195654, \\
&  &  & & & & &\text{etc.}
\end{array}
\end{equation*}

\paragraph*{17.}

The same differential equation serves for finding  the point $\mu$ of the curve, where the ordinate is the smallest or the tangent is parallel to the axis. Therefore, having put $\frac{dy}{dx}=0$, the corresponding abscissa $x$ must be found from this equation:

\begin{equation*}
    \Delta = \dfrac{x}{1+x}+\dfrac{x}{2(2+x)}+\dfrac{x}{3(3+x)}+\dfrac{x}{4(4+x)}+\dfrac{x}{5(5+x)}+\text{etc.},
\end{equation*}
which is expanded into this one:

\begin{equation*}
      \renewcommand{\arraystretch}{2.5}
\setlength{\arraycolsep}{0mm}
\begin{array}{rll}
     \Delta = ~+~ & x &\left(1+\dfrac{1}{2^2}+\dfrac{1}{3^2}+\dfrac{1}{4^2}+\text{etc.}\right) \\
     ~-~ & x^2 &\left(1+\dfrac{1}{2^3}+\dfrac{1}{3^3}+\dfrac{1}{4^3}+\text{etc.}\right) \\
      ~+~ & x^3 &\left(1+\dfrac{1}{2^4}+\dfrac{1}{3^4}+\dfrac{1}{4^4}+\text{etc.}\right) \\
       ~-~ & x^4 &\left(1+\dfrac{1}{2^5}+\dfrac{1}{3^5}+\dfrac{1}{4^5}+\text{etc.}\right) \\
        & & \text{etc.}
\end{array}
\end{equation*}
But having substituted the proximate sums of these series it will be

\begin{equation*}
          \renewcommand{\arraystretch}{1.5}
\setlength{\arraycolsep}{0mm}
\begin{array}{llllllll}
     0 &~=~& ~+~& 0.5772156649~& &~-~&1.6449340668 ~& x \\
      & & ~+~& 1.2020569032~&x^2 &~-~&1.0823232337 ~& x^3 \\
      & & ~+~& 1.0369277551~&x^4 &~-~&1.0173430620 ~& x^5 \\
     & & ~+~& 1.0083492774~&x^6 &~-~&1.0040773562 ~& x^7 \\
& & ~+~& 1.0020083928~&x^8 &~-~&1.0009945751 ~& x^{9} \\
& & ~+~& 1.0004941886~&x^{10} &~-~&1.0002460866 ~& x^{11} \\
& & ~+~& 1.0001227133~&x^{12} &~-~&1.0000612481 ~& x^{13} \\
& & ~+~& 1.0000305882~&x^{14} &~-~&1.0000152823 ~& x^{15} \\
& &    &              &\text{etc.}
\end{array}
\end{equation*}
But if the first two fractions are kept, the following a lot more convergent series emerges

\begin{equation*}
          \renewcommand{\arraystretch}{1.5}
\setlength{\arraycolsep}{0mm}
\begin{array}{llllllll}
0 &~=~& ~+~& 0.5772156649 &~-~& \frac{x}{1+x}-\frac{x}{2(2+x)} \\
  &   & ~+~& 0.0770569032x^2    &~-~& 0.3949340668x \\
  &   & ~+~& 0.0056777551x^4    &~-~& 0.0198232337x^3 \\
  &   & ~+~& 0.0005367774x^6    &~-~& 0.0017180620x^5 \\
  &   & ~+~& 0.0000552678x^8    &~-~& 0.0001711062x^7 \\
  &   & ~+~& 0.0000059074x^{10} &~-~& 0.0000180126x^9 \\
  &   & ~+~& 0.0000006430x^{12} &~-~& 0.0000019460x^{11} \\
  &   & ~+~& 0.0000000706x^{14} &~-~& 0.0000002130x^{13} \\
  &   & ~+~& 0.0000000078x^{16} &~-~& 0.0000000235x^{15} \\
  &   &    & \text{etc.}
\end{array}
\end{equation*}
Hence one finds approximately $x=\frac{1}{2}$, but this minimal ordinate will be defined more easily by means of the following question.

\subsection*{Third Question}

\textit{Given a point of the hypergeometric curve to investigate the nature of an infinitesimal portion of this curve around this point.}\\

\paragraph*{18.}

Therefore, for the given abscissa $x=p$ let the ordinate $y=q$ have been found; and now one has to find the ordinate, which  corresponds to the abscissa $p+\omega$ differing from that one by just a small amount. Therefore, since according to formula V

\begin{equation*}
            \renewcommand{\arraystretch}{2.5}
\setlength{\arraycolsep}{0mm}
\begin{array}{lllllllllllllllllll}
    \log q = -\Delta p &~+~& p &~+~ &\dfrac{1}{2}p &~+~ &\dfrac{1}{3}p &~+~ &\dfrac{1}{4}p &~+~ &\text{etc.}    \\
     & ~-~& \log (1+p) &~-~ & \log \left(1+\dfrac{1}{2}p\right)  &~-~ & \log \left(1+\dfrac{1}{3}p\right)  &~-~ & \log \left(1+\dfrac{1}{4}p\right) &~-~& \text{etc.},
\end{array}
\end{equation*}
if one writes $p+\omega$ instead of $p$ here, instead of $\log q$ the value of $\log (q+\psi)$ will result, by which the question will be resolved. And if we set $\log q =P$, writing $p +\omega$ instead of $p$, it is known to result

\begin{equation*}
    \log (q+\psi)= P+\dfrac{\omega dP}{1 dp}+\dfrac{\omega^2 ddP}{1 \cdot 2 dp^2}+\dfrac{\omega^3 d^3P}{1 \cdot 2 \cdot 3 dp^3}+\dfrac{\omega^4 d^4 P}{1 \cdot 2 \cdot 3 \cdot 4 dp^4}+\text{etc.}
\end{equation*}
But on the other hand, as we have seen:

\begin{equation*}
    \dfrac{dP}{dp}=-\Delta +\dfrac{p}{1+p}+\dfrac{p}{2(2+p)}+\dfrac{p}{3(3+p)}+\dfrac{p}{4(4+p)}+\text{etc.}
\end{equation*}
and hence further:

\begin{equation*}
               \renewcommand{\arraystretch}{2.5}
\setlength{\arraycolsep}{0mm}
\begin{array}{cllllllllllllllllllllllll}
 \dfrac{ddP}{1 \cdot dp^2}&~=~& & \dfrac{1}{(1+p)^2}&~+~& \dfrac{1}{(2+p)^2}&~+~& \dfrac{1}{(3+p)^2}&~+~& \dfrac{1}{(4+p)^2}&~+~& \text{etc.} \\
 \dfrac{d^3P}{1 \cdot 2 dp^3}&~=~&~-~ & \dfrac{1}{(1+p)^3}&~-~& \dfrac{1}{(2+p)^3}&~-~& \dfrac{1}{(3+p)^3}&~-~& \dfrac{1}{(4+p)^3}&~-~& \text{etc.} \\
 \dfrac{d^4P}{1 \cdot 2 \cdot 3 dp^4}&~=~& & \dfrac{1}{(1+p)^4}&~+~& \dfrac{1}{(2+p)^4}&~+~& \dfrac{1}{(3+p)^4}&~+~& \dfrac{1}{(4+p)^4}&~+~& \text{etc.} \\
   & & & & &\text{etc.}
\end{array}
\end{equation*}
whence, because of $P=\log q$, we conclude:

\begin{equation*}
       \renewcommand{\arraystretch}{2.5}
\setlength{\arraycolsep}{0mm}
\begin{array}{cccccccccccccccccccccccccc}
   \log \left(1+\dfrac{\psi}{q}\right)=- \Delta \omega &~+~&  &\omega &\bigg(\dfrac{p}{1+p}&~+~&\dfrac{p}{2(2+p)}&~+~&\dfrac{p}{3(3+p)}&~+~&\text{etc.}\bigg) \\
    &~+~& \dfrac{1}{2}  &\omega^2 &\bigg(\dfrac{1}{(1+p)^2}&~+~&\dfrac{1}{(2+p)^2}&~+~&\dfrac{1}{(3+p)^2}&~+~&\text{etc.}\bigg) \\
     &~-~& \dfrac{1}{3}  &\omega^3 &\bigg(\dfrac{1}{(1+p)^3}&~+~&\dfrac{1}{(2+p)^3}&~+~&\dfrac{1}{(3+p)^3}&~+~&\text{etc.}\bigg) \\
      &~+~& \dfrac{1}{4}  &\omega^4 &\bigg(\dfrac{1}{(1+p)^4}&~+~&\dfrac{1}{(2+p)^4}&~+~&\dfrac{1}{(3+p)^4}&~+~&\text{etc.}\bigg) \\
       &~-~& \dfrac{1}{5}  &\omega^2 &\bigg(\dfrac{1}{(1+p)^5}&~+~&\dfrac{1}{(2+p)^5}&~+~&\dfrac{1}{(3+p)^5}&~+~&\text{etc.}\bigg) \\
        &  &  &  & & &\text{etc.}
\end{array}
\end{equation*}

\paragraph*{19.}

Here  the coordinates $p$ and $q$ can be considered as constants, since  the letters $\omega$ and $\psi$ denote two new coordinates  taken from a given point of the curve and parallel to the first set; from their relation defined here the nature of the curve around that point is easily investigated. Thus, since we have already assigned innumerable points of the curve, hence the trace of each portion of the curve between two of those conjugated points can be defined approximately. First, from that differentiated equation, as before, the inclination $\varphi$ of the tangent to the axis is calculated and

\begin{equation*}
    \dfrac{d\psi}{d \omega}= \tan \varphi = q \left(-\Delta +\dfrac{p}{1+p}+\dfrac{p}{2(2+p)}+\dfrac{p}{3(3+p)}+\text{etc.}\right).
\end{equation*}
Further, if for the differential equation, for the sake of brevity, we set

\begin{equation*}
    d \psi = Ad\omega +B \omega d \omega +C \omega^2d \omega +\text{etc.},
\end{equation*}
the curvature radius at a given point of the curve will be

\begin{equation*}
    =\dfrac{(1+AA)^{\frac{3}{2}}}{B}= \dfrac{1}{B \cdot \cos^3 \varphi}
\end{equation*}
because of $A= \tan \varphi$. But on the other hand

\begin{equation*}
    B= \tan \varphi \left(-\Delta +\dfrac{p}{1+p}+\dfrac{p}{2(2+p)}+\dfrac{p}{3(3+p)}+\text{etc.}\right)
\end{equation*}
\begin{equation*}
    +q\left(\dfrac{1}{(1+p)^2}+\dfrac{1}{(2+p)^2}+\dfrac{1}{(3+p)^2}+\dfrac{1}{(4+p)^2}+\text{etc.}\right),
\end{equation*}
whence, if the curvature radius is set $=r$, it will be

\begin{equation*}
    \dfrac{1}{r}= \dfrac{\sin^2 \varphi \cos \varphi}{q}+q\left(\dfrac{1}{(1+p)^2}+\dfrac{1}{(2+p)^2}+\dfrac{1}{(3+p)^2}+\text{etc.}\right).
\end{equation*}

\paragraph*{20.}

But in order to extend the investigation of the direction and the curvature from the  principal point defined by the coordinates $p$ and $q$ to the points of the curve, for the sake of brevity, let us set

\begin{equation*}
       \renewcommand{\arraystretch}{2.5}
\setlength{\arraycolsep}{0mm}
\begin{array}{ccccccccccccccccccccccc}
     -\Delta ~+~ &\dfrac{p}{1+p}&~+~& \dfrac{p}{2(2+p)} &~+~& \dfrac{p}{3(3+p)} &~+~& \dfrac{p}{4(4+p)} &~+~& \text{etc.} &~=~& P, \\
       &\dfrac{1}{(1+p)^2}&~+~& \dfrac{1}{(2+p)^2} &~+~& \dfrac{1}{(3+p)^2} &~+~& \dfrac{1}{(4+p)^2} &~+~& \text{etc.} &~=~& Q, \\
           &\dfrac{1}{(1+p)^3}&~+~& \dfrac{1}{(2+p)^3} &~+~& \dfrac{1}{(3+p)^3} &~+~& \dfrac{1}{(4+p)^3} &~+~& \text{etc.} &~=~& R, \\
               &\dfrac{1}{(1+p)^4}&~+~& \dfrac{1}{(2+p)^4} &~+~& \dfrac{1}{(3+p)^4} &~+~& \dfrac{1}{(4+p)^4} &~+~& \text{etc.} &~=~& S \\
               &  &   &  &  & \text{etc.},
\end{array}
\end{equation*}
that

\begin{equation*}
    \log \left(1+\dfrac{\psi}{q}\right)= P \omega +\dfrac{1}{2}Q\omega^2 -\dfrac{1}{3}R \omega^3+\dfrac{1}{4}S\omega^4-\dfrac{1}{5}T \omega^5 +\text{etc.}
\end{equation*}
Hence now differentiating we find: 

\begin{equation*}
    \dfrac{d \psi}{d \omega}= (q+ \psi) (P+ Q \omega -R \omega^2 +S \omega^3 -T \omega^4 +\text{etc.})
\end{equation*}
and differentiating further

\begin{equation*}
    \dfrac{dd\psi}{d\omega^2}=(q+ \psi)(P+ Q \omega -R \omega^2 +S \omega^3 -T \omega^4 +\text{etc.})^2
\end{equation*}
\begin{equation*}
    +(q+\psi)(Q-2 R \omega +3 S\omega^2- 4 T \omega +\text{etc.})
\end{equation*}
\begin{equation*}
    \dfrac{d^3 \psi}{d \omega^3}=3(q+\psi)(Q-2 R \omega +3 S\omega^2 -4 T \omega^3+\text{etc.})(P+Q\omega -R \omega^2 +S \omega^3 -\text{etc.})
\end{equation*}
\begin{equation*}
    +(q+\psi)(P+Q \omega -R \omega^2+S\omega^3-T \omega^4+\text{etc.})^3
\end{equation*}
\begin{equation*}
    -(q+\psi)(2R-6S\omega +12T \omega^2- \text{etc.}).
\end{equation*}
Having covered these calculations, for the point of the curve corresponding to the abscissa $x=p + \omega$ and $y=q+\psi$ the direction of the tangent will be

\begin{equation*}
    \tan \varphi = \dfrac{d \psi}{d \omega}=(q+\psi)(P+Q\omega -R\omega^2+S\omega^3-T \omega^4+\text{etc.}).
\end{equation*}
But then, having put the curvature radius $=r$, we know that it will be:

\begin{equation*}
    r= \left(1+\dfrac{d\psi^2}{d \omega^2}\right)^{\frac{3}{2}}:\dfrac{dd\psi}{d \omega^2}=1: \dfrac{dd \psi}{d \omega^2}\cos^3 \varphi
\end{equation*}
or

\begin{equation*}
    \dfrac{1}{r}= \dfrac{dd\psi}{d \omega^2}\cos^3 \varphi,
\end{equation*}
whence we find for the variability of the curvature:

\begin{equation*}
    -\dfrac{dr}{rrd \omega}= \dfrac{d^3 \psi}{d \omega^3}\cos^3 \varphi - \dfrac{3dd\psi}{d \omega^2}\cdot \dfrac{d \varphi}{d \omega}\sin \varphi \cos^2 \varphi.
\end{equation*}
But on the other hand

\begin{equation*}
    \dfrac{d\varphi}{\cos^2 \varphi}= \dfrac{dd \psi}{d \omega},
\end{equation*}
whence:

\begin{equation*}
    -\dfrac{dr}{rrd\omega}=\dfrac{d^3 \psi}{d \omega^3}\cos^3 \varphi -3 \left(\dfrac{dd\psi}{d\omega^2}\right)^2 \sin \varphi \cos^4 \varphi.
\end{equation*}

\subsection*{Fourth Question}

\textit{To investigate the nature of the hypergeometric curve around its lowest point $\mu$ where the ordinate is the smallest.}

\paragraph*{21.}

Since this point is not far away from the point corresponding to the abscissa $=\frac{1}{2}$ and the ordinate $=\frac{1}{2}\sqrt{\pi}$, let us set $p=\frac{1}{2}$ so that $q=\frac{1}{2}\sqrt{\pi}$, and hence first let us find the values of the letters $P$, $Q$, $R$, $S$ etc., which will result as:

\begin{equation*}
          \renewcommand{\arraystretch}{2.5}
\setlength{\arraycolsep}{0mm}
\begin{array}{lllclcccccccccccccccccccccccc}
     P &~=~&~-~& \Delta &~+~& \dfrac{1}{3}&~+~& \dfrac{1}{2\cdot 5}&~+~& \dfrac{1}{3 \cdot 7}&~+~& \text{etc.} &~=~& 2(1-\log 2)-\Delta &~=~& 0.03648997397857 \\
       Q &~=~& & \dfrac{4}{3^2} &~+~& \dfrac{4}{5^2}&~+~& \dfrac{4}{7^2}&~+~& \dfrac{4}{9^2}&~+~& \text{etc.} & &  &~=~& 0.93480220054468 \\
       R &~=~& & \dfrac{8}{3^3} &~+~& \dfrac{8}{5^3}&~+~& \dfrac{8}{7^3}&~+~& \dfrac{8}{9^3}&~+~& \text{etc.} & &  &~=~& 0.41439832211716 \\
         S &~=~& & \dfrac{16}{3^4} &~+~& \dfrac{16}{5^4}&~+~& \dfrac{16}{7^4}&~+~& \dfrac{16}{9^4}&~+~& \text{etc.} & &  &~=~& 0.0.23484850566707 \\
  T &~=~& & \dfrac{32}{3^5} &~+~& \dfrac{32}{5^5}&~+~& \dfrac{32}{7^5}&~+~& \dfrac{32}{9^5}&~+~& \text{etc.} & &  &~=~& 0.144760040831276 \\
    V &~=~& & \dfrac{64}{3^6} &~+~& \dfrac{64}{5^6}&~+~& \dfrac{64}{7^6}&~+~& \dfrac{64}{9^6}&~+~& \text{etc.} & &  &~=~& 0.09261290502029 \\
      W &~=~& & \dfrac{128}{3^7} &~+~& \dfrac{128}{5^7}&~+~& \dfrac{128}{7^7}&~+~& \dfrac{128}{9^7}&~+~& \text{etc.} & &  &~=~& 0.06035822809843 
\end{array}
\end{equation*}
Further,
\begin{equation*}
    q=\dfrac{1}{2}\sqrt{\pi}= 0.88622692545274.
\end{equation*}

\paragraph*{22.}

Hence let us especially define the point $\mu$, where the ordinate is the smallest  simple approximations shows it to correspond to the abscissa $x=0.4616$,  having set

\begin{equation*}
    p+ \omega = \dfrac{1}{2}+\omega = 0.4616,
\end{equation*}
one   finds approximately

\begin{equation*}
    \omega = -0.0383,
\end{equation*}
which value must be investigated more accurately from the equation $\frac{d \psi}{d\omega}=0$ or

\begin{equation*}
    P+Q\omega -R\omega^2 +S\omega^3 -T\omega^4 +\text{etc.}=0.
\end{equation*}
Therefore, since  approximately $\omega=-\frac{1}{26}$, we set $\omega =-\frac{1}{26}-z$, and after the substitution it has to be

\begin{equation*}
        \renewcommand{\arraystretch}{1.5}
\setlength{\arraycolsep}{0mm}
\begin{array}{rrrrrrrrrrrrr}
    & ~+~&0.03595393079018 &~+~& 0.934802200z \\
    & ~+~&0.00061301526940 &~+~& 0.031876794z &~+~& 0.414398zz \\
    & ~+~&0.00001336188585 &~+~& 0.001042227z &~+~& 0.027097zz \\
    & ~+~&0.00000031677900 &~+~& 0.000032945z &~+~& 0.001285zz \\
    & ~+~&0.00000000779479 &~+~& 0.000001013z &~+~& 0.000053zz \\
    & ~+~&0.00000000019538 &~+~& 0.000000030z &~+~& 0.000002zz \\
    & ~+~&0.00000000000496 &~+~&           2z &~+~& \\ 
    & ~+~&              13 &~+~&  \\ \cline{2-7}
    &    &0.03658063271970 &~+~& 0.967755211z &~+~& 0.442835zz \\
    &    &0.03648997397857 \\ \cline{2-7}
0   &~=~ &0.00009065874113 &~+~& 0.967755211z &~+~& 0.442835zz 
\end{array}
\end{equation*}
whence one finds

\begin{equation*}
    z=-0.00009368323
\end{equation*}
and hence

\begin{equation*}
    \omega = - 0.03836785523.
\end{equation*}
Therefore, the smallest ordinate $m \mu$ corresponds to the abscissa

\begin{equation*}
    Om = 0.46163214477.
\end{equation*}
For the ordinate $m \mu = q+ \psi$ on the other hand one has to expand the equation

\begin{equation*}
    \log \left(1+\dfrac{\psi}{q}\right)= P \omega +\dfrac{1}{2}Q\omega^2 -\dfrac{1}{3}R \omega^3 +\dfrac{1}{4}S\omega^4 -\dfrac{1}{5}T \omega^5 +\text{etc.},
\end{equation*}
from which one concludes

\begin{equation*}
    \log \left(1+\dfrac{\psi}{q}\right)=- 0.000704053
\end{equation*}
and further

\begin{equation*}
    1+\dfrac{\psi}{q}=1-0.000703805,
\end{equation*}
so that the smallest ordinate becomes

\begin{equation*}
    m \mu = q+ \psi = 0.8856031945.
\end{equation*}

\paragraph*{23.}

Now let us in general differentiate to define the value of $\psi$ from the logarithmic equation, and, having done the calculation, we will obtain:

\begin{equation*}
         \renewcommand{\arraystretch}{1.5}
\setlength{\arraycolsep}{0mm}
\begin{array}{lllllll}
     \dfrac{\psi}{q} = &~+~& 0.0364899740 \omega &~+~& 0.468066860 \omega^2 \\
                       &~-~& 0.121069221 \omega^3&~+~& 0.16321479 \omega^4 \\
                       &~-~& 0.09360753 \omega^5 &~+~&\text{etc.}, 
\end{array}
\end{equation*}
which terms suffice, if the value of $\omega$ is very small. But, for the sake of brevity, let us set

\begin{equation*}
    \dfrac{\psi}{q}= \mathfrak{P}\omega +\mathfrak{Q}\omega^2 - \mathfrak{R}\omega^3 +\mathfrak{S}\omega^4 -\mathfrak{T}\omega^5
\end{equation*}
 such that

\begin{equation*}
         \renewcommand{\arraystretch}{1.5}
\setlength{\arraycolsep}{0mm}
\begin{array}{lllllll}
     \mathfrak{P}&~=~& 0.0364899740, \quad &\mathfrak{Q}&~=~&0.468066860, \\
     \mathfrak{R}&~=~& 0.121069221, \quad  &\mathfrak{S}&~=~&0.16321479, \\
     \mathfrak{T}&~=~& 0.09360753,
\end{array}
\end{equation*}
and hence we will have:

\begin{equation*}
         \renewcommand{\arraystretch}{2.5}
\setlength{\arraycolsep}{0mm}
\begin{array}{cll}
     \dfrac{d\psi}{d \omega} &~=~& q \left(\mathfrak{P}+2 \mathfrak{Q}\omega -3 \mathfrak{R}\omega^2 +4 \mathfrak{S}\omega^3 -5 \mathfrak{T}\omega^4\right), \\
     \dfrac{dd\psi}{d\omega^2} &~=~& q(2 \mathfrak{Q}-6 \mathfrak{R}\omega +12 \mathfrak{S}\omega^2 -20 \mathfrak{T}\omega^3).
\end{array}
\end{equation*}
If we now want to find the radius of curvature at the lowest point $\mu$, where

\begin{equation*}
    \omega =-0.03836785523,
\end{equation*}
since there we have $\frac{d \psi}{d \omega}=0$, that radius of curvature will be $=\frac{d \omega^2}{dd\psi}$. Set the curvature radius in this point $=r$, and since

\begin{equation*}
    \dfrac{1}{r}=2q( \mathfrak{Q}-3 \mathfrak{R}\omega +6 \mathfrak{S}\omega^2 -10 \mathfrak{T}\omega^3)=0.9669949,
\end{equation*}
for the point $\mu$ the curvature radius results as

\begin{equation*}
    r= 1.166893.
\end{equation*}

\paragraph*{24.}

I investigated these determinations of the lowest point $\mu$ of the curve with all eagerness such that it cannot without any reason be conjectured, as this point has an extraordinary property,  that  the numbers exhibiting its nature  contain in this way a certain elegance, and  if they cannot be expressed sufficiently simply in terms of a rational or irrational number, that they are at least to be referred to  a certain simpler  kind of transcendental quantities. But against the expectation it happened that such a criterion for elegance appears neither in the abscissa

\begin{equation*}
    Om =0.46163214477
\end{equation*}
nor in the ordinate

\begin{equation*}
    m \mu = 0.8856031945
\end{equation*}
nor in the curvature radius at this point

\begin{equation*}
    =1.166893;
\end{equation*}
since no affinity to simpler rational numbers or irrational numbers or to the quadrature of the circle or to logarithmic or exponential numbers is detected. Since, if the abscissa $Om$ is considered as a logarithm, the number  corresponding to it could seem to promise several things,  I sought after this number and found

\begin{equation*}
    =1.586616,
\end{equation*}
in which no affinity to any known quantities is recognised.

\paragraph*{25.}

Before I end this speculation, it will helpful to have observed that the formula $1\cdot 2 \cdot 3 \cdots x$ can also be expressed indefinitely in terms of the following series

\begin{equation*}
    x^x-x(x-1)^x+\dfrac{x(x-1)}{1 \cdot 2}(x-2)^x-\dfrac{x(x-1)(x-2)}{1 \cdot 2 \cdot 3}(x-3)^x+\text{etc.},
\end{equation*}
which, as often as $x$ is a positive integer number, immediately gives that product as $1 \cdot 2 \cdot 3 \cdots x$. This is indeed also achieved by this further extending expression:

\begin{equation*}
    a^x-x(a-1)^x+\dfrac{x(x-1)}{1 \cdot 2}(a-2)^x-\dfrac{x(x-1)(x-2)}{1 \cdot 2 \cdot 3}(a-3)^x+\text{etc.},
\end{equation*}
for, if for $x$ one successively substitutes the numbers $1, 2, 3$ etc., it will be as follows:
\begin{equation*}
       \renewcommand{\arraystretch}{1.5}
\setlength{\arraycolsep}{0mm}
\begin{array}{l}
     a^0 =1 \\
     a^1 -(a-1)^1=1 \\
     a^2-2(a-1)^2+(a-2)^2=1 \cdot 2 \\
     a^3-3(a-1)^3+3(a-2)^3-(a-3)^3=1 \cdot 2 \cdot 3 \\
     a^4-4(a-1)^4+6(a-2)^4-4(a-3)^4+(a-4)^4=1 \cdot 2 \cdot 3 \cdot 4 \\
     a^5-5(a-1)^5+10(a-2)^5-10(a-3)^5+5(a-4)^5-(a-5)^5= 1 \cdot 2 \cdot 3 \cdot 4 \cdot 5 \\
     \text{etc.}
\end{array}
\end{equation*}

\paragraph*{26.}

These are certainly obvious from the results demonstrated about the difference of each order of algebraic progressions, but nevertheless from the nature of these series the truth is not easily uncovered; thus,  the following proof seems to be in order. Since for smaller exponents $x$ the matter is obvious, I reason as follows, i.e. that, having conceded the truth for the case $x=n$, I will show that it also follows for the case $x=n+1$.\\
Therefore, let

\begin{equation*}
    \text{I.} \quad a^n-n(a-1)^n+\dfrac{n(n-1)}{1\cdot 2}(a-2)^2-\text{etc.} =N=1 \cdot 2 \cdot 3 \cdots n,
\end{equation*}
and since the number $N$ does not depend on $a$, it will also be:

\begin{equation*}
    \text{II.} \quad  (a-1)^n-n(a-2)^n+\dfrac{n(n-1)}{1\cdot 2}(a-3)^2-\text{etc.} =N,
\end{equation*}
which subtracted from the first leaves:

\begin{equation*}
    \text{III.} \quad a^n-\dfrac{(n+1)}{1}(a-1)^n+\dfrac{(n+1)n}{1 \cdot 2}(a-2)^n-\dfrac{(n+1)n(n-1)}{1 \cdot 2 \cdot 3}(a-3)^n+\text{etc.}=0;
\end{equation*}
multiply this by $a$ and it results

\begin{equation*}
    \text{IV.} \quad a^{n+1}-\dfrac{(n+1)}{1}a(a-1)^n+\dfrac{(n+1)n}{1\cdot 2}a(a-2)^n-\dfrac{(n+1)n(n-1)}{1 \cdot 2 \cdot 3}a(a-3)^n +\text{etc.}=0;
\end{equation*}
to this one add equation II multiplied by $n+1$, i.e.:

\begin{equation*}
    \text{V.} \quad  +(n+1)1(a-1)^n-\dfrac{(n+1)n}{1 \cdot 2}2(a-2)^n+\dfrac{(n+1)n(n-1)}{1 \cdot 2 \cdot 3}3(a-3)^n-\text{etc.}=(n+1)N
\end{equation*}
and the aggregate IV$+$V will give

\begin{equation*}
    \text{VI.}~~ a^{n+1}-\dfrac{(n+1)}{1}(a-1)^{n+1}+\dfrac{(n+1)n}{1 \cdot 2}(a-2)^{n+1}-\dfrac{(n+1)n(n-1)}{1 \cdot 2 \cdot 3}(a-3)^{n+1}+\text{etc.}=(n+1)N,
\end{equation*}
where, because of $N=1 \cdot 2 \cdot 3 \cdots N$, it will be $(n+1)N=1 \cdot 2 \cdot 3 \cdots (n+1)$. Therefore, it is proved  that, if our proposition

\begin{equation*}
    a^x-x(a-1)^x+\dfrac{x(x-1)}{1 \cdot 2}(a-2)^x -\dfrac{x(x-1)(x-2)}{1 \cdot 2 \cdot 3}(a-3)^x+\text{etc.}=1 \cdot 2 \cdot 3 \cdots x
\end{equation*}
was true in the case $x=n$, it will also be true in the case $x=n+1$. Therefore, since it is obviously true in the case $x=1$, it follows that it is also true for all positive integer numbers assumed for $x$.

\paragraph*{27.}

But although this expression is sufficiently elegant and worth one's complete attention, it is nevertheless less useful for our task, in which the hypergeometric curve it propounded, since for the cases, in which $x$ is a fractional number, this series not only runs to infinity but it also, if the denominator is an even number, contains imaginary terms such that its value cannot even be calculated using approximations. Therefore, having set $x=\frac{1}{2}$, this infinite series results:

\begin{equation*}
    \sqrt{a}-\dfrac{1}{2}\sqrt{a-1}-\dfrac{1 \cdot 1}{2 \cdot 4}\sqrt{a-2}-\dfrac{1 \cdot 1 \cdot 3}{2 \cdot 4 \cdot 6}\sqrt{a-3}-\dfrac{1 \cdot 1 \cdot 3 \cdot 5}{2 \cdot 4 \cdot 6 \cdot 8}\sqrt{a-4}- \text{etc.},
\end{equation*}
whose value can hardly be shown by anyone to be $=\frac{1}{2}\sqrt{\pi}$. In like manner, taking $x=-\frac{1}{2}$,  we already know from the above results that

\begin{equation*}
    \sqrt{\pi}= \dfrac{1}{\sqrt{a}}+\dfrac{1}{2\sqrt{a-1}}+\dfrac{1 \cdot 3}{2 \cdot 4 \sqrt{a-2}}+\dfrac{1 \cdot 3 \cdot 5}{2 \cdot 4 \cdot 6 \sqrt{a-3}}+\text{etc.}
\end{equation*}
Nevertheless, a further investigation of this series is left to the mathematics, especially if it is extended and represented in this form:

\begin{equation*}
    s=x^n-m(x-1)^n+\dfrac{m(m-1)}{1 \cdot 2}(x-2)^n-\dfrac{m(m-1)(m-2)}{1 \cdot 2 \cdot 3}(x-3)^n+\text{etc.};
\end{equation*}
for, without much effort one soon detects extraordinary properties, whose expansion seems worth our complete attention. I will now  present all extraordinary phenomena I was able to observe about it\footnote{The remainder of this paper is about the last series, which Euler denoted by $s$. It can be read without the first part and can be considered as a separate paper. This might also explain why Euler started the paragraphs from I. again.}.

\section*{Observations on the Series\\[2mm]
$s=x^n-m(x-1)^n+\dfrac{m(m-1)}{1 \cdot 2}(x-2)^n-\dfrac{m(m-1)(m-2)}{1 \cdot 2 \cdot 3}(x-3)^n+\text{etc.}$}

\paragraph*{I.}  In the preceding, I already demonstrated, if the exponent $n$ was $=m$, that the sum of this series will be

\begin{equation*}
    s=1 \cdot 2 \cdot 3 \cdots m
\end{equation*}
such that in this case it does not depend on the number $x$. But  I first conclude, if $n=m-1$, that  $s=0$. For, having taken $n=m$,

\begin{equation*}
    \saturn \cdots 1 \cdot 2 \cdot 3 \cdots m = x^m-m(x-1)^m+\dfrac{m(m-1)}{1\cdot 2}(x-2)^m-\text{etc.}
\end{equation*}
and, writing $x-1$ instead of $x$ and $m-1$ instead of $m$, in like manner, we get:

\begin{equation*}
    \jupiter \cdots 1 \cdot 2 \cdot 3 \cdots (m-1) =(x-1)^{m-1}-(m-1)(x-2)^{m-1}+\dfrac{(m-1)(m-2)}{1\cdot 2}(x-3)^{m-1}-\text{etc.}
\end{equation*}
Now represent that equation $[\saturn]$ in this way:

\begin{equation*}
        \renewcommand{\arraystretch}{2.5}
\setlength{\arraycolsep}{0mm}
\begin{array}{llrrrrrrrrrrr}
     \mars \cdots 1 \cdot 2 \cdot 3 \cdots m &~=~& x \cdot x^{m-1} &~-~& mx(x-1)^{m-1}&~+~& \dfrac{m(m-1)}{1\cdot 2}x(x-2)^{m-1}&~-~&\text{etc.} \\
                                             &  &                  &~+~& (x-1)^{m-1}&~-~& \dfrac{m(m-1)}{1}(x-2)^{m-1} &~+~&\text{etc.};
\end{array}
\end{equation*}
but the equation $\jupiter$ multiplied by $m$ gives:

\begin{equation*}
    \astrosun \cdots 1 \cdot 2 \cdot 3 \cdots m = m(x-1)^{m-1}-\dfrac{m(m-1)}{1}(x-2)^{m-1}+\dfrac{m(m-1)(m-2)}{1\cdot 2}(x-3)^{m-1}-\text{etc.},
\end{equation*}
which subtracted from $\mars$ and divided by $x$ yields:

\begin{equation*}
    \venus \cdots 0= x^{m-1}-\dfrac{m}{1}(x-1)^{m-1}+\dfrac{m(m-1)}{1\cdot 2}(x-2)^{m-1}-\text{etc.},
\end{equation*}
which is the propounded equation for the case $n=m-1$, whose value thus is $=0$.\\

\paragraph*{II.}
In like manner, it is shown that the sum $s$ of the propounded series also vanishes in the case $n=m-2$. For, represent the series $\venus$ in this way:

\begin{equation*}
         \renewcommand{\arraystretch}{2.5}
\setlength{\arraycolsep}{0mm}
\begin{array}{llrrrrrrr}
    \mercury \cdots 0 &~=~& x \cdot x^{m-2} &~-~& \dfrac{m}{1}x(x-1)^{m-2}&~+~& \dfrac{m(m-1)}{1\cdot 2}x(x-2)^{m-2}&~-~&\text{etc.} \\
     & & &~+~& m(x-1)^{m-2} &~-~& \dfrac{m(m-1)}{1\cdot 2}(x-2)^{m-2} &~+~& \text{etc.}
\end{array}
\end{equation*}
and if in the same series $\venus$ one writes $x-1$ instead of $x$ and $m-1$ instead of $m$, but the whole series is multiplied by $m$, it becomes

\begin{equation*}
    \rightmoon \cdots 0 = m(x-1)^{m-2} -\dfrac{m(m-1)}{1}(x-2)^{m-2}+\text{etc.}
\end{equation*}
Having subtracted this one from previous one, divide the remainder by $x$ and it will result:

\begin{equation*}
    0= x^{m-2}-\dfrac{m}{1}(x-1)^{m-2}+\dfrac{m(m-1)}{1 \cdot  2}(x-2)^{m-2}-\text{etc.}
\end{equation*}
And so the sum of the propounded series also vanishes in the case $n=m-2$, and, in like manner, it can be shown that it also vanishes in the cases $n=m-3$, $n=m-4$ etc. and in general $n=m-i$, where $i$ is an arbitrary positive integer number.   Keep in mind that the sum of the series $s$ is $=1 \cdot 2 \cdot 3 \cdots m$ in the case $n=m$, but in the cases, in which the exponent $n$ is smaller than the number $m$, the sum vanishes, if the numbers $m$ and $n$ are integers or at least $n-m$ is a positive integer number, of course. \\

\paragraph*{III.} Therefore, in order to investigate the nature of the remaining cases, let us expand each term of our series and arrange them according to the powers of $x$, having done which we will obtain:

\begin{equation*}
        \renewcommand{\arraystretch}{2.5}
\setlength{\arraycolsep}{0mm}
\begin{array}{lllll}
     s&~=~& x^n \left(1-m+\dfrac{m(m-1)}{1\cdot 2}-\dfrac{m(m-1)(m-2)}{1 \cdot 2 \cdot 3}+\text{etc.}\right) \\
      &~+~& nx^{n-1}\left(m-\dfrac{2m(m-1)}{1\cdot 2}+\dfrac{3m(m-1)(m-2)}{1\cdot 2 \cdot 3}-\text{etc.}\right) \\
      &~-~& \dfrac{n(n-1)}{1\cdot 2}x^{n-2}\left(m-\dfrac{4m(m-1)}{1\cdot 2}+\dfrac{9m(m-1)(m-2)}{1\cdot 2 \cdot 3}-\text{etc.}\right) \\
      &~+~& \dfrac{n(n-1)(n-2)}{1\cdot 2 \cdot 3}x^{n-3}\left(m-\dfrac{8m(m-1)}{1\cdot 2}+\dfrac{27m(m-1)(m-2)}{1\cdot 2 \cdot 3}-\text{etc.}\right)\\
      & &\text{etc.},
\end{array}
\end{equation*}
the sums of which series we will find as follows; first, exhibit them a bit more generally, and since its sum is known:

\begin{equation*}
    1-mu+\dfrac{m(m-1)}{1\cdot 2}u^2-\dfrac{m(m-1)(m-2)}{1\cdot 2 \cdot 3}u^3+\text{etc.}= (1-u)^m,
\end{equation*}
let us differentiate it continuously and always substitute $u$ for $du$ again, and, having changed the signs, it will be:
 
 \begin{equation*}
         \renewcommand{\arraystretch}{2.5}
\setlength{\arraycolsep}{0mm}
\begin{array}{llcllll}
     mu &~-~& \dfrac{2m(m-1)}{1\cdot 2}&u^2&~+ \dfrac{3m(m-1)(m-2)}{1\cdot 2 \cdot 3}u^3 -\text{etc.} = mu(1-u)^{m-1} \\
     mu &~-~& \dfrac{2m(m-1)}{1\cdot 2}&u^2&~ +\text{etc.}= mu(1-u)^{m-1} -m(m-1)u^2(1-u)^{m-2} \\
     mu &~-~& \dfrac{2^3m(m-1)}{1\cdot 2}&u^2&~ +\text{etc.} =mu(1-u)^{m-1}-3m(m-1)u^2 (1-u)^{m-2} \\
        &   &                            &   &~+m(m-1)(m-2)u^3(1-u)^{m-3} \\
     mu&~-~&\dfrac{2^4m(m-1)}{1\cdot 2}&u^2 &~+\text{etc.} = mu(1-u)^{m-1}-7m(m-1)uu(1-u)^{m-2} \\
      & & &  &~+ 6m(m-1)(m-2)u^3(1-u)^{m-3} \\
      & & &  &~-m(m-1)(m-2)(m-3)u^4(1-u)^{m-4}\\
      & & &  &\text{etc.}
\end{array}
 \end{equation*}
 Therefore, here one now has to write $u=1$, after which all terms in each order vanish except for those, where the exponent of $1-u$ becomes $=0$.
 
 \paragraph*{IV.}
 
Now successively attribute the values $1$, $2$, $3$, $4$, $5$ etc. to $m$ and, for the sake of brevity, write $\left(\frac{n-i}{i+1}\right)$ instead of the general coefficient

\begin{equation*}
    \dfrac{n(n-1)(n-2)\cdots (n-i)}{1 \cdot 2 \cdot 3 \cdots (i+1)}
\end{equation*}
 and we obtain the following values:

\begin{center}
\begin{scriptsize}
\begin{equation*}
       \renewcommand{\arraystretch}{2.5}
\setlength{\arraycolsep}{0mm}
\begin{array}{c|crrrrrrrrrrrrrrrrrrrrr}
   \text{if}  & ~~ \text{it will be} \\
    ~ m=1 ~& \dfrac{s}{1} &~=~& \left(\dfrac{n}{1}\right)x^{n-1} &~-~& \left(\dfrac{n-1}{2}\right)x^{n-2} &~+~& \left(\dfrac{n-2}{3}\right)x^{n-3}&~-~& \left(\dfrac{n-3}{4}\right)x^{n-4}&~+~& \left(\dfrac{n-4}{5}\right)x^{n-5}&~-~&\text{etc.} \\
       ~ m=2 ~& \dfrac{s}{1 \cdot 2} &~=~& \left(\dfrac{n-1}{2}\right)x^{n-2} &~-~& 3\left(\dfrac{n-2}{3}\right)x^{n-3} &~+~& 7\left(\dfrac{n-3}{4}\right)x^{n-4}&~-~& 15\left(\dfrac{n-4}{5}\right)x^{n-5}&~+~& 31\left(\dfrac{n-5}{6}\right)x^{n-6}&~-~&\text{etc.} \\
         ~ m=3 ~& \dfrac{s}{1 \cdot 2 \cdot 3} &~=~& \left(\dfrac{n-2}{3}\right)x^{n-3} &~-~& 6\left(\dfrac{n-3}{4}\right)x^{n-4} &~+~& 25\left(\dfrac{n-4}{5}\right)x^{n-5}&~-~& 90\left(\dfrac{n-5}{6}\right)x^{n-6}&~+~& 301\left(\dfrac{n-6}{7}\right)x^{n-7}&~-~&\text{etc.} \\
 ~ m=4 ~& \dfrac{s}{1 \cdot 2 \cdots 4} &~=~& \left(\dfrac{n-3}{4}\right)x^{n-4} &~-~& 10\left(\dfrac{n-4}{5}\right)x^{n-5} &~+~& 65\left(\dfrac{n-5}{6}\right)x^{n-6}&~-~& 350\left(\dfrac{n-6}{7}\right)x^{n-7}&~+~& 1701\left(\dfrac{n-7}{8}\right)x^{n-8}&~-~&\text{etc.} \\
  ~ m=5 ~& \dfrac{s}{1 \cdot 2 \cdots 5} &~=~& \left(\dfrac{n-4}{5}\right)x^{n-5} &~-~& 15\left(\dfrac{n-5}{6}\right)x^{n-6} &~+~& 140\left(\dfrac{n-6}{7}\right)x^{n-7}&~-~& 1050\left(\dfrac{n-7}{8}\right)x^{n-8}&~+~& 6951\left(\dfrac{n-8}{9}\right)x^{n-9}&~-~&\text{etc.} \\
   ~ m=6 ~& \dfrac{s}{1 \cdot 2 \cdots 6} &~=~& \left(\dfrac{n-5}{6}\right)x^{n-6} &~-~& 21\left(\dfrac{n-6}{7}\right)x^{n-7} &~+~& 266\left(\dfrac{n-7}{8}\right)x^{n-8}&~-~& 2646\left(\dfrac{n-8}{9}\right)x^{n-9}&~+~& 22827\left(\dfrac{n-9}{10}\right)x^{n-10}&~-~&\text{etc.} \\
\end{array}
\end{equation*}
\end{scriptsize}
\end{center}
where the formation of each numerical coefficient from the preceding is obvious; hence for the last sixth series:

\begin{equation*}
    21=6 \cdot 1 +15, \quad 266=6 \cdot 21+140, \quad 2646= 6 \cdot 266 +1050 \quad \text{etc.}
\end{equation*}
And hence it is immediately seen, if $m<n$, that the value of $s$ vanishes; for, in the last series, if $n<6$ and hence either $5$ or $4$ or $3$ etc., it will be

\begin{equation*}
    \left(\dfrac{n-5}{6}\right)=0, \quad \left(\dfrac{n-6}{7}\right)=0 \quad \text{etc.}
\end{equation*}
But then on the other hand, if $n=m$, it is also evident that
\begin{equation*}
    \dfrac{s}{1\cdot 2 \cdots m}=1,
\end{equation*}
for, in the lowest series:

\begin{equation*}
    \left(\dfrac{6-5}{6}\right)=1, \quad \left(\dfrac{6-6}{7}\right)=0, \quad \left(\dfrac{6-7}{8}\right)=0, \quad \left(\dfrac{6-8}{9}\right)=0 \quad \text{etc.}
\end{equation*}

\subsection*{Expansion of the Cases $n=m+1$}

\paragraph*{V.} 

Hence let us first expand the cases in which $n=m+1$;  the last form yields

\begin{equation*}
         \renewcommand{\arraystretch}{2.0}
\setlength{\arraycolsep}{0mm}
\begin{array}{c|cllcrr}
    \quad \text{if} \quad & ~~ \text{these sums} ~~\\
    ~~ m=1, \quad n=2 ~~ & \dfrac{s}{1} &~=~& 2x &~-~& 1 \\
    ~~ m=2, \quad n=3 ~~& \dfrac{s}{1\cdot 2} &~=~& 3x &~-~& 3 \\
    ~~ m=3, \quad n=4 ~~& \dfrac{s}{1\cdot 2 \cdot 3} &~=~& 4x &~-~& 6 \\
    ~~ m=4, \quad n=5 ~~& \dfrac{s}{1\cdot 2 \cdot 3 \cdot 4} &~=~& 5x &~-~& 10 \\
    ~~ m=5, \quad n=6 ~~& \dfrac{s}{1\cdot 2 \cdots 6} &~=~& 6x &~-~& 15 \\
                        & \text{etc.},
\end{array}
\end{equation*}
where the first coefficients of $x$ are equal to $n$, but the absolute numbers are equal to the triangular number of $n$; in general, we will have 

\begin{equation*}
        \renewcommand{\arraystretch}{2.0}
\setlength{\arraycolsep}{0mm}
\begin{array}{c|l}
     \text{if} & \quad \text{this equation} \\
\quad     n=m+1 \quad & \quad \dfrac{s}{1 \cdot 2 \cdots m}=(m+1)x-\dfrac{m(m+1)}{1\cdot 2}= (m+1)\left(x-\dfrac{m}{2}\right)
\end{array}
\end{equation*}
such that

\begin{equation*}
     \renewcommand{\arraystretch}{2.5}
\setlength{\arraycolsep}{0mm}
\begin{array}{c}
    x^{m+1}-m(x-1)^{m+1}+\dfrac{m(m-1)}{1\cdot 2}(x-2)^{m+1}-\dfrac{m(m-1)(m-2)}{1\cdot 2 \cdot 3}(x-3)^{m+1}+\text{etc.}\\
    =1 \cdot 2 \cdot 3 \cdots (m+1)\left(x-\dfrac{m}{2}\right).
    \end{array}
\end{equation*}

\subsection*{Expansion of the Cases $n=m+2$}

\paragraph*{VI.} Therefore, for these cases we will have: 

\begin{equation*}
        \renewcommand{\arraystretch}{2.5}
\setlength{\arraycolsep}{0mm}
\begin{array}{c|ccrlllllrrrrrrrrrrrrrr}
     \text{if it was} & ~~ \text{these equations} ~~ \\
     m=1, \quad n=3 \quad & \dfrac{s}{1} &~=~& 3x^2 &~-~& 3 \cdot 1 &x& ~+~& 1 \cdot ~&1 &~=~& 3&\bigg(xx &~-~&x &~+~&\dfrac{2}{6}\bigg) \\
     m=2, \quad n=4 \quad & \dfrac{s}{1 \cdot 2} &~=~& 6x^2 &~-~& 4 \cdot 3 &x& ~+~& 1 \cdot~ & 7 &~=~& 6&\bigg(xx &~-~&2x &~+~&\dfrac{7}{6}\bigg) \\
     m=3, \quad n=5 \quad & \dfrac{s}{1 \cdot 2 \cdot 3} &~=~& 10x^2 &~-~& 5 \cdot 6 &x& ~+~& 1 \cdot~ & 25 &~=~& 10&\bigg(xx &~-~&3x &~+~&\dfrac{15}{6}\bigg) \\
      m=4, \quad n=6 \quad & \dfrac{s}{1 \cdot 2 \cdot 3 \cdot 4} &~=~& 15x^2 &~-~& 6 \cdot 10 &x& ~+~& 1 \cdot~ & 65 &~=~& 15&\bigg(xx &~-~&4x &~+~&\dfrac{26}{6}\bigg) \\
      m=5, \quad n=7 \quad & \dfrac{s}{1 \cdot 2 \cdots 5} &~=~& 21x^2 &~-~& 7 \cdot 15 &x& ~+~& 1 \cdot~ & 140 &~=~& 21&\bigg(xx &~-~&5x &~+~&\dfrac{40}{6}\bigg) \\
        m=6, \quad n=8 \quad & \dfrac{s}{1 \cdot 2 \cdots 6} &~=~& 28x^2 &~-~& 8 \cdot 21 &x& ~+~& 1 \cdot~ & 266 &~=~& 28&\bigg(xx &~-~&6x &~+~&\dfrac{57}{6}\bigg) \\
        &  & & &  & \text{etc.},
\end{array}
\end{equation*}
which forms can be represented as follows:

\begin{equation*}
         \renewcommand{\arraystretch}{2.5}
\setlength{\arraycolsep}{0mm}
\begin{array}{c|cccrrrrc}
     \text{if it was} \quad & ~~ \text{it will be} ~~ \\
     m=1, \quad n=3 \quad & \dfrac{s}{1} &~=~& \dfrac{2 \cdot 3}{1 \cdot 2}&\bigg(xx&~-~& x &~+~& \dfrac{1 \cdot 4}{12}\bigg) \\
      m=2, \quad n=4 \quad & \dfrac{s}{1 \cdot 2} &~=~& \dfrac{3 \cdot 4}{1 \cdot 2}&\bigg(xx&~-~& 2x &~+~& \dfrac{2 \cdot 7}{12}\bigg) \\
      m=3, \quad n=5 \quad & \dfrac{s}{1 \cdot 2 \cdot 3} &~=~& \dfrac{4 \cdot 5}{1 \cdot 2}&\bigg(xx&~-~& 3x &~+~& \dfrac{3 \cdot 10}{12}\bigg) \\
      m=4, \quad n=6 \quad & \dfrac{s}{1 \cdot 2 \cdot 3 \cdot 4} &~=~& \dfrac{5 \cdot 6}{1 \cdot 2}&\bigg(xx&~-~& 4x &~+~& \dfrac{4 \cdot 13}{12}\bigg) \\
      m=5, \quad n=7 \quad & \dfrac{s}{1 \cdot 2 \cdots 5} &~=~& \dfrac{6 \cdot 7}{1 \cdot 2}&\bigg(xx&~-~& 5x &~+~& \dfrac{5 \cdot 16}{12}\bigg) \\
      m=6, \quad n=8 \quad & \dfrac{s}{1 \cdot 2 \cdots 6} &~=~& \dfrac{7 \cdot 8}{1 \cdot 2}&\bigg(xx&~-~& 6x &~+~& \dfrac{6 \cdot 19}{12}\bigg),
\end{array}
\end{equation*}
whence it manifestly follows, if in general $n=m+2$, that it will be

\begin{equation*}
    \dfrac{s}{1\cdot 2 \cdots m}= \dfrac{m+1}{1}\cdot \dfrac{m+2}{2}\left(xx-mx+\dfrac{m(3m+1)}{12}\right)
\end{equation*}
or

\begin{equation*}
    \dfrac{s}{1\cdot 2 \cdots m}=\dfrac{m+1}{1}\cdot \dfrac{m+2}{2} \left(\left(x-\dfrac{m}{2}\right)^2+\dfrac{m}{12}\right).
\end{equation*}
Therefore, one obtains this summation

\begin{equation*}
    x^{m+2}-m(x-1)^{m+2}+\dfrac{m(m-1)}{1 \cdot 2}(x-2)^{m+2}-\dfrac{m(m-1)(m-2)}{1 \cdot 2 \cdot 3}(x-3)^{m+2}+\text{etc.}
\end{equation*}
\begin{equation*}
    =1 \cdot 2 \cdot 3 \cdots (m+2) \left(\dfrac{1}{2}\left(x-\dfrac{m}{2}\right)^2+\dfrac{m}{24}\right).
\end{equation*}

\subsection*{Expansion of the Cases $n=m+3$}

\paragraph*{VII.}

For these cases we will have

\begin{equation*}
          \renewcommand{\arraystretch}{2.5}
\setlength{\arraycolsep}{0mm}
\begin{array}{c|crrlcclrrllll}
    \text{if it was} \quad & \quad \text{these equations} \\
    \quad m=1, \quad n=4 \quad & \dfrac{s}{1} &~=~& 4x^3 &~-~& 6 ~\cdot ~  & 1x^2 &~+~& 4 ~\cdot ~ & 1&x &~-~& 1 \cdot 1 \\ 
     \quad m=2, \quad n=5 \quad & \dfrac{s}{1 \cdot 2} &~=~& 10x^3 &~-~& 10 ~ \cdot  ~ & 3x^2 &~+~& 5 ~\cdot ~ & 7&x &~-~& 1 \cdot 15 \\ 
       \quad m=3, \quad n=6 \quad & \dfrac{s}{1 \cdot 2 \cdot 3} &~=~& 20x^3 &~-~& 15 ~ \cdot  ~ & 6x^2 &~+~& 6 ~ \cdot ~ & 25&x &~-~& 1 \cdot 90 \\ 
        \quad m=4, \quad n=7 \quad & \dfrac{s}{1 \cdot 2 \cdots 4} &~=~& 35x^3 &~-~& 21 ~ \cdot  ~ & 10x^2 &~+~& 7 ~\cdot ~ & 65&x &~-~& 1 \cdot 350 \\ 
          \quad m=5, \quad n=8 \quad & \dfrac{s}{1 \cdot 2 \cdots 5} &~=~& 56x^3 &~-~& 28 ~ \cdot  ~ & 15x^2 &~+~& 8 ~\cdot ~ & 140&x &~-~& 1 \cdot 1050 \\ 
\end{array}
\end{equation*}
which can be represented in this way:

\begin{equation*}
       \renewcommand{\arraystretch}{2.5}
\setlength{\arraycolsep}{0mm}
\begin{array}{cllllcllcllc}
     \dfrac{s}{1} &~=~& \dfrac{2 \cdot 3 \cdot 4}{1 \cdot 2 \cdot 3}&\bigg(x^3 &~-~& \dfrac{3}{2} &x^2 &~+~& \dfrac{1 \cdot 4}{4} &x& ~-~& \dfrac{1 \cdot 1 \cdot 2}{8}\bigg) \\
     \dfrac{s}{1 \cdot 2} &~=~& \dfrac{3 \cdot 4 \cdot 5}{1 \cdot 2 \cdot 3}&\bigg(x^3 &~-~& \dfrac{6}{2} &x^2 &~+~& \dfrac{2 \cdot 7}{4} &x& ~-~& \dfrac{2 \cdot 2 \cdot 3}{8}\bigg) \\
      \dfrac{s}{1 \cdot 2 \cdot 3} &~=~& \dfrac{4 \cdot 5 \cdot 6}{1 \cdot 2 \cdot 3}&\bigg(x^3 &~-~& \dfrac{9}{2} &x^2 &~+~& \dfrac{3 \cdot 10}{4} &x& ~-~& \dfrac{3 \cdot 3 \cdot 4}{8}\bigg) \\
       \dfrac{s}{1 \cdot 2 \cdots 4} &~=~& \dfrac{5 \cdot 6 \cdot 7}{1 \cdot 2 \cdot 3}&\bigg(x^3 &~-~& \dfrac{12}{2} &x^2 &~+~& \dfrac{4 \cdot 13}{4} &x& ~-~& \dfrac{4 \cdot 4 \cdot 5}{8}\bigg) \\
        \dfrac{s}{1 \cdot 2 \cdots 5} &~=~& \dfrac{6 \cdot 7 \cdot 8}{1 \cdot 2 \cdot 3}&\bigg(x^3 &~-~& \dfrac{15}{2} &x^2 &~+~& \dfrac{5 \cdot 16}{4} &x& ~-~& \dfrac{5 \cdot 5 \cdot 6}{8}\bigg) \\
         & & & &\text{etc.},
\end{array}
\end{equation*}
whence in general for the cases $n=m+3$ one concludes

\begin{equation*}
        \renewcommand{\arraystretch}{2.5}
\setlength{\arraycolsep}{0mm}
\begin{array}{lll}
     \dfrac{s}{1 \cdot 2 \cdots m} &~=~& \dfrac{m+1}{1}\cdot \dfrac{m+2}{2}\cdot \dfrac{m+3}{3}\left(x^3-\dfrac{3m}{2}x^2+\dfrac{m(3m+1)}{4}x-\dfrac{mm(m+1)}{8}\right) \\
      &~=~& \dfrac{m+1}{1}\cdot \dfrac{m+2}{2}\cdot \dfrac{m+3}{3} \left(\left(x-\dfrac{m}{2}\right)^2+\dfrac{m}{4}\left(x-\dfrac{m}{2}\right)\right)
\end{array}
\end{equation*}
such that we obtain:

\begin{equation*}
    x^{m+3}-m(x-1)^{m+3}+\dfrac{m(m-1)}{1\cdot 2}(x-2)^{m+3}-\dfrac{m(m-1)(m-2)}{1\cdot 2 \cdot 3}(x-3)^{m+3}+\text{etc.}
\end{equation*}
\begin{equation*}
    =1 \cdot 2 \cdot 3 \cdots (m+3) \left(\dfrac{1}{6}\left(x-\dfrac{m}{2}\right)^3+\dfrac{m}{24}\left(x-\dfrac{m}{2}\right)\right).
\end{equation*}

\subsection*{Preparation for the following Cases}

\paragraph*{VIII.}

Although in paragraph IV. we  gave the formulas only up to the case $m=6$, let us try to find a general formula for them. To this end, let us set $n=m +\lambda$, and to abbreviate  the expression

\begin{equation*}
    \dfrac{k(k-1)(k-2)(k-3)\cdots (k-i+1)}{1 \cdot 2 \cdot 3 \cdot 4 \cdots i}
\end{equation*}
let us write

\begin{equation*}
    \left(\dfrac{k}{i}\right)
\end{equation*}
such that $k$ denotes the first factor of the numerator, $i$ on the other hand the last factor of the denominator. Therefore, let us put  for the case

\begin{center}
\begin{footnotesize}
\begin{equation*}
        \renewcommand{\arraystretch}{2.5}
\setlength{\arraycolsep}{0mm}
\begin{array}{c|clllllllllllllllllll}
     m-1 \quad & \quad \dfrac{s}{1 \cdot 2 \cdots (m-1)} &~=~& \left(\dfrac{m +\lambda}{m-1}\right)x^{\lambda +1}&~-~&A &\left(\dfrac{m +\lambda}{m}\right)x^{\lambda}&~+~& B&\left(\dfrac{m +\lambda}{m+1}\right)x^{\lambda -1}&~-~& C&\left(\dfrac{m +\lambda}{m+2}\right)x^{\lambda -2}&~+~&\text{etc.} \\
       m \quad & \quad \dfrac{s}{1 \cdot 2 \cdot 3 \cdots m} &~=~& \left(\dfrac{m +\lambda}{m}\right)x^{\lambda}&~-~&A^1 &\left(\dfrac{m +\lambda}{m+1}\right)x^{\lambda -1}&~+~& B^1&\left(\dfrac{m +\lambda}{m+2}\right)x^{\lambda -2}&~-~& C^1&\left(\dfrac{m +\lambda}{m+3}\right)x^{\lambda -3}&~+~&\text{etc.} \\
\end{array}
\end{equation*}
\end{footnotesize}
\end{center}
such that $A^1$, $B^1$, $C^1$, $D^1$ etc. are the coefficients which must be investigated. But from the law of these formulas we see that

\begin{equation*}
    A^1 =m \cdot 1+A,~~B^1= mA^1+B,~~C^1=mB^1+C,~~D^1=mC^1+D~~\text{etc.},
\end{equation*}
where it is evident that

\begin{equation*}
    A=\dfrac{m(m-1)}{1 \cdot 2} \quad \text{and} \quad A^1 = \dfrac{(m+1)m}{1\cdot 2}
\end{equation*}
or, in our notation,

\begin{equation*}
    A=\left(\dfrac{m}{2}\right) \quad \text{and} \quad A^1 = \left(\dfrac{m+1}{2}\right).
\end{equation*}
Now for the following operations I observe that:

\begin{equation*}
    \left(\dfrac{m+ \mu +1}{\nu}\right)-\left(\dfrac{m +\mu}{\nu}\right)= \left(\dfrac{m+\mu}{\nu -1}\right),
\end{equation*}
which is clear, since by expanding:

\begin{equation*}
          \renewcommand{\arraystretch}{2.5}
\setlength{\arraycolsep}{0mm}
\begin{array}{rll}
     \left(\dfrac{m+ \mu +1}{\nu}\right)&~=~& \dfrac{(m+\mu+1)(m+\mu)(m+\mu -1)\cdots (m+\mu +2- \nu)}{1 \cdot 2 \cdot 3 \cdots \nu} \\
      \left(\dfrac{m+ \mu}{\nu}\right)&~=~& \dfrac{(m+\mu)(m+\mu -1)\cdots (m +\mu +2- \nu)(m+\mu +1 -\nu)}{1 \cdot 2 \cdots (\nu -1) \nu}, \\
\end{array}
\end{equation*}
whence it is seen that

\begin{equation*}
    \left(\dfrac{m+1}{2}\right)-\left(\dfrac{m}{2}\right)= \left(\dfrac{m}{1}\right)=m.
\end{equation*}

\paragraph*{IX.} Now for it to be

\begin{equation*}
    B^1-B =mA^1 = \left(\dfrac{m+1}{2}\right)m = 3 \left(\dfrac{m+1}{3}\right)+\left(\dfrac{m+1}{2}\right),
\end{equation*}
let us set

\begin{equation*}
    B=  \alpha \left(\dfrac{m+1}{4}\right)+ \beta \left(\dfrac{m+1}{3}\right)
\end{equation*}
and hence

\begin{equation*}
    B^1= \alpha \left(\dfrac{m+2}{4}\right)+ \beta \left(\dfrac{m+2}{3}\right)
\end{equation*}
and it will result

\begin{equation*}
    B^1-B = \alpha \left(\dfrac{m+1}{3}\right)+ \beta \left(\dfrac{m+1}{2}\right),
\end{equation*}
whence $\alpha =3$ and $\beta =1$ such that

\begin{equation*}
    B^1= 3 \left(\dfrac{m+2}{4}\right)+  \left(\dfrac{m+2}{3}\right).
\end{equation*}
But for the following operations note that in general:

\begin{equation*}
    \left(\dfrac{m+\mu}{\nu}\right)m = (\nu +1) \left(\dfrac{m+\mu}{\nu +1}\right)+(\nu -\mu) \left(\dfrac{m+\mu}{\nu}\right),
\end{equation*}
which form results, if the value of $\left(\dfrac{m +\mu}{\nu}\right)$ expanded above is multiplied by

\begin{equation*}
    m= m+\mu - \nu - \nu - \mu = (\nu +1)\cdot \dfrac{m +\mu - \nu}{\nu +1}+(\nu -\mu).
\end{equation*}

\paragraph*{X.}

Since, having observed these things, it must be $C^1-C= mB^1$, because of

\begin{equation*}
    \left(\dfrac{m+2}{4}\right)m = 5 \left(\dfrac{m+2}{5}\right)+2 \left(\dfrac{m+2}{4}\right)
\end{equation*}
and

\begin{equation*}
    \left(\dfrac{m+2}{3}\right)= 4 \left(\dfrac{m+2}{4}\right)+1 \left(\dfrac{m+2}{3}\right),
\end{equation*}
it will be

\begin{equation*}
    mB^1 = 15 \left(\dfrac{m+2}{5}\right)+10 \left(\dfrac{m+2}{4}\right)+1 \left(\dfrac{m+2}{3}\right);
\end{equation*}
therefore, set

\begin{equation*}
    C=15 \left(\dfrac{m+2}{6}\right)+10 \left(\dfrac{m+2}{5}\right)+1 \left(\dfrac{m+2}{4}\right)
\end{equation*}
hence

\begin{equation*}
    C^1=15 \left(\dfrac{m+3}{6}\right)+10 \left(\dfrac{m+3}{5}\right)+1 \left(\dfrac{m+3}{4}\right).
\end{equation*}

\paragraph*{XI.}

Since, in like manner, it has to be $D^1-D=mC^1$, since

\begin{equation*}
            \renewcommand{\arraystretch}{2.5}
\setlength{\arraycolsep}{0mm}
\begin{array}{llllll}
     m \left(\dfrac{m+3}{6}\right)&~=~& 7 \left(\dfrac{m+3}{7}\right)&~+~&3 \left(\dfrac{m+3}{6}\right) \\
     m \left(\dfrac{m+3}{5}\right)&~=~& 6 \left(\dfrac{m+3}{6}\right)&~+~&2 \left(\dfrac{m+3}{5}\right) \\
     m \left(\dfrac{m+3}{4}\right)&~=~& 5 \left(\dfrac{m+3}{5}\right)&~+~&1 \left(\dfrac{m+3}{4}\right), \\
\end{array}
\end{equation*}
it will be

\begin{equation*}
    mC^1=105\left(\dfrac{m+3}{7}\right)+105 \left(\dfrac{m+3}{6}\right)+25\left(\dfrac{m+3}{5}\right)+\left(\dfrac{m+3}{4}\right),
\end{equation*}
whence we conclude:

\begin{equation*}
    D^1 = 105 \left(\dfrac{m+4}{8}\right)+105\left(\dfrac{m+4}{7}\right)+25\left(\dfrac{m+4}{6} \right)+1 \left(\dfrac{m+4}{5}\right).
\end{equation*}

\paragraph*{XII.}

Further, because of $E^1-E=mD^1$, since

\begin{equation*}
            \renewcommand{\arraystretch}{2.5}
\setlength{\arraycolsep}{0mm}
\begin{array}{llllll}
     m \left(\dfrac{m+4}{8}\right)&~=~& 9 \left(\dfrac{m+4}{9}\right)&~+~&4 \left(\dfrac{m+4}{8}\right) \\
       m \left(\dfrac{m+4}{7}\right)&~=~& 8 \left(\dfrac{m+4}{8}\right)&~+~&3 \left(\dfrac{m+4}{7}\right) \\
         m \left(\dfrac{m+4}{6}\right)&~=~& 7 \left(\dfrac{m+4}{7}\right)&~+~&2 \left(\dfrac{m+4}{6}\right) \\
           m \left(\dfrac{m+4}{5}\right)&~=~& 6 \left(\dfrac{m+4}{6}\right)&~+~&1\left(\dfrac{m+4}{5}\right),
     \end{array}
     \end{equation*}
     we conclude
     \begin{equation*}
         mD^1 =945\left(\dfrac{m+4}{9}\right)+1260 \left(\dfrac{m+4}{8}\right)+490 \left(\dfrac{m+4}{7}\right)+56 \left(\dfrac{m+4}{6}\right)+1\left(\dfrac{m+4}{5}\right)
     \end{equation*}
     and hence
     
     \begin{equation*}
         E^1  =945\left(\dfrac{m+5}{10}\right)+1260 \left(\dfrac{m+5}{9}\right)+490 \left(\dfrac{m+5}{8}\right)+56 \left(\dfrac{m+5}{7}\right)+1\left(\dfrac{m+5}{6}\right)
     \end{equation*}
     and, proceeding even further,

    \begin{center} 
 \begin{small}
     \begin{equation*}
         F^1 = 10395\left(\dfrac{m+6}{12}\right)+17325\left(\dfrac{m+6}{11}\right)+9450\left(\dfrac{m+6}{10}\right)+1918 \left(\dfrac{m+6}{9}\right)+119 \left(\dfrac{m+6}{8}\right)+\left(\dfrac{m+6}{7}\right).
     \end{equation*}
\end{small}
\end{center}

\subsection*{Expansion of the case $n=m+\lambda$}
     
\paragraph*{XIII.}
     
     Therefore, for our series in the case $n=m+\lambda$
     
     \begin{equation*}
         s=x^{m+\lambda}-\dfrac{m}{1}(x-1)^{m+\lambda}+\dfrac{m(m-1)}{1\cdot 2}(x-2)^{m+\lambda}-\dfrac{m(m-1)(m-2)}{1 \cdot 2 \cdot 3}(x-3)^{m+\lambda}+\text{etc.},
     \end{equation*}
     if we divide the general equation exhibited above in paragraph VIII by
     
     \begin{equation*}
         \left(\dfrac{m+\lambda}{m}\right)= \dfrac{(m+\lambda)(m+\lambda -1)(m+\lambda -2)\cdots (\lambda +1)}{1 \cdot 2 \cdot 3 \cdots m},
     \end{equation*}
     we will get to this expression
     
     \begin{equation*}
              \renewcommand{\arraystretch}{2.5}
\setlength{\arraycolsep}{0mm}
\begin{array}{llll}
     \dfrac{s}{(\lambda +1)(\lambda+2)\cdots (\lambda +m)} = x^{\lambda}&~-~& \dfrac{\lambda}{m+1}A^1 x^{\lambda -1}+ \dfrac{\lambda (\lambda -1)}{(m+1)(m+2)}B^1 x^{\lambda -2} \\
      &~-~& \dfrac{\lambda (\lambda -1)(\lambda -2)}{(m+1)(m+2)(m+3)}C^1 x^{\lambda -3}+\text{etc.} 
\end{array}
     \end{equation*}
     where one has to substitute the following values for the letters $A^1$, $B^1$, $C^1$, $D^1$ etc.:

\begin{small}     
     \begin{equation*}
        \renewcommand{\arraystretch}{2.5}
\setlength{\arraycolsep}{0mm}
\begin{array}{rrrrrrrrrrrrrrrrrrrrrrr}
 A^1    &~=~& \left(\dfrac{m+1}{2}\right) &~=~& \dfrac{(m+1)m}{1 \cdot 2}  \\
 B^1 &~=~& 3\left(\dfrac{m+2}{4}\right) &~+~& \left(\dfrac{m+2}{3}\right) \\
 C^1 &~=~& 15\left(\dfrac{m+3}{6}\right) &~+~& 10\left(\dfrac{m+3}{5}\right)&~+~& \left(\dfrac{m+3}{4}\right) \\
  D^1 &~=~& 105\left(\dfrac{m+4}{8}\right) &~+~& 15\left(\dfrac{m+4}{7}\right)&~+~& 25\left(\dfrac{m+3}{4}\right)&~+~& \left(\dfrac{m+4}{5}\right) \\
 E^1 &~=~& 945\left(\dfrac{m+5}{10}\right) &~+~& 1260\left(\dfrac{m+5}{9}\right)&~+~& 490\left(\dfrac{m+5}{8}\right)&~+~& 56\left(\dfrac{m+5}{7}\right)&~+~&\left(\dfrac{m+5}{6}\right) \\ 
  F^1 &~=~& 10395\left(\dfrac{m+6}{12}\right) &~+~& 17325\left(\dfrac{m+6}{11}\right)&~+~& 9450\left(\dfrac{m+6}{10}\right)&~+~& 1918\left(\dfrac{m+6}{9}\right)&~+~&119\left(\dfrac{m+6}{7}\right)&~+~&\left(\dfrac{m+6}{7}\right) 
\end{array}
     \end{equation*}
 \end{small}
 where
 
 \begin{equation*}
       \renewcommand{\arraystretch}{1.5}
\setlength{\arraycolsep}{0mm}
\begin{array}{rrrrrrrrrrrrr}
     10395=11 \cdot 945, \quad 17325 &~=~& 10~\cdot ~&1260 &~+~& 5 ~ \cdot ~& 945 \\
      9450 &~=~& 9~\cdot ~&490 &~+~& 4 ~ \cdot ~& 1260 \\
       1918 &~=~& 8~\cdot ~&56 &~+~& 3 ~ \cdot ~& 490 \\
       119 &~=~& 7~\cdot ~&1 &~+~& 2 ~ \cdot ~& 56 \\
         1 &~=~& 6~\cdot ~&0 &~+~& 1 ~ \cdot ~& 1 \\
\end{array}
 \end{equation*}
 hence, if for the following value one sets
 
 \begin{center}
 \begin{small}
 \begin{equation*}
     G^1 = \alpha \left(\dfrac{m+7}{14}\right)+\beta \left(\dfrac{m+7}{13}\right)+\gamma \left(\dfrac{m+7}{12}\right)+\delta \left(\dfrac{m+7}{11}\right)+\varepsilon \left(\dfrac{m+7}{10}\right)+ \zeta \left(\dfrac{m+7}{9}\right)+ \eta\left(\dfrac{m+7}{8}\right),
 \end{equation*}
 \end{small}
 \end{center}
 these  coefficients will be determined as follows:
 
 \begin{equation*}
         \renewcommand{\arraystretch}{1.5}
\setlength{\arraycolsep}{0mm}
\begin{array}{rrrrrrr|rrrrrrrrrrrrr}
     \alpha &~=~& 13 ~\cdot ~& 10395  & & & & \quad \varepsilon &~=~& 9 ~ \cdot ~ & 119 &~+~& 3 ~ \cdot ~ & 1918 \\
     \beta &~=~& 12 ~\cdot ~& 17325  &~+~ & 6 ~\cdot ~ & 10395 \quad & \zeta &~=~& 8 ~ \cdot ~ & 1 &~+~& 2 ~ \cdot ~ & 119 \\
      \gamma &~=~& 11 ~\cdot ~& 9450  &~+~ & 5 ~\cdot ~ & 17325 \quad & \eta &~=~& 7 ~ \cdot ~ & 0 &~+~& 1 ~ \cdot ~ & 1 \\
       \delta &~=~& 10 ~\cdot ~& 1918  &~+~ & 4 ~\cdot ~ & 9450 \quad &  & &   \\
\end{array}
 \end{equation*}
 
 \paragraph*{XIV.}
 
 But the same values are expressed more conveniently in this way:
 
 \begin{footnotesize}
 \begin{equation*}
          \renewcommand{\arraystretch}{2.5}
\setlength{\arraycolsep}{0mm}
\begin{array}{rrllrlrllrrrrrrrrrrrrrrrrrrr}
     A^1 &~=~& \left(\dfrac{m+1}{2}\right)&~\cdot 1 \\
     B^1 &~=~& \left(\dfrac{m+2}{3}\right)&\bigg(1~+~& 3 \cdot \dfrac{m-1}{4}&\bigg)\\
     C^1 &~=~& \left(\dfrac{m+3}{4}\right)&\bigg(1~+~& 10 \cdot \dfrac{m-1}{5} &~+~& 15 \cdot \dfrac{m-1}{5}\cdot \dfrac{m-2}{6}&\bigg) \\
     D^1 &~=~& \left(\dfrac{m+4}{5}\right)&\bigg(1~+~& 25 \cdot \dfrac{m-1}{6} &~+~& 105 \cdot \dfrac{m-1}{6}\cdot \dfrac{m-2}{7} &~+~& 105 \cdot \dfrac{m-1}{6}\cdot \dfrac{m-2}{7}\cdot \dfrac{m-3}{8}\bigg) \\
      E^1 &~=~& \left(\dfrac{m+5}{6}\right)&\bigg(1~+~& 56 \cdot \dfrac{m-1}{7} &~+~& 490 \cdot \dfrac{m-1}{7}\cdot \dfrac{m-2}{8} &~+~& 1260 \cdot \dfrac{m-1}{7}\cdot \dfrac{m-2}{8}\cdot \dfrac{m-3}{9} \\
 & & & & &  & &~+~ &945 \cdot \dfrac{m-1}{7} \cdot \dfrac{m-2}{8} \cdot \dfrac{m-3}{9} \cdot \dfrac{m-4}{10} \bigg) \\
  F^1 &~=~& \left(\dfrac{m+6}{7}\right)&\bigg(1~+~& 119 \cdot \dfrac{m-1}{8} &~+~& 1918 \cdot \dfrac{m-1}{8}\cdot \dfrac{m-2}{9} &~+~& 9450 \cdot \dfrac{m-1}{8}\cdot \dfrac{m-2}{9}\cdot \dfrac{m-3}{10} \\
 & & & & &  & &~+~ &17325 \cdot \dfrac{m-1}{8} \cdot \dfrac{m-2}{9} \cdot \dfrac{m-3}{10} \cdot \dfrac{m-4}{11} \\
 & & & & &  & &~+~ &10395 \cdot \dfrac{m-1}{8} \cdot \dfrac{m-2}{9} \cdot \dfrac{m-3}{10} \cdot \dfrac{m-4}{11} \cdot \dfrac{m-5}{12} \bigg).
\end{array}
 \end{equation*}
 \end{footnotesize}
 To see the  law of this progression more easily, in general let us set

 \begin{equation*}
     M^1 = \left(\dfrac{m+\mu -1}{\mu}\right)\left(1+\alpha \cdot \dfrac{m-1}{\mu +1}+\beta \cdot \dfrac{m-1}{\mu +1}\cdot \dfrac{m-2}{\mu +2}+\gamma \cdot \dfrac{m-1}{\mu +1}\cdot \dfrac{m-2}{\mu +2}\cdot \dfrac{m-3}{\mu +3}+\text{etc.}\right)
 \end{equation*}
 and the following one
 
 \begin{equation*}
     N^1 =\left(\dfrac{m +\mu}{\mu +1}\right)\left(1+\alpha^1 \cdot \dfrac{m-1}{\mu +2}+\beta^1 \dfrac{m-1}{\mu +2}\cdot \dfrac{m-2}{\mu +3}+\gamma^1 \cdot \dfrac{m-1}{\mu +2}\cdot \dfrac{m -2}{\mu+3}\cdot \dfrac{m-3}{\mu +4}+\text{etc.}\right),
 \end{equation*}
 and these coefficients are determined as follows by the preceding ones:
 
 \begin{equation*}
         \renewcommand{\arraystretch}{1.5}
\setlength{\arraycolsep}{0mm}
\begin{array}{llllcll}
     \alpha^1 &~=~& 2 \alpha &~+~& \mu +1 & \\
     \beta^1  &~=~& 3 \beta  &~+~& (\mu +2)& \alpha \\
     \gamma^1  &~=~& 4 \gamma  &~+~& (\mu +3)& \beta \\
     \delta^1  &~=~& 5 \delta  &~+~& (\mu +4)& \gamma \\
      \varepsilon^1  &~=~& 6 \varepsilon  &~+~& (\mu +5) &\delta,
\end{array}
 \end{equation*}
 whence these formulas can easily be continued arbitrarily far.
 
 \paragraph*{XV.}
 
 Let us now substitute these values, and for the sum $s$ of the propounded series, if $n=m+\lambda$, we will obtain the following expression:
 
 \begin{small}
 \begin{equation*}
     \dfrac{s}{(\lambda +1)(\lambda +2)\cdots (\lambda +m)}
 \end{equation*}
 \begin{equation*}
         \renewcommand{\arraystretch}{2.5}
\setlength{\arraycolsep}{0mm}
\begin{array}{llll}
     &~=~& x^{\lambda}-\dfrac{\lambda m}{1 \cdot 2}x^{\lambda -1}+\dfrac{\lambda (\lambda -1)m}{1 \cdot 2 \cdot 3}x^{\lambda -2}\left(1+\dfrac{3(m-1)}{4}\right) \\
     &~-~& \dfrac{\lambda (\lambda -1)(\lambda -2)m}{1 \cdot 2 \cdot 3 \cdot 4}x^{\lambda -3}\left(1+10 \cdot \dfrac{m-1}{5}+15 \cdot \dfrac{m-1}{5}\cdot \dfrac{m-2}{6}\right) \\
     &~+~& \dfrac{\lambda (\lambda -1)(\lambda -2)(\lambda -3)m}{1 \cdot 2 \cdot 3 \cdot 4 \cdot 5}x^{\lambda -4}\left(1+25\cdot \dfrac{m-1}{6}+105 \cdot \dfrac{m-1}{6}\cdot \dfrac{m-2}{7}+105 \cdot \dfrac{m-1}{6}\cdot \dfrac{m-2}{7}\cdot \dfrac{m-3}{8}\right) \\
     &~-~& \dfrac{\lambda \cdots (\lambda -4)m}{1 \cdots 5 \cdot 6}x^{\lambda -5}\left(1+ 56 \cdot \dfrac{m-1}{7}+490 \cdot \dfrac{m-1}{7}\cdot \dfrac{m-2}{8}+1260 \cdot \dfrac{m-1}{7}\cdots \dfrac{m-3}{9}\right.\\
     &~+~&\left. 945\cdot \dfrac{m-1}{7}\cdots \dfrac{m-4}{10}\right)\\
     &~+~& \dfrac{\lambda \cdots (\lambda -5)m}{1 \cdots 6 \cdot 7}x^{\lambda -6} \left(1+119 \cdot \dfrac{m-1}{8}+1918 \cdot \dfrac{m-1}{8}\cdot \dfrac{m-2}{9}+ 9450 \cdot \dfrac{m-1}{8}\cdots \dfrac{m-3}{10}\right.\\
     &~+~& \left.17325 \cdot\dfrac{m-1}{8}\cdots \dfrac{m-4}{8}+10395 \cdot \dfrac{m-1}{8}\cdots \dfrac{m-5}{12}\right)\\
     & & \text{etc.}
     \end{array}
 \end{equation*}
 \end{small}from this  subtract the power

 \begin{equation*}
     \renewcommand{\arraystretch}{2.5}
\setlength{\arraycolsep}{0mm}
\begin{array}{lll}
     \left(x-\dfrac{m}{2}\right)^{\lambda}&~=~&x^{\lambda}-\dfrac{\lambda m}{2}x^{\lambda -1}+\dfrac{\lambda (\lambda -1)m^2}{1 \cdot 2 \cdot 4}x^{\lambda -2}-\dfrac{\lambda (\lambda -1)(\lambda -2)m^3}{1 \cdot 2 \cdot 3 \cdot 8}x^{\lambda} \\
     &~+~& \dfrac{\lambda \cdots (\lambda -3)m^4}{1 \cdots 4 \cdot 16}x^{\lambda -4}-\dfrac{\lambda \cdots (\lambda -4)m^5}{1 \cdots 5 \cdot 32}x^{\lambda -5}+\dfrac{\lambda \cdots (\lambda -5)m^6}{1 \cdots 6 \cdot 64}x^{\lambda -6}-\text{etc.}
     \end{array}
 \end{equation*}
 But here it conveniently happens that
 
 \begin{equation*}
     \dfrac{15}{5 \cdot 6}=\dfrac{4}{8}, \quad \dfrac{105}{6 \cdot 7 \cdot 8}=\dfrac{5}{16}, \quad \dfrac{945}{7 \cdot 8 \cdot 9 \cdot 10}=\dfrac{6}{32}, \quad \dfrac{10395}{8 \cdot 9 \cdot 10 \cdot 11 \cdot 12}=\dfrac{7}{64},
 \end{equation*}
 the reason for which is obvious; therefore, the above  expression, if it is expanded, takes  the following form:
 
 \begin{equation*}
       \renewcommand{\arraystretch}{2.5}
\setlength{\arraycolsep}{0mm}
\begin{array}{llll}
     \left(x-\dfrac{m}{2}\right)^{\lambda}&~+~& \dfrac{\lambda (\lambda -1)m}{1\cdot 2 \cdot 3}x^{\lambda -2}\cdot \dfrac{1}{4}-\dfrac{\lambda(\lambda -1)(\lambda -2)m}{1 \cdot 2 \cdot 3 \cdot 4}x^{\lambda -3}\cdot \dfrac{m}{2} \\
     & ~+~& \dfrac{\lambda \cdots (\lambda -3)m}{1 \cdots 4 \cdot 5}x^{\lambda -4}\left(\dfrac{5}{8}m^2+\dfrac{5}{48}m-\dfrac{1}{24}\right) \\
     &~-~& \dfrac{\lambda \cdots (\lambda -4)m}{1 \cdots 5 \cdot 6}x^{\lambda -5}\left(\dfrac{5}{8}m^3+\dfrac{5}{16}m^2-\dfrac{1}{8}m\right)\\
     &~+~& \dfrac{\lambda \cdots (\lambda -5)m}{1 \cdots 6 \cdot 7}x^{\lambda -6}\left(\dfrac{35}{64}m^4+\dfrac{35}{64}m^3-\dfrac{91}{576}m^2-\dfrac{7}{96}+\dfrac{1}{30}\right)-\text{etc.}
\end{array}
 \end{equation*}
 
 \paragraph*{XVI.} The power of $x-\frac{m}{2}$, more precisely
 
 \begin{equation*}
     \dfrac{\lambda (\lambda -1)m}{2 \cdot 3 \cdots 4}\left(x-\dfrac{m}{2}\right)^{\lambda -2}
 \end{equation*}
 is again detected to be contained in this expression; having separated this power, our expression will be:
 
 \begin{equation*}
     \left(x-\dfrac{m}{2}\right)^{\lambda}+\dfrac{\lambda (\lambda -1)m}{2 \cdot 3 \cdot 4}\left(x-\dfrac{m}{2}\right)^{\lambda -2}+\dfrac{\lambda \cdots (\lambda -3)m}{1 \cdots 4 \cdot 5}x^{\lambda -4}\left(\dfrac{5}{48}m-\dfrac{1}{24}\right)
 \end{equation*}
 \begin{equation*}
     -\dfrac{\lambda \cdots (\lambda -4)m}{1 \cdots 5 \cdot 6}x^{\lambda -5}\left(\dfrac{5}{16}m^2-\dfrac{1}{8}m\right) 
 \end{equation*}
 \begin{equation*}
     +\dfrac{\lambda \cdots (\lambda -5)m}{1 \cdots 6 \cdot 7}x^{\lambda -6}\left(\dfrac{35}{64}m^3-\dfrac{91}{576}m^2-\dfrac{7}{96}m+\dfrac{1}{36}\right)-\text{etc.},
 \end{equation*}
 which still contains
 
 \begin{equation*}
     \dfrac{\lambda \cdots (\lambda -3)m}{1 \cdots 4 \cdot 5}\left(\dfrac{5}{48}m-\dfrac{1}{24}\right)\left(x-\dfrac{m}{2}\right)^{\lambda -4}
 \end{equation*}
 furthermore,  there still is
 
 \begin{equation*}
     \dfrac{\lambda \cdots (\lambda -5)m}{1 \cdots 6 \cdot 7}x^{\lambda -6}\left(\dfrac{35}{576}m^2-\dfrac{7}{96}m+\dfrac{1}{36}\right),
 \end{equation*}
 whence, without any doubt, additionally this power  enters:
 
 \begin{equation*}
     +\dfrac{\lambda \cdots (\lambda -5)m}{1 \cdots 6 \cdot 7}\cdot \dfrac{35m^2-42m +16}{576}\left(x-\dfrac{m}{2}\right)^{\lambda -6}.
 \end{equation*}
 Therefore, our expression will be of this nature:

 \begin{footnotesize}
 \begin{equation*}
     \dfrac{s}{(\lambda +1)(\lambda+2)\cdots (\lambda +m)}= \left(x-\dfrac{m}{2}\right)^{\lambda}+\dfrac{\lambda (\lambda -1)}{1 \cdot 2}\cdot \dfrac{m}{12}\left(x-\dfrac{m}{2}\right)^{\lambda -2}
 \end{equation*}
 \begin{equation*}
     +\dfrac{\lambda (\lambda -1)\cdots (\lambda -3)}{1\cdot 2 \cdots 4}\cdot \dfrac{m(5m-2)}{240}\left(x-\dfrac{m}{2}\right)^{\lambda -4}+\dfrac{\lambda (\lambda -1)\cdots (\lambda -5)}{1 \cdot 2 \cdots 6}\cdot \dfrac{m(35m^2-42m+16)}{4032}\left(x-\dfrac{m}{2}\right)^{\lambda -6}+\text{etc.}
 \end{equation*}
 \end{footnotesize}
 
 \paragraph*{XVII.}
 
 Therefore, lo and behold the extraordinary transformation of  our propounded general series:
 
  \begin{equation*}
     s=x^{m+\lambda}-\dfrac{m}{1}(x-1)^{m+\lambda}+\dfrac{m(m-1)}{1\cdot 2}(x-2)^{m+\lambda}-\dfrac{m(m-1)(m-2)}{1 \cdot 2 \cdot 3}(x-3)^{m+\lambda}+\text{etc.}
 \end{equation*}
 which, since found in such a long-winded way and using such intricate operations, seems so weird that a direct investigation will provide us with useful auxiliary tools for  analysis: To investigate this transformation more easily,  I will represent it in this way:
 
 \begin{equation*}
     \dfrac{s}{(\lambda +1)(\lambda +2)\cdots (\lambda +m)}= \left(x-\dfrac{m}{2}\right)^{\lambda}+\dfrac{\lambda(\lambda -1)}{1\cdot 2}P\left(x-\dfrac{m}{2}\right)^{\lambda -2}
 \end{equation*}
 \begin{equation*}
     +\dfrac{\lambda (\lambda -1)\cdots (\lambda -3)}{1 \cdot 2 \cdots 4}Q\left(x-\dfrac{m}{2}\right)^{\lambda -4}+\dfrac{\lambda (\lambda -1)\cdots (\lambda -6)}{1 \cdot 2\cdots 6}R\left(x-\dfrac{m}{2}\right)^{\lambda -6}
 \end{equation*}
 \begin{equation*}
     +\dfrac{\lambda (\lambda-1)\cdots (\lambda -7)}{1 \cdot 2 \cdots 7}S\left(x-\dfrac{m}{2}\right)^{\lambda -8}+\dfrac{\lambda (\lambda -1)\cdots (\lambda -9)}{1 \cdot 2 \cdots 10}T\left(x-\dfrac{m}{2}\right)^{\lambda -10}
 \end{equation*}
 \begin{equation*}
     \text{etc.},
 \end{equation*}
 for which expression up to this point I have found:
 
 \begin{equation*}
         \renewcommand{\arraystretch}{2.5}
\setlength{\arraycolsep}{0mm}
\begin{array}{lll}
     P &~=~& \dfrac{m}{3 \cdot 4} \\
     Q &~=~& \dfrac{m(5m-2)}{5 \cdot 6 \cdot 8} \\
     R &~=~& \dfrac{m(35mm-42m+16)}{6 \cdot 7 \cdot 96} \\
     S &~=~& \dfrac{m(175m^3-420m^2+404m-144)}{34560}
\end{array}
 \end{equation*}
 but a method to find the values of these letters more quickly is desired. 
 
\paragraph*{XVIII.}

But here it is especially helpful to have noted that our series is transformed into another one which  is a power series in $x-\frac{m}{2}$, the exponents being $\lambda$, $\lambda -2$, $\lambda -4$ etc. continuously decreasing by two;  but then the letters $P$, $Q$, $R$ etc. depend only on the  number $m$ such that  neither the exponent $\lambda$ nor the quantity $x$ enter it; furthermore,  the prefixed coefficients  involve the number $\lambda$ and follow the law of progression resulting from the expansion of the binomial. Having studied this form carefully, it becomes obvious that the values of the letters $P$, $Q$, $R$, $S$ etc.  can be found independently from the propounded series or from its transformed counterpart that we gave in paragraph XV, whose law of progression is known as well, if we set $x=\frac{m}{2}$; for, if one takes $\lambda =2$,

\begin{equation*}
    P=\dfrac{s}{(\lambda +1)(\lambda +2)\cdots (\lambda +m)},
\end{equation*}
but, having set $\lambda=4$,

\begin{equation*}
    Q=\dfrac{s}{(\lambda +1)(\lambda +2)\cdots (\lambda +m)}
\end{equation*}
but, having set $\lambda=6$,

\begin{equation*}
    R=\dfrac{s}{(\lambda +1)(\lambda +2)\cdots (\lambda +m)} \quad \text{etc.}
\end{equation*}

\paragraph*{XIX.}

Therefore, if  we substitute the series found above in paragraph XV. for

\begin{equation*}
    \dfrac{s}{(\lambda +1)(\lambda +2)\cdots (\lambda+m)}
\end{equation*}
here and, for the sake of brevity, we set:
 
 \begin{small}
 \begin{equation*}
         \renewcommand{\arraystretch}{2.2}
\setlength{\arraycolsep}{0mm}
\begin{array}{llllrllrllrllrl}
     \mathfrak{A}&~=~& 1 \\
     \mathfrak{B}&~=~& 1 &~+~& 3 ~\cdot ~& \dfrac{m-1}{4} \\
 \mathfrak{C}&~=~& 1 &~+~& 10 ~\cdot ~& \dfrac{m-1}{5}&~+~& 15 ~\cdot ~&\dfrac{m-1}{5}\cdot \dfrac{m-2}{6}\\
 \mathfrak{D}&~=~& 1 &~+~& 25 ~\cdot ~& \dfrac{m-1}{6}&~+~& 105 ~\cdot ~&\dfrac{m-1}{6}\cdot \dfrac{m-2}{7}&~+~& 105 ~\cdot ~ &\dfrac{m-1}{6}\cdot \dfrac{m-2}{7}\cdot \dfrac{m-3}{8}\\
 \mathfrak{E}&~=~& 1 &~+~& 56 ~\cdot ~& \dfrac{m-1}{7}&~+~& 490 ~\cdot ~&\dfrac{m-1}{7}\cdot \dfrac{m-2}{8}&~+~& 1260 ~\cdot ~ &\dfrac{m-1}{7}\cdots \dfrac{m-3}{9} &~+~& 945 ~\cdot ~&\dfrac{m-1}{7}\cdots \dfrac{m-4}{10}\\
  \mathfrak{F}&~=~& 1 &~+~& 119 ~\cdot ~& \dfrac{m-1}{8}&~+~& 1918 ~\cdot ~&\dfrac{m-1}{8}\cdot \dfrac{m-2}{9}&~+~& 9450 ~\cdot ~ &\dfrac{m-1}{8}\cdots \dfrac{m-3}{10} &~+~& 17325 ~\cdot ~&\dfrac{m-1}{8}\cdots \dfrac{m-4}{11}\\
   & &  & & & & & & & & & &~+~& 10395 ~\cdot ~&\dfrac{m-1}{8}\cdots \dfrac{m-5}{12}\\
   & &  &&  & & & &\text{etc.}
\end{array}
 \end{equation*}
 \end{small}
we obtain the following values 
 
 \begin{equation*}
          \renewcommand{\arraystretch}{2.5}
\setlength{\arraycolsep}{0mm}
\begin{array}{llllrllrllrllrllrllr}
P &~=~& \dfrac{m^2}{2^2}&~-~& 2 \mathfrak{A}~\cdot ~ &\dfrac{m}{2}\cdot \dfrac{m}{2}&~+~& \mathfrak{B} ~\cdot ~ &\dfrac{m}{3}\\
Q &~=~& \dfrac{m^4}{2^4}&~-~& 4 \mathfrak{A}~\cdot ~ &\dfrac{m}{2}\cdot \dfrac{m^3}{2^3}&~+~& 6 \mathfrak{B} ~\cdot ~ &\dfrac{m}{3} \cdot \dfrac{m^2}{2^2}&~-~& 4 \mathfrak{C}~\cdot~& \dfrac{m}{4}\cdot \dfrac{m}{2}&~+~&\mathfrak{D}~\cdot ~&\dfrac{m}{5}\\
R &~=~& \dfrac{m^6}{2^6}&~-~& 6 \mathfrak{A}~\cdot ~ &\dfrac{m}{2}\cdot \dfrac{m^5}{2^5}&~+~& 15 \mathfrak{B} ~\cdot ~ &\dfrac{m}{3} \cdot \dfrac{m^4}{2^4}&~-~& 20 \mathfrak{C}~\cdot~& \dfrac{m}{4}\cdot \dfrac{m^3}{2^3}&~+~&\mathfrak{D}~\cdot~ &\dfrac{m}{5}\cdot \dfrac{m^2}{2^2}-6 \mathfrak{E}\cdot \dfrac{m}{6} \cdot \dfrac{m}{2}+\mathfrak{F}\cdot \dfrac{m}{7}\\
 & & \text{etc.}
\end{array}
 \end{equation*}
 whence it will be convenient to expand the values of those letters $\mathfrak{A}$, $\mathfrak{B}$, $\mathfrak{C}$, $\mathfrak{D}$ etc., whence it results:
 
 \begin{equation*}
             \renewcommand{\arraystretch}{2.5}
\setlength{\arraycolsep}{0mm}
\begin{array}{llllcl}
     &\mathfrak{A} &~=~& 1 \\
     &\mathfrak{B} &~=~& \dfrac{3}{4}m +\dfrac{1}{4}= \dfrac{3}{4}\left(m+\dfrac{1}{3}\right) \\
     &\mathfrak{C} &~=~& \dfrac{1}{2}m^2+ \dfrac{1}{2}m=\dfrac{4}{8}(mm+m)\\
     &\mathfrak{D} &~=~& \dfrac{5}{16}m^3+\dfrac{5}{8}m^2+\dfrac{5}{48}m -\dfrac{1}{24}=\dfrac{5}{16}\left(m^3+2m^2+\dfrac{1}{3}m-\dfrac{2}{15}\right)\\
     &\mathfrak{E}&~=~& \dfrac{3}{16}m^4+\dfrac{5}{8}m^3+\dfrac{5}{16}m^2-\dfrac{1}{8}m = \dfrac{6}{32}\left(m^4+\dfrac{10}{3}m^3+\dfrac{5}{3}m^2-\dfrac{2}{3}m\right) \\
     &\mathfrak{F}&~=~& \dfrac{7}{64}m^5+\dfrac{35}{64}m^4+\dfrac{35}{64}m^3-\dfrac{91}{576}m^2-\dfrac{7}{96}m+\dfrac{1}{36} \\
     \text{or} \quad \\
      &\mathfrak{F}&~=~& \dfrac{7}{64}\left(m^5+5m^4+5m^3-\dfrac{13}{9}m^2-\dfrac{2}{3}m+\dfrac{16}{63}\right);
\end{array}
 \end{equation*}
 but here, aside from the first terms, no structure is seen.
 
 \paragraph*{XX.}
 
  That the transformed series is a power series in $x-\frac{m}{2}$, was based only on induction, but it can be shown to happen necessarily in this way.  Since the propounded progression ends as it begins such that the last two terms will be
 
 \begin{equation*}
     \pm m(x-m+1)^{m+\lambda}\mp (x-m)^{m+\lambda},
 \end{equation*}
 where the upper signs hold, if $m$ is an odd number, the lower on the other hand, if  $m$ is even, let us assume that $m$ is an even number (for, the same conclusion follows, if it was odd) and set $x-\frac{m}{2}=y$, and it will be

 \begin{equation*}
          \renewcommand{\arraystretch}{2.5}
\setlength{\arraycolsep}{0mm}
\begin{array}{llllllllllllll}
     2s & ~=~ & ~+~ & \left(y+\dfrac{1}{2}m\right)^{m+\lambda}&~-~&\dfrac{m}{1}\left(y+\dfrac{1}{2}m-1\right)^{m+\lambda}&~+~&\dfrac{m(m-1)}{1\cdot 2}\left(y+\dfrac{1}{2}m-2\right)^{m+\lambda}&~-~&\text{etc.} \\
       &  & ~+~ & \left(y-\dfrac{1}{2}m\right)^{m+\lambda}&~-~&\dfrac{m}{1}\left(y-\dfrac{1}{2}m-1\right)^{m+\lambda}&~+~&\dfrac{m(m-1)}{1\cdot 2}\left(y-\dfrac{1}{2}m-2\right)^{m+\lambda}&~-~&\text{etc.} \\
\end{array}
 \end{equation*}
 and after the expansion into powers of $y=x-\frac{m}{2}$ one finds:
 
 \begin{equation*}
 \begin{array}{c}
 s= y^{m+\lambda}\left(1-\dfrac{m}{1}+\dfrac{m(m-1)}{1 \cdot 2}-\text{etc.}\right) \\
       \renewcommand{\arraystretch}{2.5}
\setlength{\arraycolsep}{0mm}
\begin{array}{llllllllllllll}
     ~+~& \left(\dfrac{m+\lambda}{2}\right)y^{m+\lambda -2}&\bigg(\left(\dfrac{m}{2}\right)^2&~-~&\dfrac{m}{1}\left(\dfrac{m}{2}-1\right)^2&~+~&\dfrac{m(m-1)}{1 \cdot 2}\left(\dfrac{m}{2}-2\right)^2&~-~&\text{etc.}\bigg) \\
      ~+~& \left(\dfrac{m+\lambda}{4}\right)y^{m+\lambda -4}&\bigg(\left(\dfrac{m}{2}\right)^4&~-~&\dfrac{m}{1}\left(\dfrac{m}{2}-1\right)^4&~+~&\dfrac{m(m-1)}{1 \cdot 2}\left(\dfrac{m}{2}-2\right)^4&~-~&\text{etc.}\bigg) \\
\end{array}\\
\text{etc.}
\end{array}
 \end{equation*}
 But all these series vanish, until one gets to the one in which the exponents are $m$, and we know its sum to be $=1 \cdot 2 \cdot 3 \cdots m$; therefore, having omitted those, whose sum becomes zero, we will obtain:
 
 \begin{equation*}
       \renewcommand{\arraystretch}{2.5}
\setlength{\arraycolsep}{0mm}
\begin{array}{llllllllllllll}
     s &~=~& \left(\dfrac{m+\lambda}{m}\right)y^{\lambda}&\bigg(\left(\dfrac{m}{2}\right)^m &~-~& \dfrac{m}{1}\left(\dfrac{m}{2}-1\right)^m&~+~& \dfrac{m(m-1)}{1 \cdot 2}\left(\dfrac{m}{2}-2\right)^m&~-~&\text{etc.}\bigg)\\
        &~ +~& \left(\dfrac{m+\lambda}{m+2}\right)y^{\lambda -2}&\bigg(\left(\dfrac{m}{2}\right)^{m+2} &~-~& \dfrac{m}{1}\left(\dfrac{m}{2}-1\right)^{m+2}&~+~& \dfrac{m(m-1)}{1 \cdot 2}\left(\dfrac{m}{2}-2\right)^{m+2}&~-~&\text{etc.}\bigg)\\
        & & &  &   &\text{etc.}
\end{array}
 \end{equation*}
 and thus it is manifest, what I tried to demonstrate, that this series  descends in the powers $y^{\lambda}$, $y^{\lambda -2}$, $y^{\lambda -4}$ etc.
 
 \paragraph*{XXI.}
 
 Now let us attribute a form to the series similar to that we had in paragraph XVII., and it will be
 
 \begin{equation*}
        \renewcommand{\arraystretch}{2.5}
\setlength{\arraycolsep}{0mm}
\begin{array}{rlllllllllllll}
     \dfrac{s}{(\lambda +1)(\lambda +2)\cdots (\lambda +m)}= \dfrac{y^{\lambda}}{1 \cdot 2 \cdots m}&\bigg(\left(\dfrac{m}{2}\right)^{m}&~-~&\dfrac{m}{1}\left(\dfrac{m}{2}-1\right)^{m}&~+~&\text{etc.}\bigg)\\
     +\dfrac{1 \cdot 2 y^{\lambda -2}}{1 \cdot 2 \cdots (m+2)}\cdot \dfrac{\lambda (\lambda -1)}{1 \cdot 2}&\bigg(\left(\dfrac{m}{2}\right)^{m+2}&~-~&\dfrac{m}{1}\left(\dfrac{m}{2}-1\right)^{m+2}&~+~&\text{etc.}\bigg)\\
     +\dfrac{1 \cdot 2 \cdot 3 \cdot 4 y^{\lambda -4}}{1 \cdot 2 \cdots (m+4)}\cdot \dfrac{\lambda (\lambda -1)\cdots (\lambda -3)}{1 \cdot 2 \cdot 3 \cdot 4}&\bigg(\left(\dfrac{m}{2}\right)^{m+4}&~-~&\text{etc.}\bigg) \\
      & \text{etc.},
\end{array}
 \end{equation*}
 whence  the values of the letters $P$, $Q$, $R$ etc. can be determined in a new way as follows:
 
 \begin{equation*}
          \renewcommand{\arraystretch}{2.5}
\setlength{\arraycolsep}{0mm}
\begin{array}{llcllllllllllllll}
     P &~=~& \dfrac{1}{3 \cdot 4 \cdots (m+2)}&\bigg(\left(\dfrac{m}{2}\right)^{m+2}&~-~&\dfrac{m}{1}\left(\dfrac{m}{2}-1\right)^{m+2}&~+~&\text{etc.}\bigg)\\
      Q &~=~& \dfrac{1}{5 \cdot 6 \cdots (m+4)}&\bigg(\left(\dfrac{m}{2}\right)^{m+4}&~-~&\dfrac{m}{1}\left(\dfrac{m}{2}-1\right)^{m+4}&~+~&\text{etc.}\bigg)\\
 R &~=~& \dfrac{1}{7 \cdot 8 \cdots (m+6)}&\bigg(\left(\dfrac{m}{2}\right)^{m+6}&~-~&\dfrac{m}{1}\left(\dfrac{m}{2}-1\right)^{m+6}&~+~&\text{etc.}\bigg)\\      
  S &~=~& \dfrac{1}{9 \cdot 10 \cdots (m+8)}&\bigg(\left(\dfrac{m}{2}\right)^{m+8}&~-~&\dfrac{m}{1}\left(\dfrac{m}{2}-1\right)^{m+8}&~+~&\text{etc.}\bigg)\\
   & &\text{etc.}
   \end{array}
 \end{equation*}
 Here, certainly the summation of similar series is necessary; since these only involve the number $m$, our investigation is to be considered to be reduced to a simpler case. Furthermore,  we realise just now that these letters depend only on the number $m$.
 
 \paragraph*{XXII.}
 
 But if  we successively attribute here the definite values $1$, $2$, $3$, $4$, $5$, $6$ etc.  to the letter $m$, we will  obtain as many values for the letters $P$, $Q$, $R$, $S$ etc., knowing which one can easily conclude their general forms. Thus, to find the letter $P$, we will have
 
 \begin{equation*}
     \renewcommand{\arraystretch}{1.3}
\setlength{\arraycolsep}{0mm}
\begin{array}{rlcccccccccccccccccccccccc}
     \text{if } m &~=~& ~0,~ & & ~1,~ & &~2, ~ & & ~3, ~& & ~4, ~ & \text{etc.} \\
     3 \cdot 2^2 P &~=~&~ 0,~ & & ~1,~ & &~2, ~ & &~3, ~ & & ~4, ~ & \text{etc.} \\
 \text{diff} & & &~1, ~ & & ~ 1, ~ & &~1, ~ & & 1
\end{array}
 \end{equation*}
 such that hence $3 \cdot 2^2 P =m$ and $P=\frac{m}{2^2 \cdot 3}$ as before. Further, for the letter $Q$,
 
 \begin{equation*}
     \renewcommand{\arraystretch}{1.3}
\setlength{\arraycolsep}{0mm}
\begin{array}{rcccccccccccccccccccccccccccc}
     \text{if }m &~=~& ~ 0,~ & & ~1,~ & & ~2, ~ & &~3, ~ & &~4, ~ & & ~5, ~ & & ~6 ~ \\
     2^4 \cdot 3 \cdot 5 Q &~=~& ~0, ~ & & ~3, ~ & & ~16, ~ & & ~39, ~ & & ~72, ~ & & ~115, ~ & & ~168 ~ \\
     \text{diff. I.} & &  & ~ 3, ~ & & ~13, ~ & & ~23, ~ & &~33, ~ & & ~ 43, ~ & & ~53 ~ \\
     \text{diff. II.} &  & & & ~ 10,~ & & ~10, ~ & & ~10, ~ & & ~10, ~ & & ~ 10, ~ 
\end{array}
 \end{equation*}
 therefore, it will be
 
 \begin{equation*}
     2^4 \cdot 3 \cdot 5Q + 3m +10 \dfrac{m(m-1)}{1 \cdot 2}+m(5m-2)
 \end{equation*}
 and hence
 
 \begin{equation*}
     Q=\dfrac{m(5m-2)}{2^4 \cdot 3 \cdot 5}.
 \end{equation*}
 In like manner, for the letter $R$,
 
  \begin{equation*}
     \renewcommand{\arraystretch}{1.3}
\setlength{\arraycolsep}{0mm}
\begin{array}{rcccccccccccccccccccccccccccc}
     \text{if }m &~=~& ~ 0,~ & & ~1,~ & & ~2, ~ & &~3, ~ & &~4, ~ & & ~5, ~ & & ~6 ~ \\
     2^6 \cdot 3 \cdot 7 R &~=~& ~0, ~ & & ~3, ~ & & ~48, ~ & & ~205, ~ & & ~544, ~ & & ~1135, ~ & & ~2048 ~ \\
     \text{diff. I.} & &  & ~ 3, ~ & & ~45, ~ & & ~157, ~ & &~339, ~ & & ~ 591, ~ & & ~913 ~ \\
     \text{diff. II.} &  & & & ~ 42,~ & & ~112, ~ & & ~182, ~ & & ~252, ~ & & ~ 322, ~ \\
      \text{diff. III.} & &  &  & & ~70, ~ & & ~70, ~ & &~70, ~ & & ~ 70, ~ & & \\
\end{array}
 \end{equation*}
 whence one concludes
 
 \begin{equation*}
     2^6 \cdot 3 \cdot 7 R =3m +21m(m-1)+\dfrac{35}{3}m(m-1)(m-2)
 \end{equation*}
 and
 
 \begin{equation*}
     R=\dfrac{m(35m^2-42m+16)}{2^6\cdot 3^2 \cdot 7},
 \end{equation*}
 which same values we obtained above already;  therefore, let us apply the same operation to he following letters.
 
 \paragraph*{XXIII.}
 
 Therefore, for the letter $S$ we will have:
 
  \begin{equation*}
     \renewcommand{\arraystretch}{1.3}
\setlength{\arraycolsep}{0mm}
\begin{array}{rcccccccccccccccccccccccccccc}
     \text{if }m &~=~&  0, & & 1, & & 2,  & &3,  & &4,  & & 5,  & & 6  \\
     2^8 \cdot 5 \cdot 9 S &~=~& 0,  & & 5,  & & 256,  & & 2013,  & & 7936,  & & 22085,  & & 49920  \\
     \text{diff. I.} & &  &  5,  & & 251,  & & 1757,  & &~5923,  & & 14149,  & & ~27835 ~ \\
     \text{diff. II.} &  & & &  246, & & 1506,  & & 4166,  & & 8226,  & &  13686,  \\
      \text{diff. III.} & &  &  & & 1260,  & & 2660,  & &4060,  & &  5460,  & &  \\
       \text{diff. IV.} &  & & &  & & 1400,  & & 1400,  & & 1400  & & \\
\end{array}
 \end{equation*}
 whence
 
 \begin{equation*}
     2^8 \cdot 5 \cdot 9 S = 5m+123m(m-1)+210m(m-1)(m-2)+\dfrac{175}{3}m(m-1)(m-2)(m-3)
 \end{equation*}
 Now further, for the letter $T$ we will have:
 
  \begin{equation*}
     \renewcommand{\arraystretch}{1.3}
\setlength{\arraycolsep}{0mm}
\begin{array}{rcccccccccccccccccccccccccccc}
     \text{if }m &~=~&  0, & & 1, & & 2,  & &3,  & &4, & & 5, & & 6 \\
     2^{10} \cdot 3 \cdot 11 T &~=~& 0,  & & 3,  & & 512,  & & 7665,  & & 4680,  & & 174255,  & & 499968  \\
     \text{diff. I.} & &  & 3,  & & 509,  & & 7153,  & &38415,  & &  128175,  & & 325713  \\
     \text{diff. II.} &  & & &  506, & & 6604,  & & 31262,  & & 89760,  & &  197538,  \\
      \text{diff. III.} & &  &  & & 6138,  & & 24618,  & &58498,  & &  107778,  & &  \\
       \text{diff. IV.} &  & & &  & & 18480, & & 33880,  & & 49280,  & &  \\
         \text{diff. V.} & &  &  & &  & & 15400, & & 15400 & &  & &  \\
\end{array}
 \end{equation*}
whence

\begin{equation*}
       \renewcommand{\arraystretch}{2.5}
\setlength{\arraycolsep}{0mm}
\begin{array}{lll}
     2^{10}\cdot 3 \cdot 11 T &~=~& 3m +253m(m-1)+1023m(m-1)(m-2) \\
                              &~+~& 770m(m-1)(m-2)(m-3) \\
                              &~+~&\dfrac{385}{3}m(m-1)(m-2)(m-3)(m-4)
\end{array}
\end{equation*}
and

\begin{equation*}
    T=\dfrac{m(385m^4-1540m^3+2684m^2-2288m+768)}{2^{10}\cdot 9 \cdot 11}.
\end{equation*}

\paragraph*{XXIV.}
 Now let us represent these values in such a way that the law of progression can be explored more easily:
 
 \begin{equation*}
       \renewcommand{\arraystretch}{2.5}
\setlength{\arraycolsep}{0mm}
\begin{array}{lll}
     P &~=~& \dfrac{1 m}{12} \\
     Q &~=~& \dfrac{1 \cdot 3 m}{12^2}\left(m-\dfrac{2}{5}\right) \\
     R &~=~& \dfrac{1 \cdot 3 \cdot 5m}{12^3}\left(m^2-\dfrac{6}{5}m+\dfrac{16}{35}\right) \\
     S &~=~& \dfrac{1 \cdot 3 \cdot 5 \cdot 7 m}{12^4}\left(m^3-\dfrac{12}{5}m+\dfrac{404}{175}m-\dfrac{144}{175}\right) \\
     T &~=~& \dfrac{1 \cdot 3 \cdot 5 \cdot 7 \cdot 9 m}{12^5}\left(m^4-\dfrac{20}{5}m^3+\dfrac{244}{35}m^2-\dfrac{208}{35}m+\dfrac{768}{385}\right)
\end{array}
 \end{equation*}
 and here in the first and second terms the law of progression is so manifest that the same  can safely be assigned for all following letters, but in the remaining terms one can still not observe any law.
 
 \paragraph*{XXV.}
 
 Therefore, to find the value of the letter $V$, let us set

 \begin{equation*}
     V= \dfrac{1 \cdot 3 \cdot 5 \cdot 7 \cdot 9 \cdot 11 m}{12^6}\left(m^5-\dfrac{30}{5}m^4+\alpha m^3-\beta m^2 +\gamma m -\delta\right).
 \end{equation*}
But from the general form
 
 \begin{equation*}
     V=\dfrac{1}{13 \cdot 14 \cdots (m+12)}\left(\left(\dfrac{m}{2}\right)^{m+12}-\left(\dfrac{m}{2}-1\right)^{m+12}+\dfrac{m(m-1)}{1\cdot 2}\left(\dfrac{m}{2}-2\right)^{m+12}-\text{etc.}\right)
 \end{equation*}
 we conclude that:
 
 \begin{equation*}
          \renewcommand{\arraystretch}{2.5}
\setlength{\arraycolsep}{0mm}
\begin{array}{lllccllrlrlrlrlrl}
     \text{if}  & & & & \text{it will be} \\
     m =1, \quad & V &~=~& \dfrac{1}{2^{12}\cdot 13} &~=~& \dfrac{5 \cdot 7 \cdot 11}{2^{12}\cdot 3^3}&(~-~&5 &~+~& \alpha &~-~& \beta &~+~& \gamma &~-~& \delta) \\
       m =2, \quad & V &~=~& \dfrac{1}{7\cdot 13} &~=~& \dfrac{5 \cdot 7 \cdot 11}{2^{11}\cdot 3^3}&(~-~&64 &~+~& 8\alpha &~-~& 4\beta &~+~& 2\gamma &~-~& \delta) \\
        m =3, \quad & V &~=~& \dfrac{597871}{2^{12}\cdot 5 \cdot 7 \cdot 13} &~=~& \dfrac{5 \cdot 7 \cdot 11}{2^{12}\cdot 3^2}&(~-~&243 &~+~& 27\alpha &~-~& 9\beta &~+~& 3\gamma &~-~& \delta) \\
         m =4, \quad & V &~=~& \dfrac{5461}{2^{2}\cdot 5 \cdot 7 \cdot 13} &~=~& \dfrac{5 \cdot 7 \cdot 11}{2^{10}\cdot 3^3}&(~-~&512 &~+~& 64\alpha &~-~& 16\beta &~+~& 4\gamma &~-~& \delta) \\
 m =5, \quad & V &~=~& \dfrac{5838647}{2^{12}\cdot 7 \cdot 13} &~=~& \dfrac{25 \cdot 7 \cdot 11}{2^{12}\cdot 3^3}&(~-~&625 &~+~& 125\alpha &~-~& 25\beta &~+~& 5\gamma &~-~& \delta) \\    
 m =6, \quad & V &~=~& \dfrac{63047}{2^{2}\cdot 3 \cdot 7 \cdot 13} &~=~& \dfrac{5 \cdot 7 \cdot 11}{2^{11}\cdot 3^2}&(~-~&0 &~+~& 216\alpha &~-~& 26\beta &~+~& 6\gamma &~-~& \delta) \\    
\end{array}
 \end{equation*}
 Therefore,  let us form the following equations:
 
 \begin{equation*}
            \renewcommand{\arraystretch}{2.5}
\setlength{\arraycolsep}{0mm}
\begin{array}{rlrlrlrlclr}
     \alpha &~-~& \beta &~+~& \gamma &~-~& \delta &~=~& \dfrac{27}{5 \cdot 7 \cdot 11 \cdot 13} &~+~& 5 \\
     8\alpha &~-~& 4\beta &~+~& 2\gamma &~-~& \delta &~=~& \dfrac{27 \cdot 2048}{5 \cdot 7^2 \cdot 11 \cdot 13} &~+~& 64 \\
     27\alpha &~-~& 9\beta &~+~& 3\gamma &~-~& \delta &~=~& \dfrac{9 \cdot 597871}{5^2 \cdot 7^2 \cdot 11 \cdot 13} &~+~& 243 \\
     64\alpha &~-~& 16\beta &~+~& 4\gamma &~-~& \delta &~=~& \dfrac{27 \cdot 256 \cdot 5461}{5^2 \cdot 7^2 \cdot 11 \cdot 13} &~+~& 512 \\
     125\alpha &~-~& 25\beta &~+~& 5\gamma &~-~& \delta &~=~& \dfrac{27 \cdot 5838647}{5^2 \cdot 7^2 \cdot 11 \cdot 13} &~+~& 625 \\
     216\alpha &~-~& 36\beta &~+~& 6\gamma &~-~& \delta &~=~& \dfrac{3 \cdot 512 \cdot 63047}{5 \cdot 7^2 \cdot 11 \cdot 13} &~+~& 0 \\
\end{array}
 \end{equation*}
 Now the first differences will look as follows:
 
 \begin{equation*}
       \renewcommand{\arraystretch}{2.5}
\setlength{\arraycolsep}{0mm}
\begin{array}{rlrlrlclr}
     7 \alpha &~-~& 3 \beta &~+~& \gamma &~=~& \dfrac{27 \cdot 157}{5 \cdot 7^2 \cdot 11} &~+~& 59 \\
      19 \alpha &~-~& 5 \beta &~+~& \gamma &~=~& \dfrac{9 \cdot 43627}{5^2 \cdot 7^2 \cdot 11} &~+~& 179 \\
       37 \alpha &~-~& 7 \beta &~+~& \gamma &~=~& \dfrac{9 \cdot 276629}{5^2 \cdot 7^2 \cdot 11} &~+~& 269 \\
        61 \alpha &~-~& 9 \beta &~+~& \gamma &~=~& \dfrac{27 \cdot 341587}{5^2 \cdot 7^2 \cdot 11} &~+~& 113 \\
         91 \alpha &~-~& 11 \beta &~+~& \gamma &~=~& \dfrac{3 \cdot 8373269}{5^2 \cdot 7^2 \cdot 11} &~-~& 625 \\
\end{array}
 \end{equation*}
 the second differences, divided by $2$, on the other hand give
 
 \begin{equation*}
          \renewcommand{\arraystretch}{2.5}
\setlength{\arraycolsep}{0mm}
\begin{array}{rlrlclr}
     6 \alpha &~-~& \beta &~=~& \dfrac{9 \cdot 268}{5^2 \cdot 7} &~+~& 60 \\
      9 \alpha &~-~& \beta &~=~& \dfrac{9 \cdot 1513}{5^2 \cdot 7} &~+~& 45 \\
       12 \alpha &~-~& \beta &~=~& \dfrac{9 \cdot 4858}{5^2 \cdot 7} &~-~& 78 \\
        15 \alpha &~-~& \beta &~=~& \dfrac{3 \cdot 34409}{5^2 \cdot 7} &~-~& 369 \\
\end{array}
 \end{equation*}
Finally, the third differences, divided by $3$, yield
 
 \begin{equation*}
     \alpha = \dfrac{3 \cdot 249}{5 \cdot 7}-5=\dfrac{3 \cdot 669}{5 \cdot 7}-41=\dfrac{3967}{5 \cdot 7}-97,
 \end{equation*}
 which three equations give the same value
 
 \begin{equation*}
     \alpha = \dfrac{572}{5 \cdot 7}= \dfrac{4 \cdot 11 \cdot 13}{5 \cdot 7},
 \end{equation*}
 from which value  the remaining ones are now defined as follows:
 
 \begin{equation*}
         \renewcommand{\arraystretch}{2.5}
\setlength{\arraycolsep}{0mm}
\begin{array}{lll}
     \beta &~=~& \dfrac{6 \cdot 572}{5 \cdot 7}-\dfrac{9 \cdot 268}{5^2 \cdot 7}-60= \dfrac{12 \cdot 1229}{5^2 \cdot 7}-60 =\dfrac{4248}{175}=\dfrac{8 \cdot 9 \cdot 59}{175} \\
     \gamma &~=~& 3 \beta - 7\alpha +\dfrac{27 \cdot 157}{5 \cdot 7^2 \cdot 11}+59 =\dfrac{255968}{5^2 \cdot 7^2 \cdot 11} \\
     \delta &~=~& \alpha -\beta +\gamma -\dfrac{27}{5 \cdot 7 \cdot 11 \cdot 13}-5 =\dfrac{1061376}{5^2 \cdot 7^2 \cdot 11 \cdot 13}.
\end{array}
 \end{equation*}
 
 \paragraph*{XXVI.}
 
 Thus, let us list up the values of the letters $P$, $Q$, $R$ etc. found up to this point all at once
 
 \begin{equation*}
         \renewcommand{\arraystretch}{2.5}
\setlength{\arraycolsep}{0mm}
\begin{array}{lll}
     P &~=~& \dfrac{1m}{12} \\
     Q &~=~& \dfrac{1 \cdot 3m}{12^2}\left(m-\dfrac{2}{5}\right) \\
     R &~=~& \dfrac{1 \cdot 3 \cdot 5 m}{12^3}\left(m^2-\dfrac{6}{5}m+\dfrac{16}{35}\right) \\
     S &~=~& \dfrac{1 \cdot 3 \cdot 5 \cdot 7 m}{12^4} \left(m^3-\dfrac{12}{5}m^2+\dfrac{404}{175}m-\dfrac{144}{175}\right) \\
      T &~=~& \dfrac{1 \cdot 3 \cdot 5 \cdot 7 \cdot 9 m}{12^5}\left(m^4-\dfrac{20}{5}m^3+\dfrac{244}{35}m^2-\dfrac{208}{35}m+\dfrac{768}{385}\right) \\
      V &~=~& \dfrac{1 \cdot 3 \cdot 5 \cdot 7 \cdot 9 \cdot 11m}{12^6}\left(m^5-\dfrac{30}{5}m^4+\dfrac{572}{35}m^3-\dfrac{4248}{175}m^2+\dfrac{255968}{13475}m-\dfrac{1061376}{175175}\right).
\end{array}
 \end{equation*}
 From the first terms I conclude that  powers occur here, after the separation of which it seems the structure can be recognised more clearly:

 \begin{equation*}
        \renewcommand{\arraystretch}{2.5}
\setlength{\arraycolsep}{0mm}
\begin{array}{lll}
     P &~=~& \dfrac{1m}{12} \\
     Q &~=~& \dfrac{1 \cdot 3m}{12^2}\left(m-\dfrac{2}{5}\right)^1\\
     R &~=~& \dfrac{1 \cdot 3 \cdot 5 m}{12^4}\left(\left(m-\dfrac{3}{5}\right)+\dfrac{17}{175}\right)\\
       S &~=~& \dfrac{1 \cdot 3 \cdot 5 \cdot 7m}{12^4}\left(\left(m-\dfrac{4}{5}\right)^3+\dfrac{4 \cdot 17}{175}m-\dfrac{16 \cdot 17}{5 \cdot 175 }\right)\\
         T &~=~& \dfrac{1 \cdot 3 \cdot 5 \cdot 7 \cdot 9 m}{12^5}\left(\left(m-\dfrac{5}{5}\right)^4+\dfrac{2 \cdot 17}{35}m^2-\dfrac{4 \cdot 17}{35 }m+\dfrac{383}{385}\right)\\
           V &~=~& \dfrac{1 \cdot 3 \cdot 5 \cdot 7 \cdot 9  \cdot 11 m}{12^6}\left(\left(m-\dfrac{6}{5}\right)^5+\dfrac{4 \cdot 17}{35}m^3-\dfrac{72 \cdot 17}{175 }m^2+\dfrac{581296}{5^3\cdot 7^2 \cdot 11}m-\dfrac{78185568}{5^5\cdot 7^2 \cdot 11 \cdot 13}\right),
\end{array}
 \end{equation*}
yes, it even seems that  the second terms can be approximately contracted in this way so that it results:

  \begin{center}
 \begin{equation*}
       \renewcommand{\arraystretch}{2.5}
\setlength{\arraycolsep}{0mm}
\begin{array}{lll}
     P &~=~& \dfrac{1m}{12}\cdot 1 \\
     Q &~=~& \dfrac{1 \cdot 3m}{12^2}\left(m-\dfrac{2}{5}\right) \\
     R &~=~& \dfrac{1 \cdot 3 \cdot 5 m}{12^3}\left(\left(m-\dfrac{3}{5}\right)^2+\dfrac{17}{175}\right) \\
     S &~=~& \dfrac{1 \cdot 3 \cdot 5 \cdot 7 m}{12^4}\left(\left(m-\dfrac{4}{5}\right)^3+\dfrac{4 \cdot 17}{175}\left(m-\dfrac{4}{5}\right)\right) \\
     T &~=~& \dfrac{1 \cdot 3 \cdot 5 \cdot 7 \cdot 9 m}{12^5}\left(\left(m-\dfrac{5}{5}\right)^4+\dfrac{10 \cdot 17}{175}\left(m-\dfrac{5}{5}\right)^2+\dfrac{9}{385}\right) \\
     V &~=~& \dfrac{1 \cdot 3 \cdot 5 \cdot 7 \cdot 9 \cdot 11m}{12^6}\left(\left(m-\dfrac{6}{5}\right)^5+\dfrac{20 \cdot 17}{175}\left(m-\dfrac{6}{5}\right)^3+\dfrac{15808}{5^3\cdot 7^2 \cdot 11}m-\dfrac{4672128}{5^5 \cdot 7^2 \cdot 11 \cdot 13}\right).
\end{array}
 \end{equation*}
 \end{center}
 If we had not found the last value, it would seem that all these expressions are reduced to powers of this kind, what we now have to admit not to happen. Therefore, one must  investigate the law of these letters from another source.
 
 \paragraph*{XXVIII.}
 
 Therefore, let us rather represent each term of these formulas in this way:

 \begin{equation*}
       \renewcommand{\arraystretch}{2.5}
\setlength{\arraycolsep}{0mm}
\begin{array}{lll}
     P &~=~& \dfrac{m}{4 \cdot 3} \\
     Q &~=~& \dfrac{mm}{16 \cdot 3} -\dfrac{m}{8 \cdot 3 \cdot 5} \\
     R &~=~& \dfrac{5m^3}{64 \cdot 9}-\dfrac{mm}{32 \cdot 3} +\dfrac{m}{4 \cdot 9 \cdot 7} \\
     S &~=~& \dfrac{5 \cdot 7 m^4}{256 \cdot 27}-\dfrac{7m^3}{64 \cdot 9}+\dfrac{101m^2}{64 \cdot 27 \cdot 5}-\dfrac{m}{16 \cdot 3 \cdot 5} \\
     T &~=~& \dfrac{5 \cdot 7 m^5}{1024 \cdot 9}-\dfrac{5 \cdot 7 m^4}{256 \cdot 9}+\dfrac{61m^3}{256 \cdot 9}-\dfrac{13m^2}{64 \cdot 9}+\dfrac{m}{4 \cdot 3 \cdot 1} \\
     V&~=~& \dfrac{5 \cdot 7 \cdot 11 m^6}{4096 \cdot 27}-\dfrac{5 \cdot 7 \cdot 11 m^5}{2048 \cdot 9}+\dfrac{1573 m^4}{1024 \cdot 27}-\dfrac{649m^3}{512 \cdot 3 \cdot 5}+\dfrac{7999m^2}{128 \cdot 27 \cdot 5 \cdot 7}-\dfrac{691 m}{8 \cdot 9 \cdot 5 \cdot 7 \cdot 13},
\end{array}
 \end{equation*}
 where we have already noticed the structure in the first and second terms,  but the last terms seemed to have no structure at all, until, having expanded the value of the letter $V$, the number $691$ provided us with a criterion  that the last terms contain the Bernoulli numbers.\\
 Therefore, let us denote the Bernoulli numbers by the letters $\alpha$, $\beta$, $\gamma$, $\delta$ etc. such that
 
 \begin{equation*}
     \alpha =\dfrac{1}{2}, \quad \beta =\dfrac{1}{6}, \quad \gamma =\dfrac{1}{6}, \quad \delta =\dfrac{3}{10}, \quad \varepsilon =\dfrac{5}{6}, \quad \zeta = \dfrac{691}{210} \quad \text{etc.}
 \end{equation*}
 and, concerning these, let us note this law of progression:
 
 \begin{equation*}
          \renewcommand{\arraystretch}{2.5}
\setlength{\arraycolsep}{0mm}
\begin{array}{lllllllllllllllll}
     \alpha &~=~& \dfrac{1}{2^1} \\
     \beta &~=~& \dfrac{5 \cdot 4 \alpha}{2^2 \cdot 1 \cdot 2 \cdot 3}&~-~&\dfrac{2}{2^3}\\
     \gamma &~=~& \dfrac{7 \cdot 6 \beta}{2^2 \cdot 1\cdot 2 \cdot 3}&~-~&\dfrac{7 \cdot 6 \cdot 5 \cdot 4 \alpha}{2^4 \cdot 1 \cdot 2 \cdots 5}&~+~&\dfrac{3}{2^5} \\
     \delta &~=~& \dfrac{9 \cdot 8 \gamma}{2^2 \cdot 1 \cdot 2 \cdot 3}&~-~&\dfrac{9 \cdot 8 \cdot 7 \cdot 6 \cdot \beta}{2^4 \cdot 1 \cdot 2 \cdots 5}&~+~& \dfrac{9 \cdot 4 \alpha}{2^6\cdot 1 \cdots 7}&~-~&\dfrac{4}{2^7} \\
     \varepsilon &~=~& \dfrac{10 \cdot 11 \delta}{2^2 \cdot 1 \cdot 2 \cdot 3}&~-~&\dfrac{11 \cdots 8 \gamma}{2^4 \cdot 1 \cdots 5}&~+~&\dfrac{11 \cdots 6 \beta}{2^6 \cdot 1 \cdots 7}&~-~&\dfrac{11 \cdots 4 \alpha}{2^8 \cdot 1 \cdots 9}&~+~& \dfrac{5}{2^9} \\
     \zeta &~=~& \dfrac{13 \cdot 12 \varepsilon}{2^2 \cdot 1 \cdot 2 \cdot 3}&~-~&\dfrac{13 \cdots 10 \delta}{2^4 \cdot 1 \cdots 5}&~+~&\dfrac{13 \cdots 8 \gamma}{2^6 \cdot 1 \cdots 7}&~-~&\dfrac{13 \cdots 6 \beta}{2^8 \cdot 1 \cdots 9}&~+~&\dfrac{13 \cdots 4 \alpha}{2^{10}\cdot 1 \cdot 11}-\dfrac{6}{2^{11}} \\
     &  & &  & \text{etc.}
\end{array}
 \end{equation*}
 And the last terms of the letters $P$, $Q$, $R$, $S$ etc. can represented in short form as  follows
 
 \begin{equation*}
     \dfrac{\alpha m}{2 \cdot 3}, \quad \dfrac{\beta m}{4 \cdot 5}, \quad \dfrac{\gamma m}{6 \cdot 7}, \quad \dfrac{\delta m}{8 \cdot 9}, \quad \dfrac{\varepsilon m}{10 \cdot 11}, \quad \dfrac{\zeta m}{12 \cdot 13}.
 \end{equation*}
 
 \paragraph*{XXIX.}
 
 But to investigate how these letters $P$, $Q$, $R$, $S$ etc. proceed, let us subtract  a multiple of it from the preceding one such that the first terms are cancelled, and since  the letter $O=1$ precedes those letters, we will have:
 
 \begin{equation*}
          \renewcommand{\arraystretch}{2.5}
\setlength{\arraycolsep}{0mm}
\begin{array}{llcllllllllllllll}
     P &~-~& \dfrac{m}{12}& O &~=~& 0 \\
     Q &~-~& \dfrac{3m}{12} &P&~=~& -\dfrac{\beta m}{4 \cdot 5} \\
     R &~-~& \dfrac{5m}{12} &Q&~=~& -\dfrac{mm}{16 \cdot 9}+\dfrac{\gamma m}{6 \cdot 7}= -\dfrac{m}{12}P+\dfrac{\gamma m}{6 \cdot 7} \\
     S &~-~& \dfrac{7m}{12}  &R& ~=~& - \dfrac{7m^3}{128 \cdot 9}+\dfrac{3m^2}{16 \cdot 5}-\dfrac{\delta m}{8 \cdot 9}\\
     T &~-~& \dfrac{9m}{12}&S &~=~& -\dfrac{7m^4}{128 \cdot 9}+\dfrac{17m^3}{64 \cdot 3 \cdot 5}-\dfrac{7m^2}{8 \cdot 9 \cdot 5}+\dfrac{\varepsilon m}{10 \cdot 11} \\
     V &~-~& \dfrac{11m}{2}&T &~=~& - \dfrac{5 \cdot 7 \cdot 11 m^5}{2048 \cdot 27}+\dfrac{451 m^4}{512 \cdot 27}- \dfrac{121 m^3}{2048 \cdot 27 \cdot 5}~+~ \dfrac{7159 m^2}{128 \cdot 27 \cdot 5 \cdot 7}~-~\dfrac{\zeta m}{12 \cdot 13}.
\end{array}
 \end{equation*}
 Therefore, if we now consider these forms with more attention and, for the sake of brevity, set:
 
 \begin{equation*}
     \dfrac{\alpha m}{2 \cdot 3}= \alpha^1, \quad \dfrac{\beta m}{4 \cdot 5}=\beta^1, \quad \dfrac{\gamma m}{6 \cdot 7}= \gamma^1, \quad \dfrac{\delta m}{8 \cdot 9}=\delta^1 \quad \text{etc.},
 \end{equation*}
 we will detect the following sufficiently simple law in our letters $P$, $Q$, $R$ etc:

 \begin{center}
 \begin{equation*}
          \renewcommand{\arraystretch}{2.5}
\setlength{\arraycolsep}{0mm}
\begin{array}{llcllcllcllcllcllcllcllc}
     P &~-~& \dfrac{1}{1}&\alpha^1 &~=~&0 \\
     Q &~-~& \dfrac{3}{1}& \alpha^1 P &~+~& \dfrac{3 \cdot 2 \cdot 1}{1 \cdot 2 \cdot 3}&\beta^1 &~=~& 0 \\
     R &~-~& \dfrac{5}{1}&\alpha^1 Q &~+~& \dfrac{5 \cdot 4 \cdot 3}{1 \cdot 2 \cdot 3}&\beta^1 P &~-~& \dfrac{5 \cdot 4 \cdot 3 \cdot 2 \cdot 1}{1 \cdot 2 \cdot 3 \cdot 4 \cdot 5}&\gamma^1 &~=~& 0\\
     S &~-~& \dfrac{7}{1}&\alpha^1 R &~+~& \dfrac{7 \cdot 6 \cdot 5}{1 \cdot 2 \cdot 3}&\beta^1 Q &~-~& \dfrac{7 \cdot 6 \cdots 3}{1 \cdot 2 \cdots 5}&\gamma^1P &~+~& \dfrac{7 \cdot 6 \cdots 1}{1 \cdot 2 \cdots 7}&\delta^1 &~=~& 0 \\
      T &~-~& \dfrac{9}{1}&\alpha^1 S &~+~& \dfrac{9  \cdots 5}{1  \cdots 3}&\beta^1 R &~-~& \dfrac{9  \cdots 5}{1  \cdots 5}&\gamma^1Q &~+~& \dfrac{9 \cdots 3}{1  \cdots 7}&\delta^1 P &~-~& \dfrac{9 \cdots 1}{1  \cdots 9}&\varepsilon^1 &~=~&0 \\
       V &~-~& \dfrac{11}{1}&\alpha^1 T &~+~& \dfrac{11  \cdots 9}{1  \cdots 3}&\beta^1 S &~-~& \dfrac{11  \cdots 7}{1  \cdots 5}&\gamma^1R &~+~& \dfrac{11 \cdots 5}{1  \cdots 7}&\delta^1 Q &~-~& \dfrac{11 \cdots 3}{1  \cdots 1}&\varepsilon^1P &~+~&\dfrac{11 \cdots 1}{1 \cdots 11}\zeta^1 =0 \\
       & &  & &  & & &  & &\text{etc.}
\end{array}
 \end{equation*}
  \end{center}
But these new letters $\alpha^1$, $\beta^1$, $\gamma^1$, $\delta^1$ etc. from the preceding ones follow this law:

 \begin{equation*}
             \renewcommand{\arraystretch}{2.5}
\setlength{\arraycolsep}{0mm}
\begin{array}{llcllcllcllcllcllc}
     \alpha^1 &~-~& \dfrac{m}{2^2 \cdot 3}& &~=~& 0 \\
     \beta^1  &~-~& \dfrac{3 \cdot 2}{2^2  \cdot 1 \cdot 2 \cdot 3}& \alpha^1 &~+~& \dfrac{m}{2^4\cdot 5}& &~=~ & 0 \\
     \gamma^1  &~-~& \dfrac{5 \cdot 4}{2^2  \cdot 1 \cdot 2 \cdot 3}& \beta^1 &~+~& \dfrac{5 \cdot 4 \cdot 3 \cdot 2}{2^4\cdot 1\cdots 5}&\alpha^1 &~-~ & \dfrac{m}{2^6 \cdot 7} & &~=~& 0 \\
      \delta^1  &~-~& \dfrac{7 \cdot 6}{2^2  \cdot 1 \cdots  3}& \gamma^1 &~+~& \dfrac{7 \cdots 4}{2^4\cdot 1\cdots 5}&\beta^1 &~-~ & \dfrac{7 \cdots 2}{2^6 \cdot 1 \cdots 7} &\alpha^1 &~+~& \dfrac{m}{2^8 \cdot 9}  & &~=~& 0 \\
        \varepsilon^1  &~-~& \dfrac{9 \cdot 8}{2^2  \cdot 1 \cdots  3}& \delta^1 &~+~& \dfrac{9 \cdots 6}{2^4\cdot 1\cdots 5}&\gamma^1 &~-~ & \dfrac{9 \cdots 4}{2^6 \cdot 1 \cdots 7} &\beta^1 &~+~& \dfrac{9 \cdots 2}{2^8 1 \cdots 9}  &\alpha^1 &~-~& \dfrac{m}{2^{10}\cdot 11}=0 \\
\end{array}
 \end{equation*}
Therefore, I now have to believe to have answered the question on that extraordinary series, that I have contemplated, completely, whence I will now  present the answer in short form here.
 
 \subsection*{Problem}
 
 Having propounded this indefinite progression:

 \begin{equation*}
     s= x^{m+\lambda}-\dfrac{m}{1}(x-1)^{m+\lambda}+\dfrac{m(m-1)}{1 \cdot 2}(x-2)^{m+\lambda}-\dfrac{m(m-1)(m-2)}{1 \cdot 2 \cdot 3}(x-3)^{m+\lambda}+\text{etc.}
 \end{equation*}
to assign its sum, if $\lambda$ was an arbitrary positive integer.
 
 \subsection*{Solution}
 
 Let the letters $\mathfrak{A}$, $\mathfrak{B}$, $\mathfrak{C}$, $\mathfrak{D}$ etc. denote the Bernoulli numbers such that:
 
 \begin{equation*}
               \renewcommand{\arraystretch}{2.5}
\setlength{\arraycolsep}{0mm}
\begin{array}{lll}
     \mathfrak{A} &~=~& \dfrac{1}{2}, \quad \mathfrak{B}=\dfrac{1}{6}, \quad \mathfrak{C}=\dfrac{1}{6}, \quad \mathfrak{D}=\dfrac{3}{10}, \quad \mathfrak{E}=\dfrac{5}{6}, \\
     \mathfrak{F} &~=~& \dfrac{691}{210}, \quad  \mathfrak{G}= \dfrac{35}{2}, \quad \mathfrak{H}=\dfrac{3617}{30}, \quad \mathfrak{I}= \dfrac{43867}{42}, \\
     \mathfrak{K}&~=~& \dfrac{1222277}{110}, \quad \mathfrak{L}=\dfrac{854513}{6}, \\
     \mathfrak{M}&~=~& \dfrac{1181820455}{546}, \quad \mathfrak{N}= \dfrac{76977927}{2}, \\
     \mathfrak{Q}&~=~& \dfrac{23749461029}{30}, \quad  \mathfrak{P}=\dfrac{8615841276005}{462}, \\
     \mathfrak{Q}&~=~&\dfrac{84802531453387}{170}, \quad \mathfrak{R}=\dfrac{90219075042845}{6} \\
     & &\text{etc.}
\end{array}
 \end{equation*}
 I observed that these numbers proceed in such a way that
 
 \begin{equation*}
               \renewcommand{\arraystretch}{2.5}
\setlength{\arraycolsep}{0mm}
\begin{array}{llcl}
     \mathfrak{A} &~=~& \dfrac{1}{2} \\
     \mathfrak{B} &~=~& \dfrac{4}{2} &\cdot \dfrac{\mathfrak{A}^2}{3} \\
     \mathfrak{C} &~=~& \dfrac{6}{2} & \cdot \dfrac{2 \mathfrak{AB}}{3} \\
     \mathfrak{D} &~=~& \dfrac{8}{2} & \cdot \dfrac{2\mathfrak{AC}}{3}+\dfrac{8 \cdot 7 \cdot 6}{2 \cdot 3 \cdot 4}\cdot \dfrac{\mathfrak{B}^2}{5} \\
     \mathfrak{E} &~=~& \dfrac{10}{2} & \cdot \dfrac{2\mathfrak{AD}}{3}+\dfrac{10 \cdot 9 \cdot 8}{2 \cdot 3 \cdot 4}\cdot \dfrac{2\mathfrak{BC}}{5} \\
     \mathfrak{F} &~=~& \dfrac{12}{2} &\cdot \dfrac{2 \mathfrak{AE}}{3}+ \dfrac{12 \cdot 11 \cdot 10}{2 \cdot 3 \cdot 4}\cdot \dfrac{2 \mathfrak{BD}}{5}+\dfrac{12 \cdot 11 \cdot 10 \cdot 9 \cdot 8}{2 \cdot 3 \cdot 4 \cdot 5 \cdot 6}\cdot \dfrac{\mathfrak{CC}}{7} \\
     \mathfrak{G}&~=~& \dfrac{14}{2}&\cdot \dfrac{2 \mathfrak{AF}}{3}+\dfrac{14 \cdot 13 \cdot 12}{2 \cdot 3 \cdot 4}\cdot \dfrac{2 \mathfrak{BE}}{5}+\dfrac{14 \cdot 13 \cdot 12 \cdot 11 \cdot 10}{2 \cdot 3 \cdot 4 \cdot 5 \cdot 6}\cdot \dfrac{2 \mathfrak{CD}}{7} \\
     & & &\text{etc.}
\end{array}
 \end{equation*}
 Hence now find numbers $P$, $Q$, $R$, $S$ etc. that
 
 \begin{equation*}
               \renewcommand{\arraystretch}{2.5}
\setlength{\arraycolsep}{0mm}
\begin{array}{llcllcllcllcllcllc}
     P &~=~& \dfrac{1 \mathfrak{A}m}{1 \cdot 2 \cdot 3} \\
     Q &~=~& \dfrac{3 \mathfrak{A}m}{1 \cdot 2 \cdot 3}&P&-~~& \dfrac{3 \cdot 2 \cdot 1 \mathfrak{B}m}{1 \cdot 2 \cdot 3 \cdot 4 \cdot 5} \\
     R &~=~& \dfrac{5 \mathfrak{A}m}{1 \cdot 2 \cdot 3} &Q&~-~& \dfrac{5 \cdot 4 \cdot 3 \cdot \mathfrak{B}m}{1 \cdot 2 \cdot 3 \cdot 4 \cdot 5} &P&~+~& \dfrac{5 \cdot 4 \cdots 1 \mathfrak{C}m}{1 \cdot 2 \cdots 7} \\
      S &~=~& \dfrac{7 \mathfrak{A}m}{1 \cdot 2 \cdot 3} &R&~-~& \dfrac{7 \cdot 6 \cdot 5 \cdot \mathfrak{B}m}{1 \cdot 2 \cdots 5} &Q&~+~& \dfrac{7 \cdots 3 \mathfrak{C}m}{1 \cdot 2 \cdots 7}&P& ~-~& \dfrac{7 \cdots 1 \mathfrak{D}}{1 \cdot 2 \cdots 9} \\
      & & & && \text{etc.}
\end{array}
 \end{equation*}
 where the law of progression also is perspicuous.\\
 Having found this series, the sum $s$ in question will be expressed in this way:
 
 \begin{equation*}
          \renewcommand{\arraystretch}{2.5}
\setlength{\arraycolsep}{0mm}
\begin{array}{rcllcll}
     \dfrac{s}{(\lambda +1)(\lambda +2)\cdots (\lambda +m)}&~=~& \left(x-\dfrac{m}{2}\right)^{\lambda}&~+~& \dfrac{\lambda (\lambda -1)}{1 \cdot 2}&P&\left(x-\dfrac{m}{2}\right)^{\lambda -2} \\
       & & &~+~& \dfrac{\lambda \cdots (\lambda -3)}{1 \cdots 4}&Q&\left(x-\dfrac{m}{2}\right)^{\lambda -4}\\
        & & &~+~& \dfrac{\lambda \cdots (\lambda -5)}{1 \cdots 6}&R&\left(x-\dfrac{m}{2}\right)^{\lambda -6}\\
         & & &~+~& \dfrac{\lambda \cdots (\lambda -7)}{1 \cdots 8}&S&\left(x-\dfrac{m}{2}\right)^{\lambda -8} \\
         & & &  &\text{etc.}
\end{array}
 \end{equation*}
 where one should note, if the number $m$ is not an integer, that the value of the product
 
 \begin{equation*}
     (\lambda +1)(\lambda +2)\cdots (\lambda +m)
 \end{equation*}
 can be defined through other artifices explained on another occasion.
 
 \subsection*{Corollary 1}
 
 If instead of the  Bernoulli numbers  we want to introduce the related ones, which I used  for the sums of the powers of the reciprocals, and denote them by the letters $A$, $B$, $C$, $D$ etc. that $A=\frac{1}{6}$, $B=\frac{1}{90}$, $C=\frac{1}{945}$, $D=\frac{1}{9450}$, $E=\frac{1}{93555}$, since these numbers depend on the first in such a way that
 
 \begin{equation*}
     \mathfrak{A}=\dfrac{1 \cdot 2 \cdot 3}{2^1}A, \quad \mathfrak{B}=\dfrac{1 \cdots 5}{2^3}B, \quad \mathfrak{C}=\dfrac{1 \cdots 7}{2^5}C \quad \text{etc.},
 \end{equation*}
 but are connected to each other in such a way that:
 
 \begin{equation*}
     5B=2A^2, \quad 7C=4AB, \quad 9D=4AC+2BB,
 \end{equation*}
 \begin{equation*}
     11E=4AD+4BC, \quad 13F=4AE+4BD+2CC \quad \text{etc.},
 \end{equation*}
 then from these numbers the letters $P$, $Q$, $R$, $S$ etc. will be determined as follows:
 
 \begin{equation*}
              \renewcommand{\arraystretch}{2.5}
\setlength{\arraycolsep}{0mm}
\begin{array}{llcllcllcllcllc}
     P &~=~& \dfrac{1Am}{2} \\
     Q &~=~& \dfrac{3Am}{2} &P &~-~& \dfrac{3 \cdot 2 \cdot 1 Bm}{2^3} \\
     R &~=~& \dfrac{5Am}{2} &Q &~-~& \dfrac{5 \cdot 4 \cdot 3 Bm}{2^3}&P&~+~& \dfrac{5 \cdot 4 \cdots 1Cm}{2^5} \\
      S &~=~& \dfrac{7Am}{2} &R &~-~& \dfrac{7 \cdot 6 \cdot 5 Bm}{2^3}&Q&~+~& \dfrac{7 \cdot 6 \cdots 3Cm}{2^5}&P&~-~&\dfrac{7 \cdot 6 \cdots 1 Dm}{2^7} \\
      & &  &  & &\text{etc.}
\end{array}
 \end{equation*}
 
 \subsection*{Corollary 2}
 
 If for the various values of the number $\lambda$ we indicate the sum of the propounded progression by the sign $\int (\lambda)$ and now for $\lambda$ successively write the numbers $0$, $1$, $2$, $3$, $4$ etc., for these cases the sums $\int (0)$, $\int (1)$, $\int (2)$, $\int (3)$ etc., for the sake of brevity having put $x-\frac{m}{2}=y$, will be expressed in the following way:
 
 \begin{equation*}
        \renewcommand{\arraystretch}{2.5}
\setlength{\arraycolsep}{0mm}
\begin{array}{rlllllllll}
     \dfrac{\int (0)}{1 \cdot 2 \cdots m} &~=~& 1 \\
     \dfrac{\int (1)}{2 \cdot 3 \cdots (m+1)} &~=~& y \\
     \dfrac{\int (2)}{3 \cdot 4 \cdots (m+2)} &~=~& y^2 &~+~& P \\
      \dfrac{\int (3)}{4 \cdot 5 \cdots (m+3)} &~=~& y^3 &~+~& 3Py \\
        \dfrac{\int (4)}{5 \cdot 6 \cdots (m+4)} &~=~& y^4 &~+~& 6Py^2 &~+~& Q \\
     \dfrac{\int (5)}{6 \cdot 7 \cdots (m+5)} &~=~& y^5 &~+~& 10Py^3 &~+~& 5Qy \\ 
     \dfrac{\int (6)}{7 \cdot 8 \cdots (m+6)} &~=~& y^6 &~+~& 15Py^4 &~+~& 15Qy^2 &~+~&R \\
     &  &  &  &  &\text{etc.}
\end{array}
 \end{equation*}
 
 \subsection*{Corollary 3}
 
 Therefore, these sums can be defined from the preceding ones as follows
 \begin{center}
 \begin{footnotesize}
 \begin{equation*}
         \renewcommand{\arraystretch}{2.5}
\setlength{\arraycolsep}{0mm}
\begin{array}{ccccccccccccccccccccccc}
     \int (1) &~=~& \dfrac{m+1}{1}&y\int(0) \\
      \int (2) &~=~& \dfrac{m+2}{2}&y\int(1)&~+~& \dfrac{(m+2)(m+1)}{2 \cdot 2}&mA\int (0) \\
        \int (3) &~=~& \dfrac{m+3}{3}&y\int(2)&~+~& \dfrac{(m+3)(m+2)}{2 \cdot 3}&mA\int (1) \\
  \int (4) &~=~& \dfrac{m+4}{4}&y\int(3)&~+~& \dfrac{(m+4)(m+3)}{2 \cdot 4}&mA\int (2)&~-~&\dfrac{(m+4)\cdots (m+1)}{2^3 \cdot 4}mB\int (0) \\      
  \int (5) &~=~& \dfrac{m+5}{5}&y\int(4)&~+~& \dfrac{(m+5)(m+4)}{2 \cdot 5}&mA\int (3)&~-~&\dfrac{(m+5)\cdots (m+2)}{2^3 \cdot 5}mB\int (1) \\      
   \int (6) &~=~& \dfrac{m+6}{6}&y\int(5)&~+~& \dfrac{(m+6)(m+5)}{2 \cdot 6}&mA\int (4)&~-~&\dfrac{(m+6)\cdots (m+3)}{2^3 \cdot 5}mB\int (2)&~+~&\dfrac{(m+6)\cdots(m+1)}{2^5 \cdot 6}mC\int (0) \\      
     \int (7) &~=~& \dfrac{m+7}{7}&y\int(6)&~+~& \dfrac{(m+7)(m+6)}{2 \cdot 7}&mA\int (5)&~-~&\dfrac{(m+7)\cdots (m+4)}{2^3 \cdot 7}mB\int (3)&~+~&\dfrac{(m+7)\cdots(m+2)}{2^5 \cdot 7}mC\int (1) \\      
     \int (8) &~=~& \dfrac{m+8}{8}&y\int(7)&~+~& \dfrac{(m+8)(m+7)}{2 \cdot 8}&mA\int (6)&~-~&\dfrac{(m+8)\cdots (m+5)}{2^3 \cdot 8}mB\int (4)&~+~&\dfrac{(m+8)\cdots(m+3)}{2^5 \cdot 8}mC\int (2) \\      
      & & & & & & & & &~-~&\dfrac{(m+8)\cdots(m+1)}{2^7 \cdot 8}mD\int (0),      
\end{array}
 \end{equation*}
 \end{footnotesize}
 \end{center}
  which law will become obvious soon to the attentive reader.
 
 \subsection*{Conclusion}
 
 Now there will be not much difficulty to generalise this task quite substantially such that, if $\varphi :x$ denotes an arbitrary function of $x$,  we can assign the sum of this series
 
 \begin{equation*}
     s= \varphi:x -m \varphi :(x-1)+\dfrac{m(m-1)}{1\cdot 2}\varphi:(x-2)-\dfrac{m(m-1)(m-2)}{1\cdot 2 \cdot 3}\varphi :(x-3).
 \end{equation*}
  For, it is perspicuous that this form exhibits the difference of order $m$ of this progression
  
  \begin{equation*}
      \varphi:x, \quad \varphi :(x-1), \quad \varphi:(x-2), \quad \varphi:(x-3) \quad \text{etc.}
  \end{equation*}
  For, from that, what I covered in \textit{Institutiones Calculi Differentialis} pag. 343 \footnote{p. 264 in the Opera Onmia Version, i.e. Volume 10 of Series 1}, if we set $\varphi:x =y$, one concludes that the differences of the respective orders are:
  
  \begin{equation*}
          \renewcommand{\arraystretch}{2.5}
\setlength{\arraycolsep}{0mm}
\begin{array}{lccccccccccccccccc}
     \Delta y &~=~& \dfrac{dy}{dx}&~-~& \dfrac{ddy}{2dx^2}&~+~& \dfrac{d^3y}{2 \cdot 3 dx^3}&~-~& \dfrac{d^4y}{2 \cdot 3 \cdot 4 dx^4}&~+~&\dfrac{d^5y}{2 \cdots 5dx^5}&~-~&\text{etc.} \\
     \Delta^2 y &~=~& \dfrac{d^2y}{dx^2}&~-~& \dfrac{3d^3y}{3dx^3}&~+~& \dfrac{7d^4y}{3\cdot 4 dx^4}&~-~&\dfrac{15d^5y}{3 \cdot 4 \cdot 5 dx^5}&~+~&\dfrac{31d^6 y}{3 \cdots 6dx^6}&~-~&\text{etc.} \\
     \Delta^3 y &~=~& \dfrac{d^3y}{dx^3}&~-~& \dfrac{6d^4y}{4dx^4}&~+~& \dfrac{25 d^5 y}{4 \cdot 5 dx^5}&~-~& \dfrac{90 d^6 y}{4 \cdot 5 \cdot 6 dx^6}&~+~& \dfrac{301 d^7 y}{4 \cdots 7 dx^7}&~-~&\text{etc.} \\
     \Delta^4y &~=~& \dfrac{d^4y}{dx^4}&~-~& \dfrac{10d^5y}{5dx^5}&~+~& \dfrac{65d^6y}{5\cdot 6 dx^6}&~-~& \dfrac{350 d^7 y}{5 \cdot 6 \cdot 7 dx^7}&~+~& \dfrac{1701 d^8 y}{5 \cdots 8 dx^8}&~-~&\text{etc.} \\
         &  &  &  & & &\text{etc.},
\end{array}
  \end{equation*}
  since which coefficients are those we had above in paragraph IV, in like manner, we will understand that the difference of order $m$ or $\Delta^m y$, i.e. the sum of the propounded series, will be
  
  \begin{equation*}
      s= \dfrac{d^m y}{dx^m}-\dfrac{A^1 d^{m+1}y}{(m+1)dx^{m+1}}+\dfrac{B^1 d^{m+2}y}{(m+1)(m+2)dx^{m+2}}-\dfrac{C^1 d^{m+3}y}{(m+1)\cdots (m+3)dx^{m+3}}+\text{etc.},
  \end{equation*}
  which coefficients $A^1$, $B^1$, $C^1$ etc. I determined above in paragraph XIII. Therefore, it will be
  
  \begin{small}
  \begin{equation*}
          \renewcommand{\arraystretch}{2.5}
\setlength{\arraycolsep}{0mm}
\begin{array}{lll}
     \dfrac{A^1}{m+1} =\dfrac{m}{2} \\
     \dfrac{B^1}{(m+1)(m+2)}=\dfrac{m}{1\cdot 2 \cdot 3}+\dfrac{3m(m-1)}{1\cdot 2 \cdot 3 \cdot 4} \\
     \dfrac{C^1}{(m+1)\cdots (m+3)}=\dfrac{m}{1\cdot 2 \cdot 3 \cdot 4}+\dfrac{10m(m-1)}{1\cdot 2 \cdots 5}+\dfrac{15m(m-1)(m-2)}{1\cdot 2 \cdots 6} \\
     \dfrac{D^1}{(m+1)\cdots (m+4)}=\dfrac{m}{1\cdot 2 \cdots 5}+\dfrac{25m(m-1)}{1 \cdot 2 \cdots 6}+\dfrac{105 m(m-1)(m-2)}{1\cdot 2 \cdots 7}+\dfrac{105m(m-1)(m-2)(m-3)}{1\cdot 2 \cdots 8} \\
     \text{etc.}
\end{array}
  \end{equation*}
  \end{small}Therefore, if we set $\varphi: \left(x-\dfrac{m}{2}\right)=v$, such that $v$ results from $y$, if one writes $x-\frac{m}{2}$ instead of $y$, it will obviously be
  
  \begin{equation*}
      \dfrac{d^m v}{dx^m}= \dfrac{d^m y}{dx^m}-\dfrac{md^{m+1}y}{2 dx^{m+1}}+\dfrac{m^2 d^{m+2}y}{2\cdot 4 dx^{m+2}}-\text{etc.};
  \end{equation*}
  if this equation is subtracted from that, one will have to do exactly the same calculations as above. Hence introducing the same letters $P$, $Q$, $R$, $S$ etc., which we defined above, we will obtain the following value of the sum $s$:

  \begin{equation*}
      s=\dfrac{d^mv}{dx^m}+\dfrac{Pd^{m+2}v}{1\cdot 2 dx^{m+2}}+\dfrac{Qd^{m+4}v}{1\cdot 2 \cdots 4 dx^{m+4}}+\dfrac{Rd^{m+6}v}{1\cdot 2 \cdots 6 dx^{m+6}}+\dfrac{Sd^{m+8}v}{1\cdot 2 \cdots 8 dx^{m+8}}+\text{etc.}
  \end{equation*}
  and hence, if one takes
  
  \begin{equation*}
      y= \varphi :x = x^{m+\lambda} \quad \text{and} \quad v = \left(x-\dfrac{m}{2}\right)^{m+\lambda},
  \end{equation*}
  manifestly the same summation we found before results, and thus the whole task reduces to the letters $P$, $Q$, $R$, $S$ etc., whose nature I derived from the Bernoulli numbers above.\\
  Hence it follows immediately , what has been less obvious before, that, if in the function $y$ or $v$ the number of dimensions was smaller than the exponent $m$, which number certainly has to be a positive integer, then all differentials of order $m$ and higher vanish and the sum $s$ will be $=0$.\\
  Further, hence  there is a clearer way to find the values of the letters $P$,$Q$, $R$, $S$ etc. For, since, having set
  
  \begin{equation*}
      s= \dfrac{d^my}{dx^m}-\dfrac{\alpha d^{m+1}y}{dx^{m+1}}+\dfrac{\beta d^{m+2}y}{dx^{m+2}}-\dfrac{\gamma d^{m+3}y}{dx^{m+3}}+\text{etc.},
  \end{equation*}
  we have
  
  \begin{equation*}
          \renewcommand{\arraystretch}{2.5}
\setlength{\arraycolsep}{0mm}
\begin{array}{lllllllllllllllllllllll}
     \alpha &~=~& \dfrac{m}{1\cdot 2} \\
     \beta &~=~& \dfrac{m}{1 \cdot 2 \cdot 3} &~+~& \dfrac{3m(m-1)}{1 \cdot 2 \cdot 3 \cdot 4} \\
     \gamma &~=~& \dfrac{m}{1\cdots 4}&~+~&\dfrac{10m(m-1)}{1\cdots 5}&~+~& \dfrac{15m \cdots (m-2)}{1 \cdots 6} \\
     \delta &~=~& \dfrac{m}{1\cdots 5}&~+~& \dfrac{25m(m-1)}{1\cdots 6}&~+~& \dfrac{105m \cdots (m-2)}{1\cdots 7}&~+~& \dfrac{105m \cdots (m-3)}{1\cdots 8} \\
           &    &  & &\text{etc.},
\end{array}
  \end{equation*}
  but the function $y$ results from the function $v=\varphi :\left(x-\frac{m}{2}\right)$, if in it instead of $x$ one writes $x+\frac{m}{2}$,  in general it will be
  
  \begin{equation*}
      \dfrac{d^ny}{dx^n}=\dfrac{d^nv}{dx^n}+\dfrac{m}{2}\cdot \dfrac{d^{n+1}v}{dx^{n+1}}+\dfrac{m^2}{2\cdot 4}\cdot \dfrac{d^{n+2}v}{dx^{n+2}}+\dfrac{m^3}{2\cdot 4 \cdot 6}\cdot \dfrac{d^{n+3}v}{dx^{n+3}}+\text{etc.},
  \end{equation*}
  whence, if one substitutes the differentials of $v$ for those of $y$, it will be

  \begin{equation*}
      s=\dfrac{d^nv}{dx^n}+\left(\dfrac{m}{2}-\alpha\right)\dfrac{d^{n+1}v}{dx^{n+1}}+\left(\dfrac{m^2}{2\cdot 4}-\dfrac{m}{2}\alpha+\beta\right)\dfrac{d^{n+2}v}{dx^{n+2}}+\left(\dfrac{m^3}{2\cdot 4 \cdot 6}-\dfrac{m^2}{2\cdot 4}\alpha +\dfrac{m}{2}\beta -\gamma\right)\dfrac{d^{n+3}v}{dx^{n+3}}+\text{etc.}
  \end{equation*}

  and so we will have:
  
  \begin{equation*}
          \renewcommand{\arraystretch}{2.5}
\setlength{\arraycolsep}{0mm}
\begin{array}{rrrrrrrrrrlrrr}
    \dfrac{m}{2} &~-~& \alpha &~=~& 0 \\
    \dfrac{m^2}{2 \cdot 4}&~-~& \dfrac{m}{2}\alpha &~+~& \beta &~=~& \dfrac{P}{1\cdot 2} \\
    \dfrac{m^3}{2 \cdot 4 \cdot 6} &~-~& \dfrac{m^2}{2\cdot 4}\alpha &~+~& \dfrac{m}{2}\beta &~-~& \gamma &~=~& 0 \\
    \dfrac{m^4}{2 \cdot 4 \cdot 6 \cdot 8} &~-~& \dfrac{m^3}{2 \cdot 4 \cdot 6} \alpha &~+~& \dfrac{m^2}{2 \cdot 4} \beta &~-~& \dfrac{m}{2}\gamma &~+~& \delta &~=~&\dfrac{Q}{1\cdot 2 \cdot 3 \cdot 4} \\
    \dfrac{m^5}{1 \cdot 2 \cdot 3 \cdot 4 \cdot 5} &~-~& \dfrac{m^4}{2\cdot 4 \cdot 6 \cdot 8}\alpha &~+~& \dfrac{m^3}{2 \cdot 4 \cdot 6}\beta &~-~& \dfrac{m^2}{2 \cdot 4}\gamma &~+~& \dfrac{m}{2}\delta &~-~& \varepsilon =0\\
     &  &  &   &  &\text{etc.};
\end{array}
  \end{equation*}
  for, one easily sees that these expressions must vanish alternately.

\lhead[\thepage]{}
\chead[Appendix]{Translation of E421}
\rhead[]{\thepage}
\chapter{Translation of E421 - Expansion of the integral  $\int x^{f-1}
dx (\log x)^\frac{m}{n}$ having extended the integration from the value $x=0$ to $x=1$}

\section*{Theorem 1}

\paragraph*{§1}

\textit{If $n$ denotes a positive integer and the integral}

\[
\int x^{f-1}dx(1-x^g)^n
\]
\textit{is extended from the value $x=0$ to $x=1$, the value of the integral will be}

\[
=\frac{g^n}{f}\cdot \frac{1 \cdot 2 \cdot 3 \cdots n}{(f+g)(f+2g)(f+3g) \cdots (f+ng)}.
\]

\subsection*{Proof}

It is known that  the integral $\int x^{f-1}dx(1-x^g)^m$   can in general be reduced to  this one $\int x^{f-1}dx(1-x^g)^{m-1}$, since it is possible to define constant quantities $A$ and $B$ in such a way that 

\[
\int x^{f-1}dx(1-x^g)^{m}=A\int x^{f-1}dx(1-x^g)^{m-1}+Bx^f(1-x^g)^m;
\]
by differentiating, this equation results

\[
 x^{f-1}dx(1-x^g)^{m}
\] 
\[
=Ax^{f-1}dx(1-x^g)^{m-1}+Bfx^{f-1}dx(1-x^g)^m-Bmgx^{f+g-1}dx(1-x^g)^{m-1},
\]
which divided by $x^{f-1}dx(1-x^g)^{m-1}$ gives

\[
1-x^g=A+Bf(1-x^g)-Bmgx^g
\]
or

\[
1-x^g=A-Bmg+B(f+mg)(1-x^g);
\]
in order for this equation to hold, it is necessary that

\[
1=B(f+mg) \quad \text{and} \quad A=Bmg,
\]
whence we conclude

\[
B=\frac{1}{f+mg} \quad \text{and} \quad A=\frac{mg}{f+mg}.
\]
Therefore, we will have the following general reduction

\[
\int x^{f-1}dx(1-x^g)^{m}= \frac{mg}{f+mg}\int x^{f-1}dx(1-x^g)^{m-1}+\frac{1}{f+mg}x^f(1-x^g)^m;
\]
because it vanishes for $x=0$, if $f>0$, of course, the addition of a constant is not necessary. Hence having extended both integrals to $x=1$, the last absolute part vanishes and for the case $x=1$ it will be

\[
\int x^{f-1}dx(1-x^g)^{m}= \frac{mg}{f+mg}\int x^{f-1}dx(1-x^g)^{m-1}.
\]
Since for $m=1$

\[
\int x^{f-1}dx(1-x^g)^{0}=\frac{1}{f}x^f=\frac{1}{f},
\]
having put $x=1$, we obtain the following values for the same case $x=1$

\begin{alignat*}{18}
&\int x^{f-1}dx(1-x^g)^{1}&&=\frac{g}{f}&&\cdot \frac{1}{f+g},\\
&\int x^{f-1}dx(1-x^g)^{2}&&=\frac{g^2}{f}&&\cdot \frac{1}{f+g} \cdot \frac{2}{f+2g},\\
&\int x^{f-1}dx(1-x^g)^{3}&&=\frac{g^3}{f}&&\cdot \frac{1}{f+g} \cdot \frac{2}{f+2g} \cdot \frac{3}{f+3g}
\end{alignat*}
and hence  we conclude that for any positive integer $n$ it will be

\[
\int x^{f-1}dx(1-x^g)^n= \frac{g^n}{f} \cdot \frac{1}{f+g} \cdot \frac{2}{f+2g} \cdot \frac{3}{f+3g} \cdots \frac{n}{f+ng},
\]
if only the numbers $f$ and $g$ are positive.

\subsection*{Corollary 1}

\paragraph*{§2}

 Vice versa, the value of a product of this kind, formed from an arbitrary amount of factors, can be expressed by an integral so that

\[
\frac{1 \cdot 2 \cdot3 \cdots n}{(f+g)(f+2g)(f+3g)\cdots(f+ng)}=\frac{f}{g^n}\int x^{f-1}dx(1-x^g)^n
\]
having extended this integral from the value $x=0$ to $x=1$.

\subsection*{Corollary 2}

\paragraph*{§3}

Therefore, if one considers a progression of this kind

\[
\frac{1}{f+g}, \quad \frac{1 \cdot 2}{(f+g)(f+2g)}, \quad \frac{1 \cdot 2 \cdot 3}{(f+g)(f+2g)(f+3g)}, \quad \frac{1 \cdot 2 \cdot 3 \cdot 4}{(f+g)(f+2g)(f+3g)(f+4g)} \quad \text{etc.},
\]
its general term corresponding to the indefinite index $n$ is conveniently represented by this integral $\frac{f}{g^n}\int x^{f-1}dx(1-x^g)^n$; and using this formula, the progression and its terms corresponding to fractional indices can be exhibited.

\subsection*{Corollary 3}

\paragraph*{§4}

If  we write $n-1$ instead of $n$, we will have
\[
\frac{1 \cdot 2 \cdot 3 \cdots (n-1)}{(f+g)(f+2g)(f+3g) \cdots (f+(n-1)g)}=\frac{f}{g^{n-1}}\int x^{f-1}dx(1-x^g)^{n-1};
\]
multiplication by $\frac{n}{f+ng}$ yields

\[
\frac{1 \cdot 2 \cdot 3 \cdots n}{(f+g)(f+2g)(f+3g) \cdots (f+ng)}= \frac{f \cdot ng}{g^n(f+ng)}\int x^{f-1}dx(1-x^g)^{n-1}.
\]

\subsection*{Scholium 1}

\paragraph*{§5}

It would have been possible to derive this last formula immediately from the preceding one, since we just proved that 

\[
\int x^{f-1}dx(1-x^g)^n= \frac{ng}{f+ng}\int x^{f-1}dx(1-x^g)^{n-1},
\]
if both integrals are extended from the value $x=0$ to $x=1$; this is to be kept in mind for all the integrals in everything what follows. Furthermore, it is to be noted that the quantities $f$ and $g$ are positive, a condition that was used in the proof, of course. Concerning the number $n$, if it denotes the index of a certain term of the progression (§3), that index can also be negative, because all terms, also those corresponding to negative indices, of the progression are considered to be exhibited by the given integral formula. Nevertheless, it is to be noted that this reduction

\[
\int x^{f-1}dx(1-x^g)^m=\frac{mg}{f+mg}\int x^{f-1}dx(1-x^g)^{m-1}
\]
is only true, if $m>0$, because otherwise the algebraic part $\frac{1}{f+mg}x^f(1-x^g)^m$  would not vanish for $x=1$.

\subsection*{Scholium 2}

\paragraph*{§6}

I already  studied series of this kind, which can be called transcendental, because the terms corresponding to fractional indices are transcendental quantities, in \textsc{Comment. acad. sc. Petrop., book 5} in more detail\footnote{Euler refers to his paper "'De progressionibus transcendentibus seu quarum termini generales algebraice dari nequeunt."' This is paper E19 in the Eneström-Index}; therefore, I will not investigate those progressions here again but focus on the remarkable comparisons of the integral formulas that can be derived from it. After I had shown that the value of the indefinite product $1 \cdot 2 \cdot 3 \cdots n$ is expressed by the integral formula $\int dx\big( \log \frac{1}{x}\big)^n$ extended from $x=0$ to $x=1$, which, if $n$ is a positive integer, is manifest by direct integration, I examined the cases, in which a fractional number is taken for $n$; in these cases it is indeed not obvious at all, to which kind of transcendental quantities these terms are to be referred. But, using a singular artifice, I reduced the same terms to more familiar quadratures; therefore, this seems to be worth of one's while to consider it with all eagerness.

\section*{Problem 1}

\paragraph*{§7}

\textit{Since it was demonstrated that}

\[
\frac{1 \cdot 2 \cdot 3 \cdots n}{(f+g)(f+2g)(f+3g) \cdots (f+ng)}=\frac{f}{g^n}\int x^{f-1}dx(1-x^g)^n,
\] 
\textit{having extended the integral from $x=0$ to $x=1$, to assign the value of the same product in the case $g=0$ by means of an integral.}

\subsection*{Solution}

Having put $g=0$ in the integral, the term $(1-x^g)^n$ vanishes, but at the same time also the denominator $g^n$ vanishes, whence the question reduces to the task to define the value of the fraction $\frac{(1-x^g)^n}{g^n}$  in the case $g=0$, in which both the numerator and the denominator vanishes. Therefore, let us consider $g$ as an infinitely small quantity, and because $x^g=e^{g \log x}$, it will be $x^g=1+g \log x$ and hence $(1-x^g)^n=g^n(-\log x)^n=g^n\big(\log \frac{1}{x}\big)^n$; hence  our integral  becomes $f\int x^{f-1}dx \big(\log \frac{1}{x}\big)^n$ for this case so that one now has this expression

\[
\frac{1 \cdot 2 \cdot 3 \cdots n}{f^n}=f \int x^{f-1}dx \bigg(\log \frac{1}{x}\bigg)^n
\]
or

\[
1 \cdot 2 \cdot 3 \cdots n= f^{n+1} \int x^{f-1}dx \bigg(\log\frac{1}{x}\bigg)^n.
\]

\subsection*{Corollary 1}

\paragraph*{§8}

If $n$ is a positive integer, the integration of the integral $\int  x^{f-1}dx \big(\log \frac{1}{x}\big)^n$ succeeds and, having extended it from $x=0$ to $x=1$, indeed the product we found to be equal to it results. But if fractional numbers are taken for $n$, the same formula can be applied to interpolate this hypergeometric progression

\[
1, \quad 1 \cdot 2, \quad 1 \cdot 2 \cdot 3, \quad 1 \cdot 2 \cdot 3 \cdot 4, \quad 1\cdot 2 \cdot 3 \cdot 4 \cdot 5 \quad \text{etc.}
\]
or

\[
1, \quad 2, \quad 6,  \quad 24, \quad 120, \quad 720, \quad 5040 \quad \text{etc.}
\]

\subsection*{Corollary 2}

\paragraph*{§9}

If the expression just found is divided by the principal one, a product  whose factors proceed in an arithmetic progression will emerge, namely

\[
(f+g)(f+2g)(f+3g)\cdots(f+ng)=f^ng^n \frac{\int x^{f-1}dx\left(\log \frac{1}{x}\right)^n}{\int x^{f-1}dx\left(1-x^g\right)^n},
\]
whose values can also be assigned, using the integral, if $n$ is a fractional number.

\subsection*{Corollary 3}

\paragraph*{§10}

Since
\[
\int x^{f-1}dx(1-x^g)^n=\frac{ng}{f+ng}\int x^{f-1}dx(1-x^g)^{n-1},
\]
 in like manner,  for the case $g=0$ it will  be

\[
\int x^{f-1}dx\left(\log \frac{1}{x}\right)^n=\frac{n}{f}\int x^{f-1}dx\left(\log \frac{1}{x}\right)^{n-1}
\]
and hence by those other integrals

\[
1 \cdot 2 \cdot 3 \cdots n=nf^n \int x^{f-1}dx\left(\log \frac{1}{x}\right)^{n-1}
\]
and

\[
(f+g)(f+2g)\cdots (f+ng)=f^{n-1}g^{n-1}(f+ng)\frac{\int x^{f-1}dx\left(\log \frac{1}{x}\right)^{n-1}}{\int x^{f-1}dx(1-x^g)^{n-1}}.
\]

\subsection*{Scholium}

\paragraph*{§11}

Because we found that

\[
1 \cdot 2 \cdot 3 \cdots n=f^{n+1}  \int x^{f-1}dx\left(\log \frac{1}{x}\right)^n,
\]
it is plain that this integral does not depend on the value of the quantity $f$, which is also easily seen by putting $x^f=y$, whence first we find

\[
fx^{f-1}dx=dy \quad \text{and} \quad \log \frac{1}{x}=-\log x=-\frac{1}{f}\log y =\frac{1}{f}\log \frac{1}{y}
\]
and therefore

\[
f^n \left(\log \frac{1}{x}\right)^n=\left(\log \frac{1}{y}\right)^n
\]
such that

\[
1 \cdot 2 \cdot 3 \cdots n=\int dy \left(\log \frac{1}{y}\right)^n,
\]
which expression results from the first by putting $f=1$. Therefore, for an interpolation of this kind the whole task is  reduced to the definition of the values of the integral $\int dx \left(\log \frac{1}{x}\right)^n$ for the cases, in which the exponent $n$ is a fractional number. For example, if $n=\frac{1}{2}$, one has to assign the value of the formula $\int dx\sqrt{\log \frac{1}{x}}$, which value I already once showed to be $=\frac{1}{2}\sqrt{\pi}$, while $\pi$ denotes the circumference of the circle whose diameter is $=1$; but for other fractional numbers I taught how to reduce its value to quadratures of algebraic curves of higher order. Because this reduction is by no means obvious and is only valid, if the integration of the formula $\int dx \left(\log \frac{1}{x}\right)^n$ is extended from the value $x=0$ to $x=1$, it  seems to be worth of one's attention. But even though I already treated this subject once\footnote{Euler considered this expression also in E19 mentioned already in the footnote above.},  nevertheless, because I was led to the results in a rather non straight-forward way, I decided take on this subject here again and explain everything in more detail.

\section*{Theorem 2}

\paragraph*{§12}

\textit{If the integrals are extended from the value $x=0$ to $x=1$ and $n$ denotes a positive integer, it will be}

\[
\frac{1 \cdot 2 \cdot 3 \cdots n}{(n+1)(n+2)(n+2) \cdots 2n}=\frac{1}{2}ng \int x^{f+ng-1}dx(1-x^g)^{n-1} \cdot \frac{\int x^{f-1}dx(1-x^g)^{n-1}}{\int x^{f-1}dx(1-x^g)^{2n-1}},
\]
\textit{whatever positive numbers are taken for $f$ and $g$.}

\subsection*{Proof}

Because above (§4) we showed that

\[
\frac{1 \cdot 2 \cdot 3 \cdots n}{(f+g)(f+2g) \cdots (f+ng)}=\frac{f \cdot ng}{g^n(f+ng)}\int x^{f-1} dx(1-x^g)^{n-1},
\]
if we write $2n$ instead of $n$,  we will have

\[
\frac{1 \cdot 2 \cdot 3 \cdots 2n}{(f+g)(f+2g) \cdots (f+2ng)}=\frac{f \cdot 2ng}{g^{2n}(f+2ng)}\int x^{f-1}dx(1-x^g)^{2n-1}.
\]
Now divide the first equation by the second one and this third one will result

\[
\frac{(f+(n+1)g)(f+(n+2)g)\cdots (f+2ng)}{(n+1)(n+2) \cdots 2n}= \frac{g^n(f+2ng)}{2(f+ng)} \cdot \frac{\int x^{f-1}dx(1-x^g)^{n-1}}{\int x^{f-1}dx(1-x^g)^{2n-1}}.
\]
But if  one writes $f+ng$ instead of $f$  in the first equation, this fourth equation will result

\[
\frac{1 \cdot 2 \cdot 3 \cdots n}{(f+(n+1)g)(f+(n+2)g) \cdots (f+2ng)}=\frac{(f+ng)ng}{g^n(f+2ng)}\int x^{f+ng-1}dx(1-x^g)^{n-1}.
\]
Multiply this fourth equation by the third and one will find the equation to be demonstrated, namely

\[
\frac{1 \cdot 2 \cdot 3 \cdots n}{(n+1)(n+2)(n+3) \cdots 2n}=\frac{1}{2}ng \int x^{f+ng-1}dx(1-x^g)^{n-1} \cdot \frac{\int x^{f-1}dx(1-x^g)^{n-1}}{\int x^{f-1}dx(1-x^g)^{2n-1}}.
\]

\subsection*{Corollary 1}

\paragraph*{§13}

If  one sets $f=n$ and $g=1$ in the first equation, the same product will result, of course

\[
\frac{1 \cdot 2 \cdot 3 \cdots n}{(n+1)(n+2) \cdots 2n}=\frac{1}{2}n \int x^{n-1}dx(1-x^g)^{n-1};
\]
having compared this equation to the one mentioned above we obtain

\[
\frac{\int x^{n-1}dx(1-x)^{n-1}}{g\int x^{f+ng-1}dx(1-x^g)^{n-1}}=\frac{\int x^{f-1}dx(1-x^g)^{n-1}}{\int x^{f-1}dx(1-x^g)^{2n-1}}.
\]

\subsection*{Corollary 2}

\paragraph*{§14}

If we write $x^g$ instead of $x$ in that equation, it will be

\[
\frac{1 \cdot 2 \cdot3 \cdots n}{(n+1)(n+2) \cdots 2n}=\frac{1}{2}ng \int x^{ng-1}dx(1-x^g)^{n-1}
\]
such that we find this comparison of the following integral formulas

\[
\int x^{ng-1}dx(1-x^g)^{n-1}=\int x^{f+ng-1}dx(1-x^g)^{n-1} \cdot \frac{\int x^{f-1}dx(1-x^g)^{n-1}}{\int x^{f-1}dx(1-x^g)^{2n-1}}.
\]

\subsection*{Corollary 3}

\paragraph*{§15}

If  we set $g=0$ in the equation of the theorem, because of $(1-x^g)^m=g^m\left(\log \frac{1}{x}\right)^m$, the powers of $g$ will cancel each other and this equation will result

\[
\frac{1 \cdot 2 \cdot 3 \cdots n}{(n+1)(n+2) \cdots 2n}=\frac{1}{2}n \int x^{f-1}dx \left(\log \frac{1}{x}\right)^{n-1} \cdot \frac{\int x^{f-1}dx \left(\log \frac{1}{x}\right)^{n-1}}{\int x^{f-1}dx \left(\log \frac{1}{x}\right)^{2n-1}},
\]
whence we conclude

\[
\frac{\left(\int x^{f-1}dx \left(\log \frac{1}{x}\right)^{n-1}\right)^2}{\int x^{f-1}dx \left(\log \frac{1}{x}\right)^{2n-1}}=g \int x^{ng-1}dx(1-x^g)^{n-1}
\]
or, because of

\[
\int x^{f-1}dx \left(\log \frac{1}{x}\right)^{n-1}= \frac{f}{n}\int x^{f-1}dx \left(\log \frac{1}{x}\right)^{n},
\]
this equality

\[
\frac{2f}{n}\cdot \frac{\left(\int x^{f-1}dx \left(\log \frac{1}{x}\right)^{n}\right)^2}{\int x^{f-1}dx \left(\log \frac{1}{x}\right)^{2n}}=g \int x^{ng-1}dx(1-x^g)^{n-1}.
\]

\subsection*{Corollary 4}

\paragraph*{§16}

Let us set $f=1$, $g=2$ and $n=\frac{m}{2}$ here so that $m$ is a positive integer, and, because of

\[
\int dx \left(\log \frac{1}{x}\right)^{m}=1 \cdot 2 \cdot3 \cdots m,
\]
it will be

\[
\frac{4}{m}\cdot \frac{\left(\int dx \left(\log \frac{1}{x}\right)^{\frac{m}{2}}\right)^2}{1 \cdot 2 \cdot 3 \cdots m}=2 \int x^{m-1}dx(1-x^2)^{\frac{m}{2}-1}
\]
and hence

\[
\int dx\left(\log \frac{1}{x}\right)^{\frac{m}{2}}=\sqrt{1 \cdot 2 \cdot 3 \cdots m \cdot \frac{m}{2}\int x^{m-1}dx(1-x^2)^{\frac{m}{2}-1}}
\]
and by taking $m=1$, because of

\[
\int \frac{dx}{\sqrt{1-xx}}=\frac{\pi}{2},
\]
one will have

\[
\int dx \sqrt{\log \frac{1}{x}}= \sqrt{\frac{1}{2}\int \frac{dx}{\sqrt{1-xx}}}=\frac{1}{2}\sqrt{\pi}.
\]

\subsection*{Scholium}

\paragraph*{§17}

So lo and behold this succinct proof of the theorem I proved some time ago\footnote{Euler proved this theorem in \cite{E19},}, which says that  $\int dx \sqrt{\log \frac{1}{x}}=\frac{1}{2}\sqrt{\pi}$; furthermore, note that I did not use an argument involving interpolations, which I had used back then.  Here, it was  deduced from this theorem I found here, which states that 

\[
\frac{\left(\int x^{f-1}dx \left(\log \frac{1}{x}\right)^{n-1}\right)^2}{\int x^{f-1}dx\left(\log \frac{1}{x}\right)^{2n-1}}=g \int x^{ng-1}dx(1-x^g)^{n-1}.
\]
But the principal theorem, from which this one is deduced, reads as follows

\[
g \frac{\int x^{f-1}dx(1-x^g)^{n-1} \cdot \int x^{f+ng-1}dx(1-x^g)^{n-1}}{\int x^{f-1}dx(1-x^g)^{2n-1}}=\int x^{n-1}dx(1-x)^{n-1};
\]
for, each side, if it is actually calculated by an integration extended from $x=0$ to $x=1$, is equal to this product

\[
\frac{1 \cdot 2 \cdot 3 \cdots (n-1)}{(n+1)(n+2)\cdots (2n-1)}.
\]
But if we want to give the one side a more general form involving a further-extending class of integrals, we can state the theorem in such a way that
\[
g \frac{\int x^{f-1}dx(1-x^g)^{n-1} \cdot \int x^{f+ng-1}dx(1-x^g)^{n-1}}{\int x^{f-1}dx(1-x^g)^{2n-1}}=k \int x^{nk-1}dx(1-x^k)^{n-1};
\]
and, if  one takes $g=0$ here, 

\[
\frac{\left(\int x^{f-1}dx \left(\log \frac{1}{x}\right)^{n-1}\right)^2}{\int x^{f-1}dx \left(\log \frac{1}{x}\right)^{2n-1}}=k \int x^{nk-1}dx(1-x^k)^{n-1}.
\]
Therefore, it has to be noted that this equality holds, whatever numbers are taken for $f$ and $g$; in the case $f=g$, this is indeed clear, since

\[
\int x^{g-1}dx(1-x^g)^{n-1}= \frac{1-(1-x^g)^n}{ng}=\frac{1}{ng};
\]
for, it will be

\[
2g\int x^{ng+g-1}dx(1-x^g)^{n-1}=k \int x^{nk-1}dx(1-x^k)^{n-1},
\]
and because

\[
\int x^{ng+g-1}dx(1-x^g)^{n-1}= \frac{1}{2}\int x^{ng-1}dx(1-x^g)^{n-1},
\]
the equality is obvious, because $k$ can be taken arbitrarily. But in the same way we arrived at this theorem, it is possible to get to other similar ones.

\section*{Theorem 3}

\paragraph*{§18}

\textit{If the following integrals are extended from the value $x=0$ to $x=1$ and $n$ denotes any positive integer, it will be}

\[
\frac{1 \cdot 2 \cdot 3 \cdots n}{(2n+1)(2n+2) \cdots 3n}= \frac{2}{3}ng \int x^{f+2ng-1}dx(1-x^g)^{n-1} \cdot \frac{\int x^{f-1}dx(1-x^g)^{2n-1}}{\int x^{f-1}dx(1-x^g)^{3n-1}},
\]
\textit{whatever positive numbers are taken for $f$ and $g$.}

\subsection*{Proof}

In the preceding theorem, we already saw that

\[
\frac{1 \cdot 2 \cdot 3 \cdots 2n}{(f+g)(f+2g)\cdots (f+2ng)}=\frac{f \cdot 2ng}{g^{2n}(f+2ng)}\int x^{f-1}dx(1-x^g)^{2n-1};
\]
if, in like manner, we write $3n$ instead of $n$ in the principal formula, we will have

\[
\frac{1 \cdot 2 \cdot 3 \cdots 3n}{(f+g)(f+2g)\cdots (f+3ng)}=\frac{f \cdot 3ng}{g^{3n}(f+3ng)}\int x^{f-1}dx(1-x^g)^{3n-1};
\]
hence, dividing this equation by the first one, we are led to

\[
\frac{(f+(2n+1)g)(f+(2n+2)g) \cdots (f+3ng)}{(2n+1)(2n+2) \cdots 3n}=\frac{2g^n(f+3ng)}{3(f+2ng)}\cdot \frac{\int x^{f-1}dx(1-x^g)^{2n-1}}{\int x^{f-1}dx(1-x^g)^{3n-1}}.
\]
But if we write $f+2gn$ instead of $f$ in the principal equation (§4), we obtain this equation

\[
\frac{1 \cdot 2 \cdot 3 \cdots n}{(f+(2n+1)g)(f+(2n+2)g)\cdots (f+3ng)}= \frac{(f+2ng)ng}{g^n(f+3ng)}\int x^{f+2ng-1}dx(1-x^g)^{n-1}.
\]
Now multiply this equation by the preceding and the equation to be proved will result

\[
\frac{1 \cdot 2 \cdot 3 \cdots n}{(2n+1)(2n+2) \cdots 3n}= \frac{2}{3}ng \int x^{f+2ng-1}dx(1-x^g)^{n-1} \cdot \frac{\int x^{f-1}dx(1-x^g)^{2n-1}}{\int x^{f-1}dx(1-x^g)^{3n-1}}.
\]

\subsection*{Corollary 1}

\paragraph*{§19}

We obtain the same value from the principal equation by putting $f=2n$ and $g=1$ so that

\[
\frac{1\cdot 2 \cdot 3 \cdots n}{(2n+1)(2n+2) \cdots 3n}= \frac{2}{3}n \int x^{2n-1}dx(1-x)^{n-1},
\]
which integral formula, writing $x^k$ instead of $x$, is transformed into this one

\[
\frac{2}{3}nk \int x^{2nk-1}dx(1-x^k)^{n-1}
\]
such that 

\[
g \int x^{f+2ng-1}dx(1-x^g)^{n-1} \cdot \frac{\int x^{f-1}dx \left(\log \frac{1}{x}\right)^{2n-1}}{\int x^{f-1}dx \left(\log \frac{1}{x}\right)^{3n-1}}=k \int x^{2nk-1}dx(1-x^k)^{n-1}.
\]

\subsection*{Corollary 2}

\paragraph*{§20}

If we set $g=0$ here, because of $1-x^g=g\log \frac{1}{x}$, we will have this equation

\[
\int x^{f-1}dx \left(\log \frac{1}{x}\right)^{n-1}\cdot \frac{\int x^{f-1}dx \left(\log \frac{1}{x}\right)^{2n-1}}{\int x^{f-1}dx \left(\log \frac{1}{x}\right)^{3n-1}}=k \int x^{2nk-1}dx(1-x^k)^{n-1};
\]
because we had found before that

\[
\frac{\left(\int x^{f-1}dx \left(\log \frac{1}{x}\right)^{n-1} \right)^2}{\int x^{f-1}dx \left(\log \frac{1}{x}\right)^{2n-1}}=k \int x^{nk-1}dx(1-x^k)^{n-1},
\]
by multiplying both expressions by each other we will have this equation

\[
\frac{\left(\int x^{f-1}dx \left(\log \frac{1}{x}\right)^{n-1}\right)^3}{\int x^{f-1}dx \left(\log \frac{1}{x}\right)^{3n-1}}=k^2 \int x^{nk-1}dx(1-x^k)^{n-1} \cdot \int x^{2nk-1}dx(1-x^k)^{n-1}.
\]

\subsection*{Corollary 3}

\paragraph*{§21}

Without any restriction one can put $f=1$ here; because,  then for $n=\frac{1}{3}$ and $k=3$ it will be

\[
\frac{\left(\int dx \left(\log \frac{1}{x}\right)^{-\frac{2}{3}}\right)^3}{\int dx \left(\log \frac{1}{x}\right)^{0}}=9 \int dx(1-x^3)^{-\frac{2}{3}} \cdot \int xdx(1-x^3)^{-\frac{2}{3}}
\]
and, because of

\[
\int dx \left(\log \frac{1}{x}\right)^{-\frac{2}{3}}=3 \int dx \left(\log \frac{1}{x}\right)^{\frac{1}{3}} \quad \text{and} \quad \int dx \left(\log \frac{1}{x}\right)^{0}=1,
\]
\[
\left(\int dx \left(\log \frac{1}{x}\right)^{\frac{1}{3}}\right)^3=\frac{1}{3} \int dx(1-x^3)^{-\frac{2}{3}} \cdot \int xdx(1-x^3)^{-\frac{2}{3}};
\]
but then for $n=\frac{2}{3}$ and $k=3$ it will be

\[
\frac{\left(\int dx \left(\log \frac{1}{x}\right)^{-\frac{1}{3}}\right)^3}{\int dx \log \frac{1}{x}}=9 \int xdx(1-x^3)^{-\frac{1}{3}} \cdot \int x^3dx(1-x^3)^{-\frac{1}{3}}
\]
or

\[
\left(\int dx \left(\log \frac{1}{x}\right)^{\frac{2}{3}}\right)^3=\frac{4}{3}\int xdx(1-x^3)^{-\frac{1}{3}} \cdot \int x^3dx(1-x^3)^{-\frac{1}{3}}.
\]

\section*{General Theorem}

\paragraph*{§22}

\textit{If the following integrals are extended from the value $x=0$ to $x=1$ and $n$ denotes a positive integer, it will be}

\[
\frac{1 \cdot 2 \cdot 3 \cdots n}{(\lambda n+1)(\lambda n+2)\cdots (\lambda +1)n}=\frac{\lambda}{\lambda +1}ng \int x^{f+ \lambda ng-1}dx(1-x^g)^{n-1} \cdot \frac{\int x^{f-1}dx(1-x^g)^{\lambda n-1}}{\int x^{f-1}dx(1-x^g)^{(\lambda +1)n-1}},
\]
\textit{whatever positive numbers are taken for the letters $f$ and $g$.}

\subsection*{Proof}

Since, as we showed above,

\[
\frac{1 \cdot 2 \cdots n}{(f+g)(f+2g) \cdots (f+ng)}= \frac{f \cdot ng}{g^n(f+ng)}\int x^{f-1}dx(1-x^g)^{ n-1},
\]
if we write $\lambda n$ instead of $n$ here at first, but then $(\lambda +1)n$ instead of $n$, we will obtain these two equations

\[
\frac{1 \cdot 2 \cdots \lambda n}{(f+g)(f+2g) \cdots(f+\lambda ng)}= \frac{f \cdot \lambda ng}{g^{\lambda n}(f+\lambda ng)}\int x^{f-1}dx(1-x^g)^{\lambda n-1},
\]
\[
\frac{1 \cdot 2 \cdots (\lambda +1) n}{(f+g)(f+2g) \cdots(f+(\lambda+1) ng)}= \frac{f \cdot (\lambda+1) ng}{g^{(\lambda +1) n}(f+(\lambda +1) ng)}\int x^{f-1}dx(1-x^g)^{(\lambda +1) n-1};
\]
dividing the first equation by this one gives

\[
\frac{(f+\lambda ng+g)(f+\lambda ng+2g) \cdots (f+\lambda ng+ng)}{(\lambda n+1)(\lambda n+2) \cdots (\lambda n+n)}=g^n \frac{\lambda(f+\lambda ng+ng)}{(\lambda +1)(f+\lambda ng)} \cdot \frac{\int x^{f-1}dx(1-x^g)^{\lambda n-1}}{\int x^{f-1}dx(1-x^g)^{(\lambda +1) n-1}}.
\]

But if we  write $f+\lambda ng$ instead of $f$ in the first equation, we will obtain

\[
\frac{1 \cdot 2 \cdots n}{(f+\lambda ng+g)(f+\lambda n g+2g) \cdots (f+\lambda ng +ng)}=\frac{(f+\lambda ng)ng}{g^n(f+ \lambda ng+ng)}\int x^{f+\lambda ng-1}dx(1-x^g)^{n-1},
\]
which two equations multiplied by each other produce the equation to be demonstrated 
\[
\frac{1 \cdot 2 \cdots n}{(\lambda n+1)(\lambda n+2) \cdots (\lambda n+n)}= \frac{\lambda n g}{\lambda +1}\int x^{f+\lambda ng-1}dx(1-x^g)^{n-1} \cdot \frac{\int x^{f-1}dx(1-x^g)^{\lambda n-1}}{\int x^{f-1}dx(1-x^g)^{(\lambda +1)n-1}}.
\]

\subsection*{Corollary 1}

\paragraph*{§23}

If we set $f=\lambda n$ and $g=1$ in the principal equation, we will also find

\[
\frac{1 \cdot 2 \cdots n}{(\lambda n+1)(\lambda n+2) \cdots (\lambda n +n)}=\frac{\lambda n}{\lambda +1}\int x^{\lambda n-1}dx(1-x)^{n-1},
\]
which form writing $x^k$ instead of $x$ changes into this one

\[
\frac{\lambda nk}{\lambda +1}\int x^{\lambda nk-1}dx(1-x^k)^{n-1}
\]
such that we have this very far-extending theorem

\[
g \int x^{f+\lambda ng-1}dx(1-x^g)^{n-1} \cdot \frac{\int x^{f-1}dx(1-x^g)^{\lambda n-1}}{\int x^{f-1}dx(1-x^g)^{\lambda n+n-1}}=k \int x^{\lambda nk-1}dx(1-x^k)^{n-1}.
\] 

\subsection*{Corollary 2}

\paragraph*{§24}

This theorem now holds, even if $n$ is not an integer; because the number $\lambda$ can be taken arbitrarily, let us even write $m$ instead of $\lambda n$ and we will find this theorem

\[
\frac{\int x^{f-1}dx(1-x^g)^{m-1}}{\int x^{f-1}dx(1-x^g)^{m+n-1}}=\frac{k \int x^{mk-1}dx(1-x^k)^{n-1}}{g\int x^{f+mg-1}dx(1-x^g)^{n-1}}.
\]

\subsection*{Corollary 3}

\paragraph*{§25}

If we set $g=0$, because of $1-x^g=g \log \frac{1}{x}$, that theorem will take  this form

\[
\frac{\int x^{f-1}dx \left(\log \frac{1}{x}\right)^{m-1}}{\int x^{f-1}dx \left(\log \frac{1}{x}\right)^{m+n-1}}=\frac{k \int x^{mk-1}dx(1-x^k)^{n-1}}{\int x^{f-1}dx \left(\log \frac{1}{x}\right)^{n-1}},
\]
which is more conveniently represented as follows

\[
\frac{\int x^{f-1}dx \left(\log \frac{1}{x}\right)^{n-1} \cdot \int x^{f-1}dx \left(\log \frac{1}{x}\right)^{m-1}}{\int x^{f-1}dx \left(\log \frac{1}{x}\right)^{m+n-1}}=k\int x^{mk-1}dx(1-x^k)^{n-1};
\]
here, it is evident  that the numbers $m$ and $n$ can be permuted.

\subsection*{Scholium}

\paragraph*{§26}

Thus, we found two ways, along which many comparisons and relations of integrals formulas can be derived; the one way, found in § 24, contains integrals of this kind

\[
\int x^{p-1}dx(1-x^g)^{q-1},
\]
which I already treated some time ago in my observations on the integrals\footnote{Euler again refers to his paper "'Observationes circa integralia formularum $\int x^{p-1}dx(1-x^n)^{\frac{q}{n}-1}$ posito post integrationem $x = 1$"'. This is paper E321 in the Eneström-Index.}

\[
\int x^{p-1}dx(1-x^n)^{\frac{q}{n}-1},
\]
extended from the value $x=0$ to $x=1$; there I showed at first that the letters $p$ and $q$ can be interchanged such that 

\[
\int x^{p-1}dx(1-x^n)^{\frac{q}{n}-1}=\int x^{q-1}dx(1-x^n)^{\frac{p}{n}-1},
\]
but then that

\[
\int \frac{x^{p-1}dx}{(1-x^n)^{\frac{p}{n}}}=\frac{\pi}{n\sin \frac{p \pi}{n}};
\]
But in particular, I demonstrated that 

\[
\int \frac{x^{p-1}dx}{\sqrt[n]{(1-x^n)^{n-q}}}\cdot \int \frac{x^{p+q-1}dx}{\sqrt[n]{(1-x^n)^{n-r}}}=\int \frac{x^{p-1}dx}{\sqrt[n]{(1-x^n)^{n-r}}} \cdot \int \frac{x^{p+r-1}dx}{\sqrt[n]{(1-x^n)^{n-q}}};
\]
 the comparison found in § 24 is already contained in this equation such that  nothing, which I have not already explained, can be deduced from this. Therefore, here I mainly attempt to follow  the other way explained in § 25; since without any restriction one can take $f=1$, our primary equation will be

\[
\frac{\int dx \left(\log \frac{1}{x}\right)^{n-1} \cdot \int dx \left(\log \frac{1}{x}\right)^{m-1}}{\int dx \left(\log \frac{1}{x}\right)^{m+n-1}}=k \int x^{mk-1}dx(1-x^k)^{n-1},
\]
by means of which the values of the integral formula $\int dx \left(\log \frac{1}{x}\right)^{\lambda}$, if $\lambda$ is not an integer, can be reduced to quadratures of algebraic curves; since, if $\lambda$ is an integer, the integrals can be solved explicitly, because

\[
\int dx \left(\log \frac{1}{x}\right)^{\lambda}=1 \cdot 2 \cdot 3 \cdots \lambda.
\]
But the question of greatest importance concerns the cases, in which $\lambda$ is a rational number. Therefore, I will define these here successively for some small denominators.

\section*{Problem 2}

\paragraph*{§27}

\textit{While $i$ denotes a positive integer, to define the value of the integral $\int dx \left(\log \frac{1}{x}\right)^{\frac{i}{2}}$, having extended the integration from $x=0$ to $x=1$.}

\subsection*{Solution}

Let us put $m=n$ in our general equation and it will be

\[
\frac{\left(\int dx \left(\log \frac{1}{x}\right)^{n-1}\right)^2}{\int dx \left(\log \frac{1}{x}\right)^{2n-1}}=k\int x^{nk-1}dx(1-x^k)^{n-1}.
\]
Now let $n-1=\frac{i}{2}$ and, because of $2n-1=i+1$, it will be

\[
\int dx \left(\log \frac{1}{x}\right)^{2n-1}=1 \cdot 2 \cdot 3 \cdots (i+1);
\]
now further take $k=2$ so that $nk-1=i+1$, and it will be

\[
\frac{\left(\int dx\sqrt{\left(\log \frac{1}{x}\right)^i}\right)^2}{1 \cdot 2 \cdot 3 \cdots (i+1)}=2\int x^{i+1}dx(1-x^2)^{\frac{i}{2}}
\]
and hence

\[
\frac{\int dx\sqrt{\left(\log \frac{1}{x}\right)^i}}{\sqrt{1 \cdot 2 \cdot 3 \cdots (i+1)}}=\sqrt{2\int x^{i+1}dx(1-x^2)^{\frac{i}{2}}},
\]
where it is evidently sufficient to take only odd numbers for $i$, because for the even ones the expansion is  immediately obvious.

\subsection*{Corollary 1}

\paragraph*{§28}

But all cases are easily reduced to $i=1$ or even to $i=-1$; for, if $i+1$ is not a negative number, the reduction we found holds. For this case it will therefore be

\[
\int \frac{dx}{\sqrt{\log \frac{1}{x}}}=\sqrt{2\int \frac{dx}{\sqrt{1-xx}}}=\sqrt{\pi},
\]
because of $\int \frac{dx}{\sqrt{1-xx}}=\frac{\pi}{2}.$

\subsection*{Corollary 2}

\paragraph*{§29}

But having covered these principal cases, because of

\[
\int dx \left(\log \frac{1}{x}\right)^n=n \int dx \left(\log \frac{1}{x}\right)^{n-1},
\]
we will have

\[
\int dx \sqrt{\log \frac{1}{x}}= \frac{1}{2}\sqrt{\pi}, \quad \int dx \left(\log \frac{1}{x}\right)^{\frac{3}{2}}=\frac{1 \cdot 3}{2 \cdot 2}\sqrt{\pi},
\]
and in general

\[
\int dx \left(\log \frac{1}{x}\right)^{\frac{2n+1}{2}}=\frac{1}{2}\cdot \frac{3}{2} \cdot  \frac{5}{2} \cdot \frac{7}{2} \cdots \frac{2n+1}{2}\sqrt{\pi}.
\]

\section*{Problem 3}

\paragraph*{§30}

\textit{While $i$ denotes a positive integer, to define the value of the integral $\int dx \left(\log \frac{1}{x}\right)^{\frac{i}{3}-1}$, having extended the integration from $x=0$ to $x=1$, of course.}

\subsection*{Solution}

Let us start from the equation of the preceding problem

\[
\frac{\left(\int dx \left(\log \frac{1}{x}\right)^{n-1}\right)^2}{\int dx \left(\log \frac{1}{x}\right)^{2n-1}}=k\int x^{nk-1}dx(1-x^k)^{n-1}
\]
and let us set $m=2n$ in the general formula such that one has

\[
\frac{\int dx \left(\log \frac{1}{x}\right)^{n-1} \cdot \int dx \left(\log \frac{1}{x}\right)^{2n-1}}{\int dx \left(\log \frac{1}{x}\right)^{3n-1}}= k \int x^{2nk-1}dx(1-x^k)^{n-1},
\]
and by multiplying these two equations we obtain

\[
\frac{\left(\int dx \left(\log \frac{1}{x}\right)^{n-1}\right)^3}{\int dx \left(\log \frac{1}{x}\right)^{3n-1}}=kk\int x^{nk-1}dx(1-x^k)^{n-1} \cdot \int x^{2nk-1}dx(1-x^k)^{n-1}.
\]
Now just set $n=\frac{i}{3}$ here such that

\[
\int dx \left(\log \frac{1}{x}\right)^{i-1}=1 \cdot 2 \cdot 3 \cdots (i-1),
\]
and take $k=3$ and this equation will result

\[
\frac{\left(\int dx \sqrt[3]{\left(\log \frac{1}{x}\right)^{i-3}}\right)^3}{1 \cdot 2 \cdot 3 \cdots (i-1)}=9 \int x^{i-1}dx \sqrt[3]{(1-x^3)^{i-3}} \cdot \int x^{2i-1}dx \sqrt[3]{(1-x^3)^{i-3}},
\]
whence we conclude

\[
\frac{\int dx \sqrt[3]{\left(\log \frac{1}{x}\right)^{i-3}}}{\sqrt{1 \cdot 2 \cdot 3 \cdots (i-1)}}=\sqrt[3]{9 \int \frac{x^{i-1}dx}{\sqrt[3]{(1-x^3)^{3-i}}} \cdot \int \frac{x^{2i-1}dx}{\sqrt[3]{(1-x^3)^{3-i}}}}.
\]

\subsection*{Corollary 1}

\paragraph*{§31}

Here, two principal cases occur, on which all remaining ones depend, namely the cases  $i=1$ and $i=2$; for these cases

\begin{alignat*}{18}
&\text{I.} \quad && \int \frac{dx}{\sqrt[3]{\left(\log \frac{1}{x}\right)^2}}&&= \sqrt[3]{9 \int \frac{dx}{\sqrt[3]{(1-x^3)^{2}}}\cdot \int \frac{xdx}{\sqrt[3]{(1-x^3)^{2}}} },\\
&\text{II.} \quad && \int \frac{dx}{\sqrt[3]{\log \frac{1}{x}}}&&= \sqrt[3]{9 \int \frac{dx}{\sqrt[3]{1-x^3}}\cdot \int \frac{x^3dx}{\sqrt[3]{1-x^3}}};
\end{alignat*}
the last formula, because of

\[
\int \frac{x^3dx}{\sqrt[3]{1-x^3}}=\frac{1}{3}\int \frac{dx}{\sqrt[3]{1-x^3}},
\]
can be transformed into this one

\[
\int \frac{dx}{\sqrt[3]{\log \frac{1}{x}}}= \sqrt[3]{\int \frac{dx}{\sqrt[3]{1-x^3}} \cdot \int \frac{xdx}{\sqrt[3]{1-x^3}}}
\]

\subsection*{Corollary 2}

\paragraph*{§32}

If, for the sake of brevity,  as in my observations mentioned before\footnote{Euler refers to his paper E321 again.} we set

\[
\int \frac{x^{p-1}dx}{\sqrt[3]{(1-x^3)^{3-q}}}=\left(\frac{p}{q}\right)
\]
and, as we did it there, for this class also set

\[
\left(\frac{2}{1}\right)=\frac{\pi}{3 \sin \frac{\pi}{3}}=\alpha,
\]
but then put

\[
\left(\frac{1}{1}\right)=\int \frac{dx}{\sqrt[3]{(1-x^3)^2}}=A,
\]
it will be

\begin{alignat*}{18}
& \text{I.} \quad && \int \frac{dx}{\sqrt[3]{\left(\log \frac{1}{x}\right)^2}}&&=\sqrt[3]{9\left(\frac{1}{1}\right)\left(\frac{2}{1}\right)}&&=\sqrt[3]{9\alpha A},\\
& \text{II.} \quad && \int \frac{dx}{\sqrt[3]{\left(\log \frac{1}{x}\right)^1}}&&=\sqrt[3]{3\left(\frac{1}{2}\right)\left(\frac{2}{2}\right)}&&=\sqrt[3]{\frac{3 \alpha \alpha}{A}}.
\end{alignat*}

\subsection*{Corollary 3}

\paragraph*{§33}

Therefore,  for the first case we will have

\[
\int dx \sqrt[3]{\left(\log \frac{1}{x}\right)^{-2}}=\sqrt[3]{9 \alpha A}, \quad \int dx \sqrt[3]{\log \frac{1}{x}}=\frac{1}{3}\sqrt[3]{9 \alpha A}
\]
and

\[
\int dx \sqrt[3]{\left(\log \frac{1}{x}\right)^{3n+1}}=\frac{1}{3}\cdot \frac{4}{3} \cdot \frac{7}{3}\cdots \frac{3n+1}{3}\sqrt[3]{9 \alpha A},
\] 
but for the other case, on the other hand, we will find

\[
\int dx \sqrt[3]{\left(\log \frac{1}{x}\right)^{-1}}=\sqrt[3]{\frac{3 \alpha \alpha}{A}}, \quad \int dx \sqrt[3]{\left(\log \frac{1}{x}\right)^{2}}=\frac{2}{3}\sqrt[3]{\frac{3 \alpha \alpha}{A}}
\]
and

\[
\int dx \sqrt[3]{\left(\log \frac{1}{x}\right)^{3n-1}}=\frac{2}{3}\cdot \frac{5}{3}\cdot \frac{8}{3} \cdots \frac{3n-1}{3} \sqrt[3]{\frac{3 \alpha \alpha}{A}}.
\]

\section*{Problem 4}

\paragraph*{§34}

\textit{While $i$ denotes a positive integer, to define the value of the integral $\int dx \left(\log \frac{1}{x}\right)^{\frac{i}{4}-1}$, having extended the integration from $x=0$ to $x=1$.}

\subsection*{Solution}

In the solution of the preceding problem, we were led to this equation

\[
\frac{\left(\int dx \left(\log \frac{1}{x}\right)^{n-1}\right)^3}{\int dx\left(\log \frac{1}{x}\right)^{3n-1}}= kk \int \frac{x^{nk-1}dx}{(1-x^k)^{1-n}} \cdot \int \frac{x^{2nk-1}dx}{(1-x^k)^{1-n}};
\]
but the general formula, setting $m=3n$ in it, yields

\[
\frac{\int dx\left(\log \frac{1}{x}\right)^{n-1} \cdot \int dx\left(\log \frac{1}{x}\right)^{3n-1}}{\int dx\left(\log \frac{1}{x}\right)^{4n-1}}=k\int \frac{x^{3nk-1}dx}{(1-x^k)^{1-n}};
\]
combining these formulas we obtain

\[
\frac{\left(\int dx\left(\log \frac{1}{x}\right)^{n-1}\right)^4}{\int dx\left(\log \frac{1}{x}\right)^{4n-1}}=k^3 \int \frac{x^{nk-1}dx}{(1-x^k)^{1-n}} \cdot \int \frac{x^{2nk-1}dx}{(1-x^k)^{1-n}} \cdot \int \frac{x^{3nk-1}dx}{(1-x^k)^{1-n}}.
\]
Let $n=\frac{i}{4}$ and take $k=4$ and it will be

\[
\frac{\int dx \left(\log \frac{1}{x}\right)^{\frac{i}{4}-1}}{\sqrt[4]{1 \cdot 2 \cdot 3 \cdots (i-1)}}=\sqrt[4]{4^3 \int \frac{x^{i-1}dx}{\sqrt[4]{(1-x^4)^{4-i}}} \cdot \int \frac{x^{2i-1}dx}{\sqrt[4]{(1-x^4)^{4-i}}} \cdot \int \frac{x^{3i-1}dx}{\sqrt[4]{(1-x^4)^{4-i}}}}.
\]

\subsection*{Corollary 1}

\paragraph*{§35}

So if $i=1$, we will have this equation

\[
\int dx\sqrt[4]{\left(\log \frac{1}{x}\right)^{-3}}= \sqrt[4]{4^3 \int \frac{dx}{\sqrt[4]{(1-x^4)^{3}}} \cdot \int \frac{xdx}{\sqrt[4]{(1-x^4)^{3}}}\cdot \int \frac{x^2dx}{\sqrt[4]{(1-x^4)^{3}}}};
\]
if this expression is denoted by the letter $P$, it will be in general

\[
\int dx \sqrt[4]{\left(\log \frac{1}{x}\right)^{4n-3}}= \frac{1}{4} \cdot \frac{5}{4} \cdot \frac{9}{4} \cdots \frac{4n-3}{4}P.
\]

\subsection*{Corollary 2}

\paragraph*{§36}

For the other principal case, let us take $i=3$ and it will be

\[
\int dx\sqrt[4]{\left(\log \frac{1}{x}\right)^{-1}}= \sqrt[4]{2 \cdot 4^3 \int \frac{x^2dx}{\sqrt[4]{1-x^4}} \cdot \int \frac{x^5dx}{\sqrt[4]{1-x^4}}\cdot \int \frac{x^8dx}{\sqrt[4]{1-x^4}}}
\]
or, after some simplification,

\[
\int dx\sqrt[4]{\left(\log \frac{1}{x}\right)^{-1}}= \sqrt[4]{8 \int \frac{xxdx}{\sqrt[4]{1-x^4}} \cdot \int \frac{xdx}{\sqrt[4]{1-x^4}}\cdot \int \frac{dx}{\sqrt[4]{1-x^4}}};
\]
if this expression is denoted by the letter $Q$, in general, it will be 

\[
\int dx \sqrt[4]{\left(\log \frac{1}{x}\right)^{4n-1}}=\frac{3}{4} \cdot \frac{7}{4} \cdot \frac{11}{4} \cdots \frac{4n-1}{4}Q.
\]

\subsection*{Scholium}

\paragraph*{§37}

If we indicate the integral formula $\int \frac{x^{p-1}dx}{\sqrt[4]{(1-x^4)^{4-q}}}$ by the sign $\left(\frac{p}{q}\right)$, in general, the solution  will be as follows

\[
\int dx \sqrt[4]{\log \left( \frac{1}{x}\right)^{i-4}}=\sqrt[4]{1 \cdot 2 \cdot 3 \cdots (i-1)4^3\left(\frac{i}{i}\right)\left(\frac{2i}{i}\right)\left(\frac{3i}{i}\right)}
\]
and for the two cases expanded before

\[
P=\sqrt[4]{4^3\left(\frac{1}{1}\right)\left(\frac{2}{1}\right)\left(\frac{3}{1}\right)} \quad \text{and} \quad Q=\sqrt[4]{8\left(\frac{3}{3}\right)\left(\frac{2}{3}\right)\left(\frac{1}{3}\right)}.
\]
Now for the formulas depending on the circle, let us set 
\[
\left(\frac{3}{1}\right)=\frac{\pi}{4 \sin \frac{\pi}{4}}=\alpha \quad \text{and} \quad \left(\frac{2}{2}\right)=\frac{\pi}{4 \sin \frac{2 \pi}{4}}=\beta,
\]
but for the transcendental ones of higher order let

\[
\left(\frac{2}{1}\right)=\int \frac{xdx}{\sqrt[4]{(1-x^4)^3}}=\int \frac{dx}{\sqrt[2]{1-x^4}}=A,
\]
on which all remaining ones depend; hence we will find

\[
P= \sqrt[4]{4^3 \frac{\alpha \alpha}{\beta}AA} \quad \text{and} \quad Q=\sqrt[4]{4 \alpha \alpha \beta \frac{1}{AA}},
\]
whence it is clear that 

\[
PQ=4\alpha = \frac{\pi}{\sin \frac{\pi}{4}}.
\]
But because  $\alpha =\frac{\pi}{2\sqrt{2}}$ and $\beta =\frac{\pi}{4}$, it will be

\[
P=\sqrt[4]{32 \pi AA} \quad \text{and} \quad Q=\sqrt[4]{\frac{\pi^3}{8AA}} \quad \text{and} \quad \frac{P}{Q}=\frac{4A}{\sqrt{\pi}}.
\]

\section*{Problem 5}

\paragraph*{§38}

\textit{While $i$ denotes a positive integer, to define the value of the integral $\int dx \sqrt[5]{\left(\log \frac{1}{x}\right)^{{i-5}}}$, having extended the integration from $x=0$ to $x=1$, of course.}

\subsection*{Solution}

From the preceding solutions it is already perspicuous that for this case one will  obtain this formula at the end

\[
\frac{\int dx \sqrt[5]{\left(\log \frac{1}{x}\right)^{i-5}}}{\sqrt[5]{1 \cdot 2 \cdot 3 \cdots (i-1)}}=\sqrt[5]{5^4 \int \frac{x^{i-1}dx}{\sqrt[5]{(1-x^5)^{5-i}}} \cdot \int \frac{x^{2i-1}dx}{\sqrt[5]{(1-x^5)^{5-i}}} \cdot \int \frac{x^{3i-1}dx}{\sqrt[5]{(1-x^5)^{5-i}}} \cdot \int \frac{x^{4i-1}dx}{\sqrt[5]{(1-x^5)^{5-i}}}},
\]
which integral formulas belong to the fifth class introduced in my dissertation mentioned above\footnote{Euler again refers to his paper "'Observationes circa integralia formularum $\int x^{p-1}dx(1-x^n)^{\frac{q}{n}-1}$ posito post integrationem $x = 1$"'. This is paper E321 in the Eneström-Index.}. Hence, if in the same way as it was done there the sign $\left(\frac{p}{q}\right)$ denotes this formula $\int \frac{x^{p-1}dx}{\sqrt[5]{(1-x^5)^{5-q}}}$, the value in question can be more conveniently expressed in such a way that

\[
\int dx \sqrt[5]{\left(\log \frac{1}{x}\right)^{i-5}}= \sqrt[5]{1 \cdot 2 \cdot 3 \cdots (i-1) 5^4 \left(\frac{i}{i}\right)\left(\frac{2i}{i}\right)\left(\frac{3i}{i}\right)\left(\frac{4i}{i}\right)};
\]
here it indeed suffices to have assigned values smaller than five to $i$; for, if the numerators exceed five just note that

\[
\left(\frac{5+m}{i}\right)=\frac{m}{m+i}\left(\frac{m}{i}\right),
\] 
but then further

\[
\left(\frac{10+m}{i}\right)=\frac{m}{m+i}\cdot \frac{m+5}{m+i+5} \left(\frac{m}{i}\right),
\]
\[
\left(\frac{15+m}{i}\right)=\frac{m}{m+i}\cdot \frac{m+5}{m+i+5} \cdot \frac{m+10}{m+i+10} \left(\frac{m}{i}\right).
\]
Furthermore, for this class  two formulas indeed involve the quadrature of the circle; these formulas are

\[
\left(\frac{4}{1}\right)=\frac{\pi}{5 \sin \frac{\pi}{5}}=\alpha \quad \text{and} \quad \left(\frac{3}{2}\right)= \frac{\pi}{5 \sin \frac{2 \pi}{5}}=\beta,
\]
but then two contain higher quadratures, which we want to put as

\[
\left(\frac{3}{1}\right)=\int \frac{xxdx}{\sqrt[5]{(1-x^5)^4}}=\int \frac{dx}{\sqrt[5]{(1-x^5)^2}}=A \quad \text{and} \quad \left(\frac{2}{2}\right)=\int \frac{xdx}{\sqrt[5]{(1-x^5)^3}}=B,
\]
and using these I assigned the values of all remaining formulas of this class\footnote{Euler took the following list out of E321.}, namely

\begin{alignat*}{18}
& \left(\frac{5}{1}\right)&&=1, \quad &&\left(\frac{5}{2}\right)&&=\frac{1}{2}, \quad &&\left(\frac{5}{3}\right)&&=\frac{1}{3}&&\quad \left(\frac{5}{4}\right)&&=\frac{1}{4}, \quad &&\left(\frac{5}{5}\right)=\frac{1}{5};\\[2mm] 
& \left(\frac{4}{1}\right)&&=\alpha, \quad &&\left(\frac{4}{2}\right)&&=\frac{\beta}{A}, \quad &&\left(\frac{4}{3}\right)&&=\frac{\beta}{2B}&&\quad \left(\frac{4}{4}\right)&&=\frac{\alpha}{3A};\\[2mm]
& \left(\frac{3}{1}\right)&&=A, \quad &&\left(\frac{3}{2}\right)&&=\beta, \quad &&\left(\frac{3}{3}\right)&&=\frac{\beta \beta}{\alpha B};\\[2mm]
& \left(\frac{2}{1}\right)&&=\frac{\alpha B}{\beta}, \quad &&\left(\frac{2}{2}\right)&&=B;\\[2mm]
& \left(\frac{1}{1}\right)&&=\frac{\alpha A}{\beta}.
\end{alignat*}

\subsection*{Corollary 1}

Having taken the exponent $i=1$, it will be

\[
\int dx \sqrt[5]{\left(\log \frac{1}{x}\right)^{-4}}=\sqrt[5]{5^4 \left(\frac{1}{1}\right)\left(\frac{2}{1}\right)\left(\frac{3}{1}\right)\left(\frac{4}{1}\right)}=\sqrt[5]{5^4 \frac{\alpha ^3}{\beta ^2}A^2B},
\]
whence we conclude in general, while $n$ denotes a positive integer, that

\[
\int dx \sqrt[5]{\left(\log \frac{1}{x}\right)^{5n-4}}=\frac{1}{5}\cdot \frac{6}{5} \cdot \frac{11}{5} \cdots \frac{5n-4}{5}\sqrt[5]{5^4 \frac{\alpha ^3}{\beta}A^2B}.
\]

\subsection*{Corollary 2}

\paragraph*{§40}

Now let $i=2$, and since then this equation results
\[
\int dx \sqrt[5]{\left(\log \frac{1}{x}\right)^{-3}}=\sqrt[5]{5^4 \left(\frac{2}{2}\right)\left(\frac{4}{2}\right)\left(\frac{6}{2}\right)\left(\frac{8}{2}\right)},
\]
because of

\[
\left(\frac{6}{2}\right)=\frac{1}{3}\left(\frac{1}{2}\right)=\frac{1}{3}\left(\frac{2}{1}\right) \quad \text{and} \quad  \left(\frac{8}{2}\right)=\frac{3}{3}\left(\frac{3}{2}\right),
\]
the left-hand side will be

\[
\sqrt[5]{5^3\left(\frac{2}{2}\right)\left(\frac{4}{2}\right)\left(\frac{2}{1}\right)\left(\frac{3}{2}\right)}=\sqrt[5]{5^3 \alpha \beta \frac{BB}{A}}
\]
and in general

\[
\int dx \sqrt[5]{\left(\log \frac{1}{x}\right)^{5n-3}}=\frac{2}{5}\cdot \frac{7}{5} \cdot  \frac{12}{5} \cdots \frac{5n-3}{5}\sqrt[5]{5^3 \alpha \beta \frac{BB}{A}}.
\]

\subsection*{Corollary 3}

\paragraph*{§41}

Let $i=3$ and the form found

\[
\int dx \sqrt[5]{\left(\log \frac{1}{x}\right)^{-2}}=\sqrt[5]{2 \cdot 5^4\left(\frac{3}{3}\right)\left(\frac{6}{3}\right)\left(\frac{9}{3}\right)\left(\frac{12}{3}\right)},
\]
because of

\[
\left(\frac{6}{3}\right)=\frac{1}{4}\left(\frac{3}{1}\right), \quad \left(\frac{9}{3}\right)=\frac{4}{7}\left(\frac{4}{3}\right), \quad \left(\frac{12}{3}\right)=\frac{2}{5}\cdot \frac{7}{10}\left(\frac{3}{2}\right),
\]
changes to

\[
\sqrt[5]{2 \cdot 5^2 \left(\frac{3}{3}\right)\left(\frac{3}{1}\right)\left(\frac{4}{3}\right)\left(\frac{3}{2}\right)}=\sqrt[5]{5^2 \frac{\beta^4}{\alpha}\cdot \frac{A}{BB}},
\]
whence it is concluded that  in general

\[
\int dx \sqrt[5]{\left(\log \frac{1}{x}\right)^{5n-2}}=\frac{3}{5}\cdot \frac{8}{5} \cdot \frac{13}{5} \cdots \frac{5n-2}{5}\sqrt[5]{5^2 \frac{\beta^4}{\alpha}\cdot \frac{A}{BB}}.
\]

\subsection*{Corollary 4}

\paragraph*{§42}

Finally, for $i=4$ our equation 

\[
\int dx \sqrt[5]{\left(\log \frac{1}{x}\right)^{-1}}=\sqrt[5]{6 \cdot 5^4 \left(\frac{4}{4}\right)\left(\frac{8}{4}\right)\left(\frac{12}{4}\right)\left(\frac{16}{4}\right)},
\]
because of

\[
\left(\frac{8}{4}\right)=\frac{3}{7}\left(\frac{4}{3}\right), \quad \left(\frac{12}{4}\right)=\frac{2}{6}\cdot \frac{7}{11}\left(\frac{4}{2}\right), \quad \left(\frac{16}{4}\right)=\frac{1}{5}\cdot \frac{6}{10} \cdot \frac{11}{15} \left(\frac{4}{1}\right),
\]
will be transformed into this form

\[
\sqrt[5]{6 \cdot 5\left(\frac{4}{4}\right)\left(\frac{4}{3}\right)\left(\frac{4}{2}\right)\left(\frac{4}{1}\right)}=\sqrt[5]{5 \frac{\alpha \alpha \beta \beta}{AAB}}
\]
such that in general

\[
\int dx \sqrt[5]{\left(\log \frac{1}{x}\right)^{5n-1}}=\frac{4}{5}\cdot \frac{9}{5} \cdot \frac{14}{5} \cdots \frac{5n-1}{5}\sqrt[5]{5 \alpha \alpha \beta \beta \frac{1}{AAB}}.
\]

\subsection*{Scholium}

\paragraph*{§43}

If we represent the value of the integral formula $\int dx \left(\log \frac{1}{x}\right)^{\lambda}$ by the sign $\left[\lambda\right]$, the cases expanded up to now yield

\begin{alignat*}{18}
&\left[-\frac{4}{5}\right]&&=\sqrt[5]{5^4 \frac{\alpha^3}{\beta^2}\cdot A^2B}, \quad &&\left[+\frac{1}{5}\right]&&=\frac{1}{5}\sqrt[5]{5^4 \frac{\alpha^3}{\beta^2}\cdot A^2B},\\[2mm]
&\left[-\frac{3}{5}\right]&&=\sqrt[5]{5^3 \alpha \beta \cdot \frac{BB}{A}}, \quad &&\left[+\frac{2}{5}\right]&&=\frac{2}{5}\sqrt[5]{5^3 \alpha \beta \frac{BB}{A}},\\[2mm]
&\left[-\frac{2}{5}\right]&&=\sqrt[5]{5^2 \frac{\beta^4}{\beta}\cdot \frac{A}{BB}}, \quad &&\left[+\frac{3}{5}\right]&&=\frac{3}{5}\sqrt[5]{5^2 \frac{\beta^4}{\alpha}\cdot \frac{A}{BB}},\\[2mm]
&\left[-\frac{1}{5}\right]&&=\sqrt[5]{5 \alpha^2 \beta^2 \cdot \frac{1}{AAB}}, \quad &&\left[+\frac{4}{5}\right]&&=\frac{4}{5}\sqrt[5]{5 \alpha^2 \beta^2 \cdot \frac{1}{AAB}},
\end{alignat*}
whence by combining two, whose indices add up to $0$, we conclude

\begin{alignat*}{18}
&\left[+\frac{1}{5}\right]&&\cdot \left[-\frac{1}{5}\right]&&=\alpha &&=\frac{\pi}{5\sin \frac{\pi}{5}},\\[2mm]
&\left[+\frac{2}{5}\right]&&\cdot \left[-\frac{2}{5}\right]&&=2 \beta &&=\frac{2\pi}{5\sin \frac{2\pi}{5}},\\[2mm]
&\left[+\frac{3}{5}\right]&&\cdot \left[-\frac{3}{5}\right]&&=3 \beta &&=\frac{3\pi}{5\sin \frac{3\pi}{5}},\\[2mm]
&\left[+\frac{4}{5}\right]&&\cdot \left[-\frac{4}{5}\right]&&=4\alpha &&=\frac{\pi}{5\sin \frac{4\pi}{5}}.
\end{alignat*}
But from the preceding problem, in like manner, we deduce:

\begin{alignat*}{18}
&\left[-\frac{3}{4}\right]&&=P&&=\sqrt[4]{4^3 \frac{\alpha \alpha}{\beta} \cdot AA}, \quad &&\left[+\frac{1}{4}\right]&&=\frac{1}{4}\sqrt[4]{4^3 \frac{\alpha \alpha}{\beta} \cdot AA},\\
&\left[-\frac{1}{4}\right]&&=Q&&=\sqrt[4]{4 \alpha \alpha \beta \cdot \frac{1}{AA}}, \quad &&\left[+\frac{3}{4}\right]&&=\frac{3}{4}\sqrt[4]{4 \alpha \alpha \beta \cdot \frac{1}{AA}}
\end{alignat*}
and hence

\begin{alignat*}{18}
&\left[+\frac{1}{4}\right]&&\cdot \left[-\frac{1}{4}\right]&&=\alpha &&=\frac{\pi}{4 \sin \frac{\pi}{4}},\\
&\left[+\frac{3}{4}\right]&&\cdot \left[-\frac{3}{4}\right]&&=3\alpha &&=\frac{3\pi}{4 \sin \frac{3\pi}{4}},
\end{alignat*}
whence, in general, we obtain this theorem that

\[
\left[\lambda\right] \cdot \left[-\lambda \right]=\frac{\lambda \pi}{\sin \lambda \pi};
\]
the reason for this can be given from the interpolation method explained some time ago\footnote{Euler explains the interpolation method he talks about here also in E19.} as follows. Since

\[
\left[\lambda\right]=\frac{1^{1-\lambda}\cdot 2^{\lambda}}{1+\lambda}\cdot \frac{2^{1-\lambda}\cdot 3^{\lambda}}{2+\lambda}\cdot \frac{3^{1-\lambda}\cdot 4^{\lambda}}{3+\lambda}\cdot \text{etc.},
\]
it will be

\[
\left[-\lambda\right]=\frac{1^{1+\lambda}\cdot 2^{-\lambda}}{1-\lambda}\cdot \frac{2^{1+\lambda}\cdot 3^{-\lambda}}{2-\lambda}\cdot \frac{3^{1+\lambda}\cdot 4^{-\lambda}}{3-\lambda}\cdot \text{etc.}
\]
and hence

\[
\left[\lambda\right]\cdot \left[-\lambda\right]=\frac{1 \cdot 1}{1 -\lambda \lambda} \cdot \frac{2 \cdot 2}{4 -\lambda \lambda} \cdot \frac{3 \cdot 3}{9 -\lambda \lambda}\cdot \text{etc.}=\frac{\lambda \pi}{\sin \lambda \pi},
\]
as I demonstrated elsewhere\footnote{Euler proved this relation in his paper "'Methodus facilis computandi angulorum sinus ac tangentes tam naturales quam artificiales"'. This is paper E128 in the Eneström-Index.}.

\section*{Problem 6 - General Problem}

\paragraph*{§44}

\textit{If the letters $i$ and $n$ denote positive integers, to define the value of the integral}

\[
\int dx \left(\log \frac{1}{x}\right)^{\frac{i-n}{n}} \quad \textit{or} \quad \int dx \sqrt[n]{\left(\log \frac{1}{x}\right)^{i-n}}
\]
\textit{having extended the integration from $x=0$ to $x=1$.}

\subsection*{Solution}

The  method explained up to this point will exhibit the value in question expressed via  quadratures of algebraic curves in the following way

\[
\frac{\int dx \sqrt[n]{\left(\log \frac{1}{x}\right)^{i-n}}}{\sqrt[n]{1 \cdot 2 \cdot3 \cdots (i-1)}}= \sqrt[n]{n^{n-1} \int \frac{x^{i-1}dx}{\sqrt[n]{(1-x^n)^{n-i}}} \cdot \int \frac{x^{2i-1}dx}{\sqrt[n]{(1-x^n)^{n-i}}}\cdots \int \frac{x^{(n-1)i-1}dx}{\sqrt[n]{(1-x^n)^{n-i}}}}.
\]
Hence, if, for the sake of brevity, we denote the integral formula $\int \frac{x^{p-1}dx}{\sqrt[n]{(1-x^n)^{n-q}}}$ by this character $\left(\frac{p}{q}\right)$, but on the other hand the formula $\int dx \sqrt[n]{\left(\log \frac{1}{x}\right)^m}$ by this character $\left[\frac{m}{n}\right]$ such that $\left[\frac{m}{n}\right]$ denotes the value of this indefinite product $1 \cdot 2 \cdot 3 \cdots z$, while $z=\frac{m}{n}$, the  value in question will be expressed more succinctly as follows

\[
\left[\frac{i-n}{n}\right]= \sqrt[n]{1 \cdot 2 \cdot 3 \cdots (i-1)n^{n-1}\left(\frac{i}{i}\right)\left(\frac{2i}{i}\right)\left(\frac{3i}{i}\right)\cdots \left(\frac{ni-i}{i}\right)},
\]
whence it is also concluded that

\[
\left[\frac{i}{n}\right]= \frac{i}{n}\sqrt[n]{1 \cdot 2 \cdot 3 \cdots (i-1)n^{n-1}\left(\frac{i}{i}\right)\left(\frac{2i}{i}\right)\left(\frac{3i}{i}\right)\cdots \left(\frac{ni-i}{i}\right)}.
\]
Here, it will always suffice to  take the number $i$ smaller than $n$, because it is known for larger numbers that 

\[
\left[\frac{i+n}{n}\right]=\frac{i+n}{n}\left[\frac{i}{n}\right], \quad \text{in the same way} \quad \left[\frac{i+2n}{n} \right]=\frac{i+n}{n}\cdot \frac{i+2n}{n}\left[\frac{i}{n}\right] \quad \text{etc.},
\]
and so the whole investigation is hence reduced to those cases in which the numerator $i$ of the fraction $\frac{i}{n}$ is smaller than the denominator $n$. In addition, it will be helpful to have noted the following properties of the integral formulas

\[
\int \frac{x^{p-1}dx}{\sqrt[n]{(1-x^n)^{n-q}}}=\left(\frac{p}{q}\right):
\]

I. The letters $p$ and $q$ are interchangeable so that

\[
\left(\frac{p}{q}\right)=\left(\frac{q}{p}\right).
\]

II. If one of the two numbers $p$ or $q$ is equal to the exponent $n$, the value of the integral formula will be algebraic, namely

\[
\left(\frac{n}{p}\right)=\left(\frac{p}{n}\right)=\frac{1}{p}\quad \text{or} \quad \left(\frac{n}{q}\right)=\left(\frac{q}{n}\right)=\frac{1}{q}.
\]

III. If the sum of the numbers $p+q$ is equal to the exponent $n$, the value of the integral formula $\left(\frac{p}{q}\right)$ can be exhibited by means of the quadrature of the circle, because 

\[
\left(\frac{p}{n-p}\right)=\left(\frac{n-p}{p}\right)=\frac{\pi}{n \sin \frac{p \pi}{n}} \quad \text{and} \quad \left(\frac{q}{n-q}\right)=\left(\frac{n-q}{q}\right)=\frac{\pi}{n \sin \frac{q \pi}{n}}.
\]

IV. If one of the numbers $p$ or $q$ is greater than the exponent $n$, the integral formula $\left(\frac{p}{q}\right)$ can be reduced to another one whose terms are smaller than $n$; this is achieved using this reduction

\[
\left(\frac{p+n}{q}\right)=\frac{p}{p+q}\left(\frac{p}{q}\right).
\]

V. There is a relation among many of these integral formulas of such a kind that

\[
\left(\frac{p}{q}\right)\left(\frac{p+q}{r}\right)=\left(\frac{p}{r}\right)\left(\frac{p+r}{q}\right)=\left(\frac{q}{r}\right)\left(\frac{q+r}{p}\right);
\]
by means of this relation all reductions I gave in my observations on these formulas\footnote{Euler is again referring to E321.} are found.

\subsection*{Corollary 1}

\paragraph*{§45}

If  we accommodate the formula that we found to each case in this way, by means of reduction IV, we will be able to exhibit them in the most simple way as follows. And for the case $n=2$, in which no further reduction is necessary, we will have

\[
\left[\frac{1}{2}\right]=\frac{1}{2}\sqrt[2]{2\left(\frac{1}{1}\right)}=\frac{1}{2}\sqrt[2]{\frac{\pi}{\sin \frac{\pi}{2}}}=\frac{1}{2}\sqrt{\pi}.
\]

\subsection*{Corollary 2}

\paragraph*{§46}

For the case $n=3$ we will have these reductions

\begin{alignat*}{18}
&\left[\frac{1}{3}\right]&&=\frac{1}{3}\sqrt[3]{3^2 \left(\frac{1}{1}\right)\left(\frac{2}{1}\right)}\\
&\left[\frac{2}{3}\right]&&=\frac{2}{3}\sqrt[3]{3 \cdot 1 \left(\frac{2}{2}\right)\left(\frac{1}{2}\right)}.
\end{alignat*}

\subsection*{Corollary 3}

\paragraph*{§47}

For the case $n=4$ one obtains these three reductions

\begin{alignat*}{18}
&\left[\frac{1}{4}\right]&&=\frac{1}{4}\sqrt[4]{4^3 \left(\frac{1}{1}\right)\left(\frac{2}{1}\right)\left(\frac{3}{1}\right)},\\
& \left[\frac{2}{4}\right]&&=\frac{2}{4}\sqrt[4]{4^2 \cdot 2 \left(\frac{2}{2}\right)^2\left(\frac{4}{2}\right)}=\frac{1}{2}\sqrt[2]{4\left(\frac{2}{2}\right)}
\intertext{because of $\left(\frac{4}{2}\right)=\frac{1}{2},$}
&\left[\frac{3}{4}\right]&&=\frac{3}{4}\sqrt{4 \cdot 1 \cdot 2 \left(\frac{3}{3}\right)\left(\frac{2}{3}\right)\left(\frac{1}{3}\right)};
\end{alignat*}
because in the second equation  $\left(\frac{2}{2}\right)=\left(\frac{4-2}{2}\right)=\frac{\pi}{4}$, it will, of course as before, be

\[
\left[\frac{2}{4}\right]=\left[\frac{1}{2}\right]= \frac{1}{2}\sqrt{\pi}.
\]

\subsection*{Corollary 4}

\paragraph*{§48}

Now let $n=5$ and these four reductions result

\begin{alignat*}{18}
&\left[\frac{1}{5}\right]&&=\frac{1}{5}\sqrt[5]{5^4\left(\frac{1}{1}\right)\left(\frac{2}{1}\right)\left(\frac{3}{1}\right)\left(\frac{4}{1}\right)},\\
&\left[\frac{2}{5}\right]&&=\frac{2}{5}\sqrt[5]{5^3 \cdot 1\left(\frac{2}{2}\right)\left(\frac{4}{2}\right)\left(\frac{1}{2}\right)\left(\frac{3}{2}\right)},\\
&\left[\frac{3}{5}\right]&&=\frac{3}{5}\sqrt[5]{5^2 \cdot 1 \cdot 2\left(\frac{3}{3}\right)\left(\frac{1}{3}\right)\left(\frac{4}{3}\right)\left(\frac{2}{3}\right)},\\
&\left[\frac{4}{5}\right]&&=\frac{4}{5}\sqrt[5]{5  \cdot 1 \cdot 2 \cdot 3\left(\frac{4}{4}\right)\left(\frac{3}{4}\right)\left(\frac{2}{4}\right)\left(\frac{1}{4}\right)}.
\end{alignat*}

\subsection*{Corollary 5}

\paragraph*{§49}

Let $n=6$ and we will have these reductions

\begin{alignat*}{18}
&\left[\frac{1}{6}\right]&&=\frac{1}{6}\sqrt[6]{6^5\left(\frac{1}{1}\right)\left(\frac{2}{1}\right)\left(\frac{3}{1}\right)\left(\frac{4}{1}\right)\left(\frac{5}{1}\right)},\\
&\left[\frac{2}{6}\right]&&=\frac{2}{6}\sqrt[6]{6^4 \cdot 2 \left(\frac{2}{2}\right)^2\left(\frac{4}{2}\right)^2\left(\frac{6}{2}\right)}=\frac{1}{3}\sqrt[3]{6^2\left(\frac{3}{2}\right)\left(\frac{4}{2}\right)},\\
&\left[\frac{3}{6}\right]&&=\frac{3}{6}\sqrt[6]{6^3 \cdot 3 \cdot 3 \left(\frac{3}{3}\right)^3\left(\frac{6}{3}\right)^2}=\frac{1}{2}\sqrt[2]{6\left(\frac{3}{3}\right)},\\
&\left[\frac{4}{6} \right]&&=\frac{4}{6}\sqrt[8]{6^2 \cdot 2 \cdot 4 \cdot 2 \left(\frac{4}{4}\right)^2\left(\frac{2}{4}\right)^2\left(\frac{6}{4}\right)}=\frac{2}{3}\sqrt[3]{6 \cdot 2 \left(\frac{4}{4}\right)\left(\frac{2}{4}\right)},\\
&\left[\frac{5}{6}\right]&&=\frac{5}{6}\sqrt[6]{6 \cdot 1 \cdot 2 \cdot 3 \cdot 4 \left(\frac{5}{5}\right)\left(\frac{4}{5}\right)\left(\frac{3}{5}\right)\left(\frac{2}{5}\right)\left(\frac{1}{5}\right)}.
\end{alignat*}

\subsection*{Corollary 6}

\paragraph*{§50}

For $n=7$ the following six equations result

\begin{alignat*}{18}
&\left[\frac{1}{7}\right]&&=\frac{1}{7}\sqrt[7]{7^6 \left(\frac{1}{1}\right)\left(\frac{2}{1}\right)\left(\frac{3}{1}\right)\left(\frac{4}{1}\right)\left(\frac{5}{1}\right)\left(\frac{6}{1}\right)},\\
&\left[\frac{2}{7}\right]&&=\frac{2}{7}\sqrt[7]{7^5 \cdot 1 \left(\frac{2}{2}\right)\left(\frac{4}{2}\right)\left(\frac{6}{2}\right)\left(\frac{1}{2}\right)\left(\frac{3}{2}\right)\left(\frac{5}{2}\right)},\\
&\left[\frac{3}{7}\right]&&=\frac{3}{7}\sqrt[7]{7^4 \cdot 1 \cdot 2 \left(\frac{3}{3}\right)\left(\frac{6}{3}\right)\left(\frac{2}{3}\right)\left(\frac{5}{3}\right)\left(\frac{1}{3}\right)\left(\frac{4}{3}\right)},\\
&\left[\frac{4}{7}\right]&&=\frac{4}{7}\sqrt[7]{7^3 \cdot 1 \cdot 2 \cdot 3 \left(\frac{4}{4}\right)\left(\frac{1}{4}\right)\left(\frac{5}{4}\right)\left(\frac{2}{4}\right)\left(\frac{6}{4}\right)\left(\frac{3}{4}\right)},\\
&\left[\frac{5}{7}\right]&&=\frac{5}{7}\sqrt[7]{7^2 \cdot 1 \cdot 2 \cdot 3 \cdot 4 \left(\frac{5}{5}\right)\left(\frac{3}{5}\right)\left(\frac{1}{5}\right)\left(\frac{6}{5}\right)\left(\frac{4}{5}\right)\left(\frac{2}{5}\right)},\\
&\left[\frac{6}{7}\right]&&=\frac{6}{7}\sqrt[7]{7 \cdot 1 \cdot 2 \cdot 3 \cdot 4 \cdot 5\left(\frac{6}{6}\right)\left(\frac{5}{6}\right)\left(\frac{4}{6}\right)\left(\frac{3}{6}\right)\left(\frac{2}{6}\right)\left(\frac{1}{6}\right)}.
\end{alignat*}

\subsection*{Corollary 7}

\paragraph*{§51}

Now let $n=8$ and one will get to these seven reductions

\begin{alignat*}{18}
&\left[\frac{1}{8}\right]&&=\frac{1}{8}\sqrt[8]{8^7 \left(\frac{1}{1}\right)\left(\frac{2}{1}\right)\left(\frac{3}{1}\right)\left(\frac{4}{1}\right)\left(\frac{5}{1}\right)\left(\frac{6}{1}\right)\left(\frac{7}{1}\right)},\\
&\left[\frac{2}{8}\right]&&=\frac{2}{8}\sqrt[8]{8^6 \cdot 2 \left(\frac{2}{2}\right)^2\left(\frac{4}{2}\right)^2 \left(\frac{6}{2}\right)^2\left(\frac{8}{2}\right)}=\frac{1}{4}\sqrt[4]{8^3\left(\frac{2}{2}\right)\left(\frac{4}{2}\right)\left(\frac{6}{2}\right)},\\
&\left[\frac{3}{8}\right]&&=\frac{3}{8}\sqrt[8]{8^5 \cdot 1 \cdot 2 \left(\frac{3}{3}\right)\left(\frac{6}{3}\right)\left(\frac{1}{3}\right)\left(\frac{4}{3}\right)\left(\frac{7}{3}\right)\left(\frac{2}{3}\right)\left(\frac{5}{3}\right)},\\
&\left[\frac{4}{8}\right]&&=\frac{4}{8}\sqrt[8]{8^4 \cdot 4 \cdot 4 \cdot 4 \left(\frac{4}{4}\right)^4\left(\frac{8}{4}\right)^3}=\frac{1}{2}\sqrt[2]{8\left(\frac{4}{4}\right)},\\
&\left[\frac{5}{8}\right]&&=\frac{5}{8}\sqrt[8]{8^3 \cdot 1 \cdot 2 \cdot 3 \cdot 4 \left(\frac{5}{5}\right)\left(\frac{2}{5}\right)\left(\frac{7}{5}\right)\left(\frac{4}{5}\right)\left(\frac{1}{5}\right)\left(\frac{6}{5}\right)\left(\frac{3}{5}\right)},\\
&\left[\frac{6}{8}\right]&&=\frac{6}{8}\sqrt[8]{8^2 \cdot 4 \cdot 2 \cdot 6 \cdot 4 \cdot 2\left(\frac{6}{6}\right)^2\left(\frac{4}{6}\right)^2\left(\frac{2}{6}\right)^2 \left(\frac{8}{6}\right)}=\frac{3}{4}\sqrt[4]{8 \cdot 2 \cdot 4 \left(\frac{6}{6}\right)\left(\frac{4}{6}\right)\left(\frac{2}{6}\right)},\\
& \left[\frac{7}{8}\right]&&= \frac{7}{8}\sqrt[8]{8 \cdot 1 \cdot 2 \cdot 3 \cdot 4 \cdot 5 \cdot 6\left(\frac{7}{7}\right)\left(\frac{6}{7}\right)\left(\frac{5}{7}\right)\left(\frac{4}{7}\right)\left(\frac{3}{7}\right)\left(\frac{2}{7}\right)\left(\frac{1}{7}\right)}.
\end{alignat*}

\subsection*{Scholium}

\paragraph*{§52}

It would be superfluous to expand these cases any further, because  the structure of these formulas is already seen very clearly from the ones we gave. If  the numbers $m$ and $n$ are coprime in the propounded formula $\left[\frac{m}{n}\right]$, the rule is obvious, because

\[
\left[\frac{m}{n}\right]=\frac{m}{n}\sqrt[n]{n^{n-m}\cdot 1 \cdot 2 \cdots (m-1)\left(\frac{1}{m}\right)\left(\frac{2}{m}\right)\left(\frac{3}{m}\right)\cdots \left(\frac{n-1}{m}\right)};
\]
but if these numbers $m$ and $n$ have a common divisor, it will indeed be useful to reduce this fraction $\frac{m}{n}$ to the smallest form and extract the value in question from the preceding cases; nevertheless, the operation can also be done as follows. Because the  expression in question certainly has this form

\[
\left[\frac{m}{n}\right]=\frac{m}{n}\sqrt[n]{n^{n-m}PQ},
\]
where $Q$ is the product of the $n-1$ integral formulas, $P$ on the other hand the product of some absolute numbers, in order to find that product $Q$ just continue this series of formulas $\left(\frac{m}{m}\right)\left(\frac{2m}{m}\right)\left(\frac{3m}{m}\right)$etc. until the numerator exceeds the exponent $n$, and instead of this numerator write its excess over $n$; if this excess is set $=\alpha$ such that our formula is $\left(\frac{\alpha}{m}\right)$, this numerator $\alpha$ will give a factor of a product $P$; then continue this series of formulas $\left(\frac{\alpha }{m}\right)\left(\frac{\alpha +m}{m}\right)\left(\frac{\alpha +2m}{m}\right)$etc. until one again gets to a numerator greater than the exponent $n$, and the formula $\left(\frac{n+\beta}{m}\right)$ emerges; instead of this formula one then has to write $\left(\frac{\beta}{m}\right)$, and hence the factor $\beta $ is introduced into the product and  one has to continue like this until $n-1$ formulas for $Q$ will have emerged.\\[2mm]
To understand these operations more easily, let us expand the case of the formula

\[
\left[\frac{9}{12}\right]=\frac{9}{12}\sqrt[12]{12^3PQ}
\]
in this way; the letters $P$ and $Q$ are found as follows:

\begin{alignat*}{38}
&\text{for} \quad &&Q \dots &&\left(\frac{9}{9}\right)&&\left(\frac{6}{9}\right)&&\left(\frac{3}{9}\right)&&\left(\frac{12}{9}\right)&&\left(\frac{9}{9}\right)&&\left(\frac{6}{9}\right)&&\left(\frac{3}{9}\right)&&\left(\frac{12}{9}\right)&&\left(\frac{9}{9}\right)&&\left(\frac{6}{9}\right)&&\left(\frac{3}{9}\right),\\
&\text{for} \quad &&P \dots && &&~~~~6  &&\cdot ~~3&& &&~~~~9  &&\cdot ~~6 &&\cdot ~~3&& && ~~~~9 && \cdot ~~6  && \cdot ~~3
\end{alignat*}
and so one finds

\[
Q=\left(\frac{9}{9}\right)^3\left(\frac{6}{9}\right)^3\left(\frac{3}{9}\right)^3\left(\frac{12}{9}\right)^2 \quad \text{and} \quad P=6^3 \cdot 3^3 \cdot 9^2.
\]
Because $\left(\frac{12}{9}\right)=\frac{1}{9}$,  $PQ=6^3\cdot 3^3 \left(\frac{9}{9}\right)^3 \left(\frac{6}{9}\right)^3 \left(\frac{3}{9}\right)^3$ and hence

\[
\left[\frac{9}{12}\right]=\frac{3}{4}\sqrt[4]{12 \cdot 6 \cdot 3 \left(\frac{9}{9}\right)\left(\frac{6}{9}\right)\left(\frac{3}{9}\right)}.
\]

\section*{Theorem}

\paragraph*{§53}

\textit{Whatever positive numbers are indicated by the letters $m$ and $n$, in the notation  introduced and explained before it will always be}

\[
\left[\frac{m}{n}\right]=\frac{m}{n}\sqrt[n]{n^{n-m} \cdot 1 \cdot 2 \cdot 3 \cdots (m-1)\left(\frac{1}{m}\right)\left(\frac{2}{m}\right)\left(\frac{3}{m}\right)\cdots \left(\frac{n-1}{m}\right)}.
\]

\subsection*{Proof}

For the cases in which $m$ and $n$ are coprime numbers, the validity of this theorem was shown in the preceding theorems; but the fact that it also holds, if those numbers $m$ and $n$ have a common divisor, is not evident from that theorem; but since the formula was already proved to be true in the cases in which $m$ and $n$ are mutually prime, it is natural to conclude that this theorem is true in general. I am  completely aware that this kind to deduce something is completely unusual and must  seem suspect to most people. In order to clear those doubts, because for the cases, in which the numbers $m$ and $n$ are composite, we obtained two expressions, it will be useful to have shown the agreement for the cases explained before. And the case $m=n$ is already a huge confirmation, in which case our formula obviously becomes $=1$.

\subsection*{Corollary 1}

\paragraph*{§54}

The first case requiring a demonstration of the agreement is that one, in which  $m=2$ and $n=4$, for which we found above (§47)

\[
\left[\frac{2}{4}\right]=\frac{2}{4}\sqrt[4]{4^2\left(\frac{2}{2}\right)^2};
\]
but now via the theorem 

\[
\left[\frac{2}{4}\right]=\frac{2}{4}\sqrt[4]{4^2 \cdot 1 \left(\frac{1}{2}\right)\left(\frac{2}{2}\right)\left(\frac{3}{2}\right)},
\]
where by comparison

\[
\left(\frac{2}{2}\right)=\left(\frac{1}{2}\right)\left(\frac{3}{2}\right),
\]
whose validity was confirmed in my observations mentioned\footnote{Euler again refers to his paper "'Observationes circa integralia formularum $\int x^{p-1}dx(1-x^n)^{\frac{q}{n}-1}$ posito post integrationem $x = 1$"'. This is paper E321 in the Eneström-Index.} above.

\subsection*{Corollary 2}

\paragraph*{§55}

If  $m=2$ and $n=6$, using the results derived above (§49) we have

\[
\left[\frac{2}{6}\right]=\frac{2}{6}\sqrt[6]{6^4 \left(\frac{2}{2}\right)^2 \left(\frac{4}{2}\right)^2};
\]
now on the other hand by means of the theorem

\[
\left[\frac{2}{6}\right]=\frac{2}{6}\sqrt[6]{6^4 \cdot 1 \left(\frac{1}{2}\right)\left(\frac{2}{2}\right)\left(\frac{3}{2}\right)\left(\frac{4}{2}\right)\left(\frac{5}{2}\right)}
\]
and therefore it has to be

\[
\left(\frac{2}{2}\right)\left(\frac{4}{2}\right)=\left(\frac{1}{2}\right)\left(\frac{3}{2}\right)\left(\frac{5}{2}\right),
\]
whose validity is clear for the same reasons.

\subsection*{Corollary 3}

\paragraph*{§56}

If $m=3$ and $n=6$, one arrives at this equation

\[
\left(\frac{3}{3}\right)^2=1 \cdot 2 \left(\frac{1}{3}\right)\left(\frac{2}{3}\right)\left(\frac{4}{3}\right)\left(\frac{5}{3}\right);
\]
but if $m=4$ and $n=6$,  in like manner,

\[
2^2\left(\frac{4}{4}\right)\left(\frac{2}{4}\right)=1 \cdot 2 \cdot 3 \left(\frac{1}{4}\right)\left(\frac{3}{4}\right)\left(\frac{5}{4}\right)
\]
or

\[
\left(\frac{4}{4}\right)\left(\frac{2}{4}\right)=\frac{3}{2}\left(\frac{1}{4}\right)\left(\frac{3}{4}\right)\left(\frac{5}{4}\right),
\]
which is also found to be true.

\subsection*{Corollary 4}

\paragraph*{§57}

The case $m=2$ and $n=8$ yields this equality

\[
\left(\frac{2}{2}\right)\left(\frac{4}{2}\right)\left(\frac{6}{2}\right)=\left(\frac{1}{2}\right)\left(\frac{3}{2}\right)\left(\frac{5}{2}\right)\left(\frac{7}{2}\right),
\]
but the case $m=4$ and $n=8$ this one

\[
\left(\frac{4}{4}\right)^3=1 \cdot 2 \cdot 3 \left(\frac{1}{4}\right)\left(\frac{2}{4}\right)\left(\frac{3}{4}\right)\left(\frac{5}{4}\right)\left(\frac{6}{4}\right)\left(\frac{7}{4}\right)
\]
and finally the case $m=6$ and $n=8$ gives this equation

\[
2 \cdot 4 \left(\frac{6}{6}\right)\left(\frac{4}{6}\right)\left(\frac{2}{6}\right)=1 \cdot 3 \cdot 5 \left(\frac{1}{6}\right)\left(\frac{3}{6}\right)\left(\frac{5}{6}\right)\left(\frac{7}{6}\right),
\]
which is also true.

\subsection*{Scholium}

\paragraph*{§58}

But if in general the numbers $m$ and $n$ have the common factor $2$ and the propounded formula is $\left[\frac{2m}{2n}\right]=\left[\frac{m}{n}\right]$, because

\[
\left[\frac{m}{n}\right]=\frac{m}{n}\sqrt[n]{n^{n-m} \cdot 1 \cdot 2 \cdot 3 \cdots (m-1)\left(\frac{1}{m}\right)\left(\frac{2}{m}\right)\left(\frac{3}{m}\right)\cdots \left(\frac{n-1}{m}\right)},
\]
after having reduced the same to the exponent $2n$ it will be

\[
\frac{m}{n}\sqrt[2n]{2n^{2n-2m} \cdot 2^2 \cdot 4^2 \cdot 6^2 \cdots (2m-2)^2 \left(\frac{2}{2m}\right)^2\left(\frac{4}{2m}\right)^2\left(\frac{6}{2m}\right)^2 \cdots \left(\frac{2n-2}{2m}\right)^2}.
\]
By the theorem, the same expression on the other hand becomes

\[
\frac{m}{n}\sqrt[2n]{2n^{2n-2m} \cdot 1 \cdot 2 \cdot 3 \cdots (2m-1) \left(\frac{1}{2m}\right)\left(\frac{2}{2m}\right)\left(\frac{3}{2m}\right) \cdots \left(\frac{2n-1}{2m}\right)},
\]
whence for the exponent $2n$ it will be

\[
2 \cdot 4 \cdot 6 \cdots (2m-2)\left(\frac{2}{2m}\right)\left(\frac{4}{2m}\right)\left(\frac{6}{2m}\right)\cdots \left(\frac{2n-2}{2m}\right)
\]
\[
=1 \cdot 3 \cdot 5 \cdots (2m-1)\left(\frac{1}{2m}\right)\left(\frac{3}{2m}\right)\left(\frac{5}{2m}\right)\cdots \left(\frac{2n-1}{2m}\right).
\]
If in the same way the common divisor is $3$,  for the exponent $3n$ one will find

\[
3^2 \cdot 6^2 \cdot 9^2 \cdots (3m-3)^2 \left(\frac{3}{3m}\right)^2\left(\frac{6}{3m}\right)^2\left(\frac{9}{3m}\right)^2\cdots \left(\frac{3n-3}{3m}\right)^2
\]
\[
=1 \cdot 2 \cdot 4 \cdot 5 \cdots (3m-2)(3m-1) \left(\frac{1}{3m}\right)\left(\frac{2}{3m}\right)\left(\frac{4}{3m}\right)\left(\frac{5}{3m}\right)\cdots \left(\frac{3n-1}{3m}\right),
\]
which equation can be more conveniently exhibited as follows

\[
\frac{1 \cdot 2 \cdot 4 \cdot 5 \cdot 7 \cdot 8 \cdot 10 \cdots (3m-2)(3m-1)}{3^2 \cdot 6^2 \cdot 9^2 \cdots (3m-3)^2}=\frac{\left(\frac{3}{3m}\right)^2\left(\frac{6}{3m}\right)^2\cdots \left(\frac{3n-3}{3m}\right)^2}{\left(\frac{1}{3m}\right)\left(\frac{2}{3m}\right)\left(\frac{4}{3m}\right)\left(\frac{5}{3m}\right)\left(\frac{7}{3m}\right)\cdots \left(\frac{3n-2}{3m}\right)\left(\frac{3n-1}{3m}\right)}.
\]
But if in general the common divisor is $d$ and the exponent $dn$, one will have

\[
\left(d \cdot 2d \cdot 3d \cdots (dm-d)\left(\frac{d}{dm}\right)\left(\frac{2d}{dm}\right)\left(\frac{3d}{dm}\right)\cdots \left(\frac{dn-d}{dm}\right)\right)^d
\]
\[
=1 \cdot 2 \cdot 3 \cdot 4 \cdots (dm-1)\left(\frac{1}{dm}\right)\left(\frac{2}{dm}\right)\left(\frac{3}{dm}\right)\cdots \left(\frac{dn-1}{dm}\right),
\]
which equation can easily be accommodated to any cases, whence the following theorem deserves  to be noted.

\section*{Theorem}

\paragraph*{§59}

\textit{If $\alpha$ is a common divisor of the numbers $m$ and $n$ and the formula $\left(\frac{p}{q}\right)$ denotes the value of the integral $\int \frac{x^{p-1}dx}{\sqrt[n]{(1-x^n)^{n-q}}}$ extended from $x=0$ to $x=1$, it will be}

\[
\left(\alpha \cdot 2 \alpha \cdot 3 \alpha \cdots (m-\alpha)\left(\frac{\alpha}{m}\right)\left(\frac{2\alpha }{m}\right)\left(\frac{3 \alpha}{m}\right)\cdots\left(\frac{n-\alpha}{m}\right)\right)^{\alpha}
\]
\[
=1 \cdot 2 \cdot 3 \cdots (m-1)\left(\frac{1}{m}\right)\left(\frac{2}{m}\right)\left(\frac{3}{m}\right)\cdots \left(\frac{n-1}{m}\right).
\]

\subsection*{Proof}

The validity of this theorem is already seen from the preceding Scholium; while the common divisor was $=d$ and the two propounded numbers were $dm$ and $dn$ there, here I just wrote $m$ and $n$ instead of them, but instead of their divisor $d$ I wrote the letter $\alpha$, the kind of divisor which the stated equality contains in such a way that one assumes the numbers $m$ and $n$ and hence also $m-\alpha$ and $n-\alpha$ to occur in the continued arithmetic progression $\alpha$, $2\alpha$, $3 \alpha$ etc. In addition, I am forced to confess that this proof is, of course, mainly based on induction and cannot be considered to be rigorous by any means; but because we are nevertheless convinced of its truth, this theorem seems to be worth of one's greater attention; nevertheless, there is no doubt that a further expansion of integral formulas of this kind will finally lead to a complete proof; but it is an extraordinary specimen of analytical investigation that it was possible for us to see its truth before we had the complete proof.

\subsection*{Corollary 1}

\paragraph*{§60}

Thus, if we  substitute the integrals  for the signs we introduced, our theorem will be as follows

\[
\alpha \cdot 2 \alpha \cdot 3 \alpha \cdots (m-\alpha) \int \frac{x^{\alpha -1}dx}{\sqrt[n]{(1-x^n)^{n-m}}} \cdot \int \frac{x^{2\alpha -1}dx}{\sqrt[n]{(1-x^n)^{n-m}}} \cdots \int \frac{x^{n-\alpha -1}dx}{\sqrt[n]{(1-x^n)^{n-m}}}
\]
\[
=\sqrt[\alpha]{1 \cdot 2 \cdot 3 \cdots (m-1)\int \frac{dx}{\sqrt[n]{(1-x^n)^{n-m}}}\cdot \int \frac{x^{}dx}{\sqrt[n]{(1-x^n)^{n-m}}}\cdots \int \frac{x^{n -2}dx}{\sqrt[n]{(1-x^n)^{n-m}}}}.
\]

\subsection*{Corollary 2}

\paragraph*{§61}

Or if, for the sake of brevity, we set $\sqrt[n]{(1-x^n)^{n-m}}=X$, it will be

\[
\alpha \cdot 2 \alpha \cdot 3 \alpha \cdots (m-\alpha) \int \frac{x^{\alpha -1}dx}{X} \cdot \int \frac{x^{2\alpha -1}dx}{X} \cdots \int \frac{x^{n-\alpha -1}dx}{X}
\]
\[
=\sqrt[\alpha]{1 \cdot 2 \cdot 3 \cdots (m-1)\int \frac{dx}{X} \cdot \int \frac{xdx}{X} \cdot \int \frac{x^{2}dx}{X} \cdots \int \frac{x^{n-2}dx}{X}}.
\]

\section*{General Theorem}

\paragraph*{§62}

\textit{If the divisors of the two numbers $m$ and $n$ are $\alpha$, $\beta$, $\gamma$ etc. and the formula $\left(\frac{p}{q}\right)$ denotes the value of the integral $\int \frac{x^{p-1}dx}{\sqrt[n]{(1-x^n)^{n-q}}}$ extended from $x=0$ to $x=1$, the following expressions consisting of integral formulas of this kind will be equal to each other}

\begin{alignat*}{18}
& &&\bigg(\alpha &&\cdot 2 \alpha && \cdot 3 \alpha &&\cdots (m&&-\alpha)&&\left(\frac{\alpha}{m}\right)&&\left(\frac{2\alpha}{m}\right)&&\left(\frac{3\alpha}{m}\right)&&\cdots \left(\frac{n-\alpha}{m}\right)\bigg)^{\alpha}\\
&=&&\bigg(\beta &&\cdot 2 \beta && \cdot 3 \beta &&\cdots (m&&-\beta)&&\left(\frac{\beta}{m}\right)&&\left(\frac{2\beta}{m}\right)&&\left(\frac{3\beta}{m}\right)&&\cdots \left(\frac{n-\beta}{m}\right)\bigg)^{\beta}\\
&=&&\bigg(\gamma &&\cdot 2 \gamma && \cdot 3 \gamma &&\cdots (m&&-\gamma)&&\left(\frac{\gamma}{m}\right)&&\left(\frac{2\gamma}{m}\right)&&\left(\frac{3\gamma}{m}\right)&&\cdots \left(\frac{n-\gamma}{m}\right)\bigg)^{\gamma}\\
& && && && && && && && \text{etc.}
\end{alignat*}

\subsection*{Proof}

The validity of this theorem obviously follows from the preceding theorem, because each of these expressions is equal to this one

\[
1 \cdot 2 \cdot 3 \cdots (m-1) \left(\frac{1}{m}\right)\left(\frac{2}{m}\right)\left(\frac{3}{m}\right)\cdots \left(\frac{n-1}{m}\right),
\]
which corresponds to the unity as smallest common divisor of the numbers $m$ and $n$. Therefore, so many expressions of this kind, i.e. all equal to each other, can be exhibited as there were common divisors of the two numbers $m$ and $n$.

\subsection*{Corollary 1}

\paragraph*{§63}

Because this formula $\left(\frac{n}{m}\right)$ is $=\frac{1}{m}$ and hence $m\left(\frac{n}{m}\right)=1$, our equal  expressions can be represented  more succinctly as follows

\begin{alignat*}{18}
& &&\bigg(\alpha &&\cdot 2 \alpha && \cdot 3 \alpha &&\cdots m&&\left(\frac{\alpha}{m}\right)&&\left(\frac{2\alpha}{m}\right)&&\left(\frac{3\alpha}{m}\right)&&\cdots \left(\frac{n}{m}\right)\bigg)^{\alpha}\\
&=&&\bigg(\beta &&\cdot 2 \beta && \cdot 3 \beta &&\cdots m&&\left(\frac{\beta}{m}\right)&&\left(\frac{2\beta}{m}\right)&&\left(\frac{3\beta}{m}\right)&&\cdots \left(\frac{n}{m}\right)\bigg)^{\beta}\\
&=&&\bigg(\gamma &&\cdot 2 \gamma && \cdot 3 \gamma &&\cdots m&&\left(\frac{\gamma}{m}\right)&&\left(\frac{2\gamma}{m}\right)&&\left(\frac{3\gamma}{m}\right)&&\cdots \left(\frac{n}{m}\right)\bigg)^{\gamma}.
\end{alignat*}
For, even if the number of factors was increased here,  the structure of these formulas is nevertheless easily seen.

\subsection*{Corollary 2}

\paragraph*{§64}

Thus, if $m=6$ and $n=12$, because of the common divisors of these numbers, $6$, $3$, $2$, $1$, one will have the following four forms that are all equal to each other

\[
=\left(6 \left(\frac{6}{6}\right)\left(\frac{12}{6}\right)\right)^6 =\left(3 \cdot 6 \left(\frac{3}{6}\right)\left(\frac{6}{6}\right)\left(\frac{9}{6}\right)\left(\frac{12}{6}\right)\right)^3
\]
\[
=\left(2 \cdot 4 \cdot 6 \left(\frac{2}{6}\right)\left(\frac{4}{6}\right)\left(\frac{6}{6}\right)\left(\frac{8}{6}\right)\left(\frac{10}{6}\right)\left(\frac{12}{6}\right)\right)^2
\]
\[
=1 \cdot 2 \cdot 3 \cdot 4 \cdot 5 \cdot 6 \left(\frac{1}{6}\right)\left(\frac{2}{6}\right)\left(\frac{3}{6}\right)\cdots \left(\frac{12}{6}\right).
\]

\subsection*{Corollary 3}

\paragraph*{§65}

If the last formula is combined with the penultimate, this equation will result

\[
\frac{1 \cdot 3 \cdot 5}{2 \cdot 4 \cdot 6}= \frac{\left(\frac{2}{6}\right)\left(\frac{4}{6}\right)\left(\frac{6}{6}\right)\left(\frac{8}{6}\right)\left(\frac{10}{6}\right)\left(\frac{12}{6}\right)}{\left(\frac{1}{6}\right)\left(\frac{3}{6}\right)\left(\frac{5}{6}\right)\left(\frac{7}{6}\right)\left(\frac{9}{6}\right)\left(\frac{11}{6}\right)},
\]
but the last compared to the second yields

\[
\frac{1 \cdot 2 \cdot 4 \cdot 5}{3 \cdot 3 \cdot 6 \cdot 6}=\frac{\left(\frac{3}{6}\right)\left(\frac{3}{6}\right)\left(\frac{6}{6}\right)\left(\frac{6}{6}\right)\left(\frac{9}{6}\right)\left(\frac{9}{6}\right)\left(\frac{12}{6}\right)\left(\frac{12}{6}\right)}{\left(\frac{1}{6}\right)\left(\frac{2}{6}\right)\left(\frac{4}{6}\right)\left(\frac{5}{6}\right)\left(\frac{7}{6}\right)\left(\frac{8}{6}\right)\left(\frac{10}{6}\right)\left(\frac{11}{6}\right)}.
\]

\subsection*{Scholium}

\paragraph*{§66}

Hence infinitely many relations among the integral formulas of the form 

\[
\int \frac{x^{p-1}dx}{\sqrt[n]{(1-x^n)^{n-q}}}=\left(\frac{p}{q}\right)
\]
follow, which are even more remarkable, because we were led to them by a completely singular method. And if anyone does not believe them to be true, he or she should consult my observations on these integral formulas\footnote{Euler again refers to his paper "Observationes circa integralia formularum $\int x^{p-1}dx(1-x^n)^{\frac{q}{n}-1}$ posito post integrationem $x = 1$". This is paper E321 in the Eneström-Index.} and will then  easily be convinced of their truth for any case. But even if this consideration provides some confirmation, the relations found here are nevertheless of even greater importance, because  a certain structure is noticed in them and they are easily generalised to all classes, whatever number was assumed for the exponent $n$, whereas in the first treatment the calculation for the higher classes becomes continuously more cumbersome and intricate. \newpage

\section*{Supplement containing the Proof of the theorem propounded in §53}

It is convenient to derive this proof from the results mentioned above; just take the equation given in §25, which for $f=1$, having changed the letters, reads as follows

\[
\frac{\int dx \left(\log \frac{1}{x}\right)^{\nu-1} \cdot \int dx \left(\log\frac{1}{x}\right)^{\mu-1}}{\int dx \left (\log \frac{1}{x}\right)^{\nu+\mu-1}}=\varkappa \int \frac{x^{\varkappa \mu -1}dx}{(1-x^{\varkappa})^{1-\nu}},
\]
and, using known reductions, represent it in this form

\[
\frac{\int dx \left(\log \frac{1}{x}\right)^{\nu} \cdot \int dx \left(\log\frac{1}{x}\right)^{\mu}}{\int dx \left (\log \frac{1}{x}\right)^{\nu+\mu}}=\frac{\varkappa \mu \nu}{\mu+\nu} \int \frac{x^{\varkappa \mu -1}dx}{(1-x^{\varkappa})^{1-\nu}}.
\]
Now set $\nu= \frac{m}{n}$ and $\mu=\frac{\lambda}{n}$, but then $\varkappa=n$ so that we have

\[
\frac{\int dx \left(\log \frac{1}{x}\right)^{\frac{m}{n}}\cdot \int dx \left(\log \frac{1}{x}\right)^{\frac{\lambda}{n}}}{\int dx \left(\log \frac{1}{x}\right)^{\frac{\lambda +m}{n}}}=\frac{\lambda m}{\lambda +m}\int \frac{x^{\lambda -1}dx}{\sqrt[\varkappa]{(1-x^n)^{n-m}}},
\]
which, for the sake of brevity having used notation introduced above, is more conveniently expressed as follows

\[
\frac{\left[\frac{m}{n}\right]\left[\frac{\lambda}{n}\right]}{\left[\frac{\lambda +m}{n}\right]}=\frac{\lambda m}{\lambda +m}\left(\frac{\lambda}{m}\right).
\]
Now  successively write the numbers $1$, $2$, $3$, $4 \dots n$ instead of $\lambda$ and multiply all these equations, whose number is $=n$, and the resulting equation will be

\[
\left[\frac{m}{n}\right]^n \frac{\left[\frac{1}{n}\right]\left[\frac{2}{n}\right]\left[\frac{3}{n}\right]\cdots \left[\frac{n}{n}\right]}{\left[\frac{m+1}{n}\right]\left[\frac{m+2}{n}\right]\left[\frac{m+3}{n}\right]\cdots \left[\frac{m+n}{n}\right]}
\]
\[
=m^n \frac{1}{m+1}\cdot \frac{2}{m+2}\cdot \frac{3}{m+3}\cdots \frac{n}{m+n}\left(\frac{1}{m}\right)\left(\frac{2}{m}\right)\left(\frac{3}{m}\right)\cdots \left(\frac{n}{m}\right)
\]
\[
=m^n \frac{1 \cdot 2 \cdot 3 \cdots m}{(n+1)(n+2)(n+3) \cdots (m+n)}\left(\frac{1}{m}\right)\left(\frac{2}{m}\right)\left(\frac{3}{m}\right)\cdots \left(\frac{n}{m}\right).
\]
But in  like manner, just transform the left-hand side such that 

\[
\left[\frac{m}{n}\right]^n \frac{\left[\frac{1}{n}\right]\left[\frac{2}{n}\right]\left[\frac{3}{n}\right]\cdots \left[\frac{m}{n}\right]}{\left[\frac{n+1}{n}\right]\left[\frac{n+2}{n}\right]\left[\frac{n+3}{n}\right]\cdots \left[\frac{n+m}{n}\right]},
\]
whose agreement with the preceding is revealed by cross multiplication. But because  from the nature of these formulas 
\[
\left[\frac{n+1}{n}\right]=\frac{n+1}{n}\left[\frac{1}{n}\right], \quad \left[\frac{n+2}{n}\right]=\frac{n+2}{2}\left[\frac{2}{n}\right], \quad \left[\frac{n+3}{n}\right]=\frac{n+3}{n}\left[\frac{3}{n}\right] \quad \text{etc.},
\]
and since we have $m$ of these formulas here, this left-hand side will become

\[
\left[\frac{m}{n}\right]^n \frac{n^m}{(n+1)(n+2)(n+3)\cdots (n+m)};
\]
because this one is equal to the other part exhibited before, namely

\[
m^n \frac{1 \cdot 2 \cdot 3 \cdots m}{(n+1)(n+2)(n+3)\cdots (n+m)}\left(\frac{1}{m}\right)\left(\frac{2}{m}\right)\left(\frac{3}{m}\right)\cdots \left(\frac{n}{m}\right),
\]
we obtain this equation

\[
\left[\frac{m}{n}\right]^n=\frac{m^n}{n^n}1 \cdot 2 \cdot 3 \cdots m \left(\frac{1}{m}\right)\left(\frac{2}{m}\right) \left(\frac{3}{m}\right)\cdots \left(\frac{n}{m}\right)
\]
such that

\[
\left[\frac{m}{n}\right]=m\sqrt[n]{\frac{1 \cdot 2 \cdot 3 \cdots m}{n^m}\left(\frac{1}{m}\right)\left(\frac{2}{m}\right)\left(\frac{3}{m}\right)\cdots \left(\frac{n}{m}\right)};
\]
 because  this equation $\left(\frac{n}{m}\right)=\frac{1}{m}$  agrees with the one propounded in §53, its truth is now indeed proved from most certain principles.

\section*{Proof of the theorem propounded in §59}

Also this theorem needs a more rigorous proof which I will give using the equation established before, i.e.

\[
\frac{\left[\frac{m}{n}\right]\left[\frac{\lambda}{n}\right]}{\left[\frac{\lambda+m}{n}\right]}=\frac{\lambda m}{\lambda +m}\left(\frac{\lambda}{m}\right),
\]
as follows. While $\alpha$ is a common divisor of the numbers $m$ and $n$, successively write the numbers $\alpha$, $2 \alpha$, $3 \alpha$ etc. up to $n$ instead of $\lambda$, whose total amount is $=\frac{n}{\alpha}$, and now multiply all equations resulting  in this way such that this equation emerges

\[
\left[\frac{m}{n}\right]^{\frac{n}{\alpha}}\frac{\left[\frac{\alpha}{n}\right]\left[\frac{2\alpha}{n}\right]\left[\frac{3\alpha}{n}\right]\cdots \left[\frac{n}{n}\right]}{\left[\frac{m+\alpha}{n}\right]\left[\frac{m+2\alpha}{n}\right]\left[\frac{m+3\alpha}{n}\right]\cdots \left[\frac{m+n}{n}\right]}
\]
\[
=m^{\frac{n}{\alpha}}\frac{\alpha}{m+\alpha}\cdot \frac{2\alpha}{m+2 \alpha}\cdot \frac{3\alpha}{m+3\alpha}\cdots \frac{n}{n+m}\left(\frac{\alpha}{m}\right)\left(\frac{2\alpha}{m}\right)\left(\frac{3\alpha}{m}\right)\cdots \left(\frac{n}{m}\right).
\]
Now transform the left-hand side into this one equal to it

\[
\left[\frac{m}{m}\right]^{\frac{n}{\alpha}}\frac{\left[\frac{\alpha}{n}\right]\left[\frac{2 \alpha}{n}\right]\left[\frac{3 \alpha}{n}\right]\cdots \left[\frac{m}{n}\right]}{\left[\frac{n+\alpha}{n}\right]\left[\frac{n+2\alpha}{n}\right]\left[\frac{n+3\alpha}{n}\right]\cdots\left[\frac{n+m}{n}\right]},
\]
which, because of $\left[\frac{n+\alpha}{n}\right]=\frac{n+\alpha}{n}\left[\frac{\alpha}{n}\right]$ and similarly for the remaining ones, is reduced to this one

\[
\left[\frac{m}{n}\right]^{\frac{n}{\alpha}} \frac{n}{n+\alpha}\cdot \frac{n}{n+2 \alpha} \cdot \frac{n}{n+3 \alpha}\cdots \frac{n}{n+m},
\]
In like manner, the right-hand side of the equation  is transformed into this one

\[
m^{\frac{n}{\alpha}}\frac{\alpha}{n+\alpha}\cdot \frac{2 \alpha}{n+2 \alpha} \cdot \frac{3 \alpha}{n+3 \alpha}\cdots \frac{m}{n+m}\left(\frac{\alpha}{m}\right)\left(\frac{2\alpha}{m}\right)\left(\frac{3\alpha}{m}\right)\cdots \left(\frac{n}{m}\right),
\]
whence this equation results

\[
\left[\frac{m}{n}\right]^{\frac{n}{\alpha}}n^{\frac{m}{\alpha}}=n^{\frac{n}{\alpha}} \alpha \cdot  2\alpha \cdot 3 \alpha \cdots m \left(\frac{\alpha}{m}\right)\left(\frac{2\alpha}{m}\right)\left(\frac{3\alpha}{m}\right)\cdots \left(\frac{n}{m}\right)
\]
and hence

\[
\left[\frac{m}{n}\right]=m \sqrt[n]{\frac{1}{n^m}\left(\alpha \cdot  2\alpha \cdot 3 \alpha \cdots m \left(\frac{\alpha}{m}\right)\left(\frac{2\alpha}{m}\right)\left(\frac{3\alpha}{m}\right)\cdots \left(\frac{n}{m}\right)\right)^{\alpha}}
\]
which equation compared to the preceding yields this equation

\[
\left(\alpha \cdot  2\alpha \cdot 3 \alpha \cdots m \left(\frac{\alpha}{m}\right)\left(\frac{2\alpha}{m}\right)\left(\frac{3\alpha}{m}\right)\cdots \left(\frac{n}{m}\right)\right)^{\alpha}=1 \cdot 2 \cdot 3 \cdots m \left(\frac{1}{m}\right)\left(\frac{2}{m}\right)\left(\frac{3}{m}\right)\cdots \left(\frac{n}{m}\right),
\]
which is to be understood for all common divisors of the two numbers $m$ and $n$.


\end{document}